\theoremstyle{definition}
\newcommand\HRule{\rule{\textwidth}{1pt}}
\newtheorem{theo}{Theorem}[section]
\newtheorem*{theo A}{Theorem}
\newtheorem*{theo 2.A}{Theorem 2.A}
\newtheorem*{theo 3.A}{Theorem 3.A}
\newtheorem*{theo 4.A}{Theorem 4.A}
\newtheorem*{theo 5.A}{Theorem 5.A}
\newtheorem*{theo 6.A}{Theorem 6.A}
\newtheorem*{theo 7.A}{Theorem 7.A}
\newtheorem*{theo 8.A}{Theorem 8.A}
\newtheorem*{theo 9.A}{Theorem 9.A}
\newtheorem*{theo 2.B}{Theorem 2.B}
\newtheorem*{theo 3.B}{Theorem 3.B}
\newtheorem*{theo 4.B}{Theorem 4.B}
\newtheorem*{theo 5.B}{Theorem 5.B}
\newtheorem*{theo 6.B}{Theorem 6.B}
\newtheorem*{theo 7.B}{Theorem 7.B}
\newtheorem*{theo 8.B}{Theorem 8.B}
\newtheorem*{theo 9.B}{Theorem 9.B}
\newtheorem*{theo 2.C}{Theorem 2.C}
\newtheorem*{theo 3.C}{Theorem 3.C}
\newtheorem*{theo 4.C}{Theorem 4.C}
\newtheorem*{theo 5.C}{Theorem 5.C}
\newtheorem*{theo 6.C}{Theorem 6.C}
\newtheorem*{theo 7.C}{Theorem 7.C}
\newtheorem*{theo 8.C}{Theorem 8.C}
\newtheorem*{theo 9.C}{Theorem 9.C}
\newtheorem*{theo 2.D}{Theorem 2.D}
\newtheorem*{theo 3.D}{Theorem 3.D}
\newtheorem*{theo 4.D}{Theorem 4.D}
\newtheorem*{theo 5.D}{Theorem 5.D}
\newtheorem*{theo 6.D}{Theorem 6.D}
\newtheorem*{theo 7.D}{Theorem 7.D}
\newtheorem*{theo 8.D}{Theorem 8.D}
\newtheorem*{theo 9.D}{Theorem 9.D}
\newtheorem*{theo 2.E}{Theorem 2.E}
\newtheorem*{theo 3.E}{Theorem 3.E}
\newtheorem*{theo 4.E}{Theorem 4.E}
\newtheorem*{theo 5.E}{Theorem 5.E}
\newtheorem*{theo 6.E}{Theorem 6.E}
\newtheorem*{theo 7.E}{Theorem 7.E}
\newtheorem*{theo 8.E}{Theorem 8.E}
\newtheorem*{theo 9.E}{Theorem 9.E}
\newtheorem*{theo 2.F}{Theorem 2.F}
\newtheorem*{theo 3.F}{Theorem 3.F}
\newtheorem*{theo 4.F}{Theorem 4.F}
\newtheorem*{theo 5.F}{Theorem 5.F}
\newtheorem*{theo 6.F}{Theorem 6.F}
\newtheorem*{theo 7.F}{Theorem 7.F}
\newtheorem*{theo 8.F}{Theorem 8.F}
\newtheorem*{theo 9.F}{Theorem 9.F}
\newtheorem*{theo 2.G}{Theorem 2.G}
\newtheorem*{theo 3.G}{Theorem 3.G}
\newtheorem*{theo 4.G}{Theorem 4.G}
\newtheorem*{theo 5.G}{Theorem 5.G}
\newtheorem*{theo 6.G}{Theorem 6.G}
\newtheorem*{theo 7.G}{Theorem 7.G}
\newtheorem*{theo 8.G}{Theorem 8.G}
\newtheorem*{theo 9.G}{Theorem 9.G}
\newtheorem*{theo 2.H}{Theorem 2.H}
\newtheorem*{theo 3.H}{Theorem 3.H}
\newtheorem*{theo 4.H}{Theorem 4.H}
\newtheorem*{theo 5.H}{Theorem 5.H}
\newtheorem*{theo 6.H}{Theorem 6.H}
\newtheorem*{theo 7.H}{Theorem 7.H}
\newtheorem*{theo 8.H}{Theorem 8.H}
\newtheorem*{theo 9.H}{Theorem 9.H}
\newtheorem*{theo 2.I}{Theorem 2.I}
\newtheorem*{theo 2.J}{Theorem 2.J}
\newtheorem{Theorem }{Theorem }
\newtheorem*{theo C}{Theorem C}
\newtheorem*{theo D}{Theorem D}
\newtheorem*{theo E}{Theorem E}
\newtheorem*{theo F}{Theorem F}
\newtheorem*{theo G}{Theorem G}
\newtheorem*{theo H}{Theorem H}
\newtheorem{defi}{Definition}[section]
\newtheorem*{defi A}{Definition}
\newtheorem*{exm A}{Example}
\newtheorem{lem}{Lemma}[section]
\newtheorem{cor}{Corollary}[section]
\newtheorem{rem}{Remark}[section]
\newtheorem{exm}{Example}[section]
\newtheorem{ques}{Question}[section]
\newcommand{\ol}{\overline}
\newcommand{\be}{\begin{equation}}
\newcommand{\ee}{\end{equation}}
\newcommand{\bes}{\begin{equation*}}
\newcommand{\ees}{\end{equation*}}
\newcommand{\bea}{\begin{eqnarray}}
\newcommand{\eea}{\end{eqnarray}}
\newcommand{\eeas}{\end{eqnarray*}}
\newcommand{\beas}{\begin{eqnarray*}}
\newcommand{\lra}{\longrightarrow}
\numberwithin{equation}{section}
\begin{document}
\renewcommand{\familydefault}{\sfdefault}
\sffamily
\fancyhead{}
\chead{\leftmark}
\pagestyle{fancy}
\begin{titlepage}

\begin{tikzpicture}[remember picture, overlay]
\end{tikzpicture}
\begin{center}
\begin{minipage}{0.45\textwidth}
\begin{flushleft}
{\bfseries \textsc{\Large{University of Kalyani, India}}}
\vspace{.35 cm}
\\
Faculty of Science
\\
Department of Mathematics
\end{flushleft}
\end{minipage}
\begin{minipage}{0.45\textwidth}
\begin{flushleft}
\end{flushleft}
\end{minipage}
\begin{minipage}{0.45\textwidth}
\begin{flushright}
\end{flushright}
\end{minipage}
\end{center}
$~~~~~~~~~~~~~~~~~~~~~$
\\
\vspace{4 cm}
\begin{center}
{\Large{\textsc{\textbf{SOME ASPECTS OF UNIQUENESS THEORY OF}}}}
\\
\vspace{.15 cm}
{\Large{\textsc{\textbf{ENTIRE AND MEROMORPHIC}}}}
\\
\vspace{.15 cm}
{\Large{\textsc{\textbf{ FUNCTIONS}}}}
\end{center}
$~~~~~~~~~~~~~~~~~~~~~$
\vspace{0.25 cm}
\begin{center}
{\bfseries \textsc{\Large{Bikash Chakraborty, M.Sc.}}}
\end{center}
$~~~~~~~~~~~~~~~~~~~~~~~~~~~~~~~$
\\
\vspace{4 cm}
\begin{center}
\begin{minipage}{0.45\textwidth}
\begin{flushleft}
\end{flushleft}
\end{minipage}
\begin{minipage}{0.45\textwidth}
\begin{flushright}
A thesis submitted for the degree of Doctor of Philosophy in Science (Mathematics)\\
\vspace{0.35 cm}
Supervisor:\\Prof. Abhijit Banerjee
\end{flushright}
\end{minipage}
\end{center}
$~~~~~~~~~~~~~~~~~~~~~~~~~~~~~~~$
\\
\vspace{2 cm}
\begin{center}
{\large{Summer, 2017}}
\end{center}
\end{titlepage}
\newpage
\begin{titlepage}

\begin{tikzpicture}[remember picture, overlay]
\end{tikzpicture}

\begin{center}
\vspace{6cm}
\HRule \\[0.4cm]
{  \bfseries \textsc{\large{To\\\vspace{.3cm} my parents\\
\vspace{.5cm}
Sri Bidyut Chakraborty and Smt. Pramila Chakraborty\\
\vspace{.5cm}
for their great interest in my education} }}\\[0.4cm]
\HRule \\[3cm]
\end{center}
\end{titlepage}
\newpage
\begin{titlepage}

\begin{tikzpicture}[remember picture, overlay]
  \draw[line width = 4pt] ($(current page.north west) + (1in,-1in)$) rectangle ($(current page.south east) + (-1in,1in)$);
\end{tikzpicture}
\vspace{6cm}
{\bfseries \textsc{\LARGE{\begin{figure}[!htb]
\centering
\includegraphics[scale=.2]{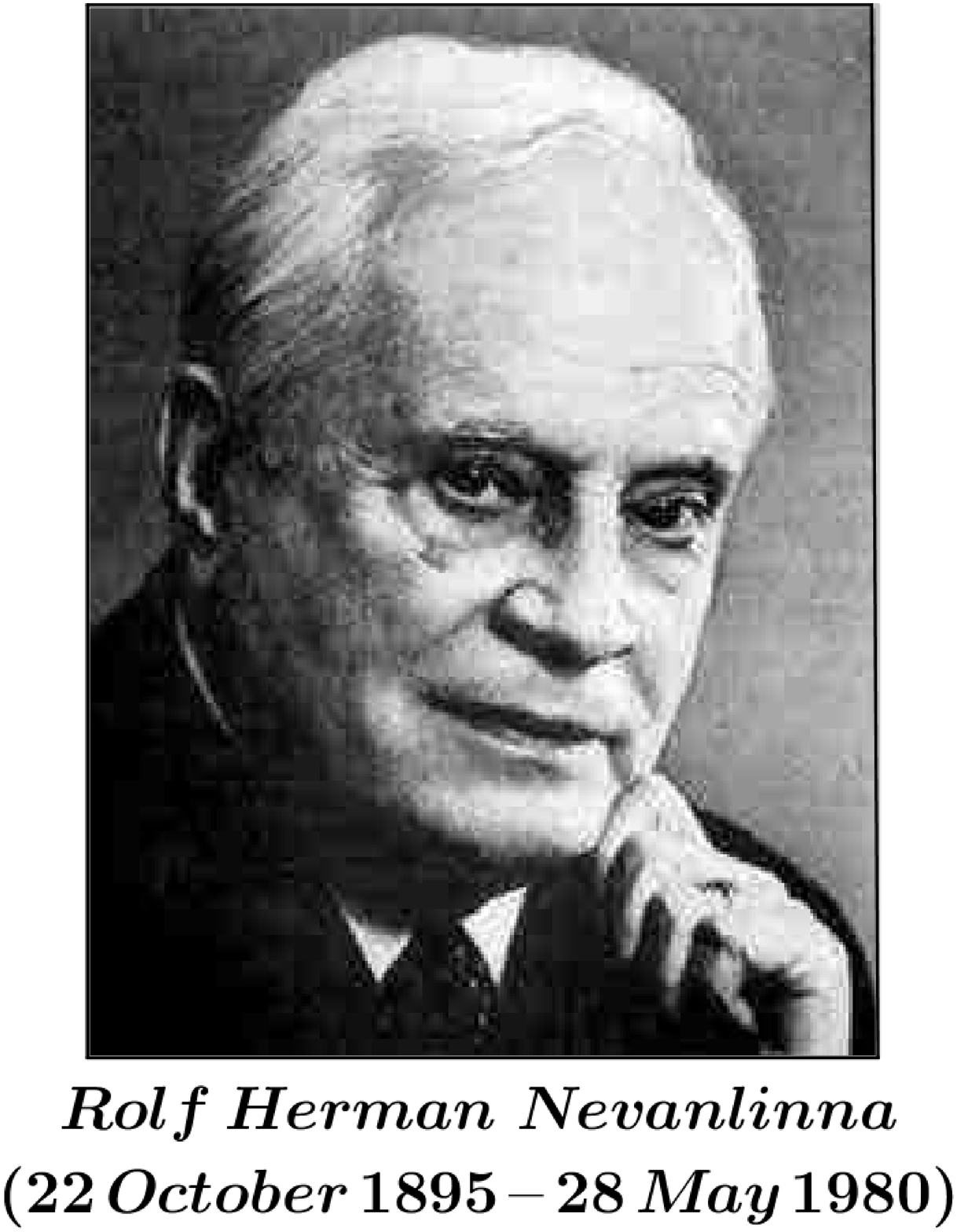}
\end{figure}
\\
\enquote{Rolf Nevanlinna, creator of the modern theory of meromorphic functions} $-$ Hayman, W. K.}}}\\[0.4cm]
\end{titlepage}
\newpage
\begin{titlepage}

\begin{figure}[!htb]
\begin{center}
\includegraphics[scale=.6]{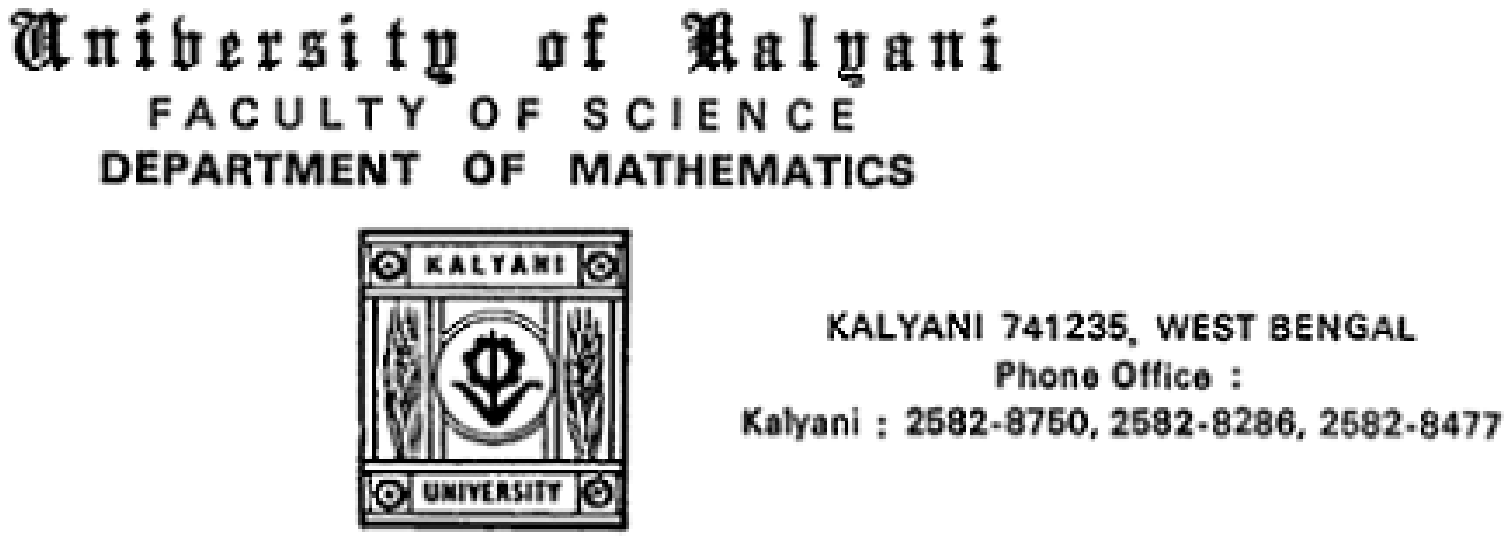}
\end{center}
\end{figure}
\begin{center}
\HRule
\vspace{2cm}
\large{\textbf{\underline{Certificate from Supervisor}}}
\end{center}
\vspace{1cm}
~~~This is to certify that the research works done in the dissertation entitled \enquote{\textsc{Some Aspects Of Uniqueness Theory Of Entire And Meromorphic Functions}}, have been carried out at Department of Mathematics, University of Kalyani, India-741235 by Bikash Chakraborty, M.Sc. under my supervision.\par
Mr. Bikash Chakraborty has joined in the Department on $05^{\text{th}}$ February, $2014$ as a DST-INSPIRE Fellow and followed the rules and regulations as laid down by University of Kalyani for the fulfillment of requirements for the degree of Doctor of Philosophy in Science.\par
As far as I know, no part of his thesis was submitted elsewhere for any degree, diploma etc. In my opinion the thesis submitted by Mr. Bikash Chakraborty is worthy of consideration for the Ph.D. degree.
\vspace{2 cm}
\begin{center}
\begin{minipage}{0.45\textwidth}
\begin{flushleft} \large
\textbf{Date:}\\
\textbf{Department of Mathematics}\\
\textbf{University of Kalyani}
\end{flushleft}
\end{minipage}
\begin{minipage}{0.45\textwidth}
\begin{flushright} \large
\textbf{(Abhijit Banerjee)}
\end{flushright}
\end{minipage}
\end{center}
\end{titlepage}
\newpage
\begin{titlepage}
$~~~~~~~~~~~~~$
\begin{center}
\vspace{2cm}
\large{\textbf{\huge{University of Kalyani}}}
\end{center}
\begin{figure}[!htb]
\begin{center}
\includegraphics[scale=.3]{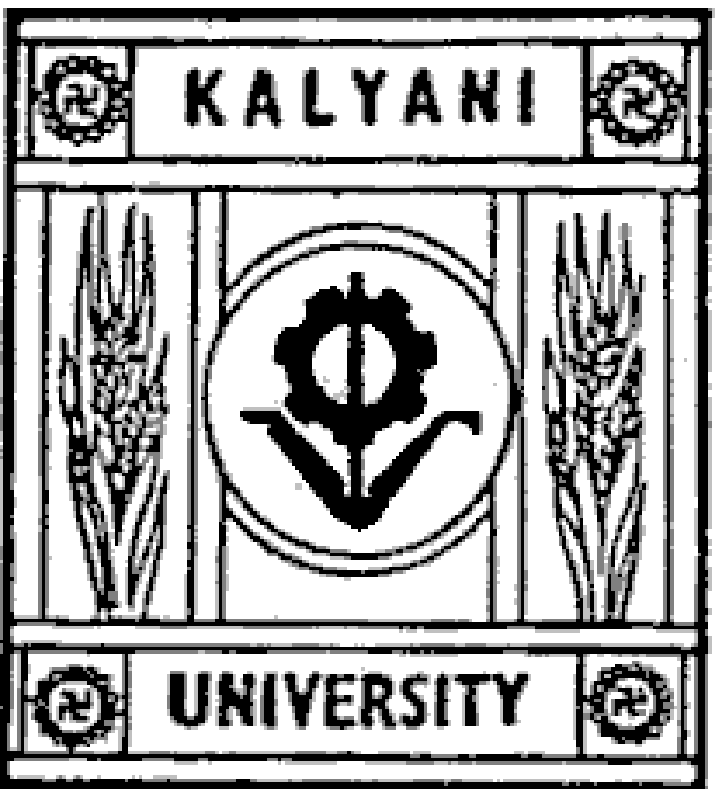}
\end{center}
\end{figure}
\begin{center}
\large{\textbf{\underline{Declaration for non-commitment of plagiarism}}}
\end{center}
\vspace{1cm}
~~~I, Bikash Chakraborty, a student of Ph.D. of the Department of Mathematics, have submitted a thesis in partial fulfillment of the requirements to obtain the above noted degree. I declare that I have not committed plagiarism in any form or violated copyright while writing the thesis, and have acknowledged the sources and/or the credit of other authors wherever applicable. If subsequently it is found that I have committed plagiarism or violated copyright, then the University authority has full right to cancel/ reject/ revoke my thesis / degree.
\vspace{2 cm}
\begin{center}
\begin{minipage}{0.45\textwidth}
\begin{flushleft} \large
\textbf{Date:}\\
\textbf{Department of Mathematics}\\
\textbf{University of Kalyani}
\end{flushleft}
\end{minipage}
\begin{minipage}{0.45\textwidth}
\begin{flushright} \large
\textbf{(Bikash Chakraborty)}
\end{flushright}
\end{minipage}

\end{center}
\end{titlepage}
\newpage
\begin{center}
\large{\textbf{\huge{Acknowledgement~~~~~~~~~~~~~~~~~~~~~~~~~~~~~~~~}}}
\end{center}
\thispagestyle{empty}
\vspace{1cm}
~~~This is my pleasure and sincere duty to acknowledge my indebtedness to all who assisted, guided and contributed their time and support to make this thesis possible.\par
I would like to express my deep gratitude to my supervisor, Professor Abhijit Banerjee, Department of Mathematics, University of Kalyani, India-741235, for his
constant help and guidance throughout my time as his student. I thank him also for all the encouragement and inspiration that he has provided, and for the tremendous generosity with which he has shared his ideas.\par
This research work would not have been possible without the financial support of Department of Science and Technology, Govt.of India.  I am thankful to Department of Science and Technology, Govt. of India for the financial support (DST/INSPIRE Fellowship/2014/IF140903) towards my Ph.D. programme.\par
I thank all the referees of various Journals in which some of the contents of my thesis have been published from time to time. I would also like to thank the authors of various papers and books which I have been consulted during the preparation of the thesis.\par
I am also grateful to all the Teachers, Research Scholars and Non-teaching staffs of Department of Mathematics, University of Kalyani, India for their hearty co-operation.\par
I shall always be grateful to all of my School, College and University days Teachers, specially Dipankar Das, Sanjib Mitra, Somnath Bandyopadhyay, Amalendu Ghosh, Krishnendu Dutta, Abhijit Banerjee, Dibyendu De, Indrajit Lahiri who have taught me about the fascinating world of the mathematics.\par
My sincere respect and gratefulness are also accorded to Swami Kamalasthananda Maharaj and Swami Vedanuragananda Maharaj of  Ramakrishna Mission Vivekananda Centenary College, Rahara, India-700118.\par
I always remember all my beloved friends from childhood to university level who has been superb all through the stages of my study.\par
I am grateful to my close friend and younger brother Mr. Sagar Chakraborty for his inspiration and loving support during my research work.\par
Finally, my enormous debt of gratitude goes to my loving parents Sri Bidyut Chakraborty and Smt. Pramila Chakraborty,  without whose motivation, support, endless patience and encouragement this thesis would never have been possible.\par
Lastly, I declare that this thesis represents my own work based on the suggestions of my supervisor
Professor Abhijit Banerjee. All the works are done under the supervision of him during the
period $2014-2017$ for the degree of Doctor of Philosophy at the University of Kalyani, India. The work submitted has not been previously included in any thesis, dissertation or report submitted to any institution for a degree, diploma or other qualification as per my knowledge.\par
I alone remain responsible for the contents of the thesis, including any possible errors and shortcomings.
\vspace{1 cm}
\begin{center}
\begin{minipage}{0.45\textwidth}
\begin{flushleft} \large
\textbf{Summer, 2017}
\end{flushleft}
\end{minipage}
\begin{minipage}{0.45\textwidth}
\begin{flushright} \large
\textbf{Bikash Chakraborty}
\end{flushright}
\end{minipage}

\end{center}
\newpage
\pagenumbering{roman}
\tableofcontents \vspace{.4in}
\begin{itemize}
\item \textbf{Bibliography} \hfill \textbf{96} \newline
\end{itemize}
\newpage
\chapter*{\textbf{Preface}}
\addcontentsline{toc}{chapter}
{Preface}
\fancyhead[r]{Preface}
\fancyhead[c]{}
\fancyhead[l]{}
The subject of this thesis is the uniqueness theory of meromorphic functions and it is devoted to problems concerning Br\"{u}ck Conjecture, Set sharing and related topics. The tool, we used in our discussions is classical value distribution theory of meromorphic functions, which is one of the milestones of complex analysis during the last century.\par
Throughout the thesis, we shall denote by $\mathbb{C}$ the set of all complex number and $\mathbb{\ol C}=\mathbb{C}\cup\{\infty\}$. Also we denote by $\mathbb{N}$ and $\mathbb{Z}$ the set of all natural numbers and the set of all integers respectively.\par
In our thesis, by \emph{Theorem $i.j.k.$}, we mean that $k$-th theorem in $j$-th section of the $i$-th chapter. Similar conventions are followed in definitions, examples, remarks etc. cases also.\par
This dissertation contains nine chapters. Also at the end of the thesis, we added one section containing the list of publications and bibliography.\par
Chapter 1 deals with the Nevanlinna theory (\cite{bd1, 2, bd2, bd3, bd4, 5, 1234}). It contains some basic definitions, notations, estimations which we used vividly throughout our journey.\par
Chapters 2, 3 and 4 are devoted to the subject on sharing values between meromorphic functions and their derivatives and more general differential expressions as well. In 1996, in connection to the value sharing of an entire functions together with its derivative, R. Br\"{u}ck (\cite{br3}) proposed a conjecture. Based on his conjecture, there were several generalizations and extensions, e.g. (\cite{zl1}, \cite{zl2a}, \cite{zl2}, \cite{br8}, \cite{zl5a}, \cite{br8c}, \cite{br8a}, \cite{br8b}, \cite{br10}, \cite{br11}, \cite{br12}, \cite{br14}, \cite{br15}, \cite{br16}). In chapters 2, 3 and 4, we studied Br\"{u}ck conjecture in more general settings in different aspects.\par
Chapters 5 and 6 introduce the idea of unique range sets, set sharing problems (\cite{am11}, \cite{am1}, \cite{ami1}, \cite{am1e}, \cite{am3}, \cite{am2}, \cite{am5}, \cite{am15}) in different aspects and its relevant concepts like uniqueness polynomial (\cite{am12}, \cite{am6.1}), strong uniqueness polynomials (\cite{am1a}), critically injective polynomials (\cite{am2}, \cite{am2a}) etc. We investigated some new class of strong uniqueness polynomials satisfying Fujimoto's condition and generating unique range sets. We have also established some sufficient conditions for uniqueness polynomials to be strong uniqueness polynomials.\par
Chapter 7 is also concerned with unique range sets corresponding to the derivatives of two meromorphic functions. Here we have also introduced some new unique range sets.\par
The shared value problems relative to a meromorphic function $f$ and its higher ordered derivative have been widely studied as subtopic of the uniqueness theory in chapters two, three and four. Changing that flavor, in Chapters 8 and 9, we consider uniqueness problem of a meromorphic function with its higher ordered derivative under aegis of set sharing to connect the Br\"{u}ck conjecture with Gross' problem.\par
Almost all the results have already been published in the form of research papers in different journals of international repute.
\newpage
\pagenumbering{arabic}
\chapter{Basic Nevanlinna Theory}
\fancyhead[l]{Chapter 1}
\fancyhead[r]{Basic Nevanlinna Theory}
\fancyhead[c]{}
\section{Introduction}
\par
The value distribution theory of  meromorphic functions is one of the milestones of complex analysis during the last century. This theory was greatly developed by the finish Mathematician Rolf Nevanlinna (\cite{5}) during the 1920's. In both its scope and its power, his approach greatly surpasses previous results, and in his honor the field is now also known as \textbf{Nevanlinna  theory}.\par
The aim of this chapter is to describe selected parts of the Nevanlinna Theory. Proofs will not be reproduced here, rather we shall indicate
where they may be found in the literature. The definitive reference for this section is Hayman's monograph (\cite{2}). For a short survey on Nevanlinna  theory one can also go thorough the article (\cite{1}).\par
Let $\mathbb{C}$ be the complex plane. A function $f$ is said to be \emph{analytic} in a domain $D\subset\mathbb{C}$ if $f'(z)$ exists finitely at every point of
$D$. The function $f$ is said to be analytic at a point $z=z_{0}$ if there exists a neighborhood of the point $z_{0}$ in which f is analytic.\par
A point $z=z_{0}\in \mathbb{C}$ is called a \emph{regular point} for a complex-valued function $f$ if $f$ is analytic at $z_{0}$. A point $z_{0}$ is called a \emph{singular point} or a \emph{singularity} of $f$, if $f$ is not analytic at $z_{0}$ but every neighborhood of $z_{0}$ contains atleast one point at which $f$ is analytic.\par
A singular point $z_{0}$ of $f$ is called an \emph{isolated singularity} of $f$ if $f$ is defined and analytic in some deleted neighborhood of $z_{0}$. Otherwise, it is called \emph{non-isolated singularity} of $f$.\par
Let $z = \alpha$ be an isolated singularity of an analytic function $f$. Then in some deleted neighborhood
of $z =\alpha$, $f$ can be expanded in the following form:
$$f(z)=\sum_{n=0}^{\infty}a_{n}(z-\alpha)^{n}+\sum_{n=1}^{\infty}b_{n}(z-\alpha)^{-n}$$
This series expansion of $f$ in negative and non-negative powers of $(z-\alpha)$ is called \emph{Laurent series} expansion of $f$. If, in particular, $f$ is analytic at $z =\alpha$,  then $b_{n} = 0$ for $n = 1,2,3,\ldots$ and the corresponding series expansion is called \emph{Taylor series} expansion of $f$.\par
In Laurent series expansion, the part $\sum_{n=0}^{\infty}a_{n}(z-\alpha)^{n}$ is called the \emph{regular part} and the part $\sum_{n=1}^{\infty}b_{n}(z-\alpha)^{-n}$ is called the \emph{principal part}. For the principal part following three possibilities are to be considered:
\begin{enumerate}
\item [i)] If all the coefficients $b_{n}$ are zero, then we call that $z =\alpha$ is a \emph{removable singularity} of $f$ because we can make $f$ regular at $z =\alpha$  by suitably defining its value at $z = \alpha$.
\item [ii)] If the principal part of $f$ at $\alpha$ contains a finite number of terms only, then
$f$ is said to have a pole at $z = \alpha$. If $b_{m}$ $(m\geq1)$ is the last non-vanishing term in
the principal part, then $\alpha$ is called a \emph{pole} of $f$ with order $m$.
\item [iii)]
If the principal part of $f$ at $\alpha$ contains infinitely many non-zero terms, then
$f$ is said to have an \emph{essential singularity} at $z = \alpha$.
\end{enumerate}
If $f$ is analytic at $z =\alpha$ and $a_{0}=a_{1}=\ldots=a_{k-1}=0$ and $a_{k}\neq0$ in the Taylor series expansion of $f$ around $\alpha$, then the expansion takes the form $f(z)=\sum_{n=k}^{\infty}a_{n}(z-\alpha)^{n}$. In this case $z = \alpha$  is called a \emph{zero} of $f$ with multiplicity $k$.\par
Also for some finite complex number $a$, a zero of $f-a$  with multiplicity $k$ is called an \emph{a-point} of $f$ with multiplicity $k$. Sometimes, a pole of $f$ with multiplicity $k$ is called as an $\infty$-point of $f$ with multiplicity $k$.\par
A function $f:\mathbb{C}\rightarrow\mathbb{C}$ is called an \emph{entire function} if it is analytic on the entire complex plane. Also, $f$ is called \emph{meromorphic} on $\mathbb{C}$ if $f$ is analytic on $\mathbb{C}$ except possibly at isolated singularities, each of which is a pole.\par
A meromorphic function in the complex plane may have infinite number of poles or zeros or $a$-points but there is only a finite number of them in any finite domain. Otherwise, there would exist at least one limit point of poles or zeros or $a$-points in the finite plane and this would be an \emph{essential singularity}. Thus two cases may be distinguished:
\begin{enumerate}
\item [i)] The point at infinity is a \emph{regular point or a pole}. So the function has a finite number of poles or zeros. In this case the function is called  \emph{rational meromorphic function}.
\item [ii)] The point at infinity is an \emph{essential singularity}. Thus the function has an infinite number of poles or zeros or $a$-points which accumulate at the point at infinity. In this case the function is called \emph{transcendental meromorphic function}.
\end{enumerate}
Now we are the position to state  \emph{Poisson-Jensen's Formula} (\cite{1234}) for a meromorphic function $f$ in a disc $|z|\leq R~(0<R<\infty)$, which is the gateway to the Nevanlinna Theory.
\begin{theo}\label{pjt}
Suppose that $f(z)$ is meromorphic in $|z|\leq R~(0<R<\infty)$ and $a_{\mu}~(\mu=1,2,\ldots, m)$ and $b_{\nu}~(\nu=1,2,\ldots,n)$ are the zeros and poles of $f(z)$  inside the disc $|z|\leq R$ respectively. If $f(z)$ is analytic elsewhere inside and on the boundary of the disc and  $z=re^{i\theta}$ $(0\leq r<R)$ is a point in $|z|\leq R$ and $f(z)\not=0,\infty$, then \beas \log|f(re^{i\theta})|&=&\frac{1}{2\pi}\int\limits_0^{2\pi}\frac{R^2-r^2}{R^2-2rR\cos(\theta-\phi)+r^2}\log|f(Re^{i\phi})|\,d\phi\\
&+& \sum_{\nu=1}^{n}q_\nu \log\bigg|\frac{R^2-\overline b_\nu re^{i\theta}}{R(re^{i\theta}-b_\nu)}\bigg|-\sum_{\mu=1}^{m}p_\mu \log\bigg|\frac{R^2-\overline a_\mu re^{i\theta}}{R(re^{i\theta}-a_\mu)}\bigg|,\eeas
 where $p_\mu$ is the order of $a_\mu$ and $q_\nu$ is the order of $b_\nu$.
\end{theo}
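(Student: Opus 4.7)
The plan is to reduce the general meromorphic case to the zero-free, pole-free case, in which the statement becomes the classical Poisson integral representation of a harmonic function on the disc. First I would record the fact that if $u$ is harmonic in a neighbourhood of the closed disc $|z|\leq R$, then for every $z=re^{i\theta}$ with $0\leq r<R$,
\[
u(re^{i\theta}) = \frac{1}{2\pi}\int_0^{2\pi}\frac{R^2-r^2}{R^2-2rR\cos(\theta-\phi)+r^2}\, u(Re^{i\phi})\,d\phi,
\]
which itself follows from Cauchy's integral formula applied to an analytic function whose real part is $u$, together with the standard ``reflected point'' manipulation that converts the Cauchy kernel into the Poisson kernel. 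Applied with $u=\log|f|$, this immediately yields the desired identity in the special case where $f$ has neither zeros nor poles in $|z|\leq R$, since $\log|f|$ is then harmonic there.

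Next, to handle the general case, I would introduce the Blaschke-type factors
\[
B_c(z) = \frac{R(z-c)}{R^2-\overline{c}\,z},
\]
which are analytic on $|z|\leq R$ whenever $|c|<R$, vanish exactly at $z=c$ to first order, and satisfy $|B_c(z)|=1$ on $|z|=R$. The last fact is immediate from the boundary computation $|R^2-\overline{c}z|^2 = R^2|z-c|^2$ when $|z|^2=R^2$. Using these, I would form the auxiliary function
\[
F(z) \;=\; f(z)\cdot\frac{\prod_{\nu=1}^{n} B_{b_\nu}(z)^{q_\nu}}{\prod_{\mu=1}^{m} B_{a_\mu}(z)^{p_\mu}},
\]
so that the numerator factors cancel the poles of $f$ while the denominator factors cancel its zeros, producing an $F$ that is analytic and non-vanishing on $|z|\leq R$, while $|F(z)|=|f(z)|$ holds on $|z|=R$.

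The third step is to apply the zero- and pole-free version of the formula to $F$ to obtain
\[
\log|F(re^{i\theta})| = \frac{1}{2\pi}\int_0^{2\pi}\frac{R^2-r^2}{R^2-2rR\cos(\theta-\phi)+r^2}\log|f(Re^{i\phi})|\,d\phi,
\]
using $|F|=|f|$ on the boundary. Taking logarithms in the definition of $F$ and solving for $\log|f(re^{i\theta})|$ produces correction sums over $\nu$ and $\mu$ involving $\log|B_{b_\nu}(re^{i\theta})|^{-1}$ and $\log|B_{a_\mu}(re^{i\theta})|^{-1}$ respectively; since $|B_c(z)|^{-1}=|R^2-\overline{c}z|/|R(z-c)|$, these are precisely the two boundary terms appearing in the statement, with the exponents $p_\mu$, $q_\nu$ and the correct signs.

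The main obstacle I anticipate is technical rather than conceptual. One has to justify applying the Poisson formula to $\log|F|$, which is only harmonic on the open disc and continuous on the closure; the standard remedy is to prove the formula on slightly smaller discs $|z|\leq R'$ with $R'\uparrow R$ and pass to the limit, which is routine since $\log|f(Re^{i\phi})|$ is integrable on $[0,2\pi]$ under the hypothesis that $f$ has no zeros or poles on the boundary circle. A similar indentation argument would be needed if zeros or poles were allowed on $|z|=R$, but that case is excluded by the statement.
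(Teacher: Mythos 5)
Your proof is correct and is essentially the standard argument for the Poisson--Jensen formula that the paper defers to the literature (Hayman, Yang--Yi): multiply $f$ by the Blaschke-type factors $R(z-c)/(R^2-\overline{c}z)$ raised to the appropriate orders so as to obtain a zero- and pole-free function $F$ with $|F|=|f|$ on $|z|=R$, apply the Poisson representation to the harmonic function $\log|F|$, and unwind the definition of $F$ to recover the two correction sums. Your sign bookkeeping and the boundary-modulus computation for the Blaschke factors are right, and your remark about approximating by smaller discs correctly addresses the only technical subtlety.
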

$\bullet$ Here if we agree to count the poles or zeros according to their multiplicity, then we can drop $p_{\mu}$ and $q_{\nu}$ form the above expression.\par
$\bullet$ This formula is deduced under the assumption that neither the poles $b_{\nu}$ nor the zeros $a_{\mu}$ are to be found on $|z|= R$. The theorem continues to hold if we permit zeros and poles on $|z|=R$.
\begin{cor}[\emph{Poisson's formula}]
Suppose that $f(z)$ has no zeros and poles in $|z|\leq R$. If $z=re^{i\theta}$ $(0\leq r<R)$, then
$$\log|f(re^{i\theta})|=\frac{1}{2\pi}\int\limits_0^{2\pi}\frac{R^2-r^2}{R^2-2rR\cos(\theta-\phi)+r^2}\log|f(Re^{i\phi})|\,d\phi.$$
\end{cor}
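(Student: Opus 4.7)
The plan is to obtain Poisson's formula as an immediate specialization of the Poisson--Jensen formula (Theorem \ref{pjt}). Since the hypothesis of the corollary is that $f(z)$ has no zeros and no poles inside the closed disc $|z|\leq R$, the natural strategy is to show that the two Blaschke-type sums in the Poisson--Jensen formula disappear, leaving only the boundary integral.

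First I would verify that the hypotheses of Theorem \ref{pjt} are satisfied at the interior point $z=re^{i\theta}$ with $0\leq r<R$. Because $f$ has no zeros or poles in $|z|\leq R$, in particular $f(re^{i\theta})\neq 0,\infty$, and $f$ is analytic inside and on $|z|=R$, so the formula is applicable. Next I would note that with the absence of zeros and poles in the disc, the indices $m$ and $n$ counting zeros $a_\mu$ and poles $b_\nu$ inside $|z|\leq R$ are both zero. Consequently, the two finite sums
\[
\sum_{\nu=1}^{n}q_\nu \log\bigg|\frac{R^2-\overline b_\nu re^{i\theta}}{R(re^{i\theta}-b_\nu)}\bigg| \quad \text{and} \quad \sum_{\mu=1}^{m}p_\mu \log\bigg|\frac{R^2-\overline a_\mu re^{i\theta}}{R(re^{i\theta}-a_\mu)}\bigg|
\]
are empty and hence vanish identically.

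What remains of the Poisson--Jensen formula is precisely
\[
\log|f(re^{i\theta})|=\frac{1}{2\pi}\int\limits_0^{2\pi}\frac{R^2-r^2}{R^2-2rR\cos(\theta-\phi)+r^2}\log|f(Re^{i\phi})|\,d\phi,
\]
which is exactly the statement of the corollary. There is no genuine obstacle here: the entire proof is the observation that under the stated hypothesis the correction terms in Theorem \ref{pjt} disappear. The only mildly delicate point worth mentioning is that $\log|f(Re^{i\phi})|$ must be integrable on $|z|=R$; this is automatic from the assumption that $f$ is analytic and zero-free on the boundary, so $\log|f(Re^{i\phi})|$ is continuous in $\phi$.
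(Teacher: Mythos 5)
Your proof is correct and is exactly the intended derivation: the paper presents this as an immediate corollary of the Poisson--Jensen formula (Theorem \ref{pjt}), and specializing to the case of no zeros and no poles makes both correction sums empty, leaving only the boundary integral. The remark about integrability of $\log|f(Re^{i\phi})|$ is a sensible touch, though the paper does not dwell on it.
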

\begin{theo}[\emph{Jensen's Theorem}]\label{j}
Suppose that $f(z)$ is meromorphic in $|z|\leq R~(0<R<\infty)$ and that $a_{\mu}~(\mu=1,2,\ldots, m)$ and $b_{\nu}~(\nu=1,2,\ldots,n)$ are the zeros and poles of $f(z)$  inside the disc $|z|\leq R$ respectively. If $f(z)$ is analytic elsewhere inside and on the boundary of the disc and $f(0)\neq 0, \infty$, then $$\log|f(0)|=\frac{1}{2\pi}\int\limits_0^{2\pi}\log|f(Re^{i\phi})|\,d\phi+\sum\limits_{\mu =1}^{m}\log\frac{|a_\mu|}{R}-\sum\limits_{\nu =1}^{n}\log\frac{|b_\nu|}{R}.$$
\end{theo}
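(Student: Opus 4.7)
The plan is to derive Jensen's Theorem as a direct specialization of the Poisson-Jensen Formula (Theorem 1.1.1), which has already been stated in the excerpt and which I am allowed to assume. Since the hypotheses of Jensen's Theorem (namely that $f$ is meromorphic in $|z|\leq R$, analytic on the boundary with finitely many zeros $a_\mu$ and poles $b_\nu$ inside the disc, and satisfies $f(0)\neq 0,\infty$) are exactly the hypotheses of Poisson-Jensen at the specific point $z=0$, the idea is simply to substitute $r=0$ (equivalently $z=0$) into the Poisson-Jensen identity and simplify each of the three terms.

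First, I would record the three simplifications that occur when $r=0$. The Poisson kernel becomes
\[
\frac{R^2-r^2}{R^2-2rR\cos(\theta-\phi)+r^2}\bigg|_{r=0}=1,
\]
so the boundary integral collapses to $\frac{1}{2\pi}\int_0^{2\pi}\log|f(Re^{i\phi})|\,d\phi$. Second, each Blaschke-type factor corresponding to a pole $b_\nu$ reduces to
\[
\log\bigg|\frac{R^2-\overline{b}_\nu\cdot 0}{R(0-b_\nu)}\bigg|=\log\frac{R}{|b_\nu|}=-\log\frac{|b_\nu|}{R},
\]
and similarly for the zeros $a_\mu$. Plugging these into the Poisson-Jensen identity (after allowing the factor $p_\mu$ or $q_\nu$ to be absorbed into repeated listings of the zeros and poles according to multiplicity, as remarked after Theorem 1.1.1) produces exactly
\[
\log|f(0)|=\frac{1}{2\pi}\int_0^{2\pi}\log|f(Re^{i\phi})|\,d\phi+\sum_{\mu=1}^{m}\log\frac{|a_\mu|}{R}-\sum_{\nu=1}^{n}\log\frac{|b_\nu|}{R},
\]
which is the desired formula.

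There is essentially no obstacle in this argument; the work is entirely bookkeeping. The only mildly delicate point is to make sure $z=0$ is genuinely in the domain of validity of Poisson-Jensen, which is guaranteed by the hypothesis $f(0)\neq 0,\infty$, so $0$ is neither a zero nor a pole of $f$. The remark already made after Theorem 1.1.1, that the formula continues to hold if zeros or poles are permitted on the boundary $|z|=R$, allows one to formulate Jensen's Theorem in the same generality without any extra effort. Thus the proof reduces to the single substitution $r=0$ and no further machinery is needed.
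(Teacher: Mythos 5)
Your proposal is correct and is exactly the derivation the paper intends: Jensen's Theorem is the specialization of the Poisson--Jensen formula (Theorem \ref{pjt}) at $z=0$, with the kernel reducing to $1$ and each factor $\log\bigl|\tfrac{R^2-\overline{c}\cdot 0}{R(0-c)}\bigr|=-\log\tfrac{|c|}{R}$, the hypothesis $f(0)\neq 0,\infty$ ensuring the formula applies at the origin. The sign bookkeeping in your computation checks out, so nothing further is needed.
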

\begin{rem}\label{jt}
For the case that $f(z)$ has zero of order $\lambda$ or pole of order $-\lambda$ at $z=0$, if the Laurent expansion of $f(z)$ at the origin be
$$f(z)= c_{\lambda}z^{\lambda} + c_{\lambda+1}z^{\lambda+1}+\ldots,~(c_{\lambda}\not=0),$$ then \emph{Jensen's formula} takes the form
$$ \log \mid c{_{\lambda}} \mid +\lambda \log R= \frac {1}{2\pi}\int_{0}^{2\pi} \log \mid f(Re^{i \phi})\mid d \phi  + \sum _{\mu = 1 }^{m} \log \mid \frac {a_{\mu } }{R} \mid - \sum _{\nu = 1 }^{n}\log \mid \frac {b_{\nu }}{R}\mid.$$
\end{rem}
This is the general form of the Jensen's formula. The theory of meromorphic functions depends largely on this formula.
\section{The First Fundamental Theorem}
\par
Rolf Nevanlinna developed a systematic study of the value distribution theory by means of his First and Second Fundamental Theorems. To this end, we shall explain some preliminaries of Nevanlinna theory.
\par
Let $f(z)$ be a non-constant meromorphic function defined in the complex plane. We denote
by $\emph{n(r,a; f)}$ the number of $a$-points of $f(z)$ in the disc $|z|<r$  for $a\in\mathbb{C}\cup\{\infty\}$, where an $a$-point is
counted according to its multiplicity. We put
$$N(r,a;f)=\int_{0}^{r}\frac{n(t, a; f)-n(0, a; f)}{t}dt+n(0, a; f) \log r,$$
where $n(0,a;f)$ denotes the multiplicity of $a$-points of $f(z)$ at the origin. Clearly $n(0,a;f)=0$ when $f(0)\not=a$. If $a = \infty$, we put $N(r,\infty; f) = N(r,f)$\par The function $N(r,a;f)$ is a real valued continuous function in $r$, known as \emph{integrated counting function} of the $a$-points of $f(z)$.
\begin{defi}
For $x\geq0$, we define
$$ \log^{+}x = \max\{\log x,0\}= \bigg\{ \begin{array}{c}
\log x,~~~x \geq1\\
0,~0\leq x<1.
\end{array}
$$
\text{It is obvious that} $~~\log x=\log^{+}x-\log^{+}{\frac{1}{x}}$ \text{for all} $x>0$.
\end{defi}
\begin{defi}
We define $$m(r,\infty;f)=m(r,f)=\frac{1}{2\pi}\int\limits_{0}^{2\pi}\log^{+}|f(re^{i\theta})|d\theta,$$ which we call the \emph{proximity function} of the meromorphic function $f(z)$.
\end{defi}
If $a\in\mathbb{C}$, then we set $m(r,a; f) = m(r,\frac{1}{f-a})$. The quantity
$$m(r,a; f)=\frac{1}{2\pi}\int\limits_{0}^{2\pi}\log^{+}\frac{1}{|f(re^{i\theta})-a|}d\theta$$
measures the mean deviation of the values of $f(z)$ from the value $a$ as $z$ is varying over the circle $|z| = r$. Actually we see that when the values of $f$ are relatively far away from the value $a$ for $z$ on $|z| = r$, then $m(r, a; f)$ is small. On the other hand, if the values of $f$ are relatively close to the value $a$ for $z$ on $|z| = r$, then $m(r, a; f)$ is large.\par
Also the quantity $N(r, a; f)$ is large or small according to $f(z) - a = 0$ has relatively many or relatively few roots inside the disc $|z| \leq r$.
\begin{defi}\label{gadu1}
The function $T(r, f) := m(r, f) + N(r, f)$ is called the \emph{Nevanlinna's characteristic function} of $f$, which plays a cardinal role in Nevanlinna Theory.
\end{defi}
The following inequalities are the basic properties of characteristic functions:
\begin{theo}
If $f_{\nu}(z)~(\nu=1,2,\ldots,p)$ are meromorphic functions in $\mid z\mid \leq R~(0<R<\infty)$, then for $0<r<R$, we have
\begin{enumerate}
\item [i)] $N\left(r,\prod\limits_{\nu=1}^{p}f_\nu\right)\leq\sum\limits_{\nu=1}^{p}N\left(r,f_\nu\right)$ and $N\left(r,\sum\limits_{\nu=1}^{p}f_\nu\right)\leq\sum\limits_{\nu=1}^{p}N\left(r,f_\nu\right)$,
\item [ii)] $m\left(r,\prod\limits_{\nu=1}^{p}f_\nu\right)\leq \sum\limits_{\nu=1}^{p}m\left(r,f_\nu\right)$ and  $m\left(r,\sum\limits_{\nu=1}^{p}f_\nu\right)\leq\sum\limits_{\nu=1}^{p}m\left(r,f_\nu\right)+\log p$,
\item [iii)] $T\left(r,\prod\limits_{\nu=1}^{p}f_\nu\right)\leq\sum\limits_{\nu=1}^{p}T\left(r,f_\nu\right)$ and  $T\left(r,\sum\limits_{\nu=1}^{p}f_\nu\right)\leq\sum\limits_{\nu=1}^{p}T\left(r,f_\nu\right)+\log p$.
\end{enumerate}
\end{theo}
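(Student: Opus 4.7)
The statement collects the standard additivity/subadditivity inequalities for $N$, $m$ and $T = m+N$. My plan is to prove each item by a direct pointwise or localisation argument and then integrate, handling products first and sums second.

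For (i), I would argue at the level of the unintegrated counting function $n(t,f)$. A pole of $\prod_{\nu=1}^p f_\nu$ at a point $z_0$ can occur only if $z_0$ is a pole of at least one factor $f_\nu$; moreover the multiplicity of the pole of the product is at most the sum of the multiplicities of the poles of the individual $f_\nu$'s at $z_0$ (a factor that is regular at $z_0$ contributes multiplicity $0$). The same is true for $\sum_{\nu=1}^p f_\nu$, since a pole of a sum at $z_0$ is a pole of some summand, and its order cannot exceed the maximum (hence the sum) of the orders of the summands at $z_0$. Summing multiplicities over all points of $|z|\leq t$ yields $n(t,\prod f_\nu)\leq\sum n(t,f_\nu)$ and $n(t,\sum f_\nu)\leq\sum n(t,f_\nu)$, and integrating via the definition of $N(r,\cdot)$ preserves the inequalities.

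For (ii), I would use two elementary inequalities for $\log^+$: $\log^+(xy)\leq\log^+x+\log^+y$ and $\log^+\bigl(\sum_{\nu=1}^p x_\nu\bigr)\leq\sum_{\nu=1}^p\log^+x_\nu+\log p$ for non-negative reals $x_\nu$. The first follows from $\log^+(xy)\leq\log^+(x\vee 1)(y\vee 1)=\log(x\vee 1)+\log(y\vee 1)$, and the second from $\sum x_\nu\leq p\cdot\max_\nu x_\nu$ followed by $\log^+$. Applying these to $x_\nu = |f_\nu(re^{i\theta})|$ pointwise on $|z|=r$ and then averaging $\frac{1}{2\pi}\int_0^{2\pi}(\cdot)\,d\theta$ yields the two inequalities in (ii).

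For (iii), I would simply add the corresponding inequalities from (i) and (ii), using $T(r,f)=m(r,f)+N(r,f)$ from Definition \ref{gadu1}; the extra $\log p$ from the sum estimate in (ii) carries over verbatim, while the product estimate requires no additive constant. The only mildly delicate point is the book-keeping of multiplicities for poles of a sum when cancellation occurs (a pole of $f_1$ may be cancelled by a pole of $f_2$), but since we only need an upper bound this cancellation can be ignored, so there is essentially no substantive obstacle; the whole argument is a routine consequence of the pointwise behaviour of $\log^+$ and the definition of pole order.
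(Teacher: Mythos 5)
Your argument is the standard one, and the paper itself offers no proof to compare it against: this theorem sits in the expository Chapter 1, where the author states explicitly that proofs are not reproduced and defers to Hayman and Yang--Yi; your write-up is essentially the textbook proof from those sources. Parts (ii) and (iii) are fully correct: the two $\log^{+}$ inequalities are justified properly, averaging over $|z|=r$ gives the $m$-inequalities, and adding $m$ and $N$ gives the $T$-inequalities with the $\log p$ carried over only in the sum case.

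The one point worth flagging is in part (i), in the sentence ``integrating via the definition of $N(r,\cdot)$ preserves the inequalities.'' Pointwise domination $n(t,\cdot)\leq\sum_{\nu}n(t,f_{\nu})$ does \emph{not} formally imply the corresponding inequality for $N$, because $N(r,f)=\int_{0}^{r}\frac{n(t,f)-n(0,f)}{t}\,dt+n(0,f)\log r$ involves the correction $n(0,f)\log r$, which is negative for $r<1$. Indeed, for $f_{1}=1/z$, $f_{2}=z$ and $0<r<1$ one has $N(r,f_{1}f_{2})=0$ while $N(r,f_{1})+N(r,f_{2})=\log r<0$, so the $N$-inequality for products literally fails. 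This is a defect of the statement as transcribed from the literature rather than of your idea (the usual remedies are to take $r\geq 1$, to assume no pole at the origin, or to absorb the discrepancy into an $O(1)$ term; note that the $T$-inequality survives in this example because $m$ compensates), but your phrasing glosses over exactly the step where it occurs, so you should either add the hypothesis that removes it or point out that the passage from $n$ to $N$ requires this caveat. Everything else, including the observation that cancellation of poles in a sum or product only helps an upper bound, is correct.
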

We see that in view of Definition (\ref{gadu1}) and Remark \ref{jt}, one can write general \emph{Jensen's formula} in terms of Nevanlinna characteristic functions and this form is known as \emph{Jensen-Nevanlinna formula}.
\begin{theo}\label{cth1}
Let $f(z)$ be meromorphic in $\mid z\mid \leq R~(0<R<\infty)$. Then for $0<r<R$, we have
$$ T\left(r,\frac{1}{f}\right)=T(r,f)+O(1),$$
where $O(1)$ is a bounded term depending on $f$ and $a$ but not on  $r$.
\end{theo}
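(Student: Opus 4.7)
The plan is to derive the identity directly from Jensen's formula (Theorem \ref{j}) together with the decomposition $\log x = \log^{+}x - \log^{+}(1/x)$, which links the characteristic function to the Jensen integral. The result is essentially a reformulation of Jensen's theorem in Nevanlinna's language, so nothing deeper than Theorem \ref{j} and Remark \ref{jt} is needed.

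First I would assume the generic case $f(0)\neq 0,\infty$, so Theorem \ref{j} applies directly. Let $a_{\mu}$ and $b_{\nu}$ denote the zeros and poles of $f$ in $|z|<r$, repeated according to multiplicity. Splitting the integrand via $\log|f(re^{i\theta})|=\log^{+}|f(re^{i\theta})|-\log^{+}\bigl(1/|f(re^{i\theta})|\bigr)$ and recalling the definitions of $m(r,f)$ and $m(r,1/f)$, I obtain
\begin{equation*}
\frac{1}{2\pi}\int_{0}^{2\pi}\log|f(re^{i\theta})|\,d\theta=m(r,f)-m\!\left(r,\frac{1}{f}\right).
\end{equation*}
On the other hand, writing $r$ in place of $R$ in Theorem \ref{j} and using the integrated forms $N(r,1/f)=\sum_{|a_{\mu}|<r}\log(r/|a_{\mu}|)$ and $N(r,f)=\sum_{|b_{\nu}|<r}\log(r/|b_{\nu}|)$ (obtained via integration by parts from $n(t,\cdot;f)$), the zero/pole sums in Jensen's identity rearrange to $N(r,1/f)-N(r,f)$.

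Combining the two previous displays I arrive at
\begin{equation*}
m(r,f)-m\!\left(r,\frac{1}{f}\right)=\log|f(0)|+N\!\left(r,\frac{1}{f}\right)-N(r,f),
\end{equation*}
that is, $T(r,f)-T(r,1/f)=\log|f(0)|$. Since the right-hand side is a constant independent of $r$, this gives $T(r,1/f)=T(r,f)+O(1)$, as required.

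Finally I would dispose of the exceptional case $f(0)=0$ or $f(0)=\infty$ by invoking the generalised Jensen formula of Remark \ref{jt}: writing $f(z)=c_{\lambda}z^{\lambda}+\cdots$, the extra term $\log|c_{\lambda}|+\lambda\log r$ appears on the left, while the counting functions $N(r,1/f)$ and $N(r,f)$ now pick up contributions $n(0,0;f)\log r$ and $n(0,\infty;f)\log r$ respectively; exactly one of these equals $|\lambda|\log r$, and the signs conspire so that the $\log r$ terms cancel when the identity is rearranged as before, leaving only a bounded constant $\log|c_{\lambda}|$. The only real piece of bookkeeping, and hence the mildest obstacle, is tracking these boundary contributions; once the signs are checked the conclusion $T(r,1/f)=T(r,f)+O(1)$ follows uniformly in $r$.
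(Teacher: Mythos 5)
Your proof is correct and is exactly the derivation the paper intends: immediately before this theorem the text says that the Jensen--Nevanlinna formula is obtained "in view of Definition (\ref{gadu1}) and Remark \ref{jt}," i.e.\ by splitting $\log|f|=\log^{+}|f|-\log^{+}(1/|f|)$ in Jensen's formula and recognising the zero/pole sums as $N(r,1/f)-N(r,f)$, which yields the exact identity $T(r,f)-T(r,1/f)=\log|c_{\lambda}|$. Your handling of the exceptional case $f(0)=0,\infty$ via Remark \ref{jt}, with the $\lambda\log r$ terms cancelling against the $n(0,\cdot;f)\log r$ contributions in the counting functions, is also the standard and correct bookkeeping.
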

\begin{theo}[\emph{Nevanlinna's First Fundamental Theorem}]
Let $f(z)$ be a non-constant meromorphic function defined in $|z|< R~(0<R \leq \infty)$ and let $a\in \mathbb{C} \cup\{\infty\}$ be any complex number. Then for $0<r<R$
$$T\left(r,\frac{1}{f-a}\right)=T(r,f)+O(1),$$
where $O(1)$ is a bounded quantity depending on $f$ and $a$ but not on  $r$.
\end{theo}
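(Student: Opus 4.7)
The plan is to reduce the First Fundamental Theorem to two ingredients that are already available in the excerpt: (a) the identity $T(r,1/f)=T(r,f)+O(1)$ of Theorem \ref{cth1}, and (b) the subadditivity inequalities for $m$, $N$, and $T$ applied to a sum of two meromorphic functions (here $f$ and the constant $-a$).

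First I would dispose of the trivial case $a=\infty$. By the convention introduced just before Definition \ref{gadu1}, $m(r,\infty;f)=m(r,f)$ and $N(r,\infty;f)=N(r,f)$, so $T(r,1/(f-\infty))$ is to be read simply as $T(r,f)$, and the conclusion is immediate with the bounded term equal to $0$.

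Next I would treat the generic case $a\in\mathbb{C}$ by splitting the problem into two steps. \textbf{Step 1:} show that $T(r,f-a)=T(r,f)+O(1)$. The poles of $f-a$ coincide with the poles of $f$ with the same multiplicities, so $N(r,f-a)=N(r,f)$ identically. For the proximity function, I would apply the sum inequality
\[
m\!\left(r,\textstyle\sum_{\nu=1}^{p}f_\nu\right)\leq\sum_{\nu=1}^{p}m(r,f_\nu)+\log p
\]
with $p=2$, once to $(f-a)+a=f$ and once to $f+(-a)=f-a$, noting that $m(r,a)=\log^{+}|a|$ for the constant function $a$. This gives
\[
|m(r,f-a)-m(r,f)|\leq\log^{+}|a|+\log 2,
\]
and combining with $N(r,f-a)=N(r,f)$ yields $|T(r,f-a)-T(r,f)|\leq\log^{+}|a|+\log 2$, a bound independent of $r$. \textbf{Step 2:} apply Theorem \ref{cth1} to the meromorphic function $g:=f-a$ to obtain
\[
T\!\left(r,\tfrac{1}{f-a}\right)=T(r,f-a)+O(1),
\]
and then substitute the estimate from Step 1 to conclude $T(r,1/(f-a))=T(r,f)+O(1)$.

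The main obstacle is really concentrated in Theorem \ref{cth1}, which the paper has already recorded; that identity in turn rests on the Jensen--Nevanlinna formula (Theorem \ref{j} together with Remark \ref{jt}) obtained by rewriting $\log|f|=\log^{+}|f|-\log^{+}(1/|f|)$ inside the integral and identifying the resulting pieces with $m$ and $N$. Once that is granted, the present theorem is essentially bookkeeping: the only points that require care are the correct handling of the possibility that $f$ has a zero or pole of $-a$ at the origin (absorbed into the $O(1)$ via Remark \ref{jt}) and the explicit constant $\log^{+}|a|+\log 2$ from the sum inequality, which confirms that the error term depends on $f$ and $a$ but not on $r$.
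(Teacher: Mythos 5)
Your proof is correct and is the classical argument: the paper itself gives no proof of this statement (Chapter~1 is expository and explicitly defers all proofs to the literature), but the cited references (Hayman; Yang--Yi) establish it exactly as you do, by combining the elementary estimate $|T(r,f-a)-T(r,f)|\leq \log^{+}|a|+\log 2$ with the Jensen--Nevanlinna identity $T(r,1/g)=T(r,g)+O(1)$ applied to $g=f-a$. Your explicit constant and your remarks on the $a=\infty$ convention and on zeros or poles at the origin are all handled correctly.
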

\begin{rem}
The First Fundamental Theorem provides an upper bound to the number of zeros of the functional equation $f(z) = a$ for $a\in\mathbb{C}\cup\{\infty\}$.
\end{rem}
In order to study some properties of characteristic functions, we state an alternative expression of $T(r,f)$:
\begin{theo}
If $f$ is meromorphic in $|z|<R$, then $$T(r,f)=\frac{1}{2\pi}\int\limits_{0}^{2\pi}N(r,e^{i\theta})d\theta+\log^{+}|f(0)|,~0<r<R.$$
This expression of $T(r,f)$ by an integral is known as \emph{Henri Cartan's Identity}.
\end{theo}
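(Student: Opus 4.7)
The plan is to apply Jensen's formula (Theorem 1.1.2) to the translated function $g_\theta(z) := f(z) - e^{i\theta}$ for each fixed $\theta \in [0, 2\pi)$, and then average the resulting identity over $\theta$. Assuming for the moment that $f(0) \neq \infty$ and $f(0) \neq e^{i\theta}$ (the latter failing for at most one $\theta$, a set of measure zero), the zeros of $g_\theta$ inside $|z| \leq r$ are exactly the $e^{i\theta}$-points of $f$, while the poles of $g_\theta$ coincide with the poles of $f$. Translating Jensen's formula into Nevanlinna notation gives
\begin{equation*}
\log|f(0) - e^{i\theta}| = \frac{1}{2\pi}\int_0^{2\pi}\log|f(re^{i\phi}) - e^{i\theta}|\,d\phi - N(r, e^{i\theta}; f) + N(r, f).
\end{equation*}

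The next step is to integrate both sides with respect to $\theta$ from $0$ to $2\pi$ and divide by $2\pi$. The entire calculation then hinges on the classical identity
\begin{equation*}
\frac{1}{2\pi}\int_0^{2\pi}\log|a - e^{i\theta}|\,d\theta = \log^{+}|a|, \qquad a \in \mathbb{C}.
\end{equation*}
I would establish this as a preliminary lemma by splitting into the cases $|a| \leq 1$ (using the mean value property of the harmonic function $\log|1 - \bar a z|$ on the closed unit disk) and $|a| > 1$ (by factoring $a - e^{i\theta} = a(1 - e^{i\theta}/a)$ and reducing to the first case, picking up the logarithmic term $\log|a|$).

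Applying this identity to the averaged Jensen equation, the left-hand side becomes $\log^{+}|f(0)|$. For the right-hand side, Fubini's theorem (justified because $\log|f(re^{i\phi}) - e^{i\theta}|$ is integrable as a function of $(\phi, \theta)$, since $f$ is meromorphic and has only finitely many level values attained on $|z|=r$) allows us to swap the $\phi$- and $\theta$-integrals; the inner average over $\theta$ then produces $\log^{+}|f(re^{i\phi})|$, whose average over $\phi$ is precisely $m(r, f)$. The term $N(r, f)$ is constant in $\theta$ and survives unchanged. Rearranging the resulting equality yields exactly
\begin{equation*}
m(r, f) + N(r, f) = \frac{1}{2\pi}\int_0^{2\pi} N(r, e^{i\theta}; f)\, d\theta + \log^{+}|f(0)|,
\end{equation*}
which is Cartan's identity.

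The main obstacle, in my view, is not the computational chain itself but rather the justification of the auxiliary lemma $\frac{1}{2\pi}\int \log|a - e^{i\theta}|d\theta = \log^{+}|a|$, which requires a small harmonic-function argument that is easy to overlook. Secondary care is needed to handle the degenerate cases: if $f(0) = e^{i\theta_0}$ for some $\theta_0$, the pointwise identity fails at that $\theta_0$, but this is a single point and does not affect the integral; if $f(0) = \infty$ or if $f$ has zeros or poles on $|z|=r$, one appeals to the Laurent-expansion form of Jensen's formula recorded in Remark 1.1.1 and notes that the additional boundary terms average out, leaving the formula intact.
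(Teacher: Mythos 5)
Your argument is correct and is precisely the classical proof that the paper defers to the literature for (it states Cartan's identity without proof): apply Jensen's formula to $f-e^{i\theta}$, average in $\theta$, and invoke the identity $\frac{1}{2\pi}\int_0^{2\pi}\log|a-e^{i\theta}|\,d\theta=\log^{+}|a|$ together with Fubini to convert the boundary integral into $m(r,f)$. The only point to state a little more carefully is the case $f(0)=\infty$, where the constant term $\log^{+}|f(0)|$ must be replaced by the Laurent-coefficient term from Remark 1.1.1 rather than ``averaging out''; since the theorem as written presupposes $\log^{+}|f(0)|$ finite, this does not affect the result.
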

\begin{cor}
$\frac{1}{2\pi}\int\limits_{0}^{2\pi}m(r,e^{i\theta})d\theta\leq \log2.$
\end{cor}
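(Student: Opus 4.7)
The plan is to reduce the average of $m(r, e^{i\theta})$ to quantities governed by $T(r,f)$ via the First Fundamental Theorem, and then to use Cartan's Identity (the preceding theorem) to eliminate the $N$-integral that arises.

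First, applying Jensen's theorem to $g(z) = f(z) - e^{i\theta}$ (equivalently, invoking the First Fundamental Theorem with its bounded error made explicit) yields, for every $\theta$ with $f(0) \neq e^{i\theta}$ and $f(0) \neq \infty$,
$$m(r, e^{i\theta}) + N(r, e^{i\theta}) \;=\; T(r, f - e^{i\theta}) - \log|f(0) - e^{i\theta}|.$$
The excluded values of $\theta$ form a set of measure zero, so the identity can be integrated over $[0,2\pi]$. Two of the resulting averages are standard: Cartan's Identity gives $\frac{1}{2\pi}\int_0^{2\pi} N(r, e^{i\theta}) \, d\theta = T(r,f) - \log^+|f(0)|$, while applying Jensen's theorem to the entire function $z - f(0)$ on the unit disc gives $\frac{1}{2\pi}\int_0^{2\pi} \log|f(0) - e^{i\theta}| \, d\theta = \log^+|f(0)|$. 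These two occurrences of $\log^+|f(0)|$ cancel, leaving
$$\frac{1}{2\pi}\int_0^{2\pi} m(r, e^{i\theta}) \, d\theta \;=\; \frac{1}{2\pi}\int_0^{2\pi} T(r, f - e^{i\theta}) \, d\theta \;-\; T(r, f).$$

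Next I would bound the $T$-average on the right. Since $f$ and $f - e^{i\theta}$ share their poles, $N(r, f - e^{i\theta}) = N(r, f)$, so $T(r, f - e^{i\theta}) = m(r, f - e^{i\theta}) + N(r,f)$. By Fubini the proximity piece becomes a double integral of $\log^+|f(re^{i\phi}) - e^{i\theta}|$, and the elementary estimate $|f(re^{i\phi}) - e^{i\theta}| \leq 2\max(|f(re^{i\phi})|, 1)$ gives $\log^+|f(re^{i\phi}) - e^{i\theta}| \leq \log^+|f(re^{i\phi})| + \log 2$. Integrating this inequality,
$$\frac{1}{2\pi}\int_0^{2\pi} T(r, f - e^{i\theta}) \, d\theta \;\leq\; m(r,f) + N(r,f) + \log 2 \;=\; T(r,f) + \log 2,$$
and substituting back into the identity of the previous paragraph yields the asserted bound $\log 2$.

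The argument is essentially bookkeeping once Cartan's Identity is in hand; the only point requiring care is matching the $\log^+|f(0)|$ contributions so that they cancel exactly (rather than leaving a residual term depending on $f(0)$), together with extracting the constant $\log 2$ from the pointwise estimate $\log^+(x+1) \leq \log^+ x + \log 2$. I do not anticipate any serious obstacle beyond this verification.
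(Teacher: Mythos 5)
Your argument is correct and is exactly the classical derivation (as in Hayman's monograph, which this chapter follows): integrate the exact First Fundamental Theorem identity $m(r,e^{i\theta})+N(r,e^{i\theta})=T(r,f-e^{i\theta})-\log|f(0)-e^{i\theta}|$ over $\theta$, use Cartan's Identity together with $\frac{1}{2\pi}\int_0^{2\pi}\log|f(0)-e^{i\theta}|\,d\theta=\log^{+}|f(0)|$ to cancel the constant terms, and bound $T(r,f-e^{i\theta})\le T(r,f)+\log 2$ via $\log^{+}|w-e^{i\theta}|\le\log^{+}|w|+\log 2$. The only caveat, consistent with the paper's own statement of Cartan's Identity, is that the case $f(0)=\infty$ requires the general form of Jensen's formula (Remark 1.2.1), but this does not affect the conclusion.
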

\begin{defi}
A function $y=f(x)$ is called \emph{convex downward} if for any two points $(x_1,f(x_1))$ and $(x_2,f(x_2))$ on the curve $y=f(x)$, the chord joining these two points lies above the arc of the curve between the two points i.e, for any $x\in[x_1,x_2]$ if $f'(x)$ is non-decreasing or, $f(x_1)(x_1-x_2)+f(x)(x_2-x_1)+f(x_2)(x_1-x)<0$.
\end{defi}
\begin{theo}For any complex number $a\in\mathbb{C}\cup\{\infty\}$,
\begin{enumerate}
\item  [i)] $N(r,a;f)$ is a non-decreasing function of $r$ and convex function of $\log r$.
\item [ii)] Also $T(r,f)$ is a non-decreasing function of $r$ and convex function of $\log r$.
\end{enumerate}
\end{theo}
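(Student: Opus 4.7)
The plan is straightforward: I would prove part (i) directly from the integral definition of $N(r,a;f)$, and then deduce part (ii) by substituting \emph{Henri Cartan's Identity} (stated just above) and inheriting the properties from part (i) under integration.

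For part (i), I start with
$$N(r,a;f) = \int_{0}^{r} \frac{n(t,a;f)-n(0,a;f)}{t}\,dt + n(0,a;f)\log r.$$
Differentiating with respect to $r$ (at points where $n(\cdot,a;f)$ is continuous, which is all but countably many values) yields
$$\frac{d}{dr}N(r,a;f) = \frac{n(r,a;f)-n(0,a;f)}{r} + \frac{n(0,a;f)}{r} = \frac{n(r,a;f)}{r} \ge 0,$$
so $N(r,a;f)$ is non-decreasing in $r$. For convexity in $\log r$, set $v=\log r$ and $\psi(v)=N(e^{v},a;f)$; by the chain rule, $\psi'(v)=n(e^{v},a;f)$. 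Since $n(r,a;f)$ is a non-decreasing (step) function of $r$ and $r=e^{v}$ is increasing in $v$, the right-derivative $\psi'(v)$ is monotonically non-decreasing in $v$, which is exactly the criterion for $\psi$ to be convex. Equivalently, $N(r,a;f)$ is convex as a function of $\log r$.

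For part (ii), I invoke Cartan's identity $T(r,f)=\frac{1}{2\pi}\int_{0}^{2\pi}N(r,e^{i\theta};f)\,d\theta+\log^{+}|f(0)|$. By part (i), for each fixed $\theta\in[0,2\pi]$, the map $r\mapsto N(r,e^{i\theta};f)$ is non-decreasing in $r$ and convex in $\log r$. Both properties are preserved by non-negative linear combinations and by integration in $\theta$ (convexity of an average of convex functions is a one-line application of the definition), and the additive constant $\log^{+}|f(0)|$ is harmless. Thus $T(r,f)$ itself is non-decreasing in $r$ and convex as a function of $\log r$.

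The only point needing care is the convexity step of part (i), because $n(r,a;f)$ is discontinuous, so $\psi'$ exists only off a countable set. This is not a genuine obstacle: one uses the standard fact that a function $\psi$ whose right-derivative is monotone non-decreasing is convex. Alternatively, writing
$$\psi(v) = \psi(v_{0}) + \int_{v_{0}}^{v} n(e^{u},a;f)\,du + n(0,a;f)(v-v_{0}),$$
one checks directly that for $v_{1}<v_{2}<v_{3}$ the mean slope $[\psi(v_{2})-\psi(v_{1})]/(v_{2}-v_{1})$ is bounded above by $n(e^{v_{2}},a;f)+n(0,a;f)$, while $[\psi(v_{3})-\psi(v_{2})]/(v_{3}-v_{2})$ is bounded below by the same quantity, since the integrand is non-decreasing. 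I expect no deeper technical difficulty.
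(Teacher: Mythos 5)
Your proof is correct and follows exactly the route the paper intends: the chapter states this theorem without proof, but places Cartan's identity immediately before it precisely so that part (ii) follows by averaging part (i) over $\theta$, and your computation $\psi'(v)=n(e^{v},a;f)$ with $v=\log r$ is the standard argument for part (i). The only blemish is in your ``alternative'' verification, where the formula $\psi(v)=\psi(v_{0})+\int_{v_{0}}^{v}n(e^{u},a;f)\,du+n(0,a;f)(v-v_{0})$ carries a redundant linear term (the substitution $t=e^{u}$ already absorbs the $n(0,a;f)\log r$ contribution into the integral), but since an added affine function affects neither monotonicity nor convexity, the conclusion stands.
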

But the function $m(r,a;f)$ is not necessarily increasing and convex, for example
\begin{exm}\label{homeoex1}
We consider $$f(z)=\frac{z}{1-z^{2}}.$$
Then $m(r,f)=0$ for $r\leq \frac{1}{2}$ and $r\geq2$ but on the other hand $m(r,f)>0$ for $r=1$.\end{exm}
Below we are giving some examples, some of which will be needful in this sequel.
\begin{exm}\label{homeoex2}
If $f(z)=\frac{P(z)}{Q(z)}$, where $P(z)$ and $Q(z)$ are polynomials having no common factors of degree $p$ and $q$  respectively and $d=\max\{p,q\}$, then
$$T(r,f)=d\log r+O(1)~\text{as}~r \to \infty.$$
\end{exm}
\begin{exm}\label{homeoex3}
If $g(z)=\frac{af(z)+b}{cf(z)+d}$, where $ad-bc\neq0$, then $T(r,g)=T(r,f)+O(1).$
\end{exm}
\begin{exm}\label{homeoex4}
If $f(z)=e^{z}$, then $T(r,f)=\frac{r}{\pi}~\text{as}~r \to \infty.$
\end{exm}
\begin{exm}\label{homeoex5}
If $f(z)=e^{P(z)}$, where $P(z)$ is polynomial of degree $p$, then
$$T(r,f)\sim \frac{|a_{p}|r^{p}}{\pi}~\text{as}~r \to \infty,$$
 where $a_{p}$ is the coefficients of $z^{p}$ in $P(z)$.
\end{exm}
\begin{exm}\label{homeoex6}
If $f(z)=e^{e^{z}}$, then $T(r,f)\thicksim\frac{e^{r}}{\sqrt{(2\pi^{3}r)}}~\text{as}~r \to \infty.$
\end{exm}
The following theorem is the characterization of a meromorphic function, whether it is transcendental or rational.
\begin{theo}
If $fz)$ is a transcendental meromorphic function in the complex plane, then
$$\lim_{r \to \infty}\frac{T(r,f)}{\log r} =\infty.$$
\end{theo}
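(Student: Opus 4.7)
The plan is to prove the result by using the definition of transcendental meromorphic function stated in the excerpt (the point at infinity is an essential singularity, so some value $a \in \mathbb{C}\cup\{\infty\}$ is attained infinitely often with $a$-points accumulating only at $\infty$), combined with the First Fundamental Theorem to pass from an estimate on the counting function to one on the characteristic.

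First, I would fix such a value $a$ and use that for any prescribed positive integer $k$, one can choose $r_{0}>0$ large enough that $n(r_{0}, a; f) \geq k$. Since $n(t,a;f)$ is non-decreasing in $t$, for any $r>r_{0}$ the definition of $N(r,a;f)$ gives
\begin{eqnarray*}
N(r, a; f) & = & \int_{0}^{r}\frac{n(t, a; f) - n(0, a; f)}{t}\, dt + n(0, a; f)\log r \\
& \geq & \int_{r_{0}}^{r}\frac{n(t, a; f) - n(0, a; f)}{t}\, dt \\
& \geq & \bigl(k - n(0, a; f)\bigr)\log\frac{r}{r_{0}}.
\end{eqnarray*}
Dividing by $\log r$ and letting $r\to\infty$ yields $\liminf_{r\to\infty} N(r, a; f)/\log r \geq k - n(0, a; f)$. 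Since $n(0,a;f)$ is a fixed finite number and $k$ is arbitrary, it follows that $N(r, a; f)/\log r \to \infty$.

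Next I would invoke Nevanlinna's First Fundamental Theorem together with Theorem \ref{cth1}: for $a\in\mathbb{C}$,
\begin{equation*}
N(r, a; f) \leq m\!\left(r,\frac{1}{f-a}\right) + N\!\left(r,\frac{1}{f-a}\right) = T(r, f) + O(1),
\end{equation*}
while for $a=\infty$ we have $N(r, f) \leq T(r, f)$ directly. In either case, combining with the lower bound from the previous paragraph forces $T(r, f)/\log r \to \infty$, which is the desired conclusion.

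The step that requires the most care is the first one, namely the assertion that a transcendental meromorphic function has some value $a$ taken infinitely often with its $a$-points accumulating only at $\infty$; however, this is precisely the characterization of an essential singularity at infinity recorded earlier in the chapter, so it can be invoked directly without invoking Picard's theorem. The rest is a routine estimate of $N(r,a;f)$ from below together with the First Fundamental Theorem.
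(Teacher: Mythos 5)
Your proof is correct. Since Chapter 1 is expository and states this theorem without reproducing a proof, there is nothing in the paper to compare against directly; measured against the standard argument in the cited literature (Hayman, Yang--Yi), your route is genuinely different. The textbook proof goes through the contrapositive: assuming $\liminf_{r\to\infty}T(r,f)/\log r<\infty$, one deduces from $N(r,a;f)\geq \big(n(\sqrt{r},a;f)-n(0,a;f)\big)\tfrac{1}{2}\log r$ and the First Fundamental Theorem that $n(r,a;f)$ is bounded for every $a$, writes $f=g/P$ with $g$ entire and $P$ a polynomial absorbing the finitely many poles, and uses $\log^{+}M(r,g)\leq 3\,T(2r,g)=O(\log r)$ along a sequence to force $g$ to be a polynomial, so that $f$ is rational. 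Your argument is shorter and more direct, and your lower bound $N(r,a;f)\geq (k-n(0,a;f))\log(r/r_{0})$ correctly delivers the full limit rather than only the limsup (the only cosmetic point being that discarding $n(0,a;f)\log r$ needs $r\geq1$, which is harmless). What your approach buys this economy with is the assertion that a transcendental meromorphic function attains some value $a\in\mathbb{C}\cup\{\infty\}$ infinitely often. That assertion is indeed recorded in the chapter's dichotomy between rational and transcendental functions, so citing it is legitimate within the paper's framework; but be aware that it is recorded there without proof and is not a formality --- it requires either the great Picard theorem at the essential singularity at infinity, or a Casorati--Weierstrass plus Baire-category argument (each set $f(\{|z|>n\})$ is open and dense in $\mathbb{C}\cup\{\infty\}$, so the intersection over $n$ is non-empty, and any $a$ in it is attained infinitely often). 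With that fact granted, the rest of your proof is a clean and complete derivation.
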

\begin{cor}\label{1.1}
If $fz)$ is a non-constant meromorphic function in the complex plane, then $f(z)$ is a rational function if and only if
$$\liminf_{r \to \infty}\frac{T(r,f)}{\log r} <\infty.$$
\end{cor}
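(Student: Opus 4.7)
The plan is to prove Corollary \ref{1.1} by combining the characterization theorem stated just above (which handles the transcendental case) with Example \ref{homeoex2} (which handles the rational case), splitting the biconditional into its two implications.

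For the forward direction, I would assume $f$ is rational and write $f(z)=P(z)/Q(z)$ where $P,Q$ are polynomials with no common factor, of degrees $p$ and $q$ respectively. Setting $d=\max\{p,q\}$, Example \ref{homeoex2} gives $T(r,f)=d\log r+O(1)$ as $r\to\infty$. Dividing by $\log r$ and letting $r\to\infty$ yields $\lim_{r\to\infty} T(r,f)/\log r = d$, which is finite, so the liminf is trivially finite.

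For the converse, I would argue by contrapositive: assume $f$ is non-constant meromorphic but not rational. Since $f$ is non-constant meromorphic on $\mathbb{C}$, the dichotomy stated earlier in the chapter shows $f$ must then be transcendental (the point at infinity is an essential singularity). Applying the preceding theorem (the one immediately before Corollary \ref{1.1}), we obtain
\[
\lim_{r\to\infty} \frac{T(r,f)}{\log r} = \infty,
\]
and since the full limit equals $\infty$, the liminf also equals $\infty$, contradicting finiteness. Hence finiteness of the liminf forces $f$ to be rational.

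Since both implications are essentially direct invocations of earlier results, there is no serious obstacle; the only mild subtlety is noting that in the converse we only have a liminf hypothesis rather than a full limit, but this is resolved by working with the contrapositive so that the transcendental case gives a full limit of infinity, which automatically pins down the liminf as well.
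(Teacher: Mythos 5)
Your proof is correct and follows exactly the derivation the paper intends: the forward direction is the computation $T(r,f)=d\log r+O(1)$ from Example \ref{homeoex2}, and the converse is the contrapositive via the rational/transcendental dichotomy together with the theorem immediately preceding the corollary. No gaps; the liminf-versus-limit point you flag is handled correctly by arguing contrapositively.
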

Now we briefly discuss about \emph{Order} and \emph{Type} of a meromorphic function.
\begin{defi}
Let $S(r)$ be a real valued and non-negative increasing function for $0<r_{0}<r<\infty$. The \emph{order $\rho$} and the \emph{lower order $\lambda$} of the function $S(r)$ are defined as $$\rho=\limsup\limits_{r\rightarrow\infty}\frac{\log^{+} S(r)}{\log r}~~and~~\lambda=\liminf\limits_{r\rightarrow\infty}\frac{\log^{+} S(r)}{\log r}.$$
\end{defi}
From the definition it is clear that the order and the lower order of a function always satisfies the relation $ 0\leq\lambda\leq\rho\leq\infty.$
\begin{defi}
If $\rho=\infty$, then $S(r)$ is said to be of \emph{infinite order}.\vspace{.3 cm}\\
If $0<\rho<\infty$, we set $c=\limsup\limits_{r\rightarrow\infty}\frac{S(r)}{r^\rho}$ and consider the following possibilities:
\begin{enumerate}
\item [i)] $S(r)$ has \emph{maximal type} if $c=\infty$,
\item [ii)] $S(r)$ has \emph{mean type} if $0<c<\infty$,
\item [iii)] $S(r)$ has \emph{minimal type} if $c=0$,
\item [iv)] $S(r)$ has \emph{convergence class} if $\int_{r_{_{0}}}^{\infty}\frac{S(t)}{t^{\rho+1}}dt$ converges.
\end{enumerate}
\end{defi}
\begin{rem}
If $S(r)$ is of order $\rho~(0<\rho<\infty)$, then for each $\varepsilon(>0)$
\begin{enumerate}
\item [i)] $S(r)<r^{\rho+\varepsilon}$ for all sufficiently large r and
\item [ii)] $S(r)>r^{\rho-\varepsilon}$ for a sequence of values of r tending to $\infty$.
\end{enumerate}
\end{rem}
\begin{rem}
If $S(r)$ is of lower order $\lambda~ (0<\lambda<\infty)$, then for each $\varepsilon(>0)$
\begin{enumerate}
\item [i)] $S(r)>r^{\lambda-\varepsilon}$ for all sufficiently large r and
\item [ii)] $S(r)<r^{\lambda+\varepsilon}$ for a sequence of values of $r \to \infty$.
\end{enumerate}
\end{rem}
\begin{theo}
Let $S(r)$ be of order $\rho~(0<\rho<\infty)$. Then the integral $\int_{r_{_{0}}}^{\infty}\frac{S(t)}{t^{k+1}}dt$ converges if $k>\rho$ and diverges if $k<\rho$.
\end{theo}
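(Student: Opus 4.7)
The plan is to exploit the two characterizations of order recorded in the remarks immediately preceding the theorem, namely: for every $\varepsilon>0$, (i) $S(r)<r^{\rho+\varepsilon}$ for all sufficiently large $r$, and (ii) $S(r)>r^{\rho-\varepsilon}$ for some sequence $r_n\to\infty$. These two bounds fit the two halves of the statement perfectly: (i) is the uniform upper bound needed to dominate the integrand for convergence, while (ii) together with the monotonicity of $S$ will supply a non-vanishing lower bound for divergence.

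For the convergence direction, suppose $k>\rho$. I would choose $\varepsilon=(k-\rho)/2>0$, so that $\rho+\varepsilon<k$. Applying (i), there is $R_1\geq r_0$ with $S(t)<t^{\rho+\varepsilon}$ for all $t\geq R_1$. Splitting
$$\int_{r_0}^{\infty}\frac{S(t)}{t^{k+1}}\,dt=\int_{r_0}^{R_1}\frac{S(t)}{t^{k+1}}\,dt+\int_{R_1}^{\infty}\frac{S(t)}{t^{k+1}}\,dt,$$
the first piece is finite as an integral of a bounded function over a compact interval away from $0$, while the second is dominated by $\int_{R_1}^{\infty}t^{\rho+\varepsilon-k-1}\,dt$, a standard $p$-integral that converges because $k-\rho-\varepsilon=(k-\rho)/2>0$.

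For the divergence direction, suppose $k<\rho$. I may assume $k>0$, since if $k\leq 0$ the integrand $S(t)/t^{k+1}$ is eventually bounded below by a positive constant (as $S$ has positive order and is therefore unbounded), so divergence is immediate. In the remaining range, pick $\varepsilon$ with $0<\varepsilon<\rho-k$ and use (ii) to obtain $r_n\to\infty$ with $S(r_n)>r_n^{\rho-\varepsilon}$. Since $S$ is non-decreasing, $S(t)\geq S(r_n)>r_n^{\rho-\varepsilon}$ for every $t\in[r_n,2r_n]$, which gives
$$\int_{r_n}^{2r_n}\frac{S(t)}{t^{k+1}}\,dt\;>\;r_n^{\rho-\varepsilon}\cdot\frac{1-2^{-k}}{k}\cdot r_n^{-k}\;=\;\frac{1-2^{-k}}{k}\,r_n^{\rho-\varepsilon-k}\longrightarrow\infty,$$
because the exponent $\rho-\varepsilon-k$ is strictly positive. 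Hence the tail $\int_{r_n}^{\infty}S(t)t^{-k-1}\,dt$ does not tend to $0$, contradicting convergence, and the integral diverges.

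The main obstacle is the asymmetric nature of the information supplied by the order: the upper bound $S(t)<t^{\rho+\varepsilon}$ is uniform in $t$, but the matching lower bound is available only along a subsequence. The critical idea for the divergence half is thus to lift a sequential pointwise lower bound into an integral lower bound by integrating over a macroscopic interval $[r_n,2r_n]$ whose length grows proportionally to $r_n$, so that the product of the height $r_n^{\rho-\varepsilon}$ and the weight $\int dt/t^{k+1}\asymp r_n^{-k}$ blows up as $n\to\infty$.
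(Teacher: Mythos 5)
Your argument is correct and is the standard one; the thesis only states this result in its introductory survey chapter (where proofs are deliberately omitted), so there is no proof of record to compare it against, and your two-sided use of the order — uniform upper bound for convergence, sequential lower bound propagated over $[r_n,2r_n]$ by monotonicity for divergence — is exactly the classical route. One tiny remark: for $-1<k\leq 0$ the integrand $S(t)/t^{k+1}$ need not be bounded below by a positive constant (take $S(t)=t^{1/2}$ and $k=0$), but divergence in that range is still immediate since $S(t)\geq 1$ eventually gives $\int S(t)\,t^{-k-1}\,dt\geq \int t^{-k-1}\,dt=\infty$ for $k\leq 0$; indeed your $[r_n,2r_n]$ estimate already works verbatim for every $k\neq 0$, so the side case is only genuinely needed at $k=0$.
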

\begin{theo}
If $S(r)$ is of order $\rho~(0<\rho<\infty)$ and has convergence class, then $S(r)$ has minimal type of order $\rho$.
\end{theo}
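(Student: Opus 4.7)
The plan is to argue by contradiction, exploiting the monotonicity of $S(r)$ to convert a putative positive lower bound on $\limsup_{r\to\infty} S(r)/r^\rho$ into a divergent lower bound for the integral $\int_{r_0}^{\infty} S(t)/t^{\rho+1}\, dt$.

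Concretely, suppose $S(r)$ does not have minimal type of order $\rho$, so that $c = \limsup_{r\to\infty} S(r)/r^\rho > 0$ (allowing the value $\infty$). Then there exist a positive constant $k$ and a sequence $r_n \to \infty$ with $S(r_n)/r_n^\rho \geq k$ for every $n$. By passing to a subsequence I may further assume $r_{n+1} > 2 r_n$, so that the intervals $[r_n, 2r_n]$ are pairwise disjoint and contained in $[r_0, \infty)$ for $n$ sufficiently large.

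Next I would use the fact that $S$ is non-decreasing: on each interval $[r_n, 2r_n]$ one has $S(t) \geq S(r_n)$, and therefore
\beas
\int_{r_n}^{2r_n} \frac{S(t)}{t^{\rho+1}}\, dt \;\geq\; S(r_n)\int_{r_n}^{2r_n} \frac{dt}{t^{\rho+1}} \;=\; \frac{S(r_n)}{\rho\, r_n^{\rho}}\bigl(1-2^{-\rho}\bigr) \;\geq\; \frac{k\bigl(1-2^{-\rho}\bigr)}{\rho}.
\eeas
Since the right-hand side is a fixed positive constant (using $\rho>0$, so $1-2^{-\rho}>0$) and the intervals $[r_n, 2r_n]$ are disjoint, summing over $n$ gives
\bes
\int_{r_0}^{\infty}\frac{S(t)}{t^{\rho+1}}\, dt \;\geq\; \sum_{n} \int_{r_n}^{2r_n}\frac{S(t)}{t^{\rho+1}}\, dt \;=\; \infty,
\ees
contradicting the assumption that $S(r)$ has convergence class. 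Hence $c=0$, i.e. $S(r)$ has minimal type of order $\rho$.

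The argument is essentially mechanical once the right interval-splitting is identified; the only point needing care is the thinning of the sequence $\{r_n\}$ so as to obtain disjoint intervals, which is where the lower bounds from the individual intervals add up rather than overlap. The hypothesis $\rho>0$ is also used crucially to ensure that $1-2^{-\rho}$ is a strictly positive constant, so no genuine analytic obstacle remains.
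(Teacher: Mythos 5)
Your proof is correct. The paper states this result in its expository first chapter without supplying a proof (it defers to the literature), so there is no in-paper argument to compare against; your argument is the standard one and every step checks out: monotonicity of $S$ gives $S(t)\geq S(r_n)$ on $[r_n,2r_n]$, the integral $\int_{r_n}^{2r_n}t^{-\rho-1}\,dt=\frac{1-2^{-\rho}}{\rho\,r_n^{\rho}}$ is computed correctly, and the thinning to disjoint intervals legitimately lets the uniform lower bounds accumulate to force divergence. One small remark: the contradiction and the sequence-thinning are avoidable. Since for every $r$ one has
$$\frac{S(r)}{r^{\rho}}\;\leq\;\frac{\rho}{1-2^{-\rho}}\int_{r}^{2r}\frac{S(t)}{t^{\rho+1}}\,dt\;\leq\;\frac{\rho}{1-2^{-\rho}}\int_{r}^{\infty}\frac{S(t)}{t^{\rho+1}}\,dt,$$
and the tail of a convergent integral tends to $0$, you get directly that $S(r)/r^{\rho}\to 0$, which is slightly stronger than $\limsup=0$ and shorter to state. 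Either way the result stands.
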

The following example shows that the converse of this theorem is not true.
\begin{exm}
Assume $S(r)=r^\lambda(\log r)^\mu$.\par
Then $\lambda$ is the order of $S(r)$. If we choose $0<\lambda<\infty$ and $-1<\mu< 0$, then $S(r)$ is of minimal type but does not belong to the convergence class.
\end{exm}
Let $f(z)$ be a non-constant entire function. Then to characterize the growth of $f(z)$ and the distribution of its zeros, we introduce a special growth scale called \emph{maximum modulus function} as follows: $$M(r)=M(r,f)=\max\limits_{\mid z \mid =r } \mid f(z)\mid.$$
\begin{theo}
If $f(z)$ is non-constant and regular in $|z|\leq R$, then
$$T(r,f)\leq \log^{+}M(r,f)\leq\frac{R+r}{R-r}T(R,f),$$
where $0\leq r < R.$
\end{theo}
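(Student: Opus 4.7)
The key simplification is that since $f$ is regular (i.e. analytic) in $|z|\le R$, it has no poles in that disc, so $N(r,f)=0$ for all $r\in[0,R)$, and therefore $T(r,f)=m(r,f)$. This reduces both inequalities to statements about the proximity function versus the maximum modulus.

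For the left inequality $T(r,f)\le \log^{+}M(r,f)$, I would simply note that $|f(re^{i\theta})|\le M(r,f)$ for every $\theta$, so $\log^{+}|f(re^{i\theta})|\le \log^{+}M(r,f)$. Integrating $\tfrac{1}{2\pi}\int_{0}^{2\pi}\cdot\, d\theta$ on both sides gives $m(r,f)\le \log^{+}M(r,f)$, which is exactly what we want after the reduction above.

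For the right inequality I would apply the Poisson-Jensen formula (Theorem \ref{pjt}) to $f$ at the point $z=re^{i\theta}$. Since $f$ has no poles in $|z|\le R$, only the sum over zeros $a_\mu$ survives, giving
\begin{equation*}
\log|f(re^{i\theta})| = \frac{1}{2\pi}\int_{0}^{2\pi}\frac{R^{2}-r^{2}}{R^{2}-2rR\cos(\theta-\phi)+r^{2}}\log|f(Re^{i\phi})|\,d\phi - \sum_{\mu}p_{\mu}\log\Bigl|\tfrac{R^{2}-\overline{a}_{\mu}re^{i\theta}}{R(re^{i\theta}-a_{\mu})}\Bigr|.
\end{equation*}
Each Blaschke-type term $\log|(R^{2}-\overline{a}_{\mu}re^{i\theta})/(R(re^{i\theta}-a_{\mu}))|$ is nonnegative for $|a_\mu|<R$ and $|re^{i\theta}|\le R$, so dropping the whole sum only increases the right-hand side. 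Next I would use the elementary bound $\log|f(Re^{i\phi})|\le \log^{+}|f(Re^{i\phi})|$ (valid because the Poisson kernel is nonnegative) together with the standard estimate
\begin{equation*}
0\le \frac{R^{2}-r^{2}}{R^{2}-2rR\cos(\theta-\phi)+r^{2}}\le \frac{R^{2}-r^{2}}{(R-r)^{2}}=\frac{R+r}{R-r}.
\end{equation*}
Pulling this constant out of the integral yields $\log|f(re^{i\theta})|\le \tfrac{R+r}{R-r}\,m(R,f)=\tfrac{R+r}{R-r}T(R,f)$.

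Since this bound is independent of $\theta$, taking the maximum over $\theta$ gives $\log M(r,f)\le \tfrac{R+r}{R-r}T(R,f)$, and because the right-hand side is nonnegative, it also bounds $\log^{+}M(r,f)$. The main subtlety I would flag is the passage from $\log$ to $\log^{+}$ on the circle $|z|=R$: this step needs the Poisson kernel to be nonnegative (which it is for $0\le r<R$), since otherwise replacing $\log$ by $\log^{+}$ could decrease the integrand at points where $|f|<1$ and reverse the inequality. The boundary case $r=0$ reduces to Jensen's theorem and is consistent. No further obstacle arises, as all the other steps are direct substitutions into the already-established Poisson-Jensen formula.
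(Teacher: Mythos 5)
Your proof is correct and is the standard argument for this inequality (the thesis states this theorem without proof, deferring to Hayman's monograph, and your route via Poisson--Jensen with the kernel bound $\frac{R^{2}-r^{2}}{(R-r)^{2}}=\frac{R+r}{R-r}$ and the nonnegativity of the Blaschke terms is exactly the classical one). All the delicate points — dropping the zero sum because each term is nonnegative, passing from $\log$ to $\log^{+}$ only after noting the kernel is nonnegative, and applying Poisson--Jensen at a point where $M(r,f)$ is attained (where $f\neq 0$ since $f$ is non-constant) — are handled correctly.
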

From the above inequality, we have the following Corollary:
\begin{cor}\label{homeo}
If $f(z)$ is an non-constant entire function, then the functions $S_1(r)=\log^{+}M(r,f)$ and $S_{2}(r)=T(r,f)$ have the same order.\par
Furthermore, if the value of the common order be $\rho ~(0<\rho<\infty)$, then $S_{1}(r)$ and $S_{2}(r)$ are together of minimal type, mean type, maximal type or of convergence class.
\end{cor}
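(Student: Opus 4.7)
The key observation is that the hypothesis provides a two-sided comparison between $S_1(r)=\log^{+}M(r,f)$ and $S_2(r)=T(r,f)$, namely
\[
T(r,f)\;\leq\;\log^{+}M(r,f)\;\leq\;\frac{R+r}{R-r}\,T(R,f)\quad(0\leq r<R).
\]
The plan is to turn the free parameter $R$ into a fixed scaling of $r$ and then read off each assertion mechanically. I would fix once and for all $R=2r$, which yields the clean two-sided estimate
\[
S_2(r)\;\leq\;S_1(r)\;\leq\;3\,S_2(2r).
\]
This single inequality will drive every part of the proof.

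For the equality of orders, I would take $\log^{+}$ of both sides, divide by $\log r$, and pass to $\limsup_{r\to\infty}$. The left inequality gives $\rho(S_1)\geq \rho(S_2)$, while the right inequality, using $\log(2r)/\log r \to 1$, gives $\rho(S_1)\leq \rho(S_2)$; hence the two orders coincide.

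For the classification by type when the common order $\rho\in(0,\infty)$ is finite, I would divide the two-sided inequality by $r^{\rho}$ and observe
\[
\frac{S_2(r)}{r^{\rho}}\;\leq\;\frac{S_1(r)}{r^{\rho}}\;\leq\;3\cdot 2^{\rho}\cdot\frac{S_2(2r)}{(2r)^{\rho}}.
\]
Taking $\limsup$ shows that the limit superior for $S_1$ is finite, zero, or infinite precisely when the corresponding limit superior for $S_2$ is; the bounded positive constant $3\cdot 2^{\rho}$ preserves the trichotomy maximal/mean/minimal type. For the convergence class, I would plug the inequality into the integrals $\int_{r_0}^{\infty}S_i(t)/t^{\rho+1}\,dt$: the left half of the inequality shows convergence of the $S_1$ integral forces convergence of the $S_2$ integral, while for the other direction I would apply the change of variable $u=2t$ in
\[
\int_{r_0}^{\infty}\frac{S_1(t)}{t^{\rho+1}}\,dt \;\leq\; 3\int_{r_0}^{\infty}\frac{S_2(2t)}{t^{\rho+1}}\,dt \;=\; 3\cdot 2^{\rho}\int_{2r_0}^{\infty}\frac{S_2(u)}{u^{\rho+1}}\,du,
\]
so the two integrals converge together.

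I do not expect a real obstacle here; the entire argument is bookkeeping around the quoted inequality and the choice $R=2r$. The only mildly delicate point is ensuring that the multiplicative constant $3\cdot 2^{\rho}$ appearing on one side does not spoil the classification, which is immediate since it is a finite positive constant independent of $r$ and hence cannot turn a finite $\limsup$ into an infinite one, a zero $\limsup$ into a nonzero one, or a convergent integral into a divergent one.
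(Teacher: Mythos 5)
Your proof is correct and is exactly the argument the paper intends: the corollary is stated as an immediate consequence of the two-sided inequality $T(r,f)\leq \log^{+}M(r,f)\leq\frac{R+r}{R-r}T(R,f)$, and your choice $R=2r$ giving $S_2(r)\leq S_1(r)\leq 3S_2(2r)$ is the standard way to extract the equality of orders, the type trichotomy, and the convergence-class equivalence. The paper omits the details, so there is nothing to compare beyond noting that your bookkeeping (the constant $3\cdot 2^{\rho}$ and the substitution $u=2t$) is sound.
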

Next we define the \emph{order} and \emph{hyper order} of a meromorphic function.
\begin{defi}\label{homeo1}
Let $f(z)$ be a non-constant meromorphic function in the open complex plane. The function $f(z)$ is said to have order  $\rho$, maximal, mean or minimal type or convergence class if the characteristic function $T(r,f)$ has this property.
\end{defi}
\begin{defi}
The hyper order $\rho _{_{2}}(f)$ of a non-constant meromorphic function $f(z)$ is defined by
$$\rho _{_{2}}(f)=\limsup\limits_{r\lra\infty}\frac{ \log \log T(r,f)}{\log r}.$$
\end{defi}
It may be noted that for an integral function $f(z)$, we can take $\log^{+}M(r,f)$ in place of $T(r,f)$ in Definition \ref{homeo1} in view of the Corollary \ref{homeo}.
\begin{theo}
If $\rho_{_{f}}$ and $\rho_{_{g}}$ be the orders of the meromorphic functions $f(z)$ and $g(z)$ respectively, then
\begin{enumerate}
\item [i)] order of $(f\pm g)\leq \max\{\rho_{_{f}}$, $\rho_{_{g}}\}$,
\item [ii)] order of $(fg)\leq \max\{\rho_{_{f}}$, $\rho_{_{g}}\}$,
\item [iii)] order of $\left(\frac{f}{g}\right)\leq \max\{\rho_{_{f}}$, $\rho_{_{g}}\}$.
\end{enumerate}
Also equality holds in each case if $\rho_{_{f}}\not=\rho_{_{g}}$.\\
\end{theo}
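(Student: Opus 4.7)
The plan is to deduce the three inequalities directly from the $T$-function estimates stated earlier in the chapter (the sub-/multi-additivity of $T$ under $\prod$ and $\sum$, combined with Theorem \ref{cth1} for the reciprocal), and then extract the order inequality by taking $\limsup$ of $\log T(r,\cdot)/\log r$. For the equality part I will exploit the symmetry of the inequalities once one observes that $f$ itself can be rewritten in terms of $f\pm g$, $fg$, or $f/g$ together with $g$.

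First, for \emph{(i)}, I would apply the additivity bound $T(r,f+g)\le T(r,f)+T(r,g)+\log 2$ (the $\sum$-version stated for $p=2$). Using $T(r,f)+T(r,g)\le 2\max\{T(r,f),T(r,g)\}$ gives
$$\log^{+}T(r,f\pm g)\le \max\{\log^{+}T(r,f),\log^{+}T(r,g)\}+O(1).$$
Dividing by $\log r$ and passing to $\limsup_{r\to\infty}$ yields $\rho_{f\pm g}\le\max\{\rho_f,\rho_g\}$. For \emph{(ii)}, the product estimate $T(r,fg)\le T(r,f)+T(r,g)$ gives the analogous inequality by the same manipulation. For \emph{(iii)}, write $f/g=f\cdot(1/g)$ and combine \emph{(ii)} with Theorem \ref{cth1}, which says $T(r,1/g)=T(r,g)+O(1)$, so that $\rho_{1/g}=\rho_g$; hence $\rho_{f/g}\le\max\{\rho_f,\rho_g\}$.

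For the equality statement, assume without loss of generality that $\rho_f>\rho_g$, and set $\rho:=\max\{\rho_f,\rho_g\}=\rho_f$. In the sum case, put $h=f+g$; then $f=h-g$, so applying \emph{(i)} to $h$ and $-g$ gives $\rho_f\le\max\{\rho_h,\rho_g\}$, and since $\rho_f>\rho_g$ we force $\rho_h\ge\rho_f$. Combined with the already proved $\rho_h\le\max\{\rho_f,\rho_g\}=\rho_f$, this yields $\rho_{f+g}=\rho_f=\rho$. The cases $f-g$, $fg$, and $f/g$ follow by the same trick, writing $f=(fg)\cdot(1/g)$ and $f=(f/g)\cdot g$ respectively, and invoking $\rho_{1/g}=\rho_g$ as before.

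The main obstacle is really a bookkeeping one: ensuring that the inequality $\log^{+}(A+B)\le\max\{\log^{+}A,\log^{+}B\}+\log 2$ is applied correctly so that the additive $O(1)$ and the $\log 2$ disappear after dividing by $\log r$ and passing to $\limsup$ (using Corollary \ref{1.1} to guarantee $T(r,\cdot)/\log r\to\infty$ in the transcendental case, and noting that rational functions have order $0$ so that the inequalities are trivial there). Once that is handled cleanly, the equality part is just the algebraic observation that each of the three operations is invertible modulo multiplication or subtraction by $g$, so the inequality can be reversed when the orders differ.
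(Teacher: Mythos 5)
Your proof is correct, and it is essentially the standard argument: the paper itself states this theorem in its background chapter without proof (deferring to the literature), and the classical proof in Hayman and Yang--Yi is exactly your route --- subadditivity of $T$ under sums and products, $T(r,1/g)=T(r,g)+O(1)$ for the quotient, and the reversal trick $f=h\mp g$, $f=(fg)\cdot(1/g)$, $f=(f/g)\cdot g$ for the equality case when $\rho_f\neq\rho_g$. Two minor remarks: the appeal to Corollary \ref{1.1} is unnecessary, since the additive $O(1)$ and $\log 2$ terms vanish after dividing by $\log r$ regardless of whether $T(r,\cdot)/\log r\to\infty$; and strictly speaking one should set aside the degenerate cases where $f\pm g$, $fg$ or $f/g$ is constant (e.g.\ $g=-f$), which can only occur when $\rho_f=\rho_g$ and so do not affect the equality assertion.
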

\begin{exm}
For a rational function $f(z)$, in view of Example \ref{homeoex2}, the order is\\ $\rho=\limsup\limits_{r\rightarrow\infty}\frac{\log(O(\log r))}{\log r}=0$.
\end{exm}
\begin{exm}
If $f(z)=e^{z}$, then in view of Example \ref{homeoex4}, the order is\\ $\rho=\limsup\limits_{r\rightarrow\infty}\frac{\log(\frac{r}{\pi})}{\log r}=1$.
\end{exm}
\begin{exm} If $f(z)=e^{e^{z}}$, then in view of Example \ref{homeoex6}, the order is $\rho=\infty$.
\end{exm}
\section{The Second Fundamental Theorem via Logarithmic Derivatives}
\par
Now we are going to discuss to Nevanlinna's Second Main Theorem. In order to state Second Main Theorem, we introduce a important Lemma, which is the main result of this section, known as the \emph{Lemma on the Logarithmic Derivative}.\par
This Lemma says that if $f$ is a meromorphic function, then the integral means of $\log^{+}|\frac{f'}{f}|$ over large circles cannot approach infinity quickly compared with the rate at which $T(r,f)$ tends to infinity.\par
For example, if $f(z)=e^{z^{2}}$, then $\frac{f'}{f}=2z$, so  $\frac{f'}{f} \to \infty$ as $z \to \infty$. However, we see here that the rate at which $\log\frac{f'}{f}$ approaches infinity is very slow by comparison to $T(r,f)$.\par
The Lemma on the Logarithmic Derivative was first proved by R. Nevanlinna and was the basis for his first proof of the Second Main Theorem.
\begin{theo}\label{kus3} (\cite{1234})
Suppose that $f(z)$ is a non-constant meromorphic function in $|z| < R$ and $a_{j}~(j=1,2,\ldots,q)$ are $q$ distinct \emph{finite complex numbers}. Then
$$m\left(r,\sum\limits_{j=1}^{q}\frac{1}{f-a_{j}}\right)=\sum\limits_{j=1}^{q}m\left(r,\frac{1}{f-a_{j}}\right) + O(1),$$
holds for $0<r<R$.
\end{theo}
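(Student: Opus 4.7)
The plan is to verify both inequalities implicit in the claimed identity and check that each carries only an $O(1)$ error. The upper bound
\[
m\!\left(r,\sum_{j=1}^{q}\frac{1}{f-a_j}\right) \le \sum_{j=1}^{q} m\!\left(r,\frac{1}{f-a_j}\right) + \log q
\]
is immediate from the subadditivity of $m$ on sums (item (ii) of the basic properties of characteristic functions stated earlier in this chapter), and $\log q$ is $O(1)$ since $q$ is fixed.

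For the reverse direction I would work pointwise on the circle $|z|=r$, exploiting the distinctness of the $a_j$'s. Set $\delta := \tfrac{1}{3}\min_{i\neq j}|a_i-a_j|>0$; the open discs of radius $\delta$ about the $a_j$'s are then pairwise disjoint, so at most one index $k$ can satisfy $|f(z)-a_k|<\delta$. Distinguish two cases for each $z$. If no $a_j$ lies within distance $\delta$ of $f(z)$, every $\log^{+}\tfrac{1}{|f(z)-a_j|}\le \log^{+}(1/\delta)$, hence $\sum_j\log^{+}\tfrac{1}{|f(z)-a_j|}\le q\log^{+}(1/\delta)$, while the right-hand side is $\geq 0$. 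Otherwise there is a unique $k$ with $|f(z)-a_k|<\delta$; for the other indices $j\neq k$ the same bound $\log^{+}(1/\delta)$ controls $\log^{+}\tfrac{1}{|f(z)-a_j|}$, and the triangle inequality gives
\[
\left|\sum_{j=1}^{q}\frac{1}{f(z)-a_j}\right| \ge \frac{1}{|f(z)-a_k|}-\frac{q-1}{\delta},
\]
so once $|f(z)-a_k|$ is small enough (below a threshold depending only on $q$ and $\delta$) the left side is at least $\tfrac{1}{2|f(z)-a_k|}$, yielding $\log^{+}\tfrac{1}{|f(z)-a_k|}\le \log^{+}\bigl|\sum_j\tfrac{1}{f(z)-a_j}\bigr|+\log 2$. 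Merging the two cases produces a pointwise estimate
\[
\sum_{j=1}^{q}\log^{+}\frac{1}{|f(z)-a_j|} \le \log^{+}\left|\sum_{j=1}^{q}\frac{1}{f(z)-a_j}\right| + K
\]
with a constant $K$ depending only on $q$ and the $a_j$'s. Averaging $\tfrac{1}{2\pi}\int_0^{2\pi}(\cdot)\,d\theta$ then yields the companion inequality $\sum_j m\bigl(r,\tfrac{1}{f-a_j}\bigr) \le m\bigl(r,\sum_j\tfrac{1}{f-a_j}\bigr) + K$, which together with the upper bound gives the asserted $O(1)$ equality.

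The main obstacle is the bookkeeping in the second case: one must ensure that when the ``dominant'' term $1/|f(z)-a_k|$ is genuinely large, the remaining $q-1$ bounded terms cannot cancel it inside the sum. The separation $\delta$ of the $a_j$'s provides exactly the uniform cushion needed, converting the potentially troublesome subtraction $\tfrac{1}{|f(z)-a_k|}-\tfrac{q-1}{\delta}$ into a harmless multiplicative factor of $\tfrac{1}{2}$, which becomes an additive $\log 2$ after taking $\log^{+}$. Once this pointwise estimate is secured, the passage to proximity functions via integration is entirely routine, since every constant introduced depends only on $q$ and the $a_j$'s and is thus independent of $r$.
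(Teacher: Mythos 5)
Your argument is correct and is essentially the classical proof of this lemma (as in Hayman's monograph and the Yang--Yi reference \cite{1234} that the thesis cites); the paper itself reproduces no proof for this statement, so there is nothing to diverge from. The two halves — subadditivity of $m$ for the upper bound, and the separation constant $\delta=\tfrac{1}{3}\min_{i\neq j}|a_i-a_j|$ isolating at most one dominant term $1/(f-a_k)$ for the pointwise lower bound — are exactly the standard mechanism, and all constants you introduce depend only on $q$ and the $a_j$, as required for the $O(1)$ conclusion.
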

\begin{theo} (\cite{1234})\label{meat}
Suppose that $f(z)$ is a non-constant meromorphic function in the complex plane and $f(0)\neq0,\infty$. Then
$$m\left(r,\frac{f'}{f}\right)< 4\log ^{+} T(R,f)+3\log ^{+}\frac{1}{R-r}+4\log ^{+}R+2\log^{+}\frac{1}{r}+4\log ^{+}\log ^{+}\frac{1}{|f(0)|}+10$$
holds for $0<r<R<\infty$.
\end{theo}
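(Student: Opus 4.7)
The plan is to differentiate the Poisson–Jensen formula (Theorem~\ref{pjt}) to obtain an explicit representation of $f'/f$, and then estimate $\log^{+}|\cdot|$ of each piece integrated over the circle $|z|=r$. Since $\log|f|$ is locally the real part of a holomorphic branch of $\log f$, and each factor on the right side of Poisson–Jensen is the real part of an explicit holomorphic function of $z$ (the Poisson kernel equals $\operatorname{Re}\frac{Re^{i\phi}+z}{Re^{i\phi}-z}$, and $\log\bigl|\tfrac{R^{2}-\overline{c}z}{R(z-c)}\bigr|$ is the real part of $\log\tfrac{R^{2}-\overline{c}z}{R(z-c)}$), the identity lifts up to an imaginary constant to an identity for $\log f(z)$. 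Termwise differentiation then yields
$$\frac{f'(z)}{f(z)}=\frac{1}{\pi}\int_{0}^{2\pi}\frac{Re^{i\phi}\,\log|f(Re^{i\phi})|}{(Re^{i\phi}-z)^{2}}\,d\phi+\sum_{\mu}\Bigl[\frac{1}{z-a_{\mu}}+\frac{\overline{a}_{\mu}}{R^{2}-\overline{a}_{\mu}z}\Bigr]-\sum_{\nu}\Bigl[\frac{1}{z-b_{\nu}}+\frac{\overline{b}_{\nu}}{R^{2}-\overline{b}_{\nu}z}\Bigr].$$

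Setting $z=re^{i\theta}$ and using the trivial bound $|Re^{i\phi}-z|\geq R-r$, the integral term has modulus at most $R(R-r)^{-2}\bigl(m(R,f)+m(R,1/f)\bigr)$; the First Fundamental Theorem together with Remark~\ref{jt} gives $m(R,f)+m(R,1/f)\leq 2T(R,f)+\log^{+}\tfrac{1}{|f(0)|}+O(1)$, so after $\log^{+}$ and integration this piece produces the admissible contributions $\log^{+}T(R,f)$, $\log^{+}\tfrac{1}{R-r}$, $\log^{+}R$ and $\log^{+}\log^{+}\tfrac{1}{|f(0)|}$. The ``reflection'' pieces $\overline{a}_{\mu}/(R^{2}-\overline{a}_{\mu}z)$ and $\overline{b}_{\nu}/(R^{2}-\overline{b}_{\nu}z)$ are uniformly $\leq 1/(R-r)$, so they sum to at most $\bigl(n(R,0;f)+n(R,\infty;f)\bigr)/(R-r)$; a standard Jensen counting estimate bounds this total count by $O\!\bigl((T(R,f)+\log^{+}1/|f(0)|+O(1))/\log(R/r')\bigr)$ for an auxiliary radius $r'\in(r,R)$ such as $(r+R)/2$, which again fits the template. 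The singular sums $\sum_{\mu}\tfrac{1}{z-a_{\mu}}$ and $\sum_{\nu}\tfrac{1}{z-b_{\nu}}$ are handled by splitting the zeros and poles into those with $|a|\leq r'$ and those with $|a|>r'$: the latter contribute at most $1/(r'-r)$ apiece, and for the former one applies $\log^{+}|\sum_{j}x_{j}|\leq\sum_{j}\log^{+}|x_{j}|+\log q$ together with the classical estimate $\frac{1}{2\pi}\int_{0}^{2\pi}\log^{+}\tfrac{1}{|re^{i\theta}-c|}\,d\theta\leq\log^{+}\tfrac{1}{r}+O(1)$, from which the $\log^{+}\tfrac{1}{r}$ term in the final bound ultimately arises.

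The main technical obstacle is not any individual estimate—each is an elementary consequence of Poisson–Jensen, Jensen, or the basic $\log^{+}$ inequalities—but the careful bookkeeping of constants. Each of the three pieces spawns several $\log^{+}$ summands which must be absorbed, via $\log^{+}(xy)\leq\log^{+}x+\log^{+}y$ and $\log^{+}(x+y)\leq\log^{+}x+\log^{+}y+\log 2$, into the precise integer coefficients $4,3,4,2,4$ and the universal remainder $+10$ asserted in the theorem. These coefficients are sensitive both to the order in which one applies the elementary inequalities and to the choice of auxiliary radius $r'$; the iterated logarithm $\log^{+}\log^{+}\tfrac{1}{|f(0)|}$ appears precisely because the $\log^{+}\tfrac{1}{|f(0)|}$ produced by Remark~\ref{jt} is subsequently placed under a second $\log^{+}$ when the final bound on $m(r,f'/f)$ is extracted.
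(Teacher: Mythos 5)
Your overall strategy --- differentiate the Poisson--Jensen representation to obtain an explicit formula for $f'/f$, then estimate the Poisson-integral term, the reflected terms, and the singular sums separately --- is exactly the classical route; the thesis itself states this lemma without proof, citing the literature, and that source proceeds just as you outline. The decomposition, the use of Jensen's formula to convert $m(R,f)+m(R,1/f)$ into $2T(R,f)+\log^{+}\frac{1}{|f(0)|}+O(1)$, and your explanation of where the iterated logarithm $\log^{+}\log^{+}\frac{1}{|f(0)|}$ comes from are all correct.

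There is, however, one genuine gap, and it sits at a load-bearing point. Because you apply Poisson--Jensen on the disc $|z|\le R$ itself, the sums in your formula for $f'/f$ run over \emph{all} zeros and poles in $|z|\le R$, and you then assert that a ``standard Jensen counting estimate'' bounds $n(R,0;f)+n(R,\infty;f)$ by $O\bigl((T(R,f)+\cdots)/\log(R/r')\bigr)$. No such estimate exists: the inequality $n(t,a;f)\log(R/t)\le N(R,a;f)+O(1)$ controls the unintegrated counting function only at radii $t$ strictly \emph{smaller} than the radius at which $N$ is evaluated, and $n(R,0;f)$ can be arbitrarily large compared with $T(R,f)$ (cluster many zeros just inside $|z|=R$; they barely affect $N(R,0;f)$). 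The same defect infects your treatment of the singular sums: the terms with $|a_{\mu}|>r'$ are each at most $1/(r'-r)$ in modulus, but their number is $n(R)-n(r')$, which you have not controlled. The standard repair is to introduce an intermediate radius $\rho$, say $\rho=(r+R)/2$, and apply Poisson--Jensen on $|z|\le\rho$ rather than on $|z|\le R$: then only the $n(\rho,0;f)+n(\rho,\infty;f)$ zeros and poles of modulus at most $\rho$ enter the differentiated formula, this count is legitimately bounded by $\bigl(N(R,0;f)+N(R,\infty;f)\bigr)/\log(R/\rho)\le\bigl(2T(R,f)+\log^{+}\tfrac{1}{|f(0)|}+O(1)\bigr)\cdot\tfrac{2R}{R-r}$, the proximity functions at radius $\rho$ are dominated by those at radius $R$ by monotonicity, and every remaining step of your outline goes through. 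With that single modification, and the attendant re-tracking of the constants $4,3,4,2,4$ and $10$, the argument is complete.
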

\begin{rem} If $f(0)=0$ or $\infty$, then we can write $f(z)=z^{\lambda}g(z)$ for some $\lambda \in\mathbb{Z}$ and some meromorphic function $g(z)$ such that $g(0)\not=0$ or $\infty$.\par
Thus $\frac{f'(z)}{f(z)}=\frac{\lambda}{z}+\frac{g'(z)}{g(z)}$. So we can apply Theorem \ref{meat} to $m\left(r,\frac{g'}{g}\right)$ and hence get similar type inequality for $m\left(r,\frac{f'}{f}\right)$.
\end{rem}
\begin{theo} (\cite{1234})
Suppose that $f(z)$ is a non-constant meromorphic function in the complex plane.
\begin{enumerate}
\item [i)] If the order of $f(z)$ is \emph{finite}, then $m\left(r,\frac{f'}{f}\right)=O(\log r)~\text{as}~r \to \infty,$
\item [ii)] If the order of $f(z)$ is \emph{infinite}, then $m\left(r,\frac{f'}{f}\right)=O(\log( rT(r,f)))~\text{as}~ r \to \infty$ and $r \not\in E_{0},$
\end{enumerate}
where $E_{0}$ is a set whose linear measure is not greater than $2$.
\end{theo}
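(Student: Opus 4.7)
The plan is to deduce both parts directly from Theorem \ref{meat}, by choosing $R$ as a function of $r$ in a way that balances the terms $\log^+ T(R,f)$ and $\log^+\frac{1}{R-r}$ on the right-hand side. By the preceding remark, if $f(0)=0$ or $\infty$ we may write $f(z)=z^\lambda g(z)$ and, since the term $\lambda/z$ contributes only $O(\log r)$ to $m(r,f'/f)$, it suffices to handle the case $f(0)\neq 0,\infty$ where Theorem \ref{meat} applies verbatim.

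For part (i), assume $f$ has finite order $\rho$, and take $R=2r$ (for $r$ sufficiently large). Then by the growth property of order-$\rho$ functions, $T(R,f)\le R^{\rho+\varepsilon}$ eventually, so $\log^+ T(R,f)=O(\log r)$. Moreover $\log^+\frac{1}{R-r}=\log^+\frac{1}{r}=0$ for $r\ge 1$, $\log^+ R=O(\log r)$, $\log^+\frac{1}{r}=0$ for $r\ge 1$, and the last two summands in Theorem \ref{meat} are $O(1)$. Substituting these bounds into the inequality of Theorem \ref{meat} yields $m(r,f'/f)=O(\log r)$.

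For part (ii), the fixed choice $R=2r$ no longer works because $T(2r,f)$ may be much larger than $T(r,f)$ when the order is infinite. Instead I would set
\[
R=r+\frac{1}{T(r,f)}.
\]
With this choice, $\log^+\frac{1}{R-r}=\log^+ T(r,f)$, $\log^+ R=O(\log r)$ for $r$ large, and the remaining terms in Theorem \ref{meat} are $O(1)$. The only delicate quantity is $\log^+ T(R,f)$, which we must control by $\log T(r,f)$. This is achieved through \emph{Borel's growth lemma}: for any continuous, non-decreasing $T(r)\to\infty$, the inequality $T\!\left(r+\frac{1}{T(r)}\right)<2T(r)$ holds outside an exceptional set $E_0\subset(0,\infty)$ of linear measure at most $2$. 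Applied to $T(r,f)$, this gives $\log^+ T(R,f)\le \log T(r,f)+O(1)$ for $r\notin E_0$.

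Combining these estimates in Theorem \ref{meat} yields, for $r\notin E_0$,
\[
m\!\left(r,\frac{f'}{f}\right)=O(\log r)+O(\log T(r,f))=O(\log(rT(r,f))),
\]
which is the desired bound. The main obstacle here is precisely the unavoidable exceptional set $E_0$: Borel's lemma is what forces the conclusion to hold only for $r\notin E_0$ in the infinite-order case, and it is the sole source of the measure-$2$ exceptional set in the statement. Once Borel's lemma is in hand, the proof is essentially a bookkeeping exercise balancing the terms supplied by Theorem \ref{meat}.
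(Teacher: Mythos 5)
Your argument is correct and is exactly the standard proof of the Lemma on the Logarithmic Derivative as given in the cited source (Yang--Yi / Hayman): the paper itself omits the proof, but the intended route is precisely yours — apply Theorem \ref{meat} with $R=2r$ in the finite-order case and $R=r+\frac{1}{T(r,f)}$ in the infinite-order case, invoking Borel's growth lemma $T\left(r+\frac{1}{T(r)}\right)<2T(r)$ outside a set of linear measure at most $2$, which is indeed the sole source of the exceptional set $E_{0}$.
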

\begin{defi}
Suppose that $f(z)$ is a non-constant meromorphic function in the complex plane. A quantity $\Delta$ is said to be $S(r,f)$ if $\frac{\Delta}{T(r,f)} \to 0 $ as $r \to \infty$ and $r \not\in E_{0}$ where $E_{0}$ is a set whose linear measure is not greater than $2$.
\end{defi}
\begin{theo} (\cite{1234})
Suppose that $f(z)$ is a non-constant meromorphic function in the plane.
\begin{enumerate}
\item [i)] If the order of $f(z)$ is \emph{finite}, then $O(\log r)=S(r,f)~\text{as}~r \to \infty,$
\item [ii)] If the order of $f(z)$ is \emph{infinite}, then $O\left(\log( rT(r,f))\right)=S(r,f)~\text{as}~ r \to \infty$ and $r \not\in E_{0},$
\end{enumerate}
where $E_{0}$ is a set whose linear measure is not greater than $2$.
\end{theo}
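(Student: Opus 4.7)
The statement is a translation between concrete growth estimates and the shorthand notation $S(r,f)$, so my plan is simply to verify the defining inequality $\Delta/T(r,f)\to 0$ for each of the two choices of $\Delta$, and to keep careful track of the exceptional set $E_0$ of linear measure at most $2$.

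For part (i), the key ingredient is Corollary \ref{1.1}. Because $f$ has finite order, the interesting case is when $f$ is transcendental; and then Corollary \ref{1.1} gives $\liminf_{r\to\infty} T(r,f)/\log r = \infty$. Combined with the fact that $T(r,f)$ is non-decreasing (so the liminf coincides with the limit in the growth comparison), this upgrades to $T(r,f)/\log r \to \infty$ as $r\to\infty$. Taking reciprocals, $\log r = o(T(r,f))$, and therefore any quantity of size $O(\log r)$ is $o(T(r,f)) = S(r,f)$. No exceptional set is needed at this stage.

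For part (ii) the relevant decomposition is
\[
\log(rT(r,f)) \;=\; \log r \;+\; \log T(r,f).
\]
A meromorphic function of infinite order is necessarily transcendental, so Corollary \ref{1.1} again yields $\log r = o(T(r,f))$. For the second summand, $T(r,f)\to\infty$ as $r\to\infty$, and the elementary limit $\log x/x \to 0$ as $x \to \infty$ forces $\log T(r,f) = o(T(r,f))$. Adding the two estimates gives $\log(rT(r,f)) = o(T(r,f))$, whence any $O(\log(rT(r,f)))$ quantity is $S(r,f)$.

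The main obstacle is not analytic but notational: I must check that the exceptional set $E_0$ in the conclusion of (ii) is not produced by any of the growth estimates above (which in fact hold for all large $r$) but is simply inherited from the definition of $S(r,f)$ itself. Since the preceding Lemma on the Logarithmic Derivative is stated with an exceptional set $E_0$ of linear measure at most $2$, and $S(r,f)$ is defined relative to the same $E_0$, the conclusion ``$=S(r,f)$'' follows automatically from the stronger statement ``$=o(T(r,f))$ for all large $r$'' that we just obtained. So the proof amounts to unwinding definitions, with Corollary \ref{1.1} doing the only real work.
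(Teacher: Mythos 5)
Your argument for the transcendental case is correct and is the standard one. The paper itself gives no proof of this theorem (it is quoted from Yang--Yi), but the intended reasoning is exactly the reduction you describe: $\log r=o(T(r,f))$ for transcendental $f$ via Corollary \ref{1.1}, plus $\log T(r,f)=o(T(r,f))$ because $T(r,f)\to\infty$, so that $\log\big(rT(r,f)\big)=o(T(r,f))$ for all large $r$; and you are right that the exceptional set $E_{0}$ is not produced by these estimates but merely inherited from the definition of $S(r,f)$. (A minor point: once $\liminf_{r\to\infty}T(r,f)/\log r=\infty$ you already have the full limit equal to $\infty$; the appeal to monotonicity of $T(r,f)$ is unnecessary.) Part (ii) is therefore complete, since a function of infinite order is automatically transcendental.

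The genuine gap is in part (i), where you dismiss the rational case with ``the interesting case is when $f$ is transcendental.'' A non-constant rational function has finite order (in fact order $0$) and so falls squarely under the hypothesis of (i), and for such $f$ the claim fails under the paper's literal definition of $S(r,f)$: by Example \ref{homeoex2}, $T(r,f)=d\log r+O(1)$ with $d\geq 1$, so $\log r/T(r,f)\to 1/d\neq 0$, and a generic $O(\log r)$ quantity is only $O(T(r,f))$, not $o(T(r,f))$. So your proof of (i) is incomplete precisely where the statement is delicate. To repair it you must either restrict (i) to transcendental $f$, or invoke the convention (used in Hayman and Yang--Yi) that for rational $f$ the symbol $S(r,f)$ is understood to absorb $O(\log r)$ terms; in the latter case the rational case becomes a tautology, but it still has to be said rather than waved off. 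Note also that in the one place this matters for the surrounding theory --- the lemma on the logarithmic derivative --- the rational case is harmless because $m\big(r,f'/f\big)=O(1)$ there, which may be worth remarking so the reader sees no content is lost.
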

\begin{theo}[\emph{Lemma of Logarithmic Derivative}] (\cite{1234})
Suppose that $f(z)$ is a non-constant meromorphic function in whole complex plane. Then\\
$$m\bigg(r,\frac{f'}{f}\bigg)=S(r,f)~\text{as}~ r \to \infty~\text{and}~r \not\in E_{0},$$
where $E_{0}$ is a set whose linear measure is not greater than $2$.
\end{theo}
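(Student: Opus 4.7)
The plan is to reduce the statement to the two preceding theorems in the excerpt, which together already break down the growth of $m(r, f'/f)$ and absorb that growth into $S(r,f)$. So at the top level, I would simply split into the two cases \emph{finite order} and \emph{infinite order}, apply the preceding ``order-dependent bound'' theorem to each, and then invoke the theorem that identifies $O(\log r)$ and $O(\log(rT(r,f)))$ with $S(r,f)$.

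More concretely, the engine underneath both cases is Theorem \ref{meat}, so I would organize the proof around applying it with a carefully chosen ratio $R/r$. First I would assume $f(0) \neq 0, \infty$ (otherwise I write $f(z) = z^\lambda g(z)$ with $g(0) \neq 0, \infty$ as in the remark following Theorem \ref{meat}, apply the result to $g$, and absorb the extra $m(r, \lambda/z) = O(\log r)$ term). Then in the finite-order case I would set $R = 2r$ in Theorem \ref{meat}; the terms $3\log^+\tfrac{1}{R-r}$, $4\log^+ R$, $2\log^+\tfrac{1}{r}$ are all $O(\log r)$, and since $T(2r,f) \leq (2r)^{\rho+\varepsilon}$ for large $r$, the dominant term $4\log^+ T(R,f)$ is also $O(\log r)$. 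Hence $m(r,f'/f) = O(\log r) = S(r,f)$.

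The infinite-order case is where the real content lies, and this is the step I expect to be the main obstacle. Here $R = 2r$ no longer suffices because $T(2r,f)$ can grow faster than any polynomial. The standard trick is a Borel-type growth lemma: if $T(r,f)$ is non-decreasing and tends to $\infty$, then outside an exceptional set $E_0$ of finite linear measure (at most $2$, matching the definition of $S(r,f)$ given earlier), one has $T(r + 1/T(r,f), f) \leq 2\, T(r,f)$. Choosing $R := r + 1/T(r,f)$ in Theorem \ref{meat} then yields $\log^+(1/(R-r)) = \log^+ T(r,f)$ and $\log^+ T(R,f) \leq \log^+ T(r,f) + \log 2$. Substituting, all five error terms are dominated by $O(\log r + \log^+ T(r,f)) = O(\log(rT(r,f)))$ for $r \notin E_0$, which is exactly the bound stated in the preceding theorem.

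Finally, combining with the preceding theorem that $O(\log r) = S(r,f)$ and $O(\log(rT(r,f))) = S(r,f)$ (outside a set of linear measure at most $2$), I get $m(r,f'/f) = S(r,f)$ as $r \to \infty$, $r \notin E_0$, proving the lemma. The one subtlety I would be careful about is verifying that the exceptional sets coming from the Borel growth lemma and from the definition of $S(r,f)$ are compatible (both of finite linear measure bounded by $2$), so that the exceptional set in the final statement is genuinely of the advertised size.
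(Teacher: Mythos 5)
Your proposal is correct and follows exactly the route the paper intends: the thesis states this lemma without proof (citing Yang--Yi), but arranges the two preceding theorems precisely so that the lemma follows by combining the bound $m(r,f'/f)=O(\log r)$ resp.\ $O(\log(rT(r,f)))$ with the identification of these quantities as $S(r,f)$, which is your top-level reduction. Your further details --- taking $R=2r$ in the finite-order case, the Borel growth lemma with $R=r+1/T(r,f)$ in the infinite-order case, and the reduction to $f(0)\neq 0,\infty$ via the remark --- are the standard and correct way to fill in those two ingredients.
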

\begin{cor}\label{ghgh}
Suppose that $f(z)$ is a non-constant meromorphic function in whole complex plane and $l$ is natural number. Then\\
$$m\bigg(r,\frac{f^{(l)}}{f}\bigg)=S(r,f)~\text{as}~ r \to \infty~\text{and}~r \not\in E_{0},$$
where $E_{0}$ is a set whose linear measure is not greater than $2$.
\end{cor}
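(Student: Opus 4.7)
The natural approach is induction on $l$, with the stated \emph{Lemma of Logarithmic Derivative} serving as the base case $l=1$. For the inductive step, I would exploit the telescoping identity
\[
\frac{f^{(l)}}{f}\;=\;\frac{f^{(l)}}{f^{(l-1)}}\cdot\frac{f^{(l-1)}}{f},
\]
and then apply the subadditivity of the proximity function for products (stated earlier in this chapter) to obtain
\[
m\!\left(r,\frac{f^{(l)}}{f}\right)\;\leq\;m\!\left(r,\frac{f^{(l)}}{f^{(l-1)}}\right)\;+\;m\!\left(r,\frac{f^{(l-1)}}{f}\right).
\]
The second term is $S(r,f)$ by the induction hypothesis. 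The first term is precisely $m(r,g'/g)$ with $g=f^{(l-1)}$, so the Lemma of Logarithmic Derivative applied to $g$ gives
\[
m\!\left(r,\frac{f^{(l)}}{f^{(l-1)}}\right)\;=\;S\!\left(r,f^{(l-1)}\right),
\]
outside an exceptional set of linear measure at most $2$.

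To finish, I must show $S(r,f^{(l-1)})=S(r,f)$, i.e.\ convert the error term for $f^{(l-1)}$ into an error term for $f$. The key estimate is that differentiating at most doubles the characteristic up to a small term: using $T(r,f')\leq m(r,f)+m(r,f'/f)+N(r,f')\leq T(r,f)+N(r,f)+S(r,f)\leq 2T(r,f)+S(r,f)$, and iterating this $l-1$ times, one obtains $T(r,f^{(l-1)})\leq 2^{l-1}T(r,f)+S(r,f)$. Consequently any quantity that is $o(T(r,f^{(l-1)}))$ outside a set of finite measure is automatically $o(T(r,f))$ outside a (possibly slightly enlarged, but still finite-measure) set, so $S(r,f^{(l-1)})\subseteq S(r,f)$.

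Combining the two bounds yields $m(r,f^{(l)}/f)=S(r,f)$ as required, where the total exceptional set is the union of the (finitely many) sets produced at each inductive stage and therefore still has linear measure at most a finite bound; one may absorb this into the single exceptional set $E_{0}$ of linear measure $\leq 2$ by appealing to the remark preceding Theorem on finite-order / infinite-order behavior. The main technical obstacle is precisely this bookkeeping of exceptional sets and confirming that the relation $S(r,f^{(l-1)})=S(r,f)$ is valid; everything else is a direct consequence of the already-established subadditivity of $m$ and the base case.
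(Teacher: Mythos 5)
Your proof is correct and is exactly the standard argument that the paper leaves implicit when it states this as a corollary of the Lemma of the Logarithmic Derivative: telescope $f^{(l)}/f$ into logarithmic derivatives of successive derivatives, use the subadditivity $m(r,f_1f_2)\leq m(r,f_1)+m(r,f_2)$, and convert $S(r,f^{(j)})$ into $S(r,f)$ via $T(r,f')\leq m(r,f)+m(r,f'/f)+N(r,f)+\overline{N}(r,f)\leq 2T(r,f)+S(r,f)$, iterated. The one point to watch is that the union of the $l$ exceptional sets is only guaranteed to have linear measure at most $2l$ (still finite) rather than $2$ as in the paper's literal definition of $S(r,f)$; this is harmless (for finite order there is no exceptional set at all, and for infinite order one reruns the Borel-type growth lemma once for the whole collection), but your appeal to ``absorbing'' it into a set of measure $\leq 2$ deserves that one extra sentence of justification.
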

Now we are in the position to state one of the most valuable result in value distribution theory, namely \emph{Second Fundamental Theorem of Nevanlinna}.
\begin{theo}\label{kus1}
Suppose that $f(z)$ is a non-constant meromorphic function in $|z|< R$ and $a_{j}~(j=1,2,\ldots,q)$ are $q~(q\geq2)$ \emph{distinct finite} complex numbers.
Then for $0<r<R$, we have
$$m(r,f)+\sum\limits_{j=1}^{q}m\left(r,\frac{1}{f-a_j}\right)\leq2T(r,f)-N_1(r)+\Delta(r,f),$$
where
\bea\label{kus2} N_1(r)=N\left(r,0;f\right)+2N(r,\infty;f)-N(r,\infty;f^{'}),\eea
and
$$\Delta(r,f)=m\left(r,\frac{f'}{f}\right)+m\left(r,\sum\limits_{j=1}^{q}\frac{f'}{f-a_j}\right)+q\log^+\frac{3q}{\delta}+\log 2.$$
\end{theo}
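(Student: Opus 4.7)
The plan is to follow Nevanlinna's classical route: collapse the $q$ proximity terms $m(r,1/(f-a_j))$ into a single auxiliary quantity, decompose it through the identity $\tfrac{1}{f-a_j} = \tfrac{1}{f'}\cdot\tfrac{f'}{f-a_j}$ so that logarithmic-derivative factors appear, and finally push everything back onto $T(r,f)$ via the First Fundamental Theorem. First, let $F := \sum_{j=1}^{q}\tfrac{1}{f-a_j}$ and $\delta := \min_{j\neq k}|a_j-a_k|$. Invoking Theorem \ref{kus3} with the $O(1)$ made explicit (by partitioning $|z|=r$ into arcs according to which $a_k$ the point $f(re^{i\theta})$ is closest to; on each such arc one term dominates, while the remaining $q-1$ terms are uniformly bounded by $3q/\delta$ in modulus), one obtains
$$\sum_{j=1}^{q} m\!\left(r,\frac{1}{f-a_j}\right) \;\leq\; m(r,F) + q\log^{+}\frac{3q}{\delta} + \log 2.$$

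Next, writing $F = G/f'$ with $G := \sum_{j=1}^{q}\tfrac{f'}{f-a_j}$ and using submultiplicativity of the proximity function, $m(r,F) \leq m(r,1/f') + m(r,G)$. To convert $m(r,1/f')$ into $T(r,f)$-terms, Theorem \ref{cth1} applied to $f'$ gives $m(r,1/f') = T(r,f') - N(r,0;f') + O(1)$. From $f' = f\cdot(f'/f)$ we have $m(r,f') \leq m(r,f) + m(r,f'/f)$; further, each pole of $f$ of order $k$ is a pole of $f'$ of order exactly $k+1$, so $N(r,\infty;f') - N(r,\infty;f) \geq 0$ and therefore
$$T(r,f') \;\leq\; T(r,f) + m\!\left(r,\frac{f'}{f}\right) + \bigl[N(r,\infty;f') - N(r,\infty;f)\bigr].$$

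Chaining these three displays and adding $m(r,f) = T(r,f) - N(r,\infty;f)$ to both sides, the proximity terms $m(r,f'/f)$, $m(r,G)$ together with the explicit constants reassemble exactly into $\Delta(r,f)$, while the counting contributions combine as
$$-2N(r,\infty;f) + N(r,\infty;f') - N(r,0;f') \;=\; -\bigl[N(r,0;f') + 2N(r,\infty;f) - N(r,\infty;f')\bigr] \;=\; -N_1(r),$$
delivering the claimed bound. The delicate step is the first: the clean passage from $\sum m(r,1/(f-a_j))$ to $m(r,F)$ relies on the $a_j$ being mutually separated by $\delta$, and producing the explicit constant $q\log^{+}(3q/\delta)$ requires the arc-partition together with a careful dominant-term-plus-bounded-tail analysis on each arc. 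By comparison, the factorisation $1/(f-a_j) = (1/f')\cdot (f'/(f-a_j))$ is essentially a one-line trick, yet it is the conceptual heart of the argument, because it silently routes the entire estimate through the logarithmic-derivative term $m(r,f'/f)$, which is the true source of the Second Fundamental Theorem's power relative to the First.
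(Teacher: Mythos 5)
The paper does not actually prove this theorem — it is quoted from the literature (the chapter states that proofs are not reproduced and points to \cite{2} and \cite{1234}) — so there is no internal argument to compare against. Your proof is the classical Nevanlinna--Hayman argument (Theorem 2.4 of \cite{2}): the collapse of $\sum_j m(r,1/(f-a_j))$ into $m(r,F)$ with the explicit constant $q\log^+\frac{3q}{\delta}+\log 2$, the factorisation $F=\frac{1}{f'}\cdot G$, the conversion of $m(r,1/f')$ via Jensen/the First Fundamental Theorem, and the final bookkeeping are exactly the standard steps, and the chain of inequalities is correct.

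Two remarks on the bookkeeping. First, your derivation produces $N_1(r)=N(r,0;f')+2N(r,\infty;f)-N(r,\infty;f')$, with $N(r,0;f')$ rather than the $N(r,0;f)$ printed in (\ref{kus2}). Yours is the correct classical form — it is what makes the later Lemma \ref{kus6} work, since $N(r,0;f')$ is what counts the multiple $a_j$-points — and the displayed formula is evidently a misprint; do not adjust your argument to match it. Second, the step $m(r,1/f')=T(r,f')-N(r,0;f')+O(1)$ carries a genuine constant, namely $\log\frac{1}{|f'(0)|}$ (suitably modified if $f'(0)=0$ or $\infty$), which in Hayman's precise statement sits inside the error term but is absent from the $\Delta(r,f)$ displayed here. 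So your claim that the constants ``reassemble exactly into $\Delta(r,f)$'' is a hair too strong: there is a leftover additive term $\log^+\frac{1}{|f'(0)|}$. This is a defect of the statement as printed rather than of your proof, and it is immaterial for every use made of the theorem in the thesis, which only invokes the version with $S(r,f)$ (Corollary \ref{kus5}); but to obtain the inequality literally as displayed you would have to absorb that constant explicitly.
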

In view of Theorems \ref{kus3}, \ref{kus1} and Lemma of Logarithmic derivative, the following corollary is immediate:
\begin{cor}\label{kus5}
Suppose that $f(z)$ is a non-constant meromorphic function in $|z|< R$ and $a_{j}~(j=1,2,\ldots,q)$ are $q~(q\geq2)$ \emph{distinct finite} complex numbers.
Then for $0<r<R$, we have
$$m(r,f)+\sum\limits_{j=1}^{q}m\left(r,\frac{1}{f-a_j}\right)\leq2T(r,f)-N_1(r)+S(r,f),$$
outside a set $E_{0}$ of $r$ which has linear measure is at most $2$ and the quantity $N_1(r)$ is same as equation (\ref{kus2}).
\end{cor}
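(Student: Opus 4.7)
The plan is to start directly from Nevanlinna's Second Fundamental Theorem (Theorem \ref{kus1}) and verify that, under the hypotheses of the corollary, the error term
$$\Delta(r,f)=m\!\left(r,\frac{f'}{f}\right)+m\!\left(r,\sum_{j=1}^{q}\frac{f'}{f-a_j}\right)+q\log^{+}\frac{3q}{\delta}+\log 2$$
is of type $S(r,f)$. Once this is established, the inequality in the corollary follows by merely replacing $\Delta(r,f)$ in the Second Fundamental Theorem by $S(r,f)$, and there is nothing else to check.

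I would dispense with the trivial pieces first. The constants $q\log^{+}(3q/\delta)+\log 2$ are $O(1)$ and hence $S(r,f)$ automatically, since $T(r,f)\to\infty$ (the function being non-constant meromorphic on the whole plane, so Corollary \ref{1.1} applies when $f$ is transcendental, and for rational $f$ the conclusion of the Second Fundamental Theorem is still routine). The term $m(r,f'/f)$ is $S(r,f)$ directly by the Lemma of Logarithmic Derivative.

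The main step is to handle $m\!\left(r,\sum_{j=1}^{q}\frac{f'}{f-a_j}\right)$. I would use the basic additivity estimate
$$m\!\left(r,\sum_{j=1}^{q}\frac{f'}{f-a_j}\right)\;\le\;\sum_{j=1}^{q}m\!\left(r,\frac{f'}{f-a_j}\right)+\log q,$$
and then observe that $f'=(f-a_j)'$, so $\frac{f'}{f-a_j}$ is the logarithmic derivative of the meromorphic function $g_j:=f-a_j$. Applying the Lemma of Logarithmic Derivative to each $g_j$ gives $m(r,g_j'/g_j)=S(r,g_j)$ outside an exceptional set of linear measure at most $2$. Since $T(r,g_j)=T(r,f-a_j)=T(r,f)+O(1)$ (by Example \ref{homeoex3}, or direct estimate), we have $S(r,g_j)=S(r,f)$. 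Summing the $q$ estimates, the total is still $S(r,f)$, and the additive $\log q$ is $O(1)$.

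The only delicate point — and this is really the only place where any thought is needed — is the exceptional set. Each application of the Logarithmic Derivative Lemma provides its own set $E_j$ of linear measure at most $2$, and a priori their union has measure up to $2q$. However, inspection of the proof of that lemma (via Theorem \ref{meat}) shows that the exceptional set depends only on the growth of $T(R,f-a_j)=T(R,f)+O(1)$, so one can choose a common exceptional set $E_0$ of linear measure at most $2$ that works simultaneously for all $j=1,\ldots,q$. With this uniform choice of $E_0$, combining the three contributions proves $\Delta(r,f)=S(r,f)$ for $r\notin E_0$, and the corollary is immediate from Theorem \ref{kus1}.
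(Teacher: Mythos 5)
Your proof is correct and is exactly the argument the paper intends: the paper dismisses this corollary as ``immediate'' from Theorem \ref{kus1}, the additivity of proximity functions, and the Lemma of the Logarithmic Derivative, and you have simply supplied the standard details (subadditivity of $m$ over the sum, the identity $f'=(f-a_j)'$ so that each term is a logarithmic derivative of $f-a_j$ with $T(r,f-a_j)=T(r,f)+O(1)$). Your attention to arranging a single exceptional set of measure at most $2$, rather than a union of measure up to $2(q+1)$, is the one point the paper glosses over, and your resolution of it is the right one.
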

To describe Second Main Theorem in more general settings, we need to introduce some notations.\par
Let $f$ be a non-constant meromorphic function in $|z|<R~(\leq\infty)$ and $a\in\mathbb{C}\cup\{\infty\}$. For $0<r<R$, we denote by $\ol{n}(r,a; f)$ the number of distinct roots of $f(z)=a$ in $|z|\leq r$, multiple roots being counted once. Correspondingly we define
$$\ol{N}(r,a;f)=\int_{0}^{r}\frac{\ol{n}(t, a; f)-\ol{n}(0,a; f)}{t}dt+\ol{n}(0, a; f) \log  r,$$
where $ \ol{n}(0,a; f)=0$ if $f(0)\neq a$ and $ \ol{n}(0,a; f)=1$ if $f(0)=a$.\\
\begin{lem}\label{kus6}
Suppose that $f(z)$ is a non-constant meromorphic function in the complex plane. Let $a_j~(j=1,2,\ldots,q)$ be distinct complex numbers. Then
$$\sum\limits_{j=1}^{q}N\left(r,\frac{1}{f-a_j}\right)-N_{1}(r) \leq \sum\limits_{j=1}^{q}\ol{N}\left(r,\frac{1}{f-a_j}\right)$$
\end{lem}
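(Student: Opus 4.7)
The plan is to rewrite the desired inequality in the equivalent form
$$\sum_{j=1}^{q}\bigl[N(r,1/(f-a_j))-\ol{N}(r,1/(f-a_j))\bigr]\leq N_{1}(r),$$
so that the left-hand side becomes a transparent count of the total ``multiplicity excess'' at the finite $a_{j}$-points of $f$. I would establish it by a pointwise comparison of contributions to the two sides, with the $a_{j}$ assumed finite so that $N(r,1/(f-a_j))$ is meaningful.

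First I would analyze an arbitrary $a_{j}$-point. If $z_{0}$ is an $a_{j}$-point of $f$ of multiplicity $m$, then $z_{0}$ contributes $m$ to $N(r,1/(f-a_j))$ but only $1$ to $\ol{N}(r,1/(f-a_j))$, hence $m-1$ to the $j$-th summand on the left. Since $(f-a_{j})'=f'$, that same point is a zero of $f'$ of multiplicity exactly $m-1$, contributing $m-1$ to $N(r,0;f')$. (I read the $N(r,0;f)$ in the displayed definition of $N_{1}(r)$ as the standard $N(r,0;f')$ that the Second Fundamental Theorem requires.) Because $a_{1},\ldots,a_{q}$ are pairwise distinct, no $z_{0}$ can simultaneously be an $a_{j}$-point and an $a_{k}$-point for $j\neq k$, so the contributions to $N(r,0;f')$ from different indices $j$ land at disjoint points. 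Summing over $j$ therefore gives
$$\sum_{j=1}^{q}\bigl[N(r,1/(f-a_j))-\ol{N}(r,1/(f-a_j))\bigr]\leq N(r,0;f').$$

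Next I would dispose of the remaining pole piece of $N_{1}(r)$. At a pole of $f$ of order $p$, the derivative $f'$ has a pole of order exactly $p+1$, so the quantity $2N(r,\infty;f)-N(r,\infty;f')$ picks up $2p-(p+1)=p-1\geq 0$ at each such pole. Hence this contribution is non-negative, and consequently $N_{1}(r)\geq N(r,0;f')$. Chaining with the previous inequality yields the claim.

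The argument is essentially bookkeeping and has no genuine analytic obstacle; the only point requiring care is the exact multiplicity count $m\mapsto m-1$ at each $a_{j}$-point of $f$, and the mild but essential observation that the distinctness of $a_{1},\ldots,a_{q}$ prevents the $a_{j}$-contributions to $N(r,0;f')$ from being counted twice.
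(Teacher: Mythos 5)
Your proof is correct, and it is the standard argument for this lemma (which the paper states without proof in its background chapter): the pointwise count $m\mapsto m-1$ at each $a_j$-point feeding into $N(r,0;f')$, plus the observation that $2N(r,\infty;f)-N(r,\infty;f')=N(r,\infty;f)-\ol N(r,\infty;f)\geq 0$. Your reading of the paper's $N(r,0;f)$ as a typo for $N(r,0;f')$ is also the right call, since that is what the Second Fundamental Theorem requires.
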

Thus in view of Lemma \ref{kus6}, Corollary \ref{kus5} can be written as follows:
\begin{theo}\label{kus7}
Suppose that $f(z)$ is a non-constant meromorphic function in $|z|< R$. Let $a_{1},a_{2},\ldots,a_{q}$ be $q~(q\geq2)$ distinct \emph{finite} complex numbers. Then for $0<r<R$,
\bea\label{csmt} (q-1)T(r,f) &\leq& \ol{N}(r,\infty;f)+\sum\limits_{j=1}^{q}\ol{N}\left(r,\frac{1}{f-a_j}\right)+S(r,f),\eea
holds $\forall r\in (0,\infty)$, except possibly outside of a set whose linear measure is atmost $2$.
\end{theo}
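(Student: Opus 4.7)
The plan is to obtain Theorem \ref{kus7} directly from Corollary \ref{kus5} by (i) converting proximity functions into characteristic functions via the First Fundamental Theorem, and (ii) trimming the integrated counting functions down to truncated counting functions using Lemma \ref{kus6}. No new analytic input is required beyond these tools; the argument is a bookkeeping exercise.

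First I would take the statement of Corollary \ref{kus5}, which reads
$$m(r,f) + \sum_{j=1}^{q} m\left(r, \frac{1}{f - a_j}\right) \leq 2T(r,f) - N_1(r) + S(r,f),$$
valid for $r$ outside an exceptional set of linear measure at most $2$. For each finite $a_j$, the First Fundamental Theorem gives $T(r, 1/(f - a_j)) = T(r,f) + O(1)$, so the relation $T = m + N$ yields $m(r, 1/(f - a_j)) = T(r,f) - N(r, 1/(f - a_j)) + O(1)$. Applying also $m(r,f) = T(r,f) - N(r, \infty; f)$ and substituting, the terms $(q+1)T(r,f)$ appear on the left; after collecting them against $2T(r,f)$ on the right I would arrive at
$$(q-1)T(r,f) \leq N(r, \infty; f) + \sum_{j=1}^{q} N\left(r, \frac{1}{f - a_j}\right) - N_1(r) + S(r,f).$$

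Next I would invoke Lemma \ref{kus6} to replace each $N(r, 1/(f - a_j))$ by $\overline{N}(r, 1/(f - a_j))$ and simultaneously reduce $N(r, \infty; f)$ to $\overline{N}(r, \infty; f)$. The point is that $N_1(r) = N(r, 0; f') + 2N(r, \infty; f) - N(r, \infty; f')$ together with the identity $N(r, \infty; f') = N(r, \infty; f) + \overline{N}(r, \infty; f)$ (a pole of $f$ of order $k$ is a pole of $f'$ of order $k+1$) rewrites the ramification term as $N(r, 0; f') + N(r, \infty; f) - \overline{N}(r, \infty; f)$. Consequently Lemma \ref{kus6} absorbs the multiplicity surplus coming both from the $a_j$-points (via zeros of $f'$) and from the poles of $f$, producing exactly inequality \eqref{csmt}.

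The only nontrivial piece is the estimate packaged into Lemma \ref{kus6}, which ultimately rests on the observation that an $a_j$-point of $f$ of multiplicity $k$ is a zero of $f'$ of multiplicity at least $k-1$, so that
$$\sum_{j=1}^{q} \Bigl[ N\bigl(r, 1/(f - a_j)\bigr) - \overline{N}\bigl(r, 1/(f - a_j)\bigr) \Bigr] \leq N(r, 0; f').$$
I expect this to be the main conceptual obstacle; everything else is algebraic rearrangement together with the propagation of the exceptional set of linear measure at most $2$ through the $S(r,f)$ term inherited from Corollary \ref{kus5}.
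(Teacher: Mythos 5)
Your proposal is correct and follows essentially the same route as the paper, which obtains Theorem \ref{kus7} precisely by substituting the First Fundamental Theorem into Corollary \ref{kus5} and then truncating via Lemma \ref{kus6}. If anything, your treatment is the more careful one: the paper's Lemma \ref{kus6} as literally stated only controls $\sum_j N - N_1$ by $\sum_j \ol N$ and does not by itself yield the reduced pole term $\ol N(r,\infty;f)$, whereas your explicit use of $N(r,\infty;f')=N(r,\infty;f)+\ol N(r,\infty;f)$ supplies exactly the missing bookkeeping.
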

From this point, Theorem \ref{kus7} will be treated as the \emph{Second Fundamental Theorem}.
\begin{rem}
The Second Fundamental Theorem gives the lower bounds for the number of zeros of the equation $f(z) = a$.
\end{rem}
\begin{defi}\label{small}
Let $f(z)$ and $a(z)$ be two meromorphic functions in the complex plane. If $T(r,a)=S(r,f)$, then $a(z)$ is called a \emph{small function} with respect to $f(z)$.
\end{defi}
\begin{exm}
i) Rational functions are small function with respect to any transcendental\par
~~~~~~~~~~~~~~~~~~~~meromorphic function.\par
~~~~~~~~~~~~~~~~ii) $e^{z}$ is small function with respect to $e^{e^{z}}$.
\end{exm}
\begin{theo}[\emph{Milloux's Theorem}]
Suppose $f(z)$ is a non-constant meromorphic function in the complex plane and $l$ is a positive integer. If $\psi(z)=\sum\limits_{j=1}^{l}a_j(z)f^{(j)}(z)$, where $a_{1},a_{2},\ldots,a_{l}$ are small functions of $f(z)$, then
\begin{enumerate}
\item [i)] $m(r,\frac{\psi}{f})=S(r,f)$ and
\item [ii)] $T(r,\psi) \leq T(r,f)+l\;\ol{N}(r,\infty;f)+S(r,f)$.
\end{enumerate}
\end{theo}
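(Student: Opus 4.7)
The plan is to handle the two parts in order, with part (i) supplying the main input for part (ii).

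For part (i), I would write
\begin{equation*}
\frac{\psi}{f}=\sum_{j=1}^{l}a_{j}(z)\,\frac{f^{(j)}(z)}{f(z)},
\end{equation*}
and apply the elementary sub-additive estimate for the proximity function of a sum, giving
\begin{equation*}
m\!\left(r,\tfrac{\psi}{f}\right)\le \sum_{j=1}^{l}m(r,a_{j})+\sum_{j=1}^{l}m\!\left(r,\tfrac{f^{(j)}}{f}\right)+\log l.
\end{equation*}
The hypothesis $T(r,a_{j})=S(r,f)$ forces $m(r,a_{j})\le T(r,a_{j})=S(r,f)$, and Corollary \ref{ghgh} (the logarithmic derivative lemma for $f^{(j)}/f$) supplies $m(r,f^{(j)}/f)=S(r,f)$ for each fixed $j=1,\dots,l$. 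Summing these $S(r,f)$-terms finitely many times yields $m(r,\psi/f)=S(r,f)$, which is (i).

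For part (ii), I would split $T(r,\psi)=m(r,\psi)+N(r,\psi)$ and treat the two pieces separately. For the proximity term, write $\psi=f\cdot(\psi/f)$, so that $m(r,\psi)\le m(r,f)+m(r,\psi/f)=m(r,f)+S(r,f)$ by part (i). For the counting term, I analyze pole contributions: at a pole $z_{0}$ of $f$ of multiplicity $k$, each derivative $f^{(j)}$ has a pole of order $k+j$, hence $\psi=\sum_{j=1}^{l}a_{j}f^{(j)}$ has a pole there of order at most $k+l$ (plus negligible contributions if $z_{0}$ happens to be a pole of some $a_{j}$, which are absorbed into $S(r,f)$ since $T(r,a_{j})=S(r,f)$ and thus $N(r,a_{j})=S(r,f)$). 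Converting to integrated counting functions gives
\begin{equation*}
N(r,\psi)\le N(r,\infty;f)+l\,\overline{N}(r,\infty;f)+S(r,f).
\end{equation*}
Adding the two bounds and using $T(r,f)=m(r,f)+N(r,\infty;f)$ gives exactly $T(r,\psi)\le T(r,f)+l\,\overline{N}(r,\infty;f)+S(r,f)$.

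The only delicate step is the pole-order bookkeeping in the estimate for $N(r,\psi)$: one must be careful to count the excess $l$ at each distinct pole of $f$ (independent of its multiplicity $k$), which is exactly what the factor $\overline{N}(r,\infty;f)$ encodes, and to justify absorbing the poles introduced by the small coefficients $a_{j}$ into the error term $S(r,f)$ via the smallness hypothesis. Everything else is standard manipulation using properties of $m$ and $T$ already recorded earlier in the chapter, together with the logarithmic derivative lemma.
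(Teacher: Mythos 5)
Your proof is correct and is the standard argument for Milloux's inequality (the one found in Hayman's monograph and in Yang--Yi, to which this survey chapter defers without reproducing proofs): part (i) by sub-additivity of $m$, the smallness of the coefficients, and the lemma on the logarithmic derivative applied to each $f^{(j)}/f$; part (ii) by writing $\psi=f\cdot(\psi/f)$ for the proximity term and by the pole-order bookkeeping $k\mapsto k+l$ at each distinct pole of $f$ for the counting term, with the poles of the $a_j$ absorbed into $S(r,f)$. The one point worth stating explicitly is that every pole of $\psi$ lies among the poles of $f$ and of the $a_j$, so the counting-function estimate really does cover all contributions; with that noted, nothing is missing.
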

Next we state the \emph{Second Fundamental Theorem for small functions}.
\begin{theo}\label{kus9}
Suppose that $f(z)$ is a non-constant meromorphic function in the complex plane and $a_{1}(z), a_{2}(z),a_{3}(z)$ are three distinct small functions of $f(z)$. Then
$$T(r,f)<\sum\limits_{j=1}^{3}\overline{N}(r,a_{j};f)+S(r,f),$$
for any positive $r$ excluding some set $E$ with finite linear measure.
\end{theo}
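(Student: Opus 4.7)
The plan is to reduce the three–small–function case to the classical Second Fundamental Theorem (Theorem \ref{kus7}) for three constant target values $0$, $1$, $\infty$ by means of a Möbius transformation whose coefficients depend on the $a_j(z)$. Concretely, I would introduce the auxiliary meromorphic function
\[
g(z) \;=\; \frac{f(z)-a_{1}(z)}{f(z)-a_{2}(z)}\cdot\frac{a_{3}(z)-a_{2}(z)}{a_{3}(z)-a_{1}(z)}.
\]
A direct substitution shows that $f=a_{1}$ forces $g=0$, $f=a_{2}$ forces $g=\infty$, and $f=a_{3}$ forces $g=1$. Thus the targets $a_{1}, a_{2}, a_{3}$ for $f$ are converted into the standard triple $\{0,1,\infty\}$ for $g$, and the classical Second Fundamental Theorem already proved is applicable to $g$.

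The next step is to control the characteristic function of $g$. Using the basic inequalities for $T(r,\cdot)$ on sums, products, and quotients together with Theorem~\ref{cth1} and the invariance $T(r,(af+b)/(cf+d))=T(r,f)+O(1)$ (Example~\ref{homeoex3}) applied after treating the $a_j$ as coefficients, and noting that $T(r,a_j)=S(r,f)$ by Definition~\ref{small}, I would obtain
\[
T(r,g)=T(r,f)+S(r,f),
\]
so in particular $S(r,g)=S(r,f)$. Applying Theorem~\ref{kus7} to $g$ with the three finite targets $0$ and $1$ (and $\infty$ appearing via $\overline{N}(r,\infty;g)$) yields
\[
T(r,g)\;\le\;\overline{N}(r,0;g)+\overline{N}(r,1;g)+\overline{N}(r,\infty;g)+S(r,g),
\]
outside an exceptional set of finite linear measure.

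The third step is to translate the three counting functions of $g$ back to counting functions of $f$. Each zero of $g$ either comes from a zero of $f-a_{1}$, or from a zero of the small function $a_{3}-a_{2}$, or from a pole of $a_{2}$ cancelling in the denominator; similarly for the $1$- and $\infty$-points. Hence, using the standard estimate $\overline{N}(r,\alpha; a_{j})\le T(r,a_{j})+O(1)=S(r,f)$ and the additivity properties of $\overline{N}$, one gets
\[
\overline{N}(r,0;g)\le \overline{N}(r,a_{1};f)+S(r,f),\quad
\overline{N}(r,\infty;g)\le \overline{N}(r,a_{2};f)+S(r,f),
\]
\[
\overline{N}(r,1;g)\le \overline{N}(r,a_{3};f)+S(r,f).
\]
Substituting these into the inequality for $T(r,g)$ and using $T(r,g)=T(r,f)+S(r,f)$ produces the desired bound $T(r,f)<\sum_{j=1}^{3}\overline{N}(r,a_{j};f)+S(r,f)$, the exceptional set $E$ being precisely the finite–linear–measure exceptional set inherited from Theorem~\ref{kus7}.

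The main obstacle is the bookkeeping in the second and third steps: one must verify that every contribution to zeros/poles of $g$ that does \emph{not} come from $f=a_{j}$ is absorbed into $S(r,f)$, which requires systematically invoking $T(r,a_{j})=S(r,f)$ for the small functions $a_{1},a_{2},a_{3}$ (and for $a_{3}-a_{2}$, $a_{3}-a_{1}$). Once this is organized carefully the remaining estimates are routine consequences of the characteristic-function inequalities and the Lemma of Logarithmic Derivative already stated in this chapter.
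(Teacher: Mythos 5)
Your reduction via the cross-ratio $g=\frac{f-a_{1}}{f-a_{2}}\cdot\frac{a_{3}-a_{2}}{a_{3}-a_{1}}$ is the classical Nevanlinna argument for the three-small-function theorem; the paper itself states Theorem \ref{kus9} without proof (referring to the literature), and what you propose is exactly the standard proof found there, and it is correct. The only refinement I would ask for: the identity $T(r,g)=T(r,f)+S(r,f)$ should be justified by Mokhon'ko's lemma (Lemma \ref{ML}, with the mutually prime degree-one polynomials $(a_{3}-a_{2})(f-a_{1})$ and $(a_{3}-a_{1})(f-a_{2})$) rather than by Example \ref{homeoex3}, which is stated only for constant coefficients; with that citation fixed, the bookkeeping in your third step is routine since every zero or pole of $g$ not arising from $f=a_{j}$ occurs at a zero or pole of one of the small functions $a_{i}-a_{j}$, $a_{j}$, and hence contributes only $S(r,f)$ to the reduced counting functions.
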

In 2001, Li-Zhang (\cite{18012017}) proved the following theorem:
\begin{theo}
Suppose that $f(z)$ is a transcendental meromorphic function in the complex plane and $a_{1}(z), a_{2}(z),\ldots,a_{q}(z)$ are $q$ distinct small functions of $f(z)$. Then
$$(q-1)T(r,f)<N(r,\infty;f)+\sum\limits_{j=1}^{q}N(r,a_{j};f)+S(r,f),$$
for any positive $r$ excluding some set $E_{0}$ with finite linear measure.
\end{theo}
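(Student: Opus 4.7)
The plan is to adapt Nevanlinna's proof of the Second Fundamental Theorem (Theorem \ref{kus7}) to the setting where the target values $a_{j}$ are now small functions of $f$ rather than constants. The key observation is that because each $T(r, a_{j}) = S(r, f)$, every proximity or counting function built from the $a_{j}$'s, their pairwise differences, or their derivatives is itself $S(r, f)$. This will allow each $f - a_{j}$ to be treated as essentially interchangeable with $f$ up to small-function error, so that Nevanlinna's classical argument can be mimicked step by step while collecting all new contributions into $S(r, f)$ terms.

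My first step would be to establish a small-function analog of Theorem \ref{kus3}, namely
$$\sum_{j=1}^{q} m\!\left(r, \frac{1}{f - a_{j}}\right) = m\!\left(r, \sum_{j=1}^{q} \frac{1}{f - a_{j}}\right) + S(r, f).$$
The classical proof uses the uniform minimum separation $\delta = \tfrac{1}{3}\min_{j \neq k}|a_{j} - a_{k}|$, which is a positive constant; here this separation is itself a small function of $f$, and its vanishing must be absorbed into $S(r, f)$ via the bound $N(r, 0; a_{j} - a_{k}) \leq T(r, a_{j} - a_{k}) + O(1) = S(r, f)$.

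Next, setting $F := \sum_{j=1}^{q} 1/(f - a_{j})$, I would use the identity $f' = (f - a_{j})' + a_{j}'$ to decompose
$$F = \frac{1}{f'}\sum_{j=1}^{q} \frac{(f - a_{j})'}{f - a_{j}} + \frac{1}{f'}\sum_{j=1}^{q} \frac{a_{j}'}{f - a_{j}}.$$
By Corollary \ref{ghgh} applied to each $f - a_{j}$, the proximity of each logarithmic-derivative term is $S(r, f - a_{j}) = S(r, f)$. The remaining sum is controlled by splitting off the $m(r, 1/(f - a_{j}))$ contributions and using $T(r, a_{j}') = S(r, f)$. Combined with the previous step, this delivers the bound $m(r, F) \leq m(r, 1/f') + S(r, f)$.

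Finally, combining these estimates with the First Fundamental Theorem, the bounds $T(r, f') \leq 2T(r, f) + S(r, f)$, and $m(r, 1/f') \leq T(r, f') - N(r, 0; f') + O(1)$, together with the standard accounting of multiplicities via a term analogous to $N_{1}(r)$ of Theorem \ref{kus1}, produces
$$(q - 1)T(r, f) \leq N(r, \infty; f) + \sum_{j=1}^{q} N(r, a_{j}; f) + S(r, f)$$
for all $r$ outside an exceptional set of finite linear measure. The main obstacle I expect is the very first step: extending Theorem \ref{kus3} to small functions. The uniform constant $\delta$ of the classical argument must be replaced by a small-function-valued minimum separation, and one must show carefully that the error incurred where $|a_{j} - a_{k}|(z)$ is small contributes only $S(r, f)$ on average, rather than $O(1)$ as in the finite-constant case; the rest of the plan follows Nevanlinna's scheme rather directly once this technical lemma is in hand.
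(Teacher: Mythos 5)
Your plan breaks down at the one step on which everything else depends: the claimed bound $m(r,F)\leq m(r,1/f')+S(r,f)$ for $F=\sum_{j=1}^{q}\frac{1}{f-a_{j}}$. Writing $f'F=\sum_{j}\frac{(f-a_{j})'}{f-a_{j}}+\sum_{j}\frac{a_{j}'}{f-a_{j}}$, the first sum indeed has proximity $S(r,f)$ by the Lemma on the Logarithmic Derivative, but the estimate you have available for the second sum is
$$m\Big(r,\sum_{j}\frac{a_{j}'}{f-a_{j}}\Big)\leq\sum_{j}m\Big(r,\frac{1}{f-a_{j}}\Big)+\sum_{j}m(r,a_{j}')+\log q,$$
and $\sum_{j}m\big(r,\frac{1}{f-a_{j}}\big)$ is precisely the quantity you are trying to bound: by your own first step it equals $m(r,F)+S(r,f)$. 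Substituting back yields $m(r,F)\leq m(r,1/f')+m(r,F)+S(r,f)$, which is vacuous. \enquote{Splitting off the $m(r,1/(f-a_{j}))$ contributions} is therefore not a technicality but a circularity, and it is exactly the obstruction that prevents Nevanlinna's classical proof from extending to small functions. (Your first step, by contrast, is essentially fine once you replace the appeal to $N(r,0;a_{j}-a_{k})=S(r,f)$ by the bound the argument actually needs, namely $m\big(r,\frac{1}{a_{j}-a_{k}}\big)\leq T(r,a_{j}-a_{k})+O(1)=S(r,f)$.)

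The statement is the Steinmetz/Li--Zhang second main theorem for small functions with non-truncated counting functions; the paper does not prove it but cites it to Li and Zhang (2001). All known proofs --- Osgood's, Steinmetz's, and the Li--Zhang refinement --- abandon the auxiliary function $F$ altogether and instead use Wronskian-type constructions built from monomials in the $a_{j}$ of bounded degree, applying the classical theory to suitable quotients formed from $b_{i}f$ and $b_{i}$ where the $b_{i}$ range over such monomials. Only for $q=3$ is there an elementary reduction, via the M\"{o}bius transformation $g=\frac{(f-a_{1})(a_{2}-a_{3})}{(f-a_{3})(a_{2}-a_{1})}$, which sends $a_{1},a_{2},a_{3}$ to the constants $0,1,\infty$ and satisfies $T(r,g)=T(r,f)+S(r,f)$; that device does not extend to $q\geq4$. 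So the gap is not one you can patch within the scheme you outline.
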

Now we state a particular case of K. Yamanoi's (\cite{6}) result:
\begin{theo}\label{kus8}
Suppose that $f(z)$ is a meromorphic function in the complex plane and $a_{1}(z), a_{2}(z),\ldots,a_{q}(z)$ are $q$ mutually distinct meromorphic functions with $f\not\equiv a_{i}$. Then for every $\varepsilon> 0$, there exists a positive constant $C(\varepsilon)$ such that
$$(q-2-\varepsilon)T(r,f)< \sum\limits_{j=1}^{q}\ol{N}(r,a_{j};f)+C(\varepsilon)\sum\limits_{j=1}^{q}T(r,a_{j})+S(r,f),$$
for any positive $r$ excluding some set $E_{0}$ with finite linear measure.
\end{theo}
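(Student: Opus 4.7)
The plan is to follow Yamanoi's approach, which builds on Osgood–Steinmetz–Vojta style ideas but requires substantial new technical input. First I would reduce to the case where every $a_j$ is actually a small function with respect to $f$. If some $a_j$ has characteristic $T(r,a_j)$ comparable with $T(r,f)$, its contribution is already absorbed on the right-hand side by the term $C(\varepsilon)\sum_{j=1}^{q} T(r,a_j)$, so the only nontrivial case is the one in which $T(r,a_j)=S(r,f)$ for every $j$. From this point on the $C(\varepsilon)$-term is a harmless $S(r,f)$ and the theorem reads $(q-2-\varepsilon)T(r,f)<\sum_{j=1}^{q}\ol N(r,a_j;f)+S(r,f)$.

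Next I would perform a Möbius normalization. Choosing any three of the small functions, say $a_1,a_2,a_3$, I would replace $f$ by the auxiliary meromorphic function
\[
F=\frac{(f-a_1)(a_3-a_2)}{(f-a_2)(a_3-a_1)},
\]
whose coefficients are small functions. Then $T(r,F)=T(r,f)+S(r,f)$, the three distinguished targets are mapped to the constants $0,1,\infty$, and the remaining $a_j$ pull back to small functions $\tilde a_j$ of $F$ with $\ol N(r,\tilde a_j;F)=\ol N(r,a_j;f)+S(r,f)$. The problem is thus reformulated for $F$ against the targets $\{0,1,\infty\}\cup\{\tilde a_4,\dots,\tilde a_q\}$.

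The heart of the argument is then to construct, for each $\varepsilon>0$, an auxiliary meromorphic function built from $F$, its derivatives, and Wronskian-type determinants involving $F$ and the $\tilde a_j$, to which the lemma on the logarithmic derivative (Corollary \ref{ghgh}) can be applied to produce $S(r,F)=S(r,f)$ error. Schematically one follows the skeleton of the proof of Theorem \ref{kus7}: write down an identity of the form $m(r,F)+\sum_{j} m(r,1/(F-\tilde a_j))\leq 2T(r,F)-\mathcal N_1(r)+S(r,f)$, where $\mathcal N_1(r)$ is a counting function of the \emph{ramification} of $F$ relative to the small targets, and then combine with Jensen's formula to convert $m$-terms into $\ol N$-terms. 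The truncation $\ol N$ (rather than $N$) appears precisely because $\mathcal N_1(r)$ subtracts off the excess multiplicity of every $a_j$-point of $F$.

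The main obstacle—and the reason this theorem lies much deeper than the Li–Zhang non-truncated version—is the construction of $\mathcal N_1(r)$ in the moving-target case. For constant targets one simply uses $f'$, whose zeros record the ramification; but $(f-a_j)'=f'-a_j'$ does not, in general, drop by one the multiplicity of a multiple $a_j$-point when $a_j$ is non-constant, so one cannot just imitate Theorem \ref{kus1}. Yamanoi's idea is to replace $f'$ by a carefully tailored first-order differential operator built from $F,\tilde a_4,\dots,\tilde a_q$ and to control the resulting ramification counting function by an inductive/potential-theoretic analysis of multiple points; this is where the weakening from $q-1$ to $q-2-\varepsilon$ enters and where the constant $C(\varepsilon)$ is produced. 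Verifying that the error terms of this construction remain $S(r,f)+C(\varepsilon)\sum T(r,\tilde a_j)$, outside an exceptional set of $r$ of finite linear measure, is the step I expect to be by far the main technical obstacle.
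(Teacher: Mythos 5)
There is a genuine gap here, and it is worth being clear about its nature. The thesis does not prove this statement at all: it is quoted verbatim as ``a particular case of K. Yamanoi's result'' (Acta Math. 192 (2004), 225--299), in a chapter whose stated policy is that proofs are not reproduced. Your proposal is likewise not a proof but a roadmap whose entire technical core is deferred to ``Yamanoi's idea''; since that core \emph{is} the theorem, nothing has actually been established. Concretely, the step you yourself flag as ``by far the main technical obstacle'' --- producing a ramification term $\mathcal{N}_1(r)$ that forces truncation level one against \emph{moving} targets, with error $S(r,f)+C(\varepsilon)\sum_j T(r,a_j)$ --- is precisely what distinguishes this theorem from the non-truncated Li--Zhang/Osgood--Steinmetz versions, and you do not supply it.

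Two more specific problems. First, the opening reduction to small targets does not work as stated: whether $T(r,a_j)$ is comparable to $T(r,f)$ is not a dichotomy uniform in $r$, and even along a subsequence where $T(r,a_q)\geq\delta T(r,f)$ one can only absorb the missing $\ol N(r,a_q;f)$ term by taking $C(\varepsilon)\geq 1/\delta$, so the constant would depend on $f$ and the targets rather than on $\varepsilon$ alone; the general-target case has to be handled inside the main argument, not peeled off in advance. Second, your description of the key construction --- a ``carefully tailored first-order differential operator'' replacing $f'$, fed into the lemma on the logarithmic derivative --- mischaracterizes Yamanoi's method: his proof of truncation level one for moving targets rests on Ahlfors' theory of covering surfaces and the geometry of the moduli space of $q$-pointed stable curves of genus zero, together with an approximation of the targets; no differential-operator argument of the kind you sketch is known to yield the truncated inequality. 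So even as an outline, the proposed route points at the wrong machinery, and following it would not close the gap.
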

The following theorem is an immediate consequence of Theorem \ref{kus8} which is the generalization of Theorem \ref{kus9}.
\begin{theo}
Suppose that $f(z)$ is a meromorphic function in the complex plane and $a_{1}(z), a_{2}(z),\ldots,a_{q}(z)$ are $q$ mutually distinct small functions with respect to $f$. Then for every $\varepsilon> 0$, there exists a positive constant $C(\varepsilon)$ such that
$$(q-2-\varepsilon)T(r,f)< \sum\limits_{j=1}^{q}\ol{N}(r,a_{j};f)+S(r,f),$$
for any positive $r$ excluding some set $E_{0}$ with finite linear measure.
\end{theo}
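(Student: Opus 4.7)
The plan is to derive the stated inequality as an immediate corollary of Theorem \ref{kus8} (Yamanoi's result) by using the hypothesis that every $a_j$ is a small function of $f$ to absorb the extra term $C(\varepsilon)\sum_{j=1}^{q} T(r,a_j)$ into the error term $S(r,f)$.

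First, I would apply Theorem \ref{kus8} directly to the meromorphic function $f$ and the $q$ mutually distinct meromorphic functions $a_1(z),\dots,a_q(z)$. Since they are small functions of $f$ (hence in particular mutually distinct meromorphic functions with $f\not\equiv a_j$, which is implicit because $T(r,a_j)=S(r,f)$ rules out $f\equiv a_j$ for a non-constant $f$), Theorem \ref{kus8} yields, for every $\varepsilon>0$, a positive constant $C(\varepsilon)$ such that
\[
(q-2-\varepsilon)T(r,f) < \sum_{j=1}^{q}\overline{N}(r,a_j;f) + C(\varepsilon)\sum_{j=1}^{q}T(r,a_j) + S(r,f),
\]
valid for all $r>0$ outside an exceptional set $E_0$ of finite linear measure.

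Next, I would invoke the defining property of small functions (Definition~\ref{small}): for each $j=1,2,\dots,q$ we have $T(r,a_j)=S(r,f)$. Since the number of small functions $q$ is a fixed integer and the class of $S(r,f)$ quantities is closed under finite sums,
\[
\sum_{j=1}^{q}T(r,a_j) = S(r,f).
\]
Moreover, $C(\varepsilon)$ is a constant that depends only on $\varepsilon$ and not on $r$, so multiplication by $C(\varepsilon)$ preserves the $S(r,f)$ bound, giving $C(\varepsilon)\sum_{j=1}^{q}T(r,a_j)=S(r,f)$.

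Finally, I would substitute this back into the inequality obtained from Theorem \ref{kus8} and merge the two $S(r,f)$ terms into a single $S(r,f)$, yielding exactly
\[
(q-2-\varepsilon)T(r,f) < \sum_{j=1}^{q}\overline{N}(r,a_j;f) + S(r,f),
\]
outside a set of finite linear measure, which is the desired conclusion. There is no real technical obstacle here; the argument is essentially a bookkeeping step that uses Definition~\ref{small} to collapse the characteristic-function contributions of the $a_j$ into the standard error term. The only point requiring mild care is to note that $q$ and $C(\varepsilon)$ are independent of $r$, so the closure of the $S(r,f)$ class under finite linear combinations is exactly what makes the corollary immediate.
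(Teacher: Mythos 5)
Your proposal is correct and is exactly the route the paper intends: the paper states this result as an immediate consequence of Theorem \ref{kus8}, obtained by observing that $C(\varepsilon)\sum_{j=1}^{q}T(r,a_{j})=S(r,f)$ when each $a_{j}$ is a small function of $f$, so the extra term is absorbed into the error term. Your remarks on the closure of the $S(r,f)$ class under finite linear combinations and on $f\not\equiv a_{j}$ being forced by the small-function hypothesis are the right justifications for this bookkeeping step.
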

\section{Some Applications}
The value distribution theory has several applications in many branches of Mathematics. But as our thesis is on uniqueness theory of Entire and Meromorphic functions, we shall restrict the discussion within Function Theory.
\subsection*{Defect Relation}
In this section, first we state a weaker reformulation of the Second Fundamental Theorem of Nevanlinna and then after state some nice consequences of it. For this, we introduce some notations.
\begin{defi}
Let $f(z)$ be a non-constant meromorphic function defined in $|z|<R$. For $0<r<R\leq\infty$ and $a \in \mathbb{C}\cup\{\infty\}$, we define
\beas && \delta(a,f)=1-\limsup\limits_{r\rightarrow R} \frac{N(r,a;f)}{T(r,f)},\\
&& \Theta(a,f)=1-\limsup\limits_{r\rightarrow\ R} \frac{\overline{N}(r,a;f)}{T(r,f)},\\
&& \theta(a,f)=\liminf\limits_{r\rightarrow R} \frac{N(r,a;f)-\overline{N}(r,a;f)}{T(r,f)}.\eeas
The terms $\delta(a,f),~\Theta(a,f)$ and $\theta(a,f)$ are known as the \emph{deficiency}, \emph{ramification index} and \emph{index of multiplicity} of the value $a$ respectively for the function $f$.
\end{defi}
$\bullet$ It is clear that $0 \leq \delta(a,f)\leq\Theta(a,f) \leq 1.$\par
$\bullet$ For given any $\varepsilon>0$ and for sufficiently large values of $r$,
$$ N(r,a;f)-\overline{N}(r,a;f)>\{\theta(a,f)-\varepsilon\}T(r,f)~\text{and}~N(r,a;f)<\{1-\delta(a,f)+\varepsilon\}T(r,f).$$
and hence $$\overline{N}(r,a;f)<\{1-\delta(a,f)-\theta(a,f)+2\varepsilon\}T(r,f);~~\text{i.e.,}~~\delta(a,f)+\theta(a,f)\leq \Theta(a,f).$$
So $0\leq \min\{\delta(a,f),\theta(a,f)\} \leq  \max\{\delta(a,f),\theta(a,f)\} \leq \delta(a,f)+\theta(a,f) \leq \Theta(a,f) \leq 1.$
\begin{theo}(\cite{1234}) [\emph{Weaker reformulation of the Second Fundamental Theorem}]\label{kus10}
Suppose that $f(z)$ is a non-constant meromorphic function in the complex plane. Then the  set $T = \{a : \Theta(a, f ) > 0\}$ is countable and on summing over all such values $a$, we have
\bea \sum\limits_{a \in T}\{\delta(a,f)+\theta(a,f)\} &\leq& \sum\limits_{a\in T}\Theta(a,f)\leq2.\eea
\end{theo}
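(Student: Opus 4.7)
The plan is to derive both assertions directly from the Second Fundamental Theorem (Theorem \ref{kus7}), combined with the pointwise inequality $\delta(a,f)+\theta(a,f)\le \Theta(a,f)$ that is already verified in the bullet points preceding the statement.

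First I would fix any finite collection $a_{1},\ldots,a_{q}$ of distinct finite complex numbers and apply Theorem \ref{kus7}:
$$(q-1)T(r,f)\le \overline{N}(r,\infty;f)+\sum_{j=1}^{q}\overline{N}(r,a_{j};f)+S(r,f).$$
Since $f$ is non-constant, $T(r,f)\to\infty$, so dividing through by $T(r,f)$ and taking $\limsup_{r\to\infty}$ along $r$ avoiding the exceptional set $E_{0}$ (of finite linear measure) kills the $S(r,f)/T(r,f)$ term. Using the definition of $\Theta$, the right-hand side limsup is at most $(1-\Theta(\infty,f))+\sum_{j=1}^{q}(1-\Theta(a_{j},f))$, and rearranging gives
$$\Theta(\infty,f)+\sum_{j=1}^{q}\Theta(a_{j},f)\le 2,$$
valid for every finite choice $a_{1},\ldots,a_{q}\in\mathbb{C}$.

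Next I would deduce countability of $T$. For each integer $n\ge 1$, set $T_{n}=\{a\in\mathbb{C}\cup\{\infty\}:\Theta(a,f)\ge 1/n\}$. If some $T_{n}$ contained more than $2n$ elements, then selecting $2n+1$ of them (pulling $\infty$ to the left-hand side if it appears, otherwise it contributes a nonnegative extra term) and applying the inequality above would yield $\sum \Theta(a_{j},f)>2$, a contradiction. Hence every $T_{n}$ is finite, and $T=\bigcup_{n\ge 1}T_{n}$ is countable.

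Finally, since $\sum_{a\in F}\Theta(a,f)\le 2$ for every finite $F\subset T$ (by the first step, after absorbing the $\infty$ term if needed), passing to the supremum over finite $F$ gives $\sum_{a\in T}\Theta(a,f)\le 2$. The remaining inequality $\sum_{a\in T}\{\delta(a,f)+\theta(a,f)\}\le \sum_{a\in T}\Theta(a,f)$ follows termwise from $\delta(a,f)+\theta(a,f)\le \Theta(a,f)$, already established before the statement. The only delicate step is the first one: one must make sure to take the limsup along the set where $r\notin E_{0}$ so that $S(r,f)=o(T(r,f))$, and one must handle cleanly whether $\infty$ is or is not counted among the chosen values, so that the bound $2$ holds uniformly; beyond this bookkeeping the argument is essentially a rearrangement.
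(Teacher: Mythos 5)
Your proposal is correct and is precisely the classical derivation of the defect relation from the Second Fundamental Theorem (Theorem \ref{kus7}): the paper itself does not reproduce a proof but only cites \cite{1234}, and the argument given there is the same one you outline. The two points you flag as delicate are handled correctly — taking the $\limsup$ along $r\notin E_{0}$ only decreases each $\limsup \overline{N}(r,a;f)/T(r,f)$, so the bound $\Theta(\infty,f)+\sum_{j}\Theta(a_{j},f)\le 2$ survives, and the finiteness of each $T_{n}$ plus the termwise inequality $\delta(a,f)+\theta(a,f)\le\Theta(a,f)$ finishes the claim.
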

Next, we shall look at some important theorems of the analysis of entire or meromorphic functions. To begin, consider the Fundamental Theorem of Algebra (FTA), if $f$ is a complex polynomial of degree $n > 0$, then $f-a$ has $n$ zeros in $\mathbb{C}$ for every $a \in \mathbb{C}$. But the direct generalization of the FTA is not true. Picard's theorem adds that entire functions can take on values infinitely many times while omitting one value.
\begin{defi}
Let $f(z)$ be a meromorphic function in the complex plane. A finite value $a$ is called a \emph{Picard exceptional value} of $f(z)$ if $f(z)-a$ has no zero.
\end{defi}
\begin{theo}[\emph{Picard's Little Theorem}]
Any non-constant entire function has at most one Picard exceptional value in the complex plane. For example, $e^z$ omits the value $0$.
\end{theo}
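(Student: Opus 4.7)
The plan is to deduce Picard's Little Theorem directly from the weaker reformulation of the Second Fundamental Theorem (Theorem~\ref{kus10}), which bounds the sum of ramification indices $\Theta(a,f)$ by $2$. Since an entire function has no poles at all, $N(r,\infty;f)\equiv 0$ and so $\ol N(r,\infty;f)\equiv 0$, which immediately yields $\Theta(\infty,f)=1$; this is the ``free'' deficiency I intend to exploit.

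First I would argue by contradiction: suppose $f$ is a non-constant entire function admitting two distinct finite Picard exceptional values $a$ and $b$. By the definition of a Picard exceptional value, $f(z)-a$ and $f(z)-b$ both have no zeros in $\mathbb{C}$, hence $\ol N(r,a;f)\equiv 0$ and $\ol N(r,b;f)\equiv 0$, giving $\Theta(a,f)=1$ and $\Theta(b,f)=1$. Combined with $\Theta(\infty,f)=1$ from the previous paragraph, we obtain
\[
\Theta(\infty,f)+\Theta(a,f)+\Theta(b,f)=3.
\]

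Next I would invoke Theorem~\ref{kus10}, which requires that $f$ be a non-constant meromorphic function in $\mathbb{C}$ (an entire function being a special case). The theorem tells us that the sum of $\Theta(c,f)$ over all $c\in\mathbb{C}\cup\{\infty\}$ is at most $2$; in particular the partial sum over $\{\infty,a,b\}$ must also be at most $2$. This contradicts the equality $3\le 2$ obtained above, so no such pair $(a,b)$ can exist, and $f$ has at most one finite Picard exceptional value.

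The only subtlety I anticipate is ensuring that $T(r,f)\to\infty$ so that the defect relation is non-vacuous and the ratios $\ol N(r,a;f)/T(r,f)$ truly give $\Theta(a,f)=1$; this is guaranteed because $f$ is non-constant (so not a constant rational function), and by the characterization of rational functions via Corollary~\ref{1.1} together with the growth estimates in Examples~\ref{homeoex2}--\ref{homeoex6}, one has $T(r,f)\to\infty$. The example $e^{z}$, which omits only the value $0$, confirms that the bound ``at most one'' is sharp.
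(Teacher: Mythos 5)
Your proof is correct and follows exactly the route the paper indicates: the text states no proof but immediately remarks that the Second Fundamental Theorem generalizes Picard's theorem, and your derivation from the defect relation (Theorem \ref{kus10}) via $\Theta(\infty,f)=\Theta(a,f)=\Theta(b,f)=1$ summing to $3>2$ is the standard instantiation of that remark. The supporting observations that $\ol N(r,\infty;f)\equiv 0$ for entire $f$ and that $T(r,f)\to\infty$ for non-constant $f$ are both sound, so there is nothing to add.
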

\begin{rem}
In fact, the Second Fundamental Theorem generalizes \emph{Picard's Theorem}.
\end{rem}
\begin{theo}
Any non-constant meromorphic function  has at most two Picard exceptional values in the complex plane. For instance, $\tan z$ omits $\pm i$.
\end{theo}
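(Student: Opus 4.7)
The plan is to read this as an immediate consequence of the defect relation in Theorem \ref{kus10}. First I would translate the hypothesis ``$a$ is a Picard exceptional value'' into Nevanlinna language: if $f-a$ has no zeros, then $n(r,a;f)\equiv 0$ on $(0,\infty)$, hence $N(r,a;f)\equiv 0$ and a fortiori $\overline{N}(r,a;f)\equiv 0$. Plugging these into the definitions displayed just before Theorem \ref{kus10} gives $\delta(a,f)=1$ and, in particular, $\Theta(a,f)=1$, so every Picard exceptional value automatically belongs to the set $T=\{a:\Theta(a,f)>0\}$.

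Next I would run the argument by contradiction. Suppose the non-constant meromorphic function $f$ on $\mathbb{C}$ admits three distinct Picard exceptional values $a_{1},a_{2},a_{3}$. By the first step, $\Theta(a_{j},f)=1$ for $j=1,2,3$, and all three lie in $T$. Applying Theorem \ref{kus10} then yields
\[
3=\sum_{j=1}^{3}\Theta(a_{j},f)\leq \sum_{a\in T}\Theta(a,f)\leq 2,
\]
which is a contradiction. Hence at most two Picard exceptional values can occur. The illustrative claim that $\tan z$ omits exactly $\pm i$ I would verify by rewriting $\tan z=a$ as $e^{2iz}=(1+ia)/(1-ia)$, which has no solution precisely when the right-hand side equals $0$ or $\infty$, i.e.\ when $a=\pm i$.

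The main ``obstacle'' here is conceptual rather than technical: one has to recognize that the content of the Second Fundamental Theorem, as repackaged in Theorem \ref{kus10}, is exactly that the total Nevanlinna ramification cannot exceed $2$, and that an omitted value saturates the deficiency at $1$. Once this bookkeeping is in place, the proof requires no growth estimates, no logarithmic-derivative lemma, and no small-function considerations beyond what is already encoded inside the defect relation invoked.
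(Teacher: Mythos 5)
Your proof is correct and follows exactly the route the paper itself indicates (the text remarks that the Second Fundamental Theorem generalizes Picard's theorem, and the chapter deliberately omits proofs of these classical facts): an omitted value forces $N(r,a;f)\equiv 0$, hence $\Theta(a,f)=1$, and three such values would violate the bound $\sum_{a\in T}\Theta(a,f)\leq 2$ of the defect relation. Your verification that $\tan z$ omits precisely $\pm i$ via $e^{2iz}=(1+ia)/(1-ia)$ is also correct.
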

The Fermat's Last Theorem tells us that for any integer $n\geq 3$, there exist no non-zero integer triples $(x, y, z)$ such that $x^n + y^n = z^n$. In this section, we will look at a generalized version of this theorem for functions.
\begin{theo}
Let $f$ and $g$ be non-constant entire (resp. meromorphic) functions such that $f^n+g^n=1$ for all $z\in \mathbb{C}$, and let $n$ be a non-negative integer. Then $n\leq 2$ (resp. 3).
\end{theo}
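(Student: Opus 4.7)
The plan is to argue by contradiction using the Second Fundamental Theorem (Theorem \ref{kus7}). Assume $n\geq 3$ in the entire case (respectively $n\geq 4$ in the meromorphic case) and aim to derive a bound of the form $(n-2-o(1))T(r,g)\leq S(r,g)$ (respectively $(n-3-o(1))T(r,g)\leq S(r,g)$), which contradicts the fact that $g$ is non-constant (so that $T(r,g)\to\infty$ and $S(r,g)=o(T(r,g))$ outside a set of finite linear measure).

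The key algebraic step is the factorization. Let $\omega=e^{2\pi i/n}$ be a primitive $n$-th root of unity and write
\[
f^{n}(z)=1-g^{n}(z)=\prod_{k=0}^{n-1}\bigl(1-\omega^{k}g(z)\bigr).
\]
From this I would extract the crucial multiplicity fact: every $\omega^{k}$-point of $g$ is a zero of $f$. Indeed, if $g(z_{0})=\omega^{k}$, then only the factor $1-\omega^{k}g$ vanishes at $z_{0}$ (because $1-\omega^{j}\omega^{k}\neq 0$ for $j\neq 0$), so the multiplicity of $z_{0}$ as an $\omega^{k}$-point of $g$ equals the multiplicity of $z_{0}$ as a zero of $f^{n}$, which is a multiple of $n$. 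Consequently
\[
\overline{N}\!\left(r,\omega^{k};g\right)\leq \frac{1}{n}\,N\!\left(r,\omega^{k};g\right)\leq \frac{1}{n}\,T(r,g)+O(1),
\]
the last inequality being the First Fundamental Theorem.

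Next I apply Theorem \ref{kus7} to $g$ with the $n$ distinct finite targets $1,\omega,\ldots,\omega^{n-1}$:
\[
(n-1)T(r,g)\leq \overline{N}(r,\infty;g)+\sum_{k=0}^{n-1}\overline{N}\!\left(r,\omega^{k};g\right)+S(r,g).
\]
Plugging in the multiplicity bound gives $\sum_{k=0}^{n-1}\overline{N}(r,\omega^{k};g)\leq T(r,g)+O(1)$. In the entire case $\overline{N}(r,\infty;g)=0$, so the inequality reduces to $(n-2)T(r,g)\leq S(r,g)$, contradicting $n\geq 3$. In the meromorphic case the trivial bound $\overline{N}(r,\infty;g)\leq T(r,g)+O(1)$ yields $(n-3)T(r,g)\leq S(r,g)$, contradicting $n\geq 4$.

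The only real content of the argument is the multiplicity observation in the second paragraph; once that is in place the Second Fundamental Theorem does all the work. The minor subtlety I would be careful about is that $g$ is indeed non-constant (so that $T(r,g)\to\infty$ and the $S(r,g)$-term can be absorbed): this follows because if $g$ were constant then from $f^{n}=1-g^{n}$ we would get $f$ constant, contradicting the hypothesis. No sharper analysis of poles is needed, because the crude bound on $\overline{N}(r,\infty;g)$ already suffices to reach the stated sharp values $n\leq 2$ and $n\leq 3$.
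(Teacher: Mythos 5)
Your argument is correct and is the classical Nevanlinna-theoretic proof of this Fermat-type result; the paper itself only states the theorem in its survey chapter without giving a proof, so there is nothing to compare against. The multiplicity observation is exactly right: each $\omega^{k}$-point of $g$ is a zero of $f^{n}$ of the same multiplicity, hence of multiplicity divisible by $n$, giving $\overline{N}(r,\omega^{k};g)\leq\frac{1}{n}N(r,\omega^{k};g)\leq\frac{1}{n}T(r,g)+O(1)$, and the Second Fundamental Theorem applied to $g$ with the $n$ roots of unity then yields the contradictions $(n-2)T(r,g)\leq S(r,g)$ and $(n-3)T(r,g)\leq S(r,g)$ in the entire and meromorphic cases respectively. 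The only blemish is a harmless index slip: the factor $1-\omega^{k}g$ vanishes where $g=\omega^{-k}$, not where $g=\omega^{k}$, but since $k$ ranges over a complete residue system this does not affect anything.
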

\subsection*{Functions Sharing Values}
The most fascinating result of Nevanlinna in the uniqueness theory is \emph{five value theorem}. Like the other beautiful results in complex analysis, there is no corresponding version of this theorem in real case also. To state this theorem, we introduce some notations.
\begin{defi}\label{daug} (\cite{padic})
Let $f$ and $g$ be two non-constant meromorphic functions and $a\in\mathbb{C}\cup\{\infty\}$. We define
$$E_{f}(a)=\{(z,p) \in \mathbb{C}\times\mathbb{N}~ |~ f(z)=a ~with~ multiplicity~ p\},$$
$$\ol{E}_{f}(a)=\{(z,1) \in \mathbb{C}\times\mathbb{N}~ |~ f(z)=a~with~ multiplicity~ p\}.$$
If $E_{f}(a)=E_{g}(a)$ \big(resp. $\ol{E}_{f}(a)=\ol{E}_{g}(a)$\big), then it is said that $f$ and $g$ share the value $a$ \emph{counting multiplicities or in short CM} \big(resp. \emph{ignoring multiplicities or in brief IM}\big).
\end{defi}
\begin{theo}[\emph{Nevanlinna's five-value theorem}]\label{chap1}
Suppose that  $f(z)$ and $g(z)$ are two non-constant meromorphic functions and $a_{1},a_{2},\ldots,a_{5}$ are five distinct values in the extended complex plane. If $f$ and $g$ share $a_j$ ($j=1,2,\ldots,5$) IM, then $f\equiv g$.
\end{theo}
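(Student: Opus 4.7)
I would argue by contradiction: assume $f \not\equiv g$. The strategy is to play Nevanlinna's Second Fundamental Theorem (Theorem \ref{kus7}), which provides a \emph{lower} bound on $T(r,\cdot)$ in terms of the counting functions $\overline{N}(r,a_j;\cdot)$, against the auxiliary function $h := f-g$, which provides a matching \emph{upper} bound on the same sum of counting functions once one exploits the sharing hypothesis.

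Before starting I would apply a common M\"obius transformation to both $f$ and $g$ so that all five shared values $a_1,\ldots,a_5$ become finite. A M\"obius substitution preserves IM sharing in the obvious way, and by Example \ref{homeoex3} it changes each $T(r,\cdot)$ only by $O(1)$, so no generality is lost. Now apply Theorem \ref{kus7} to $f$ with $q=5$ distinct finite values; absorbing (or simply discarding) the $\overline{N}(r,\infty;f)$ term yields
\bes
3\,T(r,f) \;\le\; \sum_{j=1}^{5}\overline{N}(r,a_j;f) + S(r,f),
\ees
and identically for $g$. By Definition \ref{daug} the IM sharing assumption gives $\overline{N}(r,a_j;f)=\overline{N}(r,a_j;g)$ for every $j$, so adding the two inequalities produces
\bes
3\bigl(T(r,f)+T(r,g)\bigr) \;\le\; 2\sum_{j=1}^{5}\overline{N}(r,a_j;f) + S(r,f)+S(r,g).
\ees

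For the competing upper bound I would use $h := f-g$, which is a non-identically-zero meromorphic function under the contradiction hypothesis. Every $a_j$-point $z_0$ of $f$ satisfies $g(z_0)=a_j$ by sharing, hence is a zero of $h$; and since the $a_j$ are pairwise distinct, the sets $\{z:f(z)=a_j\}$ are pairwise disjoint. Counting each once and applying the First Fundamental Theorem to $h$ gives
\bes
\sum_{j=1}^{5}\overline{N}(r,a_j;f) \;\le\; \overline{N}(r,0;h) \;\le\; T(r,h)+O(1) \;\le\; T(r,f)+T(r,g)+O(1).
\ees
Substituting into the previous display reduces to $T(r,f)+T(r,g) \le S(r,f)+S(r,g)+O(1)$, which contradicts $T(r,f),\,T(r,g)\to\infty$ while $S(r,f)+S(r,g)=o\bigl(T(r,f)+T(r,g)\bigr)$ outside a set of finite linear measure. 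Hence $f\equiv g$.

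The one genuine subtlety is that a common \emph{pole} of $f$ and $g$ is not literally a zero of $f-g$, so the upper-bound step via $h$ fails if $\infty$ is allowed among the shared values. The opening M\"obius reduction cleanly sidesteps this, and is the only step requiring real care; everything else is a direct bookkeeping of the Second Fundamental Theorem against the sharing hypothesis.
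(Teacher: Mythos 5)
Your proof is correct, and it is the classical argument for the five-value theorem: the paper itself gives no proof here (Chapter 1 explicitly defers all proofs to the literature, e.g.\ Hayman and Yang--Yi, where precisely this Second-Fundamental-Theorem-versus-$f-g$ bookkeeping appears). The only stylistic quibble is the phrase ``or simply discarding'' the term $\ol N(r,\infty;f)$ --- it must be \emph{absorbed} into the left-hand side via $\ol N(r,\infty;f)\leq T(r,f)$, as your displayed inequality in fact correctly does; and your handling of a possible shared value at $\infty$ by a preliminary M\"obius change is exactly the right precaution.
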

\begin{rem} The number five is sharp in the above theorem. for example, we note that $f(z)=e^z$ and  $g(z)=e^{-z}$ share the four values $0,1,-1,\infty$ IM but $f\not\equiv g$.
\end{rem}
\begin{defi}
Let $m$ be a positive integer or infinity and $a\in\mathbb{C}\cup\{\infty\}$. By $E_{m)}(a;f)$, we mean the set of all $a$-points of $f$ with multiplicities not exceeding $m$, where an $a$-point is counted according to its multiplicity. Also by $\ol E_{m)}(a;f)$, we mean the set of distinct $a$-points of $f(z)$ with multiplicities not greater than $m$.\par
If for some $a\in\mathbb{C}\cup\{\infty\}$, $E_{m)}(a;f)=E_{m)}(a;g)$ \big(resp. $\ol E_{m)}(a;f)=\ol E_{m)}(a;g)$\big) holds for $m=\infty$, then we see that $f$ and $g$ share the value $a$ CM (resp. IM).
\end{defi}
\begin{theo} (Theorem 3.12, \cite{1234})\label{4117}
Let $f(z)$ be a non-constant meromorphic function. Then $f(z)$  can be uniquely determined by $q$ ($=5+[\frac{2}{k}]$) sets $\ol E_{k)}(a_{j},f)$ ($j=1,2,\ldots,q$), where $a_{j}$ ($j=1,2,\ldots,q$) are $q$ distinct complex numbers and $[\frac{2}{k}]$ denotes the largest integer less than or equal to $\frac{2}{k}$.
\end{theo}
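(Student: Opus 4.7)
The plan is to argue by contradiction. Suppose another non-constant meromorphic function $g$ satisfies $\ol E_{k)}(a_j;f) = \ol E_{k)}(a_j;g)$ for $j=1,\dots,q$ but $g \not\equiv f$. I would then combine three ingredients: (i) a shared-point bound on $\sum_j \ol N_{k)}(r,a_j;f)$ coming from an auxiliary function vanishing at every shared point, (ii) a splitting of $\ol N(r,a_j;f)$ that separates multiplicities at most $k$ from larger ones, and (iii) the Second Fundamental Theorem (Theorem \ref{kus7}) applied to both $f$ and $g$.

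First, set $h := f - g \not\equiv 0$. Every point of $\ol E_{k)}(a_j;f) = \ol E_{k)}(a_j;g)$ with $a_j$ finite is a zero of $h$, and the contributions over the distinct $a_j$'s are disjoint, so
\[
\sum_{j=1}^{q} \ol N_{k)}(r,a_j;f) \;\le\; \ol N(r,0;h) \;\le\; T(r,h) + O(1) \;\le\; T(r,f) + T(r,g) + O(1).
\]
If one $a_j$ is $\infty$, the same upper bound is recovered by working with $1/f - 1/g$ at common poles (or by a preliminary M\"obius normalization). Next, for each $j$ I split
\[
\ol N(r,a_j;f) \;=\; \ol N_{k)}(r,a_j;f) \;+\; \ol N_{(k+1}(r,a_j;f),
\]
and since every point counted by $\ol N_{(k+1}$ contributes at least $k+1$ to $N(r,a_j;f)$, the First Fundamental Theorem gives $\ol N_{(k+1}(r,a_j;f) \leq \tfrac{1}{k+1}(T(r,f)+O(1))$.

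Applying Theorem \ref{kus7} to $f$ with the $q$ values $a_1,\dots,a_q$ and substituting the two bounds above yields
\[
(q-1)T(r,f) \;\le\; T(r,f) + T(r,g) + \frac{q}{k+1}T(r,f) + S(r,f).
\]
The symmetric inequality with $f,g$ interchanged follows from the hypothesis, and adding the two produces
\[
\Bigl(q - 3 - \tfrac{q}{k+1}\Bigr)\bigl(T(r,f)+T(r,g)\bigr) \;\le\; S(r,f)+S(r,g).
\]
The coefficient equals $\tfrac{(q-3)k-3}{k+1}$, and for $q = 5 + [2/k]$ a case-check ($k=1,2,\geq 3$ giving $q=7,6,5$) shows the numerator equals $1,3,2k-3$ respectively, strictly positive in every case. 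This forces $T(r,f)+T(r,g)=O(1)$ outside an exceptional set of finite linear measure, contradicting the non-constancy of $f$ and $g$.

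The principal technical nuisance I expect is the clean handling of the value $\infty$. For finite $a_j$ the function $h = f-g$ absorbs shared points transparently, but shared poles must be dealt with either by a M\"obius reduction, by an auxiliary use of $1/f - 1/g$, or by isolating the pole term in the Second Fundamental Theorem; each route works, but one must verify that no extra $T(r,f)$-factor creeps in, since the coefficient margin for $k=1$ is razor-thin.
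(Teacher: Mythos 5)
Your skeleton — the auxiliary function $h=f-g$, the splitting of $\ol N(r,a_j;f)$ at truncation level $k$, and the Second Fundamental Theorem applied symmetrically to $f$ and $g$ — is the standard route to this theorem (the paper itself gives no proof, citing Yang--Yi), but as written the key inequality is wrong and the numerology only appears to close because of that error. Theorem \ref{4117} is stated for $q$ distinct complex numbers, and Theorem \ref{kus7} carries the term $\ol N(r,\infty;f)$ on its right-hand side; since nothing forces the poles of $f$ and $g$ to coincide, this term must be paid for with $\ol N(r,\infty;f)\le T(r,f)+O(1)$. Your displayed inequality $(q-1)T(r,f)\le T(r,f)+T(r,g)+\frac{q}{k+1}T(r,f)+S(r,f)$ silently omits it. Restoring it, your two bounds give
\[
(q-1)T(r,f)\;\le\;2T(r,f)+T(r,g)+\frac{q}{k+1}T(r,f)+S(r,f),
\]
and after symmetrizing and adding, the coefficient of $T(r,f)+T(r,g)$ becomes $q-4-\frac{q}{k+1}=\frac{(q-4)k-4}{k+1}$, which equals $-1,\,0,\,-\tfrac14,\,0$ for $(k,q)=(1,7),(2,6),(3,5),(4,5)$ respectively — i.e.\ the argument fails in exactly the cases the theorem covers. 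You flagged the pole issue as a "technical nuisance" in your last paragraph, but the extra $T(r,f)$ genuinely does creep in and cannot be removed by a M\"obius normalization applied to both $f$ and $g$ simultaneously unless $\infty$ happens to be a shared value.

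The missing idea is the sharper truncation estimate. Since an $a_j$-point of multiplicity at most $k$ contributes at least $1$ to $N(r,a_j;f)$ and one of multiplicity at least $k+1$ contributes at least $k+1$, one has $N(r,a_j;f)\ge \ol N_{k)}(r,a_j;f)+(k+1)\ol N_{(k+1}(r,a_j;f)$, hence
\[
\ol N(r,a_j;f)\;\le\;\frac{k}{k+1}\,\ol N_{k)}(r,a_j;f)+\frac{1}{k+1}\,T(r,f)+O(1),
\]
rather than your cruder $\ol N_{k)}(r,a_j;f)+\frac{1}{k+1}T(r,f)+O(1)$. Feeding this into Theorem \ref{kus7} (keeping the pole term) yields $\bigl(q-2-\frac{q}{k+1}\bigr)T(r,f)\le \frac{k}{k+1}\bigl(T(r,f)+T(r,g)\bigr)+S(r,f)$, and adding the symmetric inequality gives the coefficient $\frac{(q-4)k-2}{k+1}$, which is strictly positive for $q=5+[\,2/k\,]$ in every case ($1,2,k-2$ for $k=1,2,\ge 3$). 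That factor $\frac{k}{k+1}$ in front of $\sum_j\ol N_{k)}$ is precisely the margin that makes $q=5+[\,2/k\,]$ (rather than $q>4+\frac{4}{k}$) sufficient; without it the theorem is out of reach by this method.
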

\begin{cor} If, in Theorem \ref{4117}, we choose $q=7$, $6$ or $5$, then the obvious choice of $k$  is respectively $1$, $2$ or $3$.
\end{cor}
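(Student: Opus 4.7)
The statement is a direct numerical consequence of the formula $q = 5 + \lfloor 2/k \rfloor$ appearing in Theorem \ref{4117}, so the plan is essentially to invert this relation for the three listed values of $q$ and verify that the claimed $k$ is the smallest (hence ``obvious'') positive integer producing each given $q$.

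First, I would record the three direct computations: for $k=1$ we get $\lfloor 2/1 \rfloor = 2$ and so $q = 5+2 = 7$; for $k=2$ we get $\lfloor 2/2 \rfloor = 1$ and so $q = 5+1 = 6$; for $k=3$ we get $\lfloor 2/3 \rfloor = 0$ and so $q = 5+0 = 5$. These three equalities already supply the correspondences $q=7 \leftrightarrow k=1$, $q=6 \leftrightarrow k=2$, $q=5 \leftrightarrow k=3$ asserted in the corollary.

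Next I would justify why these choices of $k$ are ``obvious'', i.e.\ natural minimal choices. Observe that $\lfloor 2/k \rfloor$ is a non-increasing function of the positive integer $k$, taking the values $2,1,0,0,0,\ldots$ for $k=1,2,3,4,\ldots$ respectively. Hence $k=1$ is the \emph{unique} positive integer giving $q=7$, and $k=2$ is the unique one giving $q=6$, whereas for $q=5$ any $k\geq 3$ works and $k=3$ is the smallest such, which is the natural ``least multiplicity restriction'' choice in the context of Theorem \ref{4117}.

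There is no real obstacle here; the only point worth being careful about is interpreting ``obvious choice'' as meaning the minimal admissible $k$ for the given $q$, since for $q=5$ the equation $5 = 5 + \lfloor 2/k \rfloor$ does not determine $k$ uniquely. Apart from that subtlety, the corollary follows immediately by substitution into the formula $q = 5 + \lfloor 2/k \rfloor$ of Theorem \ref{4117}.
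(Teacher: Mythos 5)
Your proposal is correct and follows exactly the intended reasoning: the corollary is an immediate substitution into the formula $q=5+[\tfrac{2}{k}]$ of Theorem \ref{4117}, which the paper leaves unproved as obvious. Your extra observation that $k=3$ is only the minimal (not unique) choice for $q=5$ is a accurate and worthwhile clarification, but the approach is the same.
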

\subsection*{Uniqueness Involving Derivatives}
It is natural to ask that what will happen in a special case when $g=f'$ in Theorem \ref{chap1}? Based on this curiosity, the subject matter on sharing values between entire functions and their derivatives was developed which was first studied by Rubel-Yang (\cite{br10a}).\par
Analogous to the Nevanlinna's five value theorem, in early 1977, Rubel-Yang (\cite{br10a}) proved that if any non-constant entire function $f$ and its first derivative $f'$ share two distinct finite values $a$ and $b$ CM, then $f=f'$.\par
Two years later, Mues-Steinmetz (\cite{br10}) proved that actually in the result of Rubel-Yang, one does not even need the multiplicities. Later it was shown that, in general, the results of Rubel-Yang or  Mues-Steinmetz are false, if $f$ and $f'$ share only one value. Thus one may ask that what conclusion can be made, if $f$ and $f'$ share only one value, and if an appropriate restriction on the growth of $f$ is assumed. In this direction, in 1996, a famous conjecture was proposed by R. Br\"{u}ck (\cite{br3}). Since then the conjecture and its analogous results have been investigated by many researchers which we shall discuss in the concerned chapters. Now we recall the following definition.
\begin{defi} (\cite{2}) Let $n_{0j},n_{1j},\ldots,n_{kj}$ be non-negative integers. The expression $M_{j}[f]=(f)^{n_{0j}}(f^{(1)})^{n_{1j}}\ldots(f^{(k)})^{n_{kj}}$ is called a \emph{differential monomial} generated by $f$ of degree $d(M_{j})=\sum_{i=0}^{k}n_{ij}$ and weight $\Gamma_{M_{j}}=\sum_{i=0}^{k}(i+1)n_{ij}$.\par
The sum $P[f]=\sum_{j=1}^{t}b_{j}M_{j}[f]$ is called a \emph{differential polynomial} generated by $f$ of degree $\ol{d}(P)=max\{d(M_{j}):1\leq j\leq t\}$
and weight $\Gamma_{P}=max\{\Gamma_{M_{j}}:1\leq j\leq t\}$, where $T(r,b_{j})=S(r,f)$ for $j=1,2,\ldots,t$.\par
The numbers $\underline{d}(P)=min\{d(M_{j}):1\leq j\leq t\}$ and $k$ \big( the highest order of the derivative of $f$ in $P[f]$ \big) are called respectively the lower degree and order of $P[f]$.\par
$P[f]$ is said to be homogeneous if $\ol{d}(P)$=$\underline{d}(P)$. Also $P[f]$ is called a Linear Differential Polynomial generated by $f$ if $\ol {d} (P)=1$.\par
We also denote by $\mu=Q=max\; \{\Gamma _{M_{j}}-d(M_{j}): 1\leq j\leq t\}=max\; \{ n_{1j}+2n_{2j}+\ldots+kn_{kj}: 1\leq j\leq t\}$.
\end{defi}
Since the generalizations of derivative is the differential polynomials, the afterward research on Br\"{u}ck conjecture and its generalization, one setting among the sharing functions has been restricted to only various powers of $f$ not involving any other variants such as derivatives of $f$, where as the generalization have been made on the second setting, for example, Li-Yang (\cite{br8c}), Zhang (\cite{br15}). \par Thus it will be interesting to study the relation between a power of a meromorphic function with its differential polynomial when they share some values or more generally small functions taking in background the conjecture of Br\"{u}ck. Concerning the above discussions, chapter two, three and four have been organized.
\subsection*{Functions Sharing Sets}
In course of studying the factorization of meromorphic functions, in 1977, F. Gross (\cite{am4}) initiated the uniqueness theory under more general setup
by introducing the concept of a unique range set by considering pre-images of sets of distinct elements (counting multiplicities). He also asked that does there exist a finite set $S$ such that for two entire functions $f$ and $g$, $f^{-1}(S) = g^{-1}(S)$ implies $f\equiv g$?
\begin{defi}
Let $f$ be a non-constant meromorphic function and $S\subseteq\mathbb{C}\cup\{\infty\}$. We define $E_{f}(S)=\cup_{a\in S}\{z~:~f(z)=a\}$, where a zero of $f-a$ with multiplicity $m$ counts $m$-times. Similarly we can define $\overline{E}_{f}(S)$ where a zero of $f-a$ is counted ignoring multiplicity.
\end{defi}
\begin{defi}
A set $S\subset \mathbb{C}\cup\{\infty\}$ is called a unique range set for meromorphic (resp. entire) functions, if for any two non-constant meromorphic (resp. entire) functions $f$ and $g$, the condition $E_{f}(S)=E_{g}(S)$ implies $f\equiv g$. In short, we call the set $S$ as URSM (resp. URSE). Similarly, we can define the unique range set ignoring multiplicity.\end{defi}
After the introduction of the novel idea of unique range sets researchers were getting more involved to find new unique range sets with cardinalities as small as
possible. In 1996, Li (\cite{li96}), first pointed out that the cardinality of an URSE is at least five whereas Yang-Yi (\cite{1234})  showed that the cardinality of an URSM is at least six. Till date the URSE with seven elements (\cite{am12}) and URSM with eleven elements (\cite{am3}) are the smallest available URSE and URSM.\par
A recent advent in the uniqueness literature is the notion of \emph{weighted sharing} environment which implies a gradual change from sharing IM to sharing CM. This sharing notation was introduced by Lahiri (\cite{br6}) in 2001.
\begin{defi} (\cite{br6})\label{daug2}
Let $k$ be a nonnegative integer or infinity. For $a\in\mathbb{C}\cup\{\infty\}$, we denote by $E_{k}(a;f)$ the set of all $a$-points of $f$, where an $a$-point of multiplicity $m$ is counted $m$ times if $m\leq k$ and $k+1$ times if $m>k$. If $E_{k}(a;f)=E_{k}(a;g)$, we say that $f$ and $g$ share the value $a$ with weight $k$.
\end{defi}
The definition implies that if $f$ and $g$ share a value $a$ with weight $k$, then $z_{0}$ is an $a$-point of $f$ with multiplicity $m\;(\leq k)$ if and only if it is an $a$-point of $g$ with multiplicity $m\;(\leq k)$ and $z_{0}$ is an $a$-point of $f$ with multiplicity $m\;(>k)$ if and only if it is an $a$-point of $g$ with multiplicity $n\;(>k)$, where $m$ is not necessarily equal to $n$.\par
We write $f$ and $g$ share $(a,k)$ to mean that $f$ and $g$ share the value $a$ with weight $k$. Clearly if $f$ and $g$ share $(a,k)$, then $f$ and $g$ share $(a,p)$ for any integer $p$ with $0\leq p<k$. Thus $f$ and $g$ share a value $a$ IM or CM if and only if $f$ and $g$ share $(a,0)$ or $(a,\infty)$ respectively.
\begin{defi}
For $S\subset\mathbb{C}\cup\{\infty\}$, we define $E_{f}(S, k)$ as $E_{f}(S, k)=\cup_{a\in S} E_{k}(a; f)$,
where $k$ is a non-negative integer or infinity.\par
A set $S$ for which two meromorphic functions $f$, $g$ satisfying $E_{f} (S, k) = E_{g}(S, k)$ becomes identical is called a \emph{unique range set of weight $k$} for meromorphic functions.\par
Thus it is clear that $E_{f}(S,\infty)=E_{f}(S)$ and $E_{f}(S,0)=\ol{E}_{f}(S)$.
\end{defi}
Using the concept of \emph{weighted set sharing}, in (\cite{am11}, \cite{ami1}, \cite{bl}), researchers reduced the weight of existing smallest URSM (resp. URSE) from CM to two.\par
Inspired by the \emph{Gross Question} (\cite{am4}), the set sharing problem was started which later shifted to-wards characterization of the polynomial backbone of different unique range sets. In the mean time, the invention of \enquote{Critical Injection Property} by Fujimoto (\cite{am2}, \cite{am2a}) further add essence to the research. In chapters four and five, we shall discuss in details about unique range sets and its generating polynomial.\par
Here we conclude our first chapter. Necessary things which are needed in this sequel will be discussed in the subsequent chapters.
\newpage
\chapter{Some further study on  Br\"{u}ck Conjecture}
\fancyhead[l]{Chapter 2}
\fancyhead[r]{Some further study on  Br\"{u}ck Conjecture}
\fancyhead[c]{}
\section{Introduction}
\par
In chapter one, we have already discussed about Nevanlinna's Five-Value Theorem. Almost fifty years later, in 1977, Rubel-Yang (\cite{br10a}) first highlighted for entire functions that under the special situation where $g$ is the derivative of $f$, one usually needs sharing of only two values CM for their uniqueness.
\begin{theo 2.A} (\cite{br10a}) Let $f$ be a non-constant entire function. If $f$ and $f^{'}$ share two distinct values $a$, $b$ CM, then $f^{'}\equiv f$.
\end{theo 2.A}
Two years later, Mues-Steinmetz (\cite{br10}) proved that actually in the above case one does not even need to consider the multiplicities.
\begin{theo 2.B} (\cite{br10}) Let $f$ be a non-constant entire function. If $f$ and $f^{'}$ share two distinct values $a$, $b$ IM, then $f^{'}\equiv f$.
\end{theo 2.B}
\begin{rem} The number two in the above theorem is sharp. For example, we take $f(z)=e^{e^{z}}\int_{0}^{z}e^{-e^{t}}(1-e^t)dt$. Then $f$ and $f'$ share the value $1$ IM, but $(f'-1)=e^{z}(f-1)$.
\end{rem}
Natural question would be to investigate \emph{the relation between an entire function and its derivative counterpart sharing one value CM}.
In 1996, in this direction, the following famous conjecture was proposed by Br\"{u}ck (\cite{br3}):\\
\par
{\it {\bf Conjecture:} Let $f$ be a non-constant entire function such that the hyper order $\rho _{_{2}}(f)$ of $f$ is not a positive integer or infinite. If $f$ and $f^{'}$ share a finite value $a$ CM, then $\frac{f^{'}-a}{f-a}=c$, where $c$ is a non-zero constant.} \\
\par
Br\"{u}ck himself proved the conjecture for $a=0$. For $a\not=0$, Br\"{u}ck (\cite{br3}) showed that under the assumption $N(r,0;f^{'})=S(r,f)$ the conjecture was true without any growth condition when $a=1$.
\begin{theo 2.C} (\cite{br3}) Let $f$ be a non-constant entire function. If $f$ and $f^{'}$ share the value $1$ CM and if $N(r,0;f^{'})=S(r,f)$, then $\frac{f^{'}-1}{f-1}$ is a non-zero constant.
\end{theo 2.C}
Let $a=a(z)$ be a small function. Then we say that $f$ and $g$ share $a$ CM (resp. IM) if $f-a$ and $g-a$ share $0$ CM (resp. IM). Now the following example shows the fact that one can not simply replace the value $1$ by a small function $a(z)(\not\equiv 0,\infty)$ in Theorem 2.C.
\begin{exm}\label{CHBex1.1} Let $f(z)=1+{e^{e^{z}}}$ and $a=a(z)=\frac{1}{1-e^{-z}}$.
\end{exm}
Then {\it Lemma 2.6} of (\cite{2}) yields that $a(z)$ is a small function of $f(z)$. Also it can be easily seen that $f$ and $f^{'}$ share $a(z)$ CM and $N(r,0;f^{'})=0$ but $f-a\not=c\;(f^{'}-a)$ for every non-zero constant $c$. So in this case additional suppositions are required.\par
However for entire function of finite order, in 1999, Yang (\cite{br11}) removed the supposition $N(r,0;f^{'})=0$ and obtained the following result.
\begin{theo 2.D} (\cite{br11}) Let $f$ be a non-constant entire function of finite order and let $a(\not=0)$ be a finite constant. If $f$, $f^{(k)}$ share the value $a$ CM, then $\frac{f^{(k)}-a}{f-a}$ is a non-zero constant, where $k(\geq 1)$ is an integer.
\end{theo 2.D}
{\it Theorem 2.D} may be considered as a solution to the Br\"{u}ck conjecture.
Next we consider the following example which show that in {\it Theorem 2.C}, one can not simultaneously replace \enquote{CM} by \enquote{IM} and \enquote{entire function} by \enquote{meromorphic function}.
\begin{exm}\label{CHBex1.3} Let $f(z)=\frac{2}{1-e^{-2z}}$.\end{exm}
Then $f(z)$ and $f^{'}(z)$ share $1$ IM and $N(r,0;f^{'})=0$ but $(f'-1)=(f-1)^{2}$.
Thus the conclusion of {\it Theorem 2.C} ceases to hold.\par
Thus from the above discussion it is natural to ask the following question.
\begin{ques} Can the conclusion of {\em Theorem 2.C} be obtained for a non-constant meromorphic function sharing a small function IM together with its $k$-th derivative counterpart? \end{ques}
We now recall the following two theorems due to Liu-Yang (\cite{br8b}) in the direction of IM sharing related to {\it Theorem 2.C}. It will be convenient to let $I$ denote any set of infinite linear measure of $r\in(0,\infty)$, not necessarily the same at each occurrence.
\begin{theo 2.E} (\cite{br8b}) Let $f$ be a non-constant meromorphic function. If $f$ and $f^{'}$ share $1$ IM and if \be\label{CHBe1.2} \ol N(r,\infty;f)+\ol N\left(r,0;f^{'}\right)<(\lambda +o(1))\;T\left(r,f^{'}\right)\ee for $r\in I$, where $0<\lambda <\frac{1}{4}$, then $\frac{f^{'}-1}{f-1}\equiv c$  for some constant $c\in\mathbb{C}/\{0\} $.
\end{theo 2.E}
\begin{theo 2.F} (\cite{br8b}) Let $f$ be a non-constant meromorphic function and $k$ be a positive integer. If $f$ and $f^{(k)}$ share $1$ IM and \be\label{CHBe1.3} (3k+6)\ol N(r,\infty;f)+5N(r,0;f)<(\lambda +o(1))\;T\left(r,f^{(k)}\right)\ee for $r\in I$, where $0<\lambda <1$, then $\frac{f^{(k)}-1}{f-1}\equiv c$  for some constant $c\in\mathbb{C}/\{0\}$.
\end{theo 2.F}
In the mean time, Zhang (\cite{br14}) studied {\it Theorem 2.C} for meromorphic function and also studied the CM value sharing of a meromorphic function with its $k$-th derivative.\par
In 2005, Zhang (\cite{br15}) further extended his results (\cite{br14}) in connection to Br\"{u}ck Conjecture to a small function and proved the following result for IM sharing. To state Zhang's (\cite{br15}) result, we need following two definitions.
\begin{defi} (\cite{br8}) Let $p$ be a positive integer and $a\in\mathbb{C}\cup\{\infty\}$.
\begin{enumerate}
\item [i)] $N(r,a;f\mid \geq p)$ \big(resp. $\ol N(r,a;f\mid \geq p)$\big) denotes the counting function (resp. reduced counting function) of those $a$-points of $f$ whose multiplicities are not less than $p$.
\item [ii)] $N(r,a;f\mid \leq p)$ \big(resp. $\ol N(r,a;f\mid \leq p)$\big) denotes the counting function (resp. reduced counting function) of those $a$-points of $f$ whose multiplicities are not greater than $p$.
\end{enumerate}
\end{defi}
\begin{defi} (\cite{br12})
 For $a\in\mathbb{C}\cup\{\infty\}$ and $p\in\mathbb{N}$, we denote by $N_{p}(r,a;f)$ the sum $\ol N(r,a;f)+\ol N(r,a;f\mid\geq 2)+\ldots+\ol{N}(r,a;f\mid\geq p)$. Clearly $N_{1}(r,a;f)=\ol N(r,a;f)$.
\end{defi}
\begin{theo 2.G} (\cite{br15}) Let $f$ be a non-constant meromorphic function and $k(\geq 1)$ be integer. Also let $a\equiv a(z)$ ($\not\equiv 0,\infty$) be a meromorphic small function. Suppose that $f-a$ and $f^{(k)}-a$ share $0$ IM.
If \be\label{CHBe1.1}4\ol N(r,\infty;f)+3N_{2}\left(r,0;f^{(k)}\right)+2\ol N\left(r,0;(f/a)^{'}\right)<(\lambda +o(1))\;T\left(r,f^{(k)}\right)\ee for $r\in I$, where $0<\lambda <1$, then $\frac{f^{(k)}-a}{f-a}=c$  for some constant $c\in\mathbb{C}/\{0\}$.
\end{theo 2.G}
In 2008, Zhang-L$\ddot u$ (\cite{br16}) further improved the result of Zhang (\cite{br15}) in connection to the Br\"{u}ck conjecture for the $n$-th power of a meromorphic function sharing a small function with its $k$-th derivative and obtained the following theorem.
\begin{theo 2.H} (\cite{br16}) Let $f$ be a non-constant meromorphic function and $k(\geq 1)$ and $n(\geq 1)$ be integers. Also let $a\equiv a(z)$ ($\not\equiv 0,\infty$) be a meromorphic small function. Suppose that $f^{n}-a$ and $f^{(k)}-a$ share $0$ IM. If  \be\label{CHBe1.4}4\ol N(r,\infty;f)+\ol N\left(r,0;f^{(k)}\right)+2N_{2}\left(r,0;f^{(k)}\right)+2\ol N\left(r,0;(f^{n}/a)^{'}\right)<(\lambda +o(1))\;T\left(r,f^{(k)}\right)\ee for $r\in I$, where $0<\lambda <1$, then $\frac{f^{(k)}-a}{f^{n}-a}=c$  for some constant $c\in\mathbb{C}/\{0\}$.
\end{theo 2.H}
At the end of the paper (\cite{br16}), Zhang-L$\ddot u$ (\cite{br16}) raised the following question:
\begin{ques}
What will happen if $f^{n}$ and $[f^{(k)}]^{m}$ share a small function?
\end{ques}
In order to answer the above question, Liu (\cite{br8a}) obtained the following result.
\begin{theo 2.I} (\cite{br8a}) Let $f$ be a non-constant meromorphic function and $k(\geq 1)$, $n(\geq 1)$ and $m(\geq 2)$ be integers. Also let $a\equiv a(z)$ ($\not\equiv 0,\infty$) be a meromorphic small function. Suppose that $f^{n}-a$ and $(f^{(k)})^{m}-a$ share $0$ IM. If \be\label{CHBe1.7}\frac{4}{m}\ol N(r,\infty;f)+\frac{5}{m}\ol N\left(r,0;f^{(k)}\right)+\frac{2}{m}\ol N\left(r,0;(f^{n}/a)^{'}\right)<(\lambda +o(1))\;T\left(r,f^{(k)}\right)\ee for $r\in I$, where $0<\lambda <1$, then $\frac{(f^{(k)})^{m}-a}{f^{n}-a}=c$  for some constant $c\in\mathbb{C}/\{0\}$.
\end{theo 2.I}
As $(f^{(k)})^{m}$ is a simple form of differential monomial in $f$, it will be interesting to investigate whether {\it Theorems 2.G - 2.I} can be extended up to differential polynomial generated by $f$.
In this direction, Li-Yang (\cite{br8c}) improved {\it Theorem 2.G} as follows.
\begin{theo 2.J} (\cite{br8c}) Let $f$ be a non-constant meromorphic function and $P[f]$ be a differential polynomial generated by $f$. Also let $a\equiv a(z)$ ($\not\equiv 0,\infty$) be a small meromorphic function. Suppose that $f-a$ and $P[f]-a$ share $0$ IM and $(t-1)\ol {d}(P)\leq \sum\limits_{j=1}^{t}d(M_{j})$. If  \bea\label{CHBe1.8} 4\ol N(r,\infty;f)+3N_{2}\left(r,0;P[f]\right)+2\ol N\left(r,0;(f/a)^{'}\right)<(\lambda +o(1))\;T\left(r,P[f]\right)\eea for $r\in I$, where $0<\lambda <1$, then $\frac{P[f]-a}{f-a}=c$  for some constant $c\in\mathbb{C}/\{0\}$.
\end{theo 2.J}
We see that \emph{Theorem 2.J always holds for a differential monomial without any condition on its degree}. But for a general differential polynomial one can not eliminate the supposition $(t-1)\ol {d}(P)\leq \sum_{j=1}^{t}d(M_{j})$ in the Theorem 2.J. So the following questions are open:
\begin{enumerate}
\item [i)] whether in {\it Theorem 2.J}, the condition over the degree can be removed,
\item [ii)] sharing notion can further be relaxed,
\item [iii)] inequality (\ref{CHBe1.8}) can further be weakened.
\end{enumerate}
The main aim of this chapter is to obtain the possible answer of the above questions in such a way that it improves, unifies and generalizes all the {\it Theorems 2.G - 2.J}.
\section{Main Result}
\par
\begin{theo}\label{CHBt1} Let $f$ be a non-constant meromorphic function and $n(\geq 1)$ be an integer. Let $m(\geq 1)$ be a positive integer or infinity and $a\equiv a(z)$ ($\not\equiv 0,\infty$) be a small function with respect to $f$. Suppose that $P[f]$ be a differential polynomial generated by $f$ such that $P[f]$ contains at least one derivative. Further suppose that $\ol E_{m)}(a;f^{n})=\ol E_{m)}(a;P[f])$. If
\bea\label{CHBe1.9} && 4\ol N(r,\infty;f)+N_{2}\left(r,0;P[f]\right)+2\ol N\left(r,0;P[f]\right)+\ol N\left(r,0;(f^{n}/a)^{'}\right)\\&& + \ol N\left(r,0;(f^{n}/a)^{'}\mid (f^{n}/a)\not =0\right)<(\lambda +o(1))\;T\left(r,P[f]\right)\nonumber\eea for $r\in I$, where $0<\lambda <1$, then $\frac{P[f]-a}{f^{n}-a}=c$  for some constant $c\in\mathbb{C}\setminus\{0\}$.
\end{theo}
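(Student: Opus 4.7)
The plan is to imitate the standard Wronskian-type comparison used in the Br\"{u}ck literature. Introduce the auxiliary functions
$$F = \frac{f^{n}}{a}, \qquad G = \frac{P[f]}{a},$$
so that by the hypothesis $\overline{E}_{m)}(a;f^{n}) = \overline{E}_{m)}(a;P[f])$ we have $\overline{E}_{m)}(1;F) = \overline{E}_{m)}(1;G)$, and the desired conclusion $\frac{P[f]-a}{f^{n}-a}=c$ is equivalent to $G-1 \equiv c(F-1)$. Since $a$ is small and $P[f]$ contains at least one derivative, Milloux-type inequalities yield $T(r,F), T(r,G) = O(T(r,f)) + S(r,f)$, and the terms $m(r,\cdot)$ involving logarithmic derivatives of $F$ and $G$ are of the form $S(r,f)$ by the lemma on logarithmic derivatives.

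Next define the classical auxiliary function
$$H \;=\; \left(\frac{F''}{F'}-\frac{2F'}{F-1}\right)-\left(\frac{G''}{G'}-\frac{2G'}{G-1}\right).$$
A direct logarithmic-derivative estimate gives $m(r,H) = S(r,f)$. The argument splits according as $H \equiv 0$ or $H \not\equiv 0$.

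If $H \equiv 0$, integrating twice yields a M\"obius relation
$$\frac{1}{F-1} \;=\; \frac{A}{G-1} + B$$
for constants $A \ne 0$ and $B$. When $B = 0$, rearranging gives exactly $G-1 = A^{-1}(F-1)$, i.e.\ $\frac{P[f]-a}{f^{n}-a} = A^{-1}$, which is the desired conclusion. When $B \ne 0$, the relation forces $F-1$ to omit the value $-1/B$, hence $\overline{N}(r, -1/B; F-1)$ vanishes; feeding this into the second fundamental theorem applied to $F$ at $0,1,-1/B,\infty$ together with the hypothesis (\ref{CHBe1.9}) must produce a contradiction, after also noting that $\overline{N}(r,\infty;F) \le n\overline{N}(r,\infty;f)+ S(r,f)$ and that $P[f]$ and $f$ have comparable characteristic.

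If $H \not\equiv 0$, the main task is to bound $\overline{N}(r,1;G)$ from above. A common $1$-point of $F$ and $G$ of multiplicity $1$ on each side (which, by the weighted sharing $\overline{E}_{m)}(1;F)=\overline{E}_{m)}(1;G)$, accounts for essentially all such points provided $m \ge 1$) is a zero of $H$, because the leading singular terms in the two bracketed expressions cancel. Hence
$$\overline{N}_{1)}(r,1;F) \;\le\; N(r,0;H) \;\le\; T(r,H)+O(1) \;\le\; N(r,\infty;H)+S(r,f),$$
while the poles of $H$ are controlled by $\overline{N}(r,\infty;f)$, by $\overline{N}(r,0;P[f])$, by $\overline{N}(r,0;(f^{n}/a)')$ and by $\overline{N}(r,0;(f^{n}/a)' \mid f^{n}/a \ne 0)$, which is precisely the combination appearing in (\ref{CHBe1.9}). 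Now apply the second fundamental theorem to $G$ at $0,1,\infty$:
$$T(r,G) \;\le\; \overline{N}(r,\infty;G) + \overline{N}(r,0;G) + \overline{N}(r,1;G) + S(r,G),$$
replace $\overline{N}(r,0;G)$ by $N_{2}(r,0;P[f])$-type estimates, split $\overline{N}(r,1;G)$ into the simple-point part (bounded by $N(r,0;H)$ above) plus the high-multiplicity part (bounded by the $\overline{N}(r,0;(f^{n}/a)'\mid \cdots)$ term), and combine to get
$$T(r,G) \;\le\; \bigl(\text{left-hand side of }(\ref{CHBe1.9})\bigr) + S(r,f).$$
Since $T(r,G) = T(r,P[f]) + S(r,f)$, this contradicts (\ref{CHBe1.9}) for $r\in I$, forcing $H \equiv 0$ and thereby completing the proof.

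The main obstacle I expect is the bookkeeping in the $H \not\equiv 0$ case: one has to track carefully how the weighted-sharing hypothesis $\overline{E}_{m)}$ (rather than CM) allows certain high-multiplicity $1$-points to escape the zero set of $H$, and show that their contribution is absorbed by the two terms $\overline{N}(r,0;(f^{n}/a)')$ and $\overline{N}(r,0;(f^{n}/a)' \mid (f^{n}/a)\ne 0)$; this is what makes the inequality in (\ref{CHBe1.9}) the sharp quantity that drives the argument. Ruling out $B \ne 0$ in the $H \equiv 0$ case is a secondary but nontrivial point, since it must be done without assuming $(t-1)\overline{d}(P) \le \sum d(M_{j})$ used in Theorem~2.J.
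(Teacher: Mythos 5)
Your overall architecture coincides with the paper's: the same $F=f^{n}/a$, $G=P[f]/a$, the same auxiliary function $H$, the dichotomy $H\equiv 0$ versus $H\not\equiv 0$, and in the latter case the same strategy of bounding $\ol N(r,1;G)$ through $N(r,\infty;H)$ and feeding the result into the Second Fundamental Theorem for $G$ at $0,1,\infty$ so that the resulting upper bound for $T(r,G)$ is exactly the left-hand side of (\ref{CHBe1.9}). That half of your sketch is sound and is precisely Lemma \ref{newlem1} of the text.

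The genuine gap is in the $H\equiv 0$ branch. Integrating $H\equiv 0$ gives $\frac{1}{F-1}=\frac{C}{G-1}+D$, and when $D\neq 0$ with $C/D=1$ this collapses to the product relation $\left(F-1-\frac{1}{C}\right)G\equiv -\frac{1}{C}$, which is \emph{not} excluded by the Second Fundamental Theorem applied to $F$ at $0,1,-1/B,\infty$ as you propose. In that configuration one only gets $\ol N(r,\infty;f)=S(r,f)$, $\ol N\left(r,1+\frac{1}{C};F\right)\leq \ol N(r,\infty;G)=S(r,f)$ and $\ol N(r,0;F)\leq \ol N(r,0;f)+S(r,f)\leq T(r,f)+S(r,f)$, while $T(r,F)=nT(r,f)+S(r,f)$; for $n=1$ (which the theorem allows) this yields $T(r,f)\leq T(r,f)+S(r,f)$ and no contradiction. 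The paper's Lemma \ref{newlem3} closes this case by a different mechanism: rewriting the relation as $\frac{1}{f^{\ol d(P)}\left(f^{n}-(1+1/C)a\right)}\equiv -\frac{C}{a^{2}}\,\frac{P[f]}{f^{\ol d(P)}}$ and comparing characteristics via Mokhon'ko's lemma together with Chuang's proximity estimate $m\left(r,\frac{P[f]}{f^{\ol d(P)}}\right)\leq (\ol d(P)-\underline{d}(P))\,m\left(r,\frac{1}{f}\right)+S(r,f)$ and the pole-counting Lemma \ref{CHBl2.5} for $\frac{P[f]}{f^{\ol d(P)}}$, which forces $(n+\ol d(P))T(r,f)\leq \ol d(P)T(r,f)+S(r,f)$, absurd for every $n\geq 1$. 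This is the ingredient that lets the theorem dispense with the degree condition $(t-1)\ol d(P)\leq\sum_{j}d(M_{j})$ of Theorem 2.J; you correctly flag the difficulty at the end of your proposal, but the concrete argument you offer in its place does not resolve it.
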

\begin{rem}\label{CHBr1.1} Thus if we put $m=\infty$ in {\em Theorem {\ref{CHBt1}}}, then we note that $f^{n}-a$ and $P[f]-a$ share $0$ IM and hence we obtain the improved, extended and generalized version of {\em Theorem 2.J} in the direction of {\em Question 2.1.1}.
\end{rem}
Now, we explain some definitions and notations which we need in this sequel.
\begin{defi} For $k\in\mathbb{N}\cup\{\infty\}$ and $a\in\mathbb{C}\backslash\{0\}$, let $\ol E_{k)}(a;f)=\ol E_{k)}(a;g)$. If $z_{0}$ be a zero of $f(z)-a$ of multiplicity $p$ and a zero of $g(z)-a$ of multiplicity $q$, then
\begin{enumerate}
\item [i)] by $\ol N_{L}(r,a;f)$ \big(resp. $\ol N_{L}(r,a;g)$\big), we mean the counting function of those $a$-points of $f$ and $g$ where $p>q\geq 1$ \big(resp. $q>p\geq 1$\big),
\item [ii)] by $N^{1)}_{E}(r,a;f)$, we mean the counting function of those $a$-points of $f$ and $g$ where $p=q=1$,
\item [iii)] by $\ol N^{(2}_{E}(r,a;f)$, we mean the counting function of those $a$-points of $f$ and $g$ where $p=q\geq 2$, each point in these counting functions is counted only once,
\item [iv)] by $\ol N_{f>s}(r,a;g)$ \big(resp. $\ol N_{g>s}(r,a;f)$\big), we mean the counting functions of those $a$-points of $f$ and $g$ for which $p>q=s$ (resp. $q>p=s$) and
\item [v)] by $\ol N_{f\geq k+1}(r,a;f\mid\; g\not=a)$ \big(resp. $\ol N_{g\geq k+1}(r,a;g\mid\; f\not=a)$\big), we mean the reduced counting functions of those $a$-points of $f$ and $g$ for which $p\geq k+1$ and $q=0$ (resp. $q\geq k+1$ and $p=0$).
\end{enumerate}
Clearly $N^{1)}_{E}(r,a;f)=N^{1)}_{E}(r,a;g)$ and $\ol N^{(2}_{E}(r,a;f)=\ol N^{(2}_{E}(r,a;g).$
\end{defi}
\begin{defi} (\cite{br7}) Let $a,b \in\mathbb{C}\;\cup\{\infty\}$. We denote by $N(r,a;f\mid\; g\neq b)$ the counting function of those $a$-points of $f$, counted according to multiplicity, which are not the $b$-points of $g$.
\end{defi}
\begin{defi} (\cite{br6}) Let $f$, $g$ share a value $a$ IM. We denote by $\ol N_{*}(r,a;f,g)$ the reduced counting function of those $a$-points of $f$ whose multiplicities differ from the multiplicities of the corresponding $a$-points of $g$.\\
Clearly $\ol N_{*}(r,a;f,g)\equiv\ol N_{*}(r,a;g,f)$ and $\ol N_{*}(r,a;f,g)=\ol N_{L}(r,a;f)+\ol N_{L}(r,a;g)$.
\end{defi}
\section{Lemmas}
For any two non-constant meromorphic functions $F$ and $G$, we shall denote by $H$ the following function: \be\label{CHB} H=\left(\frac{\;\;F^{''}}{F^{'}}-\frac{2F^{'}}{F-1}\right)-\left(\frac{\;\;G^{''}}{G^{'}}-\frac{2G^{'}}{G-1}\right).
\ee
Throughout this chapter, we take $F=\frac{f^{n}}{a}$ and $G=\frac{P[f]}{a}$. Now we present some lemmas which will be needed in the sequel.
\begin{lem}\label{CHBl2.1a} Let $\ol E_{m)}(1;F)=\ol E_{m)}(1;G)$; $F$, $G$ share $\infty$ IM and $H\not\equiv 0$. Then
\beas && N(r,\infty;H)\\
&\leq&\ol N(r,0;F\mid\geq 2)+\ol N(r,0;G\mid\geq 2)+\ol N_{*}(r,\infty;F,G)+\ol N_{F\geq m+1}(r,1;F \mid G\not=1)\\
&+&\ol N_{G\geq m+1}(r,1;G\mid F\not=1)+\ol N_{*}(r,1;F,G)+\ol N_{0}(r,0;F^{'})+\ol N_{0}(r,0;G^{'}),
\eeas where $\ol N_{0}(r,0;F^{'})$ is the reduced counting function of those zeros of $F^{'}$ which are not the zeros of $F(F-1)$ and $\ol N_{0}(r,0;G^{'})$ is similarly defined.
\end{lem}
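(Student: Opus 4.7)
The plan is to locate every possible pole of $H$ and, at each one, compute a short Laurent expansion of the two bracket expressions $F''/F'-2F'/(F-1)$ and $G''/G'-2G'/(G-1)$ to decide whether their residues cancel. Because $H$ is a difference of logarithmic derivative combinations, all its poles are simple, and they can only lie at zeros of $F'$ or $G'$, at preimages of $1$ under $F$ or $G$, or at poles of $F$ or $G$. Hence it suffices to examine each such point and show that the only uncancelled contributions are counted by the terms on the right-hand side.

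For the $1$-points I would split into three sub-cases. First, if $z_0$ is a common $1$-point of $F$ and $G$ with equal multiplicity $p\geq1$, a direct expansion gives each bracket the singular part $-(p+1)/(z-z_0)$, so the difference $H$ is regular at $z_0$ and such points contribute nothing. Second, if $z_0$ is a common $1$-point with unequal multiplicities $p\neq q$, the residues no longer match, giving a simple pole of $H$; these are exactly the points counted by $\ol N_*(r,1;F,G)=\ol N_L(r,1;F)+\ol N_L(r,1;G)$. Third, the weighted sharing hypothesis $\ol E_{m)}(1;F)=\ol E_{m)}(1;G)$ says that any $1$-point of $F$ which fails to be a $1$-point of $G$ must have $F$-multiplicity at least $m+1$ (otherwise it would lie in $\ol E_{m)}(1;F)\setminus\ol E_{m)}(1;G)$), and at such a point $H$ has a simple pole coming from the $F$-side alone; this yields $\ol N_{F\geq m+1}(r,1;F\mid G\neq1)$, and symmetrically $\ol N_{G\geq m+1}(r,1;G\mid F\neq1)$.

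For the remaining candidate singularities the analysis is standard. At a common pole of $F$ and $G$, the IM sharing of $\infty$ means both have a pole there; if the orders agree, the $-2$ residues from $F'/F$ and $G'/G$ cancel in $H$, while if they differ we get a simple pole of $H$, accounted for by $\ol N_*(r,\infty;F,G)$. A zero of $F$ of multiplicity $k\geq 2$ makes $F''/F'$ have a simple pole (with $2F'/(F-1)$ regular there), contributing $\ol N(r,0;F\mid\geq2)$; symmetrically for $G$. Finally, zeros of $F'$ that are neither zeros of $F$ nor of $F-1$ produce simple poles of $F''/F'$ alone, giving the defect terms $\ol N_0(r,0;F')$ and $\ol N_0(r,0;G')$. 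Summing the surviving contributions yields the stated inequality.

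The main obstacle is the bookkeeping around the level set $\{F=G=1\}$: one has to verify that every simple pole of $H$ arising from a $1$-point is counted exactly once, and that no common $1$-point with matching multiplicity slips in through the weighted-sharing defect. Once that case analysis is carried out cleanly, the remaining pieces (poles, zeros of multiplicity $\geq2$, and extraneous zeros of the derivatives) are routine.
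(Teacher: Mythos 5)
Your proposal is correct and follows essentially the same route as the paper, which likewise lists the possible pole locations of $H$ (multiple zeros of $F$ and $G$, common poles and common $1$-points with different multiplicities, $1$-points of one function that are not $1$-points of the other, and extraneous zeros of $F'$ and $G'$) and concludes from the fact that $H$ has only simple poles; your version merely makes the local Laurent computations explicit, including the key observation that the weighted sharing forces any unshared $1$-point to have multiplicity at least $m+1$. One small imprecision: at a common pole of order $k$ the residue of each bracket is $k-1$ (namely $-(k+1)$ from $F''/F'$ plus $2k$ from $-2F'/(F-1)$), not a pair of cancelling $-2$'s, but the conclusion that equal orders give regularity and unequal orders give a simple pole counted by $\ol N_{*}(r,\infty;F,G)$ is unaffected.
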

\begin{proof} We can easily verify that possible poles of $H$ occur at (i) multiple zeros of $F$ and $G$, (ii) poles of $F$ and $G$ with different multiplicities, (iii) the common zeros of $F-1$ and $G-1$ with different multiplicities, (iv) zeros of $F-1$ which are not the zeros of $G-1$, (v)  zeros of $G-1$ which are not the zeros of $F-1$, (vi) zeros of $F^{'}$ which are not the zeros of $F(F-1)$, (vii) zeros of $G^{'}$ which are not zeros of $G(G-1)$. Since $H$ has simple pole the lemma follows from above.
\end{proof}
\begin{lem}\label{CHBl2.2} (\cite{br7a}) If $N(r,0;f^{(k)}\mid f\not=0)$ denotes the counting function of those zeros of  $f^{(k)}$ which are not the zeros of $f$, where a zero of $f^{(k)}$ is counted according to its multiplicity, then $$N(r,0;f^{(k)}\mid f\not=0)\leq k\ol N(r,\infty;f)+N(r,0;f\mid <k)+k\ol N(r,0;f\mid\geq k)+S(r,f).$$
\end{lem}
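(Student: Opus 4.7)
The plan is to study the logarithmic-derivative-type quotient $g:=f^{(k)}/f$ and estimate the target counting function through it. If $z_0$ is a zero of $f^{(k)}$ at which $f(z_0)\neq 0$, then $g$ is holomorphic near $z_0$ and vanishes there with exactly the same multiplicity as $f^{(k)}$. This gives the first reduction
\[
N(r,0;f^{(k)}\mid f\neq 0)\;\leq\; N(r,0;g).
\]

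Next I would reduce $N(r,0;g)$ to Nevanlinna characteristics. By the First Fundamental Theorem and the non-negativity of the proximity function,
\[
N(r,0;g)\;=\;T(r,1/g)-m(r,1/g)\;\leq\; T(r,g)+O(1)\;=\;m(r,g)+N(r,g)+O(1).
\]
The proximity term $m(r,g)=m(r,f^{(k)}/f)$ is $S(r,f)$ by Corollary \ref{ghgh} (the Lemma on the Logarithmic Derivative applied to $f^{(k)}/f$). Hence the problem reduces to controlling $N(r,g)$, the counting function of the poles of $f^{(k)}/f$.

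The decisive step is a purely local analysis of the poles of $g$, which can lie only over the poles or zeros of $f$. At a pole of $f$ of order $p$, $f^{(k)}$ has a pole of order $p+k$, so $g$ has a pole of order exactly $k$; these points together contribute $k\,\ol N(r,\infty;f)$. At a zero $z_0$ of $f$ of order $q$, a Leibniz expansion $f=(z-z_0)^q h(z)$ with $h(z_0)\neq 0$ shows that $f^{(k)}$ has a zero of order $q-k$ when $q\geq k$ (so $g$ has a pole of order exactly $k$) and is holomorphic at $z_0$ when $q<k$ (so $g$ has a pole of order at most $q$). Summing these contributions gives
\[
N(r,g)\;\leq\; k\,\ol N(r,\infty;f)+N(r,0;f\mid<k)+k\,\ol N(r,0;f\mid\geq k),
\]
and combining with the First Fundamental Theorem reduction and the logarithmic derivative estimate produces the claimed inequality.

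I expect the main obstacle to be the multiplicity bookkeeping in the Leibniz expansion at zeros of $f$ of order $q<k$: one must verify that the resulting pole of $g$ is really bounded by $q$ (so that the first counting term is $N(r,0;f\mid<k)$, counted with multiplicity) and not by $k$. Once that is pinned down, the remaining ingredients are standard assembly of the First Fundamental Theorem and the Lemma on the Logarithmic Derivative.
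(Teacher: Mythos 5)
Your proposal is correct, and it is essentially the standard argument for this lemma (the paper states it without proof, citing Lahiri--Dewan, whose proof runs exactly this way): bound $N(r,0;f^{(k)}\mid f\not=0)$ by $N(r,0;f^{(k)}/f)$, pass to $N(r,\infty;f^{(k)}/f)+S(r,f)$ via the First Fundamental Theorem and the Lemma on the Logarithmic Derivative, and then do the local pole count at poles and zeros of $f$. Your multiplicity bookkeeping is right in all three cases (pole of order exactly $k$ over poles of $f$ and over zeros of order $q\geq k$, pole of order at most $q$ over zeros of order $q<k$), so there is no gap.
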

\begin{lem}\label{CHBl2.1} (\cite{br15}) Let $f$ be a non-constant meromorphic function and $k\in\mathbb{N}$, then $$N_{p}(r,0;f^{(k)})\leq N_{p+k}(r,0;f)+k\ol N(r,\infty;f)+S(r,f).$$
\end{lem}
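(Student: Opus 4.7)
The plan is to reduce the $p$-truncated inequality to the unrestricted bound
\[
N(r,0;f^{(k)}) \;\le\; N(r,0;f) + k\ol N(r,\infty;f) + S(r,f),
\]
and then account for the multiplicity truncations via a local tail comparison. First I would derive this unrestricted inequality from the First Fundamental Theorem together with the Lemma on the Logarithmic Derivative. Since $N(r,f^{(k)})=N(r,f)+k\ol N(r,\infty;f)$ and $m(r,f^{(k)})\le m(r,f)+m(r,f^{(k)}/f)=m(r,f)+S(r,f)$, one has $T(r,f^{(k)})\le T(r,f)+k\ol N(r,\infty;f)+S(r,f)$. Writing $1/f=(f^{(k)}/f)\cdot(1/f^{(k)})$ yields $m(r,1/f)\le m(r,1/f^{(k)})+S(r,f)$; using the First Fundamental Theorem to rewrite $T(r,f^{(k)})-m(r,1/f^{(k)})$ as $N(r,0;f^{(k)})+O(1)$ and $T(r,f)-m(r,1/f)$ as $N(r,0;f)+O(1)$ then delivers the unrestricted bound.

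Next I would establish the tail comparison
\[
N(r,0;f^{(k)}) - N_p(r,0;f^{(k)}) \;\ge\; N(r,0;f) - N_{p+k}(r,0;f).
\]
By the definition of $N_p$, the left side is the integrated counting function of the excess $\max(\mathrm{ord}_{z_0}(f^{(k)})-p,0)$ summed over zeros of $f^{(k)}$, and similarly for the right side with $p+k$ and $f$. The point is that this inequality already holds locally at every $z_0$. Indeed, at a zero $z_0$ of $f$ of order $m>k$ the local order of $f^{(k)}$ equals $m-k$, so both excesses equal $\max(m-p-k,0)$ and thus agree. At a zero of $f$ of order $m\le k$ the right-hand contribution is $0$, while the left-hand contribution is non-negative. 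Finally, any zero of $f^{(k)}$ that is not a zero of $f$ contributes non-negatively to the left side and nothing to the right. Summing over $|z|\le r$ and integrating against $dt/t$ preserves the inequality at the level of logarithmic counting functions.

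Combining the two ingredients yields
\[
N_p(r,0;f^{(k)}) = N(r,0;f^{(k)}) - \bigl[N(r,0;f^{(k)})-N_p(r,0;f^{(k)})\bigr] \le N_{p+k}(r,0;f) + k\ol N(r,\infty;f) + S(r,f),
\]
as required. I expect the main obstacle to be the tail comparison: one must carefully classify the zeros of $f^{(k)}$ into those arising from zeros of $f$ of order $>k$ (where the truncation on each side tracks in lockstep, exactly because differentiation reduces order by $k$), those at zeros of $f$ of order $\le k$ where $f^{(k)}$ happens to vanish as well, and those at points where $f\ne 0$; for the latter two kinds there is no matching contribution on the right, and one must verify that the truncation imbalance is always in the favourable direction. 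Once this book-keeping is done, step one is completely routine and the proof closes.
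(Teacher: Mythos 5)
Your proof is correct. Note that the thesis offers no proof of this lemma at all --- it is simply quoted from Zhang's paper --- and the argument in that source (going back to Lahiri--Sarkar for $k=1$) runs along a different line: one first proves the case $k=1$ by splitting the zeros of $f'$ into those lying over zeros of $f$ and those where $f\neq 0$, uses the exact identity $\min(m,p+1)=\min(m-1,p)+1$ at a zero of $f$ of order $m$ to see that the common zeros contribute at most $N_{p+1}(r,0;f)-\ol N(r,0;f)$, bounds the remaining zeros by $N(r,0;f'\mid f\neq 0)\leq N(r,0;f'/f)\leq \ol N(r,0;f)+\ol N(r,\infty;f)+S(r,f)$ so that the two $\ol N(r,0;f)$ terms cancel, and then iterates $k$ times. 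Your route avoids the induction entirely: you prove the single untruncated inequality $N(r,0;f^{(k)})\leq N(r,0;f)+k\ol N(r,\infty;f)+S(r,f)$ --- and your derivation of it from $T(r,f^{(k)})\leq T(r,f)+k\ol N(r,\infty;f)+S(r,f)$ together with $m(r,1/f)\leq m(r,1/f^{(k)})+m(r,f^{(k)}/f)$ and the First Fundamental Theorem is exactly right --- and then transfer the truncation by the pointwise tail comparison, whose three-case check is complete: the lockstep case uses that $f^{(k)}$ vanishes to order exactly $m-k$ at a zero of $f$ of order $m>k$, and the other two cases only need that the right-hand excess vanishes (which uses $p\geq 1$) while the left-hand excess is nonnegative; since the difference of the unintegrated counts is nonnegative and nondecreasing in $t$, the integrated inequality follows. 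What your approach buys is a uniform treatment of all $k$ at once and a clean separation of the global Nevanlinna estimate from the purely combinatorial truncation bookkeeping; what the standard approach buys is that it never needs the First Fundamental Theorem for $f^{(k)}$ itself, only the logarithmic derivative lemma at each step. The only tacit hypothesis worth recording is $f^{(k)}\not\equiv 0$, which is standard in this context.
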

The following lemma is known as Mokhon'ko's Lemma which plays a vital role throughout the thesis.
\begin{lem}\label{ML} (\cite{br9}) Let $f$ be a non-constant meromorphic function and let $P(f)=\sum\limits _{k=0}^{n} a_{k}f^{k}$ and $Q(f)=\sum \limits_{j=0}^{m} b_{j}f^{j}$ be two mutually prime polynomials in $f$. If the coefficients $\{a_{k}\}$ and $\{b_{j}\}$ are small functions of $f$ and  $a_{n}\not=0~,~b_{m}\not=0$, then $$T\bigg(r,\frac{P(f)}{Q(f)}\bigg)=\max\{n,m\}\;T(r,f)+S(r,f).$$
\end{lem}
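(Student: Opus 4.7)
The plan is to establish both inequalities $T(r, R) \leq d\,T(r, f) + S(r, f)$ and $d\,T(r, f) \leq T(r, R) + S(r, f)$, where $R := P(f)/Q(f)$ and $d := \max\{n, m\}$. I would begin with the auxiliary one-sided bounds $T(r, P(f)) \leq n\,T(r, f) + S(r, f)$ and $T(r, Q(f)) \leq m\,T(r, f) + S(r, f)$. For the proximity parts, the elementary inequality $|P(f(z))| \leq (n+1)\bigl(\max_k |a_k(z)|\bigr)\max(1, |f(z)|)^n$ yields $m(r, P(f)) \leq n\,m(r, f) + S(r, f)$, and the analogous inequality for $Q$. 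For the counting functions, every pole of $P(f)$ is either a pole of some coefficient $a_k$ (contributing $S(r,f)$) or a pole of $f$ of multiplicity $\leq n$, so $N(r, P(f)) \leq n\,N(r, f) + S(r, f)$, and likewise for $Q(f)$.

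For the upper bound on $T(r, R)$, the coprimality of $P$ and $Q$ as polynomials in $w$ is essential. By the Euclidean algorithm applied in the polynomial ring $K[w]$, where $K$ is the field of meromorphic functions that are small with respect to $f$, there exist polynomials $A(w), B(w) \in K[w]$ and a nonzero $D(z) \in K$ with $A(w)P(w) + B(w)Q(w) \equiv D(z)$. Substituting $w = f(z)$ forces any common zero of $P(f)$ and $Q(f)$ (outside zeros of $D$) to vanish, so their common zeros contribute only $S(r, f)$. Splitting into the cases $n \geq m$ and $n < m$, I estimate $m(r, R)$ using the leading-order behaviour as $|f| \to \infty$ (which gives $(n - m)^{+} m(r, f) + S(r, f)$, and symmetrically for $1/R$) and $N(r, R)$ from poles of $P(f)$ together with zeros of $Q(f)$ disjoint from those of $P(f)$; after the coprimality reduction this yields $N(r, R) \leq d\,N(r, f) + S(r, f)$, and combining gives the upper bound.

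For the lower bound, I would rewrite the identity $R\,Q(f) - P(f) \equiv 0$ as a polynomial equation of degree $d$ in $f$, namely $\sum_{k=0}^{d}(b_k R - a_k)\, f^k \equiv 0$ (with the convention $a_k = 0$ for $k > n$ and $b_k = 0$ for $k > m$). The leading coefficient in $f$ is then either a nonzero small function (when $n \neq m$, since $a_n$ or $-R b_m$ dominates) or $a_n - R\,b_n$ when $n = m$. Solving for $f^d$ and taking Nevanlinna proximity/counting functions yields $d\,T(r, f) \leq T(r, R) + (d-1) T(r, f) + S(r, f)$ after controlling the lower-order terms by small-function coefficients of $R$; a standard bootstrapping together with the upper bound already established absorbs the $(d-1)T(r,f)$ contribution into $T(r, R)$ in the right way, producing $d\,T(r, f) \leq T(r, R) + S(r, f)$.

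The main obstacle is the coprimality reduction in the upper bound: the naive estimate $T(r, R) \leq T(r, P(f)) + T(r, Q(f)) + O(1)$ only gives $(n + m)\,T(r, f) + S(r, f)$, and to sharpen this to $d\,T(r, f) + S(r, f)$ one must exploit the Bezout identity over $K[w]$ to show that the common zeros of $P(f)$ and $Q(f)$ cancel (up to $S(r,f)$) in the ratio $R$. Verifying that the resultant $D \in K$ of $P$ and $Q$ is indeed a nonzero small function, and carefully tracking the case distinction on $n$ versus $m$ in the leading-order analysis of $R$, is where the work concentrates.
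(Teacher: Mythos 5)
A preliminary remark: the paper offers no proof of this lemma --- it is the classical Valiron--Mohon'ko theorem, stated with a citation to Mokhon'ko's 1971 paper --- so there is no in-text argument to compare yours against, and your proposal must be judged on its own. Its overall architecture (polynomial case first, a Bezout identity over the field of small functions to neutralise common zeros of $P(f)$ and $Q(f)$, separate treatment of proximity and counting functions) is the standard one, but two of the pivotal inequalities you assert are false, and the final step of the lower bound is not valid.

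For the upper bound, you claim $m(r,R)\leq (n-m)^{+}\,m(r,f)+S(r,f)$ and $N(r,R)\leq \max\{n,m\}\,N(r,f)+S(r,f)$. Take $P\equiv 1$, $Q=f$, so $R=1/f$ and $\max\{n,m\}=1$: with $f=e^{z}$ one has $m(r,R)=m(r,e^{-z})\sim T(r,f)$ while $(n-m)^{+}m(r,f)=0$, and with $f=e^{z}-1$ one has $N(r,R)=N(r,0;f)\sim T(r,f)$ while $N(r,\infty;f)=0$. The poles and large values of $R(f)$ produced by the zeros and near-zeros of $Q(f)$ are controlled by $T(r,f)$ (via the First Fundamental Theorem applied to $Q(f)$), not by $N(r,\infty;f)$ or by the behaviour of $R$ as $|f|\to\infty$; only the combined bound $m(r,R)+N(r,R)\leq\max\{n,m\}\,T(r,f)+S(r,f)$ is true, and establishing it is exactly where the work lies (the standard route is the Euclidean-algorithm descent $P=AQ+P_{1}$, $T\left(r,P_{1}(f)/Q(f)\right)=T\left(r,Q(f)/P_{1}(f)\right)+O(1)$, and so on, terminating in a unit precisely because $P$ and $Q$ are coprime). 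For the lower bound, solving $\sum_{k}(b_{k}R-a_{k})f^{k}\equiv 0$ for $f^{d}$ gives at best an inequality of the form $d\,T(r,f)\leq C_{1}T(r,R)+(d-1)T(r,f)+S(r,f)$, which yields only $T(r,f)\leq C_{1}T(r,R)+S(r,f)$; no bootstrapping with the already-known upper bound $T(r,R)\leq d\,T(r,f)+S(r,f)$ can promote this to $d\,T(r,f)\leq T(r,R)+S(r,f)$, since for $d\geq 2$ the two inequalities being combined are mutually consistent without implying the conclusion. The usual repair is to prove $T(r,P(f))=n\,T(r,f)+S(r,f)$ in full for polynomials first (whose lower bound already requires splitting the circle $|z|=r$ according to whether $|f|$ exceeds a constant dominating the coefficient ratios) and then to run the descent above.
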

\begin{lem} \label{CHBl2.4} (\cite{br3a}) Let $f$ be a meromorphic function and $P[f]$ be a differential polynomial. Then
$$ m\left(r,\frac{P[f]}{f^{\ol {d}(P)}}\right)\leq \left(\ol {d}(P)-\underline {d}(P)\right) m\left(r,\frac{1}{f}\right)+S(r,f).$$
\end{lem}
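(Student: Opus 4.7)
The plan is to exploit the identity
$$M_j[f] \;=\; f^{d(M_j)}\prod_{i=1}^{k}\left(\frac{f^{(i)}}{f}\right)^{n_{ij}},$$
which isolates the genuinely dangerous $1/f$ behaviour from factors absorbed by the Lemma of the Logarithmic Derivative (Corollary \ref{ghgh}). This rewrites the target quantity as
$$\frac{P[f]}{f^{\ol{d}(P)}} \;=\; \sum_{j=1}^{t}\frac{b_j}{f^{\ol{d}(P)-d(M_j)}}\prod_{i=1}^{k}\left(\frac{f^{(i)}}{f}\right)^{n_{ij}},$$
in which each exponent $\ol{d}(P)-d(M_j)$ lies in $[0,\ol{d}(P)-\underline{d}(P)]$, with the upper endpoint attained by at least one index $j$.

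The decisive step is a pointwise estimate applied \emph{before} any $\sum_j\to\int\sum_j$ interchange. Using $\log^+|a_1+\cdots+a_t|\le \log t+\max_j\log^+|a_j|$ and then sub-multiplicativity $\log^+|uvw|\le\log^+|u|+\log^+|v|+\log^+|w|$ inside the maximum, I obtain
$$\log^+\left|\frac{P[f]}{f^{\ol{d}(P)}}\right|\;\le\;\log t+\max_j\log^+|b_j|+\max_j\log^+\left|\prod_{i=1}^{k}\left(\frac{f^{(i)}}{f}\right)^{n_{ij}}\right|+(\ol{d}(P)-\underline{d}(P))\log^+\left|\frac{1}{f}\right|,$$
using $\max_j(\ol{d}(P)-d(M_j))=\ol{d}(P)-\underline{d}(P)$ and $\log^+|1/f|\ge 0$. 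I would then integrate over $|z|=r$ and dominate each maximum by the corresponding finite sum: the $\sum_j m(r,b_j)$ terms are $S(r,f)$ because each $b_j$ is a small function of $f$, and the logarithmic-derivative contributions $\sum_{j,i}n_{ij}\,m(r,f^{(i)}/f)$ are $S(r,f)$ by Corollary \ref{ghgh}. What remains is precisely $(\ol{d}(P)-\underline{d}(P))\,m(r,1/f)$, as claimed.

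The main obstacle is the ordering indicated above. If one instead applied $m(r,\sum_j\cdot)\le\sum_j m(r,\cdot)$ first and then sub-multiplicativity termwise, the coefficient of $m(r,1/f)$ would become $\sum_{j}(\ol{d}(P)-d(M_j))$, potentially as large as $t(\ol{d}(P)-\underline{d}(P))$, producing only a weaker estimate. Absorbing the sum into the maximum \emph{before} the split is what exploits the fact that the worst exponent $\ol{d}(P)-d(M_j)$ is realized at a single degree value, giving the sharp coefficient $\ol{d}(P)-\underline{d}(P)$ in the statement.
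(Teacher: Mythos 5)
Your proof is correct. Note that the paper does not actually prove this lemma --- it is quoted from Chuang \cite{br3a} without proof --- so there is no in-text argument to compare against; your argument is essentially the standard one for this estimate. The factorization $M_j[f]=f^{d(M_j)}\prod_{i\geq 1}(f^{(i)}/f)^{n_{ij}}$ is the right decomposition, and your insistence on applying the pointwise bound $\log^+\big|\sum_j a_j\big|\leq\log t+\max_j\log^+|a_j|$ \emph{before} integrating is exactly what makes the coefficient of $m(r,1/f)$ come out as $\max_j\left(\ol{d}(P)-d(M_j)\right)=\ol{d}(P)-\underline{d}(P)$ rather than the useless $\sum_j\left(\ol{d}(P)-d(M_j)\right)$; dominating each integrated maximum by the corresponding full sum afterwards is harmless because every term there is $S(r,f)$ (small coefficients, plus Corollary \ref{ghgh} for the logarithmic derivatives). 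The only standard caveats worth a sentence in a polished write-up: $f$ should be non-constant so that $S(r,f)$ is meaningful and the lemma on the logarithmic derivative applies, and the pointwise inequality is invoked off the null set of $\theta$ where $f$ or some $f^{(i)}$ vanishes or has a pole on $|z|=r$, which does not affect the integrals.
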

\begin{lem} \label{CHBl2.5} Let $f$ be a meromorphic function and $P[f]$ be a differential polynomial. Then we have
\beas N\left(r,\infty;\frac{P[f]}{f^{\ol {d}(P)}}\right)&\leq& (\Gamma _{P}-\ol {d}(P))\;\ol N(r,\infty;f)+(\ol {d}(P)-\underline {d} (P))\; N(r,0;f\mid\geq k+1)\\&&+Q\ol N(r,0;f\mid\geq k+1)+\ol{d}(P)N(r,0;f|\leq k)+S(r,f).\eeas
\end{lem}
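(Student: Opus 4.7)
The strategy is to bound the poles of $P[f]/f^{\overline{d}(P)}$ location-by-location, using the decomposition
\[
\frac{P[f]}{f^{\overline{d}(P)}}=\sum_{j=1}^{t}b_j\, f^{d(M_j)-\overline{d}(P)}\prod_{i=1}^{k}\left(\frac{f^{(i)}}{f}\right)^{n_{ij}},
\]
so that the only singularities arise from (a) poles of $f$, (b) zeros of $f$, and (c) zeros and poles of the coefficients $b_j$, the last of which contribute only $S(r,f)$. Since $N(r,\infty;\cdot)$ counts with multiplicity, the plan is to bound the order of the pole at each type of singularity and then sum, taking maxima over $j$ at the end.

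At a pole of $f$ of multiplicity $p\geq 1$, each ratio $f^{(i)}/f$ has a pole of order exactly $i$, while $f^{d(M_j)-\overline{d}(P)}$ has a zero of order $p\bigl(\overline{d}(P)-d(M_j)\bigr)\geq \overline{d}(P)-d(M_j)$. Combining these, the $j$-th summand has a pole of order at most $\bigl(\Gamma_{M_j}-d(M_j)\bigr)-\bigl(\overline{d}(P)-d(M_j)\bigr)=\Gamma_{M_j}-\overline{d}(P)\leq \Gamma_P-\overline{d}(P)$, a bound independent of $p$. Summing over the distinct poles of $f$ then yields the first term $(\Gamma_P-\overline{d}(P))\,\overline{N}(r,\infty;f)$.

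For zeros of $f$ of multiplicity $p\leq k$, one cannot guarantee that $f^{(i)}$ vanishes at $z_0$ for $i\geq p$, so $P[f]$ need not vanish there; the $p\,\overline{d}(P)$-fold zero of the denominator then forces a pole of order at most $p\,\overline{d}(P)$, which, summed with multiplicities over such zeros, produces the last term $\overline{d}(P)\,N(r,0;f\mid\leq k)$.

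The delicate case, and what I view as the main obstacle, is zeros of $f$ of multiplicity $p\geq k+1$. Here every derivative $f^{(i)}$ with $1\leq i\leq k$ vanishes to order $p-i$, so $M_j[f]$ has a zero of order $p\,d(M_j)-\bigl(\Gamma_{M_j}-d(M_j)\bigr)$, whereas $f^{\overline{d}(P)}$ has a zero of order $p\,\overline{d}(P)$. The $j$-th summand therefore carries a pole of order $p\bigl(\overline{d}(P)-d(M_j)\bigr)+\bigl(\Gamma_{M_j}-d(M_j)\bigr)$. I would split this bound into its $p$-dependent piece, which is dominated by $p\bigl(\overline{d}(P)-\underline{d}(P)\bigr)$ and, after summation with multiplicities, contributes $(\overline{d}(P)-\underline{d}(P))\,N(r,0;f\mid\geq k+1)$, and its $p$-independent piece, which is bounded by $Q$ and, being uniform in $p$, contributes $Q\,\overline{N}(r,0;f\mid\geq k+1)$. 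Adding the three pole contributions and absorbing the $b_j$-terms into $S(r,f)$ gives exactly the claimed estimate.
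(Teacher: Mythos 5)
Your proposal is correct and follows essentially the same route as the paper's proof: classify the possible poles of $P[f]/f^{\ol{d}(P)}$ as arising from poles of $f$, zeros of $f$ of multiplicity at most $k$, zeros of multiplicity at least $k+1$, and zeros/poles of the coefficients $b_j$ (absorbed into $S(r,f)$), and bound the pole order in each case by exactly the quantities $\Gamma_P-\ol{d}(P)$, $p\,\ol{d}(P)$, and $p(\ol{d}(P)-\underline{d}(P))+Q$ respectively. The only cosmetic difference is that you phrase the pole-of-$f$ case via the logarithmic-derivative factorization $\prod_i (f^{(i)}/f)^{n_{ij}}$ while the paper bounds the pole order of $M_j[f]$ directly, and your splitting of the max into a $p$-dependent and a $p$-independent piece is, if anything, slightly more careful than the paper's.
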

\begin{proof} Let $z_{0}$ be a pole of $f$ of order $r$, such that $b_{j}(z_{0})\not=0,\infty$ ($1\leq j\leq t$). Then it would be a pole of $P[f]$ of order at most $r\ol {d}(P)+\Gamma _{P}-\ol {d}(P)$. Since $z_{0}$ is a pole of $f^{\ol {d}(P)}$ of order $r\ol {d}(P)$, it follows that $z_{0}$ would be a pole of $\frac{P[f]}{f^{\ol {d}(P)}}$ of order at most $\Gamma _{P}-\ol {d}(P)$. Next suppose $z_{1}$ is a zero of $f$ of order $s(>k)$, such that $b_{j}(z_{1})\not=0,\infty$ ($1\leq j\leq t$). Clearly it would be a zero of $M_{j}(f)$ of order $s.n_{0j}+(s-1)n_{1j}+\ldots+(s-k)n_{kj}= s.d(M_{j})-(\Gamma _{M_{j}}-d(M_{j}))$.
 Hence $z_{1}$ be a pole of $\frac{M_{j}[f]}{f^{\ol {d}(P)}}$ of order $$s.\ol {d}(P)-s.d(M_{j})+(\Gamma _{M_{j}}-d(M_{j}))=s(\ol {d}(P)-d(M_{j}))+(\Gamma _{M_{j}}-d(M_{j})).$$ So $z_{1}$ would be a pole of $\frac{P[f]}{f^{\ol {d}(P)}}$ of order at most $$\text{max} \{s(\ol {d}(P)-d(M_{j}))+(\Gamma _{M_{j}}-d(M_{j})): 1\leq j\leq t)\}=s(\ol {d}(P)-\underline{d}(P))+Q.$$
Since the poles of $\frac{P[f]}{f^{\ol {d}(P)}}$ comes from the poles or zeros of $f$ and poles or zeros of $b_{j}(z)$ ($1\leq j\leq t$) only, it follows that
\beas  N\left(r,\infty;\frac{P[f]}{f^{\ol {d}(P)}}\right)&\leq& (\Gamma _{P}-\ol {d}(P))\;\ol N(r,\infty;f)+(\ol {d}(P)-\underline {d} (P))\; N(r,0;f\mid \geq k+1)\\&&+Q\;\ol N(r,0;f\mid\geq k+1)+\ol{d}(P)N(r,0;f|\leq k)+S(r,f).\eeas
Hence the proof.
\end{proof}
\begin{lem} \label{CHBl2.7} Let $f$ be a non-constant meromorphic function and $P[f]$ be a differential polynomial. Then $T(r,P[f])=O(T(r,f))$ and $S(r,P[f])=S(r,f)$.
\end{lem}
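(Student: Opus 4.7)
The plan is to reduce the estimate to the standard derivative bound, monomial by monomial. Writing $P[f] = \sum_{j=1}^{t} b_j M_j[f]$ with $M_j[f] = f^{n_{0j}}(f')^{n_{1j}}\cdots (f^{(k)})^{n_{kj}}$ and $T(r,b_j)=S(r,f)$, the properties of the Nevanlinna characteristic for sums and products (already stated earlier in the chapter) give
\begin{eqnarray*}
T(r,P[f]) &\leq& \sum_{j=1}^{t} T(r,b_j M_j[f])+\log t \\
&\leq& \sum_{j=1}^{t}\sum_{i=0}^{k} n_{ij}\,T(r,f^{(i)})+\sum_{j=1}^{t}T(r,b_j)+\log t.
\end{eqnarray*}

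Next, I would bound each $T(r,f^{(i)})$ by the standard estimate $T(r,f^{(i)})\leq (i+1)T(r,f)+S(r,f)$, which itself is obtained by splitting $T(r,f^{(i)})=m(r,f^{(i)})+N(r,f^{(i)})$, using $m(r,f^{(i)})\leq m(r,f)+m(r,f^{(i)}/f)=m(r,f)+S(r,f)$ via Corollary \ref{ghgh}, and the elementary pole count $N(r,f^{(i)})=N(r,f)+i\,\overline{N}(r,f)\leq (i+1)N(r,f)$. Substituting yields, for each $j$,
\begin{equation*}
\sum_{i=0}^{k}n_{ij}\,T(r,f^{(i)}) \;\leq\; \left(\sum_{i=0}^{k}(i+1)n_{ij}\right)T(r,f)+S(r,f) \;=\; \Gamma_{M_j}\,T(r,f)+S(r,f).
\end{equation*}

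Combining the displays and absorbing the $T(r,b_j)$ terms into $S(r,f)$, we obtain
\begin{equation*}
T(r,P[f]) \;\leq\; \left(\sum_{j=1}^{t}\Gamma_{M_j}\right)T(r,f)+S(r,f) \;\leq\; t\,\Gamma_{P}\,T(r,f)+S(r,f),
\end{equation*}
which proves $T(r,P[f])=O(T(r,f))$. The second assertion $S(r,P[f])=S(r,f)$ is then immediate: by definition any quantity $\Delta$ which is $S(r,P[f])$ satisfies $\Delta/T(r,P[f])\to 0$ outside an exceptional set, and combining this with $T(r,P[f])\leq C\,T(r,f)+O(1)$ gives $\Delta/T(r,f)\to 0$, so $\Delta=S(r,f)$.

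There is essentially no obstacle here beyond bookkeeping; the only mild care needed is to ensure the coefficients $b_j$, being small functions with $T(r,b_j)=S(r,f)$, are absorbed into the error term rather than inflating the leading coefficient, and to remember that the logarithmic derivative estimate for $f^{(i)}/f$ rather than a naive bound is what produces the clean factor $(i+1)$ in the derivative characteristic estimate.
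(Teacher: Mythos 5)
Your argument is correct, but it takes a different route from the paper. The paper disposes of this lemma in one line by citing Doeringer's inequality $T(r,P[f])\leq\Gamma_{P}\,T(r,f)+S(r,f)$ (reference \cite{br3b}), where $\Gamma_{P}=\max_{j}\Gamma_{M_{j}}$, and reads off both assertions from it. You instead rebuild the estimate from scratch: subadditivity of $T$ for sums and products, the bound $T(r,f^{(i)})\leq(i+1)T(r,f)+S(r,f)$ obtained from the logarithmic-derivative lemma plus the pole count $N(r,f^{(i)})=N(r,f)+i\,\ol{N}(r,f)$, and then summation over the monomials. The price of this self-contained approach is a weaker constant, $\sum_{j=1}^{t}\Gamma_{M_{j}}$ in place of $\Gamma_{P}$ (your term-by-term bookkeeping cannot recover the max, since the cited inequality exploits cancellation that a naive triangle-inequality argument does not see); but since the lemma only asserts $T(r,P[f])=O(T(r,f))$, either constant suffices, and your derivation has the virtue of using only tools already established in Chapter~1. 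Your deduction of $S(r,P[f])=S(r,f)$ from the upper bound is the right one, with the caveat that it establishes only the inclusion actually used in the sequel (a quantity that is $o(T(r,P[f]))$ is $o(T(r,f))$); the reverse inclusion would need $T(r,f)=O(T(r,P[f]))$, which neither you nor the paper proves, and which is not needed.
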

\begin{proof} For a differential polynomial $P[f]$, from (\cite{br3b}), we know that $T(r,P[f])\leq\Gamma_{P}T(r,f)+S(r,f)$. From this inequality, the lemma follows.
\end{proof}
\begin{lem} \label{newlem1} If $\ol E_{m)}(1;F)=\ol E_{m)}(1;G)$ except the zeros and poles of $a(z)$ and $ H\not\equiv0$, then
\bea T(r,G) &\leq& 4\ol{N}(r,\infty;F)+2\ol{N}(r,0;G)+N_{2}(r,0;G)\\
\nonumber &+& \ol{N}(r,0;F')+\ol{N}(r,0;F'\mid F\not=0)+S(r,f).\eea
\end{lem}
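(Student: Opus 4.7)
The strategy is to combine Nevanlinna's Second Fundamental Theorem with the auxiliary function $H$ of \eqref{CHB}. Applying Theorem \ref{kus7} to $G = P[f]/a$ at the values $0$, $1$, $\infty$ and using Lemma \ref{CHBl2.7} to write $S(r,G)=S(r,f)$, I would start from
$$T(r,G) \leq \ol{N}(r,0;G) + \ol{N}(r,1;G) + \ol{N}(r,\infty;G) + S(r,f).$$
Because $F = f^{n}/a$ and $G = P[f]/a$ have the same set of poles up to the zeros of $a$ (which contribute only $S(r,f)$), one has $\ol{N}(r,\infty;G) \leq \ol{N}(r,\infty;F) + S(r,f)$ and $\ol{N}_{*}(r,\infty;F,G) \leq \ol{N}(r,\infty;F) + S(r,f)$.

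The core step is to bound $\ol{N}(r,1;G)$. Using the weighted sharing hypothesis one has the partition
$$\ol{N}(r,1;G) = N_{E}^{1)}(r,1;G) + \ol{N}^{(2}_{E}(r,1;G) + \ol{N}_{L}(r,1;F) + \ol{N}_{L}(r,1;G) + \ol{N}_{G \geq m+1}(r,1;G \mid F \neq 1).$$
A direct expansion from \eqref{CHB} shows that $H$ vanishes at every simple common $1$-point of $F$ and $G$, and by the Logarithmic Derivative Lemma $m(r,H)=S(r,f)$. Hence
$$N_{E}^{1)}(r,1;G) \leq N(r,0;H) \leq N(r,\infty;H) + S(r,f),$$
and substituting the bound from Lemma \ref{CHBl2.1a} produces another copy each of $\ol{N}_{L}(r,1;F)$, $\ol{N}_{L}(r,1;G)$, $\ol{N}_{G \geq m+1}(r,1;G \mid F \neq 1)$, together with $\ol{N}(r,0;F \mid \geq 2)$, $\ol{N}(r,0;G \mid \geq 2)$, $\ol{N}_{*}(r,\infty;F,G)$, $\ol{N}_{F \geq m+1}(r,1;F \mid G \neq 1)$, $\ol{N}_{0}(r,0;F')$ and $\ol{N}_{0}(r,0;G')$.

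The resulting terms split naturally into an $F$-side and a $G$-side. On the $F$-side, the blocks $\ol{N}^{(2}_E$, the two copies of $\ol{N}_L(r,1;F)$, $\ol{N}_{F \geq m+1}(r,1;F \mid G \neq 1)$, together with $\ol{N}(r,0;F \mid \geq 2)$ and $\ol{N}_{0}(r,0;F')$, consist of distinct zeros of $F'$, at either zeros or $1$-points of $F$; using $\ol{N}(r,0;F' \mid F \neq 0) \leq \ol{N}(r,0;F')$ they can be absorbed jointly into $\ol{N}(r,0;F') + \ol{N}(r,0;F' \mid F \neq 0)$. On the $G$-side, the blocks $\ol{N}^{(2}_E$, the two copies of $\ol{N}_L(r,1;G)$ and $\ol{N}_{G \geq m+1}(r,1;G \mid F \neq 1)$, and $\ol{N}_{0}(r,0;G')$ are distinct zeros of $G'$ at points where $G \neq 0$; by Lemma \ref{CHBl2.2} applied with $k=1$, each such collection is bounded by $\ol{N}(r,\infty;G) + \ol{N}(r,0;G) + S(r,f) \leq \ol{N}(r,\infty;F) + \ol{N}(r,0;G) + S(r,f)$. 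Tallying: one copy of $\ol{N}(r,\infty;F)$ comes from the SFT, one from $\ol{N}_*(r,\infty;F,G)$, and two from the two $G'$-side applications of Lemma \ref{CHBl2.2}, giving the coefficient $4$; the $\ol{N}(r,0;G)$ side assembles as one copy from the SFT, two from the $G'$-bounds, plus $\ol{N}(r,0;G \mid \geq 2)$, which totals $2\ol{N}(r,0;G) + N_{2}(r,0;G)$.

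The main obstacle is the combinatorial bookkeeping. One must verify, by a finite case analysis over the multiplicity pair $(p,q)$ at each $1$-point of $F$ and $G$, that the doubled $\ol{N}_L$-terms are absorbed into $\ol{N}(r,0;F') + \ol{N}(r,0;F' \mid F \neq 0)$ rather than forcing $2\,\ol{N}(r,0;F' \mid F \neq 0)$, and that the $G$-side multiple $1$-points plus $\ol{N}_{0}(r,0;G')$ are each dominated by one application of Lemma \ref{CHBl2.2} without overcount. Once this accounting is executed carefully, the claimed inequality falls out.
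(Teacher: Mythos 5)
Your proposal is correct and follows essentially the same route as the paper's proof: the Second Fundamental Theorem applied to $G$ at $0,1,\infty$, the decomposition of $\ol N(r,1;G)$ with the simple common $1$-points controlled through $N(r,\infty;H)$ via Lemma \ref{CHBl2.1a}, and the absorption of the leftover multiplicity terms into $\ol N(r,0;F')+\ol N(r,0;F'\mid F\not=0)$ on the $F$-side and into bounds of type $\ol N(r,\infty;G)+\ol N(r,0;G)$ via Lemma \ref{CHBl2.2} on the $G$-side. The only (immaterial) difference is bookkeeping: the paper routes the $G$-side through $\ol N(r,0;G'\mid G\not=0)+\ol N(r,0;G')$ and invokes Lemma \ref{CHBl2.1} for the second term, whereas you apply Lemma \ref{CHBl2.2} twice and keep $\ol N(r,0;G\mid\geq 2)$ separate, which yields the identical total $2\ol N(r,0;G)+N_{2}(r,0;G)$.
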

\begin{proof}
Let $z_{0}$ be a simple zero of $F-1$. Then by a simple calculation, we see that $z_{0}$ is a zero of $H$ and hence \be\label{CHBe3.2}N^{1)}_{E}(r,1;F)=N^{1)}_{E}(r,1;G)\leq N(r,0;H)\leq N(r,\infty;H)+S(r,F).
\ee
We note that $\ol N(r,\infty;F)=\ol N(r,\infty;G)+S(r,f)$ and $\ol N_{F>1}(r,1;G)+\ol N(r,1;G\mid\geq 2)=\ol N_{E}^{(2}(r,1;G)+\ol N_{L}(r,1;G)+\ol N_{L}(r,1;F)+\ol N_{G\geq m+1}(r,1;G\mid\;F\not=1)+S(r,f)$.\par
Using (\ref{CHBe3.2}), {\it Lemmas \ref{CHBl2.1a}}, {\it \ref{CHBl2.7}} and the Second Fundamental Theorem, we obtain
\bea\label{CHBe3.3}&& T(r,G)\\&\leq& \ol N(r,\infty;G)+\ol N(r,0;G)+N^{1)}_{E}(r,1;G)+\ol N_{F>1}(r,1;G)+\ol N(r,1;G\mid\geq 2)\nonumber\\&&-N_{0}(r,0;G^{'})+S(r,G)\nonumber\\&\leq& 2\ol N(r,\infty;F)+\ol N(r,0;G)+\ol N(r,0;G\mid\geq 2)+\ol N(r,0;F\mid\geq 2)+2\ol N_{L}(r,1;F)\nonumber\\& &+2\ol N_{L}(r,1;G)+\ol N_{F\geq m+1}(r,1;F\mid\;G\not=1)+2\ol N_{G\geq m+1}(r,1;G\mid\;F\not=1)\nonumber\\& &+\ol N_{E}^{(2}(r,1;G)+\ol N_{0}(r,0;F^{'})+S(r,f).\nonumber
\eea
Using {\it Lemmas \ref{CHBl2.1}}, {\it \ref{CHBl2.2}}, we see that
\bea \label{CHBe3.4}\nonumber&& \ol N(r,0;G\mid \geq 2)+2\ol N_{G\geq m+1}(r,1;G\mid\;F\not=1)+2\ol N_{L}(r,1;G)+\ol N_{E}^{(2}(r,1;G)\\
\nonumber&\leq& \ol N(r,0;G^{'}\mid G\not=0)+\ol N(r,0;G^{'})+S(r,f)\\
&\leq& 2\ol N(r,\infty;G)+\ol N(r,0;G)+N_{2}(r,0;G)+S(r,f),\eea
and \bea \label{CHBe3.5}\nonumber&& \ol N(r,0;F\mid\geq 2)+\ol N_{F\geq m+1}(r,1;F\mid\;G\not=1)+2\ol N_{L}(r,1;F)+\ol N_{0}(r,0;F^{'})\\
&\leq& \ol N(r,0;F^{'}\mid F\not=0)+\ol N(r,0;F^{'})+S(r,f).\eea
Using (\ref{CHBe3.4}) and (\ref{CHBe3.5}) in (\ref{CHBe3.3}), we have
\beas T(r,G) &\leq& 4\ol{N}(r,\infty;F)+2\ol{N}(r,0;G)+N_{2}(r,0;G)+\ol{N}(r,0,F')\\
&+& \ol{N}(r,0,F'\mid F\not=0)+S(r,f).\eeas
Hence the proof.
\end{proof}
\newpage
\begin{lem} \label{newlem2} If $ H\equiv0$, then  one of the following conditions hold:
\begin{enumerate}
\item [i)] $T(r,G)=N_{2}(r,0;G)+S(r,G)$, or
\item [ii)] $(F-1-\frac{1}{C})G\equiv-\frac{1}{C}$ for some constant $C\in \mathbb{C}\setminus\{0\}$, or
\item [iii)] $\frac{G-1}{F-1}\equiv C$ for some constant $C\in\mathbb{C}\setminus\{0\}$.
\end{enumerate}
\end{lem}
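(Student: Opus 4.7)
The hypothesis $H\equiv 0$ says
$$\frac{F''}{F'}-\frac{2F'}{F-1}\equiv\frac{G''}{G'}-\frac{2G'}{G-1},$$
i.e.\ the logarithmic derivatives of $F'/(F-1)^{2}$ and $G'/(G-1)^{2}$ coincide. Integrating once yields $F'/(F-1)^{2}\equiv D\,G'/(G-1)^{2}$ for some $D\in\mathbb{C}\setminus\{0\}$, and integrating a second time produces the bilinear identity
$$\frac{1}{F-1}\equiv\frac{A}{G-1}+B\qquad(A\in\mathbb{C}\setminus\{0\},\;B\in\mathbb{C}).$$

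I would then split on $(A,B)$. If $B=0$, the identity rearranges to $(G-1)/(F-1)\equiv A$, which is conclusion (iii) with $C=A$. If $B\neq 0$ and $A=B$, clearing denominators gives $A\,G(F-1)\equiv G-1$, and regrouping rewrites this as $\bigl(F-1-\tfrac{1}{C}\bigr)G\equiv -\tfrac{1}{C}$ with $C=A$, which is conclusion (ii). In the remaining case $B\neq 0$ and $A\neq B$ one solves for $F$ as a genuine Möbius transform of $G$,
$$F\equiv\frac{(B+1)G+(A-B-1)}{BG+(A-B)},$$
whose pole and zero preimages sit at the finite nonzero values $G=(B-A)/B$ and $G=(B+1-A)/(B+1)$ respectively.

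The remaining task is to extract conclusion (i) in this third subcase. My plan is to apply the Second Fundamental Theorem to $G$ with the three target values $0$, $\infty$, and $(B-A)/B$, convert the last two counting functions via the Möbius identity into counting functions of $F=f^{n}/a$, and then exploit the common-pole structure of $F$ and $G=P[f]/a$ together with $T(r,a)=S(r,f)=S(r,G)$ (Lemma~\ref{CHBl2.7}) and the Lemma on the Logarithmic Derivative to absorb every term other than $N_{2}(r,0;G)$ into $S(r,G)$. The main obstacle is precisely this collapse: showing that $\overline{N}(r,\infty;G)$, $\overline{N}(r,(B-A)/B;G)=\overline{N}(r,\infty;F)$, and the simple-zero portion of $\overline{N}(r,0;G)$ are each $S(r,G)$ requires a careful multiplicity count that leverages the fact that each simple zero of $G$ forces, through the Möbius relation, a zero of $F$ of multiplicity at least $n$, while poles of both $F$ and $G$ arise from the common source of poles of $f$ modulated only by the small function $a$. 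Once this is carried out, $T(r,G)\leq N_{2}(r,0;G)+S(r,G)$, together with the trivial reverse bound, yields conclusion (i).
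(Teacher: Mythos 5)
Your proposal follows the paper's proof essentially verbatim: integrating $H\equiv 0$ twice gives $\frac{1}{F-1}\equiv\frac{A}{G-1}+B$, the case $B=0$ gives (iii), the case $B\neq 0$, $A=B$ gives (ii), and in the remaining case the Second Fundamental Theorem applied to $G$ at $0$, $\infty$ and $(B-A)/B=1-A/B$ (the paper's $1-C/D$) yields (i), once one notes that a common pole of $F$ and $G$ would force $0=B\neq0$ in the bilinear relation, so $\ol N(r,\infty;f)=S(r,f)$ and hence $\ol N(r,\infty;G)$ and $\ol N\left(r,\frac{B-A}{B};G\right)=\ol N(r,\infty;F)$ are both $S(r,G)$. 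The only deviation is your planned \emph{careful multiplicity count} for the simple zeros of $G$: it is unnecessary, since $\ol N(r,0;G)\leq N_{2}(r,0;G)\leq T(r,G)+O(1)$ trivially closes the chain of inequalities, and the auxiliary claim that a simple zero of $G$ forces a zero of $F$ of multiplicity at least $n$ is not correct — the M\"{o}bius relation sends $G=0$ to $F=\frac{A-B-1}{A-B}$, which is not $0$ unless $A=B+1$. Dropping that superfluous step leaves exactly the paper's argument.
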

\begin{proof}
Given $H\equiv 0$. On integration, we get from (\ref{CHB}), \be\label{CHBe3.8}\frac{1}{F-1}\equiv\frac{C}{G-1}+D,\ee where $C$, $D$ are constants and $C\not=0$. From (\ref{CHBe3.8}) it is clear that $F$ and $G$ share $1$ CM.\\
Next we discuss following two cases.\\
\textbf{Case-1.} If $D\not=0$, then by (\ref{CHBe3.8}), we get
\be\label{CHBe3.8a}\ol N(r,\infty;f)=S(r,f),\ee
and
\be\label{CHBe3.9}\frac{1}{F-1}=\frac{D\left(G-1+\frac{C}{D}\right)}{G-1}.\ee
Clearly from (\ref{CHBe3.9}), we have \be\label{CHBe3.10}\ol N\left(r,1-\frac{C}{D};G\right)=\ol N(r,\infty;F)=\ol N(r,\infty;G)=\ol N(r,\infty;f)=S(r,f).\ee
If $\frac{C}{D}\not=1$, then the Second Fundamental Theorem, {\it Lemma \ref{CHBl2.7}} and (\ref{CHBe3.10}) yields
\beas T(r,G)&\leq& \ol N(r,\infty;G)+\ol N(r,0;G)+\ol N\left(r,1-\frac{C}{D};G\right)+S(r,G)\\&\leq&\ol N(r,0;G)+S(r,f)\leq N_{2}(r,0;G)+S(r,f)\\&\leq& T(r,G)+S(r,f),
\eeas
which gives (i); i.e., $T(r,G)= N_{2}(r,0;G)+S(r,f)$.\\
If $\frac{C}{D}=1$, then from (\ref{CHBe3.10}) we obtain (ii); i.e., $\left(F-1-\frac{1}{C}\right)G\equiv -\frac{1}{C}$.\\
\textbf{Case-2.} If $D=0$, from (\ref{CHBe3.8}), we get (iii); i.e., $\frac{G-1}{F-1}=C$. Hence the lemma.
\end{proof}
\begin{lem} \label{newlem3} If $n\geq 1$, then $(F-1-\frac{1}{C})G\not\equiv-\frac{1}{C}$ for any non-zero complex constant $C$, where $F$ and $G$ are defined previously.
\end{lem}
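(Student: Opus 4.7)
The plan is to assume for contradiction that $(F-1-\frac{1}{C})G\equiv-\frac{1}{C}$ for some $C\in\mathbb{C}\setminus\{0\}$, rewrite this as $G=1/((C+1)-CF)$, equivalently
$$P[f]=\frac{a^2}{(C+1)a-Cf^n},$$
and derive a contradiction using the Second Fundamental Theorem for small functions. Since the expression for $G$ is a reciprocal of a non-vanishing function, $G$ has no zeros, so $\ol N(r,0;P[f])=S(r,f)$. A pole of $f$ of order $p$ would force $(C+1)a-Cf^n$ to have a pole of order $np$ and hence $P[f]$ to have a zero of order $np$, contradicting the previous assertion; therefore $\ol N(r,\infty;f)=S(r,f)$, and consequently (since $P[f]$ is a differential polynomial in $f$ with small coefficients) $\ol N(r,\infty;P[f])=S(r,f)$. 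An application of Lemma \ref{ML} to the rational expression for $P[f]$ yields $T(r,P[f])=nT(r,f)+S(r,f)$.

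The crucial identity is
$$P[f]-\frac{a}{C+1}=\frac{Caf^n}{(C+1)\bigl((C+1)a-Cf^n\bigr)},$$
from which one reads off $\ol N(r,a/(C+1);P[f])=\ol N(r,0;f)+S(r,f)\leq T(r,f)+S(r,f)$. I would then invoke the Second Fundamental Theorem for the three small functions $0,\infty,a/(C+1)$ of $P[f]$ (Theorem \ref{kus9}) to obtain $T(r,P[f])\leq T(r,f)+S(r,f)$, which combined with $T(r,P[f])=nT(r,f)+S(r,f)$ yields $(n-1)T(r,f)\leq S(r,f)$, a contradiction whenever $n\geq 2$.

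The hard part is the case $n=1$, where the previous estimate degenerates to $0\leq S(r,f)$. Here I would exploit the hypothesis that $P[f]$ contains at least one derivative by rewriting the relation as
$$f\cdot P[f]=\frac{(C+1)a}{C}\,P[f]-\frac{a^2}{C},$$
which matches the hypothesis of Clunie's lemma with the power $n=1$ of $f$ on the left and a differential polynomial of degree $\ol d(P)$ on the right. For $\ol d(P)\leq 1$, Clunie's lemma directly gives $m(r,P[f])=S(r,f)$; together with $\ol N(r,\infty;P[f])=S(r,f)$ this forces $T(r,P[f])=S(r,f)$, contradicting $T(r,P[f])=T(r,f)+S(r,f)$. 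For $\ol d(P)\geq 2$ the classical Clunie lemma does not apply directly, and a generalized Clunie-type estimate (or an iteration exploiting the chain of identities produced by successively differentiating the rational form of $P[f]$) must be invoked; this is the main technical obstacle of the proof.
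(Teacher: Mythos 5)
Your reduction of the hypothesis to $P[f]=\frac{a^{2}}{(C+1)a-Cf^{n}}$, the deductions $\ol N(r,0;P[f])=S(r,f)$ and $\ol N(r,\infty;f)=S(r,f)$, and the use of Lemma \ref{ML} to get $T(r,P[f])=nT(r,f)+S(r,f)$ are all sound, and for $n\geq 2$ with $C\neq -1$ your three-small-functions argument does finish the job. But the lemma is needed for every $n\geq 1$ (Theorem \ref{CHBt1} allows $n=1$), and there you explicitly leave the case $\ol {d}(P)\geq 2$ unresolved; that is not a removable technicality but the hardest instance of the statement. In addition, your third small function $\frac{a}{C+1}$ degenerates when $C=-1$ (the case $f^{n}P[f]\equiv a^{2}$), which your argument never treats and which cannot be handled by the same three functions $0,\infty,\frac{a}{C+1}$. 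So the proposal is genuinely incomplete.

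The paper closes both gaps simultaneously by avoiding the second fundamental theorem for small functions altogether. It rewrites the identity as
$$\frac{1}{f^{\ol {d}(P)}\left(f^{n}-(1+\frac{1}{C})a\right)}\equiv-\frac{C}{a^{2}}\;\frac{P[f]}{f^{\ol {d}(P)}},$$
applies Lemma \ref{ML} to the left-hand side to obtain $(n+\ol {d}(P))T(r,f)+S(r,f)$, and bounds the characteristic of the right-hand side by $\ol {d}(P)T(r,f)+S(r,f)$ using Lemma \ref{CHBl2.4} (Chuang's estimate $m(r,P[f]/f^{\ol {d}(P)})\leq(\ol {d}(P)-\underline {d}(P))m(r,\frac{1}{f})+S(r,f)$ --- precisely the generalized Clunie-type inequality you say \enquote{must be invoked}) together with Lemma \ref{CHBl2.5} for the pole term, in which the contributions of $\ol N(r,\infty;f)$ and $N(r,0;f\mid\geq k+1)$ are already known to be $S(r,f)$ from the identity itself. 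The extra factor $f^{\ol {d}(P)}$ inserted on the left exactly absorbs the degree of $P[f]$ on the right and leaves $nT(r,f)\leq S(r,f)$, a contradiction for every $n\geq1$ and every $C\neq0$, including $C=-1$. To salvage your route you would in any case have to prove the $\ol {d}(P)\geq2$ Clunie-type bound, which is essentially Lemma \ref{CHBl2.4}; at that point the paper's uniform argument is the shorter path.
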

\begin{proof} If possible, let for for some non-zero complex constant $C$,
\bea\label{CHBeg3.12}  \left(F-1-\frac{1}{C}\right)G\equiv -\frac{1}{C},\eea
Then  from (\ref{CHBeg3.12}), we get that $\ol{N}(r,\infty;f)=S(r,f)$ and
\be \label{CHBe3.13} \ol{N}(r,0;f\mid \geq k+1)\leq N(r,0;P[f])\leq N(r,0;G)\leq N(r,0;a)=S(r,f).\ee
Again from (\ref{CHBeg3.12}), we see that $$\frac{1}{f^{\ol {d}(P)}\left(f^{n}-(1+1/C)a\right)}\equiv -\;\frac{C}{a^{2}}\;\;\frac{P[f]}{f^{\ol {d}(P)}}.$$
Using First Fundamental Theorem, (\ref{CHBe3.13}), {\it Lemmas \ref{ML}}, {\it \ref{CHBl2.4}} and {\it \ref{CHBl2.5}}, we get
\bea\label{CHBe3.14} (n+\ol {d}(P))T(r,f)&=&T\left(r,f^{\ol {d}(P)}(f^{n}-(1+\frac{1}{C})a)\right)+S(r,f)\nonumber\\
&=& T\left(r,\frac{1}{f^{\ol {d}(P)}(f^{n}-(1+\frac{1}{C})a)}\right)+S(r,f)\nonumber\\
&=&T\left(r,\frac{P[f]}{f^{\ol {d}(P)}}\right)+S(r,f)\nonumber\\&\leq& m\left(r,\frac{P[f]}{f^{\ol {d}(P)}}\right)+N\left(r,\frac{P[f]}{f^{\ol {d}(P)}}\right)+S(r,f)\nonumber\\
&\leq& \left(\ol {d}(P)-\underline {d}(P)\right) T(r,f)-\left(\ol {d}(P)-\underline {d}(P)\right)N(r,0;f\mid\leq k)\nonumber\\&&+\ol {d}(P)\;N(r,0;f\mid\leq k)+Q\;\ol N(r,0;f\mid\geq k+1)+S(r,f)\nonumber\\&\leq& (\ol {d}(P)-\underline {d}(P)) T(r,f)+\underline {d}(P)N(r,0;f|\leq k)+S(r,f)\nonumber\\&\leq& \ol {d}(P) T(r,f)+S(r,f),\nonumber\eea
which is absurd. Hence the proof.
\end{proof}
\section{Proof of the theorem}
\begin{proof} [\textbf{Proof of Theorem \ref{CHBt1}}]
Here $F-1=\frac{f^{n}-a}{a}$ and $G-1=\frac{P[f]-a}{a}$. Since $\ol E_{m)}(a,f^{n})=\ol E_{m)}(a,P[f])$, it follows that $\ol E_{m)}(1,F)=\ol E_{m)}(1,G)$ except the zeros and poles of $a(z)$.\par
First we suppose that $H\not\equiv 0$. Then in view of Lemma \ref{newlem1}, we have
\beas T\left(r,P[f]\right)&\leq& 4\ol N(r,\infty;f)+2\ol N\left(r,0;P[f]\right)+N_{2}\left(r,0;P[f]\right)+ \ol N\left(r,0;(f/a)^{'}\right)\\&&+\ol N\left(r,0;(f^{n}/a)^{'}\mid (f^{n}/a)\not=0\right)+S(r,f),
\eeas
which contradicts (\ref{CHBe1.9}).\par
Thus $H\equiv 0$. Now applying the Lemma \ref{newlem3} and condition (\ref{CHBe1.9}) in Lemma \ref{newlem2}, we get
$$\frac{G-1}{F-1}=C;~~\text{i.e.},~~\frac{P[f]-a}{f^{n}-a}=C.$$ This proves the theorem.
\end{proof}
$~~$
\vspace{3.5 cm}
\\
------------------------------------------------
\\
\textbf{The matter of this chapter has been published in An. S\c{t}iin\c{t}. Univ. Al. I. Cuza Ia\c{s}i Mat. (N.S.), Vol. 62, No. 2, f.2 (2016), pp. 501-511.}
\newpage
\chapter{On the generalizations of Br\"{u}ck Conjecture}
\fancyhead[l]{Chapter 3}
\fancyhead[r]{On the generalizations of Br\"{u}ck Conjecture}
\fancyhead[c]{}
\section{Introduction}
\par
This chapter is the continuation of chapter two. One can note that the afterward research on Br\"{u}ck conjecture and its generalization, one setting among the sharing functions has been restricted to only various powers of $f$, not involving any other variants such as derivatives of $f$, where as the generalization have been made on the second setting. In fact, in chapter two, we have encountered this situation. So the natural query would be:
\begin{ques}\label{anikul} Can Br\"{u}ck type conclusion be obtained when two different differential polynomials share a small functions IM or even under relaxed sharing notions?\end{ques}
The content of this chapter has been oriented to obtain the possible answer of the above question in such a way that it improves and generalizes Theorem \ref{CHBt1}.\par
Henceforth by $b_{j}~(j=1,2,\ldots, t)$ and $c_{i}~(i=1,2,\ldots, l)$,
 we denote small functions in $f$ and we also suppose that $P[f]=\sum_{j=1}^{t}b_{j}M_{j}[f]$ and $Q[f]=\sum_{i=1}^{l}c_{i}M_{i}[f]$ be two differential polynomials generated by $f$.
\section{Main Result}
\begin{theo}\label{brt1} Let $f$ be a non-constant meromorphic function, $m(\geq 1)$ be a positive integer or infinity and $a\equiv a(z)$ ($\not\equiv 0,\infty$) be a small function of $f$. Suppose that $P[f]$ and $Q[f]$ be two differential polynomials generated by $f$ such that $Q[f]$ contains at least one derivative. Suppose further that $\ol E_{m)}(a;P[f])=\ol E_{m)}(a;Q[f])$. If
\bea\label{bre1.9}&& 4\ol N(r,\infty;f)+N_{2}\left(r,0;Q[f]\right)+2\ol N\left(r,0;Q[f]\right)+\ol N\left(r,0;(P[f]/a)^{'}\right)\\&&+\ol N\left(r,0;(P[f]/a)^{'}\mid (P[f]/a)\not =0\right)<(\lambda +o(1))\;T\left(r,Q[f]\right)\nonumber\eea for $r\in I$, where $0<\lambda <1$, then either
\begin{enumerate}
\item $\frac{Q[f]-a}{P[f]-a}=c,$ for some constant $c\in\mathbb{C}\setminus\{0\}$, or
\item $P[f]Q[f]-aQ[f](1+d)\equiv -da^{2}$ for a non-zero constant $d\in \mathbb{C}$.
\end{enumerate}
In particular, if
\begin{enumerate}
\item [i)] $P[f]=b_{1}f^{n}+b_{2}f^{n-1}+b_{3}f^{n-2}+\ldots+b_{t-1}f$, or
\item [ii)] $\underline {d}(Q) >2\ol {d}(P)-\underline {d}(P)$ and each monomial of $Q[f]$ contains a term involving a power of $f$,
\end{enumerate}
then the conclusion (2) does not hold.
\end{theo}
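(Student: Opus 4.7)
The strategy follows the structure of the proof of Theorem 2.2.1, with $f^n$ replaced by the differential polynomial $P[f]$. Setting $F = P[f]/a$ and $G = Q[f]/a$, the sharing hypothesis $\overline{E}_{m)}(a;P[f]) = \overline{E}_{m)}(a;Q[f])$ becomes $\overline{E}_{m)}(1;F) = \overline{E}_{m)}(1;G)$ modulo zeros and poles of $a$, which contribute only $S(r,f)$. I then define the auxiliary function $H$ by (2.3.1) and split into the cases $H \not\equiv 0$ and $H \equiv 0$.

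If $H\not\equiv 0$, Lemma 2.3.8 of Chapter 2 applies directly; using $T(r,G)=T(r,Q[f])+S(r,f)$ and $\overline{N}(r,\infty;F)=\overline{N}(r,\infty;f)+S(r,f)$ to rephrase the bound in the natural variables leads to a contradiction with hypothesis (3.2.1). If $H\equiv 0$, Lemma 2.3.9 produces three subcases: (a) $T(r,G)=N_2(r,0;G)+S(r,G)$, again contradicting (3.2.1); (b) $(F-1-\tfrac1C)G \equiv -\tfrac1C$; or (c) $(G-1)/(F-1)\equiv C$ for some nonzero $C$. Subcase (c) immediately gives conclusion (1), and multiplying (b) through by $a^2$ and setting $d=1/C$ yields conclusion (2) of the theorem.

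It remains to rule out (b) under each of the two particular hypotheses, so suppose $(P[f]-(1+\tfrac1C)a)Q[f]\equiv -a^2/C$. A pole of $f$ would force the left-hand side to have a pole against the small right-hand side, so $\overline{N}(r,\infty;f)=S(r,f)$; and since the identity implies $Q[f]$ has no zeros outside the pole-set of $a$, a standard multiplicity count yields $\overline{N}(r,0;f\mid\,\ge k_Q+1)=S(r,f)$, where $k_Q$ is the highest derivative order appearing in $Q[f]$. In particular case (i), where $P[f]=b_1 f^n+\cdots+b_{t-1}f$ is a polynomial in $f$ of degree $n$, I rewrite the identity as
\[
\frac{Q[f]}{f^{\overline{d}(Q)}} \;=\; \frac{-a^2/C}{f^{\overline{d}(Q)}\bigl(P[f]-(1+\tfrac1C)a\bigr)}
\]
and compare characteristics. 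Mokhon'ko's Lemma (Lemma 2.3.4) applies to the right-hand side because its denominator is a polynomial in $f$ of degree $n+\overline{d}(Q)$ with small coefficients, yielding characteristic $(n+\overline{d}(Q))T(r,f)+S(r,f)$, while Lemmas 2.3.5 and 2.3.6 bound the left-hand side above by $\overline{d}(Q)T(r,f)+S(r,f)$; together these force $n\,T(r,f)=S(r,f)$, contradicting $n\ge 1$.

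The main obstacle is particular case (ii). Here $P[f]$ genuinely involves derivatives of $f$, so Mokhon'ko's Lemma no longer pins down the right-hand characteristic exactly. I would work instead with the factorization $Q[f]=f\,\tilde Q[f]$ provided by the hypothesis that every monomial of $Q[f]$ contains $f$, bound $T(r,(P[f]-(1+\tfrac1C)a)/f^{\overline{d}(P)})$ above via Lemmas 2.3.5 and 2.3.6 applied to $P[f]/f^{\overline{d}(P)}$, and match it against the lower bound that the factorization furnishes through the auxiliary identity for $Q[f]/f^{\underline{d}(Q)}$. The inequality $\underline{d}(Q) > 2\overline{d}(P)-\underline{d}(P)$ is calibrated exactly to make these bounds incompatible, the constant $2\overline{d}(P)-\underline{d}(P)$ arising from combining the $(\overline{d}(P)-\underline{d}(P))$ of Lemma 2.3.5 with the extra $\overline{d}(P)$ absorbed when passing from $P[f]-(1+\tfrac1C)a$ back to $P[f]$. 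Carrying out this degree bookkeeping cleanly, in the absence of an exact Mokhon'ko identity on the right, is the delicate step of the proof.
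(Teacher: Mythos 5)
Your architecture coincides with the paper's: the same $F=P[f]/a$, $G=Q[f]/a$ and $H$, Lemma \ref{newlem4} for the case $H\not\equiv 0$, the trichotomy of Lemma \ref{newlem5} for $H\equiv 0$, and, for particular case (i), the same Mokhon'ko computation as in Lemma \ref{newlem3} with the roles of the power of $f$ and the differential polynomial interchanged. Up to and including case (i), your proposal is essentially the paper's proof.

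The gap is particular case (ii), which is exactly the new content of this theorem relative to Chapter 2, and which you leave as a plan (\enquote{carrying out this degree bookkeeping cleanly \ldots is the delicate step}). Two concrete things are missing. First, the paper splits on the constant: when $C=-1$ the identity degenerates to $P[f]Q[f]\equiv a^{2}$ and is killed by a pure proximity-function computation --- the hypothesis on the monomials of $Q$ yields $N(r,0;f)=S(r,f)$ (any zero of $f$ outside the small exceptional set would be a zero of $Q[f]$, which has none), $m(r,f)$ is converted into $m\left(r,\frac1f\right)$ by the First Fundamental Theorem, and both the $(\ol d(P)-\underline d(P))\,m\left(r,\frac1f\right)$ from Lemma \ref{CHBl2.4} and the extra $\ol d(P)\,m\left(r,\frac1f\right)$ are pushed through $m\left(r,\frac1f\right)\le\frac{1}{\underline d(Q)}m\left(r,\frac{1}{Q[f]}\right)$; that is where $2\ol d(P)-\underline d(P)$ actually arises, not from \enquote{passing from $P[f]-(1+\frac1C)a$ back to $P[f]$} (for $C=-1$ there is nothing to pass back from). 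Second, when $C\neq-1$ the paper's argument is not a proximity count at all: the identity forces $\ol N\left(r,1+\frac1C;F\right)=\ol N(r,\infty;G)=S(r,f)$, so $1+\frac1C$ is a deficient value of $F$, and the Second Fundamental Theorem together with Lemma \ref{brl2.8} gives $T(r,P[f])\le\frac{\ol d(P)-\underline d(P)}{\underline d(Q)}T(r,P[f])+S(r,f)$, a contradiction already under the weaker inequality $\underline d(Q)>\ol d(P)-\underline d(P)$. Your sketch never invokes the Second Fundamental Theorem and never says how the hypothesis that every monomial of $Q[f]$ contains a power of $f$ is consumed, so as written it closes neither subcase. (A uniform proximity argument along your lines can in fact be made to work for all $C$, but it requires precisely the steps $N(r,0;f)=S(r,f)$ and $T(r,Q[f])=T(r,P[f])+S(r,f)$ that you have not supplied.)
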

\begin{rem}\label{brr1.1} Thus if we put $m=\infty$ in {\em Theorem {\ref{brt1}}}, then $P[f]-a$ and $Q[f]-a$ share $0$ IM where $P[f]=b_{1}f^{n}+b_{2}f^{n-1}+b_{3}f^{n-2}+\ldots+b_{t-1}f$ and we obtain the improved, extended and generalized version of Theorem \ref{CHBt1} in the direction of Question \ref{anikul}.\end{rem}
Following examples show that condition (\ref{bre1.9}) of Theorem \ref{brt1} is not necessary.
\begin{exm}\label{brex1.6}  Let $f(z)=\frac{e^{z}}{e^{z}+1}$. $P[f]=f-f^{'}$, $Q[f]=f^{2}-3f{f^{'}}^{3}+f^{3}{f^{'}}^{2}-ff^{'}f^{'''}+ff^{'}f^{''}$. Then clearly $P[f]$ and $Q[f]$ share $1$ CM and $\frac{Q[f]-1}{P[f]-1}=1$, but condition (\ref{bre1.9})  is not satisfied. Here we note that $2=\underline{d}(Q)>2\ol{d}(P)-\underline{d}(P)=1$.\end{exm}
\begin{exm}\label{brex1.8}  Let $f(z)=\frac{1}{e^{z}+1}$. $P[f]={f^{'}}^{2}$, $Q[f]=ff^{''}-f^{2}f^{'}$. Then clearly $P[f]=Q[f]=\frac{e^{2z}}{{(e^{z}+1)}^{4}}$ share $\frac{1}{z}$ CM and $\frac{Q[f]-\frac{1}{z}}{P[f]-\frac{1}{z}}=1$, but condition (\ref{bre1.9})  is not satisfied.
\end{exm}
Next two examples show that both the conditions stated in (ii) are essential in order to obtain conclusion (1) in Theorem \ref{brt1} if $P[f]$ is a differential polynomial.
\begin{exm}\label{brex1.10}  Let $f(z)=\sin z$. $P[f]=3{f}^{2}+{f^{'}}^{2}-2iff^{'}$, $Q[f]=(f')^{2}-2iff^{'}-f^{2}$. Then clearly $P[f]=2-e^{2iz}$ and $Q[f]=e^{-2iz}$ share $1$ CM. Here condition (\ref{bre1.9}) is satisfied, but $\frac{Q[f]-1}{P[f]-1}=e^{-2iz}$, rather $P[f]Q[f]-2Q[f]+1=0$.
\end{exm}
\begin{exm}\label{brex1.13}  Let $f(z)=\cos z$. $P[f]=-f-if^{'}+(1+i){f^{'}}^{2}+(1+i){f^{''}}^{2}$, $Q[f]=if-f^{'''}$. Then clearly $P[f]=1+i-e^{-iz}$ and $Q[f]=ie^{iz}$ share  $i$ CM. Here condition (\ref{bre1.9}) is satisfied, but $\frac{Q[f]-i}{P[f]-i}=ie^{iz}$, rather $P[f]Q[f]-(1+i)Q[f]+i=0$. We also note that $\ol{d}(P)\not =\underline{d}(P)$ and $\underline{d}(Q)\not>2\ol{d}(P)-\underline{d}(P)$.
\end{exm}
The following two examples show that in order to obtain conclusions (1) or (2) of {\it Theorem \ref{brt1}}, condition (\ref{bre1.9}) is essential.
\begin{exm}\label{brex1.14}  Let $f(z)=\sin z$. $P[f]=if+f^{'}$, $Q[f]=2f^{'}-(f^{2}+{f^{'}}^{2})$. Then $P[f]=e^{iz}$ and $Q[f]=e^{iz}+e^{-iz}-1$ share $1$ IM. Here neither of the conclusions of {\em Theorem \ref{brt1}} is satisfied, nor condition (\ref{bre1.9}) is satisfied. We note that $\frac{Q[f]-1}{P[f]-1}=\frac{(e^{iz}-1)}{e^{iz}}$ and $P[f]Q[f]-\lambda Q[f]$ is non-constant function for any $\lambda\in\mathbb{C}$.
\end{exm}
\begin{exm}\label{brex1.15}  Let $f(z)=\cos z$. $P[f]=f-if^{'}$, $Q[f]=2f-({f^{'}}^{2}+{f^{''}}^{2})$. Then $P[f]=e^{iz}$ and $Q[f]=e^{iz}+e^{-iz}-1$ share $1$ IM. Here neither of the conclusions of {\em Theorem \ref{brt1}} is satisfied, nor condition (\ref{bre1.9}) is satisfied. We note that $\frac{Q[f]-1}{P[f]-1}=\frac{(e^{iz}-1)}{e^{iz}}$ and $P[f]Q[f]-\lambda Q[f]$ is non-constant function for any $\lambda\in\mathbb{C}$.
\end{exm}
\section{Lemmas}
Throughout this chapter, we take $F=\frac{P[f]}{a}$, $G=\frac{Q[f]}{a}$ and  $H$ is defined by the equation (\ref{CHB}). Now we present some lemmas which will be needed in this sequel.
\begin{lem} \label{newlem4} If $\ol E_{m)}(1;F)=\ol E_{m)}(1;G)$ except the zeros and poles of $a(z)$ and $ H\not\equiv0$, then
\bea T(r,G) &\leq& 4\ol{N}(r,\infty;F)+2\ol{N}(r,0;G)+N_{2}(r,0;G)\\
\nonumber &+& \ol{N}(r,0;F')+\ol{N}(r,0;F'\mid F\not=0)+S(r,f).\eea
\end{lem}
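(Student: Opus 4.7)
The plan is to mirror the argument of Lemma 3.2.6 (newlem1) from Chapter~2, with adjustments to accommodate the fact that $F=P[f]/a$ and $G=Q[f]/a$ are now both differential polynomials in $f$ divided by $a$ (rather than one being a pure power of $f$), and that no sharing of $\infty$ is postulated in the hypotheses.

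First I would observe that if $z_0$ is a simple zero of $F-1$ lying outside the zero and pole set of $a$, then the weighted sharing hypothesis $\overline{E}_{m)}(1;F)=\overline{E}_{m)}(1;G)$ forces $z_0$ to be a simple zero of $G-1$ as well, and a routine calculation with $H$ from \eqref{CHB} gives $H(z_0)=0$. Hence
\[
N^{1)}_E(r,1;F)=N^{1)}_E(r,1;G)\leq N(r,0;H)+S(r,f)\leq N(r,\infty;H)+S(r,f),
\]
and the analogue of Lemma 2.3.1 bounds $N(r,\infty;H)$ by the expected collection of multiple-zero counters of $F$ and $G$, the $\overline{N}_*$ counters at $1$ and $\infty$, the excess $\overline{N}_{F\geq m+1}$ and $\overline{N}_{G\geq m+1}$ terms, and the auxiliary terms $\overline{N}_0(r,0;F')$ and $\overline{N}_0(r,0;G')$.

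Next I would invoke the Second Fundamental Theorem on $G$ with the values $0,1,\infty$, split
\[
\overline{N}(r,1;G)=N^{1)}_E(r,1;G)+\overline{N}_{F>1}(r,1;G)+\overline{N}(r,1;G\mid\geq 2),
\]
and substitute the $H$-bound just obtained for the simple common $1$-points. I would then apply the Milloux-type estimates (the analogues of Lemmas 2.3.2 and 2.3.3) to absorb the pieces $\overline{N}(r,0;G\mid\geq 2)$, $\overline{N}_L(r,1;G)$ and $\overline{N}_{G\geq m+1}(r,1;G\mid F\neq 1)$ into $N_2(r,0;G)+2\overline{N}(r,\infty;G)+\overline{N}(r,0;G)+S(r,f)$, and symmetrically collapse the $F$-side contributions into $\overline{N}(r,0;F')+\overline{N}(r,0;F'\mid F\neq 0)+S(r,f)$.

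The only genuinely new ingredient compared with Chapter~2 is the treatment of $\overline{N}_*(r,\infty;F,G)$, since $F$ and $G$ are no longer postulated to share poles. The key observation is that every pole of $P[f]$ or $Q[f]$ arises, up to an $S(r,f)$ contribution coming from zeros and poles of the small coefficients $a,b_j,c_i$, from a pole of $f$; consequently
\[
\overline{N}(r,\infty;F)=\overline{N}(r,\infty;f)+S(r,f)=\overline{N}(r,\infty;G)+S(r,f),
\]
so that $\overline{N}_*(r,\infty;F,G)\leq 2\overline{N}(r,\infty;F)+S(r,f)$. Together with the $\overline{N}(r,\infty;G)$ produced directly by the Second Main Theorem, this accounts for the final coefficient $4$ in front of $\overline{N}(r,\infty;F)$. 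I anticipate no substantive obstacle beyond the bookkeeping needed to telescope these reduced counting functions; every non-trivial estimate is already supplied by Lemmas 3.3.2--3.3.4 and the Logarithmic Derivative Lemma.
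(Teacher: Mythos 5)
Your proposal follows exactly the route the paper takes: its own proof of this lemma is a one-line reference to Lemma \ref{newlem1}, whose argument — the bound $N^{1)}_{E}(r,1;F)\leq N(r,\infty;H)+S(r,f)$ at simple common $1$-points, the Second Fundamental Theorem applied to $G$ at $0,1,\infty$, and the absorption of the multiple-point and $\ol N_{*}$ terms via Lemmas \ref{CHBl2.1a}, \ref{CHBl2.2} and \ref{CHBl2.1} — is precisely what you reconstruct. Your closing paragraph on $\ol N_{*}(r,\infty;F,G)$ merely makes explicit the one adaptation the paper leaves tacit, namely that the poles of $P[f]/a$ and $Q[f]/a$ all trace back to poles of $f$ up to an $S(r,f)$ term, so $F$ and $G$ still share $\infty$ in the sense required by Lemma \ref{CHBl2.1a}.
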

\begin{proof}
Proof is similar to the proof of Lemma \ref{newlem1}.
\end{proof}
\begin{lem} \label{brl2.8} Let $f$ be a non-constant meromorphic function and $P[f]$, $Q[f]$ be two differential polynomials. Then
\beas  N(r,0;P[f]) &\leq& \frac{\ol {d}(P)-\underline {d}(P)}{\underline {d}(Q)} m\left(r,\frac{1}{Q[f]}\right)+(\Gamma _{P}-\ol {d}(P))\;\ol N(r,\infty;f)\\
&&+(\ol {d}(P)-\underline {d} (P))\; N(r,0;f\mid\geq k+1)+\mu \ol N(r,0;f\mid\geq k+1)\\
&&+\ol {d}(P) N(r,0;f\mid\leq k)+\ol{d}(P)N(r,0;f)+S(r,f).\eeas
\end{lem}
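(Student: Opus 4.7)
The plan is to bound $N(r,0;P[f])$ by first converting it, via the factorization $\tfrac{1}{P[f]}=\tfrac{f^{\ol{d}(P)}}{P[f]}\cdot\tfrac{1}{f^{\ol{d}(P)}}$, into a quantity controlled by the two auxiliary lemmas \ref{CHBl2.4} and \ref{CHBl2.5}, and then to eliminate the resulting offending proximity term $m(r,1/f)$ by repeating the same trick on $Q[f]$. Concretely, since poles of a product are dominated by the sum of poles of the factors,
$$N(r,0;P[f])=N\!\left(r,\infty;\tfrac{1}{P[f]}\right)\leq N\!\left(r,\infty;\tfrac{f^{\ol{d}(P)}}{P[f]}\right)+\ol{d}(P)\,N(r,0;f).$$

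Next I would invoke the First Fundamental Theorem ($T(r,1/h)=T(r,h)+O(1)$) to write
$$N\!\left(r,\infty;\tfrac{f^{\ol{d}(P)}}{P[f]}\right)\leq m\!\left(r,\tfrac{P[f]}{f^{\ol{d}(P)}}\right)+N\!\left(r,\infty;\tfrac{P[f]}{f^{\ol{d}(P)}}\right)+O(1),$$
and apply Lemma \ref{CHBl2.4} to the first summand and Lemma \ref{CHBl2.5} to the second. All counting-function pieces on the right-hand side of the target inequality appear at this stage, so the preliminary estimate reads
\begin{align*}
N(r,0;P[f])&\leq(\ol{d}(P)-\underline{d}(P))\,m(r,1/f)+(\Gamma_{P}-\ol{d}(P))\,\ol{N}(r,\infty;f)\\
&\quad+(\ol{d}(P)-\underline{d}(P))\,N(r,0;f\mid\geq k+1)+\mu\,\ol{N}(r,0;f\mid\geq k+1)\\
&\quad+\ol{d}(P)\,N(r,0;f\mid\leq k)+\ol{d}(P)\,N(r,0;f)+S(r,f);
\end{align*}
only the proximity term $(\ol{d}(P)-\underline{d}(P))\,m(r,1/f)$ still has to be converted into $\tfrac{\ol{d}(P)-\underline{d}(P)}{\underline{d}(Q)}\,m(r,1/Q[f])$.

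The decisive step, and the step I expect to be the real obstacle, is this conversion. I would use the analogous factorization $\tfrac{1}{f^{\ol{d}(Q)}}=\tfrac{1}{Q[f]}\cdot\tfrac{Q[f]}{f^{\ol{d}(Q)}}$, which after taking proximity functions gives
$$\ol{d}(Q)\,m(r,1/f)=m\!\left(r,1/f^{\ol{d}(Q)}\right)\leq m(r,1/Q[f])+m\!\left(r,Q[f]/f^{\ol{d}(Q)}\right).$$
Applying Lemma \ref{CHBl2.4} to $Q[f]$ bounds the last term by $(\ol{d}(Q)-\underline{d}(Q))\,m(r,1/f)+S(r,f)$, and the crucial observation is that this quantity can now be \emph{absorbed back} into the left-hand side, leaving
$$\underline{d}(Q)\,m(r,1/f)\leq m(r,1/Q[f])+S(r,f),\qquad\text{i.e.}\qquad m(r,1/f)\leq\tfrac{1}{\underline{d}(Q)}\,m(r,1/Q[f])+S(r,f).$$
Multiplying by $\ol{d}(P)-\underline{d}(P)$ and plugging into the preliminary estimate yields the claimed bound. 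Apart from this self-absorbing move—which is what produces the factor $1/\underline{d}(Q)$—the argument is nothing more than careful bookkeeping of Lemmas \ref{CHBl2.4}--\ref{CHBl2.5} and the First Fundamental Theorem.
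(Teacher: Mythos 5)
Your proof is correct, and the overall skeleton coincides with the paper's: both start from $N(r,0;P[f])\leq N\big(r,\infty;\tfrac{f^{\ol{d}(P)}}{P[f]}\big)+\ol{d}(P)N(r,0;f)$, pass through the First Fundamental Theorem, and feed the result into Lemmas \ref{CHBl2.4} and \ref{CHBl2.5}. Where you genuinely diverge is in the step you correctly single out as decisive, namely the inequality $\underline{d}(Q)\,m(r,1/f)\leq m(r,1/Q[f])+S(r,f)$. The paper proves this from scratch by splitting the circle $|z|=r$ into $E_{1}=\{\theta:|f(re^{i\theta})|\leq 1\}$ and its complement $E_{2}$: on $E_{1}$ one has $\big|Q[f]/f^{\underline{d}(Q)}\big|\leq\sum_{j}|c_{j}|\prod_{i}|f^{(i)}/f|^{n_{ij}}$ because the leftover exponent $\sum_{i}n_{ij}-\underline{d}(Q)$ of $|f|$ is nonnegative, while on $E_{2}$ the integrand $\log^{+}|f|^{-\underline{d}(Q)}$ vanishes, and the lemma on the logarithmic derivative absorbs the remaining factors into $S(r,f)$. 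You instead write $\ol{d}(Q)\,m(r,1/f)\leq m(r,1/Q[f])+m\big(r,Q[f]/f^{\ol{d}(Q)}\big)$, bound the last term by $(\ol{d}(Q)-\underline{d}(Q))\,m(r,1/f)+S(r,f)$ via Lemma \ref{CHBl2.4} applied to $Q$, and subtract; since $m(r,1/f)$ is a finite real number for each $r$, the absorption is legitimate. Your route is shorter and reuses machinery already in place, at the cost of being slightly less self-contained; the paper's $E_{1}/E_{2}$ argument is essentially a re-run of the proof of Lemma \ref{CHBl2.4} adapted to the lower degree $\underline{d}(Q)$, which is why the two give the same constant $1/\underline{d}(Q)$. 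Everything else in your write-up matches the paper's bookkeeping exactly.
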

\begin{proof} For a fixed value of $r$, let $E_{1}=\{\theta \in [0,2\pi]: \left|f(re^{i\theta })\right|\leq 1 \}$ and  $E_{2}$ be its complement. Since by definition $$\sum\limits_{i=0}^{k} n_{ij}\geq \underline {d}(Q),$$ for every $j=1,2,\ldots,l$; it follows that on $E_{1}$
$$\left|\frac{Q[f]}{f^{\underline{d}(Q)}}\right| \leq \sum\limits_{j=1}^{l}\left| c_{j}(z)\right| \prod\limits_{i=1}^{k}\left| \frac{f^{(i)}}{f} \right|^{n_{ij}} \left|f\right|^{^{\sum\limits_{i=0}^{k}n_{}{ij}-\underline{d}(Q)}}\leq \sum\limits_{j=1}^{l}\left| c_{j}(z)\right| \prod\limits_{i=1}^{k}\left| \frac{f^{(i)}}{f} \right|^{n_{ij}}.$$
Also we note that $$\frac{1}{f^{\underline{d}(Q)}}=\frac{Q[f]}{f^{\underline{d}(Q)}}\;\frac{1}{Q[f]}.$$
Since on $E_{2}$, $\frac{1}{\left|f(z)\right|}< 1$, we have
\beas && \underline{d}(Q)m\left(r,\frac{1}{f}\right) \\&=& \frac{1}{2\pi}\int\limits_{E_{1}}\log ^{+}\frac{1}{\left|f(re^{i\theta })\right|^{\underline{d}(Q)}}d\theta+\frac{1}{2\pi}\int\limits_{E_{2}}\log ^{+}\frac{1}{\left|f(re^{i\theta })\right|^{\underline{d}(Q)}}d\theta\\ &\leq&\frac{1}{2\pi}\sum\limits_{j=1}^{l}\left[\int\limits_{E_{1}}\log ^{+}\left|c_{j}(z)\right|d\theta+\sum\limits_{i=1}^{k}\int\limits_{E_{1}}\log ^{+}\left|\frac{f^{(i)}}{f}\right|^{n_{ij}}d\theta\right]+\frac{1}{2\pi}\int\limits_{E_{1}}\log ^{+}\left|\frac{1}{Q[f(re^{i\theta})]}\right|d\theta\\&\leq &\frac{1}{2\pi}\int\limits_{0}^{2\pi }\log ^{+}\left|\frac{1}{Q[f(re^{i\theta})]}\right|d\theta+S(r,f)=m\left(r,\frac{1}{Q[f]}\right)+S(r,f).\eeas
So using {\it Lemmas {\ref{CHBl2.4}}}, {\it \ref{CHBl2.5}} and the First Fundamental Theorem, we get
\beas  N(r,0;P[f]) &\leq& N\left(r,\infty;\frac{f^{\overline{d}(P)}}{P[f]}\right)+\overline{d}(P)N(r,0;f)\\
&\leq& m\left(r,\frac{P[f]}{f^{\overline{d}(P)}}\right)+N\left(r,\infty;\frac{P[f]}{f^{\overline{d}(P)}}\right)+\overline{d}(P)N(r,0;f)+S(r,f)\\
&\leq& \left(\ol {d}(P)-\underline {d}(P)\right)m\left(r,\frac{1}{f}\right)+\left(\Gamma _{P}-\ol {d}(P)\right)\;\ol N(r,\infty;f)\\
&&+\left(\ol {d}(P)-\underline {d} (P)\right)\; N(r,0;f\mid\geq k+1)+\mu \ol N(r,0;f\mid\geq k+1)\\
&&+\ol {d}(P) N(r,0;f\mid\leq k)+\ol{d}(P)N(r,0;f)+S(r,f)\\
&\leq& \frac{\left(\ol {d}(P)-\underline {d}(P)\right)}{\underline {d}(Q)}m\left(r,\frac{1}{Q[f]}\right)+\left(\Gamma _{P}-\ol {d}(P)\right)\;\ol N(r,\infty;f)\\
&&+\left(\ol {d}(P)-\underline {d} (P)\right)\; N(r,0;f\mid\geq k+1)+\mu \ol N(r,0;f\mid\geq k+1)\\&&+\ol {d}(P) N(r,0;f\mid\leq k)+\ol{d}(P)N(r,0;f)+S(r,f).\eeas
Hence the proof follows.
\end{proof}
The proof of the following lemma can be done in the line of Lemma \ref{newlem2}. So we omit the details.
\begin{lem} \label{newlem5} If $ H\equiv0$, then  one of the following conditions hold:
\begin{enumerate}
\item [i)] $T(r,G)=N_{2}(r,0;G)+S(r,G)$, or
\item [ii)] $(F-1-\frac{1}{C})G\equiv-\frac{1}{C}$ for some constant $C\in \mathbb{C}\setminus\{0\}$, or
\item [iii)] $\frac{G-1}{F-1}\equiv C$ for some constant $C\in\mathbb{C}\setminus\{0\}$.
\end{enumerate}
\end{lem}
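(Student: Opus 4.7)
The strategy is to mirror the proof of Lemma \ref{newlem2} from chapter two, adapting it to the present setting where $F=P[f]/a$ instead of $f^n/a$. The crucial structural fact that makes the adaptation routine is that both $F=P[f]/a$ and $G=Q[f]/a$ have their poles concentrated at the poles of $f$ together with the zeros of $a$, so the pole-matching arguments from the earlier proof continue to hold up to $S(r,f)$ terms.

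First I would observe that
$$\frac{F''}{F'}-\frac{2F'}{F-1}=\frac{d}{dz}\log\frac{F'}{(F-1)^2},$$
so integrating $H\equiv 0$ twice produces
$$\frac{1}{F-1}=\frac{C}{G-1}+D \qquad (C\neq 0).$$
This relation forces $F$ and $G$ to share the value $1$ CM, and it cleanly splits the remainder of the argument into two cases depending on whether $D$ vanishes.

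If $D=0$, then $(G-1)/(F-1)\equiv C$ and we are done with conclusion (iii). If $D\neq 0$, I would rewrite the relation as
$$F-1=\frac{G-1}{D\bigl(G-1+C/D\bigr)}.$$
A pole of $f$ of any order would produce a pole of $F$; but near such a point $\tfrac{1}{F-1}\to 0$ while $\tfrac{C}{G-1}+D\to D\neq 0$, a contradiction. Hence $\overline{N}(r,\infty;f)=S(r,f)$, and consequently
$$\overline{N}\!\left(r,1-\tfrac{C}{D};G\right)=\overline{N}(r,\infty;F)=S(r,f).$$
When $C/D\neq 1$, the Second Fundamental Theorem applied to $G$ with the three targets $\infty$, $0$, and $1-C/D$ yields
$$T(r,G)\leq\overline{N}(r,\infty;G)+\overline{N}(r,0;G)+\overline{N}\!\left(r,1-\tfrac{C}{D};G\right)+S(r,G)\leq N_2(r,0;G)+S(r,f),$$
which together with the trivial inequality $N_2(r,0;G)\leq T(r,G)+S(r,G)$ gives conclusion (i). When $C/D=1$, algebraic manipulation of $\tfrac{1}{F-1}=\tfrac{DG}{G-1}$ gives $G(DF-D-1)=-1$, i.e.\ $\bigl(F-1-\tfrac{1}{D}\bigr)G\equiv-\tfrac{1}{D}$, which is conclusion (ii) with the constant $C$ in the statement taken to be $D$.

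The main obstacle is the pole-counting step in the $D\neq 0$ case. In chapter two this was transparent because $F=f^n/a$ made the pole-order correspondence explicit, whereas here $F=P[f]/a$ is a differential polynomial in $f$ divided by a small function, so one must verify carefully that the set of poles of $F$ coincides with the set of poles of $f$ modulo the zeros/poles of the small function $a$ and the small functions $b_j$ generating $P[f]$. Once this identification is made and the contribution from those exceptional points is absorbed into $S(r,f)$, the rest of the proof is bookkeeping identical to that of Lemma \ref{newlem2}.
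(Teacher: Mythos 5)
Your proposal is correct and matches the paper exactly: the paper itself gives no independent proof of this lemma, stating only that it "can be done in the line of Lemma \ref{newlem2}," and your argument is precisely that adaptation (integrate $H\equiv 0$ twice to get $\frac{1}{F-1}=\frac{C}{G-1}+D$, then split on $D=0$ versus $D\neq 0$ and on $C/D=1$ versus $C/D\neq 1$). You also correctly identify and address the only point that genuinely needs checking in the new setting, namely that poles of $f$ still force poles of both $F=P[f]/a$ and $G=Q[f]/a$ away from the exceptional points of the small functions, so the $D\neq 0$ case yields $\ol N(r,\infty;f)=S(r,f)$ as before.
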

\section {Proof of the theorem}
\begin{proof} [\textbf{Proof of Theorem \ref{brt1}}] Since $\ol E_{m)}(a;P[f])=\ol E_{m)}(a;Q[f])$, it follows that $\ol E_{m)}(1;F)=\ol E_{m)}(1;G)$ except the zeros and poles of $a(z)$.\par
If $H\not\equiv 0$, then in view of Lemma \ref{newlem4}, we obtain
\beas T\left(r,Q[f]\right) &\leq& 4\ol N(r,\infty;f)+2\ol N\left(r,0;Q[f]\right)+N_{2}\left(r,0;Q[f]\right)+ \ol N\left(r,0;(P[f]/a)^{'}\right)\\&&+\ol N\left(r,0;(P[f]/a)^{'}\mid (P[f]/a)\not=0\right)+S(r,f),
\eeas
which contradicts (\ref{bre1.9}).\\
Thus $H\equiv 0$. Then applying the given condition (\ref{bre1.9}) in Lemma \ref{newlem5}, we obtain two following cases:\\
\textbf{Case-1.}
\bea\label{bre3.12} \left(F-1-\frac{1}{C}\right)G\equiv -\frac{1}{C}.
\eea
That is, \bea\label{sri} P[f]Q[f]-a Q(1+d)\equiv -da^{2},\eea for a non-zero constant $d=\frac{1}{C}\in \mathbb{C}$.\\
\textbf{Case-2.}
\beas\frac{G-1}{F-1}=C.\eeas
That is,
\bea\frac{Q[f]-a}{P[f]-a}=C.
\eea
Next we have to show that if
\begin{enumerate}
\item [i)] $P[f]=b_{1}f^{n}+b_{2}f^{n-1}+b_{3}f^{n-2}+\ldots+b_{t-1}f$, or
\item [ii)] $\underline {d}(Q) >2\ol {d}(P)-\underline {d}(P)$ and each monomial of $Q[f]$ contains a term involving a power of $f$,
\end{enumerate}
then the conclusion in \emph{case-1} does not hold.\\
\medbreak
If $P[f]=b_{1}f^{n}+b_{2}f^{n-1}+b_{3}f^{n-2}+\ldots+b_{t-1}f$, then proceeding as in Lemma \ref{newlem3}, we get a contradiction when $n\geq1$.\par
Next we assume that $P[f]$ is a differential polynomial satisfying $\underline {d}(Q) >2\ol {d}(P)-\underline {d}(P)$ and each monomial of $Q[f]$ contains a term involving a power of $f$.\par
Under this situation, we discuss the following two cases:\\
First we assume that $C=-1$. Then from (\ref{bre3.12}), we get $FG\equiv 1$; i.e., $P[f] Q[f]\equiv a^{2}$.\par
Then clearly $\ol N(r,\infty;P[f])=\ol N(r,\infty;Q[f])=S(r,f)$. Also $N(r,0;f)=S(r,f)$, since each monomial of $Q[f]$ contains a term involving a power of $f$.\par
Thus from the First Fundamental Theorem, {\it Lemma \ref{CHBl2.4}} and noting that $m\left(r,\frac{1}{f}\right)\leq \frac{1}{\underline {d}(Q)} m\left(r,\frac{1}{Q[f]}\right)$, we have
\beas T(r,Q[f])&\leq& T(r,P[f])+S(r,f)\\
&\leq& m\left(r,\frac{P[f]}{f^{\ol {d}(P)}}\right)+\ol {d}(P) m(r,f)+S(r,f)\\
&\leq& \left(\ol {d}(P)-\underline {d}(P)\right)m\left(r,\frac{1}{f}\right)+\ol {d}(P) m(r,f)+S(r,f)\\
&\leq&\frac{\left(\ol {d}(P)-\underline {d}(P)\right)}{\underline {d}(Q)} m\left(r,\frac{1}{Q[f]}\right)+\ol {d}(P)\left\{m\left(r,\frac{1}{f}\right)+N(r,0;f)\right\}+S(r,f)\\
&\leq& \frac{\left(\ol {d}(P)-\underline {d}(P)\right)}{\underline {d}(Q)} m\left(r,\frac{1}{Q[f]}\right)+\frac{\ol {d}(P)}{\underline {d}(Q)}m\left(r,\frac{1}{Q[f]}\right)+S(r,f),
\eeas which is a contradiction as $\underline  {d}(Q) >2\ol {d}(P)-\underline {d}(P)$.\par
Next we assume $C\not =-1$. Then from (\ref{bre3.12}), we have $$\ol N(r,1+\frac{1}{C};F)=\ol N(r,\infty;G)=S(r,f).$$\\
So again noticing the fact that each monomial of $Q[f]$ contains a term involving a power of $f$, by the Second Fundamental Theorem and {\it Lemma \ref{brl2.8}}, we get
\bea\label{bre3.15} T(r,P[f])&\leq& \ol N(r,\infty;F)+\ol N(r,0;F)+\ol N\left(r,1+\frac{1}{C};F\right)+S(r,f)\nonumber\\
&\leq& N(r,0;P[f])+S(r,f)\nonumber\\
&\leq &\frac{\ol {d}(P)-\underline {d}(P)}{\underline {d}(Q)} T(r,P[f])+S(r,f),\nonumber\eea
i.e., \be\label{bre3.16} \frac{\underline{d}(Q)+\underline{d}(P)-\ol {d}(P)}{\underline{d}(Q)}T(r,P[f])\leq S(r,f).\ee  Since by the given condition $\underline{d}(Q)>2\ol {d}(P)-\underline{d}(P)>\ol {d}(P)-\underline{d}(P)$; so the inequality (\ref{bre3.16}) leads to a contradiction. This proves the theorem.
\end{proof}
$~~$
\vspace{13 cm}
\\
------------------------------------------------
\\
\textbf{The matter of this chapter has been published in Commun. Korean Math. Soc., Vol. 31, No. 2, (2016), pp. 311-327.}
\newpage
\chapter{Further investigations on a question of Zhang and L\"{u}}
\fancyhead[l]{Chapter 4}
\fancyhead[r]{Further investigations on a question of Zhang and L\"{u}}
\fancyhead[c]{}
\section{Introduction}
\par
Unlike the previous chapters, throughout this chapter our sole intention is to find the relation between power of a meromorphic function and its differential monomial, taking into background that derivative of a function is a special case of differential monomial.\par
In 1998, Gundersen-Yang (\cite{gunyang}) showed that Br\"{u}ck conjecture holds for \emph{entire functions of finite order}. But researchers observed that to consider Br\"{u}ck conjecture for entire functions of infinite order, one need some restrictions on the growth of the functions.\par
The pioneer in this respect was Yu (\cite{6117}) who established some results related to Br\"{u}ck conjecture and posed some open questions in the same paper. Motivated by Yu's questions (\cite{6117}), Lahiri-Sarkar (\cite{br8}) and Zhang (\cite{br15}) studied the problem of a meromorphic or an entire function sharing one small function with its derivative with the notion of weighted sharing of values.
\begin{defi} (\cite{br15}) For $a \in \mathbb{C}\cup\{\infty\}$ and a positive integer $p$, we put
$$\delta_{p}(a,f)=1-\limsup\limits_{r \to \infty} \frac{{N}_{p}(r,a;f)}{T(r,f)}.$$
Thus $0\leq \delta(a,f) \leq \delta_{p}(a,f) \leq \delta_{p-1}(a,f) \leq\ldots\leq \delta_{2}(a,f) \leq \delta_{1}(a,f)=\Theta(a,f)\leq1$.
\end{defi}
In continuation of this content, in 2008, Zhang-L\"{u} (\cite{br16}) considered the uniqueness of the $n$-th power of a meromorphic function sharing a small function with its $k$-th derivative and proved  the following theorem.
\begin{theo 4.A} (\cite{br16}) Let $ k(\geq 1)$, $n(\geq 1)$ be integers and $f$ be a non-constant meromorphic function. Also let $a(z) (\not\equiv 0,\infty )$ be a small function with respect to $f$. Suppose $f^{n}-a$ and $f^{(k)}-a$ share $(0,l)$. If $l=\infty$ and
\be \label {e1.1}(3+k)\Theta(\infty,f)+2\Theta(0,f)+\delta_{2+k}(0,f) > 6+k-n, \ee
or $l=0$ and
\be \label {e1.2}(6+2k)\Theta(\infty,f)+4\Theta(0,f)+2\delta_{2+k}(0,f) > 12+2k-n, \ee
then $f^{n}$ $\equiv$ $f^{(k)}$.
\end{theo 4.A}
In the same paper Zhang-L\"{u} (\cite{br16}) raised the following question:
\begin{ques}
What will happen if  $f^{n}$ and $[f^{(k)}]^{s}$ share a small function?
\end{ques}
In  2010, Chen-Zhang (\cite{zl2}) gave a answer to the above question. Unfortunately there were some gaps in the proof of the theorems in (\cite{zl2}) which was latter rectified by Banerjee-Majumder (\cite{zl1}).\par
In  2010, Banerjee-Majumder (\cite{zl1}) proved two theorems one of which further improved Theorem 4.A whereas the other answered the question of Zhang-L\"{u} (\cite{br16}) in the following manner:
\begin{theo 4.B} (\cite{zl1}) Let $ k(\geq 1)$, $n(\geq 1)$ be integers and $f$ be a non-constant meromorphic function. Also let $a(z) (\not\equiv 0,\infty )$ be a small function with respect to $f$. Suppose $f^{n}-a$ and $f^{(k)}-a$ share $(0,l)$. If $l\geq 2$ and
\be \label {zle1.3} (3+k)\Theta(\infty,f)+2\Theta(0,f)+\delta_{2+k}(0,f) > 6+k-n, \ee
or $l=1$ and
\be \label {zle1.4} \left(\frac{7}{2}+k\right)\Theta(\infty,f)+\frac{5}{2}\Theta(0,f)+\delta_{2+k}(0,f) > 7+k-n, \ee
or $l=0$ and
\be \label {zle1.5}(6+2k)\Theta(\infty,f)+4\Theta(0,f)+\delta_{2+k}(0,f)+\delta_{1+k}(0,f) > 12+2k-n, \ee
then $f^{n}=f^{(k)}$.
\end{theo 4.B}
\begin{theo 4.C} (\cite{zl1}) Let $k(\geq 1)$, $n(\geq 1)$, $m(\geq 2)$ be integers and $f$ be a non-constant meromorphic function.
Also let $a(z)(\not\equiv 0,\infty)$ be a small function with respect to $f$. Suppose $f^{n}-a$ and $[f^{(k)}]^{m}-a$ share $(0,l)$.
If $l=2$ and \be\label{zle1.3a}(3+2k)\;\Theta (\infty,f)+2\;\Theta (0,f)+2\delta_{1+k}(0,f)> 7+2k-n,\ee or
 $l=1$ and \be\label{zle1.4a}\left(\frac{7}{2}+2k\right)\;\Theta (\infty,f)+\frac{5}{2}\;\Theta (0,f)+2\delta_{1+k}(0,f)> 8+2k-n,\ee
or $l=0$ and \be\label{zle1.5a}(6+3k)\;\Theta (\infty,f)+4\;\Theta (0,f)+3\delta_{1+k}(0,f)> 13+3k-n,\ee then $f^{n}\equiv [f^{(k)}]^{m}$.
\end{theo 4.C}
For $m=1$, it can be easily proved that Theorem 4.B is a better result than Theorem 4.C. Also we observe that in the conditions (\ref{zle1.3a})-(\ref{zle1.5a}), there was no influence of $m$. \par
Very recently, in order to improve the results of Zhang (\cite{br15}), Li-Huang (\cite{zl5a}) obtained the following theorem. In view of {\it Lemma \ref{zll1.1}} proved later, we see that the following result obtained in (\cite{zl5a}) is better than that of {\it Theorem 4.B} for $n=1$.
\begin{theo 4.D} (\cite{zl5a}) Let $f$ be a non-constant meromorphic function, $ k(\geq 1)$, $l(\geq 0)$ be be integers and also let $a(z) (\not\equiv 0,\infty )$ be a small function with respect to $f$. Suppose $f-a$ and $f^{(k)}-a$ share $(0,l)$. If $l\geq 2$ and
\be \label {zle1.6} (3+k)\Theta(\infty,f)+\delta_{2}(0,f)+\delta_{2+k}(0,f) > k+4,\ee
or $l=1$ and
\be \label {zle1.7} \left(\frac{7}{2}+k\right)\Theta(\infty,f)+\frac{1}{2}\Theta(0,f)+\delta_{2}(0,f)+\delta_{2+k}(0,f) > k+5, \ee
or $l=0$ and
\be \label {zle1.8} (6+2k)\Theta(\infty,f)+2\Theta(0,f)+\delta_{2}(0,f)+\delta_{1+k}(0,f)+\delta_{2+k}(0,f) > 2k+10, \ee
then $f \equiv f^{(k)}$.
\end{theo 4.D}
Recently Charak-Lal (\cite{zl2a}) considered the possible extension of Theorem 4.B in the direction of the question of Zhang-L\"{u} (\cite{br16}) up to differential polynomial. They proved the following result:
\begin{theo 4.E} (\cite{zl2a}) Let $f$ be a non-constant meromorphic function and $n$  be a positive integer and $a(z) (\not\equiv 0,\infty )$ be a meromorphic function satisfying $T(r,a)=o(T(r,f))$ as $r \to \infty$. Let $P[f]$ be a non-constant differential polynomial in $f$. Suppose $f^{n}$ and $P[f]$ share $(a,l)$. If $l\geq 2$ and
\be \label {zle1.9}(3+Q)\Theta(\infty,f)+2\Theta(0,f)+\ol{d}(P)\delta(0,f) > Q+5+2\ol{d}(P)-\underline{d}(P)-n, \ee
or $l=1$ and
\be \label {zle1.10} \left(\frac{7}{2}+Q\right)\Theta(\infty,f)+\frac{5}{2}\Theta(0,f)+\ol{d}(P)\delta(0,f) > Q+6+2\ol{d}(P)-\underline{d}(P)-n, \ee
or $l=0$ and
\be \label {zle1.11} (6+2Q)\Theta(\infty,f)+4\Theta(0,f)+2\ol{d}(P)\delta(0,f) > 2Q+4\ol{d}(P)-2\underline{d}(P)+10-n, \ee
then $f^{n} \equiv P[f]$.
\end{theo 4.E}
This is a supplementary result corresponding to Theorem 4.B because putting $P[f]=f^{(k)}$ one can't obtain Theorem 4.B, rather in this case a set of stronger conditions are obtained  as particular case of Theorem 4.D. So it is natural to ask the next question.
\begin{ques} \label{zlq1}
Is it possible to improve {\it Theorem 4.B} in the direction of {\it Theorem 4.D} up to differential monomial so that the result give a positive answer to the question of Zhang-L\"{u} (\cite{br16})?
\end{ques}
Before going to state our main result, we need to introduce another notation.
\begin{defi} For two positive integers $n$ and $p$, we define
$$\mu_{p}= min\{n,p\}~~\text{and}~~\mu_{p}^{*}= p+1-\mu_{p}.$$
With the above notation, it is clear that $ N_{p}(r,0;f^{n}) \leq \mu_{p}N_{\mu_{p}^{*}}(r,0;f).$
\end{defi}
\section{Main Result}
The following theorem is the main result of this chapter which gives a positive answer to the question of Zhang-L\"{u} (\cite{br16}).
\begin{theo}\label{zlt1} Let $ k(\geq 1)$, $n(\geq 1)$ be integers and $f$ be a non-constant meromorphic function and $M[f]$ be a differential monomial of degree $d_{M}$ and weight $\Gamma_{M}$ and $k$ is the highest derivative in $M[f]$. We put $\lambda=\Gamma_{M}-d_{M}$.  Also let $a(z) (\not\equiv 0,\infty )$ be a small function with respect to $f$. Suppose $f^{n}-a$ and $M[f]-a$ share $(0,l)$.\\ If $l\geq 2$ and
\be \label{zle1.12} (3+\lambda)\Theta(\infty,f)+\mu_{2}\delta_{\mu_{2}^{*}}(0,f)+d_{M}\delta_{2+k}(0,f) > 3+\Gamma_{M}+\mu_{2}-n, \ee
or $l=1$ and
\be\label {zle1.13}  \left(\frac{7}{2}+\lambda\right)\Theta(\infty,f)+\frac{1}{2}\Theta(0,f)+\mu_{2}\delta_{\mu_{2}^{*}}(0,f)+d_{M}\delta_{2+k}(0,f) > 4+\Gamma_{M}+\mu_{2}-n, \ee
or $l=0$ and
\be \label {zle1.14} (6+2\lambda)\Theta(\infty,f)+2\Theta(0,f)+\mu_{2}\delta_{\mu_{2}^{*}}(0,f)+d_{M}\delta_{2+k}(0,f)+d_{M}\delta_{1+k}(0,f) > 8+2\Gamma_{M}+\mu_{2}-n, \ee
then $f^{n} \equiv M[f]$.
\end{theo}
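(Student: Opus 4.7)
The plan is to pass to the normalized functions $F := f^{n}/a$ and $G := M[f]/a$, so that, outside the zeros and poles of the small function $a$, the hypothesis $f^{n}-a$ and $M[f]-a$ share $(0,l)$ becomes $\ol E_{l)}(1;F)=\ol E_{l)}(1;G)$ (for $l\geq 2$, $l=1$, $l=0$ interpreted in the weighted sense of Definition \ref{daug2}). I would then introduce the auxiliary function $H$ defined in equation (\ref{CHB}) and, as is standard, split the argument according as $H\not\equiv 0$ or $H\equiv 0$.

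For the case $H\not\equiv 0$, I would first estimate the pole divisor of $H$ in the style of Lemma \ref{CHBl2.1a}: its poles are controlled by multiple zeros of $F$ and $G$, poles of $F,G$ of differing order (here all simple since $F,G$ share $\infty$ IM up to $S(r,f)$), the weight-$l$ excess $1$-points, and the zeros of $F',G'$ unshared with $F(F-1),G(G-1)$. Every simple common $1$-point of $F,G$ is a zero of $H$, so $N_E^{1)}(r,1;F)\le N(r,\infty;H)+S(r,f)$. Applying the Second Fundamental Theorem to $G$ with the values $0,1,\infty$ and then converting the extra multiplicity terms via Lemmas \ref{CHBl2.2} and \ref{CHBl2.1}, I would obtain, in the three weight cases respectively, an inequality of the shape
\begin{align*}
T(r,G)\le {}& (3+\lambda)\,\ol N(r,\infty;f)+\mu_{2}N_{\mu_{2}^{*}}(r,0;f)\\
& +d_{M}N_{2+k}(r,0;f)+(\text{weight terms})+S(r,f),
\end{align*}
where the weight terms add $0$, $\tfrac12\Theta(0,f)+\tfrac12\ol N(r,\infty;f)$, or $\Theta(0,f)+\lambda\ol N(r,\infty;f)+d_{M}N_{1+k}(r,0;f)$ for $l\ge 2,1,0$. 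Since $T(r,G)=T(r,M[f])+S(r,f)$ and, by the First Fundamental Theorem applied through $F$, the left-hand side is at least $nT(r,f)-\lambda\ol N(r,\infty;f)+\text{(small)}$ up to the Mokhon'ko/Lemma \ref{CHBl2.5} estimates for $M[f]$, translating the counting functions into deficiencies via $\ol N(r,\infty;f)\le(1-\Theta(\infty,f)+\varepsilon)T(r,f)$, $N_{p}(r,0;f)\le(1-\delta_{p}(0,f)+\varepsilon)T(r,f)$ yields a strict inequality that directly contradicts (\ref{zle1.12}), (\ref{zle1.13}), (\ref{zle1.14}) according to $l$.

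For the case $H\equiv 0$, integrating twice gives
\[
\frac{1}{F-1}\equiv\frac{C}{G-1}+D,\qquad C\neq 0,
\]
exactly as in Lemma \ref{newlem2}. Three sub-cases arise: if $D\neq 0$ and $C/D\neq 1$ the Second Fundamental Theorem forces $T(r,G)=N_{2}(r,0;G)+S(r,f)$, inadmissible under (\ref{zle1.12})--(\ref{zle1.14}); if $D\neq 0$ and $C/D=1$ we recover the relation $(F-1-1/C)G\equiv -1/C$, and applying the argument of Lemma \ref{newlem3} adapted to the monomial $M[f]$ (using Lemma \ref{CHBl2.4} together with Mokhon'ko's Lemma \ref{ML}) produces a contradiction with $n\ge 1$; finally if $D=0$ then $G-1\equiv C(F-1)$, and comparing $T(r,F)=nT(r,f)+S(r,f)$ against $T(r,G)=T(r,M[f])+S(r,f)$ under the deficiency hypothesis pins down $C=1$, i.e. $f^{n}\equiv M[f]$, as required.

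The main obstacle I anticipate is the simultaneous bookkeeping in the case $H\not\equiv 0$: the terms $\ol N_{L}(r,1;F)$, $\ol N_{L}(r,1;G)$, $\ol N_{F\ge l+1}(r,1;F\mid G\neq 1)$, $\ol N_{G\ge l+1}(r,1;G\mid F\neq 1)$ must be absorbed differently for $l\ge 2$, $l=1$, $l=0$ so that the coefficients of $\Theta(\infty,f)$, $\Theta(0,f)$, $\mu_{2}\delta_{\mu_{2}^{*}}(0,f)$, $d_{M}\delta_{2+k}(0,f)$ and (only for $l=0$) $d_{M}\delta_{1+k}(0,f)$ match exactly the hypotheses (\ref{zle1.12})--(\ref{zle1.14}). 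The delicate point is that $N_{2}(r,0;f^{n})$ must be bounded by $\mu_{2}N_{\mu_{2}^{*}}(r,0;f)$ rather than by the crude $2\ol N(r,0;f)$, which is precisely where the improvement over Theorem 4.C and the answer to the question of Zhang--L\"u is obtained; all remaining steps are routine consequences of the logarithmic derivative lemma, Lemma \ref{CHBl2.7}, and the First/Second Fundamental Theorems.
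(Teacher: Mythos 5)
Your overall architecture (pass to $F=f^{n}/a$, $G=M[f]/a$, split on $H\not\equiv 0$ versus $H\equiv 0$) matches the paper, but there is a genuine gap in the way you propose to close the case $H\not\equiv 0$, and it occurs exactly at the step where the integer $n$ must enter the final inequality. You apply the Second Fundamental Theorem to $G$ alone, obtaining an upper bound for $T(r,G)=T(r,M[f])+S(r,f)$, and then assert that this left-hand side is at least $nT(r,f)-\lambda\ol N(r,\infty;f)+(\text{small})$. No such lower bound holds: one only has the upper bound $T(r,M[f])\leq \Gamma_{M}\,T(r,f)+S(r,f)$, and for $f=e^{z}$, $M[f]=f'$ and $n$ large one gets $T(r,G)\sim T(r,f)\ll nT(r,f)$ while $\ol N(r,\infty;f)=0$. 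Consequently an upper bound on $T(r,G)$ alone can never contradict (\ref{zle1.12})--(\ref{zle1.14}), whose right-hand sides carry the term $-n$. The paper avoids this by applying the Second Fundamental Theorem to $F$ and $G$ \emph{simultaneously}, so that the left-hand side is $T(r,F)+T(r,G)$; after the $1$-point bookkeeping the only surviving $1$-point term is $N(r,1;G)\leq T(r,G)+O(1)$, which cancels the $T(r,G)$ on the left and leaves precisely $nT(r,f)=T(r,F)+S(r,f)$ bounded by $(\lambda+3)\ol N(r,\infty;f)+\mu_{2}N_{\mu_{2}^{*}}(r,0;f)+d_{M}N_{2+k}(r,0;f)+(\text{weight terms})+S(r,f)$. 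That cancellation is the mechanism by which $n$ appears with the correct sign, and your plan omits it.

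A secondary divergence concerns $H\equiv 0$: the paper does not run the three-way sub-case analysis of Lemma \ref{newlem2}, but instead notes that $F,G$ then share $(1,\infty)$ and $(\infty,0)$, verifies via Lemma \ref{zll9} and condition (\ref{zle1.12}) the hypothesis of Lemma \ref{zll14} to conclude $F\equiv G$ or $FG\equiv 1$, and excludes $FG\equiv 1$ by Lemma \ref{zll10}. Your sub-case analysis could likely be completed (the relation $(F-1-\frac{1}{C})G\equiv-\frac{1}{C}$ is indeed ruled out by the argument of Lemma \ref{newlem3} applied to the monomial), but the assertions that $T(r,G)=N_{2}(r,0;G)+S(r,G)$ is ``inadmissible'' under (\ref{zle1.12})--(\ref{zle1.14}) and that the deficiency hypothesis ``pins down $C=1$'' when $D=0$ are each nontrivial Second Fundamental Theorem computations that you state without carrying out. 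The essential defect to repair, however, is the missing cancellation in the $H\not\equiv 0$ case.
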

\begin{cor} If we consider  a non-constant polynomial $p(f)$ of degree $n$  with $p(0)=0$ instead of $f^{n}$ in above Theorem, then similar type conclusions hold.
\end{cor}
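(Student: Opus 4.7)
The plan is to mirror the proof of Theorem \ref{zlt1} with $F=p(f)/a$ and $G=M[f]/a$, exploiting the factorization $p(z)=c\,z^{m_1}\prod_{i=2}^{s}(z-b_i)^{m_i}$ where the $b_i$ are distinct, non-zero, and $\sum_{i=1}^{s}m_i=n$. Three observations make the adaptation almost mechanical: (i) $T(r,p(f))=n\,T(r,f)+S(r,f)$ by Lemma \ref{ML}, so the scaling of characteristic functions matches the $f^n$ case exactly; (ii) the poles of $p(f)$ coincide with those of $f$, giving $\overline N(r,\infty;p(f))=\overline N(r,\infty;f)$; (iii) the zeros of $p(f)$ split into zeros of $f$ (multiplicities multiplied by $m_1$) and $b_i$-points of $f$ for $i\geq 2$.

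First I would form the auxiliary function $H$ as in \eqref{CHB} with the new $F,G$, and bound $N(r,\infty;H)$ by the analog of Lemma \ref{CHBl2.1a}, which depends only on $F,G$ abstractly. Splitting into the three cases $l\geq 2$, $l=1$, $l=0$, one then applies the Second Fundamental Theorem to $G$ as in the proof of Theorem \ref{zlt1}. The only estimates needing attention are those of the form $N_{p}(r,0;p(f))$: using (iii), these split as $\mu_p N_{\mu_p^*}(r,0;f)$ (handled exactly as in the theorem) plus $\sum_{i\geq 2}N_{p}(r,b_i;f)$, which is absorbed into the characteristic of $f$ via $\overline N(r,b_i;f)\leq (1-\Theta(b_i,f)+o(1))T(r,f)$. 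The upshot is that hypotheses \eqref{zle1.12}--\eqref{zle1.14} translate, term by term, into the same inequalities with $\delta_{\mu_2^*}(0,f)$ replaced by its natural analog measuring the deficiency at the full zero-set of $p$.

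In the complementary case $H\equiv 0$, the analog of Lemma \ref{newlem2} yields three subcases. The option $T(r,G)=N_2(r,0;G)+S(r,G)$ is killed by the hypothesis as usual. The option $(F-1-\tfrac{1}{C})G\equiv -\tfrac{1}{C}$ becomes $p(f)M[f]\equiv a(1+d)M[f]-da^2$, forcing $\overline N(r,\infty;f)=S(r,f)$ and $N(r,0;M[f])=S(r,f)$; one then computes $T(r,p(f))$ via Lemma \ref{CHBl2.4} and Lemma \ref{CHBl2.5} applied to the monomial $M[f]$, and the crucial inequality $n\geq 1$ combined with Mokhon'ko produces the contradiction exactly as in Lemma \ref{newlem3}. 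This leaves $M[f]-a\equiv c\,(p(f)-a)$, and the condition $p(0)=0$ is what allows a final Second Fundamental Theorem argument at the value $a(1-1/c)$ of $p(f)$ to force $c=1$, giving $p(f)\equiv M[f]$.

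The main obstacle, and the only real novelty over the theorem, is the careful bookkeeping required at the $b_i$-points of $f$. These contribute extra terms to every zero-count of $p(f)$, and must be transferred consistently to the deficiency side in all three weighted sharing regimes so that the final inequalities retain the same shape as \eqref{zle1.12}--\eqref{zle1.14}. Once this bookkeeping is executed, no genuinely new analytic input is needed, and the corollary follows along the same chain of inequalities as Theorem \ref{zlt1}.
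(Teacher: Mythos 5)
The paper states this corollary without proof, so the benchmark is the proof of Theorem \ref{zlt1} itself. Your handling of the case $H\not\equiv 0$ is the right adaptation: Mokhon'ko's lemma gives $T(r,p(f))=nT(r,f)+S(r,f)$, the poles of $p(f)$ are exactly those of $f$, and the $b_i$-points of $f$ (the nonzero roots of $p$) enter every zero-count of $F=p(f)/a$ and must be shifted into modified deficiency hypotheses. But be aware that this bookkeeping is not cosmetic: Example \ref{zlex1.3} of this very chapter uses $p(f)=f^{2}+2f$, which \emph{does} satisfy $p(0)=0$, and it defeats the verbatim condition (\ref{zle1.14}) for $l=0$; so for $l=0$ the ``similar type conclusions'' can only hold after the conditions are genuinely strengthened at the points $b_i$, and your phrase ``translate term by term'' glosses over the one place where the statement could actually fail.

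The genuine gap is in your case $H\equiv 0$. Theorem \ref{zlt1} does not use the Chapter 2 trichotomy there: it integrates $H\equiv 0$ to a M\"{o}bius relation, verifies the counting-function hypothesis of Lemma \ref{zll14} to get $F\equiv G$ or $FG\equiv 1$, and kills $FG\equiv 1$ with Lemma \ref{zll10}, reaching $p(f)\equiv M[f]$ in one step. By importing Lemma \ref{newlem2} instead, you are left in the third subcase with $M[f]-a\equiv C\,(p(f)-a)$ and must force $C=1$, which is precisely the extra strength Theorem \ref{zlt1} needs over Theorem \ref{CHBt1}. Your proposed second fundamental theorem argument ``at the value $a(1-1/C)$'' amounts to $nT(r,f)\le \ol N(r,\infty;f)+\ol N(r,0;p(f))+\ol N(r,0;M[f])+S(r,f)$, and since $\ol N(r,0;p(f))$ can be as large as $n\,T(r,f)$ when $p$ has $n$ distinct roots, this does not close without re-invoking the (modified) deficiency hypotheses; moreover $p(0)=0$ is not what makes it close. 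Where $p(0)=0$ actually earns its keep is in the analogue of Lemma \ref{zll10} (a zero of $f$ is a zero of $p(f)$, hence would have to be a pole of $M[f]$ if $p(f)M[f]\equiv a^{2}$) and in aligning the zeros of $p(f)$ with those of $M[f]$ so that the $\delta_{\mu_{2}^{*}}(0,f)$-type terms survive. The corollary is most safely proved by repeating Case 2 of the proof of Theorem \ref{zlt1} with $F=p(f)/a$ and adapting Lemmas \ref{zll14} and \ref{zll10}, rather than by your detour through Lemma \ref{newlem2}.
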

Following example shows that in Theorem \ref{zlt1}, $a(z) \not\equiv 0,\infty $ is necessary.
\begin{exm} \par
Let us take $f(z)=e^{e^{z}}$ and $M[f]=f'$, then $M[f]$ and $f$ share $0,\infty$ and the deficiency conditions stated in Theorem \ref{zlt1} is satisfied as $0$, $\infty$ both are exceptional values of f but $f \not\equiv M[f]$.
\end{exm}
The next example shows that the deficiency conditions stated in Theorem \ref{zlt1} are sufficient but not necessary.
\begin{exm} \par
Let $f(z)=Ae^{z}+Be^{-z}$, $AB \neq 0$. Then $\ol{N}(r,\infty;f)=S(r,f)$ and $\ol{N}(r,0;f)=\ol{N}(r,-\frac{B}{A};e^{2z})\sim T(r,f)$. Thus $\Theta(\infty,f)=1$ and $\Theta(0,f)=\delta_{q}(0,f)=0$. \par
It is clear that $M[f]=f^{''}$ and $f$ share $a(z)=\frac{1}{z}$ and the deficiency conditions in Theorem \ref{zlt1} is not satisfied, but $M[f] \equiv f$.
\end{exm}
In the next example, we see that $f^{n}$ can't be replaced by arbitrary polynomial $p(f)=a_{0}f^{n}+a_{1}f^{n-1}+\ldots+a_{n}$ in Theorem \ref{zlt1} for IM ($l=0$) sharing case.
\begin{exm} \label{zlex1.3} \par
If we take $f(z)=e^{z}$, $p(f)=f^{2}+2f$ and $M[f]=f^{(3)}$, then $p(f)+1=(M[f]+1)^{2}$. Thus $p(f)$ and $M[f]$ share $(-1,0)$. Also $\Theta(0,f)=\Theta(\infty,f)=\delta_{q}(0,f)=\delta(0,f)=1$ as $0$ and $\infty$ are exceptional values of $f$. Thus condition (\ref{zle1.14}) of Theorem \ref{zlt1} is satisfied but $p(f)\not\equiv M[f]$.
\end{exm}
In view of example \ref{zlex1.3} the following question is inevitable.
\begin{ques}
Is it possible to replace $f^{n}$ by arbitrary polynomial $p(f)=a_{0}f^{n}+a_{1}f^{n-1}+\ldots+a_{n}$ with $p(0)\not=0$ in Theorem \ref{zlt1} for $l\geq1$?
\end{ques}
\section{Lemmas}
Throughout this chapter, we take $F=\frac{f^{n}}{a}$, $G=\frac{M[f]}{a}$ and $H$ is defined by the equation (\ref{CHB}). Now present some lemmas which are necessary to proceed further.
\begin{lem}\label{zll7} For any two non-constant meromorphic functions $f_{1}$ and $f_{2}$,
$$N_{p}(r,\infty;f_{1}f_{2}) \leq N_{p}(r,\infty;f_{1})+N_{p}(r,\infty;f_{2}).$$
\end{lem}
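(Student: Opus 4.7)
The plan is to establish the inequality pointwise at each potential pole of $f_1 f_2$ and then integrate. First I would recall the identity
$$N_p(r,\infty;f)=\sum_{j=1}^{p}\ol{N}(r,\infty;f\mid\geq j),$$
which shows that a pole of $f$ of multiplicity $m$ contributes exactly $\min(m,p)$ to the left-hand side. Equivalently, if $\ol{n}_p(t,\infty;f)$ denotes the truncated pole-counting function (each pole of order $m$ counted $\min(m,p)$ times), then $N_p(r,\infty;f)$ is obtained from $\ol{n}_p$ by the standard Nevanlinna integration.

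Next I would fix an arbitrary point $z_0$ at which $f_1 f_2$ has a pole. Denoting by $m_1$ and $m_2$ the orders of the poles of $f_1$ and $f_2$ at $z_0$ (with the convention $m_i=0$ if $f_i$ is regular or has a zero there), one has the elementary observation that the order $m$ of the pole of the product satisfies $m\leq m_1+m_2$, since zeros in one factor can only diminish, never enhance, poles coming from the other.

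The key arithmetic step is the subadditivity
$$\min(m_1+m_2,p)\leq \min(m_1,p)+\min(m_2,p)$$
for non-negative integers $m_1,m_2,p$, which is verified case-by-case: if $m_1+m_2\leq p$ both sides equal $m_1+m_2$; if $m_1+m_2>p$ and some $m_i\geq p$ the right side already reaches $p$; otherwise both $m_i<p$ and the right side equals $m_1+m_2>p$. Combined with $m\leq m_1+m_2$, this yields $\min(m,p)\leq \min(m_1,p)+\min(m_2,p)$ at every point of $\mathbb{C}$, hence $\ol{n}_p(t,\infty;f_1 f_2)\leq \ol{n}_p(t,\infty;f_1)+\ol{n}_p(t,\infty;f_2)$ for all $t\geq 0$.

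Finally, applying the monotone, linear operation $\int_0^r \frac{\ol{n}_p(t)-\ol{n}_p(0)}{t}\,dt+\ol{n}_p(0)\log r$ to both sides of this pointwise inequality delivers the stated bound. There is no substantive obstacle here; the lemma is a direct consequence of the subadditivity of $x\mapsto\min(x,p)$ applied to local pole orders, and no deep tool from Nevanlinna theory is invoked beyond the definition of $N_p$.
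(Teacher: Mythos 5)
Your proof is correct and follows essentially the same route as the paper's: a pointwise comparison of the local contributions at each potential pole of $f_1f_2$, followed by the standard integration. Your organization---reducing everything to the subadditivity $\min(m_1+m_2,p)\le\min(m_1,p)+\min(m_2,p)$ combined with the local bound $m\le m_1+m_2$---is in fact tidier than the paper's four-case analysis with signed orders, but the underlying idea is identical.
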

\begin{proof} Let $z_{0}$ be a pole of $f_{i}$ of order $t_{i}$ for $i=1,2.$ Then $z_{0}$ be a pole of $f_{1}f_{2}$ of order at most $t_{1}+t_{2}$.\par
\textbf{Case-1.} Let $t_{1} \geq p $ and $t_{2} \geq p $. Then $t_{1}+t_{2}\geq p$. So $z_{0}$ is counted at most $p$ times in the left hand side of the above counting function, whereas the same is counted $p+p$ times in the right hand side of the above counting function.\par
\textbf{Case-2.}  Let $t_{1} \geq p $ and $t_{2} < p $.\\
 \textbf{Subcase-2.1.} Let $t_{1}+t_{2}\geq p$. So $z_{0}$ is counted at most $p$ times in the left hand side of the above counting function, whereas the same is counted as $p+\max\{0,t_{2}\}$ times in the right hand side of the above counting function.\\
 \textbf{Subcase-2.2.} Let $t_{1}+t_{2}< p$. This case is occurred if $t_{2}$ is negative; i.e., if $z_{0}$ is a zero of $f_{2}$. Then $z_{0}$ is counted at most $\max\{0,t_{1}+t_{2}\}$ times whereas the same is counted $p$  times in the right hand side of the above expression. \par
\textbf{Case-3.} Let $t_{1} < p $ and $t_{2} \geq p $. Then $t_{1}+t_{2}\geq p$. This case can be disposed off as done in Case 2.\par
\textbf{Case-4.} Let $t_{1} < p $ and $t_{2} < p$. \\
\textbf{Subcase-4.1.} Let $t_{1}+t_{2}\geq p$.  Then $z_{0}$ is counted at most $p$ times whereas the same is counted $\max\{0,t_{1}\}+\max\{0,t_{2}\}$ times in the right hand side of the above expression. \\
\textbf{Subcase-4.2.} Let $t_{1}+t_{2} < p$. Then $z_{0}$ is counted at most $\max\{0,t_{1}+t_{2}\}$ times whereas $z_{0}$ is counted $\max\{0,t_{1}\}+\max\{0,t_{2}\}$ times in the right hand side of the above counting functions. Combining all the cases, Lemma \ref{zll7} follows.
\end{proof}
\begin{lem}\label{zll1.1} $1+\delta_{2}(0,f) \geq 2\Theta(0,f)$.
\end{lem}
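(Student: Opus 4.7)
The plan is to unwind the definitions of $\delta_2(0,f)$ and $\Theta(0,f)$ and reduce the claim to a pointwise comparison of counting functions. Recall
\[
\delta_2(0,f) = 1-\limsup_{r\to\infty}\frac{N_2(r,0;f)}{T(r,f)},\qquad \Theta(0,f) = 1-\limsup_{r\to\infty}\frac{\overline{N}(r,0;f)}{T(r,f)},
\]
and by definition $N_2(r,0;f) = \overline{N}(r,0;f)+\overline{N}(r,0;f\mid\geq 2)$.

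The first (and essentially only) key step is the pointwise bound
\[
N_2(r,0;f)\;=\;\overline{N}(r,0;f)+\overline{N}(r,0;f\mid\geq 2)\;\leq\;2\,\overline{N}(r,0;f),
\]
which follows because each zero of $f$ counted in $\overline{N}(r,0;f\mid\geq 2)$ is also a zero of $f$ counted in $\overline{N}(r,0;f)$. Dividing by $T(r,f)$ and passing to $\limsup$ as $r\to\infty$ gives
\[
\limsup_{r\to\infty}\frac{N_2(r,0;f)}{T(r,f)}\;\leq\;2\limsup_{r\to\infty}\frac{\overline{N}(r,0;f)}{T(r,f)},
\]
i.e.\ $1-\delta_2(0,f)\leq 2\bigl(1-\Theta(0,f)\bigr)$. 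Rearranging yields $2\Theta(0,f)\leq 1+\delta_2(0,f)$, which is exactly the claim.

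There is no real obstacle here; the lemma is essentially a definitional observation, and the only thing to verify carefully is the pointwise counting inequality and the fact that $\limsup(A+B)\leq\limsup A+\limsup B$ when taking the limsup of a sum of nonnegative ratios with a common denominator. No growth/order hypotheses on $f$ are needed.
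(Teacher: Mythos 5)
Your proof is correct and takes essentially the same route as the paper's: both arguments reduce the lemma to the pointwise inequality $N_{2}(r,0;f)\leq 2\overline{N}(r,0;f)$ (immediate from $N_{2}=\overline{N}+\overline{N}(\cdot\mid\geq 2)$ and $\overline{N}(\cdot\mid\geq 2)\leq\overline{N}$) and then pass to the $\limsup$. The only cosmetic difference is that the paper invokes $\limsup A-\limsup B\leq\limsup(A-B)$ while you use direct monotonicity of the $\limsup$, which is if anything slightly cleaner.
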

\begin{proof}
\beas 2\Theta(0,f)-\delta_{2}(0,f)-1 &=& \limsup_{r \to \infty}\frac{N_{2}(r,0;f)}{T(r,f)}-\limsup_{r \to \infty}\frac{2\ol{N}(r,0;f)}{T(r,f)}\\
& \leq& \limsup_{r \to \infty}\frac{N_{2}(r,0;f)-2\overline{N}(r,0;f)}{T(r,f)}\\
& \leq& 0. \eeas
\end{proof}
\begin{lem} (\cite{br8})\label{zll8} Let $p$ and $k$ be two positive integers. Then for a non-constant meromorphic function $f$, the following inequality holds:
$$N_{p}(r,0;f^{(k)}) \leq N_{p+k}(r,0;f)+k\overline{N}(r,\infty;f)+S(r,f).$$
\end{lem}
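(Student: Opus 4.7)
My plan is to establish the stronger intermediate inequality
\[
N_p(r, 0; f^{(k)}) \;\leq\; T(r, f^{(k)}) - T(r, f) + N_{p+k}(r, 0; f) + S(r, f),
\]
and then invoke Milloux's estimate $T(r, f^{(k)}) \leq T(r, f) + k\,\overline{N}(r, \infty; f) + S(r, f)$ (a direct consequence of Milloux's theorem recorded in Chapter 1 with $\psi=f^{(k)}$) to finish.

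First, using the First Fundamental Theorem in the form $T(r, f) - N(r, 0; f) = m(r, 1/f) + O(1)$ (and the analogue for $f^{(k)}$), and introducing the excess-over-truncation quantities
\[
E_f := N(r, 0; f) - N_{p+k}(r, 0; f), \qquad E_{f^{(k)}} := N(r, 0; f^{(k)}) - N_p(r, 0; f^{(k)}),
\]
the target reduces to
\[
E_f - E_{f^{(k)}} \;\leq\; m(r, 1/f^{(k)}) - m(r, 1/f) + S(r, f).
\]

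Second, I would handle the right-hand side via the Lemma on the Logarithmic Derivative (Corollary \ref{ghgh}): from $m(r, 1/f) \leq m(r, 1/f^{(k)}) + m(r, f^{(k)}/f)$ and $m(r, f^{(k)}/f) = S(r, f)$, we get $m(r, 1/f^{(k)}) - m(r, 1/f) \geq -S(r, f)$, so it suffices to prove $E_f - E_{f^{(k)}} \leq 0$.

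Third, I would verify this pointwise at each zero $z_0$ of $f^{(k)}$ of multiplicity $n$. If $z_0$ is also a zero of $f$ of multiplicity $m > k$, then Taylor expansion gives $n = m - k$, so the excess contributions $\max(0, n - p) = \max(0, m - p - k)$ to $E_{f^{(k)}}$ and $\max(0, m - (p+k))$ to $E_f$ are equal and cancel. Otherwise either $f(z_0) \neq 0$ or $f$ vanishes at $z_0$ to some order $m \leq k$; in both subcases $E_f$ receives zero contribution (since $m \leq k \leq p+k$ forces $\max(0, m - (p+k)) = 0$), while $E_{f^{(k)}}$ may receive a non-negative contribution $\max(0, n-p)$. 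Summing over all zeros yields $E_f - E_{f^{(k)}} \leq 0$, closing the intermediate inequality.

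The main obstacle is the bookkeeping in this third step: one must verify both the sharp multiplicity identity $n = m - k$ at common zeros and the precise calibration of the truncation thresholds $p$ (for $f^{(k)}$) and $p+k$ (for $f$) that makes the excesses of type-(A) points cancel exactly, while confirming that the residual type-(B) contributions land with the correct sign. The remaining ingredients are routine applications of the First Fundamental Theorem, the Logarithmic Derivative Lemma, and Milloux's inequality.
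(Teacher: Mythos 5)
Your proof is correct. The thesis itself gives no proof of this lemma (it is imported from \cite{br8}; the identical statement also appears as Lemma \ref{CHBl2.1}, quoted from \cite{br15}), and your argument --- first establishing the sharper bound $N_{p}(r,0;f^{(k)})\leq T(r,f^{(k)})-T(r,f)+N_{p+k}(r,0;f)+S(r,f)$ via the First Fundamental Theorem, the logarithmic-derivative estimate $m\left(r,\frac{f^{(k)}}{f}\right)=S(r,f)$ and the excess comparison $E_f\leq E_{f^{(k)}}$, then finishing with Milloux's inequality $T(r,f^{(k)})\leq T(r,f)+k\overline{N}(r,\infty;f)+S(r,f)$ --- is precisely the standard proof in those sources, and every step checks out. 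The only half-line left implicit in your third step is that a zero of $f$ which is \emph{not} a zero of $f^{(k)}$ necessarily has multiplicity at most $k\leq p+k$ and so contributes nothing to $E_f$; once that is noted, no term of $E_f$ escapes your case analysis and the inequality $E_f-E_{f^{(k)}}\leq 0$ is fully justified.
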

\begin{lem} \label{zll5}  Let $f$ be a non-constant  meromorphic function and $M[f]$  be a differential monomial of degree $d_{M}$ and weight $\Gamma_{M}$. Then $$N\left(r,\infty;\frac{M}{f^{d_{M}}}\right) \leq d_{M}N(r,0;f)+\lambda\overline{N}(r,\infty;f)+S(r,f).$$
\end{lem}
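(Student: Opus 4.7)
The plan is to analyze the pole structure of $M[f]/f^{d_M}$ by direct inspection, localising at poles and zeros of $f$. Since $d_M = n_0 + n_1 + \cdots + n_k$, I would first rewrite the quotient in the suggestive form
$$\frac{M[f]}{f^{d_M}} = \prod_{i=1}^{k} \left(\frac{f^{(i)}}{f}\right)^{n_i},$$
which makes it clear that this function can have poles only over poles of $f$ or zeros of $f$; elsewhere every logarithmic derivative $f^{(i)}/f$ is finite.

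Next, I would estimate the pole multiplicity at each type of singularity. At a pole $z_0$ of $f$ of order $p$, the derivative $f^{(i)}$ has a pole of order exactly $p+i$, so $f^{(i)}/f$ has a pole of order $i$, and the full product has a pole of order $\sum_{i=1}^k i\, n_i = \Gamma_M - d_M = \lambda$. Summing over distinct poles of $f$ gives a contribution of at most $\lambda\,\overline{N}(r, \infty; f)$. At a zero $z_0$ of $f$ of order $q$, a case analysis on whether $q \geq i$ or $q < i$ shows that $f^{(i)}/f$ has a pole of order at most $\min(q, i)$ at $z_0$: when $q \geq i$, the derivative has a zero of order $q-i$ and the pole order of $f^{(i)}/f$ is exactly $i$; when $q < i$, the derivative is either non-vanishing at $z_0$ or has a zero of some order, and in either case the pole order of $f^{(i)}/f$ is at most $q$. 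Hence the full product has a pole of order at most $\sum_{i=1}^k \min(q,i)\, n_i \leq q\sum_{i=1}^k n_i = q(d_M - n_0) \leq q\, d_M$. Summing over all zeros of $f$ counted with multiplicity gives a contribution of at most $d_M\, N(r, 0; f)$.

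Adding the two contributions and absorbing any negligible terms into $S(r,f)$ produces the claimed estimate. The only delicate step is the case analysis at zeros of $f$, where one must verify uniformly that the pole order of $f^{(i)}/f$ at a zero of $f$ of order $q$ is bounded by $\min(q, i)$; this is the sole place where a little care with Taylor expansions is required, but it is a standard computation. Everything else is routine bookkeeping of orders, and no appeal to the lemma on logarithmic derivatives or any Nevanlinna-theoretic machinery is needed beyond the definitions of $N$ and $\overline{N}$.
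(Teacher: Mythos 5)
Your proof is correct and follows essentially the same route as the paper: a local count of the pole order of $M[f]/f^{d_M}$ at poles of $f$ (order exactly $\lambda=\sum_{i\geq 1} i\,n_i$, hence the $\lambda\overline{N}(r,\infty;f)$ term) and at zeros of $f$ (order at most $q\,d_M$ at a zero of order $q$, hence the $d_M N(r,0;f)$ term). Your explicit factorisation $M[f]/f^{d_M}=\prod_{i\geq1}\bigl(f^{(i)}/f\bigr)^{n_i}$ and the $\min(q,i)$ refinement at zeros only make the paper's bookkeeping slightly sharper; no gap.
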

\begin{proof} If $z_{0}$ be a pole of $f$  of order $t$, then it is a pole of $\frac{M}{f^{d_{M}}}$ of order $\lambda=n_{1}+2n_{2}+\ldots+kn_{k}$. Again if $z_{1}$ be a zero of $f$  of order $s$, then it is a pole of $\frac{M}{f^{d_{M}}}$ of order at most $sd_{M}$. Thus $$N\left(r,\infty;\frac{M}{f^{d_{M}}}\right) \leq d_{M}N(r,0;f)+\lambda\overline{N}(r,\infty;f)+S(r,f).$$
Hence the proof.
\end{proof}
\begin{lem}\label{zll9} Let $p$ and $k$ be two positive integers. Also let $M[f]$ be a differential monomial generated by a non-constant meromorphic function $f$. Then $$N_{p}(r,0;M[f]) \leq d_{M}N_{p+k}(r,0;f)+\lambda\overline{N}(r,\infty;f)+S(r,f).$$
\end{lem}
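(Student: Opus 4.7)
The plan is to reduce the statement to the single-derivative case (Lemma \ref{zll8}) by exploiting the multiplicative structure of $M[f] = (f)^{n_0}(f^{(1)})^{n_1}\cdots(f^{(k)})^{n_k}$ and sub-additivity of truncated counting functions. In that reduction, the two moment identities $\sum_{i=0}^{k} n_i = d_M$ and $\sum_{i=0}^{k} i\,n_i = \Gamma_M - d_M = \lambda$ are what make the final coefficients come out as advertised.

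The first step is to prove the sub-additivity inequality
\[
N_p\!\left(r,0;\prod_{j}\phi_j\right) \leq \sum_{j} N_p(r,0;\phi_j)
\]
for meromorphic $\phi_j$ (interpreting a zero of the product as coming from some combination of zeros and poles of the factors). At a zero $z_0$ of the product of order $\alpha$, each $\phi_j$ has order $\beta_j$ (positive for zero, negative for pole) with $\sum \beta_j = \alpha > 0$, so $\alpha \leq \sum \beta_j^{+}$ and hence
\[
\min(\alpha,p) \;\leq\; \min\!\Big(\textstyle\sum \beta_j^{+},\,p\Big) \;\leq\; \sum \min(\beta_j^{+},p).
\]
A parallel check gives $N_p(r,0;\phi^{n}) \leq n\,N_p(r,0;\phi)$ by comparing $\min(nt,p)$ with $\min(nt,np) = n\min(t,p)$.

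Applying these two inequalities to $M[f]$ yields
\[
N_p(r,0;M[f]) \;\leq\; \sum_{i=0}^{k} n_i\,N_p\!\left(r,0;f^{(i)}\right).
\]
Next I would invoke Lemma \ref{zll8} on each summand (the $i=0$ term is trivial):
\[
N_p(r,0;f^{(i)}) \;\leq\; N_{p+i}(r,0;f) + i\,\overline{N}(r,\infty;f) + S(r,f).
\]
Summing and using $N_{p+i}(r,0;f) \leq N_{p+k}(r,0;f)$ for $i\leq k$ gives
\[
N_p(r,0;M[f]) \;\leq\; \Big(\sum n_i\Big) N_{p+k}(r,0;f) + \Big(\sum i\,n_i\Big)\overline{N}(r,\infty;f) + S(r,f),
\]
and the two moment identities collapse the prefactors to $d_M$ and $\lambda$ respectively, yielding the stated bound.

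The only subtle step is the sub-additivity in the first display: zeros of $M[f]$ need not be zeros of every factor (a pole of one $f^{(i)}$ can be partially cancelled by a higher-order zero of another), so the bookkeeping via $\beta_j^{+}$ rather than $|\beta_j|$ is essential. Once that is in place, the rest is bookkeeping with the degree and weight of $M[f]$.
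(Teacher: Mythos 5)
Your proposal is correct and follows essentially the same route as the paper: the paper likewise splits $N_{p}(r,0;M[f])$ over the factors of the monomial via the sub-additivity of truncated counting functions (its Lemma \ref{zll7}, stated for poles and applied to reciprocals), invokes Lemma \ref{zll8} on each $N_{p}(r,0;f^{(i)})$, and collapses the coefficients using $\sum_i n_i=d_M$ and $\sum_i i\,n_i=\lambda$. The only difference is that you re-derive the sub-additivity (with the $\beta_j^{+}$ bookkeeping) rather than citing it, which is a worthwhile precaution since cancellation between zeros and poles of different factors is exactly the delicate point.
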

\begin{proof}
In view of Lemma \ref{zll7} and  Lemma \ref{zll8}, we can write
\beas N_{p}(r,0;M[f]) &\leq& \sum\limits_{i=0}^{k} n_{i}N_{p}(r,0;f^{(i)})+S(r,f)\\
&\leq& \sum\limits_{i=0}^{k} n_{i}\{N_{p+i}(r,0;f)+i\overline{N}(r,\infty;f)\}+S(r,f)\\
&\leq& d_{M}N_{p+k}(r,0;f)+\lambda\overline{N}(r,\infty;f)+S(r,f).
\eeas
Hence the proof.
\end{proof}
\begin{lem}\label{zll10}  Let $f$ be a non-constant  meromorphic function and $a(z)$ be a small function with respect to $f$. Then $FG \not\equiv 1$, where $F$ and $G$ are defined previously.
\end{lem}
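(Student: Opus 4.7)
The plan is to argue by contradiction. Suppose $FG\equiv 1$, which unravels to the functional identity $f^{n}M[f]\equiv a^{2}$. I will combine a local analysis of the zeros and poles of $f$ with the First Fundamental Theorem and the logarithmic derivative lemma to force $T(r,f)=S(r,f)$, the desired contradiction.

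First, I would control $\overline N(r,\infty;f)$ and $N(r,0;f)$. If $z_{0}$ is a pole of $f$ of order $t\geq 1$ that is neither a zero nor a pole of $a$, then $f^{n}$ contributes a pole of order $nt$ while $M[f]$ also contributes a pole there, so $f^{n}M[f]$ has a pole at $z_{0}$, contradicting $a^{2}(z_{0})\in\mathbb{C}\setminus\{0\}$. Hence every pole of $f$ must occur among the zeros or poles of $a$, giving
\[\overline N(r,\infty;f)\leq\overline N(r,0;a)+\overline N(r,\infty;a)=S(r,f),\]
and matching of multiplicities (each local order of $f$ at an exceptional point is bounded by a fixed multiple of the local order of $a$ there) also yields $N(r,\infty;f)=S(r,f)$. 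A symmetric argument at a zero $z_{0}$ of $f$ with $a(z_{0})\neq 0,\infty$ shows that $f^{n}$ alone already forces $f^{n}M[f]$ to vanish to order at least $n\geq 1$ at $z_{0}$, again incompatible with $a^{2}(z_{0})\neq 0$. Therefore $N(r,0;f)=S(r,f)$ as well.

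Next, I would rewrite the identity as $f^{n+d_{M}}=a^{2}\bigl(M[f]/f^{d_{M}}\bigr)^{-1}$, so the First Fundamental Theorem gives
\[(n+d_{M})\,T(r,f)=T\!\left(r,\frac{M[f]}{f^{d_{M}}}\right)+S(r,f).\]
The factor $M[f]/f^{d_{M}}$ is a product of expressions of the form $(f^{(i)}/f)^{n_{i}}$, so by the logarithmic derivative lemma $m(r,M[f]/f^{d_{M}})=S(r,f)$. For the counting side, Lemma \ref{zll5} gives
\[N\!\left(r,\infty;\frac{M[f]}{f^{d_{M}}}\right)\leq d_{M}\,N(r,0;f)+\lambda\,\overline N(r,\infty;f)+S(r,f)=S(r,f),\]
using the bounds already established. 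Combining everything produces $(n+d_{M})T(r,f)=S(r,f)$, which contradicts the non-constancy of $f$.

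The main obstacle, though routine, is the multiplicity bookkeeping at the zeros and poles of $a$: one must verify that the local orders of $f$ at those exceptional points are dominated by the local orders of $a$, so that their contributions to $N(r,0;f)$ and $\overline N(r,\infty;f)$ really collapse to $S(r,f)$ rather than growing with $T(r,f)$. A coarser estimate that bypasses this step would only give $(n-\lambda)T(r,f)\leq S(r,f)$, which fails when $n\leq\lambda$; so the careful local analysis of poles and zeros is essential to obtain the result for every $n\geq 1$. Once this is secured, the remainder is a direct application of the Nevanlinna-theoretic machinery already assembled in the chapter.
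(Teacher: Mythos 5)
Your proof is correct and follows essentially the same route as the paper: from $FG\equiv 1$ you deduce $f^{n}M[f]\equiv a^{2}$, apply the First Fundamental Theorem to $f^{n+d_{M}}=a^{2}\bigl(M[f]/f^{d_{M}}\bigr)^{-1}$, and bound $T\bigl(r,M[f]/f^{d_{M}}\bigr)$ by the logarithmic derivative lemma together with Lemma \ref{zll5}. The only difference is that you explicitly justify $N(r,0;f)=S(r,f)$ and $\overline N(r,\infty;f)=S(r,f)$ via the local analysis at points away from the zeros and poles of $a$, a step the paper's own proof uses but leaves tacit.
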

\begin{proof} On contrary assume that $FG \equiv 1$. Then Lemma \ref{zll5} and the First Fundamental Theorem yields that
\begin{eqnarray*} (n+d_{M})T(r,f) &=&T\left(r,\frac{M}{f^{d_{M}}}\right)+S(r,f)\\
 &\leq& d_{M}N(r,0;f)+\lambda\overline{N}(r,\infty;f)+S(r,f)\\ &=&S(r,f),
\end{eqnarray*} which is a contradiction. Thus the lemma follows.
\end{proof}
\begin{lem}\label{zll11} (\cite{zl1}) Let $F$ and $G$ share $(1,l)$ and $\overline{N}(r,\infty;F)=\overline{N}(r,\infty;G)$ and $H\not\equiv 0$, where  $F$, $G$ and $H$ are defined previously.
 Then \beas N(r,\infty;H) &\leq& \overline{N}(r,\infty;F)+\overline{N}(r,0;F|\geq 2)+\overline{N}(r,0;G|\geq 2)+\overline{N}_{0}(r,0;F')\\
 &&+\overline{N}_{0}(r,0;G')+\overline{N}_{L}(r,1;F)+\overline{N}_{L}(r,1;G)+S(r,f). \eeas
 \end{lem}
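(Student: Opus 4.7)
The plan is to first display $H$ as a logarithmic derivative so that its poles are automatically simple, and then carry out a point-by-point residue analysis showing that the only surviving simple-pole contributions fall into the families listed on the right-hand side.

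First I would note the identity
\[
H=\frac{d}{dz}\log\frac{F'/(F-1)^{2}}{G'/(G-1)^{2}},
\]
which is a meromorphic logarithmic derivative and hence has only simple poles. In particular $N(r,\infty;H)=\overline{N}(r,\infty;H)$, and the potential poles of $H$ lie in the union of zeros of $F$, $F-1$, $F'$, $G$, $G-1$, $G'$ and the poles of $F$ and $G$. I would then compute the residue of the building block $\Phi_{F}:=\tfrac{F''}{F'}-\tfrac{2F'}{F-1}$ at each special point by a short local Laurent expansion: at a pole of $F$ of order $p$ the residue equals $p-1$; at a zero of $F-1$ of order $p$ it equals $-(p+1)$; at a zero of $F$ of order $p\geq 2$ it equals $p-1$ (with $\Phi_{F}$ regular at simple zeros of $F$); and at a zero of $F'$ of order $p$ which is neither a $0$- nor a $1$-point of $F$ it equals $p$. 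The analogous formulas hold for $\Phi_{G}$, and $\operatorname{Res} H=\operatorname{Res}\Phi_{F}-\operatorname{Res}\Phi_{G}$ at every candidate point.

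The case analysis then proceeds as follows. At a common pole of $F$ and $G$ of the same order the two residues cancel and $H$ is regular; in general the pole contributions are bounded by $\overline{N}(r,\infty;F)$, using the hypothesis $\overline{N}(r,\infty;F)=\overline{N}(r,\infty;G)$. At a common $1$-point of $F$ and $G$ of equal multiplicity the residues $-(p+1)$ also cancel, and by the $(1,l)$-sharing this equality is automatic whenever the common multiplicity is $\leq l$ (the set of $1$-points of $F$ and $G$ being the same). The only surviving $1$-point contributions come from common $1$-points of multiplicities $p\neq q$, which by $(1,l)$-sharing must satisfy $p,q>l$; such a point lies in exactly one of $\overline{N}_{L}(r,1;F)$ or $\overline{N}_{L}(r,1;G)$ according to whether $p>q$ or $q>p$, so these two terms exactly account for this family. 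The multiple zeros of $F$ (resp.\ $G$) contribute simple poles bounded by $\overline{N}(r,0;F\mid\geq 2)$ (resp.\ $\overline{N}(r,0;G\mid\geq 2)$), and zeros of $F'$ (resp.\ $G'$) away from the $0$- and $1$-points of $F$ (resp.\ $G$) contribute exactly $\overline{N}_{0}(r,0;F')$ (resp.\ $\overline{N}_{0}(r,0;G')$). Summing all these upper bounds yields the stated inequality, with the $S(r,f)$ term absorbing the standard contributions from the zeros and poles of the small functions implicit in $F$ and $G$.

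The main obstacle is the careful bookkeeping at common $1$-points of multiplicity $>l$: one needs to verify, from the definition of $(1,l)$-sharing, that every such point of unequal multiplicity is counted exactly once across $\overline{N}_{L}(r,1;F)$ and $\overline{N}_{L}(r,1;G)$ (never twice, never missed), so that the residue contribution $\operatorname{Res} H=q-p$ is absorbed without overcounting. All remaining steps reduce to the routine residue computations sketched above.
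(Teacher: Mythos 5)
Your argument is correct and is essentially the same as the paper's: the thesis cites this lemma from \cite{zl1} without reproving it, but its proof of the analogous Lemma \ref{CHBl2.1a} is exactly your enumeration — the possible poles of $H$ are located among multiple zeros of $F$ and $G$, poles and common $1$-points of differing multiplicities, and the extra zeros of $F'$ and $G'$, and since $H$ has only simple poles the reduced counting functions of these families bound $N(r,\infty;H)$. Your explicit residue computations (cancellation at equal-order poles and equal-multiplicity $1$-points, the $q-p$ residue at the unequal ones landing in $\overline{N}_{L}(r,1;F)+\overline{N}_{L}(r,1;G)$) simply make rigorous what the paper states as "can easily verify."
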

 \begin{lem}\label{zll13} Let $F$ and $G$ share $(1,l)$ and $H \not\equiv 0$. Then \beas \overline{N}(r,1;F)+ \overline{N}(r,1;G) &\leq& N(r,\infty;H) + \overline{N}^{(2}_{E}(r,1;F)+\overline{N}_{L}(r,1;F)+\overline{N}_{L}(r,1;G)\\ && +\overline{N}(r,1;G)+S(r,f).\eeas
\end{lem}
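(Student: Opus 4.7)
The plan is to decompose $\overline{N}(r,1;F)$ into four disjoint pieces according to local behaviour at each common $1$-point, then to establish by a Laurent calculation that every simple common $1$-point of $F$ and $G$ is a zero of $H$, and finally to convert this into a bound via the First Fundamental Theorem and the lemma on logarithmic derivatives.

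First, since $F$ and $G$ share $(1,l)$, every $1$-point of $F$ is a $1$-point of $G$ and vice versa. A common $1$-point of $F$ of multiplicity $p$ and of $G$ of multiplicity $q$ falls into exactly one of four mutually exclusive categories: simple on both sides ($p=q=1$, contributing to $N^{1)}_{E}(r,1;F)$), of equal multiplicity $\geq 2$ ($p=q\geq 2$, contributing to $\overline{N}^{(2}_{E}(r,1;F)$), $F$'s multiplicity strictly larger ($p>q\geq 1$, contributing to $\overline{N}_{L}(r,1;F)$), or $G$'s multiplicity strictly larger ($q>p\geq 1$, contributing to $\overline{N}_{L}(r,1;G)$). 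Consequently
\[
\overline{N}(r,1;F)\leq N^{1)}_{E}(r,1;F)+\overline{N}^{(2}_{E}(r,1;F)+\overline{N}_{L}(r,1;F)+\overline{N}_{L}(r,1;G).
\]

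The main technical step is a local Laurent computation at a simple common $1$-point $z_{0}$. Writing $F(z)=1+a_{1}(z-z_{0})+a_{2}(z-z_{0})^{2}+\cdots$ with $a_{1}\neq 0$, one verifies that $\frac{F''}{F'}=\frac{2a_{2}}{a_{1}}+O(z-z_{0})$ while $\frac{2F'}{F-1}=\frac{2}{z-z_{0}}+\frac{2a_{2}}{a_{1}}+O(z-z_{0})$, so the bracket $\frac{F''}{F'}-\frac{2F'}{F-1}$ has residue $-2$ at $z_{0}$ and vanishing constant term. The analogous expansion for $G$ produces the same residue and the same constant term; in forming $H$ both cancel and so $H(z_{0})=0$. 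Hence $N^{1)}_{E}(r,1;F)\leq N(r,0;H)$.

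Finally, because $H$ is a combination of logarithmic derivatives of $F$ and $G$ and $T(r,F),T(r,G)=O(T(r,f))$ by Lemma \ref{CHBl2.7}, the lemma on logarithmic derivatives yields $m(r,H)=S(r,f)$. The First Fundamental Theorem then gives $N(r,0;H)\leq T(r,H)+O(1)=N(r,\infty;H)+m(r,H)+O(1)=N(r,\infty;H)+S(r,f)$. Adding $\overline{N}^{(2}_{E}(r,1;F)+\overline{N}_{L}(r,1;F)+\overline{N}_{L}(r,1;G)+\overline{N}(r,1;G)$ to both sides of the resulting bound on $N^{1)}_{E}(r,1;F)$ and combining with the decomposition above yields the claimed inequality. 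The only genuine obstacle is the Laurent calculation at a simple common $1$-point; once that is in hand, the rest is bookkeeping together with standard Nevanlinna-theoretic estimates.
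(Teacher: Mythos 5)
Your proposal is correct and follows essentially the same route as the paper: the paper's proof consists precisely of the inequality $N(r,1;F\mid =1)\leq N(r,0;H)+S(r,f)\leq N(r,\infty;H)+S(r,f)$ followed by the (unstated) decomposition of $\overline{N}(r,1;F)$ into the four multiplicity classes, which you have simply written out in full, including the Laurent computation the paper calls "simple calculations."
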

\begin{proof} By simple calculations, it is clear that
$$N(r,1;F|=1)\leq N(r,0;H)+S(r,f)\leq N(r,\infty;H)+S(r,f).$$
Thus proof is obvious if we keep the above inequality in our mind.
\end{proof}
\newpage
\begin{lem}\label{zll12} If $F$ and $G$ share $(1,l)$, then
$$\overline{N}_{L}(r,1;F)\leq \frac{1}{2}\overline{N}(r,\infty;F)+\frac{1}{2}\overline{N}(r,0;F)+S(r,F)~~~~\text{when}~~l\geq 1~~\text{and}$$
$$\overline{N}_{L}(r,1;F)\leq \overline{N}(r,\infty;F)+\overline{N}(r,0;F)+S(r,F)~~~~\text{when}~~l=0.$$
\end{lem}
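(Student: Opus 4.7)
The plan is to translate the weighted sharing hypothesis into a lower bound on the multiplicity of every point counted by $\ol N_L(r,1;F)$, and then convert that multiplicity information into a counting function estimate via the First and Second Fundamental Theorems applied to $F$ with the three target values $0,1,\infty$.

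First I would unpack what $(1,l)$-sharing forces at the points of $\ol N_L(r,1;F)$. If $z_0$ is such a point with $F$-multiplicity $p$ and $G$-multiplicity $q$, then by definition $p>q\geq 1$. If either $p\leq l$ or $q\leq l$ held, then the weighted sharing condition $E_l(1;F)=E_l(1;G)$ would impose $p=q$, contradicting $p\neq q$. Hence $p>q\geq l+1$, so $p\geq l+2$; in particular $p\geq 3$ when $l\geq 1$ and $p\geq 2$ when $l=0$.

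Next I would exploit the elementary bookkeeping observation that each such $z_0$ contributes at least $l+1$ to the difference $N(r,1;F)-\ol N(r,1;F)$. Summing over the points of $\ol N_L(r,1;F)$ gives
\[
(l+1)\,\ol N_L(r,1;F)\;\leq\;N(r,1;F)-\ol N(r,1;F).
\]
Applying the First Fundamental Theorem $N(r,1;F)\leq T(r,F)+O(1)$ together with the Second Fundamental Theorem for $F$ at the three values $0,1,\infty$,
\[
T(r,F)\;\leq\;\ol N(r,0;F)+\ol N(r,1;F)+\ol N(r,\infty;F)+S(r,F),
\]
the $\ol N(r,1;F)$ terms cancel and one arrives at
\[
(l+1)\,\ol N_L(r,1;F)\;\leq\;\ol N(r,0;F)+\ol N(r,\infty;F)+S(r,F).
\]
Specialising to $l\geq 1$ yields the first inequality, since $1/(l+1)\leq 1/2$, while $l=0$ yields the second.

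I anticipate no substantial obstacle: the argument is essentially multiplicity accounting followed by the two Fundamental Theorems. The only subtle point is the translation of the weighted sharing condition into the multiplicity lower bound $p\geq l+2$ at every point of $\ol N_L(r,1;F)$, which is immediate from the definition once $p\neq q$ is used to exclude the low-multiplicity regime.
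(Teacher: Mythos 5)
Your argument is correct. The translation of weighted sharing into $p>q\geq l+1$, hence $p\geq l+2$, at every point counted in $\overline{N}_{L}(r,1;F)$ is exactly right, and the bookkeeping $(l+1)\,\overline{N}_{L}(r,1;F)\leq N(r,1;F)-\overline{N}(r,1;F)$ followed by $N(r,1;F)\leq T(r,F)+O(1)$ and the Second Fundamental Theorem for $F$ at $0,1,\infty$ (so that the $\overline{N}(r,1;F)$ terms cancel) legitimately yields $(l+1)\,\overline{N}_{L}(r,1;F)\leq \overline{N}(r,0;F)+\overline{N}(r,\infty;F)+S(r,F)$, which specialises to both claimed inequalities.

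However, your route is genuinely different from the paper's. The paper observes that a $1$-point counted in $\overline{N}_{L}(r,1;F)$ has multiplicity at least $3$ (resp. $2$) and is therefore a zero of $F'$ of order at least $2$ (resp. $1$) which is not a zero of $F$; this gives $\overline{N}_{L}(r,1;F)\leq \frac{1}{2}N(r,0;F'\mid F\neq 0)$ (resp. without the $\frac{1}{2}$), and then the standard estimate $N(r,0;F'\mid F\neq 0)\leq \overline{N}(r,\infty;F)+\overline{N}(r,0;F)+S(r,F)$, which rests only on the lemma on the logarithmic derivative applied to $F'/F$. So the paper's proof is lighter, needing only the logarithmic-derivative lemma rather than the full Second Fundamental Theorem, and it localises the multiplicity information in $F'$ rather than in the deficiency of the value $1$. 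Your proof buys a slightly sharper constant for free: for $l\geq 2$ it gives the factor $\frac{1}{l+1}$ rather than $\frac{1}{2}$ (the paper's method would also yield this if one tracked the order $p-1\geq l+1$ of the zero of $F'$, but as written it does not). Both arguments carry the same exceptional-set caveat hidden in $S(r,F)$, so there is no loss there.
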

\begin{proof} If $l\geq1$, then multiplicity of any 1-point of $F$ counted in $ \overline{N}_{L}(r,1;F)$ is at least $3$. Therefore $\overline{N}_{L}(r,1;F) \leq \frac{1}{2}\overline{N}(r,0;F'|F\neq 0) \leq \frac{1}{2}\overline{N}(r,\infty;F)+\frac{1}{2}\overline{N}(r,0;F)+S(r,F)$.
\vspace{.2 cm}
\\
Again if $l=0$, then multiplicity of any 1-point of $F$ counted in $ \overline{N}_{L}(r,1;F)$ is at least $2$. Thus $\overline{N}_{L}(r,1;F) \leq \overline{N}(r,0;F'|F\neq 0) \leq \overline{N}(r,\infty;F)+\overline{N}(r,0;F)+S(r,F)$.
\end{proof}
\begin{lem}\label{zll14}
Let $f$ be a non-constant meromorphic function and $a(z)$ be a small function of $f$. Also let $F=\frac{f^{n}}{a}$ and $G=\frac{M}{a}$. If  $F$ and $G$ share $(1,\infty)$ except the zeros and poles of $a(z)$ and
$$N_{2}(r,0;F)+N_{2}(r,0;G)+\ol{N}(r,\infty;F)+\ol{N}(r,\infty;G)+\ol{N}_{*}(r,\infty;F,G)<(\nu+o(1))\widetilde{T}(r),$$
where $\nu<1,~\widetilde{T}(r)=\max\{T(r,F),T(r,G)\}$ and $\widetilde{S}(r)=o(\widetilde{T}(r))$, $r\in I$, $I$ is a set of infinite linear measure of $r\in(0,\infty)$, then  $F\equiv G$ or $FG\equiv 1$.
\end{lem}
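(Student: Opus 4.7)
The plan is to introduce the auxiliary function $H$ as defined in equation (\ref{CHB}) with the present $F$ and $G$, and split the argument into the two cases $H\not\equiv 0$ and $H\equiv 0$. The weighted-sharing hypothesis $(1,\infty)$ (i.e., CM sharing of $1$ modulo zeros and poles of $a$) will ensure that every common $1$-point has equal multiplicities on both sides, so $\ol N_L(r,1;F)=\ol N_L(r,1;G)=\widetilde S(r)$ and the counting of $1$-points is comparatively clean.

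In the first case, assume $H\not\equiv0$. The poles of $H$ can only arise from multiple zeros of $F$ and $G$, from poles of $F,G$ with different multiplicities, and from the ramification points of $F^{'}, G^{'}$ that are not counted by $F(F-1), G(G-1)$; using the CM sharing of $1$, the contribution from $1$-points vanishes. This yields (as in Lemma \ref{zll11})
\[
N(r,\infty;H)\;\le\; N_2(r,0;F)+N_2(r,0;G)+\ol N(r,\infty;F)+\ol N(r,\infty;G)+\ol N_{*}(r,\infty;F,G)+\widetilde S(r).
\]
Applying the Second Fundamental Theorem to both $F$ and $G$ with the three target values $0,1,\infty$, and using the fact that every simple $1$-point of $F$ is a zero of $H$, one bounds $T(r,F)+T(r,G)$ by essentially twice the left-hand side of the hypothesis plus $\widetilde S(r)$. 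Dividing by $\widetilde T(r)$ and using the hypothesis $(<\nu<1)$ forces $\widetilde T(r)\le \widetilde S(r)$, a contradiction.

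In the second case, $H\equiv 0$. Integrating twice gives
\[
\frac{1}{F-1}\;\equiv\;\frac{A}{G-1}+B
\]
for some constants $A,B$ with $A\neq0$. I will analyze the three sub-cases $(B\neq0,\,A\neq B)$, $(B\neq 0,\,A=B)$, and $B=0$. In each sub-case, by reading off an exceptional value of $F$ or $G$ and invoking the Second Fundamental Theorem together with Lemma \ref{zll7}--\ref{zll9}, the quantities $\ol N(r,\infty;F)$, $N_2(r,0;F)$, $N_2(r,0;G)$ enter a lower bound for $\widetilde T(r)$ that, via the standing hypothesis with $\nu<1$, forces a contradiction except when $A=1,B=0$ (giving $F\equiv G$) or $A=B\neq 0$ with the additional collapse giving $FG\equiv 1$.

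The main obstacle is the cleanly organized case $(B\neq0,A=B)$, where one obtains the relation $(F-1-\tfrac{1}{B})G\equiv -\tfrac{1}{B}$; here one must argue that the resulting growth/deficiency structure of $F$ and $G$ is incompatible with the hypothesis unless $FG\equiv 1$. This is the only place where the interplay between $N_2(r,0;F)$, $N_2(r,0;G)$ and $\ol N(r,\infty;F)+\ol N(r,\infty;G)$ is delicate, because one must avoid losing the factor $\nu<1$ when shifting between $F$-quantities and $G$-quantities; the rest of the cases are routine once the framework is set up.
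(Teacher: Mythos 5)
Your proposal is correct and follows essentially the same route as the paper: the paper's own (very terse) proof establishes exactly the bound $N(r,H)\leq \ol N(r,0;F\mid\geq 2)+\ol N(r,0;G\mid\geq 2)+\ol N_{L}(r,\infty;F)+\ol N_{L}(r,\infty;G)+\ol N_{0}(r,0;F')+\ol N_{0}(r,0;G')+S(r,f)$ using the CM sharing of $1$ and the IM sharing of poles, and then defers the remaining Second Fundamental Theorem argument and the $H\equiv 0$ integration/case analysis to Lemma 2.13 of \cite{zl1.1}, which is precisely the two-case structure you describe. Your filled-in details (the cancellation of $1$-point contributions in $H$, the subcases of $\frac{1}{F-1}\equiv\frac{A}{G-1}+B$, and the identification of $FG\equiv1$ in the subcase $A=B=-1$) are the standard and correct way to complete that reference.
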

\begin{proof}
 Let $z_{0}$ be a pole of $f$ which is not a pole or zero of $a(z)$. Then $z_{0}$ is a pole of $F$ as well as $G$. Thus $F$ and $G$ share those pole of $f$ which is not zero or pole of $a(z)$. Thus
\beas N(r,H) &\leq& \ol{N}(r,0;F\geq2)+\ol{N}(r,0;G\geq2)+\ol{N}_{L}(r,\infty;F)+\ol{N}_{L}(r,\infty;G)\\
&+& \ol{N}_{0}(r,0;F')+\ol{N}_{0}(r,0;G')+S(r,f).\eeas
Rest part of the proof can be carried out in the line of proof of Lemma 2.13 of (\cite{zl1.1}).
\end{proof}
\section{Proof of the theorem}
\begin{proof}[\textbf{Proof of Theorem \ref{zlt1}}] Since $f^{n}$ and $M[f]$ share $(a,l)$, it follows that $F$ and $G$ share $(1,l)$ except the zeros and poles of $a(z)$. Now we consider the following cases:\\
\textbf{Case-1.} Let $H\not\equiv 0$.\\
\textbf{Subcase-1.1.} If $l\geq 1$, then using Second Main Theorem, Lemmas \ref{zll13} and \ref{zll11}, we get
\bea\label{zlt2} &&T(r,F)+T(r,G)\\
&\leq&\nonumber \overline{N}(r,\infty;F)+\overline{N}(r,\infty;G)+\overline{N}(r,0;F)+\overline{N}(r,0;G)+N(r,\infty;H)+\overline{N}^{(2}_{E}(r,1;F)\\
&& \nonumber +\overline{N}_{L}(r,1;F)+\overline{N}_{L}(r,1;G)+\overline{N}(r,1;G)-\overline{N}_{0}(r,0;F^{'})-\overline{N}_{0}(r,0;G^{'})+S(r,f)\\
&\leq &\nonumber 2\overline{N}(r,\infty;F)+\overline{N}(r,\infty;G)+N_{2}(r,0;F)+N_{2}(r,0;G)+\overline{N}^{(2}_{E}(r,1;F)\\
\nonumber&&  +2\overline{N}_{L}(r,1;F)+2\overline{N}_{L}(r,1;G)+\overline{N}(r,1;G) +S(r,f).\eea
\textbf{Subsubcase-1.1.1.} If $l=1$, then using Lemmas \ref{zll9} and \ref{zll12}, inequality (\ref{zlt2}) can be written as
\beas && T(r,F)+T(r,G)\\
& \leq& \frac{5}{2}\overline{N}(r,\infty;F)+\overline{N}(r,\infty;G)+\frac{1}{2}\overline{N}(r,0;F)+\mu_{2}N_{\mu_{2}^{*}}(r,0;f)+N_{2}(r,0;G)\\
&&+\overline{N}^{(2}_{E}(r,1;F)+\overline{N}_{L}(r,1;F)+2\overline{N}_{L}(r,1;G)+\overline{N}(r,1;G)+S(r,f)\\
& \leq& \frac{5}{2}\overline{N}(r,\infty;F)+\overline{N}(r,\infty;G)+\frac{1}{2}\overline{N}(r,0;F)+\mu_{2}N_{\mu_{2}^{*}}(r,0;f)+N_{2}(r,0;G)\\
&&+N(r,1;G)+S(r,f).\eeas
That is, for any $\varepsilon > 0$
\beas && n\;T(r,f)\\
&\leq& \big(\lambda+\frac{7}{2}\big)\overline{N}(r,\infty;f)+\frac{1}{2}\overline{N}(r,0;f)+\mu_{2}N_{\mu_{2}^{*}}(r,0;f)+d_{M}N_{2+k}(r,0;f)+S(r,f)\\
&\leq& \big\{\Gamma_{M}+\mu_{2}+4-(\lambda+\frac{7}{2})\Theta(\infty,f)-\frac{1}{2}\Theta(0,f)-\mu_{2}\delta_{\mu_{2}^{*}}(0,f)
-d_{M}\delta_{2+k}(0,f)\big\}T(r,f)\\
&&+\left(\varepsilon+o(1)\right)T(r,f),\eeas
which contradicts the condition (\ref{zle1.13}).
\\
\textbf{Subsubcase-1.1.2.} If $l\geq 2$, then with the help of Lemma \ref{zll9}, we can write (\ref{zlt2}) as
\beas && T(r,F)+T(r,G)\\
&\leq& 2\overline{N}(r,\infty;F)+\overline{N}(r,\infty;G)+\mu_{2}N_{\mu_{2}^{*}}(r,0;f)+N_{2}(r,0;G)+ N(r,1;G)+S(r,f).\eeas
i.e., for any $\varepsilon > 0$
\beas && n\;T(r,f)\\
&\leq & (\lambda+3)\overline{N}(r,\infty;f)+\mu_{2}N_{\mu_{2}^{*}}(r,0;f)+d_{M}N_{2+k}(r,0;f)+S(r,f)\\
& \leq & \big\{\Gamma_{M}+\mu_{2}+3-(\lambda+3)\Theta(\infty,f)-\mu_{2}\delta_{\mu_{2}^{*}}(0,f)-d_{M}\delta_{2+k}(0,f)\big\}\;T(r,f)\\
&& +\left(\varepsilon+o(1)\right)\;T(r,f),\eeas
which contradicts the condition (\ref{zle1.12}).\\
\textbf{Subcase-1.2.} If $l=0$, then using the Second Main Theorem and Lemmas \ref{zll13}, \ref{zll11}, \ref{zll12} and \ref{zll9}, we get
\bea \label{zlr11} && T(r,F)+T(r,G)\\
\nonumber &\leq& \overline{N}(r,\infty;F)+\overline{N}(r,0;F)+\overline{N}(r,1;F)+\overline{N}(r,\infty;G) +\overline{N}(r,0;G)\\
\nonumber && +\overline{N}(r,1;G)-\overline{N}_{0}(r,0;F')-\overline{N}_{0}(r,0;G')+S(r,F)+S(r,G)\\
\nonumber & \leq & \overline{N}(r,\infty;F)+\overline{N}(r,0;F)+\overline{N}(r,\infty;G)+\overline{N}(r,0;G)+N(r,\infty;H)+\overline{N}^{(2}_{E}(r,1;F)\\
\nonumber && +\overline{N}_{*}(r,1;F,G)+\overline{N}(r,1;G)-\overline{N}_{0}(r,0;F')-\overline{N}_{0}(r,0;G')+S(r,F)+S(r,G)\\
\nonumber & \leq& 2\overline{N}(r,\infty;F)+\overline{N}(r,\infty;G)+N_{2}(r,0;F)+N_{2}(r,0;G)+\overline{N}^{(2}_{E}(r,1;F)\\
\nonumber && +2\overline{N}_{L}(r,1;F)+2\overline{N}_{L}(r,1;G)+\overline{N}(r,1;G)+S(r,f) \\
\nonumber & \leq &
2\overline{N}(r,\infty;F)+2\overline{N}(r,\infty;G)+\mu_{2}N_{\mu_{2}^{*}}(r,0,f)+N_{2}(r,0;G)+\overline{N}(r,0;G)\\
\nonumber && +2\left(\overline{N}(r,\infty;F)+\overline{N}(r,0;F)\right)+\overline{N}^{(2}_{E}(r,1;F)+\overline{N}_{L}(r,1;G)+\overline{N}(r,1;G)+S(r,f)\\
\nonumber &\leq&  4\overline{N}(r,\infty;F)+\mu_{2}N_{\mu_{2}^{*}}(r,0,f)+N_{2}(r,0;G)+2\overline{N}(r,\infty;G)\\
\nonumber && +\overline{N}(r,0;G)+2\overline{N}(r,0;F)+N(r,1;G)+S(r,f).\eea
That is, for any $\varepsilon > 0$
\beas  n\;T(r,f) &\leq& (2\lambda+6)\overline{N}(r,\infty;f)+2\overline{N}(r,0;f)+\mu_{2}N_{\mu_{2}^{*}}(r,0,f)+d_{M}N_{1+k}(r,0;f)\\
\nonumber && +d_{M}N_{2+k}(r,0;f)+S(r,f)\\
& \leq& \{2\Gamma_{M}+\mu_{2}+8-(2\lambda+6)\Theta(\infty,f)-2\Theta(0,f)-\mu_{2}\delta_{\mu_{2}^{*}}(0,f)\\
&&-d_{M}\delta_{1+k}(0,f)-d_{M}\delta_{2+k}(0,f)\}\;T(r,f)+\left(\varepsilon + o(1) \right)\;T(r,f),\eeas
which contradicts the condition (\ref{zle1.14}).
\newpage
\textbf{Case-2.} Let $H\equiv 0$.\\
On Integration, we get,
\begin{equation}\label{zl3.3} \frac{1}{G-1}\equiv\frac{A}{F-1}+B,\end{equation}
 where $A(\neq 0)$, $B$ are complex constants.
Then $F$ and $G$ share $(1,\infty)$. Also by construction of $F$ and $G$, we see that $F$ and $G$ share $(\infty,0)$ also. \par
So using Lemma \ref{zll9} and condition (\ref{zle1.12}), we obtain
\beas && N_{2}(r,0;F)+N_{2}(r,0;G)+\ol{N}(r,\infty;F)+\ol{N}(r,\infty;G)+\ol{N}_{L}(r,\infty;F)+\ol{N}_{L}(r,\infty;G)\\
 &\leq& \mu_{2}N_{\mu_{2}^{*}}(r,0;f)+d_{M}N_{2+k}(r,0;f)+(\lambda+3)\overline{N}(r,\infty;f)+\widetilde{S}(r)\\
 &\leq&\{(3+\lambda+d_{M}+\mu_{2})-((\lambda+3)\Theta(\infty,f)+\delta_{\mu_{2}^{*}}(0,f)+d_{M}\delta_{2+k}(0,f))\}\;T(r,f)+\widetilde{S}(r)\\
 &<& T(r,F)+\widetilde{S}(r).\eeas
Thus in view of Lemma \ref{zll14} and Lemma \ref{zll10}, we get  $F\equiv G$; i.e., $f^{n} \equiv M[f]$. This proves the theorem.
\end{proof}
$~~$
\vspace{13 cm}
\\
------------------------------------------------
\\
\textbf{The matter of this chapter has been published in Ann. Univ. Paedagog. Crac. Stud. Math., Vol. 14 (2015), pp. 105-119.}
\newpage
\chapter{A new type of unique range set with deficient values}
\fancyhead[l]{Chapter 5}
\fancyhead[r]{A new type of unique range set with deficient values}
\fancyhead[c]{}
\section{Introduction}
In the introductory part, we have already noticed that Gross (\cite{am4}) was the pioneer of the concept of unique range sets. For the sake of brevity, here we reconsider the definition of unique range sets as a generalization of the definition of value sharing as follows:
\begin{defi} (\cite{padic})\label{rabi}
Let $f$ and $g$ be two non-constant meromorphic functions and $S\subset \mathbb{C}\cup\{\infty\}$, we define
$$E_{f}(S)=\bigcup\limits_{a \in S}\{(z,p) \in \mathbb{C}\times\mathbb{N}~ |~ f(z)=a ~with~ multiplicity~ p\},$$
$$\ol{E}_{f}(S)=\bigcup\limits_{a \in S}\{(z,1) \in \mathbb{C}\times\mathbb{N}~ |~ f(z)=a ~with~ multiplicity~ p\}.$$
If $E_{f}(S)=E_{g}(S)$ \big(resp. $\ol{E}_{f}(S)=\ol{E}_{g}(S)$\big), then it is said that $f$ and $g$ share the set $S$ counting multiplicities or in short CM (resp. ignoring multiplicities or in short IM).\par
Thus, if $S$ is singleton, then it coincides with the usual definition of value sharing.
\end{defi}
In 1977, Gross (\cite{am4}) proposed the following problem which has later became popular as \enquote{\emph{Gross Problem}}. The problem was as follows:\par
\emph{Does there exist a finite set $S$ such that any two non-entire functions $f$ and $g$ sharing the set $S$ must be $f=g$?}\par
In 1982, Gross-Yang (\cite{am5}) gave the affirmative answer to the bove question as follows:
\begin{theo 5.A} (\cite{am5}) Let $S=\{z\in \mathbb{C} : e^{z}+z=0\}$. If two entire functions $f$, $g$ satisfy  $E_{f}(S)=E_{g}(S)$, then $f\equiv g$.
\end{theo 5.A}
In that paper (\cite{am5}) they first introduced the terminology unique range set for entire function (in short URSE). Later the analogous definition for meromorphic functions was introduced in a similar fashion.
\begin{defi} (\cite{1234})
Let  $S\subset \mathbb{C}\cup\{\infty\} $ and $f$ and $g$ be two non-constant meromorphic (resp. entire) functions. If $E_{f}(S)=E_{g}(S)$ implies $f\equiv g$, then $S$ is called a unique range set for meromorphic (resp. entire) functions or in brief URSM (resp. URSE).
\end{defi}
In 1997, Yi (\cite{am15a}) introduced the analogous definition for unique range sets with ignoring multiplicities.
\begin{defi} (\cite{am15a})
A set $S\subset \mathbb{C}\cup\{\infty\}$ is said to be a unique range set for meromorphic (resp. entire) functions ignoring multiplicity or in short URSM-IM (resp. URSE-IM) or a reduced unique range set for meromorphic (resp. entire) functions or in short R-URSM (resp. R-URSE) if $\ol E_{f}(S)=\ol E_{g}(S)$ implies $f\equiv g$ for any pair of non-constant meromorphic (resp. entire) functions.
\end{defi}
During the last few years the notion of unique as well as reduced unique range sets have been generating an increasing interest among the researchers and naturally a new area of research have been developed under the aegis of uniqueness theory.\par
The prime concern of the researchers have been to find new unique range sets or to make the cardinalities of the existing range sets as small as possible imposing some restrictions on the deficiencies of the generating meromorphic functions. To see the remarkable progress in this regard one can make a glance to (\cite{am11}, \cite{am1e}, \cite{am3}, \cite{am9b}, \cite{bd5}, \cite{am12}, \cite{am14}, \cite{am15}, \cite{am15a}).\par
Till date the URSM with $11$ elements and R-URSM with $17$ elements are the smallest available URSM and R-URSM obtained by Frank-Reinders (\cite{am3}) and Bartels (\cite{am1e}) respectively. Similarly URSE with $7$ elements (\cite{am12}) and R-URSE with $10$ elements (see \cite{1234}, Theorem 10.76) are the smallest available URSE and R-URSE.\par
Also it is observed that a URSE must contain at least $5$ elements whereas a URSM must contain at least $6$ elements (see \cite{1234}, p. 517 and p. 527).\par
In 1995, Li-Yang (\cite{am12}) first elucidated the fact that the finite URSM's are nothing but the set of distinct zeros of some suitable polynomials and subsequently the study of the characteristics of these underlying polynomials should also be given utmost priority. Consequently they (\cite{am12}) introduced the following definition:
\begin{defi} (\cite{am12})
 A polynomial $P$ in $\mathbb{C}$ is called an uniqueness polynomial for meromorphic (resp. entire) functions or in short, UPM (resp. UPE), if for any two non-constant meromorphic (resp. entire) functions $f$ and $g$, $P(f)\equiv P(g)$ implies $f\equiv g$.
\end{defi}
Suppose that $P$ is a polynomial of degree $n$ in $\mathbb{C}$ having only simple zeros and $S$ be the set of all zeros of $P$. If $S$ is a URSM (resp. URSE), then from the definition it follows that $P$ is UPM (resp. UPE). However the converse is not, in general, true.
\begin{exm} Suppose that $f(z)=-\frac{b}{a}e^{z}$ and $g(z)=-\frac{b}{a}e^{-z}$. Further suppopse that $S=\{z~\mid~P(z)=0\}$, where $P(z)=az+b$ ($a\not =0$). Then clearly the polynomial $P$ is a UPM but $E_{f}(S)=E_{g}(S)$.
\end{exm}
In 2000, Fujimoto (\cite{am2}) first discovered a special property of a polynomial, which was called by Fujimoto himself as \enquote{poperty H}.\par
A monic polynomial $P(z)$ without multiple zero is said to be satisfy \enquote{poperty H} if $P(\alpha )\not =P(\beta )$ for any two distinct zeros $\alpha $, $\beta $ of the derivative $P'(z)$.\par
In 2003, Fujimoto (\cite{am2a}) obtained the following characterization of a critically injective polynomial to be a uniqueness polynomial.
\begin{theo 5.B}\label{1q1q} (\cite{am2a}) Suppose $P(z)$ is a polynomial satisfying \enquote{poperty H}. Further suppose
that $d_{1}, d_{2}, \ldots, d_{k}$ are the distinct zeros of $P'$ with respective multiplicities $q_{1}, q_{2}, \ldots, q_{k}$. Then $P(z)$ will be a uniqueness polynomial if and only if the following inequality holds:
\bea\label{19012017} \sum \limits_{1\leq l<m\leq k}q_{_{l}}q_{m}>\sum \limits_{l=1}^{k} q_{_{l}}.\eea
In particular, the inequality (\ref{19012017}) is always satisfied whenever $k\geq 4$. Also when $k=3$ and $\max \{q_{1},q_{2},q_{3}\}\geq 2$ or when $k=2$, $\min \{q_{1},q_{2}\}\geq 2$ and $q_{1}+q_{2}\geq 5$, then also the inequality (\ref{19012017}) holds.
\end{theo 5.B}
In continuation to the definition \ref{rabi}, we would like to present the definition of weighted set sharing in a different angle as that was already provided in introduction part.
\begin{defi}
Let $f$ and $g$ be two non-constant meromorphic functions and set $S\subset \mathbb{C}\cup\{\infty\}$. For $l\in\mathbb{N}\cup\{0\}\cup\{\infty\}$, we define
$$E_{f}(S,l)=\bigcup\limits_{a \in S}\{(z,t) \in \mathbb{C}\times\mathbb{N}~ |~ f(z)=a ~with~ multiplicity~ p\},$$
where $t=p$ if $p\leq l$ and $t=p+1$ if $p>l$.\par
If $E_{f}(S,l)=E_{g}(S,l)$, then it is said that $f$ and $g$ share the set $S$ with weight $l$.
\end{defi}
The main intention of this chapter is to introduce a new type of unique range set for meromorphic function which improve all the previous results in this aspect specially those of (\cite{am1c}) and (\cite{am1d}) by removing the \enquote{max} conditions in deficiencies.
\section{Main Result}
Henceforth for two positive integers $n$ and $m$, we shall denote by $P_{\ast}(z)$ the following polynomial:
\beas P_{\ast}(z)=\displaystyle\sum_{i=0}^{m}\hspace{.05in}\binom{m}{i}\frac{(-1)^{i}}{n+m+1-i} z^{n+m+1-i}+1=Q(z)+1,
\eeas
where $Q(z)=\displaystyle\sum_{i=0}^{m}\hspace{.05in}\binom{m}{i}\frac{(-1)^{i}}{n+m+1-i} z^{n+m+1-i}$ and $P_{\ast}^{'}(z)=z^{n}(z-1)^{m}$.
\begin{theo}\label{amt1.1} Let $n(\geq 3)$, $m(\geq 3)$ be two positive integers. Suppose that $S_{\ast}=\{z: P_{\ast}(z)=0\}$. Let $f$ and $g$ be two non-constant meromorphic functions such that $E_{f}(S_{\ast},l)=E_{g}(S_{\ast},l)$.
Now if one of the following conditions holds:
\begin{enumerate}
\item[(a)] $l\geq 2$  and $\Theta _{f}+\Theta _{g}>\left(9-(n+m)\right),$
\item[(b)] $l=1$ and  $\Theta _{f}+\Theta _{g}>\left(10-(n+m)\right),$
\item[(c)] $l=0$ and  $\Theta _{f}^{*}+\Theta _{g}^{*}>\left(15-(n+m)\right),$
\end{enumerate}
then $f\equiv g$, where $$\Theta _{f}=2\Theta (0,f)+2\Theta (\infty,f)+\Theta (1,f)+\frac{1}{2}\min\{\Theta (1;f),\Theta (1;g)\}$$ and $$\Theta _{f}^{*}=3\Theta (0,f)+3\Theta (\infty,f)+\Theta (1,f)+\frac{1}{2}\min\{\Theta (1;f),\Theta (1;g)\}.$$ Similarly $\Theta _{g}$ and $\Theta _{g}^{*}$ are defined.
\end{theo}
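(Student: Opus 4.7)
The plan is to reduce the set-sharing condition to a sharing-of-$1$ situation by introducing the auxiliary functions $F := -Q(f)$ and $G := -Q(g)$. Because $F-1 = -(Q(f)+1) = -P_{\ast}(f)$, the zeros of $F-1$ agree with the zeros of $P_{\ast}(f)$ with the same multiplicities, so the hypothesis $E_{f}(S_{\ast},l)=E_{g}(S_{\ast},l)$ amounts to $F$ and $G$ sharing the value $1$ with weight $l$. Mokhon'ko's lemma gives $T(r,F)=(n+m+1)T(r,f)+O(1)$; poles of $F$ correspond bijectively (with multiplicity multiplied by $n+m+1$) to poles of $f$; and the crucial identity $F'=-f^{n}(f-1)^{m}f'$ allows me to control $\overline N(r,0;F'\mid F\not=0)$ and $\overline N_{0}(r,0;F')$ by $\overline N(r,0;f)$, $\overline N(r,1;f)$ and $\overline N(r,\infty;f)$ up to $S(r,f)$, which is precisely how $\Theta(0,f)$, $\Theta(1,f)$, $\Theta(\infty,f)$ will enter the final inequality.

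With this setup I would apply the standard Lahiri-type machinery through the function $H$ defined in equation (\ref{CHB}) of Chapter~2 and split into two cases. If $H\not\equiv 0$, a Second Fundamental Theorem argument of the type already used in Lemma~2.3.1 (adapted for sharing of $1$ with weight $l$) produces
\begin{eqnarray*}
T(r,F)+T(r,G) &\leq& 2\overline N(r,\infty;F)+2\overline N(r,\infty;G)+N_{2}(r,0;F)+N_{2}(r,0;G)\\
&& + \overline N_{L}(r,1;F)+\overline N_{L}(r,1;G)+(\text{lower-order terms})+S(r,f)+S(r,g).
\end{eqnarray*}
Translating each $N$-function back to $f$ and $g$ via the relations above, and using $T(r,F)+T(r,G)=(n+m+1)(T(r,f)+T(r,g))+O(1)$, this becomes an inequality of the form $(n+m)(T(r,f)+T(r,g))\leq (\text{coefficient involving }\Theta\text{'s})(T(r,f)+T(r,g))+S(r,f)+S(r,g)$. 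The thresholds $9$, $10$, $15$ in (a), (b), (c) will emerge from the three different estimates of $\overline N_{L}(r,1;F)+\overline N_{L}(r,1;G)$: when $l\geq 2$ both terms are absorbed; when $l=1$ the factor $\tfrac{1}{2}$ appears, explaining the $\tfrac{1}{2}\min\{\Theta(1,f),\Theta(1,g)\}$ correction; when $l=0$ the coefficients of $\Theta(0,\cdot)$ and $\Theta(\infty,\cdot)$ must jump from $2$ to $3$, which is exactly the difference between $\Theta_{f}$ and $\Theta_{f}^{\ast}$.

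If $H\equiv 0$, integrating twice yields $\frac{1}{F-1}\equiv \frac{A}{G-1}+B$ for constants $A\not=0$, $B$. The decisive subcase is $B=0$ and $A=1$: here $F\equiv G$ forces $P_{\ast}(f)\equiv P_{\ast}(g)$, and since $P_{\ast}(0)=1\not=1+(-1)^{m}\frac{n!m!}{(n+m+1)!}=P_{\ast}(1)$, the polynomial $P_{\ast}$ satisfies Fujimoto's Property~H, and since $nm>n+m$ is automatic for $n,m\geq 3$, Theorem~5.B makes $P_{\ast}$ a uniqueness polynomial, yielding $f\equiv g$. In the remaining subcases ($B=0$, $A\not=1$, or $B\not=0$) the Möbius relation between $F$ and $G$ forces $f$ to avoid, or be heavily ramified over, the preimages under $P_{\ast}$ of one or more additional constants; applying the Second Fundamental Theorem to $f$ at $0,1,\infty$ together with these extra exceptional values again contradicts whichever of (a), (b), (c) is in force. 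The principal technical obstacle is the $l=0$ branch of the case $H\not\equiv 0$: there the weaker estimate $\overline N_{L}(r,1;F)\leq \overline N(r,\infty;F)+\overline N(r,0;F)+S(r,f)$ (in place of the sharper $\tfrac{1}{2}$-bound available for $l\geq 1$) is what raises the threshold from $9$ to $15$, and any imprecision in converting $\overline N(r,0;F)$ into counting functions of $f$ at $0$ and at the zeros of $Q$ would distort the exact coefficients of $\Theta_{f}^{\ast}$ and invalidate the result.
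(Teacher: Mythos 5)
Your reduction (setting $F-1=-P_{\ast}(f)$ so that $F$ and $G$ share $1$ with weight $l$), and your treatment of the case $H\equiv 0$ (the M\"obius relation, elimination of the degenerate subcases by the Second Fundamental Theorem, and the final appeal to Fujimoto's criterion via $P_{\ast}(0)\neq P_{\ast}(1)$ and $nm>n+m$) match the paper's argument up to the affine normalization $F\mapsto 1-F$; a small remark is that the paper rules out the $H\equiv 0$ subcases unconditionally for $n,m\geq 3$, without invoking (a)--(c) as you suggest. The genuine gap is in the case $H\not\equiv 0$. The main inequality you propose is the standard value-sharing estimate $T(r,F)+T(r,G)\leq 2\overline N(r,\infty;F)+2\overline N(r,\infty;G)+N_{2}(r,0;F)+N_{2}(r,0;G)+\cdots$, i.e.\ the Br\"uck-type lemma of Chapter 2. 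With your $F=-Q(f)$ the term $N_{2}(r,0;F)$ counts the preimages under $f$ of \emph{all} zeros of $Q$; writing $Q(z)=z^{n+1}R(z)$ with $\deg R=m$, the $m$ simple roots $\beta_{j}$ of $R$ contribute roughly $2\sum_{j}\overline N(r,\beta_{j};f)$, of order $2m\,T(r,f)$, a quantity that cannot be expressed through $\Theta(0,f)$, $\Theta(1,f)$, $\Theta(\infty,f)$ and that swallows the gain from $T(r,F)=(n+m+1)T(r,f)$. At best this route yields a condition of the shape $n>m+\mathrm{const}$, asymmetric in $n$ and $m$ and far from the stated symmetric thresholds $9-(n+m)$, $10-(n+m)$, $15-(n+m)$.

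What actually delivers the theorem is a structurally different application of the Second Fundamental Theorem: apply it to $f$ itself over the $n+m+4$ targets $0,1,\infty,\alpha_{1},\dots,\alpha_{n+m+1}$ (the $\alpha_{j}$ being the zeros of $P_{\ast}$), which puts $(n+m+2)T(r,f)$ on the left and $\sum_{j}\overline N(r,\alpha_{j};f)=\overline N(r,0;P_{\ast}(f))$ on the right. The latter splits into simple zeros, controlled by $N(r,\infty;H)$, and multiple zeros; since $P_{\ast}$ has only simple zeros and $P_{\ast}'(z)=z^{n}(z-1)^{m}$, every multiple zero of $P_{\ast}(f)$ is a zero of $f'$, so via the Lahiri--Dewan estimate $N(r,0;f'\mid f\neq 0)\leq\overline N(r,0;f)+\overline N(r,\infty;f)+S(r,f)$ everything collapses onto counting functions of $f$ and $g$ at $0$, $1$, $\infty$ only. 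You do list the identity $F'=-f^{n}(f-1)^{m}f'$ as an ingredient, but the inequality you then write down never exploits it in this way, and it is precisely this step that produces the exact coefficients in $\Theta_{f}$ and $\Theta_{f}^{\ast}$ and the thresholds $9$, $10$, $15$.
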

Before going to state some necessary lemmas, we explain some notations which will be needful in this sequel.\par
For $a\in\mathbb{C}\cup\{\infty\}$, we denote by $N(r,a;f\mid=1)$ the counting function of simple $a$-points of $f$. Thus if $f$ and $g$ share $(a,m)$, $m\geq 1$, then $N^{1)}_{E}(r,a;f)=N(r,a;f\mid=1)$.\par
For a positive integer $m$, we denote by $N(r,a;f\mid\leq m)$ (resp. $N(r,a;f\mid\geq m$) by the counting function of those $a$-points of $f$ whose multiplicities are not greater (resp. less) than $m$ where each $a$-point is counted according to its multiplicity. Also $\ol N(r,a;f\mid\leq m)$ and $\ol N(r,a;f\mid\geq m)$ are the reduced counting function of $N(r,a;f\mid\leq m)$ and $N(r,a;f\mid\geq m)$ respectively.\par
Analogously, one can define $N(r,a;f\mid <m), N(r,a;f\mid >m), \ol N(r,a;f\mid <m)$ and $\ol N(r,a;f\mid >m)$.\par
We denote by $\ol N(r,a;f\mid=k)$ the reduced counting function of those $a$-points of $f$ whose multiplicities is exactly $k$, where $k\geq 2$ is an integer.
\section{Lemmas}
Let, unless otherwise stated $F$ and $G$ be two non-constant meromorphic functions given by $F=P_{\ast}(f)$ and $G=P_{\ast}(g)$ and $H$ is defined by the equation (\ref{CHB}). Also we denote by $T_{\ast}(r)=\max\{T(r,f),~T(r,g)\}$ and $S_{\ast}(r)=o(T_{\ast}(r))$.
\begin{lem}\label{aml2} If $F$ and $G$ are two non-constant meromorphic functions such that they share $(0,0)$ and $H\not\equiv 0$, then $$N^{1)}_{E}(r,0;F\mid=1)=N^{1)}_{E}(r,0;G\mid=1)\leq N(r,\infty;H)+S(r,f)+S(r,g).$$
\end{lem}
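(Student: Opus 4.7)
The plan is to show that every common simple point of $F$ and $G$ at the shared value becomes a zero of $H$, and then to control $N(r,0;H)$ by $N(r,\infty;H)$ via standard Nevanlinna machinery. So I would start by picking a point $z_{0}$ which is a simple common zero of $F$ and $G$ (equivalently a simple common $1$-point of $P_{*}(f)-1$ and $P_{*}(g)-1$, i.e.\ the situation for which $H$ in (\ref{CHB}) is tailored). Expanding $F$ and $G$ in local Taylor series around $z_{0}$ and substituting into each of the two bracketed expressions of $H$, a direct computation shows that the simple-pole contributions $-\tfrac{2}{z-z_{0}}$ arising from $\tfrac{2F'}{F-1}$ and $\tfrac{2G'}{G-1}$ cancel against themselves, and the finite parts coming from $\tfrac{F''}{F'}$ and $\tfrac{G''}{G'}$ also cancel against the finite parts of $\tfrac{2F'}{F-1}$ and $\tfrac{2G'}{G-1}$. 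Hence $H(z_{0})=0$. Since $F$ and $G$ share the value in question IM, each such simple common point contributes at least one zero of $H$, so
\[
N^{1)}_{E}(r,0;F\mid=1) \;\leq\; N(r,0;H).
\]

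Next, using the assumption $H\not\equiv 0$, I would apply the First Fundamental Theorem to $1/H$:
\[
N(r,0;H) \;\leq\; T\!\left(r,\tfrac{1}{H}\right)+O(1) \;=\; T(r,H)+O(1) \;=\; m(r,H)+N(r,\infty;H)+O(1).
\]
To control $m(r,H)$ I would split $H$ into its four logarithmic-derivative pieces $F''/F'$, $F'/(F-1)$, $G''/G'$, $G'/(G-1)$ and apply the Lemma on the Logarithmic Derivative (Corollary \ref{ghgh}) to each. Since $F=P_{*}(f)$ and $G=P_{*}(g)$, Lemma \ref{CHBl2.7} gives $S(r,F)=S(r,f)$ and $S(r,G)=S(r,g)$, and therefore $m(r,H)=S(r,f)+S(r,g)$. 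Chaining these two estimates produces the claimed bound
\[
N^{1)}_{E}(r,0;F\mid=1) \;\leq\; N(r,\infty;H)+S(r,f)+S(r,g).
\]

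The only nontrivial step is the opening local computation that verifies $H$ vanishes at each common simple point; once that cancellation is checked, the remainder is a routine invocation of the First Fundamental Theorem together with the Lemma on the Logarithmic Derivative. A small care-point is to keep track of the fact that the equality $N^{1)}_{E}(r,0;F\mid=1)=N^{1)}_{E}(r,0;G\mid=1)$ is an immediate tautology from the definition (the common simple points are simultaneously counted for $F$ and for $G$), so that part needs no separate argument.
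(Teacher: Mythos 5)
Your proposal matches the paper's proof essentially verbatim: the paper likewise verifies by local (Laurent) expansion that each simple common zero of $F$ and $G$ is a zero of $H$, bounds $N(r,0;H)$ by $T(r,H)+O(1)=N(r,\infty;H)+m(r,H)+O(1)$, and disposes of $m(r,H)$ via the Lemma on the Logarithmic Derivative (Corollary \ref{ghgh}) together with $S(r,F)=S(r,f)$, $S(r,G)=S(r,g)$. The only cosmetic point is that in this chapter $F=P_{\ast}(f)$ and $G=P_{\ast}(g)$ share the value $0$, so the relevant form of $H$ uses $2F'/F$ and $2G'/G$ rather than the $F-1$, $G-1$ normalization written in (\ref{CHB}); your cancellation computation is unaffected by this relabelling.
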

\begin{proof} In view of Corollary \ref{ghgh}, we can write $m(r,H)=S(r,f)+S(r,g).$\par
Now by Laurent expansion of $H$, we can easily verify that each simple zero of $F$ (and so of $G$) is a zero of $H$. Hence
\beas N^{1)}_{E}(r,0;F\mid=1)=N^{1)}_{E}(r,0;G\mid=1)&\leq& N(r,0;H)\nonumber\\&\leq& T(r,H)+O(1)\nonumber\\&=& N(r,\infty;H)+S(r,f)+S(r,g).\nonumber\eeas
This proves the lemma.
\end{proof}
\begin{lem}\label{aml3} Let $S_{\ast}$ be the set of zeros of $P_{\ast}$. If for two non-constant meromorphic functions $f$ and $g$, $E_{f}(S_{\ast},0)=E_{g}(S_{\ast},0)$ and $H\not\equiv 0$, then \beas \ol N(r,\infty;H)&\leq& \ol N(r,0;f)+\ol N(r,1;f)+\ol N(r,0;g)+\ol N(r,1;g)+\ol N_{*}(r,0;F,G)\\& &+\ol N(r,\infty;f)+\ol N(r,\infty;g)+\ol N_{0}(r,0;f^{'})+\ol N_{0}(r,0;g^{'}),\eeas
where by $\ol N_{0}(r,0;f^{'})$, we mean the reduced counting function of those zeros of $f^{'}$ which are not the zeros of $Ff(f-1)$ and $\ol N_{0}(r,0;g^{'})$ is similarly defined.
\end{lem}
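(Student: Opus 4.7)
The plan is to locate the possible poles of $H$ via the local Laurent expansions of its two summands $\phi_F := F''/F' - 2F'/(F-1)$ and $\phi_G := G''/G' - 2G'/(G-1)$. A direct computation at zeros of $F-1$, poles of $F$, multiple zeros of $F$, and the remaining zeros of $F'$ shows that $\phi_F$ (and symmetrically $\phi_G$) has only simple poles, so $H$ has only simple poles and consequently $\ol N(r,\infty;H)=N(r,\infty;H)$.

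Next I would enumerate the possible poles of $\phi_F$ using the two key structural facts about $P_*$: the factorization $P_*'(z)=z^n(z-1)^m$, and the normalization $P_*(0)=1$ which forces $F-1=Q(f)$ to vanish to order $(n+1)s$ whenever $f$ has a zero of order $s$. Together these confine the singular set of $\phi_F$ to the union of the following locations: (a) zeros of $f$, where $F-1$ has a high-order zero and $F'$ has a matching high-order zero; (b) $1$-points of $f$, where $F'$ has a high-order zero while $F-1$ stays finite nonzero; (c) poles of $f$, where $F$ has a pole; (d) zeros of $F$ of multiplicity $\ge 2$; and (e) the remaining zeros of $f'$ away from (a)--(d). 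An identical list applies to $\phi_G$ with $g$ replacing $f$.

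When forming $H=\phi_F-\phi_G$, the only cancellation forced by the hypothesis happens at common zeros of $F$ and $G$ (type (d)), since $E_f(S_*,0)=E_g(S_*,0)$ makes $F$ and $G$ share $0$ IM. At a common zero of multiplicities $s$ and $t$, the local residues of $\phi_F$ and $\phi_G$ are $s-1$ and $t-1$, so the residue of $H$ is $s-t$, which vanishes precisely when $s=t$. Hence (d) contributes only $\ol N_{*}(r,0;F,G)$ to $\ol N(r,\infty;H)$. Since the hypothesis gives no sharing information at other values, at points of types (a)--(c) and (e) I cannot guarantee cancellation, and I would simply majorize the contributions by $\ol N(r,0;f)+\ol N(r,1;f)+\ol N(r,\infty;f)+\ol N_0(r,0;f')$ and the symmetric terms for $g$.

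The main technical obstacle is the enumeration step: one must carefully rule out, via the local Laurent expansion and the special form of $P_*'$, the existence of any pole of $\phi_F$ outside the listed locations, and verify the residue computation in each case (especially to confirm that simple zeros of $F$ contribute nothing and that $1$-points of $F$ cannot occur at generic points of $f$). Once this enumeration is in place, summing the contributions from the two summands and accounting for the single guaranteed cancellation at common zeros yields exactly the stated inequality.
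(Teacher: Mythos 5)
Your overall strategy --- enumerate the possible poles of the two summands of $H$ by local Laurent/residue analysis, then use the IM sharing of the value $0$ to cancel the contributions at common zeros of $F$ and $G$ of equal multiplicity --- is exactly the (tersely stated) argument of the paper. But there is a concrete gap in your execution, caused by taking the cross-reference to the definition of $H$ literally. You work with $\phi_F=F''/F'-2F'/(F-1)$. Then $2F'/(F-1)$ has a pole at \emph{every} zero of $F-1=Q(f)$, and $Q(z)=z^{n+1}\widetilde R(z)$ with $\deg\widetilde R=m\geq 3$ and $\widetilde R(0)\neq 0$, so $Q$ has nonzero roots $\rho\neq 0,1$. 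At a simple $\rho$-point of $f$ one has $F'=f^{n}(f-1)^{m}f'\neq 0$ while $F-1$ has a simple zero, so $\phi_F$ has a pole with residue $-2$ there, and nothing forces $G-1$ to vanish at the same point. These poles are in none of your classes (a)--(e) and are not majorized by any term on the right-hand side, so your claim that (a)--(e) exhausts the singular set is false, and indeed the stated inequality fails for that choice of $H$.

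The resolution is that in this chapter $H$ must be read as $\frac{F''}{F'}-\frac{2F'}{F}-\frac{G''}{G'}+\frac{2G'}{G}$, i.e.\ the auxiliary function adapted to the shared value $0$ rather than $1$; this is what Lemma 5.3.1 (simple zeros of $F$ are zeros of $H$) and the integration step $\frac1F\equiv\frac{A}{G}+B$ in the proof of Theorem 5.2.1 presuppose. With that $H$ your bookkeeping goes through essentially verbatim: zeros and $1$-points of $f$ are poles of $F''/F'$ because $F'=f^{n}(f-1)^{m}f'$ while $F=P_{\ast}(0)=1$ resp.\ $F=P_{\ast}(1)\neq 0$ there; poles of $f$ give residue $(n+m+1)p-1\neq 0$; the leftover zeros of $f'$ give $\ol N_{0}(r,0;f')$; and at a common zero of $F,G$ of orders $s,t$ the residues are $-(s+1)$ and $-(t+1)$ (not $s-1$ and $t-1$ as you wrote, though the conclusion --- cancellation precisely when $s=t$, hence a contribution of $\ol N_{*}(r,0;F,G)$ --- is unaffected). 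After this correction your proof coincides with the paper's.
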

\begin{proof} Since $E_{f}(S_{\ast},0)=E_{g}(S_{\ast},0)$, it follows that $F$ and $G$ share $(0,0)$. Also we observe that $F^{'}=f^{n}(f-1)^{m}f^{'}$.
It can be easily verified that possible poles of $H$ occur at (i) poles of $f$ and $g$, (ii) those $0$-points of $F$ and $G$ with different multiplicities, (iii) zeros of $f^{'}$ which are not the zeros of $Ff(f-1)$, (iv) zeros of $g^{'}$ which are not zeros of $Gg(g-1)$, (v) $0$ and $1$ points of $f$ and $g$.
Since $H$ has only simple poles, the lemma follows from above. This proves the lemma.
\end{proof}
\begin{lem}\label{aml4} $Q(1)$ is not an integer. In particular, $P_{\ast}(1)\not =-1$, where $n(\geq 3)$ and $m(\geq 3)$ are integers.
\end{lem}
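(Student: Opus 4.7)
The plan is to exploit the given fact that $P_{\ast}'(z)=z^{n}(z-1)^{m}$, which identifies $P_{\ast}$ as the antiderivative of $t^{n}(t-1)^{m}$ normalized so that $P_{\ast}(0)=1$. Since $Q=P_{\ast}-1$ and $Q(0)=0$, the Fundamental Theorem of Calculus gives
\[
Q(1) \;=\; Q(1)-Q(0) \;=\; \int_{0}^{1} t^{n}(t-1)^{m}\,dt.
\]

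Next I would evaluate this integral via the Beta function. Substituting $(t-1)^{m}=(-1)^{m}(1-t)^{m}$ gives
\[
Q(1) \;=\; (-1)^{m}\int_{0}^{1} t^{n}(1-t)^{m}\,dt \;=\; (-1)^{m}B(n+1,m+1) \;=\; (-1)^{m}\,\frac{n!\,m!}{(n+m+1)!}.
\]

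Finally, since $n\geq 3$ and $m\geq 3$, we have $n+m+1\geq 7$, so
\[
0 \;<\; \frac{n!\,m!}{(n+m+1)!} \;=\; \frac{1}{(m+1)\binom{n+m+1}{n}} \;\leq\; \frac{1}{4\cdot 35}\;<\;1.
\]
Hence $|Q(1)|\in(0,1)$, so $Q(1)$ cannot be an integer; in particular $Q(1)\neq -2$, which gives $P_{\ast}(1)=Q(1)+1\neq -1$.

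There is no real obstacle here: once one recognizes $Q(1)$ as a Beta integral the bound $|Q(1)|<1$ is immediate from $n,m\geq 3$. The only point worth checking carefully is the normalization constant $C=1$ arising from integrating $P_{\ast}'$, which matches the definition $P_{\ast}=Q+1$ given in the paper.
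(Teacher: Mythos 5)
Your proof is correct, and it reaches the paper's key identity by a genuinely different route. The paper proves
\[
Q(1)=\sum_{i=0}^{m}\binom{m}{i}\frac{(-1)^{i}}{n+m+1-i}=\frac{(-1)^{m}\,m!}{(n+m+1)(n+m)\cdots(n+1)}
\]
by induction on $m$ (using the recursion $S_{n}(k+1)=S_{n+1}(k)-S_{n}(k)$), and then concludes non-integrality by observing that the right-hand side is a product of fractions of absolute value less than one. You instead observe that $Q$ is the antiderivative of $P_{\ast}'(z)=z^{n}(z-1)^{m}$ with $Q(0)=0$, so $Q(1)=\int_{0}^{1}t^{n}(t-1)^{m}\,dt=(-1)^{m}B(n+1,m+1)=(-1)^{m}\frac{n!\,m!}{(n+m+1)!}$, which is the same closed form; the final step, $0<\frac{n!\,m!}{(n+m+1)!}<1$, is the same quantitative observation the paper makes. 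Your derivation is shorter and arguably more transparent — it replaces a somewhat fiddly induction with a one-line Beta-integral evaluation, and it makes clear that the identity has nothing to do with the hypotheses $n,m\geq 3$ (those are only needed, and in fact far more than needed, for the bound $|Q(1)|<1$, which already holds for all $n,m\geq 0$). The paper's induction is more elementary in that it avoids invoking the Beta function, but since the statement $\int_{0}^{1}t^{n}(1-t)^{m}dt=\frac{n!\,m!}{(n+m+1)!}$ can itself be proved by repeated integration by parts, nothing essential is lost. Your closing remark about the normalization constant is moot since you work with $Q$ directly rather than $P_{\ast}$, but that does no harm.
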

\begin{proof} We claim that \beas S_{n}(m)&=& \sum\limits_{i=0}^{m}\binom{m}{i} \frac{(-1)^i}{n+m+1-i}\\&=&\frac{(-1)^m m!}{(n+m+1)(n+m)\ldots(n+1)}.\eeas
We prove the claim by method of induction on $m$. \par
At first for $m=3$, we get \beas S_{n}(3)&=&\frac{1}{n+4}-\frac{3}{n+3}+\frac{3}{n+2}-\frac{1}{n+1}\\&=&\frac{(-1)^3\cdot 3!}{(n+4)(n+3)(n+2)(n+1)}.\eeas
So, $S_{n}(m)$ is true for $m=3$. Now we assume that $S_{n}(m)$ is true for $m=k$, where $k$ is any given positive integer such that $k\geq3$. Now we will show that $S_{n}(m)$ is true for $m=k+1$.
i.e., \beas {\frac{\binom{k+1}{0}}{n+k+2}} - {\frac{\binom{k+1}{1}}{n+k+1}} +\ldots + (-1)^{k+1}\frac{\binom{k+1}{k+1}}{n+1}=\frac{(-1)^{(k+1)} {(k+1)!}}{(n+k+2)(n+k+1)\ldots(n+1)}.\eeas
Using induction hypothesis, we have \beas && S_{n}(k+1)\\
&=&\frac{1}{n+k+2} - \frac{k+1}{n+k+1} + \frac{(k+1)k}{2(n+k)}- \ldots + \frac{(-1)^{k+1}}{n+1}\\
&=&\left[\frac{1}{n+k+2} - \frac{k}{n+k+1} + \frac{k(k-1)}{2(n+k)} - \ldots+ \frac{(-1)^{k}}{n+2}\right] \\
&&-\left[\frac{1}{n+k+1} - \frac{2k}{2(n+k)} + \frac{3k(k-1)}{2.3(n+k-1)} - \ldots+ \frac{(-1)^{k}}{n+1}\right]\\
&=& \bigg[\frac{\binom{k}{0}}{(n+1)+k+1} - \frac{\binom{k}{1}}{(n+1)+k} + \frac{\binom{k}{2}}{(n+1)+k-1} - \ldots + (-1)^{k}\frac{\binom{k}{k}}{(n+1)+1}\bigg]\\
&&-\left[\frac{\binom{k}{0}}{n+k+1} - \frac{\binom{k}{1}}{(n+k)} + \frac{\binom{k}{2}}{(n+k-1)} - \ldots + (-1)^{k}\frac{\binom{k}{k}}{n+1}\right]\\
&=& S_{n+1}(k)-S_{n}(k)\\
&=&\frac{(-1)^{k} k!}{(n+k+2)(n+k+1)\ldots(n+2)}-\frac{(-1)^{k} k!}{(n+k+1)(n+k)\ldots(n+1)}\\
&=& \frac{(-1)^{(k+1)} (k+1)!}{(n+k+2)(n+k+1)\ldots(n+1)}.\eeas
So our claim has been established. We note that $S_{n}(m)=(-1)^{m}\prod\limits_{i=1}^{m}\frac{i}{(n+i)}\frac{1}{(n+m+1)}$ and hence it can not be an integer.
In particular, we have proved that $Q(1)\not =-2$; i.e., $P_{\ast}(1)\not=-1$.
\end{proof}
\begin{lem}\label{aml6} (\cite{br7a}) If $N(r,0;f^{(k)}\mid f\not=0)$ denotes the counting function of those zeros of $f^{(k)}$ which are not the zeros of $f$, where a zero of $f^{(k)}$ is counted according to its multiplicity, then $$N(r,0;f^{(k)}\mid f\not=0)\leq k\ol N(r,\infty;f)+N(r,0;f\mid <k)+k\ol N(r,0;f\mid\geq k)+S(r,f).$$
\end{lem}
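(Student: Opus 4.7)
The plan is to rewrite the counting function of interest in terms of the logarithmic-type quotient $f^{(k)}/f$, and then to combine the First Fundamental Theorem with the Lemma on the Logarithmic Derivative. The starting observation is that every zero of $f^{(k)}$ which is \emph{not} a zero of $f$ is automatically a zero of the meromorphic function $\varphi := f^{(k)}/f$, counted with the same multiplicity. Hence
\[
N\!\left(r,0;f^{(k)}\mid f\neq 0\right)\;\leq\; N(r,0;\varphi).
\]

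Next I would invoke the First Fundamental Theorem in the form $N(r,0;\varphi)\leq T(r,\varphi)+O(1)=m(r,\varphi)+N(r,\infty;\varphi)+O(1)$. By the Lemma of the Logarithmic Derivative (applied via Corollary \ref{ghgh} to $f^{(k)}/f$), one has $m(r,\varphi)=S(r,f)$, so the problem reduces to estimating the poles of $\varphi$.

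The key computational step is a local pole analysis of $\varphi$, and this is where the three terms on the right-hand side appear:
\begin{itemize}
\item If $z_0$ is a pole of $f$ of order $p$, then $f^{(k)}$ has a pole of order $p+k$ there, so $\varphi$ has a pole of order exactly $k$. This contributes at most $k\,\overline{N}(r,\infty;f)$.
\item If $z_0$ is a zero of $f$ of order $s$ with $s<k$, write $f(z)=(z-z_0)^s h(z)$ with $h(z_0)\neq 0$; a direct Leibniz computation shows $f^{(k)}$ is regular at $z_0$, while $f$ vanishes to order $s$, so $\varphi$ has a pole of order at most $s$. Summing contributes $N(r,0;f\mid <k)$.
\item If $z_0$ is a zero of $f$ of order $s\geq k$, then $f^{(k)}$ vanishes to order $s-k$, so $\varphi$ has a pole of order exactly $k$, contributing $k\,\overline{N}(r,0;f\mid \geq k)$.
\end{itemize}
Adding these three bounds yields $N(r,\infty;\varphi)\leq k\,\overline{N}(r,\infty;f)+N(r,0;f\mid <k)+k\,\overline{N}(r,0;f\mid\geq k)$, and combining with the estimate $m(r,\varphi)=S(r,f)$ produces the lemma.

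I expect the local pole computation at zeros of $f$ to be the only delicate step: one must verify carefully that in the \emph{low-order} case $s<k$ the pole order of $\varphi$ is genuinely at most $s$ (and not less, say, when some further cancellation occurs among derivatives of $h$), and that in the \emph{high-order} case $s\geq k$ the order is exactly $k$. Once this Laurent-expansion bookkeeping is done, the remainder is bookkeeping with the First Fundamental Theorem plus the standard logarithmic derivative estimate, both of which are available from the earlier parts of the excerpt.
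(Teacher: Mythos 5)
Your proof is correct and is essentially the standard argument behind this lemma (which the thesis states without proof, citing Lahiri--Dewan): bound the counting function by $N(r,0;f^{(k)}/f)$, apply the First Fundamental Theorem together with $m(r,f^{(k)}/f)=S(r,f)$, and read off the three terms from the local pole orders of $f^{(k)}/f$ at poles of $f$ and at zeros of $f$ of order $<k$ and $\geq k$. The step you flag as delicate is actually immediate: at a zero of $f$ of order $s<k$ the numerator $f^{(k)}$ is holomorphic, so the pole order of the quotient is trivially at most $s$, and any extra cancellation only helps the upper bound.
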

\section {Proof of the theorem}
\begin{proof} [\textbf{Proof of Theorem \ref{amt1.1}}]  First we observe that since $P_{\ast}(0)=1\not =P_{\ast}(1)=Q(1)+1$, $P_{\ast}(z)$ is satisfying \enquote{poperty H}. Also $P_{\ast}(z)-1$ and $P_{\ast}(z)-P_{\ast}(1)$ have a zero of multiplicity $n+1$ and $m+1$ respectively at $0$ and $1$, it follows that the zeros of $P_{\ast}(z)$ are simple.
Let the zeros be given by $\alpha _{j}$, $j=1,2,\ldots,n+m+1$.
Since $E_{f}(S_{\ast},l)=E_{g}(S_{\ast},l)$, it follows that $F$, $G$ share $(0,l)$.\\
\textbf{Case-1.} First we suppose that $H\not\equiv 0$. \\
\textbf{Subcase-1.1.} If $l\geq 2$, then applying {\it Lemma \ref{aml6}}, we note that
\bea \label{ame3.1} & & \ol N_{0}(r,0;g^{'})+\ol N(r,0;G\mid\geq 2)+ \ol N_{*}(r,0;F,G) \\&\leq& \ol N_{0}(r,0;g^{'})+\ol N(r,0;G \mid\geq 2)+\ol N(r,0;G\mid\geq 3)\nonumber\\ &\leq& N(r,0;g^{'} \mid g\not=0)+S(r,g)\nonumber\\&\leq&\ol N(r,0;g)+\ol N(r,\infty;g)+S(r,g).\nonumber \eea
Hence for $\varepsilon >0$, using (\ref{ame3.1}), {\it Lemmas \ref{ML}}, {\it \ref{aml2}} and {\it \ref{aml3}}, we get from Second Fundamental Theorem that
\bea \label{ame3.1a} &&(n+m+2)\;T(r,f)\\
\nonumber &\leq & \ol N(r,\infty;f)+\ol N(r,0;f)+\ol N(r,1;f)+N(r,0;F\mid=1)+\ol N(r,0;F\mid\geq 2)\\
\nonumber & &-N_{0}(r,0;f^{'})+S(r,f)\\
\nonumber &\leq & 2\ol N(r,0;f)+2\ol N(r,1;f)+\ol N(r,0;g)+\ol N(r,1;g)+ 2\ol N(r,\infty;f) + \ol N(r,\infty; g)\\
\nonumber & & + \ol N_{0}(r,0;g') + \ol N(r,0;G\mid\geq 2) + \ol N_{*}(r,0;F,G) + S_{\ast}(r)\\
\nonumber &\leq& 2\{\ol N(r,0;f)+\ol N(r,1;f)+ \ol N(r,\infty;f)+\ol N(r,0;g)+ \ol N(r,\infty; g)\}+\ol N(r,1;g)\\
\nonumber && + S_{\ast}(r)\\
\nonumber &\leq & (11-2\Theta (0;f)-2\Theta (\infty;f)-2\Theta (1;f)-2\Theta (0;g)-2\Theta (\infty;g)-\Theta (1;g)+\varepsilon )T_{\ast}(r)\\
\nonumber && +S_{\ast}(r).\nonumber\eea \par
In a similar way, we can obtain
\bea \label{ame3.1aa} (n+m+2)T(r,g)&\leq& (11-2\Theta (0;f)-2\Theta (\infty;f)-\Theta (1;f)-2\Theta (0;g)\\&&-2\Theta (\infty;g)-2\Theta (1;g)+\varepsilon )T_{\ast}(r)+S_{\ast}(r).\nonumber\eea
Combining (\ref{ame3.1a}) and (\ref{ame3.1aa}), we see that
\bea\label{ame3.1aaa}&&(n+m-9+2\Theta (0;f)+2\Theta (\infty;f)+\Theta (1;f)+2\Theta (0;g)\\
\nonumber &&+2\Theta (\infty;g)+\Theta (1;g)+\min\{\Theta (1;f),\Theta (1;g)\}-\varepsilon )T_{\ast}(r) \leq S_{\ast}(r).\eea
Since $\varepsilon > 0$ is arbitrary, inequality  (\ref{ame3.1aaa}) leads to a contradiction.
\newpage
\textbf{Subcase-1.2.} If $l=1$, then using {\it Lemma \ref{aml6}}, we can write (\ref{ame3.1}) as
\bea\label{ame3.1e} && \ol N_{0}(r,0;g^{'})+\ol N(r,0;G\mid\geq 2)+ \ol N_{*}(r,0;F,G)\\
\nonumber&\leq& \ol N_{0}(r,0;g^{'})+\ol N(r,0;G \mid\geq 2)+\ol N_{L}(r,0;G)+\ol N(r,0;F\mid\geq 3)\\
\nonumber&\leq& N(r,0;g^{'}\mid g\not=0)+\sum_{j=1}^{n+m+1}\ol N(r,\alpha _{j};f\mid\geq 3)\\
\nonumber&\leq&\ol N(r,0;g)+\ol N(r,\infty;g)+\frac{1}{2}\sum_{j=1}^{n+m+1}\{ N(r,\alpha_{j};f)-\ol N(r,\alpha_{j};f)\}+S(r,g)\nonumber\\&\leq& \ol N(r,0;g)+\ol N(r,\infty;g)+\frac{1}{2} N(r,0;f^{'}\mid f\not = 0)+S(r,g)\nonumber\\&\leq& \ol N(r,0;g)+\ol N(r,\infty;g)+\frac{1}{2}\{\ol N(r,0;f)+\ol N(r,\infty;f)\}+S(r,f)+S(r,g).\nonumber\eea
Thus for $\varepsilon >0$, using (\ref{ame3.1e}), {\it Lemmas \ref{aml2}} and {\it \ref{aml3}} and proceeding as in (\ref{ame3.1a}), we get from Second Fundamental Theorem that
\bea \label{ame3.1b} (n+m+2)T(r,f) &\leq& (12-2\Theta (0;f)-2\Theta (\infty;f)-2\Theta (1;f)\\
&-&2\Theta (0;g)-2\Theta (\infty;g)-\Theta (1;g)+\varepsilon )T_{\ast}(r)+S_{\ast}(r)\nonumber.\nonumber\eea
Similarly, we can obtain
\bea \label{ame3.1bb} (n+m+2)T(r,g) &\leq& (12-2\Theta (0;f)-2\Theta (\infty;f)-\Theta (1;f)\\
&-&2\Theta (0;g)-2\Theta (\infty;g)-2\Theta (1;g)+\varepsilon )T_{\ast}(r)+S_{\ast}(r).\nonumber\eea
Combining (\ref{ame3.1b}) and (\ref{ame3.1bb}), we see that
\bea\label{ame3.1bbb} &&(n+m-10+2\Theta (0;f)+2\Theta (\infty;f)+\Theta (1;f)+2\Theta (0;g)\\
\nonumber&&+2\Theta (\infty;g)+\Theta (1;g)+\min\{\Theta (1;f),\Theta (1;g)\}-\varepsilon )T_{\ast}(r) \leq S_{\ast}(r).\eea
Since $\varepsilon > 0$ is arbitrary, inequality (\ref{ame3.1bbb}) leads to a contradiction.\\
\textbf{Subcase-1.3.} If $l=0$, then using {\it Lemma \ref{aml6}}, we note that
\bea \label{ame3.1ee}& &\ol N_{0}(r,0;g^{'})+\ol N^{(2}_{E}(r,0;F)+2\ol N_{L}(r,0;G)+2\ol N_{L}(r,0;F)\\&\leq & \ol N_{0}(r,0;g^{'})+\ol N^{(2}_{E}(r,0;G)+\ol N_{L}(r,0;G)+\ol N_{L}(r,0;G)+2\ol N_{L}(r,0;F)\nonumber \\&\leq& \ol N_{0}(r,0;g^{'})+\ol N(r,0;G \mid\geq 2)+\ol N_{L}(r,0;G)+2\ol N_{L}(r,0;F)\nonumber\\&\leq& N(r,0;g^{'}\mid g\not=0)+\ol N(r,0;G \mid\geq 2)+2\ol N(r,0;F \mid\geq 2)\nonumber\\&\leq & 2\ol N(r,0;g)+2\ol N(r,\infty;g)+2\ol N(r,0;f)+2\ol N(r,\infty;f)+S_{\ast}(r).\nonumber\eea
Using Second Fundamental Theorem, we have
\bea \label{ame3.1csa}\;\;\;\;& & (n+m+2)T(r,f)\\ &\leq & \ol N(r,\infty;f)+\ol N(r,0;f)+\ol N(r,1;f)+ N^{1)}_{E}(r,0;F)+\ol N_{L}(r,0;F)+\ol N_{L}(r,0;G)\nonumber\\
\nonumber &&+\ol N^{(2}_{E}(r,0;F)-N_{0}(r,0;f^{'})+S(r,f).
\eea
Hence using (\ref{ame3.1csa}), (\ref{ame3.1ee}), {\it Lemmas \ref{ML}}, {\it \ref{aml2}} and {\it \ref{aml3}}, we get for $\varepsilon >0$ that
\bea \label{ame3.1c}\;\;\;\;& & (n+m+2)T(r,f)\\
\nonumber &\leq & 2\{\ol N(r,0;f)+\ol N(r,1;f)\}+\ol N(r,0;g)+\ol N(r,1;g)+ 2\ol N(r,\infty;f) + \ol N(r,\infty; g)\\
\nonumber & &+\ol N^{(2}_{E}(r,0;F) +2\ol N_{L}(r,0;G)+2\ol N_{L}(r,0;F)+\ol N_{0}(r,0;g^{'})+S(r,f)+S(r,g)\\
\nonumber &\leq & (17-3\Theta (0;f)-3\Theta (\infty;f)-2\Theta (1;f)-3\Theta (0;g)\nonumber-3\Theta (\infty;g)\\
\nonumber &&-\Theta (1;g)+\varepsilon )T_{\ast}(r)+S_{\ast}(r).\eea
In a similar manner, we can obtain
\bea \label{ame3.1cc} && (n+m+2)T(r,g)\\
\nonumber &\leq& (17-3\Theta (0;f)-3\Theta (\infty;f)-\Theta (1;f)-3\Theta (0;g)\\&&-3\Theta (\infty;g)-2\Theta (1;g)+\varepsilon )T_{\ast}(r)+S_{\ast}(r).\eea
Combining (\ref{ame3.1c}) and (\ref{ame3.1cc}), we see that
\bea \label{ame3.1ccc}&&(n+m-15+3\Theta (0;f)+3\Theta (\infty;f)+\Theta (1;f)+3\Theta (0;g)\\&&+3\Theta (\infty;g)+\Theta (1;g)+\min\{\Theta (1;f),\Theta (1;g)\}-\varepsilon )T_{\ast}(r) \leq S_{\ast}(r).\nonumber\eea
Since $\varepsilon >0$ be arbitrary, inequality (\ref{ame3.1ccc}) leads to a contradiction.\\
\textbf{Case-2.} Next we suppose that $H\equiv 0$.\\
On integration, we get from (\ref{CHB}) \be\label{ame3.4}\frac{1}{F}\equiv\frac{A}{G}+B,\ee where $A$, $B$ are constants with $A\not=0$.\\
From {\it Lemma \ref{ML}}, we get \be\label{ame3.5} T(r,f)=T(r,g)+S(r,g).\ee
\textbf{Subcase-2.1.} First suppose that $B\not =0$.\\
From (\ref{ame3.4}), we have $$\ol N\left(r,\frac{-A}{\;B};G\right)=\ol N(r,\infty;f).$$
\textbf{Subcase-2.1.1.} Let $\frac{-A}{\;B}\not =1$.\par
If $\frac{-A}{\;B}\not =Q(1)+1$, then in view of (\ref{ame3.5}) and the Second Fundamental Theorem, we get
\beas &&(n+2m+1)T(r,g)\\
&\leq& \ol N(r,\infty;g)+\ol N\left(r,1;G\right)+\ol N\left(r,\frac{-A}{\;B};G\right)+S(r,g)\\
&=&\ol N(r,\infty;g)+\ol N(r,0;g)+m T(r,g)+\ol N(r,\infty;f)+S(r,g)\\
&\leq &(m+2)T(r,g)+\ol N(r,\infty;f)+S(r,g)\\
&\leq& (m+3)T(r,g)+S(r,g),
\eeas which  is a contradiction for $n \geq 3$ and $m\geq 3$.\\
\par
If $\frac{-A}{\;B}=Q(1)+1$, then from (\ref {ame3.4}), we have
\bea\label {ame3.5a}\frac{G}{B F}=G-P_{\ast}(1)=G+\frac{A}{B}=(g-1)^{m+1}(g-\alpha^{'} _{1})(g-\alpha^{'} _{2})\ldots (g-\alpha^{'} _{n}),\eea where $\alpha^{'} _{i}$, $i=1,2,\ldots,n$ are the distinct simple zeros of $P_{\ast}(z)+\frac{A}{B}$.
As $B\not= 0$, $f$ and $g$ do not have any common pole. Let $z_{0}$ be a zero of $g-1$ of multiplicity p (say), then it must be a pole of $f$ with multiplicity $q\geq 1$ (say). So from (\ref{ame3.5a}), we have $$(m+1)p=(n+m+1)q\geq m+n+1.$$ i.e., $$p\geq \frac{n+m+1}{m+1}>1.$$
Next suppose $z_{i}$ be a zero of $g-\alpha^{'} _{i}$ of multiplicity $p_{i}$, then in view of (\ref{ame3.5a}), we have $z_{i}$ be a pole of $f$ of multiplicity $q_{i}$, (say) such that
$$p_{i}=(n+m+1)q_{i}\geq n+m+1.$$ Let $\beta _{j}$, $j=1,2,\ldots,m$ be the distinct simple zeros of $P_{\ast}(z)-1$. Now from the Second Fundamental Theorem, we get
\beas &&(n+m+1)T(r,g)\\&\leq& \ol N(r,\infty;g)+\ol N\left(r,1;G\right)+\ol N\left(r,\frac{-A}{\;B};G\right)+S(r,g)\\&\leq& \ol N(r,\infty;g)+\ol N(r,0;g)+\sum\limits_{j=1}^{m}\ol N(r,\beta _{j};g)+\ol N(r,1;g)+\sum\limits_{i=1}^{n}\ol N(r,\alpha^{'} _{i};g)+S(r,g)\\&\leq& \left(m+2+\frac{1}{2}+\frac{n}{n+m+1}\right)T(r,g)+S(r,g),\eeas which  is a contradiction for $n\geq 3$, $m\geq 3$.\\
\textbf{Subcase-2.1.2.}  Next let $\frac{-A}{\;B}=1$.\\
From (\ref{ame3.4}), we have $$\frac{1}{F}=\frac{B(G-1)}{G}.$$ Therefore in view of (\ref{ame3.5}), Second Fundamental Theorem yields
\beas&& T(r,g)+S(r,g)\geq \ol N(r,\infty;f)=\ol N(r,1;G)=\ol N(r,0;g)+\sum\limits_{j=1}^{m}\ol N(r,\beta _{j};g)\\&&\geq (m-1)T(r,g)+S(r,g),\eeas
a contradiction as $m\geq 3$.\\
\textbf{Subcase-2.2.} Now suppose that $B=0$.\\ From (\ref{ame3.4}), we get \be\label{ame3.6} AF\equiv G.\ee
\textbf{Subcase-2.2.1.} Suppose $A\not =1$. \\
If $A=P_{\ast}(1)$, then from (\ref{ame3.6}), we have $$P_{\ast}(1)\left(F-\frac{1}{P_{\ast}(1)}\right)\equiv G-1.$$
As $P_{\ast}(1)\not =1$ and {\it Lemma \ref{aml4}} implies $P_{\ast}(1)\not =-1$, we have $\frac{1}{P_{\ast}(1)}\not=P_{\ast}(1)$, it follows that $P_{\ast}(z)-\frac{1}{P_{\ast}(1)}$ has simple zeros. Let they be given by $\gamma _{i}$, $i=1,2,\ldots, n+m+1$.
So from the Second Fundamental Theorem and (\ref{ame3.5}), we get
\beas(n+m-1)T(r,f)&\leq& \sum\limits_{i=1}^{n+m+1}\ol N(r,\gamma _{i};f)+S(r,f)\\&\leq& \ol N(r,0;g)+\sum\limits_{j=1}^{m}\ol N(r,\beta _{j};g)\leq (m+1) T(r,f)+S(r,f),\eeas a contradiction since $n\geq 3$.\\
If $A\not =P_{\ast}(1)$, then we have from (\ref{ame3.6}) $$A(F-1)\equiv G-A.$$
Let the distinct zeros of $P_{\ast}(z)-A$ be given by $\delta _{i}$, $i=1,2,\ldots, n+m+1$.
 So from the Second Fundamental Theorem and (\ref{ame3.5}), we get
 \beas(n+m-1)T(r,g)&\leq& \sum\limits_{i=1}^{n+m+1}\ol N(r,\delta _{i};g)+S(r,g)\\&=& \sum\limits_{j=1}^{m}\ol N(r,\beta _{j};f)+\ol N(r,0;f)+S(r,f)\\&\leq& (m+1)T(r,g)+S(r,g),\eeas a contradiction since $n\geq 3$.\\
\textbf{Subcase-2.2.2.} Suppose $A=1$.\\
Then from (\ref{ame3.6}), we have $$F\equiv G;~~\text{i.e.},~~ P_{\ast}(f)\equiv P_{\ast}(g).$$
Here $k=2$, $d_{1}=0$, $d_{2}=1$, $q_{1}=n$, $q_{2}=m$. Since $\min \{q_{1},q_{2}\}=\min \{n,m \}\geq 2$ and $n+m\geq 5$, we see that $nm>n+m$.\\ So from  Theorem 5.B, we conclude that $P_{\ast}(z)$ is an uniqueness polynomial. Therefore $f\equiv g$. This proves the theorem.
\end{proof}
$~~$
\vspace{5 cm}
\\
------------------------------------------------
\\
\textbf{The matter of this chapter has been published in  Afr. Mat., Vol. 26, No. 7-8, (2015), pp. 1561-1572.}
\newpage
\chapter{On some sufficient conditions of the strong uniqueness polynomials}
\fancyhead[l]{Chapter 6}
\fancyhead[r]{On some sufficient conditions of the strong uniqueness polynomials}
\fancyhead[c]{}
\section{Introduction}
In quest for the minimum admissible cardinality of a finite unique range sets, Li-Yang (\cite{am12}) first realized that the finite unique range sets are nothing but the set of distinct zeros of some suitable polynomials. They observed that if $S=\{a_{1},a_{2},\ldots,a_{n}\}\subset \mathbb{C}$, with $a_{i}\not=a_{j}$, is a unique range set for meromorphic (resp. entire) functions then the polynomial $P(z)=(z-a_{1})(z-a_{2})\ldots(z-a_{n})$ has the property that $P(f)\equiv P(g)$ implies $f\equiv g$ for any pair of non-constant meromorphic (resp. entire) functions $f$ and $g$.\par
This type of polynomial was called as uniqueness polynomial (\cite{am12}) for meromorphic (resp. entire) functions, which was later called by Fujimoto (\cite{am2a}) as the uniqueness polynomial in a broad sense. Subsequently this realization of polynomial backbone of a finite unique range sets opened a new era in the uniqueness theory of entire and meromorphic functions.\par
The following results may be considered as the initial characterizations of a uniqueness polynomial. At first we note that every one degree polynomial is uniqueness polynomial.
\begin{exm} (\cite{am12}) No polynomial of degree two is a UPE. For example, if we take $P(z)=az^{2}+bz+c~(a\not=0)$ and for any entire function $f$, define $g:=-f-\frac{b}{a}$, then $P(f)=P(g)$ but $f\not=g$.
\end{exm}
\begin{exm} (\cite{am12}) No polynomial of degree three is a UPE. For example, if we take, $P(z)=z^{3}-az+b$, $f(z)=\frac{\omega_{2}e^{z}}{\omega_{2}-\omega_{1}}-\frac{a\omega_{1}e^{-z}}{\omega_{2}-\omega_{1}}$ and
$g(z)=\frac{e^{z}}{\omega_{2}-\omega_{1}}-\frac{ae^{-z}}{\omega_{2}-\omega_{1}}$, where $\omega_{i}$ are non-real cubic roots of unity, then $P(f)=P(g)$ but $f\not=g$.
\end{exm}
\begin{exm} (\cite{am12}) Let $P(z)=z^{4}+a_{3}z^{3}+a_{2}z^{2}+a_{1}z+a_{0}$. Then $P(z)$ is not a UPM but $P(z)$ is a UPE if and only if $(\frac {a_{3}}{2})^{3}-\frac {a_{2}a_{3}}{2}+a_{1}\not=0$.
\end{exm}
\begin{exm}\label{ame1} (\cite{am6.1}) Let $P(z)=z^{n}+a_{n-1}z^{n-1}+\ldots+a_{1}z+a_{0}~(n\geq4)$ be a monic polynomial. If there exist an integer $t$ with $1\leq t < n-2$ and $\gcd(n,t)=1$ such that $a_{n-1}=\ldots=a_{t+1}=0$ but $a_{t} \neq 0$, then $P(z)$ is a UPE.
\end{exm}
\begin{exm}\label{ame2} (\cite{am6.1}) Let $P(z)=z^{n}+a_{m}z^{m}+a_{0}$ be a monic polynomial such that $\gcd(n,m)=1$ and  $a_{m} \neq 0$. If $n\geq 5$ and  $1\leq m < n-1$, then $P(z)$ is a UPM.
\end{exm}
\begin{exm} (\cite{am12}) If $P_1(z)$ is a UPM (resp. UPE) and $P_2(z)$ is a polynomial, then $(P_1{}\circ P_{2})(z)$ is a UPM (resp. UPE) if and only if $P_2(z)$ is a UPM (resp. UPE).
\end{exm}
While searching some sufficient conditions for a polynomial to be a UPM or UPE, Fujimoto (\cite{am2, am2a}) introduced a variant of the notion of the uniqueness polynomial, which was later called by An-Wang-Wong (\cite{am1a}) as a strong uniqueness polynomial.
\begin{defi}
A polynomial $P(z)$ in $\mathbb{C}$ is called a strong uniqueness polynomial for meromorphic (resp. entire) functions if for any non-constant meromorphic (resp. entire) functions $f$ and $g$, $P(f)\equiv AP(g)$ implies $f\equiv g$, where $A$ is any non-zero constant. In short, we say $P(z)$ is a SUPM (resp. SUPE).
\end{defi}
It is clear from the above definitions that a strong uniqueness polynomial is a uniqueness polynomial but the following examples show that a uniqueness polynomial may not be strong uniqueness polynomial.
\begin{exm}\label{amex1} Let $P(z)=az+b$ $(a\neq 0)$. Clearly $P(z)$ is a UPM (resp. UPE) but for any non-constant meromorphic (resp. entire) function $g$, if we take $f := c g-\frac{b}{a}(1-c)$  $(c \neq 0,1)$, then $P(f)=cP(g)$ but $f \neq g$.
\end{exm}
\begin{exm}\label{kly} Consider $P(z)=z^{n-r}(z^{r}+a)$ where $a$ is a non-zero complex number and $\gcd(n,r)=1$, $r\geq 2$ and $n\geq 5$. Then $P(z)$ is a uniqueness polynomial as shown in Example \ref{ame2} but for any non-constant meromorphic function $g$ if we take $f:=\omega g$  where $\omega$ is non-real $r$-th root of unity. Then $P(f)=\omega^{n-r}P(g)$.
\end{exm}
The key discovery of Fujimoto (\cite{am2}) is to highlight a special property of a polynomial, which plays a pivotal role in characterizing a SUPM or SUPE. This property was called by Fujimoto himself as \enquote{Property (H)}. Later on An-Wang-Wong (\cite{am1a}) and An (\cite{an}) referred this property as \enquote{separation condition} and \enquote{injective condition} respectively. Recently Banerjee-Lahiri (\cite{bl}) renamed the \enquote{Property (H)} as \enquote{critical injection property}.
\begin{defi}
 A polynomial $P(z)$ is said to satisfy critical injection property if $P(\alpha )\not =P(\beta )$ for any two distinct zeros $\alpha $, $\beta $ of the derivative $P'(z)$.
 \end{defi}
Clearly the inner meaning of critical injection property is that the polynomial $P(z)$ is injective on the set of distinct zeros of $P^{'}(z)$, which are known as critical points of $P(z)$.\par
In this connection, Fujimoto (\cite{am2a}) gave some sufficient conditions for a critically injective polynomial, with $k(\geq 2)$-critical points, to be uniqueness polynomials (see Theorem 5.B), which help us to find many uniqueness polynomials.\par
Like uniqueness polynomial, Fujimoto also did some remarkable investigations to find some sufficient conditions for a critically injective polynomial to be a strong uniqueness polynomial. In this connection, Fujimoto proved the following theorems.
\begin{theo 6.A}\label{amih1} (\cite{am2}) A critically injective polynomial $P(z)$, with $k\geq 4$, is a strong uniqueness polynomial if $$P(d_{1})+P(d_{2})+\ldots+P(d_{k}) \neq 0.$$
\end{theo 6.A}
\begin{rem}
As an application of Theorem 6.A, Fujimoto (\cite{am2}) himself proved that $P(z)=z^{n}+az^{n-r}+b$  is a strong uniqueness polynomial if $r\geq3$, $n>r+1$, $\gcd (n, r)=1$ and  $ab\not=0$, which is an improvement of a result of Yi (\cite{am15}).
\end{rem}
\begin{theo 6.B} (\cite{am2a}) A critically injective polynomial $P(z)$, with $k=3$, is a strong uniqueness polynomial if $\max (q_{1},q_{2},q_{3})\geq 2$ and
\beas && \frac{P(d_{l})}{P(d_{m})}\neq \pm 1~~\text{for}~~1\leq l<m \leq 3,\\
&& \frac{P(d_{l})}{P(d_{m})}\neq \frac{P(d_{m})}{P(d_{n})}~~\text{for~~any~~~permutation}~~ (l,m,n)~~\text{ of}~~(1,2.3).\eeas
\end{theo 6.B}
\begin{theo 6.C} (\cite{am2a}) A critically injective polynomial $P(z)$, with $k=2$ and $q_{1}\leq q_{2}$, is a strong uniqueness polynomial if ether of the following conditions holds:
\begin{enumerate}
\item [i)]  $q_{1}\geq 3$ and $P(d_{1})+P(d_{2})\not=0$,
\item [ii)] $q_{1}\geq 2$ and $q_{2}\geq q_{1}+3$.
\end{enumerate}
\end{theo 6.C}
We noticed that Theorems 6.A, 6.B and 6.C are related to some sufficient conditions for a critically injective polynomial to be strong uniqueness polynomial. But none of the above theorems told about the case when a polynomial has exactly one critical point.\par The following example shows that if $k=1$, then that polnomial can't be a strong uniqueness polynomial.
\begin{exm}\label{amk}
For $k=1$, taking $P(z)=(z-a)^{q}-b$ for some constants $a$ and $b$ with $b\not=0$ and an integer $q\geq 2$, it is easy to verify that for an arbitrary non-constant meromorphic function $g$ and a constant $c(\not=1)$ with $c^{q}=1$, the function $g:=cf+(1-c)a(\not=f)$ satisfies the condition $P(f)=P(g)$.
\end{exm}
\begin{rem}
From the above example, it is also observed that for  $k=1$ no polynomial is uniqueness polynomial.\end{rem}
Though Fujimoto did investigations to find some sufficient conditions for a critically injective polynomial to be a strong uniqueness polynomial but so far no attempt have been made by any researchers to find some sufficient conditions for a UPM to be SUPM. To deal in this perspective is the main motivation of this chapter.
\section{Main Results}
\par
We have already seen from the Example \ref{amk} that a polynomial having only one critical points can't be a uniqueness polynomial. So uniqueness polynomials has at least two critical points. Now we state our results.
\begin{theo}\label{amith1} Suppose $P(z)$ is a critically injective uniqueness polynomial of degree $n$ with simple zeros. Further suppose that $P(z)$ has at least two critical points and among them let $\alpha$ and $\beta$ be the two critical points with maximum multiplicities.\par
If $z=\alpha$ is a $P(\alpha)$ point of $P(z)$ of order $p$ and $z=\beta$ is a $P(\beta)$ point of $P(z)$ of order $t$ with $\max\{t,p\}+t+p\geq 5+n$ and $\{P(\alpha)+P(\beta)\} \not=0$, then $P(z)$ is a strong uniqueness polynomial.
\end{theo}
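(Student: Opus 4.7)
The plan is to assume, toward a contradiction, that $P(f) \equiv AP(g)$ for some constant $A \neq 0$ and some non-constant meromorphic functions $f, g$, and to deduce that $A = 1$; since $P$ is a uniqueness polynomial by hypothesis, $P(f) \equiv P(g)$ then forces $f \equiv g$, completing the proof. So I will assume throughout that $A \neq 1$ and work toward a contradiction.

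First I exploit the hypotheses on $P$ to obtain the factorizations
\[
P(z) - P(\alpha) = (z-\alpha)^{p}\prod_{j=1}^{n-p}(z-c_{j}), \qquad P(z) - P(\beta) = (z-\beta)^{t}\prod_{k=1}^{n-t}(z-d_{k}),
\]
where the $c_{j}$'s are pairwise distinct and all different from $\alpha$, and analogously for the $d_{k}$'s. The simplicity of these auxiliary roots is forced by the fact that any additional multiple root of $P - P(\alpha)$ would produce a second critical point of $P$ sharing the critical value $P(\alpha)$, violating critical injectivity. Writing $Q_{1}(z) = \prod_{j}(z-c_{j})$ and $Q_{2}(z) = \prod_{k}(z-d_{k})$, the relation $P(f) = AP(g)$ takes the two equivalent forms
\[
(f-\alpha)^{p}Q_{1}(f) = A\bigl(P(g) - \mu\bigr), \qquad (f-\beta)^{t}Q_{2}(f) = A\bigl(P(g) - \nu\bigr),
\]
where I set $\mu := P(\alpha)/A$ and $\nu := P(\beta)/A$; since $A \neq 1$, we have $\mu \neq P(\alpha)$ and $\nu \neq P(\beta)$.

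The core step is a two-fold application of Nevanlinna's Second Fundamental Theorem to $g$: once with the $n$ preimages of $\mu$ under $P$ (plus $\infty$), and once with the $n$ preimages of $\nu$ (plus $\infty$). At every $\alpha$-point of $f$ of multiplicity $s$, the left-hand side of the first identity vanishes to order $sp$, hence $g$ must attain some root $\gamma$ of $P(z) = \mu$ with multiplicity at least $sp/m_{\gamma}$, where $m_{\gamma}$ is the multiplicity of $\gamma$ in $P(z) - \mu$; because $\alpha$ and $\beta$ are the two critical points of maximum multiplicity, each such $m_{\gamma}$ is bounded by $\max\{p, t\}$ whenever $m_{\gamma} > 1$, and equals $1$ otherwise. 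The corresponding ramification of $g$ is brought into the Second Fundamental Theorem, and after combining with the elementary identities $T(r,P(f)) = nT(r,f) + O(1)$ and $T(r,f) = T(r,g) + O(1)$ (both immediate from $P(f) = AP(g)$ and Lemma \ref{ML}), I expect to reach an estimate of the form
\[
\bigl(\max\{p,t\} + p + t - n - 5 + o(1)\bigr)\, T(r,f) \;\leq\; S(r,f),
\]
which under the hypothesis $\max\{p,t\} + p + t \geq n + 5$ is the sought contradiction, provided that neither of the two Second-Fundamental-Theorem applications is degenerate.

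The hypothesis $P(\alpha) + P(\beta) \neq 0$ plays precisely the role of ruling out simultaneous degeneracy: both applications could collapse only if $\mu = P(\beta)$ and $\nu = P(\alpha)$ held together, which forces $A = P(\alpha)/P(\beta) = P(\beta)/P(\alpha)$, i.e., $P(\alpha)^{2} = P(\beta)^{2}$; critical injectivity rules out $P(\alpha) = P(\beta)$ and the hypothesis $P(\alpha) + P(\beta) \neq 0$ rules out $P(\alpha) = -P(\beta)$, so at least one of the two applications survives in its generic form and produces the contradiction. The main obstacle I anticipate is the bookkeeping required when $\mu$ or $\nu$ coincides with a critical value $P(\delta_{\ell})$ of $P$ for some critical point $\delta_{\ell}\notin\{\alpha,\beta\}$: in each such sub-case one must verify, using that $\alpha$ and $\beta$ are the \emph{two} critical points of maximum multiplicity, that the extra ramification at $\delta_{\ell}$ (bounded by $\max\{p,t\}$) is too small to absorb the gain coming from the $\alpha$- and $\beta$-contributions, and this is exactly where the numerical inequality $\max\{p,t\} + p + t \geq n + 5$ is used decisively.
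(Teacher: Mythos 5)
Your overall strategy (assume $P(f)\equiv AP(g)$ with $A\neq 1$ and force a contradiction via the Second Fundamental Theorem, then invoke the uniqueness-polynomial hypothesis when $A=1$) matches the paper's, and your factorizations of $P-P(\alpha)$ and $P-P(\beta)$ are correct. But the core of your argument is never actually executed (``I expect to reach an estimate of the form\ldots''), and the one place where you do commit to a mechanism --- the claim that if only one of the two degeneracies $\mu=P(\beta)$, $\nu=P(\alpha)$ occurs, then ``at least one of the two applications survives in its generic form and produces the contradiction'' --- does not hold. A single generic application of the Second Fundamental Theorem to $g$ with the $n$ simple preimages of $\mu$ plus $\infty$ gives $(n-1)T(r,g)\leq \ol N(r,\infty;g)+\ol N(r,\alpha;f)+\sum_j\ol N(r,c_j;f)+S(r,g)\leq (n+2-p)T(r,g)+S(r,g)$, i.e.\ only $(p-3)T(r,g)\leq S(r,g)$ (and symmetrically $(t-3)T$ for the $\nu$-application). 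This is vacuous when the relevant multiplicity equals $3$, which is not excluded by $\max\{t,p\}+t+p\geq n+5$: for the Frank--Reinders polynomial one has $p=n-2$, $t=3$, and for $A=P(\alpha)/P(\beta)$ the surviving generic application is precisely the one governed by $t=3$, yielding $0\cdot T(r,g)\leq S(r,g)$. To rescue your scheme you would have to combine the collapsed and generic applications and invoke $p+t\leq n+1$ (which, together with the hypothesis, forces $\max\{t,p\}\geq 4$), and then still dispose of the sub-cases where $\mu$ or $\nu$ equals $P(\delta)$ for a third critical point $\delta$; none of this bookkeeping is present, and your stated fallback via $P(\alpha)+P(\beta)\neq 0$ does not cover the case of two distinct third critical values either.

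For contrast, the paper avoids all of this by applying the Second Fundamental Theorem not to $g$ with $2n$ target points but to the single composite function $F=\bigl(P(f)-P(\beta)\bigr)/P(\beta)$ (with $\beta$ the critical point of larger multiplicity) and exactly three finite values $0$, $P(\alpha)/P(\beta)-1$, $A-1$, whose preimage counting functions are each bounded by $\ol N(r,\cdot\,;f)+(n-\text{mult})T(r,f)$ or $\ol N(r,\beta;g)+(n-t)T(r,g)$; this yields $2nT(r,f)\leq(3n-2t-p+4)T(r,f)+S(r,f)$ directly. The only degenerate value is $A=P(\alpha)/P(\beta)$, which is handled by rerunning the argument on $G$, and the hypothesis $P(\alpha)+P(\beta)\neq 0$ enters solely to guarantee that the two target values $-\tfrac{P(\alpha)-P(\beta)}{P(\alpha)}$ and $\tfrac{P(\alpha)-P(\beta)}{P(\beta)}$ are distinct there. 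Your interpretation of the role of $P(\alpha)+P(\beta)\neq0$ is therefore not the one that makes the proof close, and as written your proposal has a genuine gap.
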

\begin{rem} As $\alpha$ and $\beta$ are critical points of $P(z)$, so $t$, $p \geq 2$.
\end{rem}
\begin{exm}\label{amiex1} Firts we consider the Frank-Reinders (\cite{am3}) polynomial $$P_{FR}(z)=\frac{(n-1)(n-2)}{2}z^{n}-n(n-2)z^{n-1}+\frac{n(n-1)}{2}z^{n-2}-c,$$ where $n\geq6$ and $c \neq 0, ~1,~\frac{1}{2}$.
\end{exm}
It is clear that $P_{FR}(z)$ is a critically injective polynomial with only simple zeros as $c \neq 0$, $1$, $\frac{1}{2}$. Also we note that $P_{FR}(z)-P_{FR}(1)=(z-1)^{3}R_{1}(z)$ and  $P_{FR}(z)-P_{FR}(0)=z^{n-2}R_{2}(z)$, where $R_{i}(z)$ ($i=1,2$) has no multiple zero with $R_{1}(1)\not=0$ and $R_{2}(0)\not=0$.\par
Again in view of Theorem 5.B, $P_{FR}(z)$ is a uniqueness polynomial for $n\geq 5$. Then applying Theorem \ref{amith1}, we get that $ P_{FR}(z)$ is a SUPM if  $c \neq 0, 1,\frac{1}{2}$ and $\max\{n-2,3\}+(n-2)+3\geq 5+n$; i.e., $n\geq 6$.
\begin{exm}\label{amiabn}  Next we consider the polynomial which we already introduced in the previous chapter
$$P_{B}(z)=\sum\limits_{i=0}^{m} \binom{m}{i}\frac{(-1)^{i}}{n+m+1-i}z^{n+m+1-i} + c,$$
where $c \neq0,-\lambda,-\frac{\lambda}{2}$ and $\lambda=\sum\limits_{i=0}^{m} \binom{m}{i}\frac{(-1)^{i}}{n+m+1-i}$.
\end{exm}
It is clear that Lemma \ref{aml4} implies $\lambda \neq 0$. Also $P_{B}(z)$ is critically injective polynomial with only simple zeros, since $P'_{B}(z)=z^{n}(z-1)^{m}$ and $c \neq 0, -\lambda$.\par
Moreover, $P_{B}(z)-P_{B}(1)=(z-1)^{m+1}R_{3}(z)$ and $P_{B}(z)-P_{B}(0)=z^{n+1}R_{4}(z)$ where $R_{i}(z)$ ($i=3,4$) has no multiple zero with $R_{3}(1)\not=0$, and $R_{4}(0)\not=0$.\par
Again Theorem 5.B yields that $P_{B}(z)$ is a uniqueness polynomial when $\min\{m,n\}\geq2$ and $m+n\geq5$. Since $c \neq -\frac{\lambda}{2}$ and $P_{B}(1) + P_{B}(0) \not=0$, so in view of Theorem \ref{amith1}, $P_{B}(z)$ is a strong uniqueness polynomial if $\min\{m,n\}\geq2$ and $m+n\geq5$ and $\max\{m+1,n+1\}+(m+1)+(n+1)\geq 5+(m+n+1)$; i.e., $\max\{m,n\} \geq 3$.
\begin{rem} If we take $n=3,~m=2$ or $n=2,~m=3$, then by above discussion it is clear that $P_{B}(z)$ is a six degree strong uniqueness polynomial.
\end{rem}
Inspired by the above Example \ref{amiabn}, we first introduce most general form of $P_{B}(z)$ and we shall show that Theorem \ref{amith1} is also applicable to it.
\begin{exm}\label{amihii}
Let us define
$$P_{\star}(z)=\sum\limits_{i=0}^{m} \sum\limits_{j=0}^{n} \binom{m}{i}\binom{n}{j}\frac{(-1)^{i+j}}{n+m+1-i-j}z^{n+m+1-i-j}a^{j}b^{i}+c=Q(z)+c,$$
where $a, b$ be two complex numbers such that $b\neq0$, $a\not=b$ and
$$c\not\in \{0,-Q(a),-Q(b),-\frac{Q(a)+Q(b)}{2}\}.$$
\beas\text{Clearly,~~~~~~} P_{\star}'(z) &=& \sum\limits_{i=0}^{m} \sum\limits_{j=0}^{n} \binom{m}{i}\binom{n}{j}(-1)^{i+j}z^{m+n-i-j}a^{j}b^{i}\\
&=& \bigg(\sum\limits_{i=0}^{m}(-1)^{i}\binom{m}{i}z^{m-i}b^{i}\bigg)\bigg(\sum\limits_{j=0}^{n}\binom{n}{j}(-1)^{j}z^{n-j}a^{j}\bigg)\\
&=& (z-b)^{m}(z-a)^{n}.\eeas
To this end, first we note that\\
$P_{\star}(z)-P_{\star}(b)=(z-b)^{m+1}R_{5}(z)$ where $R_{5}(z)$ has no multiple zero and $R_{5}(b)\not=0$, and
$P_{\star}(z)-P_{\star}(a)=(z-a)^{n+1}R_{6}(z)$ where $R_{6}(z)$ has no multiple zero and $R_{6}(a)\not=0$.\par
So $P_{\star}(a)= P_{\star}(b)$ implies $(z-b)^{m+1}R_{5}(z)=(z-a)^{n+1}R_{6}(z)$. As we choose $a \not=b$ so $R_{5}(z)$ has a factor $(z-a)^{n+1}$ which implies the polynomial $P_{\star}(z)$ is of degree at least $m+1+n+1$, a contradiction.\par
Also by the assumption on $c$, it is clear that $P_{\star}(a)+P_{\star}(b) \not=0$ and $P_{\star}(a)P_{\star}(b) \not=0$. Thus $P_{\star}(z)$ has no multiple zero.\par
Again by Theorem 5.B, we see that $P_{\star}(z)$ is a uniqueness polynomial when $m+n\geq 5$ and $\min\{m,n\}\geq 2$. Thus if $m+n\geq 5$, $\max\{m,n\} \geq 3$ and $\min\{m,n\}\geq 2$, then $P_{\star}(z)$ is a strong uniqueness polynomial for meromorphic functions by Theorem \ref{amith1}.
\end{exm}
\begin{rem} If we take $n=3,~ m=2$ or $n=2,~ m=3$, then by above discussion $P_{\star}(z)$ is a six degree strong uniqueness polynomial.
\end{rem}
\begin{cor}\label{amihi}
If we take $a=0$ and $b=1$ in above example, then we get Example \ref{amiabn}.
\end{cor}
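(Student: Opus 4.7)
The plan is to simply substitute $a=0$ and $b=1$ into the definition of $P_{\star}(z)$ given in Example \ref{amihii}, verify that the double sum collapses to the single sum appearing in $P_{B}(z)$, and then check that the exclusion set for the constant $c$ matches the one imposed in Example \ref{amiabn}.

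First, with $a=0$, the factor $a^{j}$ in the double sum vanishes for every $j\geq 1$, so only the $j=0$ slice of the sum survives, leaving
\[
P_{\star}(z)=\sum_{i=0}^{m}\binom{m}{i}\binom{n}{0}\frac{(-1)^{i}}{n+m+1-i}z^{n+m+1-i}b^{i}+c.
\]
Setting $b=1$ then kills the factor $b^{i}$ and gives exactly the expression for $P_{B}(z)$ displayed in Example \ref{amiabn}. As a sanity check, the derivative in Example \ref{amihii} factors as $P_{\star}'(z)=(z-b)^{m}(z-a)^{n}$, which under $a=0$, $b=1$ becomes $z^{n}(z-1)^{m}$, matching $P_{B}'(z)$ exactly.

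Next I would verify that the restrictions on $c$ coincide. Writing $Q(z)=P_{\star}(z)-c$ as in Example \ref{amihii}, I would observe that every monomial in $Q(z)$ carries a positive power of $z$ (since $i+j\leq m+n<m+n+1$), so $Q(0)=0$; hence under $a=0$ we have $Q(a)=0$. On the other hand, under $b=1$ we obtain $Q(b)=Q(1)=\sum_{i=0}^{m}\binom{m}{i}\tfrac{(-1)^{i}}{n+m+1-i}=\lambda$ in the notation of Example \ref{amiabn}. Therefore the exclusion set
\[
\{0,\,-Q(a),\,-Q(b),\,-\tfrac{Q(a)+Q(b)}{2}\}
\]
collapses to $\{0,-\lambda,-\lambda/2\}$, which is precisely the set from which $c$ is excluded in Example \ref{amiabn}. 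Finally, the standing hypothesis $a\neq b$ from Example \ref{amihii} becomes $0\neq 1$, which holds automatically.

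There is no real obstacle here; the verification is purely formal algebraic substitution, and the only point that might be worth a line of justification is the observation $Q(0)=0$ used to reconcile the two exclusion sets.
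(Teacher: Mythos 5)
Your verification is correct and is exactly the (implicit) argument the paper intends: the corollary is stated without proof precisely because it amounts to the formal substitution you carry out, with the $j=0$ slice surviving when $a=0$, the derivative specializing to $z^{n}(z-1)^{m}$, and the exclusion set $\{0,-Q(a),-Q(b),-\tfrac{Q(a)+Q(b)}{2}\}$ collapsing to $\{0,-\lambda,-\tfrac{\lambda}{2}\}$ via $Q(0)=0$ and $Q(1)=\lambda$. No issues.
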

\begin{rem}
If we take $a=0$ and $b\not=0$ in the previous example, then we have the following polynomial:
$$P_{B}(z)=\sum\limits_{i=0}^{m} \binom{m}{i}\frac{(-1)^{i}}{n+m+1-i}z^{n+m+1-i}b^{i} + c,$$
where $bc \neq 0$, $c\not=-b^{n+m+1}\lambda,~\frac{-b^{n+m+1}\lambda}{2}$, where $\lambda$ is defined as in the previous example.
\end{rem}
Then clearly when $m+n\geq 5$, $\max\{m,n\}\geq 3$ and $\min\{m,n\}\geq 2$, $P_{B}(z)$ is a strong uniqueness polynomial.
\begin{rem}
The above examples are related to the strong uniqueness polynomials with two critical points. Now we are giving the following example where there are more than two critical points, and in view of Theorem \ref{amith1}, one can easily verify that it is a strong uniqueness polynomial.
\end{rem}
\begin{exm}
 Consider the polynomial $P(z)= z^{n}-\frac{n}{m}z^{m}+b$ where $n-m\geq 2$.
\end{exm}
Then clearly $P(z)$ has at least three critical points. Also $P(z)-P(1)=(z-1)^{2}T_{1}(z)$, where $T_{1}(1)\not=0$ and $P(z)-P(0)=z^{m}T_{2}(z)$, where $T_{2}(0)\not=0$.\par
Also we have already seen that $P(z)$ is a uniqueness polynomial for $n-m\geq2$, $\gcd(m,n)=1$ and $n\geq5$ (see Example \ref{ame2}). Thus by applying Theorem \ref{amith1}, we can conclude that $P(z)$ is a strong uniqueness polynomial when $b \not\in\{0,\frac{n-m}{m},\frac{n-m}{2m}\}$ and $\max\{m,2\}+m-n\geq3$, $n-m\geq2$, $\gcd(m,n)=1$, $n\geq5$.
\begin{rem}
For $n=7$, $m=5$ with proper choice of $b$, we can have seven degree Yi-type strong uniqueness polynomial.
\end{rem}
\begin{theo}\label{amith41} Suppose $P(z)$ is a critically injective uniqueness polynomial of degree $n$  with simple zeros having at least two critical points, say  $\gamma$ and $\delta$. Further suppose that the total number of $P(\gamma)$ and $P(\delta)$ points of $P(z)$ are respectively $p$ and $q$ with  $|p-q|\geq 3$.\par If for any complex number $d \not\in\{P(\gamma),P(\delta)\}$, $(P(z)-d)$ has at least $\min\{p+3,q+3\}$ distinct zeros, then $P(z)$ is a strong uniqueness polynomial.
\end{theo}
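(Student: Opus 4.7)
The plan is to show $P(z)$ is a SUPM by assuming $P(f)\equiv A\,P(g)$ for a non-zero constant $A$ and non-constant meromorphic functions $f$, $g$, and forcing $A=1$; since $P$ is already a uniqueness polynomial by hypothesis, this will immediately yield $f\equiv g$. So suppose, towards a contradiction, that $A\neq 1$. First I would note, via Mokhon'ko's lemma (Lemma~\ref{ML}), that $T(r,f)=T(r,g)+O(1)$, and from the simplicity of the zeros of $P$ together with $\gamma,\delta$ being critical points that $P(\gamma),P(\delta)\neq 0$ (otherwise a critical point would be a multiple zero of $P$).

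The central observation I would exploit is the identity $P(f)-A\,P(\gamma)=A\bigl(P(g)-P(\gamma)\bigr)$, which shows that the $z$-set $\{P(g)=P(\gamma)\}$ coincides with the $z$-set $\{P(f)=d_{1}\}$, where $d_{1}:=A\,P(\gamma)$. Partitioning both sides by the target value gives
$$\sum_{i=1}^{p}\overline{N}(r,\gamma_{i};g)=\sum_{j}\overline{N}(r,a_{j};f),$$
where $\gamma_{1}=\gamma,\gamma_{2},\ldots,\gamma_{p}$ are the distinct roots of $P(z)=P(\gamma)$ and the $a_{j}$ are the distinct roots of $P(z)=d_{1}$. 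The analogous identity holds with $(\gamma,d_{1})$ replaced by $(\delta,d_{2})$, where $d_{2}:=A\,P(\delta)$ and $\delta_{1},\ldots,\delta_{q}$ are the distinct roots of $P(z)=P(\delta)$.

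I would then split into four cases according to whether $d_{1},d_{2}$ lie in $\{P(\gamma),P(\delta)\}$, noting first that $d_{1}=P(\gamma)$ or $d_{2}=P(\delta)$ would force $A=1$: namely (i) both $d_{1},d_{2}\notin\{P(\gamma),P(\delta)\}$; (ii) $d_{1}=P(\delta)$, $d_{2}\notin\{P(\gamma),P(\delta)\}$; (iii) $d_{2}=P(\gamma)$, $d_{1}\notin\{P(\gamma),P(\delta)\}$; (iv) $d_{1}=P(\delta)$ and $d_{2}=P(\gamma)$, which (after using critical injectivity to exclude $P(\gamma)=P(\delta)$) forces $A=-1$ and $P(\gamma)+P(\delta)=0$. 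In each case I would apply the Second Fundamental Theorem to $f$ at the relevant distinct finite values together with $\infty$, and combine with the estimates $\overline{N}(r,\infty;f)\leq T(r,f)+O(1)$ and $\sum_{i}\overline{N}(r,\gamma_{i};g)\leq p\,T(r,g)+O(1)$ (and $q\,T(r,g)+O(1)$ on the $\delta$-side). In case (i), the hypothesis supplies $m,m'\geq\min\{p+3,q+3\}$ distinct roots of $P(z)=d_{1}$ and $P(z)=d_{2}$ respectively, while the computation yields $m\leq p+2$ and $m'\leq q+2$; these together force $p\geq q+1$ and $q\geq p+1$, an immediate contradiction. In cases (ii)--(iv), the same machinery yields the sandwich $|p-q|\leq 2$, which directly contradicts the hypothesis $|p-q|\geq 3$.

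The main obstacle I anticipate is the casework bookkeeping: verifying that the case structure is exhaustive, that the potential overlap of case (ii) with $d_{2}\in\{P(\gamma),P(\delta)\}$ collapses into case (iv) via critical injectivity, and that in the asymmetric cases (ii) and (iii) the two resulting inequalities really do pin $|p-q|$ into $\{0,1,2\}$. A secondary care is that the distinct roots $a_{j}$ of $P(z)-d_{1}$ need not all be simple when $d_{1}$ is a critical value of $P$ other than $P(\gamma),P(\delta)$; but since the key identity and the application of the Second Fundamental Theorem are phrased via the reduced counting functions $\overline{N}$, this causes no real difficulty. Once these cases are dispatched, $A=1$ is forced, and the hypothesis that $P$ is a uniqueness polynomial then delivers $f\equiv g$.
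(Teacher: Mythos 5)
Your proposal is correct and runs on the same engine as the paper's proof --- Mokhon'ko's lemma to get $T(r,f)=T(r,g)+O(1)$, the identity between the preimage sets $\{P(g)=P(\gamma)\}$ and $\{P(f)=A P(\gamma)\}$, and the Second Fundamental Theorem applied to the distinct roots of the shifted polynomials --- but your case decomposition is genuinely different. The paper splits on whether $A=P(\gamma)$ and, within that, on how the auxiliary value $1$ (which satisfies $P(g)=1\Leftrightarrow P(f)=A$) interacts with $P(\delta)$ and with $P(\gamma)=-1$; you instead split directly on whether $d_{1}=AP(\gamma)$ and $d_{2}=AP(\delta)$ land in $\{P(\gamma),P(\delta)\}$, which makes exhaustiveness transparent, since $d_{1}=P(\gamma)$ or $d_{2}=P(\delta)$ forces $A=1$ outright. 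A concrete payoff of your version is case (i): by running the counting argument for \emph{both} $d_{1}$ and $d_{2}$ you obtain $\min\{p+3,q+3\}\le p+2$ and $\min\{p+3,q+3\}\le q+2$ simultaneously, which is already contradictory with no appeal to $|p-q|\ge 3$; the paper's corresponding step (Subcase 1.1) feeds only one shifted value into the Second Fundamental Theorem, and its inequality $l\le p+2$ against $l\ge\min\{p+3,q+3\}$ yields a contradiction, as written, only when $q\ge p$, so your two-sided variant is actually the more robust bookkeeping. In your cases (ii)--(iv), the coincidence of the two critical preimage sets gives the sandwich $p-2\le q$ and $q-2\le p$, contradicting $|p-q|\ge 3$ exactly as in the paper's Case 2, and your observation that multiple roots of $P(z)-d_{1}$ are harmless because everything is phrased with reduced counting functions is the right one.
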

The following Corollary is an immediate consequence of Theorem \ref{amith41}.
\begin{cor}\label{amicor2.1} Suppose $P(z)$ is a critically injective uniqueness polynomial of degree $n$  with simple zeros having at least two critical points. Further suppose that $P(z)$ has two critical points, say  $\gamma$ and $\delta$ satisfying $P(\delta)=1$, $(P(\gamma))^{2} \not=0,~1$.\par
If for any complex number $d \not\in \{P(\gamma),P(\delta)\}$, $(P(z)-d)$ has at least $q+3$ distinct zeros, where $q$ is the total number of $P(\delta)$ points of $P(z)$, then $P(z)$ is a strong uniqueness polynomial.
\end{cor}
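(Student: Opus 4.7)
My plan is to deduce Corollary \ref{amicor2.1} as a direct specialization of Theorem \ref{amith41}. The corollary's arithmetic assumptions $P(\delta)=1$ and $(P(\gamma))^{2}\neq 0,1$ unpack precisely as $P(\gamma)\not\in\{0,P(\delta),-P(\delta)\}$; in other words the two critical values are distinct, nonzero, and do not sum to zero. These are exactly the non-degeneracy conditions that the proof of Theorem \ref{amith41} uses to eliminate the exceptional cases for the constant $A$ arising in a hypothetical functional relation $P(f)\equiv AP(g)$ (compare with the Fujimoto-style arguments underlying Theorems 6.A--6.C cited earlier in this chapter).

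The zero-count hypothesis in Theorem \ref{amith41} asks that $(P(z)-d)$ have at least $\min\{p+3,q+3\}$ distinct zeros for every $d\not\in\{P(\gamma),P(\delta)\}$. Since $\min\{p+3,q+3\}\leq q+3$ in every case, the corollary's stronger bound of $q+3$ distinct zeros immediately supplies what is required.

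The only point that requires care is the inequality $|p-q|\geq 3$ appearing in Theorem \ref{amith41}. Writing $p=n-s$ and $q=n-r$ with $s,r$ the multiplicities of $\gamma,\delta$ as zeros of $P'$, this reduces to $|r-s|\geq 3$. By critical injection, any further critical point $\eta$ of $P$ satisfies $P(\eta)\not\in\{P(\gamma),P(\delta)\}$, so applying the corollary's zero-count hypothesis at $d=P(\eta)$ forces the multiplicity of $\eta$ as a zero of $P'$ to be at most $r-3$. A degree-count of $P'$ (namely $\deg P'=n-1=s+r+\sum_{\eta}s_{\eta}$), together with the arithmetic data $P(\gamma)\neq 0,\pm 1$ and the critical injection property, then pins down $s$ so that $s\leq r-3$, which yields $|p-q|\geq 3$. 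With all hypotheses verified, Theorem \ref{amith41} gives that $P(z)$ is a strong uniqueness polynomial.

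The principal obstacle is this last step, because the corollary's zero-count hypothesis explicitly excludes $d=P(\gamma)$ and therefore does not directly bound the multiplicity of $\gamma$ in $P'$; the argument has to transfer the bound available at the other critical values to $\gamma$ itself, and this is precisely where the arithmetic conditions $P(\gamma)\neq 0,\pm 1$ (equivalently $P(\gamma)\neq P(\delta),-P(\delta),0$) become indispensable. Once this transfer is made, the conclusion of the corollary is a one-line invocation of Theorem \ref{amith41}.
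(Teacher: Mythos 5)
Your reduction stalls at exactly the point you flag: the hypothesis $|p-q|\geq 3$ of Theorem \ref{amith41} does \emph{not} follow from the hypotheses of Corollary \ref{amicor2.1}, and no ``transfer'' argument can produce it. Writing $s$ and $r$ for the multiplicities of $\gamma$ and $\delta$ as zeros of $P'$, critical injectivity gives $p=n-s$ and $q=n-r$ (the only multiple root of $P(z)-P(\gamma)$ is $\gamma$ itself), so $|p-q|=|s-r|$. The corollary constrains the \emph{values} $P(\gamma)$, $P(\delta)$ and the zero-counts of $P(z)-d$ at $d\not\in\{P(\gamma),P(\delta)\}$, none of which says anything about $s$ versus $r$. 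Concretely, take $P'(z)=z^{3}(z-1)^{3}$, $n=7$, with the constant of integration chosen so that $P(1)=1$ and $P(0)\neq 0,\pm 1$: then $s=r=3$, every $d$ outside the two critical values is attained at exactly $n=7=q+3$ distinct points, all hypotheses of the corollary hold, yet $|p-q|=0$. So the corollary is genuinely not a specialization of the \emph{statement} of Theorem \ref{amith41}, and your degree count $n-1=s+r+\sum_{\eta}s_{\eta}$ combined with $s_{\eta}\leq r-3$ for the other critical points bounds $\sum_{\eta}s_{\eta}$, not $s$.

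The intended (and correct) route is to go back into the \emph{proof} of Theorem \ref{amith41} and observe that $|p-q|\geq 3$ is used only in the last subcase of Case 2, namely $A\neq 1$, $A\neq P(\gamma)$ and $AP(\delta)=P(\gamma)$; when $P(\delta)=1$ this forces $A=P(\gamma)$, contradicting the case assumption, so that subcase is vacuous. The remaining subcases either cannot occur under the corollary's arithmetic hypotheses (Subcase 1.1 needs $P(\delta)\neq 1$, and the branch $P(\gamma)=-1$ is excluded by $(P(\gamma))^{2}\neq 1$) or invoke the zero-count hypothesis only at $d=\tfrac{P(\delta)}{P(\gamma)}=\tfrac{1}{P(\gamma)}$ and $d=AP(\delta)=A$, which lie outside $\{P(\gamma),P(\delta)\}$ precisely because $(P(\gamma))^{2}\neq 0,1$ and $A\neq 1,P(\gamma)$; there the bound ``at least $q+3$ distinct zeros'' is exactly what closes the Second Fundamental Theorem estimates against $qT(r,g)$. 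Your first two paragraphs are consistent with this, but the corollary is a consequence of the theorem's proof, not of its statement, and your third paragraph's attempt to force $|p-q|\geq 3$ is the gap.
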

\begin{exm}\label{amih}
Consider the polynomial
$$P_{B_{1}}(z)=\sum\limits_{i=0}^{m} \binom{m}{i}\frac{(-1)^{i}}{n+m+1-i}z^{n+m+1-i}b^{i} + 1,$$
where we choose $b (\neq0)$ such a manner that $b^{n+m+1}\sum\limits_{i=0}^{m} \binom{m}{i}\frac{(-1)^{i}}{n+m+1-i}\not= -1,-2$.
\end{exm}
Here $P_{B_{1}}'(z)=(z-b)^{m}z^{n}$. Thus $P_{B_{1}}(z)$ is critically injective polynomial with no multiple zero when  $\min\{m,n\}\geq 2$.\par
If we take $m,n\in\mathbb{N}$ with $m+n\geq 5$ and $\min\{m,n\}\geq 2$, then by Theorem 5.B, $P_{B_{1}}(z)$ is a uniqueness polynomial. \par
Also $(P_{B_{1}}(z)-d)$ has exactly $m+n+1$ distinct zeros for any complex number $d \in \mathbb{C}\backslash\{P_{B_{1}}(b),P_{B_{1}}(0)\}$, otherwise there exist at least one complex number $\varsigma$ which is a zero of $\left(P_{B_{1}}(z)-d\right)$ of multiplicity at least 2. Consequently $P_{B_{1}}(\varsigma)=d$ and  $P_{B_{1}}'(\varsigma)=0$. That is, $d \in \{P_{B_{1}}(0),P_{B_{1}}(b)\}$, which is absurd.\par
Thus in view of Corollary \ref{amicor2.1}, $P_{B_{1}}(z)$ is a strong uniqueness polynomial if  $m+n\geq 5$, $\min\{m,n\}\geq 2$ and $n\geq 3$.
\begin{rem}
Example \ref{amih} gives the answer of the question raised in the paper (\cite{ami1}).
\end{rem}
We have observed from Example \ref{kly} that uniqueness polynomial may contain multiple zeros. However the two theorems so far stated are dealing with strong uniqueness polynomials with simple zeros. So natural question would be whether there exist a strong uniqueness polynomial which has multiple zeros? The next theorem shows that the answer is affirmative.\par
Next we shall demonstrate the following strong uniqueness polynomial with multiple zero of degree $n\geq 6$.
\begin{theo}\label{amith2}
Let $$\overline{P}(z)=z^{n}+az^{n-1}+bz^{n-2},$$ where $ab \neq0$ and $a^{2}=\lambda b$ where $\lambda =4\left(1-\frac{1}{(n-1)^{2}}\right)$, then $\overline{P}(z)$ is a strong uniqueness polynomial of degree $n\geq 6$.
\end{theo}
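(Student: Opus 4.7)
The plan is to prove $\overline{P}(f)\equiv A\,\overline{P}(g)$ forces $f\equiv g$, which in turn forces $A=1$. First I would record the critical structure of $\overline{P}(z)=z^{n-2}(z^{2}+az+b)$: the polynomial has a zero of multiplicity $n-2$ at $0$ and two simple zeros $\beta_{1},\beta_{2}$ (the roots of $z^{2}+az+b$, well-defined since the discriminant $a^{2}-4b=-4b/(n-1)^{2}\neq 0$). The hypothesis $a^{2}=\lambda b$ with $\lambda=4(1-(n-1)^{-2})$ is exactly what makes the discriminant of the quadratic factor of $\overline{P}'(z)/z^{n-3}=nz^{2}+(n-1)az+(n-2)b$ vanish, so $\overline{P}'(z)=nz^{n-3}(z-\alpha)^{2}$ with $\alpha=-a(n-1)/(2n)\neq 0$. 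Thus $\overline{P}$ has exactly two critical points $0$ and $\alpha$, with critical values $\overline{P}(0)=0$ and $\overline{P}(\alpha)=\alpha^{n-2}\cdot 2b/[n(n-1)]\neq 0$, so $\overline{P}$ is critically injective.

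Next, since $g\not\equiv 0$, I would set $h:=f/g$, meromorphic on $\mathbb{C}$. Substituting $f=gh$ into $\overline{P}(f)=A\overline{P}(g)$ and cancelling the common factor $g^{n-2}$ yields the identity
\[
g^{2}(h^{n}-A)+a\,g\,(h^{n-1}-A)+b\,(h^{n-2}-A)\equiv 0.\tag{$\ast$}
\]
\textbf{Case 1: $h^{n}\equiv A$.} Then $h$ is a constant $\omega$ with $\omega^{n}=A$, so $f\equiv\omega g$. Plugging this back into $\overline{P}(f)=A\overline{P}(g)$ and using that $g^{n},g^{n-1},g^{n-2}$ are linearly independent over $\mathbb{C}$ for non-constant $g$, together with $ab\neq 0$, I get $\omega^{n}=\omega^{n-1}=\omega^{n-2}=A$, which forces $\omega=1$, hence $A=1$ and $f\equiv g$.

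\textbf{Case 2: $h^{n}\not\equiv A$.} Then $(\ast)$ is a non-degenerate quadratic in $g$ with discriminant $\Delta(z)=a^{2}(z^{n-1}-A)^{2}-4b(z^{n}-A)(z^{n-2}-A)$ evaluated at $z=h$. Meromorphicity of $g$ requires $\sqrt{\Delta(h)}$ to be meromorphic, so every zero of $\Delta(h)$ must have even multiplicity. Substituting $a^{2}=\lambda b$ and simplifying gives
\[
\tfrac{(n-1)^{2}}{4b}\Delta(z)=A(n-1)^{2}z^{n-2}(z-1)^{2}-(z^{n-1}-A)^{2}=:\tilde\Delta(z),
\]
whose only non-vanishing coefficients sit at degrees $2n-2,n,n-1,n-2,0$. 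Attempting $\tilde\Delta=Q^{2}$ with $Q\in\mathbb{C}[z]$ of degree $n-1$ and leading coefficient $i$, the zero coefficients of $\tilde\Delta$ in degrees $2n-3,\ldots,n+1$ force the middle coefficients $c_{n-2},\ldots,c_{2}$ of $Q$ to vanish one by one; then matching the coefficients of $z^{n}$ and $z^{n-1}$ fixes $c_{1}$ and $c_{0}$ uniquely, and the coefficient of $z^{n-2}$ in $Q^{2}$ is forced to be $0$ (for $n\geq 6$, the would-be middle term $c_{(n-2)/2}^{2}$ also vanishes), contradicting $A(n-1)^{2}\neq 0$. Hence $\tilde\Delta$ is not a perfect square, so has odd-multiplicity roots $\zeta_{1},\ldots,\zeta_{s}$; the requirement that each $h-\zeta_{i}$ have only even-order zeros means the $\zeta_{i}$ are totally ramified or Picard exceptional values of $h$, and the Nevanlinna defect relation $\sum_{a}[1-1/\nu(a,h)]\leq 2$ (combined with $T(r,h)$ being comparable to $T(r,f)$) produces a contradiction for $n\geq 6$.

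The main obstacle is the last step of Case 2: showing rigorously that the odd-multiplicity roots of $\tilde\Delta$ cannot all be absorbed as ramified/Picard-exceptional values of $h=f/g$. This demands an explicit count of odd-multiplicity roots of $\tilde\Delta(z)$ under the hypothesis $a^{2}=\lambda b$, combined with a Nevanlinna-theoretic estimate on $h$ using that $h$ satisfies the quadratic $(\ast)$ over $\mathbb{C}(g)$; the threshold $n\geq 6$ is the smallest $n$ for which this argument closes.
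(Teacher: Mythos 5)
Your overall strategy coincides with the paper's: set $h=f/g$, reduce $\overline{P}(f)=A\overline{P}(g)$ to the quadratic $(\ast)$ in $g$, observe that the discriminant $\Delta(h)=b\psi(h)$ with $\psi(t)=\lambda(t^{n-1}-A)^{2}-4(t^{n-2}-A)(t^{n}-A)$ must be the square of a meromorphic function, and then exploit the resulting ramification of $h$. Your closed form $\tfrac{(n-1)^{2}}{4b}\Delta(z)=A(n-1)^{2}z^{n-2}(z-1)^{2}-(z^{n-1}-A)^{2}$ is a correct and rather pleasant rewriting, and your Case 1 is fine (though note you should also dispose of the case where $h$ is a constant with $h^{n}\neq A$, which forces $g$ to be constant directly from $(\ast)$).

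However, the step you yourself flag as "the main obstacle" is precisely the mathematical content of the theorem, and the substitute you offer does not close it. Showing that $\tilde\Delta$ is not a perfect square \emph{polynomial} only guarantees at least two odd-multiplicity roots (the degree $2n-2$ is even), whereas your ramification argument needs at least five: each odd-multiplicity root $\zeta_{i}$ of $\tilde\Delta$ is a totally ramified (or omitted) value of $h$, contributing at least $\tfrac{1}{2}$ to $\sum_{a}\Theta(a,h)\leq 2$, so up to four such values are perfectly admissible (compare $\sin z$, which has two totally ramified values). The paper supplies exactly the missing count in its Lemmas \ref{amis1} and \ref{amis1.1}: for $A\neq 0,1$ the polynomial $\psi$ has \emph{no} multiple roots at all (hence $2n-2$ simple roots), while for $A=1$ it has a single root of multiplicity four at $t=1$ and $2n-6$ simple roots elsewhere. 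Neither fact is routine — the proof of Lemma \ref{amis1} passes through $F(t)=\psi(e^{t})e^{(1-n)t}$ and a hyperbolic-function identity forcing $\cosh w_{0}=1$ — and your coefficient-matching scheme, besides proving a much weaker statement, does not even distinguish the genuinely different case $A=1$ (where $\tilde\Delta(1)=0$ with multiplicity four). With those two lemmas in hand, the Second Fundamental Theorem applied to $h$ at the simple roots (and, for $A=1$, additionally at the roots of $h^{n}-1$) yields the contradictions $(2n-4)T(r,h)\leq(n-1)T(r,h)$ and $(3n-9)T(r,h)\leq(2n-4)T(r,h)$, the latter being what pins down $n\geq 6$. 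As it stands, your argument is a correct skeleton with the load-bearing lemma missing.
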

\begin{cor}
Let $$\overline{P}(z)=z^{n}+az^{n-1}+bz^{n-2}+c,$$ where $ab \neq0$ and $a^{2}=\lambda b$ where $\lambda =4\left(1-\frac{1}{(n-1)^{2}}\right)$, then $\overline{P}(z)$ is a uniqueness polynomial of degree $n\geq 6$.
\end{cor}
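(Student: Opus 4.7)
The plan is first to verify that $\overline{P}$ is a uniqueness polynomial for meromorphic functions, and then, assuming $\overline{P}(f)\equiv A\,\overline{P}(g)$ for some $A\in\mathbb{C}\setminus\{0\}$, to show $A=1$; the uniqueness polynomial property then yields $f\equiv g$.

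The hypothesis $a^{2}=\lambda b$ with $\lambda=4n(n-2)/(n-1)^{2}$ is arranged precisely so that
\[
\overline{P}'(z)\;=\;nz^{n-3}\Bigl[z^{2}+\tfrac{a(n-1)}{n}z+\tfrac{b(n-2)}{n}\Bigr]\;=\;nz^{n-3}(z-\gamma)^{2},
\]
where $\gamma=-a(n-1)/(2n)\neq 0$, because that choice of $\lambda$ makes the discriminant of the bracketed quadratic vanish. So $\overline{P}$ has exactly two critical points, $0$ with multiplicity $q_{1}=n-3$ and $\gamma$ with multiplicity $q_{2}=2$, and one computes $\overline{P}(0)=0$ and $\overline{P}(\gamma)=2\gamma^{n}/((n-1)(n-2))\neq 0$. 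Thus $\overline{P}$ is critically injective; Theorem 5.B applies since $q_{1}q_{2}=2(n-3)>n-1=q_{1}+q_{2}$ whenever $n\geq 6$, so $\overline{P}$ is a UPM.

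Now suppose $\overline{P}(f)\equiv A\,\overline{P}(g)$ with $A\neq 1$. Because $\gamma$ is a critical point of multiplicity $2$, we have $\overline{P}(z)-\overline{P}(\gamma)=(z-\gamma)^{3}R(z)$ for a polynomial $R$ of degree $n-3$ with simple roots $r_{1},\ldots,r_{n-3}$, all distinct from $0,\alpha,\beta,\gamma$ (here $\alpha,\beta$ denote the two simple roots of $z^{2}+az+b$). Since $A\neq 1$, the value $A\overline{P}(\gamma)$ is not a critical value, so $\overline{P}(z)-A\overline{P}(\gamma)$ has $n$ distinct simple roots $v_{1},\ldots,v_{n}$, disjoint from $\{0,\alpha,\beta,\gamma\}$. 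Rearranging the hypothesis gives the twin identities
\[
\prod_{i=1}^{n}(f-v_{i})\;=\;A\,(g-\gamma)^{3}R(g),\qquad
\prod_{j=1}^{n}(g-w_{j})\;=\;A^{-1}(f-\gamma)^{3}R(f),
\]
where $w_{1},\ldots,w_{n}$ are the analogous simple roots of $\overline{P}(z)-A^{-1}\overline{P}(\gamma)$. The first identity forces every $\gamma$-point of $g$ of multiplicity $\mu$ to be a $v_{i}$-point of $f$ of multiplicity exactly $3\mu$, and every $r_{k}$-point of $g$ to be a $v_{i}$-point of $f$ with equal multiplicity; hence $\sum_{i}\overline{N}(r,v_{i};f)=\overline{N}(r,\gamma;g)+\sum_{k}\overline{N}(r,r_{k};g)$ while $\sum_{i}N(r,v_{i};f)=3N(r,\gamma;g)+\sum_{k}N(r,r_{k};g)$.

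I would then apply Nevanlinna's Second Fundamental Theorem to $f$ with the $n+4$ distinct values $\{0,\alpha,\beta,\infty,v_{1},\ldots,v_{n}\}$ and to $g$ with $\{0,\alpha,\beta,\infty,\gamma,r_{1},\ldots,r_{n-3}\}$, translating $\sum_{i}\overline{N}(r,v_{i};f)$ via the identities above, using $N(r,\infty;f)=N(r,\infty;g)$ (immediate from comparing pole orders on both sides of the hypothesis) and $T(r,f)=T(r,g)+O(1)$ (from Mokhon'ko's lemma). The crucial extra saving of $2\overline{N}(r,\gamma;g)$ coming from the multiplicity tripling at $\gamma$-points is what makes the resulting inequality contradict $n\geq 6$; finally $A=1$ and the UPM step finishes the proof. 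The main technical obstacle is calibrating the Second Fundamental Theorem bookkeeping tightly enough to cover the edge cases $n=6,7$, for which Fujimoto's Theorem 6.C does not apply (that theorem requires $q_{2}\geq q_{1}+3$, i.e., $n\geq 8$); the triple-zero structure at $\gamma$ and the pole-matching must be exploited carefully to bridge that gap.
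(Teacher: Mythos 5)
Your first paragraph already proves the corollary, and by a genuinely different route from the paper's. The corollary asks only that $\overline{P}$ be a uniqueness polynomial, and you obtain this directly from Fujimoto's criterion (Theorem 5.B): the hypothesis $a^{2}=\lambda b$ makes the discriminant of the quadratic factor of $\overline{P}'$ vanish, so $\overline{P}'(z)=nz^{n-3}(z-\gamma)^{2}$ with $\gamma=-a(n-1)/(2n)\neq0$; the two critical values differ by $2\gamma^{n}/((n-1)(n-2))\neq0$, so $\overline{P}$ is critically injective; and $q_{1}q_{2}=2(n-3)>n-1=q_{1}+q_{2}$ precisely when $n\geq6$. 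The paper instead reads the corollary off from Theorem \ref{amith2}: the constant-free polynomial $z^{n}+az^{n-1}+bz^{n-2}$ is there shown to be a \emph{strong} uniqueness polynomial by substituting $h=f/g$, completing the square to isolate $\psi(h)=\lambda(h^{n-1}-A)^{2}-4(h^{n-2}-A)(h^{n}-A)$, and applying the Second Fundamental Theorem to $h$ at the zeros of $\psi$ (Lemmas \ref{amis1}--\ref{amis2}); since the relation $P(f)\equiv P(g)$ is unaffected by adding the constant $c$ to $P$, the uniqueness-polynomial property transfers to $\overline{P}$. Your route is shorter and uses only the critical-point data. Two small points to tidy: $\overline{P}(0)=c$, not $0$ (harmless, since only $\overline{P}(0)\neq\overline{P}(\gamma)$ is used); and because the paper phrases ``property H'' for polynomials without multiple zeros, while $\overline{P}$ may have one (e.g.\ a zero of order $n-2$ at the origin when $c=0$), you should add the one-line reduction that critical injectivity, the multiplicities $q_{1},q_{2}$ and the uniqueness-polynomial property are all unchanged upon replacing $c$ by a generic $c'$ for which $\overline{P}$ has only simple zeros.

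Your second and third paragraphs aim at the stronger assertion that $\overline{P}$ is a \emph{strong} uniqueness polynomial for every such $c$. That is more than the corollary states, the paper's sufficient conditions for that stronger property all impose extra exclusions on the constant term (compare the hypotheses on $c$ in Examples \ref{amiabn} and \ref{amihii}), and you explicitly leave the decisive Second Fundamental Theorem estimate for $n=6,7$ unfinished. Since the first paragraph settles the corollary as stated, those two paragraphs should simply be dropped rather than completed.
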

\begin{rem}  Let $P_{1}(z)$ be a SUPM. Then $(P_{1}oP_{2})(z)$ is a SUPM if and only if $P_{2}(z)$ is a UPM.
\end{rem}
It is easy to see that if $P(z)$ is strong uniqueness polynomial, then for any non-zero constants $a$ and $c$, $P(af+b)=cP(ag+b)$ gives $(af+b)=(ag+b)$, i.e, $P(az+b)$ is also strong uniqueness polynomial.\par
Already we have discussed in the introductory part of this chapter that finite URSM's are nothing but the set of distinct zeros of some suitable polynomials. Thus at the time of studying uniqueness polynomial, it is general curiosity of the researchers to investigate whether the zero set of the uniqueness polynomial forms unique range set or not.\par For example, Yi (\cite{am15}), Frank-Reinders (\cite{am3}), Banerjee (\cite{ami1}) simultaneously studied the corresponding unique range sets in connection to their uniqueness polynomial.\par
As we have already introduced some new type of uniqueness polynomials in Example \ref{amihii}, we also intend to follow the same direction. Before going to state our concerning results, we recall some well known definitions and results.\par
Fujimoto first observed that \enquote{critical injection property} of polynomials plays crutial role for the set of zeros of a strong uniqueness polynomial to be a unique range set.
\begin{theo 6.D}\label{ami1111} (\cite{am2})
 Let $P(z)$ be a critically injective polynomial of degree $n$ in $\mathbb{C}$ having only simple zeros. Let $P'(z)$ have $k$ distinct zeros and either $k\geq3$ or $k=2$ and $P'(z)$ have no simple zero. Further suppose that $P(z)$ is a SUPM (resp. SUPE). If $S$ is the set of zeros of $P(z)$, then $S$ is a URSM (resp. URSE) whenever $n>2k+6$ (resp. $n>2k+2$) while URSM-IM (resp. URSE-IM) whenever $n>2k+12$ (resp. $n>2k+5$).
 \end{theo 6.D}
\begin{defi} (\cite{am1})
A set $S\subset \mathbb{C}\cup \{\infty\}$ is called a $URSM_{l)}$ (resp. $URSE_{l)}$) if for any two non-constant meromorphic (resp. entire) functions $f$ and $g$, $E_{l)}(S,f)=E_{l)}(S,g)$ implies $f\equiv g$.
\end{defi}
In 2009, with the notion of  $URSM_{l)}$, Bai-Han-Chen (\cite{am1}) improved Theorem 6.D.
\begin{theo 6.E} (\cite{am1})
In addition to the hypothesis of Theorem 6.D, we suppose that $l$ is a positive integer or $\infty$. Let $S$ be the set of zeros of $P(z)$. If
\begin{enumerate}
\item [i)] $l\geq 3$ or $\infty$ and  $n>2k+6$ (resp. $n>2k+2$),
\item [ii)] $l=2$  and  $n>2k+7$ (resp. $n>2k+2$),
\item [iii)] $l=1$  and  $n>2k+10$ (resp. $n>2k+4$),
\end{enumerate}
then $S$ is a $URSM_{l)}$ (resp. $URSE_{l)}$).
\end{theo 6.E}
Recently Banerjee (\cite{ami1}) proved the following result in more general settings.
\begin{theo 6.F} (\cite{ami1})
In addition to the hypothesis of Theorem 6.D, we suppose that $l$ is a positive integer or $\infty$. Let $S$ be the set of zeros of $P(z)$. If
\begin{enumerate}
\item [i)] $l\geq 3$ or $\infty$ and  $\min\{\Theta(\infty;f),\Theta(\infty;g)\}>\frac{6+2k-n}{4}$,
\item [ii)] $l=2$  and  $\min\{\Theta(\infty;f),\Theta(\infty;g)\}>\frac{14+4k-2n}{9}$,
\item [iii)] $l=1$  and  $\min\{\Theta(\infty;f),\Theta(\infty;g)\}>\frac{10+2k-n}{6}$,
\end{enumerate}
then $S$ is a $URSM_{l)}$ ($URSE_{l)}$).
\end{theo 6.F}
We have already seen from Example \ref{amihii} that the polynomial
\begin{equation}\label{equ1} P_{\star}(z)=\sum\limits_{i=0}^{m} \sum\limits_{j=0}^{n} \binom{m}{i}\binom{n}{j}\frac{(-1)^{i+j}}{n+m+1-i-j}z^{n+m+1-i-j}a^{j}b^{i} + c,\end{equation}
is a critically injective strong uniqueness polynomial without any multiple zeros when $m+n\geq 5$, $\max\{m,n\}\geq 3$ and $\min\{m,n\}\geq 2$ with $a\not= b$, $b\not =0$.
Also we have defined
$$Q(z)=\sum\limits_{i=0}^{m} \sum\limits_{j=0}^{n} \binom{m}{i}\binom{n}{j}\frac{(-1)^{i+j}}{n+m+1-i-j}z^{n+m+1-i-j}a^{j}b^{i}$$
and choose
$$c\not\in \{0,-Q(a),-Q(b),-\frac{Q(a)+Q(b)}{2}\}.$$
Thus the following two Theorems are immediate in view of Theorems 6.E and 6.F.
\begin{theo}\label{amibcha} Let $m,n$ be two integers such that  $m+n\geq 5$, $\max\{m,n\}\geq 3$ and $\min\{m,n\}\geq 2$. Take $S_{\star}=\{z~:~P_{\star}(z)=0\}$ where $P_{\star}(z)$ is defined by \ref{equ1} with the already defined choice of $a,b,c$. Further suppose that $l$ is a positive integer or $\infty$. If
\begin{enumerate}
\item [i)] $l\geq 3$ or $\infty$ and  $m+n>9$ (resp. $5$),
\item [ii)] $l=2$  and  $m+n>10$ (resp. $5$),
\item [iii)] $l=1$  and  $m+n>13$ (resp. $7$),
\end{enumerate}
then $S_{\star}$ is a $URSM_{l)}$ (resp. $URSE_{l)})$.
\end{theo}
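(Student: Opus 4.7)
The plan is to treat the present theorem as a direct specialization of Theorem 6.E, with $P_{\star}(z)$ playing the role of the abstract polynomial $P(z)$ and all the structural hypotheses already verified in Example 6.2.4. The only real work is a careful audit of those hypotheses for $P_{\star}$ and a translation of the numerical inequalities.

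First, I would compile the structural properties of $P_{\star}$ from Example 6.2.4 that Theorem 6.E demands. Under $m+n\geq 5$, $\max\{m,n\}\geq 3$, $\min\{m,n\}\geq 2$ together with $a\neq b$, $b\neq 0$ and $c\notin\{0,-Q(a),-Q(b),-(Q(a)+Q(b))/2\}$, it was shown that $P_{\star}(z)$ is critically injective, has only simple zeros, and is a SUPM. Next, since $P_{\star}'(z)=(z-b)^{m}(z-a)^{n}$ has exactly $k=2$ distinct zeros $a$ and $b$, and the hypothesis $\min\{m,n\}\geq 2$ guarantees that neither of these critical points is simple, the alternative ``$k=2$ and $P'(z)$ has no simple zero'' in Theorem 6.D is satisfied. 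Hence the complete hypothesis set of Theorem 6.E is in force with $k=2$ and $\deg P_{\star}=m+n+1$.

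Second, I would substitute $n\mapsto m+n+1$ and $k\mapsto 2$ into the six inequalities of Theorem 6.E. For URSM this gives $m+n+1>2k+6=10$, $m+n+1>2k+7=11$, $m+n+1>2k+10=14$, i.e.\ $m+n>9,10,13$ for $l\geq 3$ or $\infty$, $l=2$, $l=1$ respectively; for URSE it gives $m+n+1>2k+2=6$, $m+n+1>2k+2=6$, $m+n+1>2k+4=8$, i.e.\ $m+n>5,5,7$. These are precisely the six numerical conditions stated in the theorem, and an application of Theorem 6.E then yields that $S_{\star}$ is a $URSM_{l)}$ (resp.\ $URSE_{l)}$).

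There is no genuinely new analytic difficulty; the argument is a clean bookkeeping reduction to Theorem 6.E. The only point requiring care is to verify, once and for all from Example 6.2.4, that the avoidance conditions on $c$ force both (a) simplicity of every zero of $P_{\star}$ and (b) the SUPM conclusion of Theorem 6.1 (which needs $P_{\star}(a)+P_{\star}(b)\neq 0$ and $\max\{m+1,n+1\}+(m+1)+(n+1)\geq 5+(m+n+1)$), and that $\min\{m,n\}\geq 2$ is exactly the arithmetic hypothesis that makes the ``$k=2$, no simple critical zero'' branch of Theorems 6.D/6.E available. Once this is recorded, the rest is essentially an invocation.
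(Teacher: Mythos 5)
Your proposal is correct and matches the paper's own treatment: the paper states these theorems as ``immediate in view of Theorems 6.E and 6.F,'' relying on exactly the facts you audit from Example \ref{amihii} (critical injectivity, simple zeros, the SUPM property, and $k=2$ with no simple critical zero since $\min\{m,n\}\geq 2$), followed by the substitution $\deg P_{\star}=m+n+1$, $k=2$ into Theorem 6.E. Your arithmetic reproduces the six stated bounds, so the argument is sound.
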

\begin{theo}\label{saptami} With the suppositions of Theorem \ref{amibcha}, if
\begin{enumerate}
\item [i)] $l\geq 3$ or $\infty$ and  $\min\{\Theta(\infty;f),\Theta(\infty;g)\}>\frac{9-m-n}{4}$,
\item [ii)] $l=2$  and  $\min\{\Theta(\infty;f),\Theta(\infty;g)\}>\frac{20-2m-2n}{9}$,
\item [iii)] $l=1$  and  $\min\{\Theta(\infty;f),\Theta(\infty;g)\}>\frac{13-m-n}{6}$,
\end{enumerate}
then $S_{\star}$ is a $URSM_{l)}$ $(URSE_{l)})$.
\end{theo}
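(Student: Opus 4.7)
\textbf{Proof Proposal for Theorem \ref{saptami}.}

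The plan is to recognize Theorem \ref{saptami} as a direct specialization of Theorem 6.F to the particular polynomial $P_{\star}(z)$ from Example \ref{amihii}; essentially no new analytic work is required beyond a bookkeeping substitution of parameters. First I would verify that $P_{\star}(z)$ satisfies every hypothesis imposed on the polynomial $P(z)$ in Theorem 6.F. Concretely: (i) from $P_{\star}'(z)=(z-b)^m(z-a)^n$ we see that $P'_{\star}$ has exactly $k=2$ distinct zeros, namely $a$ and $b$, and since $m,n\geq 2$ neither of them is simple, so the secondary hypothesis \enquote{$k=2$ and $P'(z)$ has no simple zero} of Theorem 6.F is met; (ii) the verification in Example \ref{amihii} already shows $P_{\star}(a)\neq P_{\star}(b)$ under the assumption $a\neq b$, which is the critical injection property; (iii) the choice of $c$ guarantees that $P_{\star}(z)$ has only simple zeros (as checked in Example \ref{amihii}); (iv) Example \ref{amihii} combined with Theorem \ref{amith1} yields that $P_{\star}(z)$ is a strong uniqueness polynomial for meromorphic functions, precisely under the hypotheses $m+n\geq 5$, $\max\{m,n\}\geq 3$, $\min\{m,n\}\geq 2$ imposed through Theorem \ref{amibcha}.

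Next I would compute the degree $N$ of $P_{\star}(z)$, which is $N=n+m+1$, and insert $k=2$, $N=n+m+1$ into the three numerical bounds appearing in Theorem 6.F. A short arithmetic check gives
\begin{align*}
\frac{6+2k-N}{4} &= \frac{6+4-(n+m+1)}{4}=\frac{9-m-n}{4},\\
\frac{14+4k-2N}{9} &= \frac{14+8-2(n+m+1)}{9}=\frac{20-2m-2n}{9},\\
\frac{10+2k-N}{6} &= \frac{10+4-(n+m+1)}{6}=\frac{13-m-n}{6},
\end{align*}
matching the three bounds in conditions (i), (ii), (iii) of Theorem \ref{saptami} respectively. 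Then applying Theorem 6.F directly gives the desired $URSM_{l)}$ conclusion. For the $URSE_{l)}$ assertion, I would remark that any entire function has $\Theta(\infty;f)=1$, so the deficiency hypothesis is automatic whenever $9-m-n<4$, $20-2m-2n<9$, or $13-m-n<6$ (according to the value of $l$), and all of these are consequences of the standing assumption $m+n\geq 5$ in Theorem \ref{amibcha}; hence Theorem 6.F likewise delivers the $URSE_{l)}$ conclusion.

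Since this is in effect a specialization, there is no genuine analytic obstacle. The only point requiring care is to make sure that the hypotheses needed by Theorem 6.F (critical injectivity, simple zeros, the $k=2$ subcase with no simple critical point, and the SUPM property) are each established separately; all four have already been verified elsewhere in the chapter, so the proof is essentially a clean substitution plus a short verification, and the remark handling the entire-function case finishes the argument.
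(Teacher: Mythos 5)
Your proposal is correct and follows exactly the paper's route: the paper obtains Theorem \ref{saptami} as an immediate consequence of Theorem 6.F applied to $P_{\star}(z)$, whose degree is $n+m+1$ and whose derivative $(z-b)^{m}(z-a)^{n}$ has $k=2$ distinct critical zeros, none simple since $\min\{m,n\}\geq 2$, with critical injectivity, simplicity of the zeros, and the SUPM property already established in Example \ref{amihii} and Theorem \ref{amith1}, and your arithmetic substitution of $k=2$ and degree $n+m+1$ into the three bounds is accurate. The only blemish is the tangential closing remark: the deficiency bounds are not all automatic for entire functions under $m+n\geq 5$ alone (for instance $13-m-n<6$ requires $m+n>7$), but this does not affect the proof, since the theorem's hypotheses already include those bounds and the entire-function case is covered by the $URSE_{l)}$ parenthetical built into Theorem 6.F.
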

Thus we have obtained URSM with weight $2$ with cardinality atleast $11$ and this is the best result in this direction. Researchers gave different kind of URSM but none of them is able to reduce its cardinality below $11$ (\cite{am3}).
\begin{cor}
Next we consider the polynomial
$$P_{B}(z)=\sum\limits_{i=0}^{m} \binom{m}{i}\frac{(-1)^{i}}{n+m+1-i}z^{n+m+1-i}b^{i} + c,$$
where $bc \neq 0$, $c\not=-b^{n+m+1}\lambda,~-\frac{1}{2}b^{n+m+1}\lambda$ where $\lambda=\sum\limits_{i=0}^{m} \binom{m}{i}\frac{(-1)^{i}}{n+m+1-i}$ and $m+n\geq 5$, $\max\{m,n\}\geq 3$, $\min\{m,n\}\geq 2$.
\end{cor}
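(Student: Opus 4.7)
The plan is to reduce the corollary to a direct specialization of Theorem~\ref{saptami} by taking $a=0$ in the polynomial $P_{\star}(z)$ of Example~\ref{amihii}, and then verify that all hypotheses transfer cleanly to $P_{B}(z)$. Once this identification is established, the URSM/URSE conclusions of Theorem~\ref{saptami} carry over with the same deficiency bounds on $\min\{\Theta(\infty;f),\Theta(\infty;g)\}$ expressed in $m+n$.

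First I would substitute $a=0$ in the double-sum defining $P_{\star}$. Since every term with $j\geq 1$ contains a factor $a^{j}=0$, only the $j=0$ layer survives, giving exactly $P_{B}(z)=Q(z)+c$ with $Q(z)=\sum_{i=0}^{m}\binom{m}{i}\frac{(-1)^{i}}{n+m+1-i}b^{i}z^{n+m+1-i}$. Correspondingly $P_{\star}'(z)=(z-b)^{m}(z-a)^{n}$ specializes to $P_{B}'(z)=(z-b)^{m}z^{n}$, so the two critical points of $P_{B}$ are $0$ and $b$, with multiplicities $n$ and $m$. Because $b\neq 0$, they are distinct, so the role of the hypothesis $a\neq b$ in Example~\ref{amihii} is played here by $b\neq 0$.

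Next I would evaluate $Q$ at the two critical points to match the exclusion set on $c$. Clearly $Q(0)=0$, since every monomial of $Q$ has positive degree, while a direct computation gives
\[
Q(b)=\sum_{i=0}^{m}\binom{m}{i}\frac{(-1)^{i}}{n+m+1-i}\,b^{(n+m+1-i)+i}=b^{n+m+1}\lambda.
\]
Hence the exclusion set $\{0,-Q(a),-Q(b),-\tfrac{1}{2}(Q(a)+Q(b))\}$ required for $P_{\star}$ collapses to $\{0,-b^{n+m+1}\lambda,-\tfrac{1}{2}b^{n+m+1}\lambda\}$, which is precisely the set the corollary excludes from $c$. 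Moreover the degree constraints $m+n\geq 5$, $\max\{m,n\}\geq 3$, $\min\{m,n\}\geq 2$ are inherited verbatim from Example~\ref{amihii}, so Theorem~\ref{amith1} (applied there) certifies that $P_{B}$ is a critically injective strong uniqueness polynomial with only simple zeros.

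Having identified $P_{B}$ as an admissible instance of $P_{\star}$ for Theorem~\ref{saptami}, I would simply invoke that theorem with $S_{B}=\{z:P_{B}(z)=0\}$ to conclude that $S_{B}$ is a $URSM_{l)}$ (resp.\ $URSE_{l)}$) under the three deficiency conditions listed there. The only real ``obstacle'' is a bookkeeping one in the second step: one must be careful that $Q(0)=0$ collapses two of the four forbidden values of $c$ into a single condition $c\neq 0$, so the exclusion set in the corollary has cardinality three rather than four; but this is immediate once $Q(0)=0$ is noted, and no additional analytic argument is needed.
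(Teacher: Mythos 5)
Your proposal is correct and essentially matches the paper's route: the paper likewise reduces everything to Theorem \ref{amith1} for the strong uniqueness property and then invokes Theorems \ref{amibcha} and \ref{saptami}, and it even records the $a=0$ specialization of $P_{\star}(z)$ in the remark following Corollary \ref{amihi}. The only cosmetic difference is that the paper re-verifies the hypotheses directly for $P_{B}(z)$ (computing $P_{B}(b)-P_{B}(0)=b^{n+m+1}\lambda\neq 0$ and $P_{B}(b)+P_{B}(0)=b^{n+m+1}\lambda+2c\neq 0$), whereas you inherit them from Example \ref{amihii}; both computations are the same.
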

By Lemma 2.2 of (\cite{ami1}), we have seen that $P_{B}(b)-P_{B}(0)=b^{n+m+1}\lambda\not=0$, which implies $P_{B}(z)$ is critically injective. Again $P_{B}(0)=c\not=0$ and $P_{B}(b)\not=0$, hence $P_{B}(z)$ have no multiple zeros.\par
Finally, as $P_{B}(b)+P_{B}(0)=b^{n+m+1}\lambda+2c\not=0$ and  $m+n\geq 5$, $\max\{m,n\}\geq 3$, $\min\{m,n\}\geq 2$, by Theorem \ref{amith1}, $P_{B}(z)$ is a strong uniqueness polynomial. Thus similar type conclusions of Theorems \ref{amibcha} and \ref{saptami} are applicable for this polynomial also.
\section{Lemmas}
\begin{lem} \label{amis1} If \bea\label{astomi2} \psi(t)=\lambda(t^{n-1}-A)^{2}-4(t^{n-2}-A)(t^{n}-A)\eea where $\lambda =4\left(1-\frac{1}{(n-1)^{2}}\right)$ and $A\neq 0,~1$, then $\psi(t)=0$ has no multiple roots.
\end{lem}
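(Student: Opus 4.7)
The plan is to proceed by contradiction, assuming that $t_0$ is a multiple root of $\psi$, i.e.\ $\psi(t_0)=\psi'(t_0)=0$. The first move is to rewrite $\psi$ in a far more transparent form. A direct expansion gives the identity
$$(t^{n-1}-A)^2-(t^{n-2}-A)(t^n-A)=At^{n-2}(t-1)^2,$$
and combining this with $\lambda-4=-4/(n-1)^2$ yields
$$\psi(t)=4At^{n-2}(t-1)^2-\frac{4}{(n-1)^2}(t^{n-1}-A)^2.$$
From this closed form it is immediate that $\psi(0)=-4A^2/(n-1)^2\neq 0$ (since $A\neq 0$) and $\psi(1)=-4(1-A)^2/(n-1)^2\neq 0$ (since $A\neq 1$), so any hypothetical multiple root $t_0$ must satisfy $t_0\notin\{0,1\}$.

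Next I would differentiate the rewritten $\psi$. Factoring out $(t-1)t^{n-3}$ and using $t_0\neq 0$, the equation $\psi'(t_0)=0$ collapses to the clean relation
$$(n-1)A(t_0-1)\bigl[nt_0-(n-2)\bigr]=2t_0\,(t_0^{n-1}-A).\qquad(\ast)$$
Squaring $(\ast)$ and replacing $(t_0^{n-1}-A)^2$ by $(n-1)^2At_0^{n-2}(t_0-1)^2$ (which is just $\psi(t_0)=0$ rewritten), then cancelling the nonzero factor $(n-1)^2A(t_0-1)^2$, produces
$$A\bigl[nt_0-(n-2)\bigr]^2=4t_0^n.\qquad(\ast\ast)$$

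Finally, reading $(\ast)$ as $2t_0^n=A\bigl\{2t_0+(n-1)(t_0-1)[nt_0-(n-2)]\bigr\}$ and substituting into $(\ast\ast)$ will eliminate the factor $A$ and leave a purely polynomial identity in $t_0$. A routine expansion of both sides is expected to collapse this identity to
$$-n(n-2)(t_0-1)^2=0;$$
since $n\ge 3$, this forces $t_0=1$, contradicting the first step. The main obstacle is keeping the algebra honest through the squaring-and-substitution maneuver and the final expansion; the closed-form reformulation of $\psi$ is precisely what makes each intermediate step factor cleanly so that the $(t_0-1)^2$ residue emerges, and this is what ultimately rules out multiple roots.
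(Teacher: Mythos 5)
Your proposal is correct, and it takes a genuinely different route from the paper's. The paper substitutes $t=e^{w}$, forms $F(w)=\psi(e^{w})e^{(1-n)w}$, and exploits the resulting symmetric expression in $e^{\pm(n-1)w}$ and $e^{\pm w}$: combining $F(w_{0})=F'(w_{0})=0$ by squaring and subtracting yields a quadratic in $\cosh w_{0}$ that collapses to $(\cosh w_{0}-1)^{2}=0$, forcing $z_{0}=1$. You instead stay entirely in the polynomial ring: the identity $(t^{n-1}-A)^{2}-(t^{n-2}-A)(t^{n}-A)=At^{n-2}(t-1)^{2}$ gives the closed form $\psi(t)=4At^{n-2}(t-1)^{2}-\tfrac{4}{(n-1)^{2}}(t^{n-1}-A)^{2}$, after which the elimination between $\psi(t_{0})=0$ and $\psi'(t_{0})=0$ is a direct resultant-style computation ending in $-n(n-2)(t_{0}-1)^{2}=0$ (I checked the expansion; your constant is exactly right). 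The two arguments are structurally parallel --- both terminate in a perfect square that pins $t_{0}$ to $1$ --- but yours is more elementary, avoiding the exponential change of variable, and the closed form of $\psi$ has the side benefit of making transparent both the exclusion of $t_{0}\in\{0,1\}$ and why, in the degenerate case $A=1$ (Lemma 6.3.2 of the paper, the Frank--Reinders situation), $t=1$ becomes a zero of $\psi$ of multiplicity exactly four. The only point to make explicit when writing it up is the harmless case $n=3$, where the factor $t^{n-3}$ pulled out of $\psi'$ is just $1$.
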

\begin{proof} Let $F(t):=\psi(e^{t})e^{(1-n)t}~~\text{for}~~t\in \mathbb{C}.$ Then by elementary calculations, we get
\bea\label{astomi} F(t)=(\lambda-4)\left(e^{(n-1)t}+A^{2}e^{-(n-1)t}\right)+4A(e^{t}+e^{-t})-2A\lambda.\eea
Hence
\bea\label{astomi1} F'(t)=\psi'(e^{t})e^{(1-n)t}e^{t}-(n-1)\psi(e^{t})e^{(1-n)t}.\eea
Clearly, if $t=0$, then $\psi(t)\not=0$. Now, if possible assume that $\psi(z_{0})=\psi'(z_{0})=0$. As $z_{0}\neq0$, there exist some $w_{0} \in \mathbb{C}$ such that $z_{0}=e^{w_{0}}$. Thus we have $F(w_{0})=F'(w_{0})=0$. Now, equations (\ref{astomi}) and (\ref{astomi1}) yields
\bea(\lambda-4)\left(e^{(n-1)w_{0}}+A^{2}e^{-(n-1)w_{0}}\right)=-4A(e^{w_{0}}+e^{-w_{0}})+2A\lambda,\eea
and
\bea(\lambda-4)\left(e^{(n-1)w_{0}}-A^{2}e^{-(n-1)w_{0}}\right)=-\frac{4A(e^{w_{0}}-e^{-w_{0}})}{n-1}.\eea
Therefore
\beas 4A^{2}(\lambda-4)^{2} &=& (\lambda-4)^{2}\left((e^{(n-1)w_{0}}+A^{2}e^{-(n-1)w_{0}})^{2}-(e^{(n-1)w_{0}}-A^{2}e^{-(n-1)w_{0}})^{2}\right)\\
&=& \left(-4A(e^{w_{0}}+e^{-w_{0}})+2A\lambda\right)^{2}-\left(-\frac{4A(e^{w_{0}}-e^{-w_{0}})}{n-1}\right)^{2}\\
&=& 4A^{2}\lambda^{2}-32A^{2}\lambda\cosh w_{0}+64A^{2}\cosh^{2} w_{0}-\frac{64A^{2}}{(n-1)^{2}}\sinh^{2}w_{0},
\eeas
i.e. $$(\cosh w_{0})^{2}\left(16-\frac{16}{(n-1)^{2}}\right)-8\lambda \cosh w_{0}+\left(8\lambda-16+\frac{16}{(n-1)^{2}}\right)=0,$$
so, $(\cosh w_{0}-1)^{2}=0$, that is, $\cosh w_{0}=1$ which implies $z_{0}+\frac{1}{z_{0}}=2$. Hence $z_{0}=1$ but $\psi(1)=(1-A)^{2}\neq0$ as $A\neq 1$. Thus our assumption is wrong.
\end{proof}
\begin{lem} (\cite{am3}) \label{amis1.1} If $~\Gamma(t)=\lambda(t^{n-1}-1)^{2}-4(t^{n-2}-1)(t^{n}-1)$ where $\lambda =4\left(1-\frac{1}{(n-1)^{2}}\right)$, then $\Gamma(1)=0$ with multiplicity four. All other zeros of $\Gamma(t)$ are simple.
\end{lem}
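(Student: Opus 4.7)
\textbf{Proof plan for Lemma \ref{amis1.1}.}

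The plan is to split the argument into two independent parts: first, to show that no zero of $\Gamma(t)$ other than $t=1$ can be a multiple root, and second, to pin down the exact multiplicity of the root at $t=1$ by a direct Taylor computation.

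For the first part, I observe that $\Gamma(t) = \psi(t)\big|_{A=1}$ in the notation of Lemma \ref{amis1}. I would therefore retrace the argument used to prove Lemma \ref{amis1} word for word: put $F(t) = \Gamma(e^t)e^{(1-n)t}$, which gives
\[
F(t) = (\lambda-4)\bigl(e^{(n-1)t} + e^{-(n-1)t}\bigr) + 4\bigl(e^{t} + e^{-t}\bigr) - 2\lambda,
\]
and compute $F'(t)$ by differentiating. If $z_0 \neq 0$ is a multiple root of $\Gamma(t)$, writing $z_0 = e^{w_0}$ and imposing $F(w_0) = F'(w_0) = 0$, the same algebraic manipulation as in Lemma \ref{amis1} forces $(\cosh w_0 - 1)^2 = 0$, hence $z_0 = 1$. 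Unlike the $A \neq 1$ case, $\Gamma(1) = 0$, so the conclusion is that every multiple zero of $\Gamma$ must coincide with $t=1$; in other words, all zeros of $\Gamma$ away from $t=1$ are simple.

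For the second part, I would factor $t^{k}-1 = (t-1)P_k(t)$ with $P_k(t) = 1+t+\cdots+t^{k-1}$, so that
\[
\Gamma(t) = (t-1)^2\,G(t), \qquad G(t) := \lambda P_{n-1}(t)^{2} - 4\,P_{n-2}(t)\,P_{n}(t).
\]
Using $P_k(1) = k$ and the closed form $\lambda(n-1)^2 = 4n(n-2)$, I obtain $G(1) = \lambda(n-1)^2 - 4n(n-2) = 0$. Differentiating, $P_k'(1) = \tfrac{k(k-1)}{2}$, and a short computation gives
\[
G'(1) = \lambda(n-1)^{2}(n-2) - 2n(n-2)\bigl[(n-3)+(n-1)\bigr] = 4n(n-2)^{2} - 4n(n-2)^{2} = 0.
\]
Hence $(t-1)^{4} \mid \Gamma(t)$. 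To show the multiplicity is exactly four, I would then expand $\Gamma$ in powers of $s = t-1$ up to $s^{4}$, using $t^{k}-1 = \sum_{i\ge 1}\binom{k}{i}s^{i}$, and show that the coefficient of $s^{4}$ equals $-\tfrac{n(n-2)}{3}$, which is nonzero for $n \geq 3$. Combined with the first part, this yields the lemma.

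The main obstacle is the $s^{4}$-coefficient computation in the second part, which requires tracking the binomial identities $2a_{1}a_{3}+a_{2}^{2}$ for $a_{i} = \binom{n-1}{i}$ against $b_{1}c_{3}+b_{2}c_{2}+b_{3}c_{1}$ for $b_{i}=\binom{n-2}{i}$, $c_{i}=\binom{n}{i}$, and then simplifying the final difference $(n-2)(7n-18) - (7n^{2}-32n+37) = -1$. The rest is either direct substitution of $\lambda = \tfrac{4n(n-2)}{(n-1)^{2}}$ or a verbatim reuse of the exponential-substitution trick from Lemma \ref{amis1}.
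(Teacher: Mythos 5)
Your proposal is correct, but note that the paper does not actually prove Lemma \ref{amis1.1} at all: it is quoted from Frank--Reinders (\cite{am3}) without argument, so there is no in-paper proof to compare against. What you supply is therefore a self-contained substitute, and both halves check out. For the simplicity of the zeros away from $t=1$, specializing the proof of Lemma \ref{amis1} to $A=1$ does work verbatim: $\Gamma(0)=\lambda-4=-\tfrac{4}{(n-1)^2}\neq 0$, so any multiple zero $z_0$ can be written as $e^{w_0}$, and the same algebra collapses to $4\lambda(\cosh w_0-1)^2=0$, forcing $z_0=1$ (one should remark that $\lambda\neq 0$ is used at this last step, and that the contradiction $\psi(1)=(1-A)^2\neq0$ available for $A\neq1$ is precisely what disappears when $A=1$, which is why the conclusion weakens to ``the only possible multiple zero is $t=1$''). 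For the order at $t=1$, I verified your numbers: with $\lambda(n-1)^2=4n(n-2)$ one gets $G(1)=0$ and $G'(1)=4n(n-2)^2-4n(n-2)^2=0$, so $(t-1)^4\mid\Gamma$, and the coefficient of $(t-1)^4$ is
\[
\frac{n(n-2)}{3}\Bigl[(n-2)(7n-18)-(7n^2-32n+37)\Bigr]=-\frac{n(n-2)}{3},
\]
which is nonzero for $n\geq 3$. So the multiplicity is exactly four. The only polish I would suggest is to state explicitly the two side conditions ($\Gamma(0)\neq0$ and $\lambda\neq0$) that make the exponential substitution and the final cancellation legitimate; otherwise the argument is complete and could stand in place of the citation.
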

\begin{lem}\label{amis2} If $\lambda =4\left(1-\frac{1}{(n-1)^{2}}\right)$, $t \neq 1$ and $A\neq0,~1$, then $\psi(t)=0$ and $t^{n}-A=0$ has no common roots, where $\psi(t)$ is defined by equation (\ref{astomi2}).
\end{lem}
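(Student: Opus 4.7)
The plan is to proceed by direct substitution, exploiting the fact that the second term of $\psi(t)$ carries a factor of $(t^n - A)$. Suppose $t_0$ is a common root of $\psi(t) = 0$ and $t^n - A = 0$. Then plugging $t_0^n = A$ into the defining expression $\psi(t) = \lambda(t^{n-1}-A)^2 - 4(t^{n-2}-A)(t^n - A)$ makes the second summand vanish, leaving
\[
\psi(t_0) = \lambda\bigl(t_0^{n-1} - A\bigr)^2 = 0.
\]

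Next I would argue that the factor $\lambda$ cannot absorb this zero. Since $n \geq 6$ (the context in which this lemma is invoked, cf.\ Theorem \ref{amith2}), one has $(n-1)^2 > 1$, hence $\lambda = 4\left(1 - \frac{1}{(n-1)^2}\right) \neq 0$. Therefore we must have $t_0^{n-1} = A$.

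Combining $t_0^{n-1} = A$ with $t_0^n = A$ gives $t_0^n = t_0^{n-1}$, i.e.\ $t_0^{n-1}(t_0 - 1) = 0$. By hypothesis $t_0 \neq 1$, so $t_0 = 0$; but then $A = t_0^n = 0$, contradicting the assumption $A \neq 0$. This contradiction completes the argument, so no common root exists.

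The \emph{only} subtlety worth flagging is the non-vanishing of $\lambda$; once that is observed, the lemma reduces to a two-line manipulation, so there is no genuine obstacle here. The result is essentially the precise algebraic fact that the two factors of the second summand of $\psi(t)$, together with the single factor in the first summand, cannot simultaneously vanish outside the excluded loci $t = 1$ and $A \in \{0, 1\}$.
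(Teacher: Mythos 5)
Your argument is correct and is essentially the paper's own proof: substitute $t_0^n=A$ to kill the second summand, deduce $t_0^{n-1}=A$ (using $\lambda\neq 0$), and derive $A=t_0A$, which is impossible for $A\neq 0$, $t_0\neq 1$. Your explicit remark that $\lambda\neq 0$ must be checked is a small point the paper leaves implicit, but the route is the same.
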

\begin{proof}
If $\psi(t)=0$ and $t^{n}-A=0$ has a common root, then $t^{n-1}-A=0$ and $t^{n}-A=0$. That is, $A=t^{n}=tt^{n-1}=tA$, which is absurd as $A\neq0$ and $t\neq1$.
\end{proof}
\section {Proofs of the theorems}
\par
\begin{proof} [\textbf{Proof of Theorem \ref{amith1} }]
By the given conditions on $P(z)$, we can write
\begin{enumerate}
\item [i)] $P(z)-P(\alpha)= (z-\alpha)^{p}Q_{n-p}(z)$ where $Q_{n-p}(z)$ is a polynomial of degree $(n-p)$, $Q_{n-p}(\alpha)\not=0$ and
\item [ii)] $P(z)-P(\beta)=(z-\beta)^{t}Q(z)$, where  $Q(z)$ is a polynomial of degree $(n-t)$ and $Q(\beta)\not=0$.
\end{enumerate}
As $\alpha$ and $\beta$ are critical points of $P(z)$, we have $P(\alpha)\not=P(\beta)$ and $t,p\geq 2$. Also $P(\alpha)P(\beta)\not=0$, as all zeros of $P(z)$ are simple.\par
Now suppose, for any two non-constant meromorphic functions $f$ and $g$ and a non-zero constant $A \in \mathbb{C}$, that
\bea P(f) = AP(g).\eea
We consider two cases:\\
\textbf{Case-1.} $A \neq 1$.\\
From the assumption of Theorem \ref{amith1}, $P(z)$ is satisfying $\max\{t,p\}+t+p\geq 5+n$ where $t$, $p$ are previously defined.\\
\textbf{Subcase-1.1.} First we assume that $t\geq p$. Thus $2t+p\geq 5+n$. We define
$$F:=\frac{(f-\beta)^{t}Q(f)}{P(\beta)}~~\text{and}~~G:=\frac{(g-\beta)^{t}Q(g)}{P(\beta)}.$$
Thus
\bea\label{e1.1} F=AG+A-1.\eea
So by Lemma \ref{ML}, we have \bea T(r,f)=T(r,g)+O(1).\eea
If  $A \neq \frac{P(\alpha)}{P(\beta)}$, then by applying the Second Fundamental Theorem, we get
\beas && 2nT(r,f)+O(1)=2T(r,F)\\
 &\leq& \overline{N}(r,\infty;F)+\overline{N}(r,0;F)+\overline{N}\left(r,\frac{P(\alpha)}{P(\beta)}-1;F\right)+\overline{N}(r,A-1;F)+S(r,F)\\
&\leq& \overline{N}(r,\infty;f)+\overline{N}(r,\beta;f)+(n-t) T(r,f)+\overline{N}(r,\alpha;f)+(n-p)T(r,f)+\\
&+&\overline{N}(r,0;G)+S(r,f)\\
&\leq& \overline{N}(r,\infty;f)+\overline{N}(r,\beta;f)+(n-t)T(r,f)+\overline{N}(r,\alpha;f)+(n-p)T(r,f)+\\
&+&\overline{N}(r,\beta;g)+(n-t) T(r,g)+S(r,f)\\
&\leq& (3n-2t-p+4)T(r,f)+S(r,f), \eeas
which is a contradiction as $2t+p\geq 5+n$.\\
If $A=\frac{P(\alpha)}{P(\beta)}$, then from equation (\ref{e1.1}) we have
\bea P(\beta)F=P(\alpha)G+\{P(\alpha)-P(\beta)\}.\eea
As $P(\alpha) \pm P(\beta) \not=0$ and $P(\alpha)P(\beta) \not=0$, we have $\frac{P(\alpha)-P(\beta)}{P(\beta)}\not=-\frac{P(\alpha)-P(\beta)}{P(\alpha)}$. Thus in view of the Second Fundamental Theorem, we obtain
\beas && 2nT(r,g)+O(1)=2T(r,G)\\
&\leq& \overline{N}(r,\infty;G)+\overline{N}(r,0;G)+\overline{N}\left(r,-\frac{P(\alpha)-P(\beta)}{P(\alpha)};G\right)+\overline{N}\left(r,\frac{P(\alpha)-P(\beta)}{P(\beta)};G\right)\\
&+&S(r,G)\\
&\leq& \overline{N}(r,\infty;g)+\overline{N}(r,\beta;g)+(n-t)T(r,g)+\overline{N}(r,0;F)+\overline{N}(r,\alpha;g)+(n-p)T(r,g)\\
&+& S(r,g)\\
&\leq& (3+2n-t-p)T(r,g)+\overline{N}(r,\beta;f)+(n-t)T(r,f)+S(r,g)\\
&\leq& (3n-2t-p+4)T(r,g)+S(r,g), \eeas
which is a contradiction as $2t+p\geq 5+n$.\\
\textbf{Subcase-1.2.} Now consider $t< p$. Thus $t+2p\geq 5+n$. We define
$$F:=\frac{(f-\alpha)^{p}Q_{n-p}(f)}{P(\alpha)}~~\text{and}~~G:=\frac{(g-\alpha)^{p}Q_{n-p}(g)}{P(\alpha)}.$$
Proceeding similarly as above, we reach at contradiction. Hence if a critically injective polynomial $P(z)$ with no multiple zeros satisfy $\max\{t,p\}+t+p\geq 5+n$, then $P(f)=AP(g)$ always imply $A=1$.\\
\textbf{Case-2.} $A=1$.Then, as $P(z)$ is a uniqueness polynomial, we have $f\equiv g$.
\end{proof}
\begin{proof} [\textbf{Proof of Theorem \ref{amith41}}]
By the given assumptions, we may write
\begin{enumerate}
\item [i)] $P(z)-P(\gamma)= (z-\xi_{1})^{l_{1}}(z-\xi_{2})^{l_{2}}\ldots(z-\xi_{p})^{l_{p}}$ with $\gamma=\xi_{1}$,
\item [ii)] $P(z)-P(\delta)=(z-\eta_{1})^{m_{1}}(z-\eta_{2})^{m_{2}}\ldots(z-\eta_{q})^{m_{q}}$ with $\delta=\eta_{1},$\\
where $\xi_{i}\not=\xi_{j}$, $\xi_{i}\not=\eta_{j}$  and $\eta_{i}\not=\eta_{j}$ for all $i,j$.
\end{enumerate}
As $P(z)$ has no multiple zeros and $\gamma$, $\delta$ are critical points of $P(z)$, we have $P(\gamma)P(\delta) \not=0$. Also $P(\gamma)\not=P(\delta)$, as $P(z)$ is critically injective.\par
Suppose, for any two non-constant meromorphic functions $f$ and $g$ and for any non-zero complex constant $A$, that
\bea\label{krishna} P(f) = AP(g).\eea
Then by Lemma \ref{ML}, \bea T(r,f)=T(r,g)+O(1)~~\text{and}~~S(r,f)=S(r,g).\eea
We consider two cases:\\
\textbf{Case-1.} $A \neq 1$ and $A = P(\gamma)$. Then $P(\gamma)\neq 1$. Thus we can write
\be \label{e66e dana} P(f)-P(\gamma) = P(\gamma)\left(P(g)-1\right).\ee
\textbf{Subcase-1.1.} $P(\delta)\not=1$. Let $\nu_k$ $(k=1,2,\ldots,l)$ are the $l$ distinct zeros of $(P(g)-1)$. Then by the Second Fundamental Theorem, we get
\beas (l-2)T(r,g) &\leq& \sum\limits_{k=1}^{l}\overline{N}(r,\nu_{k};g)+S(r,g)= \sum\limits_{i=1}^{p}\overline{N}(r,\xi_{i};f)+S(r,g)\\
&\leq& \big(p+o(1)\big)T(r,g),\eeas
which is a contradiction.
\newpage
\textbf{Subcase-1.2.} $P(\delta)=1$. Here we consider two subcases:\par
If $P(\gamma)=-1$, then $$P(f)-P(\delta) = P(\gamma)(P(g)-P(\gamma)).$$
In this case, applying the Second Fundamental Theorem, we obtain
\beas (p-1)T(r,g) &\leq& \overline{N}(r,\infty;g)+\sum\limits_{i=1}^{p}\overline{N}(r,\xi_{i};g)+S(r,g) \\
&\leq& T(r,g)+\sum\limits_{j=1}^{q}\overline{N}(r,\eta_{j};f)+S(r,g) \\
&\leq& (q+1)T(r,g)+S(r,g),\eeas
which leads to a contradiction.\par
If  $P(\gamma)\not=-1$, then  $$P(f)-P(\delta) = P(\gamma)\left(P(g)-\frac{P(\delta)}{P(\gamma)}\right),$$ where $\frac{P(\delta)}{P(\gamma)} \not\in \{1,P(\delta),P(\gamma)\}$. Let $\theta_{l}$ $(l=1,2,\ldots,t)$ be the distinct zeros of $\left(P(z)-\frac{P(\delta)}{P(\gamma)}\right)$. Then by the Second Fundamental Theorem, we get
\beas (t-2)T(r,g) &\leq& \sum\limits_{l=1}^{t}\overline{N}(r,\theta_{l};g)+S(r,g)=\sum\limits_{j=1}^{q}\overline{N}(r,\eta_{j};f)+S(r,f) \\
&\leq& q T(r,g)+S(r,g),\eeas
which is a again contradiction.\\
\textbf{Case-2.} $A \neq 1$ and $A \neq P(\gamma)$. In this case, equation (\ref{krishna}) can be written as
\bea P(f)-AP(\delta) = A(P(g)-P(\delta)).\eea
~~~~If $AP(\delta)\neq P(\gamma)$, then $AP(\delta)\not\in \{P(\gamma),P(\delta)\}$. Let $\zeta_{k}$ $(k=1,2,\ldots,m)$ be the distinct zeros of $(P(z)-AP(\delta))$. Then by the Second Fundamental Theorem, we get
\beas (m-2)T(r,f) &<& \sum\limits_{k=1}^{m}\overline{N}(r,\zeta_{k};f)+S(r,f) \\
&\leq& \sum\limits_{j=1}^{q}\overline{N}(r,\eta_{j};g)+S(r,g) \\
&\leq& q T(r,g)+S(r,g),\eeas
which is not possible.\par
If $AP(\delta)= P(\gamma)$, then $P(\delta) \not=1$ and $P(f)-P(\gamma) = A(P(g)-P(\delta))$. By the Second Fundamental Theorem, we get
\beas (p-2)T(r,f) &<& \sum\limits_{i=1}^{p}\overline{N}(r,\xi_{i};f)+S(r,f) \\
&\leq& \sum\limits_{j=1}^{q}\overline{N}(r,\eta_{j};g)+S(r,g) \\
&\leq& q T(r,g)+S(r,g),\eeas
Proceeding similarly, we get
$$(q-2)T(r,g)\leq p T(r,f)+S(r,f).$$
Since $|p-q|\geq 3$, in either cases, we get a contradiction.\\
Thus $A=1$. Hence, as $P(z)$ is a uniqueness polynomial, we have $f\equiv g$.
\end{proof}
\begin{proof} [\textbf{Proof of Theorem \ref{amith2} }]
Let $f$ and $g$ be two non-constant meromorphic functions such that $\overline{P}(g)=A\overline{P}(f)$, where $A\in\mathbb{C}\setminus\{0\}$. Then by Lemma \ref{ML}, \bea T(r,f)=T(r,g)+O(1)~~\text{and}~~S(r,f)=S(r,g).\eea
By putting $h=\frac{f}{g}$, we have \bea\label{null} g^{2}(h^{n}-A)+ag(h^{n-1}-A)+b(h^{n-2}-A)=0.\eea
If $h$ is a constant function, then as $g$ is non-constant, we get $(h^{n}-A)=(h^{n-1}-A)=(h^{n-2}-A)=0;$ i.e., $A=Ah=Ah^{2}$ which gives $h=1$, and hence $f=g$.\par
Next we consider $h$ as a non-constant. Then \bea \label{amidecem} \left(g+\frac{a}{2}\frac{h^{n-1}-A}{h^{n}-A}\right)^{2}&=& \frac{b\psi(h)}{4(h^{n}-A)^{2}},\eea
where $\psi(t)=\lambda(t^{n-1}-A)^{2}-4(t^{n-2}-A)(t^{n}-A)$.\\
\textbf{Case-1.} $A=1$. Clearly, in view of Lemmas \ref {amis1.1} and \ref {amis2}, from equation (\ref{amidecem}), we get
\bea\label{lt1} \left(g+\frac{a}{2}\frac{h^{n-1}-1}{h^{n}-1}\right)^{2}=\frac{b(h-1)^{4}\prod\limits_{i=1}^{2n-6}(h-\kappa_{i})}{4\{(h-1)\prod\limits_{j=1}^{n-1}(h-\rho_{j})\}^{2}},\eea
where $\kappa_{i}\not =\rho_{j}$ for $i=1,\ldots,2n-6;j=1,\ldots,(n-1)$. Now, by the Second Fundamental Theorem, we get
\beas (3n-9)T(r,h) &\leq& \sum\limits_{i=1}^{2n-6}\ol{N}(r,\kappa_{i};h)+\sum\limits_{j=1}^{n-1}\ol{N}(r,\rho_{j};h)+S(r,h)\\
&\leq& \frac{1}{2}\sum\limits_{i=1}^{2n-6}N(r,\kappa_{i};h)+\sum\limits_{j=1}^{n-1}\ol{N}(r,\rho_{j};h)+S(r,h)\\
&\leq& (2n-4)T(r,h)+S(r,h), \eeas
which is a contradiction for $n\geq 6$.\\
\textbf{Case-2.} $A\neq1$. From equation (\ref{amidecem}), we have
\bea\left(g+\frac{a}{2}\frac{h^{n-1}-A}{h^{n}-A}\right)^{2}=\frac{b\psi(h)}{4(h^{n}-A)^{2}}.\eea
By Lemma \ref{amis1}, $\psi(t)=0$ has $(2n-2)$ distinct zeros, say $\zeta_{i}$ for $i=1,\ldots,2n-2$. So, in view of Lemma \ref{amis2}, and the Second Fundamental Theorem, we get
\beas (2n-4)T(r,h) &\leq& \sum\limits_{i=1}^{2n-2}\ol{N}(r,\zeta_{i};h)+S(r,h)\\
&\leq& \frac{1}{2}\sum\limits_{i=1}^{2n-2}N(r,\zeta_{i};h)+S(r,h)\\
&\leq& (n-1)T(r,h)+S(r,h), \eeas
which is a contradiction when $n\geq 4$. Hence the proof.
\end{proof}
$~~$
\vspace{15 cm}
\\
------------------------------------------------
\\
\textbf{The matter of this chapter has been published in  Adv. Pure Appl. Math., Vol. 8, No. 1, (2017), pp. 1-13.}
\newpage
\chapter{Further results on the
uniqueness of meromorphic functions and their derivative counterpart sharing one or two sets}
\fancyhead[l]{Chapter 7}
\fancyhead[r]{Uniqueness of meromorphic functions and their derivative sharing sets}
\fancyhead[c]{}
\section{Introduction}
\par
We have alreay noticed that, in 1976, Gross (\cite{am4}) extended the study of value sharing by considering set sharing and introduced the notion of unique range set. Further, Gross proved that there exist three finite set $S_{j}$ ($j=1,2,3$) such that any two non-constant entire functions $f$ and $g$ satisfying $E_{f}(S_{j})=E_{g}(S_{j})$ ($j=1,2,3$)  must be identical, and posed the following question:\medbreak
\par
{\bf Question 7.A} {\it Can one find two finite set $S_{j}$ ($j=1,2$)  such that any two non-constant entire functions $f$ and $g$ satisfying $E_{f}(S_{j})=E_{g}(S_{j})$ ($j=1,2$)  must be identical?\par
If the answer to the above Question is affirmative, it would be interesting to know how large both sets would have to be.}\medbreak
\par
In 1997, Fang-Xu (\cite{jj5.1}) and in 1998, Yi (\cite{jj13}) obtained some interesting resluts in realation to the Question 7.A. Below we first provide the results of Yi (\cite{jj13}).
\begin{theo 7.A} (\cite{jj13}) Let $S_{1}=\{z:z^{n}+az^{n-1}+b=0\}$ and $S_{2}=\{0\}$, where $a$, $b$ are non-zero constants such that $z^{n}+az^{n-1}+b=0$ has no repeated root and $n\;(\geq 3)$ be a positive integer. Let $f$ and $g$ be two non-constant entire functions such that $E_{f}(S_{j},\infty)=E_{g}(S_{j},\infty)$ ($j=1,2$), then $f\equiv g$.
\end{theo 7.A}
\begin{theo 7.B} (\cite{jj13}) Let $S_{1}$ and $S_{2}$ are two finite sets such that any two non-constant entire functions $f$ and $g$ satisfying $E_{f}(S_{j},\infty)=E_{g}(S_{j},\infty)$ ($j=1,2$) must be identical, then $\max\{\sharp(S_{1}),\sharp(S_{2})\}\geq 3$, where $\sharp(S)$ denotes the cardinality of the set $S$.
\end{theo 7.B}
But the above theorems are invalid for meromorphic functions. Thus the following question is natural:\medbreak
\par
{\bf Question 7.B} (\cite{jj13.1}, \cite{jj12.1}, \cite{jj13.2}) {\it Can one find two finite sets $S_{j}$ $(j=1,2)$ such that any two non-constant meromorphic functions $f$ and $g$ satisfying $E_{f}(S_{j},\infty)=E_{g}(S_{j},\infty)$ for $j=1,2$  must be identical?}\medbreak
\par
In 1994, Yi (\cite{jj12.2}) proved that there exist two finite sets $S_1$ (with 2 elements) and $S_2$ (with 9 elements) such that any two
non-constant meromorphic functions $f$ and $g$ satisfying $E_{f}(S_{j},\infty)=E_{g}(S_{j},\infty)$ ($j=1,2$) must be identical.\par
In (\cite{ami5}), Li-Yang proved that there exist two finite sets $S_1$ (with 1 element) and $S_2$ (with 15 elements) such that any two non-constant meromorphic functions $f$ and $g$ satisfying $E_{f}(S_{j},\infty)=E_{g}(S_{j},\infty)$ ($j=1,2$) must be identical.\par
 In (\cite{jj5.0}), Fang-Guo proved that there exist two finite sets $S_1$ (with 1 element) and $S_2$ (with 9 elements) such
that any two non-constant meromorphic functions $f$ and $g$ satisfying $E_{f}(S_{j},\infty)=E_{g}(S_{j},\infty)$ ($j=1,2$) must be identical.\par
Also in 2002, Yi (\cite{jj13.1}) proved that there exist two finite sets $S_1$ (with 1 element) and $S_2$ (with 8 elements) such that any two non-constant meromorphic functions $f$ and $g$ satisfying $E_{f}(S_{j},\infty)=E_{g}(S_{j},\infty)$ ($j=1,2$) must be identical.\par
In 2008, Banerjee (\cite{jj2.1}) further improved the result of Yi (\cite{jj13.1}) by relaxing the nature of sharing the range sets by the notion of weighted sharing. He established that there exist two finite sets $S_1$ (with 1 element) and $S_2$ (with 8 elements) such that any two non-constant meromorphic functions $f$ and $g$ satisfying $E_{f}(S_{1},0)=E_{g}(S_{1},0)$ and $E_{f}(S_{2},2)=E_{g}(S_{2},2)$  must be identical.\par
In this context, the natural query would be \emph{whether there exists similar types of unique range sets corresponding to the derivatives of two meromorphic functions}. In this direction, the following uniqueness results have been obtained when the derivatives of meromorphic functions sharing two sets.
\begin{theo 7.C} (\cite{jj5.2, jj12.1}) Let $S_{1}=\{z:z^{n}+az^{n-1}+b=0\}$ and $S_{2}=\{\infty\}$, where $a$, $b$ are non-zero constants such that $z^{n}+az^{n-1}+b=0$ has no repeated root and $n\;(\geq 7)$, $k$ be two positive integers. Let $f$ and $g$ be two non-constant meromorphic functions such that $E_{f^{(k)}}(S_{1},\infty)=E_{g^{(k)}}(S_{1},\infty)$ and $E_{f}(S_{2},\infty)=E_{g}(S_{2},\infty)$, then $f^{(k)}\equiv g^{(k)}$.
\end{theo 7.C}
In 2010, Banerjee-Bhattacharjee (\cite{jj3.2}) improved the above results as follows:
\begin{theo 7.D} (\cite{jj3.2})
Let $S_{i}$ ($i=1,2$) and $k$ be given as in {\em Theorem 7.C}. Let $f$ and $g$ be two non-constant meromorphic functions  such that $E_{f^{(k)}}(S_{1},2)=E_{g^{(k)}}(S_{1},2)$ and $E_{f}(S_{2},1)=E_{g}(S_{2},1)$, then $f^{(k)}\equiv g^{(k)}$.
\end{theo 7.D}
\begin{theo 7.E} (\cite{jj3.2})
Let $S_{i}$ ($i=1,2$) be given as in {\em Theorem 7.C}. Let $f$ and $g$ be two non-constant meromorphic functions  such that $E_{f^{(k)}}(S_{1},3)=E_{g^{(k)}}(S_{1},3)$ and $E_{f}(S_{2},0)=E_{g}(S_{2},0)$, then $f^{(k)}\equiv g^{(k)}$.
\end{theo 7.E}
\begin{theo 7.F} (\cite{jj3.3})  Let $S_{i}$ ($i=1,2$) and $k$ be given as in {\em Theorem 7.C}. Let $f$ and $g$ be two non-constant meromorphic functions  such that $E_{f^{(k)}}(S_{1},2)=E_{g^{(k)}}(S_{1},2)$ and $E_{f}(S_{2},0)=E_{g}(S_{2},0)$, then $f^{(k)}\equiv g^{(k)}$.
\end{theo 7.F}
So far from the above discussions, we see that for the two set sharing problems, the best result has been obtained when one set contain $8$ elements and the other set contain $1$ element. On the other hand, when derivatives of the functions are considered, then the cardinality of one set can further be reduced to $7$.
So it will be natural query whether there can be a single result corresponding to uniqueness of the function sharing two sets which can accommodate the derivative counterpart of the main function as well under relaxed sharing hypothesis with smaller cardinalities than the existing results.\par
In this direction, to improve all the preceding theorems stated so far {\it Theorems 7.C-7.F} in some sense are the goal of this chapter.
\section{Main Results}
\par
Suppose for two positive integers $m$ and $n$, we shall denote by $\widehat{P}(z)$ the following polynomial
\bea\label{jjabcp1}
\widehat{P}(z)=z^{n}-\frac{2n}{n-m}z^{n-m}+\frac{n}{n-2m}z^{n-2m}+c,
\eea
where $c$ is any complex number satisfying $|c|\not=\frac{2m^2}{(n-m)(n-2m)}$ and $c\not=0,-\frac{1-\frac{2n}{n-m}+\frac{n}{n-2m}}{2}$.\par
\begin{theo}\label{jjthB3}
Suppose $n(\geq 1),~m(\geq1),~k(\geq0)$ be three positive integers such that $\gcd\{m,n\}=1$. Further suppose that $\widehat{S}=\{z : \widehat{P}(z)=0\}$ where the polynomial $\widehat{P}(z)$ is defined by (\ref{jjabcp1}). Let $f$ and $g$ be two non-constant meromorphic functions satisfying $E_{f^{(k)}}(\widehat{S},l)=E_{g^{(k)}}(\widehat{S},l)$. If one of the following conditions holds:
\begin{enumerate}
\item [i)] $l\geq2$ and $n>\max\{ 2m+4+\frac{4}{k+1},4m+1\}$,
\item [ii)] $l=1$ and $n> \max\{2m+4.5+\frac{4.5}{k+1},4m+1\}$,
\item [iii)] $l=0$ and  $n> \max\{2m+7+\frac{7}{k+1},4m+1\}$,
\end{enumerate}
then  $f^{(k)} \equiv g^{(k)}$.
\end{theo}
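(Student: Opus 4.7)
\textbf{Proof Plan for Theorem \ref{jjthB3}.}

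The plan is to transpose the chapter's two-set problem to a weighted one-sharing problem for auxiliary functions, and then exploit the fact that the polynomial $\widehat{P}$ in (\ref{jjabcp1}) is a critically injective strong uniqueness polynomial (one verifies $\widehat{P}'(z) = nz^{n-2m-1}(z^m-1)^2$, so the critical points are $0$ and the $m$-th roots of unity, and the three conditions on $c$ together with $\gcd(m,n)=1$ ensure that the hypotheses of Theorem~5.B and Fujimoto's criterion in Theorem~6.A are met). Accordingly, I would first set $F = \widehat{P}(f^{(k)})$ and $G = \widehat{P}(g^{(k)})$, so that the hypothesis $E_{f^{(k)}}(\widehat{S},l)=E_{g^{(k)}}(\widehat{S},l)$ is precisely the statement that $F$ and $G$ share the value $0$ with weight $l$, and introduce the auxiliary function $H$ as in (\ref{CHB}) applied to the shifted pair $F/(-c)+1$ and $G/(-c)+1$ (or equivalently use $F,G$ sharing $0$ directly, as in the proof of Theorem~\ref{amt1.1}).

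Next I would carry out the standard dichotomy $H \not\equiv 0$ versus $H\equiv 0$. In the case $H \not\equiv 0$, I would feed the analogues of Lemmas~\ref{aml2} and~\ref{aml3} into the Second Fundamental Theorem applied to $F$ (and to $G$), invoking Mokhon'ko's Lemma~\ref{ML} to convert $T(r,F)$ into $nT(r,f^{(k)})+S(r,f)$. The crucial step converting the estimate on $f^{(k)}$ into one on $f$ is Lemma~\ref{zll8}, which yields $N_p(r,0;f^{(k)}) \le N_{p+k}(r,0;f)+k\overline{N}(r,\infty;f)+S(r,f)$ and, via $T(r,f^{(k)}) \le (k+1)T(r,f)+S(r,f)$, introduces the factor $(k+1)$ that explains the $\frac{\cdot}{k+1}$ terms in conditions (i)--(iii). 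Weighted-sharing bookkeeping (analogous to Subcases~1.1--1.3 of Theorem~\ref{amt1.1}) then produces inequalities of the shape
\[
(n-2m-c_l - \tfrac{c_l}{k+1})\,T(r,f) \le S(r,f),
\]
with $c_l \in\{4,4.5,7\}$ for $l\ge 2, l=1, l=0$ respectively; these contradict the stated lower bounds on $n$, forcing $H\equiv 0$.

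For the alternative case $H \equiv 0$ I would integrate to obtain $\frac{1}{F-1}\equiv \frac{A}{G-1}+B$ and, following the four-way split in the proof of Theorem~\ref{amt1.1}, eliminate every possibility except $B=0$, $A=1$. The subcase $B\ne 0$ is killed by estimating $\overline{N}(r,\infty;f^{(k)})$ against $T(r,g^{(k)})$ via the Second Fundamental Theorem, using the condition $n>4m+1$ to ensure $\widehat{P}(z)-\alpha$ has enough simple zeros (here $\alpha = \widehat{P}(1)$, a critical value of multiplicity exactly $3$ at each $m$-th root of unity, plus $n-3m$ further simple zeros). The subcase $B=0$, $A\ne 1$ is precisely the assertion that $\widehat{P}(f^{(k)}) \equiv A\,\widehat{P}(g^{(k)})$, which is excluded by the strong uniqueness property of $\widehat{P}$ (this is where the normalization $c\neq -\lambda/2$ and $|c|\neq \lambda$, with $\lambda=\tfrac{2m^{2}}{(n-m)(n-2m)}$, enter). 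Finally $B=0,A=1$ gives $\widehat{P}(f^{(k)})\equiv \widehat{P}(g^{(k)})$, and the uniqueness-polynomial property of $\widehat{P}$ (Theorem~5.B, applicable since $\widehat{P}'$ has $m+1\ge 2$ distinct critical points of multiplicities ensuring (\ref{19012017})) yields $f^{(k)}\equiv g^{(k)}$.

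The main obstacle I anticipate is the bookkeeping in the $H\not\equiv 0$ case: one has to track the reduced counting functions $\overline{N}_{L}$, $\overline{N}_{E}^{(2}$, and $\overline{N}_{F\geq m+1}(\cdot,0;F\mid G\ne 0)$ through the weighted-sharing hypothesis and combine them with the $k$-th derivative inflation from Lemma~\ref{zll8} so that the three numerical thresholds $2m+4+\tfrac{4}{k+1}$, $2m+4.5+\tfrac{4.5}{k+1}$, $2m+7+\tfrac{7}{k+1}$ emerge cleanly. The secondary obstacle is verifying, under the stated restrictions on $c$, that $\widehat{P}$ is genuinely a strong uniqueness polynomial in the sense needed to rule out $\widehat{P}(f^{(k)})\equiv A\widehat{P}(g^{(k)})$ with $A\neq 1$; this amounts to a direct application of Fujimoto's criterion (Theorem~6.A) once one checks that $\sum_{i} \widehat{P}(d_i) \neq 0$ at the critical points $d_i$ of $\widehat{P}$.
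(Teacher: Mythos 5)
Your overall architecture (the dichotomy on $H$ from (\ref{CHB}), the Second Fundamental Theorem plus weighted-sharing bookkeeping when $H\not\equiv 0$, and integration to a M\"obius relation when $H\equiv 0$) is the same as the paper's, but two of your key mechanisms are misidentified, and one of them is a genuine gap. First, the $\frac{1}{k+1}$ terms in conditions (i)--(iii) do not come from Lemma \ref{zll8} or from converting estimates on $f^{(k)}$ back to $T(r,f)$; the theorem concludes $f^{(k)}\equiv g^{(k)}$, and the paper's proof never descends to $f$ at all. The factor arises simply because every pole of $f^{(k)}$ has multiplicity at least $k+1$, whence $\overline{N}(r,\infty;f^{(k)})\leq\frac{1}{k+1}N(r,\infty;f^{(k)})\leq\frac{1}{k+1}T(r,f^{(k)})$; feeding this into the SMT inequality (together with Lemma \ref{jjb4}) produces the thresholds $2m+4+\frac{4}{k+1}$, etc., directly in terms of $T(r,f^{(k)})+T(r,g^{(k)})$. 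Your proposed inequality in $T(r,f)$ would not yield these constants, since the translation between $T(r,f)$ and $T(r,f^{(k)})$ costs another factor of $k+1$ in the wrong direction.

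Second, and more seriously, your disposal of the case $H\equiv 0$ by invoking Fujimoto's strong-uniqueness criterion (Theorem 6.A) does not cover the full range of parameters. Since $\widehat{P}'(z)=nz^{n-2m-1}(z^m-1)^2$, the polynomial has exactly $m+1$ distinct critical points, so Theorem 6.A (which needs at least four) applies only for $m\geq 3$; for $m=1,2$ one would need Theorems 6.B or 6.C, whose hypotheses fail for admissible values such as $m=1$, $n=6$ (there $q_1=2$, $q_2=3$, so neither condition of Theorem 6.C holds). The paper avoids the strong-uniqueness property altogether: Lemma \ref{jjbiku} integrates $H\equiv 0$ to $F=\frac{AG+B}{CG+D}$ and kills every non-identity case by direct SMT estimates using the factorization $\widehat{P}(z)-\widehat{P}(1)=(z-1)^{3}Q_{n-3}(z)$ under $n\geq 4m+2$, reducing finally to $F\equiv G$ and Lemma \ref{jjbd2} (which only needs $\widehat{P}$ to be a uniqueness polynomial, guaranteed by Theorem 5.B and Lemma \ref{jjb0}). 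Note also that $\widehat{P}(z)-\widehat{P}(1)$ has a triple zero at $z=1$ only, not at every $m$-th root of unity as you assert: critical injectivity forces $\widehat{P}(\beta)\neq\widehat{P}(1)$ for the other roots $\beta$ of $z^m=1$, so the remaining $n-3$ zeros are simple. This correction matters for the counting in your $B\neq 0$ subcase.
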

The next theorem focus on the two set sharing problem.
\begin{theo}\label{jjthB5}
Let $n(> 4m+1),~m(\geq1),~k(\geq0)$ be three positive integers satisfying $\gcd\{m,n\}=1$ and $\widehat{S}=\{z : \widehat{P}(z)=0\}$ where the polynomial $\widehat{P}(z)$ is defined by (\ref{jjabcp1}). Let $f$ and $g$ be two non-constant meromorphic functions satisfying $E_{f^{(k)}}(\widehat{S},l)=E_{g^{(k)}}(\widehat{S},l)$ and $E_{f^{(k)}}(0,q)=E_{g^{(k)}}(0,q)$ where $0\leq q < \infty$. If
\begin{enumerate}
\item [i)] $l\geq \frac{3}{2}+\frac{2}{n-2m-1}+\frac{1}{(n-2m)q+n-2m-1}$ and
\item [ii)] $n>2m+\frac{4}{k+1}+\frac{4}{(k+1)(n-2m-1)}+\frac{2}{(k+1)((n-2m)q+n-2m-1)}$,
\end{enumerate}
then $f^{(k)}\equiv g^{(k)}$.
\end{theo}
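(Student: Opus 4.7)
The plan is to deploy the standard Nevanlinna set-sharing technique, adapted to the value $0$ that is being shared by the composed polynomials. Writing $f_1:=f^{(k)}$, $g_1:=g^{(k)}$ and $F:=\widehat P(f_1)$, $G:=\widehat P(g_1)$, the hypothesis $E_{f^{(k)}}(\widehat S,l)=E_{g^{(k)}}(\widehat S,l)$ translates to the statement that $F$ and $G$ share $0$ with weight $l$, while $E_{f^{(k)}}(0,q)=E_{g^{(k)}}(0,q)$ says that $f_1,g_1$ share $0$ with weight $q$. From $\widehat P'(z)=nz^{n-2m-1}(z^m-1)^2$ one reads off the two key factorisations $\widehat P(z)-\widehat P(0)=z^{n-2m}Q_0(z)$ with $Q_0(0)\neq 0$ and $\widehat P(z)-\widehat P(\zeta)=(z-\zeta)^3Q_\zeta(z)$ with $Q_\zeta(\zeta)\neq 0$ for every $m$-th root of unity $\zeta$. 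The hypothesis $\gcd(m,n)=1$ together with the prescribed restrictions on the constant $c$ ensure that $\widehat P$ is critically injective and has only simple zeros; an application of Fujimoto's Theorem~5.B together with $n>4m+1$ then shows $\widehat P$ is a uniqueness polynomial for meromorphic functions.

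I next introduce the auxiliary function
\[H=\left(\frac{F''}{F'}-\frac{2F'}{F}\right)-\left(\frac{G''}{G'}-\frac{2G'}{G}\right).\]
If $H\not\equiv 0$, the logarithmic derivative lemma gives $m(r,H)=S(r,f)+S(r,g)$, and a routine pole analysis bounds $N(r,\infty;H)$ by a sum of reduced counting functions at multiple zeros of $F$ and $G$, at poles of $F$ and $G$ of different multiplicities, at $\overline N_L$-type contributions controlled by the weight $l$, and at zeros of $F'$, $G'$ disjoint from $F$, $G$. Since every simple common zero of $F$ and $G$ is a zero of $H$, one gets $N^{1)}_{E}(r,0;F)\le N(r,\infty;H)+S(r,f)+S(r,g)$. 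Applying the Second Fundamental Theorem to $F$ at the three values $0$, $\infty$ and $\widehat P(0)$, together with Mokhon'ko's identity $T(r,F)=nT(r,f_1)+S(r,f)$, the standard bound $\overline N(r,\infty;f_1)\le \frac{1}{k+1}\,T(r,f_1)+S(r,f)$ coming from the $k$-fold derivative structure, and the factorisation $F-\widehat P(0)=f_1^{n-2m}Q_0(f_1)$ combined with the weighted sharing of $0$ by $f_1,g_1$ (which saves exactly the factor $(n-2m)q+n-2m-1$ in the zero-counts of $F-\widehat P(0)$), one arrives after symmetrisation in $f,g$ at an inequality strictly incompatible with the quantitative hypotheses (i)--(ii). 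This forces $H\equiv 0$.

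Integrating $H\equiv 0$ twice yields $\frac{1}{F}=\frac{C}{G}-D$ for constants $C\neq 0$, $D$, i.e.\ $G=CF/(1+DF)$. If $D=0$ and $C=1$, then $\widehat P(f_1)\equiv\widehat P(g_1)$, whence $f_1\equiv g_1$ by the uniqueness-polynomial property of $\widehat P$, completing the proof. If $D=0$ and $C\neq 1$, evaluate $G=CF$ at any zero of $f_1$: by the weighted sharing at $0$ the corresponding point is also a zero of $g_1$, so both $F$ and $G$ take the value $\widehat P(0)=c$ there, forcing $c=Cc$ and hence $C=1$, a contradiction; the exceptional case that $f_1$ has no zeros at all is ruled out by applying the Second Fundamental Theorem to $f_1$ at $0$, $\infty$ and the $m$-th roots of unity and using $n>4m+1$. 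If $D\neq 0$, the Möbius relation forces $-1/D$ to have a preimage of $F$ only among poles of $G$; since $\widehat P(z)=-1/D$ has $n$ distinct roots, each such preimage would have to be a pole of $g_1$, producing too large a Picard-type exceptional set for $f_1$ relative to $n>4m+1$, which again contradicts the Second Fundamental Theorem.

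The principal obstacle is the inequality bookkeeping in the case $H\not\equiv 0$: the thresholds on $l$ and $n$ in (i)--(ii) are sharp, and capturing the combinatorial saving $(n-2m)q+n-2m-1$ produced by the interaction between the weighted sharing at $0$ and the zero of $\widehat P(z)-\widehat P(0)$ of order $n-2m$ at the origin requires a delicate weighted counting-function argument; a secondary obstacle is the elimination of the non-trivial Möbius subcases in $H\equiv 0$, where the extra sharing hypothesis at $0$ must be carefully combined with $n>4m+1$ and the critical-value structure of $\widehat P$ to close the argument.
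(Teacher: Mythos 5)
Your proposal follows essentially the same route as the paper: the same auxiliary function $H$ (up to the harmless normalization of having $F=\widehat P(f^{(k)})$ share $0$ rather than $1$), the same dichotomy $H\not\equiv 0$ versus $H\equiv 0$, the same mechanism for extracting the factor $(n-2m)q+n-2m-1$ from the weighted sharing of $0$ combined with the zero of order $n-2m$ of $\widehat P(z)-\widehat P(0)$ at the origin (the paper packages this as Lemma \ref{jjb7} via the function $\Phi=\frac{F'}{F-1}-\frac{G'}{G-1}$), and the same Möbius case analysis plus Fujimoto's criterion in the degenerate case. The argument is correct and matches the paper's proof in all essentials.
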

The following  example shows that for the two set sharing case,  choosing the set $\textsf{S}_{1}$ with one element and $\textsf{S}_{2}$ with two elements, Theorem \ref{jjthB5} ceases to hold.
\begin{exm}\label{jjabc2015.}
Let $\textsf{S}_{1}=\{a\}$ and $\textsf{S}_{2}=\{b,c\}$. Choose $f(z) =p(z)+(b-a)e^{z}$ and $g(z)=q(z)+(-1)^{k}(c-a)e^{-z}$, where $p(z)$ and $q(z)$ are polynomial of degree $k$ with the coefficient of $z^{k}$ in $p(z)$ and $q(z)$ is equal to $\frac{a}{k!}$. Here $E_{f^{(k)}}(\textsf{S}_j)=E_{g^{(k)}}(\textsf{S}_j)$ for $j=1,2$  but $f^{(k)}\not\equiv g^{(k)}$.
\end{exm}
\begin{rem} If we consider $k\geq1$ in Theorem \ref{jjthB5}, then we see that there exists two sets $\textsf{S}_{1}$ (with 1 element) and $\textsf{S}_{2}$ (with $6$ elements) such that when derivatives of any two non-constant meromorphic functions share them with finite weight yields $f^{(k)}\equiv g^{(k)}$, thus improve {\it Theorem 7.F} in the direction of {\it Question 7.B}.
\end{rem}
The next two examples show that specific form of choosing the set $\textsf{S}_{1}$ with five elements and $\textsf{S}_{2}=\{0\}$ for $k\geq1$, Theorem \ref{jjthB5} ceases to hold.
\begin{exm}\label{jjabc2015}
Let $f(z) =\frac{1}{(\sqrt{\alpha \beta \gamma })^{k-1}}e^{\sqrt{\alpha \beta \gamma}\;z}$ and $g(z)=\frac{(-1)^{k}}{(\sqrt{\alpha \beta \gamma})^{k-1}}e^{-\sqrt{\alpha \beta \gamma}\;z}$ ($k\geq 1$) and $\textsf{S}=\{\alpha \sqrt{\beta },\alpha \sqrt{\gamma},\beta \sqrt{\gamma },\gamma \sqrt{\beta },\sqrt(\alpha\beta\gamma)\}$, where $\alpha $, $\beta $ and $\gamma $ are three non-zero distinct complex numbers. Clearly $E_{f^{(k)}}(\textsf{S})=E_{g^{(k)}}(\textsf{S})$ and $E_{f^{(k)}}(0)=E_{g^{(k)}}(0)$  but $f^{(k)}\not\equiv g^{(k)}$.
\end{exm}
\begin{exm} Let $f(z)=\frac{1}{c^{k}}e^{cz}$ and $g(z)=\omega^{4}f(z)$ and $\textsf{S}=\{\omega^{4},\omega^{3},\omega^{2},\omega,1\}$, where $\omega$ is the non-real fifth root of unity and $c$ is a non-zero complex number. Clearly $E_{f^{(k)}}(\textsf{S})=E_{g^{(k)}}(\textsf{S})$ and $E_{f^{(k)}}(0)=E_{g^{(k)}}(0)$  but $f^{(k)}\not\equiv g^{(k)}$.
\end{exm}
\section{Lemmas}
Throughout this chapter, we take
\beas F=-\frac{1}{c}\left(f^{(k)}\right)^{n-2m}\left(\left(f^{(k)}\right)^{2m}-\frac{2n}{n-m}\left(f^{(k)}\right)^{m}+\frac{n}{n-2m}\right),\eeas
\beas G=-\frac{1}{c}\left(g^{(k)}\right)^{n-2m}\left(\left(g^{(k)}\right)^{2m}-\frac{2n}{n-m}\left(g^{(k)}\right)^{m}+\frac{n}{n-2m}\right),\eeas
and $H$ is defined by the equation (\ref{CHB}) where $n(\geq 1)$, $m(\geq 1)$ and $k(\geq 0)$ are integers. Let us also define by $\widehat{T}(r):=\max\{T\left(r,f^{(k)}\right),T\left(r,g^{(k)}\right)\}$ and $\widehat{S}(r):=o(\widehat{T}(r))$.
\begin{lem}\label{jjb0}  The polynomial $$\widehat{P}(z)=z^{n}-\frac{2n}{n-m}z^{n-m}+\frac{n}{n-2m}z^{n-2m}+c $$ is a critically injective polynomial having only simple zeros when $|c|\not=0,\frac{2m^2}{(n-m)(n-2m)}$.
\end{lem}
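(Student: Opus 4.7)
The plan is to compute the critical points of $\widehat{P}(z)$ directly, evaluate $\widehat{P}$ at each critical point, and then translate the conditions ``simple zeros'' and ``critically injective'' into explicit constraints on $c$.

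First I would differentiate:
\begin{eqnarray*}
\widehat{P}'(z) &=& nz^{n-1} - 2nz^{n-m-1} + nz^{n-2m-1} \\
&=& nz^{n-2m-1}(z^{2m} - 2z^{m} + 1) = nz^{n-2m-1}(z^{m}-1)^{2}.
\end{eqnarray*}
Hence the critical points are $z=0$ and the $m$-th roots of unity $\zeta_{1},\ldots,\zeta_{m}$. Clearly $\widehat{P}(0)=c$. For $\zeta$ with $\zeta^{m}=1$, one has $\zeta^{n-m}=\zeta^{n}=\zeta^{n-2m}$, so
$$\widehat{P}(\zeta)=\zeta^{n}\left(1-\frac{2n}{n-m}+\frac{n}{n-2m}\right)+c.$$
A direct calculation gives
$$(n-m)(n-2m)-2n(n-2m)+n(n-m)=n^{2}-3nm+2m^{2}-2n^{2}+4nm+n^{2}-nm=2m^{2},$$
so that $\widehat{P}(\zeta)=\zeta^{n}\cdot\dfrac{2m^{2}}{(n-m)(n-2m)}+c$.

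Next I would handle the simple-zero assertion. Any multiple zero of $\widehat{P}$ is a common zero of $\widehat{P}$ and $\widehat{P}'$, and hence a critical point. Thus multiple zeros can only occur at $z=0$ (forcing $c=0$) or at some $m$-th root of unity $\zeta$ (forcing $c=-\zeta^{n}\cdot\frac{2m^{2}}{(n-m)(n-2m)}$, which has modulus $\frac{2m^{2}}{(n-m)(n-2m)}$). The hypotheses $c\neq 0$ and $|c|\neq\frac{2m^{2}}{(n-m)(n-2m)}$ rule out both possibilities, so all zeros of $\widehat{P}$ are simple.

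Finally, for critical injectivity I would check all pairs of distinct critical values. Comparing $\widehat{P}(0)$ with $\widehat{P}(\zeta)$ gives $\zeta^{n}\cdot\frac{2m^{2}}{(n-m)(n-2m)}=0$, which is impossible since $|\zeta^{n}|=1$. Comparing $\widehat{P}(\zeta_{i})$ with $\widehat{P}(\zeta_{j})$ reduces to $\zeta_{i}^{n}=\zeta_{j}^{n}$. This is where the coprimality hypothesis $\gcd(m,n)=1$ (imposed in the theorems where this lemma is applied) enters: the map $\zeta\mapsto\zeta^{n}$ is then a bijection on the group of $m$-th roots of unity, forcing $\zeta_{i}=\zeta_{j}$. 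The main (and really only) obstacle is the algebraic identity $(n-m)(n-2m)-2n(n-2m)+n(n-m)=2m^{2}$; once this is established, the rest is immediate bookkeeping.
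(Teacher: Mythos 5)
Your proposal is correct and follows essentially the same route as the paper: compute $\widehat{P}'(z)=nz^{n-2m-1}(z^{m}-1)^{2}$, observe that any multiple zero must be a critical point (forcing $c=0$ or $|c|=\frac{2m^{2}}{(n-m)(n-2m)}$), and deduce critical injectivity from $\zeta^{n}\neq 0$ and the fact that $\gcd(m,n)=1$ makes $\zeta\mapsto\zeta^{n}$ injective on $m$-th roots of unity. You are also right that the coprimality hypothesis, though absent from the lemma's statement, is what the paper itself invokes at the same step.
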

\begin{proof} We see that $\widehat{P}'(z)=nz^{n-2m-1}(z^{m}-1)^{2}$. Thus $\widehat{P}(z)$ is critically injective, because
\begin{enumerate}
\item [i)] $\widehat{P}(0)=\widehat{P}(\alpha)$ with $\alpha^{m}=1$ implies $\alpha=0$, which is absurd.
\item [ii)] $\widehat{P}(\beta)=\widehat{P}(\gamma)$ with $\beta^{m}=1,\gamma^{m}=1$ implies $\beta^{n}=\gamma^{n}$. Thus $\beta=\gamma$ as $\gcd\{m,n\}=1$.
\end{enumerate}
Next, on contrary, we assume that $\widehat{P}(z)$ has atleast one multiple zero. Then $\widehat{P}(\alpha)=\widehat{P}'(\alpha)=0$ holds for some $\alpha$. Then either $\alpha=0$ or $\alpha^{m}=1$.\par
If $\alpha=0$, then $\widehat{P}(\alpha)=0$; i.e., $c=0$, a contradiction  by assumption on $c$.\par
If  $\alpha^{m}=1$, then $\widehat{P}(\alpha)=0$ implies $\alpha^{n}\left(1-\frac{2n}{n-m}+\frac{n}{n-2m}\right)+c=0;$ i.e., $|c|=\frac{2m^{2}}{(n-m)(n-2m)}$, which is impossible by assumption on $c$. Hence the proof.
\end{proof}
\begin{lem}\label{jjb4} (\cite{jj3.3})
If $F$ and $G$ share $(1,l)$ where $0\leq l<\infty$, then\\
$$\ol{N}(r,1;F)+\ol{N}(r,1;G)-N_{E}^{1}(r,1,F)+(l-\frac{1}{2})\ol{N}_{*}(r,1;F,G)\leq\frac{1}{2}\left(N(r,1;F)+N(r,1;G)\right).$$
\end{lem}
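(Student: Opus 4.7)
The plan is to decompose both sides of the inequality according to the common local structure of the $1$-points of $F$ and $G$ and then compare the contributions term by term. Since $F$ and $G$ share $(1,l)$, every $1$-point of $F$ is a $1$-point of $G$, and conversely. Moreover, whenever a common $1$-point has $F$-multiplicity $p$ and $G$-multiplicity $q$ with $p\neq q$, we must have $\min\{p,q\}\geq l+1$ (otherwise the weight-$l$ condition would force $p=q$). These two facts are the only ingredients I intend to use.

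First I would split the set of common $1$-points into four disjoint families and record the corresponding reduced counting functions: (i) simple common $1$-points, contributing $N_{E}^{1)}(r,1;F)$; (ii) common $1$-points where $F$ and $G$ have the same multiplicity $\geq 2$, contributing $\ol N_{E}^{(2}(r,1;F)=\ol N_{E}^{(2}(r,1;G)$; (iii) common $1$-points where $F$ has strictly larger multiplicity, contributing $\ol N_{L}(r,1;F)$; (iv) common $1$-points where $G$ has strictly larger multiplicity, contributing $\ol N_{L}(r,1;G)$. Since every $1$-point of $F$ falls into exactly one family, this gives
\[
\ol N(r,1;F)=\ol N(r,1;G)=N_{E}^{1)}(r,1;F)+\ol N_{E}^{(2}(r,1;F)+\ol N_{L}(r,1;F)+\ol N_{L}(r,1;G),
\]
and hence, using $\ol N_{*}(r,1;F,G)=\ol N_{L}(r,1;F)+\ol N_{L}(r,1;G)$,
\[
\ol N(r,1;F)+\ol N(r,1;G)-N_{E}^{1)}(r,1;F)+\left(l-\tfrac12\right)\ol N_{*}(r,1;F,G)
\]
simplifies to
\[
N_{E}^{1)}(r,1;F)+2\ol N_{E}^{(2}(r,1;F)+\left(l+\tfrac{3}{2}\right)\bigl(\ol N_{L}(r,1;F)+\ol N_{L}(r,1;G)\bigr).
\]

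Next I would bound $N(r,1;F)+N(r,1;G)$ from below using the same four families. A simple common $1$-point contributes exactly $1+1=2$, and a common $1$-point of family (ii) with common multiplicity $p\geq 2$ contributes $2p\geq 4$. For family (iii), the weight-$l$ condition gives $F$-multiplicity $\geq l+2$ and $G$-multiplicity $\geq l+1$, for a total contribution $\geq 2l+3$; family (iv) contributes $\geq 2l+3$ by symmetry. Summing over all common $1$-points yields
\[
N(r,1;F)+N(r,1;G)\geq 2N_{E}^{1)}(r,1;F)+4\ol N_{E}^{(2}(r,1;F)+(2l+3)\bigl(\ol N_{L}(r,1;F)+\ol N_{L}(r,1;G)\bigr),
\]
so that $\tfrac12\bigl(N(r,1;F)+N(r,1;G)\bigr)$ is at least the same expression we obtained for the left-hand side in the previous paragraph, which finishes the proof.

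Since the argument is an accounting one, there is no genuine obstacle beyond being careful with the lower bound $\min\{p,q\}\geq l+1$ in families (iii) and (iv); this is the only place the hypothesis $E_{l}(1;F)=E_{l}(1;G)$ enters, and the constant $l-\tfrac12$ on the left is chosen precisely so that the contributions of $\ol N_{L}(r,1;F)$ and $\ol N_{L}(r,1;G)$ match on both sides. Everything else is purely combinatorial bookkeeping.
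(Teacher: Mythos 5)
Your proof is correct. The thesis quotes this lemma from Banerjee--Bhattacharjee without reproducing a proof, and your argument is exactly the standard derivation: decompose the common $1$-points into the four families, use that a common $1$-point with differing multiplicities must have both multiplicities at least $l+1$ (hence the larger at least $l+2$, giving a contribution of at least $2l+3$ to $N(r,1;F)+N(r,1;G)$), and observe that both sides then reduce to $N_{E}^{1)}(r,1;F)+2\ol N_{E}^{(2}(r,1;F)+\left(l+\tfrac{3}{2}\right)\ol N_{*}(r,1;F,G)$; the bookkeeping checks out.
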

\begin{lem}\label{jjb7} Suppose that $F\not\equiv G$. Further suppose that $f^{(k)}$ and $g^{(k)}$ share $(0,q)$ where $0\leq q<\infty$ and $F$, $G$ share $(1,l)$, then
\beas & &\{(n-2m)q+n-2m-1\}\;\ol N\left(r,0;f^{(k)}\mid\geq q+1\right)\\
&\leq& \ol{N}\left(r,\infty;f^{(k)}\right)+\ol{N}\left(r,\infty;g^{(k)}\right)+\ol{N}_{*}(r,1;F,G)+\widehat{S}(r).\eeas
Similar expressions hold for $g$ also.
\end{lem}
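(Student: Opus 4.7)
Since the hypothesis $F \not\equiv G$ is the only global relation we have, the strategy is to manufacture a nontrivial auxiliary meromorphic function out of $F$ and $G$ whose zeros are controlled below by the multiple zeros of $f^{(k)}$ and whose poles are controlled above by the three terms appearing on the right. Motivated by the multiplier $(n-2m)q + n - 2m - 1 = (n-2m)(q+1) - 1$, I would consider the logarithmic combination
$$\Phi \;:=\; \frac{F'}{F-1} - \frac{G'}{G-1}.$$
The first task is to check $\Phi \not\equiv 0$. If it were, integration would give $F - 1 \equiv A\,(G-1)$ for some nonzero constant $A$. At any common zero $z_0$ of $f^{(k)}$ and $g^{(k)}$ of multiplicity $\geq q+1$ (which exists whenever the left-hand side of the lemma is nontrivial, in view of the $(0,q)$-sharing hypothesis), both $F(z_0)=0$ and $G(z_0)=0$, so $-1 = -A$, forcing $A = 1$ and $F \equiv G$, contradicting the standing assumption. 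Thus $\Phi \not\equiv 0$ whenever the inequality has content.

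\textbf{Zero estimate.} The key calculation is at a zero $z_0$ of $f^{(k)}$ of multiplicity $p \geq q+1$, with corresponding zero of $g^{(k)}$ of multiplicity $p' \geq q+1$ forced by the $(0,q)$-sharing. Using $F - 1 = -\frac{1}{c}\widehat P(f^{(k)})$ together with $\widehat P'(z) = n z^{n-2m-1}(z^m - 1)^2$, differentiation gives
$$F' \;=\; -\frac{n}{c}\,(f^{(k)})^{n-2m-1}\bigl((f^{(k)})^m - 1\bigr)^{2} f^{(k+1)},$$
whose order of vanishing at $z_0$ is $(n-2m-1)p + 0 + (p-1) = (n-2m)p - 1$, since $(f^{(k)})^m - 1$ is nonzero there. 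Similarly $G'$ vanishes to order $(n-2m)p' - 1$ at $z_0$. As $F - 1$ and $G - 1$ both take the nonzero value $-1$ at $z_0$, the function $\Phi$ itself vanishes there to order at least $\min\{(n-2m)p - 1,\, (n-2m)p' - 1\} \geq (n-2m)(q+1) - 1$. Consequently
$$\bigl\{(n-2m)q + n - 2m - 1\bigr\}\,\ol N\bigl(r,0; f^{(k)} \mid \geq q+1\bigr) \;\leq\; N(r, 0; \Phi).$$

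\textbf{Pole estimate and conclusion.} By First Fundamental Theorem, $N(r, 0; \Phi) \leq T(r, \Phi) + O(1) = m(r, \Phi) + N(r, \infty; \Phi) + O(1)$. The lemma on logarithmic derivatives applied to $F - 1$ and $G - 1$ gives $m(r, \Phi) = \widehat S(r)$. For the poles of $\Phi$, I would examine each candidate singularity: a zero of $F$ (resp.\ $G$) is harmless since $F - 1$ stays bounded away from $0$; a pole of $f^{(k)}$ contributes at worst a simple pole of $F'/(F-1)$, bounded collectively by $\ol N(r,\infty; f^{(k)}) + \ol N(r,\infty; g^{(k)})$; and at a common $1$-point of $F$ and $G$ with matching multiplicities the residues of $F'/(F-1)$ and $G'/(G-1)$ coincide (each equals the common multiplicity) and cancel, so a pole of $\Phi$ survives only when the multiplicities at the $1$-point differ, which is counted precisely by $\ol N_*(r, 1; F, G)$. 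Assembling the three contributions yields
$$N(r, \infty; \Phi) \;\leq\; \ol N(r, \infty; f^{(k)}) + \ol N(r, \infty; g^{(k)}) + \ol N_*(r, 1; F, G),$$
and combining with the zero estimate gives the desired inequality, the analogous bound for $g^{(k)}$ following by symmetry. The only real obstacle is the bookkeeping at the common $1$-points together with the verification $\Phi \not\equiv 0$; both are resolved by leveraging the common high-multiplicity zeros of $f^{(k)}$ and $g^{(k)}$ that the $(0,q)$-sharing provides.
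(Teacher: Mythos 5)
Your proposal is correct and takes essentially the same route as the paper: the paper's proof likewise introduces $\Phi=\frac{F'}{F-1}-\frac{G'}{G-1}$, disposes of $\Phi\equiv 0$ by integrating to $F-1=A(G-1)$ and noting that $0$ then becomes a Picard exceptional value of $f^{(k)}$ and $g^{(k)}$ (so the left-hand side is trivial), and otherwise bounds $\{(q+1)(n-2m)-1\}\,\ol N\left(r,0;f^{(k)}\mid\geq q+1\right)\leq N(r,0;\Phi)\leq N(r,\infty;\Phi)+\widehat{S}(r)$ with the same pole bookkeeping at poles of $F$, $G$ and at $1$-points of unequal multiplicity. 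Your write-up is merely a bit more explicit about the exact vanishing order of $F'$ and the residue cancellation at common $1$-points.
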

\begin{proof} Define \bea \label{CHP} \Phi &:=& \frac{F^{'}}{F-1}-\frac{G^{'}}{G-1}.\eea
If $\Phi=0$, then by integration, we get \bea F-1=A(G-1),\eea
where $A$ is non-zero constant. Since $F\not\equiv G$, we have $A\not= 1$. Thus $0$ is an e.v.P. of $f^{(k)}$ and $g^{(k)}$ and hence the lemma follows.\par
Next we consider as $\Phi\not=0$. If $z_{0}$ be a zero of $f^{(k)}$ of order $t(\geq q+1)$, then $z_{0}$ is a zero of $F$ of order atleast $(q+1)(n-2m)$ and hence $z_{0}$ is a zero of $\Phi$ of order at least $(q+1)(n-2m)-1$. Thus
\beas & &\{(q+1)(n-2m)-1\}\;\ol N\left(r,0;f^{(k)}\mid\geq q+1\right)\\
&\leq& N(r,0;\Phi) \leq T(r,\Phi)+O(1)\\
&\leq& N(r,\infty;\Phi)+\widehat{S}(r)\\
&\leq& \ol{N}\left(r,\infty;f^{(k)}\right)+\ol{N}\left(r,\infty;g^{(k)}\right)+\ol{N}_{*}(r,1;F,G)+\widehat{S}(r).\eeas
Hence the proof.
\end{proof}
\begin{lem}\label{jjbd2}  If $F\equiv G$ holds for $k\geq0$ and $n\geq 2m+4$, then $f^{(k)}\equiv g^{(k)}$.
\end{lem}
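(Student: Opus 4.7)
My plan is to reduce this lemma to the known criterion of Fujimoto (Theorem 5.B) for a critically injective polynomial to be a uniqueness polynomial, combined with the structural properties of $\widehat{P}$ that have already been established in Lemma \ref{jjb0}.

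First I would unwrap the definitions of $F$ and $G$. Setting $h=f^{(k)}$ and $\phi=g^{(k)}$, a direct expansion gives
\[
F = -\tfrac{1}{c}\bigl(\widehat{P}(h)-c\bigr)+1, \qquad G = -\tfrac{1}{c}\bigl(\widehat{P}(\phi)-c\bigr)+1,
\]
so the hypothesis $F\equiv G$ is equivalent to $\widehat{P}(h)\equiv \widehat{P}(\phi)$. Thus it suffices to prove that $\widehat{P}$ is a uniqueness polynomial for meromorphic functions; then the UPM property directly yields $h\equiv\phi$, i.e., $f^{(k)}\equiv g^{(k)}$.

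Next I would verify the hypotheses of Theorem 5.B for the polynomial $\widehat{P}$. By Lemma \ref{jjb0}, $\widehat{P}$ is critically injective with simple zeros. A direct computation shows
\[
\widehat{P}'(z)=n\,z^{n-2m-1}(z^{m}-1)^{2},
\]
whose distinct zeros are $d_{1}=0$ of multiplicity $q_{1}=n-2m-1$, and the $m$ distinct $m$-th roots of unity $d_{2},\dots,d_{m+1}$, each of multiplicity $q_{j}=2$. Hence in the notation of Theorem 5.B we have $k=m+1$ critical points with the above multiplicities.

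The remaining step is to check Fujimoto's inequality $\sum_{1\le l<j\le k}q_{l}q_{j}>\sum_{l=1}^{k}q_{l}$. A straightforward count gives
\[
\sum_{l=1}^{k}q_{l}=(n-2m-1)+2m=n-1,
\]
\[
\sum_{1\le l<j\le k}q_{l}q_{j}=2m(n-2m-1)+4\binom{m}{2}=2m(n-m-2),
\]
so the required inequality becomes $2m(n-m-2)>n-1$, equivalently $(2m-1)n>2m^{2}+4m-1$. Since $n\ge 2m+4$, one has $(2m-1)n\ge(2m-1)(2m+4)=4m^{2}+6m-4$, and the difference $(4m^{2}+6m-4)-(2m^{2}+4m-1)=2m^{2}+2m-3>0$ for every $m\ge 1$. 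Hence Theorem 5.B applies and $\widehat{P}$ is a UPM.

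Here the only genuine obstacle is the arithmetic inequality $2m(n-m-2)>n-1$, which I expect to be the sharpest point of the argument and precisely the place where the hypothesis $n\ge 2m+4$ enters; if either $h$ or $\phi$ happens to reduce to a constant (which can occur when $k\ge 1$ and the corresponding function is a polynomial of degree $\le k$), one argues separately by noting that $\widehat{P}(h)$ being constant forces $\phi$ to be a constant taking the same $\widehat{P}$-value, and since $\widehat{P}$ is critically injective the equation $\widehat{P}(\phi)=\widehat{P}(h)$ then forces $h=\phi$. Thus in all cases $f^{(k)}\equiv g^{(k)}$, completing the proof.
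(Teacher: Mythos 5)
Your proof is correct and follows essentially the same route as the paper: the paper's one-line proof simply invokes Lemma \ref{jjb0} together with Theorem 5.B, and you have filled in exactly the verification it leaves implicit, namely that the critical points of $\widehat{P}'(z)=nz^{n-2m-1}(z^{m}-1)^{2}$ are $0$ with multiplicity $n-2m-1$ and the $m$ roots of unity each with multiplicity $2$, and that Fujimoto's inequality reduces to $2m(n-m-2)>n-1$, which indeed holds for $n\ge 2m+4$. One caveat: in your closing aside on the degenerate case where $f^{(k)}$ or $g^{(k)}$ is constant, the claim that critical injectivity of $\widehat{P}$ forces $h=\phi$ is not valid, since critical injectivity only asserts injectivity on the zeros of $\widehat{P}'$ and a polynomial of degree $n\ge 2$ is never globally injective; this degenerate situation falls outside the definition of a uniqueness polynomial, is not treated by the paper either, and does not occur in the applications of the lemma, so it does not affect the substance of your argument.
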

\begin{proof} In view of Lemma \ref{jjb0} and Theorem 5.B, the lemma follows.
\end{proof}
\begin{lem}\label{jjbb3} If $k\geq0$ and $n\geq5$, then $FG\not\equiv 1$.
\end{lem}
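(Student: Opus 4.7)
The plan is to assume for contradiction that $FG\equiv 1$ and to derive a numerical contradiction from Nevanlinna's Second Fundamental Theorem, exploiting the sharp multiplicity constraints that $FG\equiv 1$ imposes on the zeros, poles, and $\gamma_j$-points of $f^{(k)}$ and $g^{(k)}$.

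First, writing $\widehat{P}_{1}(z):=\widehat{P}(z)-c = z^{n-2m}(z^{2m}-\tfrac{2n}{n-m}z^{m}+\tfrac{n}{n-2m})$, the hypothesis $FG\equiv 1$ is equivalent to $\widehat{P}_{1}(f^{(k)})\,\widehat{P}_{1}(g^{(k)})\equiv c^{2}$. A direct computation (the polynomial factor and its derivative $2mz^{m-1}(z^{m}-\tfrac{n}{n-m})$ have no common zero, as one sees by evaluating the polynomial at $z=0$ and at $z^{m}=n/(n-m)$) shows that the $2m$ roots $\gamma_{1},\ldots,\gamma_{2m}$ of $z^{2m}-\tfrac{2n}{n-m}z^{m}+\tfrac{n}{n-2m}$ are distinct and nonzero. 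Hence $\widehat{P}_{1}$ has a zero of order $n-2m$ at the origin and $2m$ simple nonzero roots.

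Next I would use the fact that $\widehat{P}_{1}(f^{(k)})\widehat{P}_{1}(g^{(k)})$ is a nonzero constant: every zero of one factor must be cancelled by a pole of the other, and conversely. Careful bookkeeping shows that $f^{(k)}$ and $g^{(k)}$ have disjoint pole-sets, and that a pole of $f^{(k)}$ of order $p$ forces $g^{(k)}$ either to vanish of order $np/(n-2m)$ or to take some value $\gamma_{j}$ of order $np$ at the same point; the symmetric statement also holds. Setting $d:=\gcd(n,2m)$ — which, since $\gcd(m,n)=1$, lies in $\{1,2\}$ — the divisibility conditions yield
$$p\geq\tfrac{n}{d}\;\text{at any zero of }f^{(k)},\qquad p\geq n\;\text{at any }\gamma_{j}\text{-point of }f^{(k)},$$
and similarly for $g^{(k)}$. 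Consequently
$$\ol N(r,0;f^{(k)})\leq\tfrac{d}{n}T(r,f^{(k)}),\qquad \sum_{j=1}^{2m}\ol N(r,\gamma_{j};f^{(k)})\leq\tfrac{2m}{n}T(r,f^{(k)}),$$
and, translating poles of $f^{(k)}$ into zeros and $\gamma_{j}$-points of $g^{(k)}$,
$$\ol N(r,\infty;f^{(k)})\leq\tfrac{2m+d}{n}T(r,g^{(k)}).$$

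Finally, Mokhon'ko's lemma (Lemma~\ref{ML}) applied to $F$ as a polynomial of degree $n$ in $f^{(k)}$, together with $T(r,F)=T(r,G)+O(1)$ coming from $FG\equiv1$, yields $T(r,f^{(k)})=T(r,g^{(k)})+O(1)$. Feeding these estimates into the Second Fundamental Theorem for $f^{(k)}$ with the $2m+2$ target values $0,\gamma_{1},\ldots,\gamma_{2m},\infty$ gives
$$(2m+1)T(r,f^{(k)})\leq\tfrac{2(2m+d)}{n}T(r,f^{(k)})+S(r,f^{(k)}),$$
and since $\tfrac{2(2m+d)}{2m+1}\leq\tfrac{8}{3}$ for $m\geq1$ and $d\leq2$, the hypothesis $n\geq 5$ makes the left coefficient strictly larger, a contradiction. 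The main obstacle I anticipate is the multiplicity bookkeeping: one must track the high-multiplicity zero of $\widehat{P}_{1}$ at the origin separately from the simple roots $\gamma_{j}$, and the precise value of $d=\gcd(n,2m)$ — which uses $\gcd(m,n)=1$ essentially — governs how tight the zero-multiplicity bound can be made. Once that bookkeeping is clean, the SFT estimate is routine.
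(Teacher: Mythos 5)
Your proposal is essentially the paper's own proof: the paper likewise rewrites $FG\equiv 1$ as $\widehat{P}_{1}(f^{(k)})\widehat{P}_{1}(g^{(k)})\equiv c^{2}$, deduces that every $\gamma_{j}$-point of $f^{(k)}$ has multiplicity at least $n$ and every zero has multiplicity at least $n/2m$ (your $\gcd$ refinement to $n/d$ is a mild sharpening, and your bound $\ol N(r,\infty;f^{(k)})\leq\frac{2m+d}{n}T(r,g^{(k)})$ matches the paper's $\frac{4m}{n}T$), and then applies the Second Fundamental Theorem with the values $0,\gamma_{1},\ldots,\gamma_{2m},\infty$. The one slip is the SFT coefficient: with these $2m+2$ values the theorem yields $2m\,T\left(r,f^{(k)}\right)$ on the left-hand side, not $(2m+1)T\left(r,f^{(k)}\right)$, but since $2m>\frac{2(2m+d)}{n}$ already holds for all $m\geq1$ when $n\geq5$, the contradiction survives unchanged.
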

\begin{proof} On contrary, we suppose that $FG\equiv 1$. Then \bea\label{jjr0}\left(f^{(k)}\right)^{n-2m}\prod_{i=1}^{2m}\left(\left(f^{(k)}\right)-\gamma_{i}\right)\left(g^{(k)}\right)^{n-2m}\prod_{i=1}^{2m}\left(\left(g^{(k)}\right)-\gamma_{i}\right)=c^{2},\eea
where $\gamma_{i}$ (i=1,2,\ldots,2m) are the roots of the equation $z^{2m}-\frac{2n}{n-m}z^{m}+\frac{n}{n-2m}=0$.\\
Applying Lemma \ref{ML} in (\ref{jjr0}), we have $$T\left(r,f^{(k)}\right)=T\left(r,g^{(k)}\right)+O(1).$$
Let $z_{0}$ be a $\gamma_{i}$ point of $f^{(k)}$ of order $p$. Then $z_{0}$ is a pole of $g$ of order $q$ such that $p=n(1+k)q\geq n$. Thus
\bea \ol{N}\left(r,\gamma_{i};f^{(k)}\right)\leq\frac{1}{n}N\left(r,\gamma_{i};f^{(k)}\right).\eea
Again let $z_{0}$ be a zero of $f^{(k)}$ of order $t$. Then $z_{0}$ is a pole of $g$ of order $s$ such that $(n-2m)t=ns(1+k)$. Thus $t>s(1+k)$ and $2ms(1+k)=(n-2m)(t-s(1+k))\geq(n-2m)$. Consequently $(n-2m)t=ns(1+k)$ gives $t\geq\frac{n}{2m}.$
Hence \bea \ol{N}\left(r,0;f^{(k)}\right)\leq\frac{2m}{n}N\left(r,0;f^{(k)}\right).\eea
Again from equation (\ref{jjr0}), we get
\beas \ol{N}\left(r,\infty;f^{(k)}\right) &\leq& \ol{N}\left(r,0;g^{(k)}\right)+\sum\limits_{i=0}^{2m}\ol{N}\left(r,\gamma_{i};g^{(k)}\right)\\
 &\leq& \frac{2m}{n}N\left(r,0;g^{(k)}\right)+\frac{1}{n}\sum\limits_{i=0}^{2m}N\left(r,\gamma_{i};g^{(k)}\right)\\
 &\leq& \frac{4m}{n}T\left(r,g^{(k)}\right)+O(1).\eeas
Hence using the Second Fundamental Theorem, we get
 \bea\label{jjr1} && 2mT\left(r,f^{(k)}\right)\\
\nonumber  &\leq& \ol{N}\left(r,\infty;f^{(k)}\right)+\ol{N}\left(r,0;f^{(k)}\right)+\sum\limits_{i=0}^{2m}\ol{N}\left(r,\gamma_{i};f^{(k)}\right)+S\left(r,f^{(k)}\right)\\
\nonumber  &\leq& \frac{4m}{n}T\left(r,f^{(k)}\right)+\frac{2m}{n}T\left(r,f^{(k)}\right)+\frac{2m}{n}T\left(r,f^{(k)}\right)+S\left(r,f^{(k)}\right),\eea
which is a contradiction as $n\geq5$. Hence the proof.
\end{proof}
\begin{lem}\label{jjbiku} If $H\equiv 0$, $k\geq 0$ and $n\geq 4m+2$ with $\gcd\{m,n\}=1$, then $f^{(k)}\equiv g^{(k)}$.
\end{lem}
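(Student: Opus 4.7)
My plan is to assume $H\equiv 0$ and integrate twice to extract the M\"obius relation
\[
\frac{1}{F-1}\equiv\frac{A}{G-1}+B
\]
for suitable constants $A\neq 0$ and $B$. Applying Lemma \ref{ML} to $F=-\tfrac{1}{c}\widehat{P}(f^{(k)})+1$ and $G=-\tfrac{1}{c}\widehat{P}(g^{(k)})+1$ will give $T(r,f^{(k)})=T(r,g^{(k)})+S(r,g^{(k)})$, and then I would split the analysis according to whether $B=0$ or $B\neq 0$.

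\textbf{The subcase $B=0$.} Here the relation collapses to $AF\equiv G+(A-1)$. If $A=1$ then $F\equiv G$, i.e.\ $\widehat{P}(f^{(k)})\equiv\widehat{P}(g^{(k)})$, and since $n\geq 4m+2\geq 2m+4$ the uniqueness-polynomial conclusion of Lemma \ref{jjbd2} forces $f^{(k)}\equiv g^{(k)}$. If $A\neq 1$, I would use the linear relation $AF=G+(A-1)$ to match $\overline{N}(r,\infty;F)=\overline{N}(r,\infty;G)$ and $\overline{N}(r,1-\tfrac{1}{A};F)=\overline{N}(r,0;G)$; combined with the factorisation $\widehat{P}(w)-c=w^{n-2m}\prod_{j=1}^{2m}(w-\beta_{j})$, the structure of $\widehat{P}(w)=\prod_{i}(w-\gamma_{i})$ (all $\gamma_{i}$ simple by Lemma \ref{jjb0}), and the Second Fundamental Theorem applied to $g^{(k)}$ at the values $\infty,\,0,\,\beta_{1},\ldots,\beta_{2m},\,\gamma_{1},\ldots,\gamma_{n}$, I expect an inequality of the shape $(n-2m-O(1))\,T(r,g^{(k)})\leq\widehat{S}(r)$, which fails for $n\geq 4m+2$.

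\textbf{The subcase $B\neq 0$.} Rewriting the M\"obius identity as $F-1\equiv(G-1)/(BG+(A-B))$, the poles of $F$ (and hence of $f^{(k)}$) lie exactly over $G=1-A/B$, and $1-A/B\neq 1$ because $A\neq 0$. The distinguished degeneracy is $A=B=-1$, which forces $FG\equiv 1$; Lemma \ref{jjbb3} (whose hypothesis $n\geq 5$ is implied by $n\geq 4m+2$) immediately rules this out. In every other subcase $1-A/B$ becomes a Picard-exceptional value of $G$ modulo $\widehat{S}(r)$, and I would apply the Second Fundamental Theorem to $G$ at $0,\,1,\,1-A/B,\,\infty$: the counts of $\overline{N}(r,0;G)$ and $\overline{N}(r,1;G)$ are dictated by the factorisations of $\widehat{P}(w)-c$ and $\widehat{P}(w)$, while $\widehat{P}(w)-cA/B$ admits $n$ simple roots unless $cA/B$ coincides with one of the two critical values of $\widehat{P}$, in which case the factor degrades only by multiplicities $n-2m$ or $3$. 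The resulting estimate $(n-2m-O(1))\,T(r,g^{(k)})\leq\widehat{S}(r)$ again contradicts $n\geq 4m+2$.

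\textbf{Main obstacle.} The genuinely delicate bookkeeping is the $B\neq 0$ analysis when $1-A/B$ hits a critical value of $\widehat{P}$: the reduced counting function $\overline{N}(r,1-A/B;G)$ then loses a factor from the high multiplicity of the critical point ($n-2m$ at $0$ and $3$ at each of the $m$-th roots of unity), and one has to balance this gain against the simultaneous appearance of many simple $\beta_{j}$- or $\alpha_{i}$-preimages of $g^{(k)}$ in $\overline{N}(r,0;G)$ or $\overline{N}(r,1;G)$. The threshold $n\geq 4m+2$ is exactly what lets the Second Fundamental Theorem dominate the degree-$n$ side, and the coprimality $\gcd(m,n)=1$ enters (as in Lemma \ref{jjbb3}) to exclude spurious divisibility matchings between the pole and zero orders of $f^{(k)}$ and $g^{(k)}$ induced by the M\"obius relation.
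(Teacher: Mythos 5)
Your overall architecture matches the paper's: integrate $H\equiv 0$ to a M\"obius relation between $F$ and $G$, equate characteristics via Lemma \ref{ML}, dispose of $F\equiv G$ by Lemma \ref{jjbd2} and of $FG\equiv 1$ by Lemma \ref{jjbb3}, and eliminate the remaining cases by the Second Fundamental Theorem. But two of the SFT applications you describe do not close at the threshold $n\geq 4m+2$. The serious one is the linear case $AF=G+(A-1)$, $A\neq 1$. Applying SFT to $g^{(k)}$ at $\infty,0,\beta_1,\dots,\beta_{2m},\gamma_1,\dots,\gamma_n$ gives left side $(n+2m)T(r,g^{(k)})$, while on the right $\sum_i\overline{N}(r,\gamma_i;g^{(k)})=\overline{N}(r,1;G)$ is a full $nT(r,g^{(k)})$ (these are the shared $1$-points; nothing makes them sparse) and $\overline{N}(r,0;G)=\overline{N}\bigl(r,1-\tfrac{1}{A};F\bigr)$ is also up to $nT(r,f^{(k)})$ when $c/A$ is not a critical value of $\widehat{P}$. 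The inequality collapses to $2m\leq n+\tfrac{1}{k+1}$, which is vacuous, not to $(n-2m-O(1))T\leq\widehat{S}(r)$. The paper closes this case only by importing the remaining critical value of $\widehat{P}$ at $z=1$: writing $F+\xi=(f^{(k)}-1)^{3}Q_{n-3}(f^{(k)})$ with $\xi=\tfrac{1}{c}\bigl(1-\tfrac{2n}{n-m}+\tfrac{n}{n-2m}\bigr)$, it runs SFT on $G$ with the \emph{four} values $\infty,\,0,\,-\tfrac{1-\lambda}{\lambda},\,-\xi$, using $\overline{N}(r,-\xi;G)\leq (n-2)T(r,g^{(k)})$, and it must treat the degeneration $\tfrac{1-\lambda}{\lambda}=\xi$ separately --- this is exactly where the hypothesis $c\neq-\tfrac{1}{2}\bigl(1-\tfrac{2n}{n-m}+\tfrac{n}{n-2m}\bigr)$ is consumed, a hypothesis your plan never touches. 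You mention the multiplicities $n-2m$ and $3$ of the critical fibres, but you deploy them in the wrong case.

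The second gap is in your $B\neq 0$ analysis: your value list for $G$ is $\{0,1,1-A/B,\infty\}$, and when $A=B$ the distinguished value $1-A/B$ collides with $0$, so the list degenerates to $\{0,1,\infty\}$ and $\overline{N}(r,1;G)$ again swallows all of $T(r,G)$; SFT yields nothing. The robust move (the paper's case $AC\neq 0$) is to apply SFT to $F$ at $\infty$, $0$ and the value of $F$ over the poles of $G$, which is automatically distinct from $0,1,\infty$, giving $nT(r,f^{(k)})\leq\bigl(2m+1+\tfrac{2}{k+1}\bigr)T(r,f^{(k)})+\widehat{S}(r)$. Two further inaccuracies: $1-A/B$ is not Picard-exceptional for $G$ modulo $\widehat{S}(r)$ --- its preimages are exactly the poles of $f^{(k)}$, which need not be sparse; the usable bound is $\overline{N}(r,1-A/B;G)=\overline{N}(r,\infty;F)\leq\tfrac{1}{k+1}T(r,f^{(k)})$ --- and $\widehat{P}$ has $m+1$ critical values rather than two, since $\gcd(m,n)=1$ makes $\epsilon\mapsto\epsilon^{n-2m}$ a permutation of the $m$-th roots of unity. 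All of this is repairable, but as written the argument does not prove the lemma for $n=4m+2$ or $n=4m+3$.
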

\begin{proof} Given that $H\equiv 0$. On integration, we have
\bea\label{jjpe1.1} F=\frac{AG+B}{CG+D},\eea
where $A,B,C,D$ are constant satisfying $AD-BC\neq 0 $. Thus $F$ and $G$ share $(1,\infty)$. Also by Lemma \ref{ML}, we get
\bea\label{jjpe1.2} T(r,f^{(k)})=T(r,g^{(k)})+\widehat{S}(r).\eea
Now we consider the following cases:\\
\textbf{Case-1.} If $AC\neq0$, then (\ref{jjpe1.1}) can be written as
\bea F-\frac{A}{C}=\frac{BC-AD}{C(CG+D)}.\eea
So, $$\overline{N}\left(r,\frac{A}{C};F\right)=\overline{N}(r,\infty;G).$$
Thus applying Second Fundamental Theorem and (\ref{jjpe1.2}), we get
\beas && n T\left(r,f^{(k)}\right)+O(1)=T(r,F)\\
 &\leq& \overline{N}(r,\infty;F)+\overline{N}(r,0;F)+\overline{N}(r,\frac{A}{C};F)+S(r,F)\\
&\leq& \overline{N}\left(r,\infty;f^{(k)}\right)+\overline{N}\left(r,0;f^{(k)}\right)+2mT\left(r,f^{(k)}\right)+\overline{N}\left(r,\infty;g^{(k)}\right)+S\left(r,f^{(k)}\right)\\
&\leq& \left(2m+1+\frac{2}{k+1}\right)T\left(r,f^{(k)}\right)+S\left(r,f^{(k)}\right),\eeas
which is a contradiction as $n\geq 4m+2$.
\newpage
\textbf{Case-2.} Next suppsoe that $AC=0$. Since $AD-BC\neq0$, so $A=C=0$ never occur.\par
\textbf{Subcase-2.1.} If $A=0$ and $C\neq0$, then $B\neq0$ and (\ref{jjpe1.1}) can be written as
\bea\label{jjms19} F=\frac{1}{\gamma G+\delta},\eea
where $\gamma=\frac{C}{B}$ and $\delta=\frac{D}{B}$.\par If $F$ has no $1$-point, then by the Second Fundamental Theorem, we have
\beas &&T(r,F)\\
&\leq& \overline{N}(r,\infty;F)+\overline{N}(r,0;F)+\overline{N}(r,1;F)+S(r,F)\\
&\leq& \overline{N}\left(r,\infty;f^{(k)}\right)+\overline{N}\left(r,0;f^{(k)}\right)+2mT\left(r,f^{(k)}\right)+S\left(r,f^{(k)}\right)\\
&\leq& \frac{1}{n}\left(2m+1+\frac{1}{k+1}\right)T(r,F)+S(r,F),\eeas
which is a contradiction as $n\geq 4m+2$.\par
Thus $\gamma+\delta=1$ and $\gamma\neq 0$. Hence equation (\ref{jjms19}) becomes
\bea\label{jjme20} F=\frac{1}{\gamma G+1-\gamma}.\eea
So, $$\overline{N}\left(r,0;G+\frac{1-\gamma}{\gamma}\right)=\overline{N}(r,\infty;F).$$
If $\gamma\neq1$, then by the Second Fundamental Theorem, we get
\beas && T(r,G)\\
&\leq& \overline{N}(r,\infty;G)+\overline{N}\left(r,0;G\right)+\overline{N}\left(r,0;G+\frac{1-\gamma}{\gamma}\right)+S(r,G)\\
&\leq& \overline{N}\left(r,\infty;g^{(k)}\right)+\overline{N}\left(r,0;g^{(k)}\right)+2mT\left(r,g^{(k)}\right)+\overline{N}\left(r,\infty;f^{(k)}\right)+S\left(r,g^{(k)}\right)\\
&\leq& \frac{1}{n}\left(2m+1+\frac{2}{k+1}\right)T(r,F)+S(r,F),\eeas
which is a contradiction as $n\geq 4m+2$.\\
Thus $\gamma=1$ and hence $FG\equiv 1$, which is impossible by Lemma \ref{jjbb3}.\par
\textbf{Subcase-2.2.} If $A\neq0$ and $C=0$, then $D\neq0$ and (\ref{jjpe1.1}) can be written as
\bea\label{jjms21} F=\lambda G+\mu,\eea
where $\lambda=\frac{A}{D}$ and $\mu=\frac{B}{D}$.\par
It is obvious that  $F$ has atleast one $1$-point, otherwise proceeding as above, we get a contradiction. Thus $\lambda+\mu=1$ with $\lambda\neq0$ and equation (\ref{jjms21}) becomes
$$F=\lambda G+1-\lambda.$$
So, $$\overline{N}\left(r,0;G+\frac{1-\lambda}{\lambda}\right)=\overline{N}(r,0;F).$$
Define $$\xi:=\frac{1}{c}\left(1-\frac{2n}{n-m}+\frac{n}{n-2m}\right).$$
Thus $F+\xi=\left(f^{(k)}-1\right)^{3}Q_{n-3}\left(f^{(k)}\right)$, where $Q_{n-3}(z)$ is an $(n-3)$-degree polynomial and $Q_{n-3}(1)\not= 0$.\par
If $\lambda \neq1$ and $\frac{1-\lambda}{\lambda}\not=\xi$, then by the Second Fundamental Theorem, we get
\beas &&2T(r,G)\\
&\leq& \overline{N}(r,\infty;G)+\overline{N}(r,0;G)+\overline{N}\left(r,0;G+\frac{1-\lambda}{\lambda}\right)+\overline{N}(r,0;G+\xi)+S(r,G)\\
&\leq& \overline{N}\left(r,\infty;g^{(k)}\right)+\overline{N}\left(r,0;g^{(k)}\right)+2mT\left(r,g^{(k)}\right)+\overline{N}\left(r,0;f^{(k)}\right)+2mT\left(r,f^{(k)}\right)\\
&+& \overline{N}\left(r,1;g^{(k)}\right)+(n-3)T\left(r,g^{(k)}\right)+S\left(r,g^{(k)}\right)\\
&\leq& \frac{1}{n}\left(4m+n+\frac{1}{k+1}\right)T(r,G)+S(r,G),\eeas
which is a contradiction as $n\geq 4m+2$.\par
If $\lambda \neq1$ and $\frac{1-\lambda}{\lambda}=\xi$, then $\lambda G=F-\lambda\xi$ and $\lambda\not=-1$ as $c\not=-\frac{1-\frac{2n}{n-m}+\frac{n}{n-2m}}{2}$.\par
Thus by applying the Second Fundamental Theorem and equation (\ref{jjpe1.2}), we obtain
\beas && 2T(r,F)\\
 &\leq& \overline{N}(r,\infty;F)+\overline{N}(r,0;F)+\overline{N}(r,0;F-\lambda\xi)+\overline{N}(r,0;F+\xi)+S(r,F)\\
&\leq& \overline{N}\left(r,\infty;f^{(k)}\right)+\overline{N}\left(r,0;g^{(k)}\right)+2mT\left(r,g^{(k)}\right)+\overline{N}\left(r,0;f^{(k)}\right)+2mT\left(r,f^{(k)}\right)\\
&+& \overline{N}\left(r,1;f^{(k)}\right)+(n-3)T\left(r,f^{(k)}\right)+S\left(r,g^{(k)}\right)\\
&\leq& \frac{1}{n}\left(4m+n+\frac{1}{k+1}\right)T(r,F)+S(r,F),\eeas
which is a contradiction as $n \geq 4m+2$.\par
Thus $\lambda=1$, hence $F\equiv G$; i.e., using Lemma \ref{jjbd2}, we have $f^{(k)}\equiv g^{(k)}$.
\end{proof}
\section{Proofs of the theorems}
\begin{proof} [\textbf{Proof of Theorem \ref{jjthB3} }]
It is clear that $\overline{N}(r,\infty;f^{(k)})\leq\frac{1}{k+1}N(r,\infty;f^{(k)})$ and
$$F'=-\frac{n}{c}\left(f^{(k)}\right)^{n-2m-1}\left(\left(f^{(k)}\right)^{m}-1\right)^{2}f^{(k+1)},$$
$$G'=-\frac{n}{c}\left(g^{(k)}\right)^{n-2m-1}\left(\left(g^{(k)}\right)^{m}-1\right)^{2}g^{(k+1)}.$$
Also by simple calculations, one can get
\bea \label{jjcb2.5} \overline{N}(r,1;F|=1)=\overline{N}(r,1;G|=1)\leq N(r,\infty;H).\eea
\newpage
\textbf{Case-1.} First we assume that $H \not\equiv 0$.\\
Then by simple calculations, the following inequalities are obvious
\bea\label{jjcb2} && N(r,\infty;H)\\
\nonumber &\leq& \ol{N}(r,0;F|\geq2)+\ol{N}(r,0;G|\geq2)+\ol{N}(r,\infty;F)\\
\nonumber&+& \ol{N}(r,\infty;G)+\ol{N}_{*}(r,1;F,G)+\ol{N}_{0}(r,0;F')+\ol{N}_{0}(r,0;G')\\
\nonumber &\leq& \ol{N}\left(r,0;f^{(k)}\right)+\ol{N}\left(r,0;g^{(k)}\right)+\ol{N}\left(r,1;\left(g^{(k)}\right)^{m}\right)\\
\nonumber&+&\ol{N}\left(r,1;\left(f^{(k)}\right)^{m}\right)+\ol{N}\left(r,\infty;f^{(k)}\right)+\ol{N}\left(r,\infty;g^{(k)}\right)\\
\nonumber&+&\ol{N}_{*}(r,1;F,G)+\ol{N}_{0}\left(r,0;f^{(k+1)}\right)+\ol{N}_{0}\left(r,0;g^{(k+1)}\right),\eea
where $\ol{N}_{0}(r,0;F')$ is the reduced counting function of zeros of $F'$ which is not zeros of $F(F-1)$ and $\ol{N}_{0}\left(r,0;f^{(k+1)}\right)$ is the reduced counting function of zeros of $f^{(k+1)}$ which is not zeros of $f^{(k)}\left(\left(f^{(k)}\right)^m-1\right)$ and $(F-1)$.\par
In view of the Second Main Theorem, (\ref{jjcb2}) and (\ref{jjcb2.5}), we get
\bea\label{jjn2}&& (n+m)\left\{T\left(r,f^{(k)}\right)+T\left(r,g^{(k)}\right)\right\}\\
\nonumber &\leq& \overline{N}\left(r,\infty;f^{(k)}\right)+\overline{N}\left(r,0;f^{(k)}\right)+\overline{N}\left(r,\infty;g^{(k)}\right)+\overline{N}\left(r,0;g^{(k)}\right)\\
\nonumber &+& \overline{N}(r,1;F)+\overline{N}(r,1;G)+\ol{N}\left(r,1;\left(f^{(k)}\right)^{m}\right)+\ol{N}\left(r,1;\left(g^{(k)}\right)^{m}\right)\\
\nonumber&-& N_{0}\left(r,0,f^{(k+1)}\right)-N_{0}\left(r,0,g^{(k+1)}\right)+S\left(r,f^{(k)}\right)+S\left(r,g^{(k)}\right)\\
\nonumber &\leq& 2\left\{\overline{N}\left(r,\infty;f^{(k)}\right)+\overline{N}\left(r,\infty;g^{(k)}\right)\right\}+2\left\{\overline{N}\left(r,0;f^{(k)}\right)+\overline{N}\left(r,0;g^{(k)}\right)\right\}\\
\nonumber &+& 2\left\{\ol{N}\left(r,1;\left(g^{(k)}\right)^{m}\right)+\ol{N}\left(r,1;\left(f^{(k)}\right)^{m}\right)\right\}+\overline{N}(r,1;F)+\overline{N}(r,1;G)\\
\nonumber &-&\overline{N}(r,1;F|=1)+\ol{N}_{*}(r,1;F,G)+S\left(r,f^{(k)}\right)+S\left(r,g^{(k)}\right).
\eea
Thus in view of Lemma \ref{jjb4}, (\ref{jjn2}) can be written as
\bea\label{jjn3.51}&& \left(\frac{n}{2}-m-2\right)\left\{T\left(r,f^{(k)}\right)+T\left(r,g^{(k)}\right)\right\}\\
\nonumber &\leq& \frac{2}{k+1}\left\{N\left(r,\infty;f^{(k)}\right)+N\left(r,\infty;g^{(k)}\right)\right\}\\
\nonumber &+&\left(\frac{3}{2}-l\right)\ol{N}_{*}(r,1;F,G)+S\left(r,f^{(k)}\right)+S\left(r,g^{(k)}\right).
\eea
If $l\geq2$, then from (\ref{jjn3.51}) we get a contradiction when $n>2m+4+\frac{4}{k+1}$.\par
Next, if $l=1$, then
\beas && N_{*}(r,1;F,G)=\ol{N}_{L}(r,1;F)+\ol{N}_{L}(r,1;G)\\
&\leq& \frac{1}{2}\left\{N\left(r,0;f^{(k+1)}~|~f^{(k)}\neq0\right)+N\left(r,0;g^{(k+1)}~|~g^{(k)}\neq0\right)\right\}\\
&\leq&\frac{1}{2}\left\{\overline{N}\left(r,\infty;f^{(k)}\right)+\overline{N}\left(r,0;f^{(k)}\right)+\overline{N}\left(r,\infty;g^{(k)}\right)+\overline{N}\left(r,0;g^{(k)}\right)\right\}\\
&+& S\left(r,f^{(k)}\right)+S\left(r,g^{(k)}\right)\\
&\leq& \frac{1}{2}\left(1+\frac{1}{k+1}\right)\left\{T\left(r,f^{(k)}\right)+T\left(r,g^{(k)}\right)\right\}+S\left(r,f^{(k)}\right)+S\left(r,g^{(k)}\right).
\eeas
Thus (\ref{jjn3.51}) becomes
\bea\label{jjn3.5}&& \left(\frac{n}{2}-m-2-\frac{2}{k+1}\right)\left\{T\left(r,f^{(k)}\right)+T\left(r,g^{(k)}\right)\right\}\\
\nonumber &\leq&  \frac{1}{4}\left(1+\frac{1}{k+1}\right)\left\{T\left(r,f^{(k)}\right)+T\left(r,g^{(k)}\right)\right\}+S\left(r,f^{(k)}\right)+S\left(r,g^{(k)}\right),
\eea
which is a contradiction when $n>2m+4.5+\frac{4.5}{k+1}$.\par
If $l=0$, then
\beas && N_{*}(r,1;F,G)=\ol{N}_{L}(r,1;F)+\ol{N}_{L}(r,1;G)\\
&\leq& N\left(r,0;f^{(k+1)}~|~f^{(k)}\neq0\right)+N\left(r,0;g^{(k+1)}~|~g^{(k)}\neq0\right)\\
&\leq& \overline{N}\left(r,\infty;f^{(k)}\right)+\overline{N}\left(r,0;f^{(k)}\right)+\overline{N}\left(r,\infty;g^{(k)}\right)+\overline{N}\left(r,0;g^{(k)}\right)\\
&+& S\left(r,f^{(k)}\right)+S\left(r,g^{(k)}\right)\\
&\leq& \overline{N}\left(r,\infty;f^{(k)}\right)+\overline{N}\left(r,0;f^{(k)}\right)+\overline{N}\left(r,\infty;g^{(k)}\right)+\overline{N}\left(r,0;g^{(k)}\right)\\
&+& S\left(r,f^{(k)}\right)+S\left(r,g^{(k)}\right)\\
&\leq& \left(1+\frac{1}{k+1}\right)\left\{T\left(r,f^{(k)}\right)+T\left(r,g^{(k)}\right)\right\}+S\left(r,f^{(k)}\right)+S\left(r,g^{(k)}\right).
\eeas
Thus (\ref{jjn3.51}) becomes
\bea\label{jjn3.5}&& \left(\frac{n}{2}-m-2-\frac{2}{k+1}\right)\left\{T\left(r,f^{(k)}\right)+T\left(r,g^{(k)}\right)\right\}\\
\nonumber &\leq&  \frac{3}{2}\left(1+\frac{1}{k+1}\right)\left\{T\left(r,f^{(k)}\right)+T\left(r,g^{(k)}\right)\right\}+S\left(r,f^{(k)}\right)+S\left(r,g^{(k)}\right),
\eea
which is a contradiction when $n>2m+7+\frac{7}{k+1}$.\\
\medbreak
\textbf{Case-2.} Next we assume that $H\equiv 0$. Then in view of the Lemma \ref{jjbiku}, we obtained $f^{(k)}\equiv g^{(k)}$ if $n\geq 4m+2$. Hence the proof.
\end{proof}
\begin{proof} [\textbf{Proof of Theorem \ref{jjthB5} }]
If $H\equiv 0$, then by the Lemma \ref{jjbiku}, we obtained that $f^{(k)}\equiv g^{(k)}$ when $n\geq 4m+2$. Thus we consider $H \not\equiv 0$. Then obviously $F\not\equiv G$. As $f^{(k)}$ and $g^{(k)}$ share $(0,q)$ where $q\geq0$, by simple calculations, we have
\bea\label{jjm1} && N(r,\infty;H)\\
\nonumber &\leq& \ol{N}\left(r,\infty;f^{(k)}\right)+\ol{N}\left(r,\infty;g^{(k)}\right)+\ol{N}\left(r,1;\left(g^{(k)}\right)^{m}\right)\\
\nonumber &+&\ol{N}\left(r,1;\left(f^{(k)}\right)^{m}\right)+\ol{N}_{*}\left(r,0;f^{(k)},g^{(k)}\right)+\ol{N}_{*}(r,1;F,G)\\
\nonumber &+&\ol{N}_{0}\left(r,0;f^{(k+1)}\right)+\ol{N}_{0}\left(r,0;g^{(k+1)}\right),\eea
where $\ol{N}_{0}\left(r,0;f^{(k+1)}\right)$ is the reduced counting function of zeros of $f^{(k+1)}$ which is not zeros of $f^{(k)}\left(\left(f^{(k)}\right)^m-1\right)$ and $(F-1)$.\par
Using the Second Fundamental Theorem, (\ref{jjcb2.5}), (\ref{jjm1}) and Lemma \ref{jjb4}, we get
\bea\label{jjm3}&& \left(\frac{n}{2}-m\right)\left\{T\left(r,f^{(k)}\right)+T\left(r,g^{(k)}\right)\right\}\\
\nonumber &\leq& 2\overline{N}\left(r,0;f^{(k)}\right)+\overline{N}_{*}\left(r,0;f^{(k)},g^{(k)}\right)+2\left\{\overline{N}\left(r,\infty;f^{(k)}\right)+\overline{N}\left(r,\infty;g^{(k)}\right)\right\}\\
\nonumber &+&\left(\frac{3}{2}-l\right)\ol{N}_{*}\left(r,1;F,G\right)+S\left(r,f^{(k)}\right)+S\left(r,g^{(k)}\right)\\
\nonumber &\leq& 2\overline{N}\left(r,0;f^{(k)}\right)+\overline{N}\left(r,0;f^{(k)}~|~\geq q+1\right)+2\left\{\overline{N}\left(r,\infty;f^{(k)}\right)+\overline{N}\left(r,\infty;g^{(k)}\right)\right\}\\
\nonumber &+&\left(\frac{3}{2}-l\right)\ol{N}_{*}\left(r,1;F,G\right)+S\left(r,f^{(k)}\right)+S\left(r,g^{(k)}\right).
\eea
Applying the Lemma \ref{jjb7} in (\ref{jjm3}), we obtain
\bea\label{jjm4} && \left(\frac{n}{2}-m-\frac{2}{k+1}\right)\left\{T\left(r,f^{(k)}\right)+T\left(r,g^{(k)}\right)\right\}\\
\nonumber &\leq& 2\overline{N}\left(r,0;f^{(k)}\right)+\overline{N}\left(r,0;f^{(k)}~|~\geq q+1\right)+\left(\frac{3}{2}-l\right)\ol{N}_{*}\left(r,1;F,G\right)\\
\nonumber &+& S\left(r,f^{(k)}\right)+S\left(r,g^{(k)}\right)\\
\nonumber &\leq& \frac{2}{(k+1)(n-2m-1)}\left\{T\left(r,f^{(k)}\right)+T\left(r,g^{(k)}\right)\right\}\\
\nonumber &+& \frac{1}{(k+1)((n-2m)q+n-2m-1)}\left\{T\left(r,f^{(k)}\right)+T\left(r,g^{(k)}\right)\right\}\\
\nonumber &+& \left(\frac{2}{n-2m-1}+\frac{1}{(n-2m)q+n-2m-1}+\frac{3}{2}-l\right)\ol{N}_{*}\left(r,1;F,G\right)\\
\nonumber &+& S\left(r,f^{(k)}\right)+S\left(r,g^{(k)}\right),
\eea
which leads to a contradiction, if $$n>2m+\frac{4}{k+1}+\frac{4}{(k+1)(n-2m-1)}+\frac{2}{(k+1)((n-2m)q+n-2m-1)},$$ and $$l\geq \frac{3}{2}+\frac{2}{n-2m-1}+\frac{1}{(n-2m)q+n-2m-1}.$$
Hence the proof.
\end{proof}
$~~$
\vspace{4 cm}
\\
------------------------------------------------
\\
\textbf{The matter of this chapter has been published in Jordan J. Math. Stat., Vol. 9, No. 2, (2016), pp. 117-139.}
\newpage
\chapter{On the uniqueness of power of a meromorphic function sharing a set with its $k$-th derivative}
\fancyhead[l]{Chapter 8}
\fancyhead[r]{Uniqueness of power of a meromorphic function with its derivative}
\fancyhead[c]{}
\section{Introduction}
\par
The shared value problems relative to a meromorphic function $f$ with its $k$-th derivative have been widely studied in chapters two, three and four. In this chapter, changing that flavor, we divert our investigations to consider uniqueness problem of power of a meromorphic function with its $k$-th derivative under the aegis of set sharing.\par
The inception of this particular field was due to Rubel-Yang (\cite{br10a}) and the afterwards research on Br\"{u}ck conjecture. In continuation to Br\"{u}ck conjecture, in 1998, Gundersen-Yang (\cite{gunyang}) proved that the conjecture is true when $f$ is entire function of finite order and in 2004, Chen-Shon (\cite{chenshon}) proved that the conjecture is also true for entire function of first order $\rho_{1}(f) <\frac{1}{2}$, but upto now, Br\"{u}ck conjecture is still open. Also, \emph{the corresponding conjecture for meromorphic functions fails}, in general.\par
Yang-Zhang (\cite{ar7.1}) first replaced $f$ by $f^{m}$ in Br\"{u}ck conjecture and proved that the conjecture holds for the function $f^m$, and the order restriction on $f$ is not needed if $m$ is relatively large.
\begin{theo 8.A} (\cite{ar7.1}) Let $f$ be a non-constant meromorphic (resp. entire) function and $m>12~(\text{resp.}~7)$ be an integer. If $f^m$ and $(f^m)'$ share $1$ CM, then $f^m = (f^m)'$ and $f$ assumes the form $f(z) = ce^{\frac{z}{m}}$, where $c$ is a non-zero constant.
\end{theo 8.A}
In this direction, in 2009, Zhang (\cite{ar8}) made further improvement as follows:
\begin{theo 8.B} (\cite{ar8})  Let $f$ be a non-constant entire function; $m$, $k$ be positive integers and $a(z)$ $(\not\equiv 0,\infty)$ be a small function of $f$. Suppose $f^{m} -a$ and $(f^{m})^{(k)}-a$ share the value $0$ CM and $m > k + 4$.
Then $f^m\equiv (f^m)^{(k)}$ and $f$ assumes the form $f(z) = ce^{\frac{\lambda}{m}z}$ where $c$ is a non-zero constant and $\lambda^k = 1$.
\end{theo 8.B}
\begin{theo 8.C} (\cite{ar8})  Let $f$ be a non-constant meromorphic function; $m,~k\in \mathbb{N}$ and $a(z)$ $(\not\equiv 0,\infty)$ be a small function of $f$. Suppose $f^{m} -a$ and $(f^{m})^{(k)}-a$ share the value $0$ CM and $(m-k-1)(m-k-4) > 3k + 6$.
Then $f^m\equiv (f^m)^{(k)}$ and $f$ assumes the form $f(z) = ce^{\frac{\lambda}{m}z}$ where $c$ is a non-zero constant and $\lambda^k = 1$.
\end{theo 8.C}
In the same year, Zhang-Yang (\cite{ar9}) further improved {\it Theorem 8.C} by reducing the lower bound of $m$.
\begin{theo 8.D} (\cite{ar9})  Let $f$ be a non-constant meromorphic function; $m$, $k$ be positive integers and $a(z)$ $(\not\equiv 0,\infty)$ be a small function of $f$. Suppose $f^{m} -a$ and $(f^{m})^{(k)}-a$ share the value $0$ IM and $m > 2k+3+\sqrt{(2k+3)(k+3)}$. Then $f^m\equiv (f^m)^{(k)}$ and $f$ assumes the form $f(z) = ce^{\frac{\lambda}{m}z}$ where $c$ is a non-zero constant and $\lambda^k = 1$.
\end{theo 8.D}
Since then a number of improvements and generalizations have been made on the uniqueness of $f^{m}$ and $(f^{m})^{(k)}$, but none of the researchers were being engaged towards changing of the sharing environment in those results. So the following query is natural:
\begin{ques} If $f^{m}$ and $(f^{m})^{(k)}$ share a set $S$ instead of a value $a(\not=0,\infty)$, then can the conclusion of {\it Theorem 8.B} be obtained?
\end{ques}
The following example shows that the minimum cardinality of such sets is at least three.
\begin{exm} Let $S=\{a,b\}$, where $a$ and $b$ are any two distinct complex numbers and $m\geq1$ be any integer. Let $f(z)=(e^{-z}+a+b)^{\frac{1}{m}}$, where we take the principal branch when $m\geq2$. Then $E_{f^m}(S)=E_{(f^m)'}(S)$ but $f^m\not\equiv (f^m)'$.\end{exm}
\section{Main Results}
\par
For a positive integer $n(\geq 3)$, let $P_{Yi}(z)$ (\cite{jj13}) denotes the following polynomial:
\be\label{are5.1}
P_{Yi}(z)=z^{n}+az^{n-1}+b~~~\text{where}~~ab\not=0~\text{and}~\frac{b}{a^{n}}\not=(-1)^{n}\frac{(n-1)^{(n-1)}}{n^{n}}.
\ee
\begin{theo}\label{arthB2}
Let $n(\geq 4)$, $k(\geq1)$ and $m(\geq k+1)$ be three positive integers. Suppose that $S_{Yi}=\{z : P_{Yi}(z)=0 \}$ where $P_{Yi}(z)$ is defined by (\ref{are5.1}). Let $f$ be a non-constant meromorphic function such that $E_{f^{m}}(S_{Yi},l)=E_{(f^{m})^{(k)}}(S_{Yi},l)$. If
\begin{enumerate}
\item [i)] $l\geq2$ and $(n-2)(2n^{2}l-5nl-3n+l+1)>6(n-1)l$, or
\item [ii)] $l=1 $ and $n\geq 5,$ or
\item [iii)] $l=0$ and  $ n\geq 7,$
\end{enumerate}
then  $f^{m} \equiv (f^{m})^{(k)}$ and $f$ assumes the form $f(z)=ce^{\frac{\zeta}{m}z},$ where $c$ is a non-zero constant and $\zeta^{k}=1$.
\end{theo}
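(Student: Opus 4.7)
Following the strategy of Chapters~5--7, I would introduce the auxiliary functions
\[
F=-\frac{(f^{m})^{n}+a(f^{m})^{n-1}}{b},\qquad G=-\frac{\bigl((f^{m})^{(k)}\bigr)^{n}+a\bigl((f^{m})^{(k)}\bigr)^{n-1}}{b},
\]
so that $F-1=-P_{Yi}(f^{m})/b$ and $G-1=-P_{Yi}\bigl((f^{m})^{(k)}\bigr)/b$. The hypothesis $E_{f^{m}}(S_{Yi},l)=E_{(f^{m})^{(k)}}(S_{Yi},l)$ then translates into $F$ and $G$ sharing $(1,l)$, and the target conclusion $f^{m}\equiv(f^{m})^{(k)}$ will follow from $F\equiv G$ together with the critical-injectivity of $P_{Yi}$. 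By Mokhon'ko's lemma (Lemma~\ref{ML}) and Milloux-type estimates, $T(r,F)=nm\,T(r,f)+S(r,f)$, while $T(r,G)$ is controlled by $nm\,T(r,f)+nk\,\ol N(r,\infty;f)+S(r,f)$. Let $H$ be defined by~(\ref{CHB}).

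\textbf{Case $H\not\equiv 0$.} This is the main analytic core. Using the structural lemma on $N(r,\infty;H)$ (of the same type as Lemmas~\ref{CHBl2.1a}, \ref{jjb4}), one bounds $N(r,\infty;H)$ by reduced counting functions of multiple zeros/poles of $F,G$ together with the weighted-sharing defect $\ol N_{*}(r,1;F,G)$. Applying the Second Fundamental Theorem simultaneously to $F$ and $G$ (together with $0$, $1$, $\infty$ for each), and then inserting Lemma~\ref{jjb4} to bound the $1$-point contribution by $\tfrac12(N(r,1;F)+N(r,1;G))+(\tfrac32-l)\ol N_{*}(r,1;F,G)$, one arrives at an inequality of the schematic form
\[
2nm\,T(r,f)\ \leq\ \bigl(\text{explicit coefficient in }n,k,l,m\bigr)\,T(r,f)+S(r,f).
\]
The numerical hypotheses (i)--(iii) are designed so that this coefficient is strictly smaller than $2nm$, producing a contradiction. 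The condition $m\geq k+1$ enters here to bound $\ol N(r,0;(f^{m})^{(k)})$ by $\ol N(r,0;f)+k\ol N(r,\infty;f)$ via Lemma~\ref{CHBl2.2}.

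\textbf{Case $H\equiv 0$.} Integration gives $F=(AG+B)/(CG+D)$ with $AD-BC\neq 0$, so $F$ and $G$ share $(1,\infty)$ and the identity $T(r,F)=T(r,G)+S(r,f)$ holds by Lemma~\ref{ML}. A subcase analysis along the lines of Lemma~\ref{jjbiku} -- splitting according to whether $AC\neq 0$, $A=0$, or $C=0$ -- combined with the Second Fundamental Theorem applied to $F$ (respectively $G$) at the points $\{\infty,0,\text{critical values of }P_{Yi}\}$ forces either a contradiction or $F\equiv G$. The dangerous possibility $FG\equiv 1$ is ruled out separately: it implies $P_{Yi}(f^{m})P_{Yi}((f^{m})^{(k)})\equiv b^{2}$, which after a computation analogous to Lemma~\ref{jjbb3} contradicts $n\geq 4$ and $m\geq k+1$.

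\textbf{Deducing $f^{m}\equiv(f^{m})^{(k)}$ and the final form.} From $F\equiv G$, writing $u=f^{m}$, $v=(f^{m})^{(k)}$, one has $(u-v)\bigl[(u^{n-1}+u^{n-2}v+\cdots+v^{n-1})+a(u^{n-2}+\cdots+v^{n-2})\bigr]\equiv 0$; the second factor cannot vanish identically (this would force an algebraic relation incompatible with $T(r,v)\leq T(r,u)+k\ol N(r,\infty;f)+S(r,f)$ combined with Lemma~\ref{ML}), so $f^{m}\equiv(f^{m})^{(k)}$. Theorem~8.B (or Theorem~8.D) then yields $f(z)=ce^{\zeta z/m}$ with $\zeta^{k}=1$. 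The principal obstacle I anticipate is fine-tuning the coefficient bookkeeping in the $H\not\equiv 0$ case so that the sharp inequality $(n-2)(2n^{2}l-5nl-3n+l+1)>6(n-1)l$ in part~(i) is exactly matched, and simultaneously ruling out the $FG\equiv 1$ branch when $m=k+1$ is at its minimum admissible value.
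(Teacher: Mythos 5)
Your skeleton matches the paper's: the paper also sets $F=R(f^{m})$, $G=R((f^{m})^{(k)})$ with $R(z)=-\tfrac{1}{b}(z^{n}+az^{n-1})$, splits on $H\equiv 0$ versus $H\not\equiv 0$, handles $H\equiv 0$ by integrating to a M\"obius relation and ruling out $FG\equiv 1$, and passes from $F\equiv G$ to $f^{m}\equiv(f^{m})^{(k)}$ by studying $h=(f^{m})^{(k)}/f^{m}$, which satisfies $f^{m}(h^{n}-1)=-a(h^{n-1}-1)$. Your closing step via Theorem 8.B is also essentially what the paper does (its Lemma \ref{arabc121}: $0$ and $\infty$ must be Picard exceptional for $f$ since $m\geq k+1>k$).

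However, there is a genuine gap in your Case $H\not\equiv 0$. You describe it as a routine Second Fundamental Theorem count plus Lemma \ref{jjb4}, with the hypotheses (i)--(iii) "designed so that the coefficient is strictly smaller than $2nm$," and you defer the difficulty to "coefficient bookkeeping." That bookkeeping cannot close as described: a generic SFT count for two meromorphic functions sharing an $n$-element set with weight $2$ needs $n$ of the order of $11$, whereas here $n\geq 5$ suffices. The missing mechanism is the pair of auxiliary functions
$U=\frac{F'}{F-1}-\frac{G'}{G-1}$ and $V=\frac{F'}{F(F-1)}-\frac{G'}{G(G-1)}$
(the paper's Lemmas \ref{arbc121} and \ref{arbc123}). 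Because $R$ has a zero of order $n-1$ at $0$ and a pole of order $n$ at $\infty$, and because $f^{m}$ and $(f^{m})^{(k)}$ share poles while every zero of $f^{m}$ is a zero of $(f^{m})^{(k)}$ when $m\geq k+1$, each zero of $f$ is a zero of $U$ of order at least $n-2$ and each pole of $f$ is a zero of $V$ of order at least $2n-1$. Bounding $N(r,0;U)$ and $N(r,0;V)$ by $N(r,\infty;U)$ and $N(r,\infty;V)$ then yields $\ol N(r,0;f)\leq\ol N(r,0;(f^{m})^{(k)})=O(\tfrac{1}{n})\ol N(r,\infty;f)$ and $\ol N(r,\infty;f)=O(\tfrac{1}{n})\breve T(r)$, and it is exactly these fractions that produce the quadratic-in-$n$ condition of part (i) and the thresholds $n\geq 5$, $n\geq 7$. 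Relatedly, your stated use of $m\geq k+1$ (to bound $\ol N(r,0;(f^{m})^{(k)})$ from above via Lemma \ref{CHBl2.2}) is the wrong direction: its role is that zeros of $f^{m}$ have order $\geq m>k$ and hence survive differentiation, giving $\ol N(r,0;f)\leq\ol N(r,0;(f^{m})^{(k)})$ and the nesting of zero sets that feeds $U$ and $V$. Without this ingredient the argument you outline proves the theorem only for much larger $n$.
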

\begin{cor}
Let $n(\geq 4)$, $k(\geq1)$ and $m(\geq k+1)$ be three positive integers. Suppose that $S_{Yi}=\{z : P_{Yi}(z)=0 \}$ where $P_{Yi}(z)$ is defined by (\ref{are5.1}). Let $f$ be a non-constant meromorphic function such that $E_{f^{m}}(S_{Yi},3)=E_{(f^{m})^{(k)}}(S_{Yi},3)$. Then  $f^{m} \equiv (f^{m})^{(k)}$ and $f$ assumes the form $f(z)=ce^{\frac{\zeta}{m}z},$ where $c$ is a non-zero constant and $\zeta^{k}=1$.
\end{cor}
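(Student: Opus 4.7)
The plan is to reduce the set-sharing hypothesis to a weighted value sharing by setting $F=f^{m}$, $G=(f^{m})^{(k)}$, and
$$\mathcal{F} \;=\; -\frac{1}{b}\bigl(F^{n}+aF^{n-1}\bigr), \qquad \mathcal{G} \;=\; -\frac{1}{b}\bigl(G^{n}+aG^{n-1}\bigr),$$
so that $\mathcal{F}-1=-P_{Yi}(F)/b$, $\mathcal{G}-1=-P_{Yi}(G)/b$, and the hypothesis $E_{F}(S_{Yi},l)=E_{G}(S_{Yi},l)$ becomes the assertion that $\mathcal{F}$ and $\mathcal{G}$ share $(1,l)$. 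The arithmetic constraint on $b/a^{n}$ in (\ref{are5.1}) is exactly what is needed to guarantee that $P_{Yi}$ has only simple zeros, that $P'_{Yi}(z)=z^{n-2}\bigl(nz+a(n-1)\bigr)$ is critically injective on $P_{Yi}$, and that (via Theorem 5.B, applicable since $n\ge 4$) $P_{Yi}$ is a uniqueness polynomial for meromorphic functions. Note also that Mokhon'ko's Lemma (Lemma \ref{ML}) yields $T(r,\mathcal{F})=nT(r,F)+O(1)=mn\,T(r,f)+S(r,f)$, and Corollary \ref{ghgh} gives $T(r,G)\le (k+1)T(r,F)+S(r,f)$; these will be the workhorse comparisons.

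Next I would introduce the standard auxiliary function $H$ defined in (\ref{CHB}) with $F,G$ replaced by $\mathcal{F},\mathcal{G}$. The first branch of the argument treats $H\not\equiv 0$. Using the Laurent expansion argument of Lemma \ref{aml2}, every simple $1$-point of $\mathcal{F}$ (equivalently, every simple zero of $P_{Yi}(F)$) is a zero of $H$; and using the analogue of Lemma \ref{aml3}, $N(r,\infty;H)$ is bounded by the reduced counting functions of the zeros of $F,G$, the poles of $f$, the $(-a(n-1)/n)$-points of $F,G$, and the excess counting functions $\ol{N}_{L}(r,1;\mathcal{F}),\ol{N}_{L}(r,1;\mathcal{G})$. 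Feeding this into the Second Fundamental Theorem applied simultaneously to $\mathcal{F}$ and $\mathcal{G}$ produces an inequality
$$(n-1)\bigl(T(r,\mathcal{F})+T(r,\mathcal{G})\bigr)\;\le\;\Psi_{l}(n)\,T(r,f)+\bigl(\tfrac{3}{2}-l\bigr)\ol{N}_{*}(r,1;\mathcal{F},\mathcal{G})+S(r,f),$$
where $\Psi_{l}(n)$ is the linear combination of the $\ol{N}$-bounds produced by the weight $l$. For $l\ge 2$ the $\ol{N}_{*}$ term is discarded; for $l=1$ and $l=0$ one uses Lemma \ref{zll12}-type inequalities ($\ol{N}_{L}(r,1;\mathcal{F})\le \tfrac{1}{2}\ol{N}(r,\infty;\mathcal{F})+\tfrac{1}{2}\ol{N}(r,0;\mathcal{F})+S$ and the analogous IM estimate) to absorb $\ol{N}_{*}$ into zero- and pole-counting terms of $f$. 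Rearranging using $T(r,\mathcal{F})=mnT(r,f)+S(r,f)$ and the bound $T(r,\mathcal{G})\ge mT(r,f)-k\ol{N}(r,\infty;f)+S(r,f)$, the three numerical conditions $(n-2)(2n^{2}l-5nl-3n+l+1)>6(n-1)l$, $n\ge 5$, and $n\ge 7$ drop out in turn as exactly the thresholds that make the rearranged inequality self-contradictory.

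The second branch treats $H\equiv 0$. Integration of (\ref{CHB}) yields a Möbius relation $1/(\mathcal{F}-1)=A/(\mathcal{G}-1)+B$ with constants $A\neq 0,B$. Splitting by whether $B=0$ and whether $A=1$, and using Mokhon'ko's Lemma to equate $T(r,F)$ and $T(r,G)$, each non-trivial subcase is excluded by applying the Second Fundamental Theorem to $\mathcal{G}$ with the three values $0$, $1$, and $-A/B$ (respectively $0$, $1$, $1-1/A$): the inequality $(n-1)T(r,\mathcal{G})\le 2T(r,\mathcal{G})+S(r,\mathcal{G})$ obtained from the resulting value distribution forces $n\le 3$, contradicting $n\ge 4$. (The critical-injectivity constraints $P_{Yi}(0)=b\neq 0$, $P_{Yi}(-a(n-1)/n)\neq 0$, and $P_{Yi}(0)\neq P_{Yi}(-a(n-1)/n)$ enter here to ensure the three target values are genuinely distinct.) The only survivor is $A=1,B=0$, i.e.\ $\mathcal{F}\equiv\mathcal{G}$, so $P_{Yi}(F)\equiv P_{Yi}(G)$; the uniqueness-polynomial property of $P_{Yi}$ then gives $F\equiv G$, i.e.\ $f^{m}\equiv (f^{m})^{(k)}$.

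Finally, with $f^{m}\equiv(f^{m})^{(k)}$ in hand, the identity forces $f^{m}$ to be a zero-free entire function: a pole of $f$ of order $s$ would produce different orders ($ms$ versus $ms+k$) on the two sides, impossible for $k\ge 1$; a zero of $f$ of order $t$ would produce $mt\ge m\ge k+1$ on the left against $mt-k$ on the right, again impossible. Hence $f^{m}=e^{\varphi}$ with $\varphi$ entire, and $(f^{m})^{(k)}$ is $e^{\varphi}$ times a polynomial in $\varphi',\ldots,\varphi^{(k)}$. The identity $f^{m}\equiv(f^{m})^{(k)}$ reduces to this differential polynomial being identically $1$, and by comparing orders via the growth of $T(r,e^{\varphi})$ one concludes $\varphi'\equiv \zeta$ for a constant $\zeta$ satisfying $\zeta^{k}=1$, so $f^{m}(z)=c^{m}e^{\zeta z}$ and $f(z)=c\,e^{\zeta z/m}$. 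The main obstacle I anticipate is the $l=0$ (IM) regime: there the $\ol{N}_{L}$ contributions in the $H\not\equiv 0$ branch must be absorbed extremely tightly through Lemma \ref{zll12}-type estimates and the Milloux-style bound $\ol{N}(r,0;F')\le \ol{N}(r,0;F)+\ol{N}(r,\infty;F)+S(r,f)$ in order for the threshold to come down exactly to $n\ge 7$ rather than a larger number; any sloppiness in these estimates destroys the sharpness.
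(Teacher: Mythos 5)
Your reduction to $\mathcal F,\mathcal G$ sharing $(1,l)$, the $H\not\equiv 0$ versus $H\equiv 0$ dichotomy, the counting-function estimates feeding the Second Fundamental Theorem, and the closing Picard-value argument all track the paper's proof of Theorem \ref{arthB2}, of which this corollary is the case $l=3$ (where the condition $(n-2)(6n^{2}-18n+4)>18(n-1)$ does hold for every $n\geq 4$, only barely at $n=4$: $56>54$). However, there is a genuine gap at the end of your $H\equiv 0$ branch. You claim that Theorem 5.B shows $P_{Yi}$ is a uniqueness polynomial and use this to pass from $P_{Yi}(f^{m})\equiv P_{Yi}\bigl((f^{m})^{(k)}\bigr)$ to $f^{m}\equiv (f^{m})^{(k)}$. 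This is false: $P_{Yi}'(z)=z^{n-2}\bigl(nz+a(n-1)\bigr)$ has exactly two critical points, of multiplicities $q_{1}=n-2$ and $q_{2}=1$, and Fujimoto's criterion in Theorem 5.B is an equivalence, so $P_{Yi}$ is a uniqueness polynomial if and only if $q_{1}q_{2}>q_{1}+q_{2}$, i.e. $n-2>n-1$, which never holds. Thus the Yi polynomial is \emph{not} a uniqueness polynomial for arbitrary pairs of meromorphic functions, and no choice of $n$ repairs this step.

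The identity $\mathcal F\equiv\mathcal G$ must therefore be exploited using the fact that the second function is built from the $k$-th derivative of the first, as in Lemma \ref{arsr2}: setting $h=(f^{m})^{(k)}/f^{m}$, the identity rearranges to $f^{m}(h^{n}-1)=-a(h^{n-1}-1)$. A pole of $f$ of order $s$ would give the left side a pole of order $ms+nk$ and the right side one of order $(n-1)k$, which is impossible, so $f$ has no poles; then at any point where $h$ equals an $n$-th root of unity $\lambda\neq 1$, the right side would have to vanish, forcing $\lambda^{n-1}=1$ and hence $\lambda=1$ since $\gcd(n,n-1)=1$ --- a contradiction unless such points are absent. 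Hence $\ol N(r,\lambda;h)\leq \ol N(r,\infty;f)=S(r,f)$ for each of the $n-1$ such values $\lambda$, and the Second Fundamental Theorem applied to $h$ gives $(n-3)T(r,h)=S(r,h)$, so $h$ is constant and then $h\equiv 1$. Without an argument of this kind your proof does not close, because the general uniqueness-polynomial route is structurally unavailable for $P_{Yi}$; the remainder of your outline (the $H\not\equiv 0$ estimates and the final derivation of $f(z)=ce^{\zeta z/m}$ from the Picard exceptionality of $0$ and $\infty$) is sound and matches the paper.
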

\begin{theo}\label{arthB1} Let $n(\geq 4)$, $k(\geq1)$ and $m(\geq k+1)$ be three positive integers. Suppose that $S_{Yi}=\{z : P_{Yi}(z)=0 \}$ where $P_{Yi}(z)$ is defined by (\ref{are5.1}). Let $f$ be a non-constant entire function such that $E_{f^{m}}(S_{Yi},l)=E_{(f^{m})^{(k)}}(S_{Yi},l)$. If
\begin{enumerate}
\item [i)] $l\geq 1$ and $n\geq 4,$ or
\item [ii)] $l=0$ and  $ n\geq 5,$
\end{enumerate}
then  $f^{m} \equiv (f^{m})^{(k)}$ and $f$ assumes the form $f(z)=ce^{\frac{\zeta}{m}z},$ where $c$ is a non-zero constant and $\zeta^{k}=1$.
\end{theo}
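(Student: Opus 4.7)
\textbf{Proof Proposal for Theorem \ref{arthB1}.} The plan is to mimic the structure used for Theorem \ref{arthB2}, but to exploit the fact that an entire $f$ has $\overline{N}(r,\infty;f)=0=\overline{N}(r,\infty;f^{(k)})$, which should reduce the pole terms in the Second Fundamental Theorem estimates and therefore allow the lower bound on $n$ to be cut down to $4$ (when $l\geq 1$) and $5$ (when $l=0$). First I would set $F=-\tfrac{1}{b}P_{Yi}(f^{m})+1=-\tfrac{1}{b}(f^{m})^{n-1}(f^{m}+a)$ and $G=-\tfrac{1}{b}((f^{m})^{(k)})^{n-1}((f^{m})^{(k)}+a)$. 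Then $F-1$ and $G-1$ vanish precisely at the pre-images of $S_{Yi}$ under $f^{m}$ and $(f^{m})^{(k)}$ respectively, so the hypothesis forces $F$ and $G$ to share $(1,l)$. Using Lemma \ref{ML} one obtains $T(r,F)=n\, T(r,f^{m})+O(1)$ and $T(r,G)=n\, T(r,(f^{m})^{(k)})+O(1)$, and hence $S(r,F)=S(r,G)=S(r,f)$ since $m\geq k+1$ guarantees $(f^{m})^{(k)}\not\equiv 0$.

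Introduce the auxiliary function $H$ given by (\ref{CHB}). In the first case, $H\not\equiv 0$. Applying the standard Lemma \ref{CHBl2.1a}-type estimate for $N(r,\infty;H)$, together with the Second Fundamental Theorem to $F$ (whose only finite a-points counted are at $0$, at $-a$ and at $1$), and using $\overline{N}(r,\infty;F)=\overline{N}(r,\infty;G)=0$, I expect an inequality of the rough form
\[
(n-1)\{T(r,f^{m})+T(r,(f^{m})^{(k)})\} \leq \big(\tfrac{3}{2}-l\big)\overline{N}_{*}(r,1;F,G)+\Lambda(r)+S(r,f),
\]
where $\Lambda(r)$ collects $\overline{N}(r,0;f^{m})$, $\overline{N}(r,0;(f^{m})^{(k)})$, $\overline{N}(r,-a;f^{m})$, $\overline{N}(r,-a;(f^{m})^{(k)})$, each bounded by $T(r,f^{m})+O(1)$ or $T(r,(f^{m})^{(k)})+O(1)$. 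Estimating $\overline{N}_{*}$ in the usual way (trivial when $l\geq 1$; via Lemma \ref{CHBl2.2} applied to $(f^{m})^{(k+1)}/(f^{m})^{(k)}$ when $l=0$) produces a contradiction as soon as $n\geq 4$ for $l\geq 1$ and $n\geq 5$ for $l=0$. I anticipate that this bookkeeping of the reduced counting functions $\overline{N}_{L}(r,1;F)$ and $\overline{N}_{L}(r,1;G)$, especially for $l=0$, will be the technically delicate step, since one must avoid the double-counting that arises from the fact that the zeros of $f^{m}$ have multiplicity divisible by $m$.

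In the second case, $H\equiv 0$. Integration gives $\frac{1}{F-1}=\frac{A}{G-1}+B$ with $A\neq 0$. I would split into three sub-cases ($B\neq 0$ with $-A/B\neq 1$; $B\neq 0$ with $-A/B=1$; $B=0$), noting that $\overline{N}(r,\infty;F)=\overline{N}(r,\infty;G)=0$ since both functions are entire. In the non-trivial sub-cases the Second Fundamental Theorem forces a contradiction via the Yi-polynomial assumption $b/a^{n}\neq (-1)^{n}(n-1)^{n-1}/n^{n}$, which guarantees that $P_{Yi}$ has only simple zeros (and in particular that $FG\equiv 1$ is impossible when $n\geq 4$, in the spirit of Lemma \ref{jjbb3}). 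The remaining sub-case forces $F\equiv G$, i.e.\ $P_{Yi}(f^{m})\equiv P_{Yi}((f^{m})^{(k)})$. Since $P_{Yi}(z)=z^{n}+az^{n-1}+b$ is a uniqueness polynomial (Example \ref{ame1}, in fact a particular instance where the only critical points are $0$ and $-a(n-1)/n$, and $P'_{Yi}$ has two distinct zeros with $P_{Yi}(0)\neq P_{Yi}(-a(n-1)/n)$ under our coefficient condition), this yields $f^{m}\equiv (f^{m})^{(k)}$.

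Finally, the functional equation $f^{m}\equiv (f^{m})^{(k)}$ is a linear ODE of order $k$ with constant coefficients for $u:=f^{m}$, namely $u^{(k)}-u=0$. Its entire solutions are $u(z)=\sum_{j=1}^{k}c_{j}e^{\zeta_{j}z}$ with $\zeta_{j}^{k}=1$. Since $u=f^{m}$ is an $m$-th power of an entire function it cannot vanish anywhere unless $f$ itself vanishes, which by the First Fundamental Theorem applied to $u/e^{\zeta z}$ for a fixed $k$-th root of unity $\zeta$ forces exactly one $c_{j}$ to be non-zero; thus $f^{m}(z)=c\,e^{\zeta z}$, whence $f(z)=c^{1/m}e^{\zeta z/m}$ with $\zeta^{k}=1$. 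Relabelling the constant gives the claimed form. I expect the main obstacle will be the careful accounting in the $l=0$ case of the second paragraph, where the absence of both pole terms and weight-2 sharing forces one to squeeze the estimate down to $n\geq 5$.
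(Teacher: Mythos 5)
Your overall architecture (same $F=-\tfrac1b(f^m)^{n-1}(f^m+a)$, same $H$, same case split, same ODE endgame) matches the paper's, but the proposal has two genuine gaps. The first is in the $H\not\equiv 0$ case: if you bound the terms in $\Lambda(r)$ — $\ol N(r,0;f^m)$, $\ol N(r,0;(f^m)^{(k)})$, $\ol N(r,-a;f^m)$, $\ol N(r,-a;(f^m)^{(k)})$ — by full characteristic functions, you cannot reach $n\geq4$ or $n\geq5$. After absorbing the shared $1$-points via $\tfrac12\{N(r,1;F)+N(r,1;G)\}=\tfrac n2\{T(r,f^m)+T(r,(f^m)^{(k)})\}$, the surviving left-hand coefficient is about $\tfrac n2-1$ (not $n-1$), and a right-hand side containing $2\{T(r,f^m)+T(r,(f^m)^{(k)})\}$ plus the contribution of $N(r,\infty;H)$ forces $n$ up to around $8$ or $9$ even for entire $f$. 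The decisive ingredient in the paper is a pair of auxiliary functions, $U=\frac{F'}{F-1}-\frac{G'}{G-1}$ and $V=\frac{F'}{F(F-1)}-\frac{G'}{G(G-1)}$ (Lemmas \ref{arbc121} and \ref{arbc123}): because a zero of $f$ of order $t$ is a zero of $F$ of order $mt(n-1)$ and of $G$ of order $(mt-k)(n-1)\geq n-1$ (here $m\geq k+1$ is used), it is a zero of $U$ of order at least $n-2$, and one deduces $\ol N(r,0;f)\leq\ol N(r,0;(f^m)^{(k)})\leq C_{n,l}\,\ol N(r,\infty;f)+\breve S(r)$, which is $\breve S(r)$ for entire $f$; similarly $V$ controls $\ol N(r,\infty;f)$ in the meromorphic case. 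This is what collapses the right-hand side to $\breve S(r)$ and yields $n\geq4$ (and, through the derivation of the $U$-lemma for $l=0$, the threshold $n\geq5$). Without it your claimed thresholds are not attainable.

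The second gap is in the $F\equiv G$ endgame. The polynomial $P_{Yi}(z)=z^n+az^{n-1}+b$ is \emph{not} a uniqueness polynomial: $P_{Yi}'(z)=nz^{n-2}\bigl(z+\tfrac{a(n-1)}{n}\bigr)$ has critical points of multiplicities $q_1=n-2$ and $q_2=1$, and Fujimoto's criterion (Theorem 5.B) requires $q_1q_2>q_1+q_2$, i.e.\ $n-2>n-1$, which fails; Example \ref{ame1} is also inapplicable since $a_{n-1}=a\neq0$ violates $t<n-2$. (Indeed, the whole point of Yi's two-set theorems is that $E_f(S_{Yi})=E_g(S_{Yi})$ alone does not force $f\equiv g$.) So you cannot conclude $f^m\equiv(f^m)^{(k)}$ from $P_{Yi}(f^m)\equiv P_{Yi}((f^m)^{(k)})$ by citing a uniqueness-polynomial property. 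The correct argument (Lemma \ref{arsr2}) exploits the specific relation between the two functions: setting $h=(f^m)^{(k)}/f^m$, the identity $F\equiv G$ becomes $f^m(h^n-1)=-a(h^{n-1}-1)$, so any $\lambda_i$-point of $h$ with $\lambda_i^n=1$, $\lambda_i\neq1$ must be a pole of $f$; for entire $f$ the Second Fundamental Theorem then forces $h$ to be constant, and the same identity forces $h=1$. Your final step recovering $f(z)=ce^{\zeta z/m}$ is essentially right, though it is cleaner to argue as in Lemma \ref{arabc121} that $f$ can have no zero (a zero of order $t$ would be a zero of $f^m$ of order $mt$ but of $(f^m)^{(k)}$ of order $mt-k$, contradicting $f^m\equiv(f^m)^{(k)}$).
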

\section{Lemmas}
Define $$R(z):=-\frac{1}{b}(z^{n}+az^{n-1}).$$
Throughout of this chapter, we take $F:=R(f^{m})$, $G:=R((f^{m})^{(k)})$ and $H$  is defined by the equation (\ref{CHB}). Also we define $\breve{T}(r)=T(r,f^m)+T(r,(f^m)^{(k)})$ and $\breve{S}(r)=S(r,f).$
\begin{lem}\label{arabc121} If  $f^m=(f^{m})^{(k)}$  and $m\geq k+1$, then  $f$ takes the form $f(z)=ce^{\frac{\zeta}{m}z},$
where $c$ is a non-zero constant and $\zeta^{k}=1$.
\end{lem}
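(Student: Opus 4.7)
The plan is to reduce $f^{m}=(f^{m})^{(k)}$ to an identity for an exponential polynomial $g:=f^{m}$, and then use a Vandermonde-type argument on zeros to force this exponential polynomial to contain only one term.

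First I would verify that $f$ is entire. If $f$ had a pole of order $p\ge 1$ at some $z_{0}$, then $f^{m}$ would have a pole of order $mp$ there while $(f^{m})^{(k)}$ would have a pole of order $mp+k$; the equation $f^{m}=(f^{m})^{(k)}$ would then force $k=0$, contradicting $k\ge 1$. Hence $g:=f^{m}$ is entire. Moreover $g^{(k)}-g=0$ is a linear constant-coefficient ODE whose characteristic polynomial $\lambda^{k}-1$ has exactly the $k$ distinct roots $\omega_{j}:=e^{2\pi i j/k}$, $j=0,1,\dots,k-1$. Consequently
\begin{equation*}
g(z)=\sum_{j=0}^{k-1}c_{j}\,e^{\omega_{j}z}
\end{equation*}
for suitable constants $c_{j}\in\mathbb{C}$. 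Setting $J:=\{j:c_{j}\ne 0\}$, one has $1\le|J|\le k$ (the lower bound uses that $f$, and hence $g$, is not identically zero), and the goal reduces to proving $|J|=1$.

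The decisive step is to control the multiplicity of any zero of $g$. If $z_{0}$ were a zero of $g$ of multiplicity $N$, then $g^{(l)}(z_{0})=0$ for $l=0,1,\dots,N-1$, which rewrites as
\begin{equation*}
\sum_{j\in J}\omega_{j}^{\,l}\,\bigl(c_{j}\,e^{\omega_{j}z_{0}}\bigr)=0,\qquad l=0,1,\dots,N-1.
\end{equation*}
As soon as $N\ge|J|$, the first $|J|$ of these equations form a square Vandermonde system in the distinct nodes $\{\omega_{j}\}_{j\in J}$, hence is invertible; this forces $c_{j}e^{\omega_{j}z_{0}}=0$ for every $j\in J$, contradicting $c_{j}\ne 0$. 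Therefore every zero of $g$ has multiplicity strictly less than $|J|\le k$. On the other hand, since $g=f^{m}$, every zero of $g$ has multiplicity divisible by $m$ and hence is at least $m\ge k+1>k\ge|J|$. These two bounds are incompatible, so $g$ has no zero at all.

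Since $g$ is an entire function of order at most one (being a finite sum of order-one exponentials) with no zeros, Hadamard factorization yields $g(z)=A\,e^{\alpha z}$ for some $A\ne 0$ and $\alpha\in\mathbb{C}$. Comparing this with the expansion $g=\sum_{j\in J}c_{j}e^{\omega_{j}z}$ and using linear independence of the exponentials $\{e^{\omega_{j}z}\}_{j\in J}$ forces $|J|=1$, $\alpha=\omega_{j_{0}}$ for some $j_{0}$, and $A=c_{j_{0}}$. Setting $\zeta:=\omega_{j_{0}}$ (so $\zeta^{k}=1$) and choosing any $m$-th root $c$ of $c_{j_{0}}$, the entire, nowhere-vanishing function $f$ must coincide with $c\,e^{\zeta z/m}$, as required. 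I expect the Vandermonde step to be the main obstacle, since one must carefully translate high-order vanishing of the exponential sum into a linear system in the scalars $c_{j}e^{\omega_{j}z_{0}}$ and exploit distinctness of the $k$-th roots of unity; the remaining ingredients (pole analysis, the ODE solution, Hadamard factorization, and linear independence of distinct exponentials) are standard.
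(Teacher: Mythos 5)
Your proposal is correct, and it reaches the conclusion by a noticeably heavier route than the paper on the key step. The pole analysis is identical to the paper's (a pole of order $s$ of $f$ would make $f^{m}$ and $(f^{m})^{(k)}$ have poles of orders $ms$ and $ms+k$ respectively). For zeros, however, the paper simply observes that a zero of $f$ of order $t$ gives $f^{m}$ a zero of order $mt$ while $(f^{m})^{(k)}$ has one of order $mt-k$ (exact, since $mt\geq m\geq k+1>k$), which already contradicts $f^{m}=(f^{m})^{(k)}$; you instead first solve the ODE $g^{(k)}=g$, and then run a Vandermonde argument on the exponential sum to bound zero multiplicities by $|J|\leq k<m$. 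Both are valid, but the direct order count makes your Vandermonde machinery unnecessary. What your write-up buys in exchange is a fully explicit final step: the paper jumps from \enquote{$0$ and $\infty$ are Picard exceptional values of $f$} straight to $f(z)=ce^{\zeta z/m}$, tacitly using the differential equation, whereas you spell out the passage via the general solution of $g^{(k)}=g$, Hadamard factorization of the zero-free order-one function $g$, and linear independence of distinct exponentials to force a single surviving term with $\zeta^{k}=1$. (One cosmetic point: $c$ is \emph{some} $m$-th root of $c_{j_{0}}$ determined by $f$, not an arbitrarily chosen one, but this does not affect the statement being proved.)
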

\begin{proof} We \emph{claim} that $0$ and $\infty$ are the Picard exceptional value of $f$.\par
Because otherwise, if $z_{0}$ is a zero of $f$ of order $t$, then it is zero of $f^m$ and $(f^m)^{(k)}$ of order $mt$ and $(mt-k)$ respectively, which is impossible as $k>0$.\par
Similarly, if $z_{0}$ is a pole of $f$ of order $s$, then it is pole of $f^m$ and $(f^m)^{(k)}$ of order $ms$ and $(ms+k)$ respectively, which is impossible as $k>0$.\par
Thus $f$ takes the form of $f(z)=ce^{\frac{\zeta}{m}z},$ where $c$ is a non-zero constant and $\zeta^{k}=1$.
\end{proof}
\begin{lem}\label{arsr2} If $F\equiv G$, then $f^{m}\equiv (f^{m})^{(k)}$ for any $m\geq 1$, $k \geq 1$ and $n \geq 4$.
\end{lem}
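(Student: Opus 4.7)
I would first translate $F\equiv G$ into the purely algebraic identity
\[
u^{n}+au^{n-1}\equiv v^{n}+av^{n-1},
\]
where $u:=f^{m}$ and $v:=(f^{m})^{(k)}$. Suppose towards a contradiction that $u\not\equiv v$. Note $v\not\equiv 0$, for otherwise $u^{n-1}(u+a)\equiv 0$ forces $u$ constant, and hence $f$ constant. Factoring $P_{Yi}(u)-P_{Yi}(v)$ by $(u-v)$, dividing through by $v^{n-1}$ and then multiplying by $(h-1)$, the substitution $h:=u/v$ yields the explicit expressions
\[
v=-a\,\frac{h^{n-1}-1}{h^{n}-1},\qquad u=-a\,\frac{h(h^{n-1}-1)}{h^{n}-1}.
\]
After cancelling the common factor $h-1$, both are reduced rational functions in $h$ with numerator and denominator coprime (no common zero), and Lemma \ref{ML} gives $T(r,u)=T(r,v)=(n-1)T(r,h)+O(1)$, so $h$ is a non-constant meromorphic function on $\mathbb{C}$.

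Next, let $\alpha_{1},\dots,\alpha_{n-1}$ be the $n$-th roots of unity other than $1$, i.e.\ the simple zeros of the reduced denominator $h^{n-1}+h^{n-2}+\dots+1$. I claim each $\alpha_{i}$ is a Picard exceptional value of $h$. Indeed, suppose $z_{0}$ were an $\alpha_{i}$-point of $h$ of multiplicity $p\ge 1$. Since $\alpha_{i}\ne 1$ forces $\alpha_{i}^{n-1}\ne 1$ (otherwise $\alpha_{i}=\alpha_{i}^{n}/\alpha_{i}^{n-1}=1$), the numerator values $\alpha_{i}\,(\alpha_{i}^{n-2}+\dots+1)=\alpha_{i}(\alpha_{i}^{n-1}-1)/(\alpha_{i}-1)$ and $(\alpha_{i}^{n-2}+\dots+1)$ are both non-zero at $\alpha_{i}$. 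Consequently the formulae above show that $u$ and $v$ each have a pole at $z_{0}$ of order exactly $p$. But $v=u^{(k)}$, so the pole order of $v$ at $z_{0}$ must equal $p+k$. Thus $p+k=p$, forcing $k=0$, which contradicts $k\ge 1$.

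Therefore $h$ omits $n-1\ge 3$ finite values, and Picard's little theorem (or, alternatively, applying the Second Fundamental Theorem to $h$ with the values $\alpha_{1},\dots,\alpha_{n-1}$, which yields $(n-3)T(r,h)\le S(r,h)$) forces $h$ to be a constant $c$. Reading back the relation $v(c^{n}-1)+a(c^{n-1}-1)=0$: if $c^{n}=1$ then $c^{n-1}\ne 1$ (the two together force $c=1$), and we obtain $a(c^{n-1}-1)=0$, contradicting $a\ne 0$; if $c^{n}\ne 1$ then $v$ is a non-zero constant, so $u=cv$ is a non-zero constant, giving $v=u^{(k)}=0$, again a contradiction. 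Hence $c=1$, i.e.\ $u\equiv v$, which is precisely $f^{m}\equiv (f^{m})^{(k)}$.

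The main obstacle is the multiplicity comparison in the middle step: one must verify that the pole order of $u$ and $v$ at an $\alpha_{i}$-point of $h$, computed from the rational expressions in $h$, genuinely records the analytic pole order in the variable $z$, and that the numerators do not accidentally vanish at $\alpha_{i}$. Both checks reduce to the elementary facts $\alpha_{i}\ne 0$ and $\alpha_{i}^{n-1}\ne 1$; everything after that is straightforward bookkeeping.
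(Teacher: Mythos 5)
Your proof is correct and follows essentially the same route as the paper's: both reduce $F\equiv G$ to the relation $v(h^{n}-1)=-a(h^{n-1}-1)$ for the ratio $h$ of $f^{m}$ and $(f^{m})^{(k)}$, show that the $n$-th roots of unity other than $1$ cannot be taken by $h$ (you via the pole-order mismatch between $u$ and $u^{(k)}$, the paper via the observation that such points would be poles of $f$ and hence of $h$), invoke the Second Fundamental Theorem or Picard to force $h$ constant, and then check that the constant must be $1$. The only differences are that you work with the reciprocal of the paper's $h$ and make the multiplicity bookkeeping explicit, which if anything tightens the paper's somewhat telegraphic inequality chain.
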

\begin{proof}  Define $$h:=\frac{(f^{m})^{(k)}}{f^{m}}.$$
Then, as $f$ is non-constant, we can write
\bea \label{rkd} f^{m}(h^{n}-1)=-a(h^{n-1}-1).\eea
Let $\lambda_{i}$ $(i=1,2,\ldots,n-1)$ be the non-real distinct zeros of $h^{n}-1=0$. If $h$ is non-constant meromorphic function, then by the Second Main Theorem and Lemma \ref{ML}, we get
\beas (n-3)T(r,h)&\leq& \sum\limits_{i=1}^{n-1}\ol{N}(r,\lambda_{i};h)+S(r,h)\\
&\leq& \ol{N}(r,\infty;f)+S(r,f)\\
&\leq& S(r,f),
\eeas
which is a contradiction as $n\geq 4$. Thus $h$ is constant. As $f$ is non-constant, from(\ref{rkd}), we see that the only possibility of $h$ is $1$. Hence the proof.
\end{proof}
\begin{lem}\label{arl.n.1}
If $F$ and $G$ share $(1,0)$, then
\begin{enumerate}
\item [i)] $\overline{N}_{L}(r,1;F)\leq \overline{N}(r,0;f)+\overline{N}(r,\infty;f)+S(r,f),$
\item [ii)] $\overline{N}_{L}(r,1;G)\leq \overline{N}\left(r,0; \left(f^{m}\right)^{k}\right)+\overline{N}(r,\infty;f)+S(r,f).$
\end{enumerate}
\end{lem}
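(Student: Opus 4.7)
\medskip

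\textbf{Proof proposal for Lemma \ref{arl.n.1}.} The plan is to read off the zero structure of $F-1$ from the polynomial $P_{Yi}$ and then appeal to Lahiri's logarithmic–derivative estimate (Lemma \ref{aml6}) applied respectively to $f$ and to $f^{m}$. Recall that $F-1=-\frac{1}{b}P_{Yi}(f^{m})$ and $G-1=-\frac{1}{b}P_{Yi}((f^{m})^{(k)})$. Because of the assumption $b/a^{n}\neq (-1)^{n}(n-1)^{n-1}/n^{n}$, the polynomial $P_{Yi}(z)$ has only simple zeros, say $\alpha_{1},\dots,\alpha_{n}$; in particular $P_{Yi}$ and $P_{Yi}^{\prime}$ have no common zero, and no $\alpha_{i}$ equals $0$ (since $b\neq 0$). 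Hence at every $1$-point $z_{0}$ of $F$ we have $f^{m}(z_{0})=\alpha_{i}$ for exactly one $i$, and the multiplicity of $z_{0}$ as a $1$-point of $F$ agrees with its multiplicity as an $\alpha_{i}$-point of $f^{m}$.

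For part (i), write $F^{\prime}=-\frac{1}{b}P_{Yi}^{\prime}(f^{m})\,(f^{m})^{\prime}$. At a $1$-point $z_{0}$ of $F$ of multiplicity $p$, the factor $P_{Yi}^{\prime}(f^{m})$ is nonzero at $z_{0}$ (simple zero of $P_{Yi}$), so $(f^{m})^{\prime}$ vanishes at $z_{0}$ to order $p-1$. Since $\alpha_{i}\neq 0,\infty$, the point $z_{0}$ is neither a zero nor a pole of $f$, and therefore $(f^{m})^{\prime}=m f^{m-1}f^{\prime}$ forces $f^{\prime}(z_{0})=0$ to order $p-1$. Because $F$ and $G$ share $(1,0)$, any $z_{0}$ contributing to $\overline{N}_{L}(r,1;F)$ has $p\geq 2$, so it is a genuine zero of $f^{\prime}$ at which $f$ does not vanish. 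Counting such points without multiplicity yields
\[
\overline{N}_{L}(r,1;F)\;\leq\;\overline{N}\bigl(r,0;f^{\prime}\,|\,f\neq 0\bigr)\;\leq\;N\bigl(r,0;f^{\prime}\,|\,f\neq 0\bigr),
\]
and Lemma \ref{aml6} (the case $k=1$) bounds the right-hand side by $\overline{N}(r,0;f)+\overline{N}(r,\infty;f)+S(r,f)$, which is exactly (i).

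For part (ii), one repeats the same argument verbatim, but now with $f^{m}$ replaced by $(f^{m})^{(k)}$. Thus $G^{\prime}=-\frac{1}{b}P_{Yi}^{\prime}((f^{m})^{(k)})\cdot((f^{m})^{(k)})^{\prime}$, and at any $z_{0}$ counted in $\overline{N}_{L}(r,1;G)$ (where the multiplicity of $G-1$ is $q\geq 2$) the factor $P_{Yi}^{\prime}((f^{m})^{(k)})$ is nonzero, so $((f^{m})^{(k)})^{\prime}$ vanishes at $z_{0}$ to order $q-1\geq 1$ while $(f^{m})^{(k)}(z_{0})=\alpha_{i}\neq 0$. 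Hence
\[
\overline{N}_{L}(r,1;G)\;\leq\;N\!\left(r,0;((f^{m})^{(k)})^{\prime}\,\big|\,(f^{m})^{(k)}\neq 0\right),
\]
and applying Lemma \ref{aml6} with $k=1$ to the meromorphic function $(f^{m})^{(k)}$ yields the bound $\overline{N}(r,0;(f^{m})^{(k)})+\overline{N}(r,\infty;(f^{m})^{(k)})+S(r,(f^{m})^{(k)})$. Finally note $\overline{N}(r,\infty;(f^{m})^{(k)})=\overline{N}(r,\infty;f)$ and $S(r,(f^{m})^{(k)})=S(r,f)$, giving (ii).

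There is no serious obstacle here; the only subtle point is verifying that the derivative factor $P_{Yi}^{\prime}$ does not vanish at $\alpha_{i}$, which is guaranteed precisely by the choice of $a,b$ making the zeros of $P_{Yi}$ simple, and observing that $F$–$G$ sharing $(1,0)$ is exactly what forces $p\geq 2$ (resp.\ $q\geq 2$) on every point contributing to $\overline{N}_{L}$.
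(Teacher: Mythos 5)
Your proof is correct and takes essentially the same route as the paper: both arguments reduce $\overline{N}_{L}(r,1;F)$ (resp.\ $\overline{N}_{L}(r,1;G)$) to counting multiple $\alpha_i$-points of $f^{m}$ (resp.\ of $(f^{m})^{(k)}$), where the $\alpha_i$ are the simple, nonzero roots of $P_{Yi}$, i.e.\ to zeros of the derivative occurring away from the zeros and poles of the function. The only cosmetic difference is the final packaging: the paper bounds this quantity by $N\left(r,\infty;\frac{(f^{m})'}{f^{m}}\right)+S(r,f)$ via the lemma on the logarithmic derivative, while you invoke the Lahiri--Dewan estimate (Lemma \ref{aml6} with $k=1$); both yield the identical bound $\overline{N}(r,0;\cdot)+\overline{N}(r,\infty;\cdot)+S(r,f)$.
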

\begin{proof}
As $F$ and $G$ share $1$ IM, we get
\beas \overline{N}_{L}(r,1;F) &\leq&  N(r,1;F)-\ol{N}(r,1,F)\\
&\leq&  N\left(r,\infty;\frac{(f^{m})'}{f^{m}}\right)+S(r,f)\\
&\leq& \overline{N}(r,0;f^{m})+\overline{N}(r,\infty;f^{m})+S(r,f)\\
&\leq& \overline{N}(r,0;f)+\overline{N}(r,\infty;f)+S(r,f).\eeas
Proceeding similarly, we can get the proof of (ii). Hence the proof.
\end{proof}
\begin{lem}\label{arl.n.1m}
If $F$ and $G$ share $(1,l)$ ($l\geq1$), then
$$\overline{N}_{L}(r,1;F)+\overline{N}_{L}(r,1;G) \leq \frac{1}{l}(\overline{N}(r,0;f)+\overline{N}(r,\infty;f))+S(r,f).$$
\end{lem}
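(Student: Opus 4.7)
The plan is to show that at every point counted in $\overline{N}_{L}(r,1;F)+\overline{N}_{L}(r,1;G)$ the derivative $f'$ is forced to vanish to order at least $l$ at a place where $f$ itself does not vanish, and then to convert this into the stated bound by invoking Lemma \ref{aml6} with $k=1$.

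First I would analyse the local picture at a $1$-point of $F$. Since $F-1=-\tfrac{1}{b}P_{Yi}(f^{m})$, the hypothesis $b/a^{n}\neq(-1)^{n}(n-1)^{n-1}/n^{n}$ in (\ref{are5.1}) guarantees that $P_{Yi}$ has only simple zeros, each distinct from $0$ and from the critical point $-a(n-1)/n$ of $P_{Yi}$. Therefore, if $F(z_{0})=1$, then $f^{m}(z_{0})=\alpha$ for some root $\alpha$ of $P_{Yi}$ with $\alpha\neq 0$ and $R'(\alpha)\neq 0$. In particular $f(z_{0})\neq 0$, and if $F-1$ vanishes to order $p$ at $z_{0}$, then $f^{m}-\alpha$ vanishes to exactly order $p$, so $(f^{m})'=mf^{m-1}f'$ vanishes to order $p-1$ there; since $mf^{m-1}(z_{0})\neq 0$, the factor $f'$ itself vanishes to order $p-1$ at $z_{0}$.

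Next, I would translate the weighted sharing hypothesis into a multiplicity estimate. Because $F$ and $G$ share $(1,l)$ with $l\geq 1$, any $z_{0}$ contributing to $\overline{N}_{L}(r,1;F)$ or $\overline{N}_{L}(r,1;G)$ must satisfy $p,q\geq l+1$; otherwise the definition of weighted sharing would force $p=q$, contradicting $p\neq q$. Combined with the previous step this yields $\operatorname{ord}_{z_{0}}f'\geq l$ at each such point, and since the two counting functions count disjoint sets (one requires $p>q$ and the other $q>p$), summation gives
\[
l\bigl(\overline{N}_{L}(r,1;F)+\overline{N}_{L}(r,1;G)\bigr)\;\leq\;N\bigl(r,0;f'\mid f\neq 0\bigr).
\]

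Finally, invoking Lemma \ref{aml6} with $k=1$ produces
\[
N\bigl(r,0;f'\mid f\neq 0\bigr)\;\leq\;\overline{N}(r,\infty;f)+\overline{N}(r,0;f)+S(r,f),
\]
and dividing by $l$ delivers the asserted inequality. The only delicate point is the first step: establishing that the full zero of $F'$ at a $1$-point of $F$ is carried by $f'$ alone. This is the technical hinge and rests decisively on the simplicity of the roots of $P_{Yi}$ together with their separation from the critical points of $P_{Yi}$, which guarantees $R'(\alpha)\neq 0$ and prevents the zero of $F'$ from being absorbed by the factor $R'(f^{m})$.
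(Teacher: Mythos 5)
Your proposal is correct and follows essentially the same route as the paper: both arguments use the weighted sharing to force multiplicity at least $l+1$ at every $1$-point counted in $\overline{N}_{L}(r,1;F)+\overline{N}_{L}(r,1;G)$, convert this into a zero of order at least $l$ of the logarithmic derivative of $f^{m}$ at a point where $f\neq 0$, and then bound the resulting counting function by $\overline{N}(r,0;f)+\overline{N}(r,\infty;f)+S(r,f)$. The only cosmetic difference is that the paper carries out the final estimate inline, via $\overline{N}(r,1;F\mid\geq l+1)\leq\frac{1}{l}\left(N(r,1;F)-\overline{N}(r,1;F)\right)\leq\frac{1}{l}N\left(r,\infty;\frac{(f^{m})'}{f^{m}}\right)+S(r,f)$, whereas you route it through Lemma \ref{aml6} with $k=1$; your additional care about the simplicity of the zeros of $P_{Yi}$ is welcome but does not change the logic.
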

\begin{proof}
As $l\geq 1$, we get
\beas \overline{N}_{L}(r,1;F)+\overline{N}_{L}(r,1;G) &\leq& \overline{N}(r,1;F|\geq l+1)\\
&\leq& \frac{1}{l} \left(N(r,1;F)-\ol{N}(r,1,F)\right)\\
&\leq& \frac{1}{l} N\left(r,\infty;\frac{(f^m)'}{f^{m}}\right)+S(r,f)\\
&\leq& \frac{1}{l}\left(\overline{N}(r,0;f)+\overline{N}(r,\infty;f)\right)+S(r,f).\eeas
Hence the proof.
\end{proof}
\begin{lem}\label{arbc1234} Let $F$ and $G$ share $(1,l)$ and $m\geq k+1$. If $H \not\equiv0$, then
\bea\label{arel.2} && N(r,\infty;H)\\
\nonumber &\leq& \overline{N}(r,\infty;f)+\overline{N}\left(r,0;\left(f^m\right)^{(k)}\right)+\ol{N}_{*}(r,1;F,G)+\overline{N}\left(r,-a\frac{n-1}{n};f^m\right)\\
\nonumber &+& \overline{N}\left(r,-a\frac{n-1}{n};\left(f^m\right)^{(k)}\right)+
\overline{N}_{0}\left(r,0;(f^{m})'\right)+\overline{N}_{0}\left(r,0;\left(f^m\right)^{(k+1)}\right),\eea
where $\overline{N}_{0}\left(r,0;\left(f^{m}\right)'\right)$ denotes the counting function of the zeros of $(f^{m})'$ which are not the zeros of $f^{m}\left(f^{m}+a\frac{n-1}{n}\right)$ and $F-1$. Similarly, $\overline{N}_{0}\left(r,0;(f^{m})^{(k+1)}\right)$ is defined.
\end{lem}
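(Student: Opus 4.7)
The plan is the standard pole-counting argument for the auxiliary function $H$. By the Lemma on the Logarithmic Derivative, $m(r,H)=\breve S(r)$, and because each of $F''/F'$ and $2F'/(F-1)$ (and the analogous pair for $G$) contributes only simple poles to $H$, it suffices to show that every pole of $H$ is accounted for (counted once) by the terms on the right-hand side. I will enumerate the possible locations of these poles and, in each case, identify a counting function on the right-hand side that captures the point.

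First, a pole of $f$ of order $t$ produces a pole of $F$ of order $nmt$ and a pole of $G$ of order $n(mt+k)$; since $k\geq 1$, these differ, and a Laurent expansion shows $H$ has a simple pole there. Such points contribute $\overline{N}(r,\infty;f)$. Next, for multiple zeros of $F$, the factorization $F = -\tfrac{1}{b}(f^m)^{n-1}(f^m+a)$ shows two sub-families: zeros of $f^m$ (automatically of multiplicity $\geq n-1 \geq 3$) and zeros of $f^m+a$ of order $\geq 2$. For the former, the hypothesis $m\geq k+1$ forces every zero of $f^m$ to have multiplicity at least $m>k$, so it is also a zero of $(f^m)^{(k)}$ and hence counted in $\overline{N}(r,0;(f^m)^{(k)})$. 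For the latter, the point is a zero of $(f^m)'$ that is neither a zero of $f^m$ (since $a\neq 0$), nor of $f^m+a(n-1)/n$ (since $-a\neq -a(n-1)/n$), nor of $F-1$ (because $P_{Yi}(-a)=b\neq 0$), so it is captured by $\overline{N}_0(r,0;(f^m)')$. An entirely analogous analysis for the multiple zeros of $G = -\tfrac{1}{b}((f^m)^{(k)})^{n-1}((f^m)^{(k)}+a)$ routes the contributions into $\overline{N}(r,0;(f^m)^{(k)})$ and $\overline{N}_0(r,0;(f^m)^{(k+1)})$.

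Third, common $1$-points of $F$ and $G$ with distinct multiplicities, together with $1$-points of one that are not $1$-points of the other, form exactly the term $\overline{N}_*(r,1;F,G)$. Fourth, zeros of $F'$ that are not zeros of $F(F-1)$ are, via the formula $F'=-\tfrac{n}{b}(f^m)^{n-2}\bigl(f^m+a(n-1)/n\bigr)(f^m)'$, either zeros of $f^m+a(n-1)/n$ (counted by $\overline{N}(r,-a(n-1)/n;f^m)$; note that these lie outside the zero set of $F-1$ precisely because the condition $b/a^n \neq (-1)^n(n-1)^{n-1}/n^n$ ensures $P_{Yi}(-a(n-1)/n)\neq 0$), or zeros of $(f^m)'$ lying outside $\{f^m=0\}\cup\{f^m=-a(n-1)/n\}\cup\{F=1\}$ (counted by $\overline{N}_0(r,0;(f^m)')$). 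The analogous case for $G'$ yields $\overline{N}(r,-a(n-1)/n;(f^m)^{(k)})$ and $\overline{N}_0(r,0;(f^m)^{(k+1)})$.

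Summing the contributions from the four families (with each pole of $H$ counted once, which is legitimate because $H$ has only simple poles) gives exactly the bound in the lemma. The chief technical subtlety is the clean routing of multiple zeros of $F$ originating from zeros of $f^m$ through $\overline{N}(r,0;(f^m)^{(k)})$ rather than through a separate $\overline{N}(r,0;f)$ term: this hinges crucially on $m\geq k+1$. The companion condition $b/a^n \neq (-1)^n(n-1)^{n-1}/n^n$ guarantees that $P_{Yi}$ has simple roots and that $-a(n-1)/n$ is not a root of $P_{Yi}$, which is what lets the $F'$-zero analysis separate cleanly; absent either assumption, additional terms would appear on the right-hand side.
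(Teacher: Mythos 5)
Your proof is correct and follows essentially the same route as the paper, which merely sketches the standard pole-enumeration for $H$ (noting that $m\geq k+1$ lets the zeros of $f^m$ be absorbed into $\overline{N}\left(r,0;\left(f^m\right)^{(k)}\right)$ and declaring the rest "obvious"); you have simply written out the case analysis in full, including the correct verifications that $P_{Yi}(-a)=b\neq 0$ and that the hypothesis $\frac{b}{a^{n}}\neq(-1)^{n}\frac{(n-1)^{n-1}}{n^{n}}$ keeps $-a\frac{n-1}{n}$ off the zero set of $P_{Yi}$.
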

\begin{proof} We note that zeros of $f^{m}$ are the zeros of $(f^{m})^{(k)}$ if $m\geq k+1$. Also, zeros of $F$ and $G$ comes from zeros of $f^{m}(f^{m}+a)$ and $(f^{m})^{(k)}\left((f^{m})^{(k)}+a\right)$ respectively,  and
 $$\ol{N}_{L}\big(r,0;f^{m}\big)+\ol{N}_{L}\left(r,0;\left(f^{m}\right)^{(k)}\right)+\ol{N}_{L}\left(r,0;\left(f^{m}\right)^{(k)}\mid f^{m}\not= 0\right)\leq \ol{N}\left(r,0;\left(f^{m}\right)^{(k)}\right).$$
Rest part of the proof is obvious. So we omit the details.
\end{proof}
\begin{lem}\label{arbc121} Let $F$ and $G$ share $(1,l)$, $m\geq k+1$ and $F \not\equiv G$.
\begin{enumerate}
\item [i)] If $l=0$ and $n\geq 5$, then
\beas\label{arbc111} \ol{N}(r,0;f)\leq \ol{N}\left(r,0;\left(f^m\right)^{(k)}\right) &\leq& \frac{3}{n-4}\overline{N}(r,\infty;f)+\breve{S}(r).\eeas
\item [ii)] If $l\geq 1$ and $n>2+\frac{1}{l}$, then
\beas\label{arbc111} \ol{N}(r,0;f)\leq \ol{N}\left(r,0;\left(f^m\right)^{(k)}\right) &\leq& \frac{l+1}{nl-2l-1}\overline{N}(r,\infty;f)+\breve{S}(r).\eeas
\end{enumerate}
\end{lem}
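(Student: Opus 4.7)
The first inequality is essentially cost-free: since $m \geq k+1$, any zero $z_0$ of $f$ of order $t$ forces $f^m$ to have a zero of order $mt \geq m \geq k+1$ there, and hence $(f^m)^{(k)}$ to have a zero of order $mt-k \geq 1$. This gives $\overline{N}(r,0;f) \leq \overline{N}(r,0;(f^m)^{(k)})$.

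For the substantive inequality, my plan is to introduce the standard auxiliary function
$$\Phi := \frac{F'}{F-1} - \frac{G'}{G-1}$$
and exploit the hypothesis $F \not\equiv G$ by contrasting its zeros and poles. First I would dispose of the trivial subcase $\Phi \equiv 0$: integration yields $F-1 = A(G-1)$ for a non-zero constant $A$, and matching pole orders at any pole of $f$ (where $F-1$ and $A(G-1)$ have orders $nms$ and $n(ms+k)$ respectively) forces $k=0$, contradicting $k \geq 1$; thus $f$ is entire and $\overline{N}(r,\infty;f)=0$, after which the First/Second Fundamental Theorem applied to $G$ (together with $A \neq 1$, since $F \not\equiv G$) yields the claimed bound. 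For $\Phi \not\equiv 0$, the lemma of logarithmic derivatives gives $m(r,\Phi) = S(r,f)$, so $T(r,\Phi) = N(r,\infty;\Phi) + S(r,f)$. The poles of $\Phi$ can only arise from poles of $f$ (each contributing a simple pole of $\Phi$ because the pole orders of $F$ and $G$ never match for $k\geq 1$) and from 1-points of $F$ and $G$ whose multiplicities disagree, giving
$$N(r,\infty;\Phi) \leq \overline{N}(r,\infty;f) + \overline{N}_*(r,1;F,G) + S(r,f).$$

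On the zero side, at a zero $z_0$ of $(f^m)^{(k)}$ the function $G = -\frac{1}{b}V^{n-1}(V+a)$ (with $V = (f^m)^{(k)}$) vanishes to order $(n-1)t$, so $G'/(G-1)$ has a zero of order $(n-1)t - 1$; I will argue that at the same $z_0$, which must also be a zero of $F$ of high order (this is where I use that $z_0$ cannot be a 1-point of $F$, since by sharing it would force $G(z_0)=1$, contradicting $G(z_0)=0$), the function $F'/(F-1)$ has a zero of comparable order, so that $\Phi$ vanishes to order at least $n-2$. This yields $(n-2)\overline{N}(r,0;(f^m)^{(k)}) \leq N(r,0;\Phi) \leq N(r,\infty;\Phi) + S(r,f)$. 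Substituting, I obtain the master inequality
$$(n-2)\overline{N}(r,0;(f^m)^{(k)}) \leq \overline{N}(r,\infty;f) + \overline{N}_*(r,1;F,G) + S(r,f).$$

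Finally, I split on $l$. For $l \geq 1$, Lemma~\ref{arl.n.1m} gives $\overline{N}_*(r,1;F,G) \leq \tfrac{1}{l}(\overline{N}(r,0;f) + \overline{N}(r,\infty;f)) + S(r,f)$; feeding this into the master inequality and using $\overline{N}(r,0;f) \leq \overline{N}(r,0;(f^m)^{(k)})$ on the right, a rearrangement produces $\overline{N}(r,0;(f^m)^{(k)}) \leq \frac{l+1}{nl-2l-1} \overline{N}(r,\infty;f) + S(r,f)$, valid whenever $n > 2 + 1/l$. For $l = 0$, Lemma~\ref{arl.n.1} yields $\overline{N}_*(r,1;F,G) \leq \overline{N}_L(r,1;F) + \overline{N}_L(r,1;G) \leq \overline{N}(r,0;f) + \overline{N}(r,0;(f^m)^{(k)}) + 2\overline{N}(r,\infty;f) + S(r,f)$; substituting and again using the trivial first inequality on the right reduces the master estimate to $(n-4)\overline{N}(r,0;(f^m)^{(k)}) \leq 3\overline{N}(r,\infty;f) + S(r,f)$, which is the desired bound for $n \geq 5$.

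The main technical obstacle is the zero-counting step: one must carefully verify that \emph{every} zero of $(f^m)^{(k)}$ actually forces $\Phi$ to vanish to order at least $n-2$, not only those that simultaneously vanish on $f^m$. The sharing hypothesis on 1-points and the structural form $R(z) = -z^{n-1}(z+a)/b$ (with $R(0) = 0$ and $0 \notin S_{Yi}$) are both indispensable here: the former rules out the dangerous configuration $F(z_0)=1, G(z_0)=0$, and the latter ensures that $F$ carries a zero of order at least $n-1$ wherever $U$ itself vanishes. Handling the residual zeros of $(f^m)^{(k)}$ at which $U \notin \{0,-a\}$ will require invoking Lemma~\ref{aml6} applied to $U = f^m$ (using $m \geq k+1$ so that $N(r,0;U\mid<k) = 0$) to absorb such "stray" zeros back into $\overline{N}(r,0;f) + \overline{N}(r,\infty;f)$ before solving the coupled inequalities.
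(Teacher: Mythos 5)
Your architecture is exactly the paper's: the same auxiliary function $\Phi=\frac{F'}{F-1}-\frac{G'}{G-1}$ (called $U$ in the paper's proof), the same degenerate case disposed of by integration, the same pole estimate $N(r,\infty;\Phi)\le \overline{N}(r,\infty;f)+\overline{N}_{L}(r,1;F)+\overline{N}_{L}(r,1;G)+S(r,f)$ (every pole of $f$ gives $F$ and $G$ poles of the unequal orders $nms$ and $n(ms+k)$, hence a simple pole of $\Phi$), and the same final step of feeding Lemmas \ref{arl.n.1} and \ref{arl.n.1m} into the master inequality and solving for $\overline{N}\left(r,0;\left(f^{m}\right)^{(k)}\right)$. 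That closing algebra is correct and reproduces the constants $\frac{3}{n-4}$ and $\frac{l+1}{nl-2l-1}$.

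The gap is precisely the one you flag and then do not close: the step $(n-2)\,\overline{N}\left(r,0;\left(f^{m}\right)^{(k)}\right)\le N(r,0;\Phi)$. At a zero of $f$ both $F$ and $G$ vanish to order at least $n-1$ (since $m\ge k+1$ and $R(w)=-\frac{1}{b}w^{n-1}(w+a)$ with $a\ne0$), so both logarithmic-derivative terms, and hence $\Phi$, vanish to order at least $n-2$; but this only yields $(n-2)\,\overline{N}(r,0;f)\le N(r,0;\Phi)$. At a zero $z_{0}$ of $\left(f^{m}\right)^{(k)}$ with $f^{m}(z_{0})\ne0$, only $G'/(G-1)$ vanishes to high order, while $F'/(F-1)$ is generically a unit there: the sharing hypothesis excludes $F(z_{0})=1$ but does not force $F(z_{0})=0$, so your assertion that such a $z_{0}$ "must also be a zero of $F$ of high order" is unjustified and $\Phi$ need not vanish at all. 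Your proposed repair via Lemma \ref{aml6}, namely $N\left(r,0;\left(f^{m}\right)^{(k)}\mid f^{m}\ne0\right)\le k\overline{N}(r,\infty;f)+k\overline{N}(r,0;f)+S(r,f)$, injects a factor of $k$ and therefore cannot recover the $k$-independent constants in the statement. For what it is worth, the paper's own proof writes the identical chain $\overline{N}\left(r,0;\left(f^{m}\right)^{(k)}\right)\le\frac{1}{n-2}N(r,0;U)$ while verifying the order-$(n-2)$ vanishing only at zeros of $f$, i.e. it silently skips exactly the point you isolate; so you have located a genuine weak spot of the argument rather than missed an idea, but neither your proposal nor your patch proves the lemma with the stated bounds. (Your degenerate case is fine once you also note that a zero of $f$ would force $A=1$ in $F-1=A(G-1)$; then $f=e^{\alpha}$ has no zeros or poles and $\overline{N}\left(r,0;\left(f^{m}\right)^{(k)}\right)=N\left(r,0;\left(f^{m}\right)^{(k)}/f^{m}\right)=S(r,f)$ by the lemma on the logarithmic derivative.)
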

\begin{proof} Define \bea U:=\left(\frac{F'}{(F-1)}-\frac{G'}{(G-1)}\right).\eea
If $U\equiv 0$, then on integration, we have
\beas F-1=B(G-1).\eeas
But, as $F \not\equiv G$, we have $\ol{N}(r,0;f)=S(r,f)$.\par
Next we consider $U\not\equiv 0$. Let $z_{0}$ be a zero of $f$ of order $t$. Then $z_{0}$ is zero of $F$ and $G$ of order $mt(n-1)$ and  $(m t-k)(n-1)$ respectively. Thus $z_{0}$ is a zero of $U$ of order atleast $\nu=(n-2).$ Hence
\bea\label{apuda} \ol{N}(r,0;f) &\leq& \ol{N}\left(r,0;\left(f^m\right)^{(k)}\right)\\
\nonumber&\leq& \frac{1}{\nu}N(r,0;U) \leq \frac{1}{\nu}N(r,\infty;U)+S(r,f)\\
\nonumber&\leq& \frac{1}{\nu}\left\{\ol{N}_{L}(r,1;F)+\ol{N}_{L}(r,1;G)+\ol{N}_{L}(r,\infty;F)+\ol{N}_{L}(r,\infty;G)\right\}+S(r,f)\\
\nonumber&\leq& \frac{1}{\nu}\left\{\ol{N}_{L}(r,1;F)+\ol{N}_{L}(r,1;G)+\ol{N}(r,\infty;f)\right\}+S(r,f).
\eea
If $l=0$, then using the Lemma \ref{arl.n.1}, we can write (\ref{apuda}) as
\beas && \ol{N}(r,0;f)\leq \ol{N}\left(r,0;\left(f^m\right)^{(k)}\right) \\
&\leq& \frac{1}{\nu}\left\{\overline{N}(r,0;f)+\overline{N}\left(r,0;\left(f^{m}\right)^{(k)}\right)+3\overline{N}(r,\infty;f)\right\}+S(r,f)\\
&\leq& \frac{1}{\nu}\left\{2\overline{N}\left(r,0;\left(f^{m}\right)^{(k)}\right)+3\overline{N}(r,\infty;f)\right\}+\breve{S}(r).
\eeas
Therefore
\beas \ol{N}(r,0;f)\leq \ol{N}\left(r,0;\left(f^m\right)^{(k)}\right) &\leq& \frac{3}{n-4}\overline{N}(r,\infty;f)+\breve{S}(r).\eeas
If $l\geq 1$, then using the Lemma \ref{arl.n.1m}, we can write (\ref{apuda}) as
\beas &&\ol{N}(r,0;f)\leq \ol{N}\left(r,0;\left(f^m\right)^{(k)}\right)\\
&\leq& \frac{1}{\nu}\left\{\frac{1}{l}\left(\overline{N}(r,0;f)+\overline{N}(r,\infty;f)\right)+\overline{N}(r,\infty;f)\right\}+S(r,f).\eeas
Thus
\beas \ol{N}(r,0;f)\leq \ol{N}\left(r,0;\left(f^m\right)^{(k)}\right) &\leq& \frac{l+1}{n l-2l-1}\overline{N}(r,\infty;f)+\breve{S}(r).\eeas
Hence the proof.
\end{proof}
\begin{lem}\label{arbc123} Let $F$ and $G$ share $(1,l)$, $m\geq k+1$ and $F \not\equiv G$.
\begin{enumerate}
\item [i)] If $l=0$ and $n\geq 6$, then $\ol{N}(r,\infty;f)\leq \frac{n-4}{2n^{2}-11n+3}\breve{T}(r)+\breve{S}(r).$
\item [ii)] If $l\geq 1$ and $n\geq 4$, then
\beas &&\ol{N}(r,\infty;f)\leq \frac{(n l-2l-1)l}{(2nl-l-1)(n l-2l-1)-(l+1)^{2}}\breve{T}(r)+\breve{S}(r).\eeas
\end{enumerate}
\end{lem}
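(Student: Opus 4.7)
The plan is to parallel Lemma~\ref{arbc121} but now to apply the Second Main Theorem to both $F$ and $G$ simultaneously. I first dispose of the case $H\equiv 0$: integrating yields a M\"obius relation between $F$ and $G$, and a case analysis in the spirit of Lemma~\ref{jjbiku}, combined with $F\not\equiv G$ and the specific form of $R$, will force $\ol N(r,\infty;f)=\breve S(r)$, which is strictly stronger than the claim. Thus the substantive case is $H\not\equiv 0$.

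In that case I first use Lemma~\ref{ML} to record $T(r,F)+T(r,G)=n\breve T(r)+O(1)$, and then apply the Second Main Theorem to $F$ and $G$ at the three values $\{0,1,\infty\}$ and sum. From the factorisation $R(z)=-b^{-1}z^{n-1}(z+a)$, the zero-counting terms decompose as $\ol N(r,0;F)=\ol N(r,0;f)+\ol N(r,-a;f^{m})$ with $\ol N(r,-a;f^{m})\leq T(r,f^{m})+S(r,f)$, while the pole-counting terms collapse to a single $\ol N(r,\infty;f)$ on each side. These observations yield an inequality of the form
\[
(n-1)\breve T(r)\leq \ol N(r,0;f)+\ol N\left(r,0;(f^{m})^{(k)}\right)+2\ol N(r,\infty;f)+\ol N(r,1;F)+\ol N(r,1;G)+\breve S(r).
\]

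The delicate step is controlling $\ol N(r,1;F)+\ol N(r,1;G)$. For this I will use the standard inequality $N_{E}^{1)}(r,1;F)\leq N(r,\infty;H)+\breve S(r)$ together with Lemma~\ref{arbc1234}, after decomposing the $1$-counts into simple common points, common multiple points, and the mismatches $\ol N_{L}(r,1;F)$ and $\ol N_{L}(r,1;G)$. The tail terms will be bounded by Lemma~\ref{arl.n.1} when $l=0$ and by Lemma~\ref{arl.n.1m} when $l\geq 1$, each time feeding back into the right-hand side multiples of $\ol N(r,\infty;f)$ and $\ol N(r,0;(f^{m})^{(k)})$. A final substitution via Lemma~\ref{arbc121} absorbs the zero-counts $\ol N(r,0;f)+\ol N(r,0;(f^{m})^{(k)})$ into a constant multiple of $\ol N(r,\infty;f)$, reducing the right-hand side to a single linear function of $\ol N(r,\infty;f)$ modulo $\breve S(r)$; rearranging will yield the asserted bound.

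The main obstacle will be the bookkeeping needed to realise the precise denominators $2n^{2}-11n+3$ (for $l=0$) and $(2nl-l-1)(nl-2l-1)-(l+1)^{2}$ (for $l\geq 1$). The quadratic nature of these expressions indicates that the sharing weight $l$ must be tracked simultaneously through the $\ol N_{L}$ bound (via Lemma~\ref{arl.n.1m}) and through the coefficient produced by Lemma~\ref{arbc121}; a single application of the Second Main Theorem will not generate both quadratic factors, so the symmetric summation over $F$ and $G$ is essential.
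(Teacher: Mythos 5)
Your plan for the main case runs in the wrong direction. The Second Fundamental Theorem, applied to $F$ and $G$ and summed, produces an inequality of the shape $A\,\breve{T}(r)\leq B\,\ol{N}(r,\infty;f)+\breve{S}(r)$ (after you absorb the $N(r,1;\cdot)$ and $\ol{N}(r,-a;\cdot)$ terms into multiples of $\breve{T}(r)$): that is a \emph{lower} bound for $\ol{N}(r,\infty;f)$, whereas the lemma asserts an \emph{upper} bound with a small coefficient, roughly $\frac{1}{2n}\breve{T}(r)$. SMT places $\ol{N}(r,\infty;F)$ on the right-hand side with coefficient $1$; no rearrangement of your inequality can move it to the left with the large coefficient needed. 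The mechanism the paper actually uses is an auxiliary function, $V=\frac{F'}{F(F-1)}-\frac{G'}{G(G-1)}$. Since a pole of $f$ of order $p$ is a pole of $F$ of order $pmn$ and of $G$ of order $(pm+k)n$, each such point is a zero of $V$ of order at least $\lambda=2n-1$ (using $m\geq k+1$, so $mn\geq 2n$). Hence $\lambda\,\ol{N}(r,\infty;f)\leq N(r,0;V)\leq N(r,\infty;V)+S(r,f)$, and $N(r,\infty;V)$ is controlled by $\ol{N}_{L}(r,1;F)+\ol{N}_{L}(r,1;G)+\ol{N}(r,0;G)$, which after Lemmas \ref{arl.n.1}/\ref{arl.n.1m} and \ref{arbc121} is a bounded multiple of $\ol{N}(r,\infty;f)$ plus $\breve{T}(r)$. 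Subtracting gives the claimed coefficients; in particular the quadratic denominators come from multiplying $\lambda=2n-1$ against the feedback coefficient of Lemma \ref{arbc121}, not from any symmetric summation of SMT over $F$ and $G$.

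Your degenerate case is also misaligned with the hypothesis: the lemma assumes $F\not\equiv G$, not $H\not\equiv 0$, and the split that matters is $V\equiv 0$ versus $V\not\equiv 0$. When $V\equiv 0$, integration gives $1-\frac{1}{F}=A\left(1-\frac{1}{G}\right)$; since $f^{m}$ and $(f^{m})^{(k)}$ share poles IM and $F\not\equiv G$ forces $A\neq 1$, the poles must be absent and $\ol{N}(r,\infty;f)=S(r,f)$, which is stronger than required. Introducing $H$ here is unnecessary and does not repair the main-case problem above.
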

\begin{proof} Define \bea V:=\left(\frac{F'}{F(F-1)}-\frac{G'}{G(G-1)}\right).\eea
\textbf{Case-1.} If $V\equiv 0$, then by integration, we get
\beas \left(1-\frac{1}{F}\right)=A\left(1-\frac{1}{G}\right).\eeas
As $f^{m}$ and $(f^{m})^{(k)}$ share $(\infty,0)$ and $F \not\equiv G$, so $\overline{N}(r,\infty;f)=S(r,f).$ Hence the proof.\\
\textbf{Case-2.} Next we consider $V\not\equiv 0$.\par
If $z_{0}$ is a pole of $f$ of order $p$, then it is a pole of $(f^{m})^{(k)}$ of order $(pm+k)$. Thus it is a pole of $F$ and $G$ of order $pmn$ and $(pm+k)n$ respectively. Hences $z_{0}$ is a zero of $\left(\frac{F'}{F-1}-\frac{F'}{F}\right)$ order atleast $(pmn-1)$ and zero of $V$ of order atleast $\lambda=2n-1$.
Thus
\bea\label{maa} \lambda \overline{N}(r,\infty;f) &\leq& N(r,0;V) \leq N(r,\infty;V)+S(r,f)\\
\nonumber&\leq& \overline{N}_{*}(r,1;F,G)+\overline{N}_{*}(r,0;F,G)+\ol{N}(r,0;G~|~F\neq 0)+S(r,f)\\
\nonumber&\leq& \overline{N}_{L}(r,1;F)+\overline{N}_{L}(r,1;G)+\overline{N}(r,0;G)+S(r,f)\\
\nonumber&\leq& \overline{N}_{L}(r,1;F)+\overline{N}_{L}(r,1;G)+\overline{N}\left(r,0;\left(f^{m}\right)^{(k)}\right)+\breve{T}(r)+\breve{S}(r).
\eea
If $l=0$, then using Lemmas \ref{arl.n.1} and \ref{arbc121}, we can write (\ref{maa}) as
\beas && \lambda\overline{N}(r,\infty;f) \\
&\leq& \overline{N}(r,0;f)+\overline{N}\left(r,0;\left(f^{m}\right)^{(k)}\right)+2\overline{N}(r,\infty;f)+\overline{N}\left(r,0;\left(f^{m}\right)^{(k)}\right)+\breve{T}(r)+\breve{S}(r)\\
&\leq& 2\overline{N}(r,\infty;f)+3\overline{N}\left(r,0;\left(f^{m}\right)^{(k)}\right)+\breve{T}(r)+\breve{S}(r)\\
&\leq& \left(2+\frac{9}{n-4}\right)\overline{N}(r,\infty;f)+\breve{T}(r)+\breve{S}(r).
\eeas
Therefore
\beas \ol{N}(r,\infty;f)\leq \frac{n-4}{2n^{2}-11n+3}\breve{T}(r)+\breve{S}(r).\eeas
If $l\geq 1$, then using Lemmas \ref{arl.n.1m} and \ref{arbc121}, we can write (\ref{maa}) as
\beas && \lambda \overline{N}(r,\infty;f) \\
&\leq& \frac{1}{l}\left\{\overline{N}(r,0;f)+\overline{N}(r,\infty;f)\right\}+\overline{N}\left(r,0;\left(f^{m}\right)^{(k)}\right)+\breve{T}(r)+ \breve{S}(r)\\
&\leq& \frac{1}{l}\overline{N}(r,\infty;f)+\left(1+\frac{1}{l}\right)\overline{N}\left(r,0;\left(f^{m}\right)^{(k)}\right)+\breve{T}(r)+ \breve{S}(r)\\
&\leq& \left(\frac{1}{l}+\frac{(l+1)^{2}}{l(n l-2l-1)}\right)\overline{N}(r,\infty;f)+\breve{T}(r)+ \breve{S}(r).
\eeas
As $n\geq4 $ and $l\geq1$, so $(2nl-l-1)(nl-2l-1)-(l+1)^{2}=l[l\{n(2n-5)+1\}-(3n-1)]\geq2l$.\\
Therefore
\beas \ol{N}(r,\infty;f)\leq \frac{(nl-2l-1)l}{(2nl-l-1)(nl-2l-1)-(l+1)^{2}}\breve{T}(r)+\breve{S}(r).\eeas
Hence the proof.
\end{proof}
\begin{lem}\label{arsr1} If $H\equiv 0$, then $F\equiv G$ for  $m\geq k+1$ and $n \geq 4$.
\end{lem}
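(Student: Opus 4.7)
Two successive integrations of $H\equiv 0$ produce the Möbius-type relation
\[
\frac{1}{F-1}=\frac{A}{G-1}+B,\qquad A\neq 0,
\]
equivalently $F=\frac{(B+1)G+(A-B-1)}{BG+(A-B)}$. Applied to the representations $F=R(f^{m})$, $G=R((f^{m})^{(k)})$ with $R(z)=-\tfrac{1}{b}(z^{n}+az^{n-1})$, Lemma \ref{ML} gives $T(r,F)=n\,T(r,f^{m})+O(1)$ and $T(r,G)=n\,T(r,(f^{m})^{(k)})+O(1)$, and the Möbius linkage forces $T(r,f^{m})=T(r,(f^{m})^{(k)})+S(r,f)$; in particular $T(r,(f^{m})^{(k)})=m\,T(r,f)+S(r,f)$. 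The polynomial $R$ has critical values $0$ and $\xi:=(-1)^{n}a^{n}(n-1)^{n-1}/(bn^{n})$, with $\xi\notin\{0,1\}$ by the hypothesis on $(a,b)$, and $R(w)=c$ has $n$ simple roots whenever $c\notin\{0,\xi\}$.

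\textbf{Case analysis.} When $B\neq 0$, set $\beta:=(B-A)/B$, so that the pole locus of $F$ coincides with the $\beta$-locus of $G$ and $\overline{N}(r,\beta;G)=\overline{N}(r,\infty;f)\leq T(r,f)+O(1)$. A three-value Second Fundamental Theorem application to $G$ at $0,\infty,\beta$ (or at $0,\infty,1/(A+1)$ when $\beta=0$ and $A\neq -1$), combined with the direct estimate
\[
\overline{N}(r,0;G)\leq \overline{N}(r,0;(f^{m})^{(k)})+\overline{N}(r,-a;(f^{m})^{(k)})\leq 2m\,T(r,f)+S(r,f),
\]
produces $(nm-2m-2)T(r,f)\leq S(r,f)$, incompatible with $n\geq 4$ and $m\geq k+1\geq 2$; the sub-case $\beta=0$, $A\neq -1$ is sharpened by applying the Second Fundamental Theorem to $(f^{m})^{(k)}$ at the $n+3$ values $0,-a,\infty$ together with the $n$ distinct roots of $R(w)=1/(A+1)$, improving the inequality to $(nm-3)T(r,f)\leq S(r,f)$. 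When $B=0$ the relation reduces to $G=AF+(1-A)$: the trivial sub-case $A=1$ delivers $F\equiv G$ at once, while for $A\neq 1$ one examines the polynomial identity $v^{n-1}(v+a)=Au^{n-1}(u+a)+b(A-1)$ with $u=f^{m}$, $v=(f^{m})^{(k)}$. At a pole of $f$ of order $p$ the LHS has pole order $n(mp+k)$ and the RHS only $nmp$, forcing $k=0$ and hence $f$ entire; at a zero of $f$ of order $s$, the bound $ms\geq m\geq k+1>k$ makes the LHS vanish while the RHS is $\approx b(A-1)\neq 0$, ruling out zeros of $f$. Writing $f=e^{h}$ and expanding, the resulting identity $(Q_{k}^{n}-A)e^{mnh}+a(Q_{k}^{n-1}-A)e^{m(n-1)h}=b(A-1)$—with $Q_{k}$ a differential polynomial in $h',h'',\ldots,h^{(k)}$—violates the Borel-type independence of $e^{mnh}$ and $e^{m(n-1)h}$ unless $A=1$, again a contradiction.

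\textbf{Principal obstacle.} The residual delicate configuration is the sub-sub-case $B=A=-1$ of Case $B\neq 0$, where the Möbius relation degenerates to $FG\equiv 1$ and the Nevanlinna estimates above become borderline when $m=2$. Here we leave the purely analytic framework and use the rigidity of $FG\equiv 1$: the pole--zero matching $\overline{N}(r,0;F)=\overline{N}(r,\infty;G)=\overline{N}(r,\infty;f)$ combined with $\overline{N}(r,0;F)\geq \overline{N}(r,0;f)+\overline{N}(r,-a;f^{m})$ and the corresponding identity for $G$ compel $f$ to be entire and non-vanishing, hence $f=e^{h}$ for some entire $h$; the further requirement that $f^{m}+a=e^{mh}+a$ admit no zero then contradicts the Little Picard theorem applied to the non-constant entire function $e^{mh}$, eliminating this residual case. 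This completes the proof that $F\equiv G$ under $n\geq 4$ and $m\geq k+1\geq 2$, after which the preceding Lemma \ref{arsr2} converts it into $f^{m}\equiv(f^{m})^{(k)}$ in the downstream application.
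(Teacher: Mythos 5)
Your proof is correct and shares the paper's overall skeleton (integrate $H\equiv 0$ to a M\"{o}bius relation between $F$ and $G$, eliminate the non-degenerate configurations by the Second Fundamental Theorem, and kill $FG\equiv 1$ by a Picard-type exceptional-value count), but it is organized and finished differently. The paper integrates to $F=\frac{AG+B}{CG+D}$ and splits on $C\neq 0$ versus $C=0$; your dichotomy $B\neq 0$ versus $B=0$ in $\frac{1}{F-1}=\frac{A}{G-1}+B$ is the same one in disguise ($B=0$ is exactly the affine case $C=0$). In the non-affine case the paper runs two separate SFT counts (for $A\neq 0$ and $A=0$) and meets $FG\equiv1$ inside the sub-case $A=0$, $\gamma=1$, whereas you handle the generic situation with a single three-value SFT at $0,\infty,\beta$ and isolate $FG\equiv1$ as the degenerate corner $A=B=-1$; both treatments of $FG\equiv1$ are the same (three exceptional values of $f^{m}$ versus Picard). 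The real divergence is in the affine case with $A\neq1$: the paper kills poles by the CM pole-order mismatch $mtn\neq(mt+k)n$, kills zeros by noting a common zero of $F$ and $G$ would force $\mu=0$, and then closes with a three-value SFT needing only $n\geq4$; you instead read off both facts from the explicit identity $v^{n-1}(v+a)=Au^{n-1}(u+a)+b(A-1)$ and then finish by writing $f=e^{h}$ and invoking Borel/Mokhon'ko on $(Q_k^n-A)e^{mnh}+a(Q_k^{n-1}-A)e^{m(n-1)h}=b(A-1)$, using that $Q_k$ is small with respect to $e^{h}$ by the logarithmic derivative lemma. Your route is heavier but gives more structural information; the paper's is shorter and stays within the SFT toolkit used elsewhere in the chapter. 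Two cosmetic repairs: the phrase \enquote{forcing $k=0$} should read \enquote{forcing $f$ to be pole-free, since $k\geq1$}; and in the sub-case $\beta=0$, $A\neq-1$ the equation $R(w)=1/(A+1)$ need not have $n$ distinct roots if $1/(A+1)$ coincides with the nonzero critical value of $R$ (it then has $n-1$), though the resulting inequality still yields the contradiction for $n\geq4$, $m\geq2$ — and in fact in that sub-case the M\"{o}bius relation already forces $G$ to omit $0$ and $\infty$, which collapses the argument entirely.
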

\begin{proof} By the given assumptions, we see that $F$ and $G$ share $(1,\infty)$ and $(\infty,0)$. Also, integrating $H\equiv0$, we have
\bea\label{arpe1.1} F\equiv \frac{AG+B}{CG+D},\eea
where $A,B,C,D$ are constant satisfying $AD-BC\neq0$. Again by Lemma \ref{ML}
\bea\label{arpe1.2} T(r,f^m)=T\left(r,\left(f^m\right)^{(k)}\right)+S(r,f).\eea
\textbf{Case-1.} First we consider $C\neq 0$.\par
Let $z_{0}$ be a pole of $f$ with multiplicity $t$. Then $z_{0}$ is a pole of $F$ with multiplicity $mnt$, but $z_{0}$ is removable singularity or analytic point of $\frac{AG+B}{CG+D}$, which is not possible as $n$ is non-negative and equation (\ref{arpe1.1}) holds. Thus $\overline{N}(r,\infty;f)=S(r,f)$.\par
\textbf{Subcase-1.1.} If $A\neq0$, then equation (\ref{arpe1.1}) can be written as
\bea\label{six} F-\frac{A}{C}=\frac{BC-AD}{C(CG+D)}.\eea
Thus
$$\overline{N}\left(r,\frac{A}{C};F\right)=\overline{N}(r,\infty;G)=S(r,f).$$
Using the Second Fundamental Theorem, we get
\beas && n T(r,f^{m})+O(1)=T(r,F)\\
&\leq& \overline{N}(r,\infty;F)+\overline{N}(r,0;F)+\overline{N}\left(r,\frac{A}{C};F\right)+S(r,F)\\
&\leq& 2\overline{N}(r,\infty;f)+\overline{N}(r,-a;f^m)+\overline{N}(r,0;f^m)+S(r,f)\\
&\leq& \overline{N}(r,-a;f^m)+\overline{N}(r,0;f^m)+S(r,f),
\eeas
which is a contradiction as $n\geq 3$.\par
\textbf{Subcase-1.2.} If $A=0$, then equation (\ref{arpe1.1}) can be written as
\bea\label{kajal} F=\frac{1}{\gamma G+\delta},\eea
where $\gamma=\frac{C}{B}$ and $\delta=\frac{D}{B}$. Obviously $B\not=0$ and $\gamma\not=0$.\par
If $F$ has no 1-point, then Second Fundamental Theorem yields
\beas && n T(r,f^{m})+O(1)=T(r,F)\\
&\leq& \overline{N}(r,\infty;F)+\overline{N}(r,0;F)+\overline{N}(r,1;F)+S(r,F)\\
&\leq& \overline{N}\left(r,\infty;f^m\right)+\overline{N}\left(r,-a;f^m\right)+\overline{N}\left(r,0;f^m\right)+S(r,f)\\
&\leq& \overline{N}\left(r,-a;f^m\right)+\overline{N}\left(r,0;f^m\right)+S(r,f),\eeas
which is a contradiction as $n\geq 3$.
Thus $\gamma+\delta=1$ and $\gamma\neq0$. Also
\bea\label{kobi}\overline{N}\left(r,0;G+\frac{1-\gamma}{\gamma}\right)=\overline{N}(r,\infty;F)=S(r,f).\eea
If $\gamma\neq1$, then Second Fundamental Theorem, equations (\ref{arpe1.2}) and (\ref{kobi}) yields
\beas && n T\left(\left(f^{m}\right)^{(k)}\right)+O(1)=T(r,G)\\
&\leq& \overline{N}(r,\infty;G)+\overline{N}(r,0;G)+\overline{N}\left(r,0;G+\frac{1-\gamma}{\gamma}\right)+S(r,G)\\
&\leq& \overline{N}\left(r,-a;\left(f^m\right)^{(k)}\right)+\overline{N}\left(r,0;\left(f^m\right)^{(k)}\right)+\breve{S}(r),\eeas
which is a contradiction as $n\geq 3$. Therefore $\gamma=1$. Hence $FG\equiv 1$; that is,
\bea \left(f^{m}\right)^{n-1}\left(f^{m}+a\right)\left(\left(f^{m}\right)^{(k)}\right)^{n-1}\left(\left(f^{m}\right)^{(k)}+a\right)\equiv b^{2}.\eea
As $f^{m}$ and $\left(f^{m}\right)^{(k)}$ share $(\infty,0)$, so $\infty$ is the exceptional values of $f^{m}$. Also as zeros of $F$ is neutralized by poles of G, so $0$ and $-a$ are also exceptional values of $f^{m}$. But this is not possible by the Second Fundamental Theorem. So the case $\gamma=1$ can't occur.
\vspace{.2cm}\\
\textbf{Case-2.} Next we consider $C=0$. Then the equation (\ref{arpe1.1}) can be written as
\bea\label{tra}F=\lambda G+\mu,\eea
where $\lambda=\frac{A}{D}$ and $\mu=\frac{B}{D}$. Clearly $\lambda$, $\mu\not=0$. Also $F$ and $G$ share $(1,\infty)$ and $(\infty,\infty)$.\par
If $z_{0}$ be a pole of $f$ of order $t$, then it is a pole of $F$ of order $mtn$ and pole of $G$ of order $(mt+k)n$. But $F$ and $G$ share poles counting multiplicities. Thus $mtn=(mt+k)n$ but $nk\not= 0$ by assumption on $n,k$. Thus
\begin{equation}\label{iamdon} \ol{N}(r,\infty;f)=S(r,f).
\end{equation}
Proceeding similarly as above, we can see that $F$ has atleast one $1$-point. Thus $\lambda+\mu=1$ with $\lambda\neq0$.\par
\textbf{Subcase-2.1.} Let $\lambda\not=1$.\par
We note that if $m\geq k+1$, then $$\overline{N}\left(r,0;f^m\right)\leq\overline{N}\left(r,0;\left(f^m\right)^{(k)}\right).$$ Thus every zero of $f$ is zero of $F$ as well as $G$. Thus  $\ol{N}(r,0;f)=S(r,f)$, otherwise, $\mu=0$, which is impossible.\par
Using the Second Fundamental Theorem, equations (\ref{arpe1.2}), (\ref{iamdon}), we get
\beas && n T\left(r,f^{m}\right)+O(1)=T(r,F)\\
&\leq& \overline{N}(r,\infty;F)+\overline{N}(r,0;F)+\overline{N}(r,1-\lambda;F)+S(r,F)\\
&\leq& \overline{N}\left(r,0;f^m\right)+\overline{N}\left(r,-a;f^m\right)+\overline{N}\left(r,0;\left(f^m\right)^{(k)}\right)+\overline{N}\left(r,-a;\left(f^m\right)^{(k)}\right)+S(r,f)\\
&\leq& 3  T\left(r,f^{m}\right)+S(r,f),\eeas
which is a contradiction as $n\geq 4$.
\vspace{.2 cm}\par
\textbf{Subcase-2.2.} Let $\lambda=1$. Then $$F\equiv G.$$ Hence the proof.
\end{proof}
\section {Proofs of the theorems}
\par
\begin{proof} [\textbf{Proof of Theorem \ref{arthB2} and \ref{arthB1}}] We consider two cases:\par
\textbf{Case-1.} Let $H \not\equiv 0$. Then clearly $F \not\equiv G$. Also by simple calculations, we see that
$$\overline{N}(r,1;F|=1)=\overline{N}(r,1;G|=1)\leq N(r,\infty;H).$$
Now applying Second Fundamental Theorem and  Lemma \ref{arbc1234}, we get
\bea\label{arpe1.1.11}&& (n+1) \left\{T\left(r,f^m\right)+T\left(r,\left(f^m\right)^{(k)}\right)\right\}\\
\nonumber &\leq& \overline{N}(r,\infty;f^{m})+\overline{N}\left(r,\infty;\left(f^{m}\right)^{(k)}\right)+\overline{N}\left(r,0;f^{m}\right)+\overline{N}\left(r,0;\left(f^{m}\right)^{(k)}\right)\\
\nonumber &+& \overline{N}\left(r,-a\frac{n-1}{n};f^m\right)+\overline{N}\left(r,-a\frac{n-1}{n};\left(f^m\right)^{(k)}\right)+\overline{N}(r,1;F)\\
\nonumber &+& \overline{N}(r,1;G)-N_{0}\left(r,0;(f^m)'\right)-N_{0}\left(r,0;\left(f^m\right)^{(k+1)}\right)+\breve{S}(r)\\
\nonumber &\leq& 3\left\{\overline{N}(r,\infty;f)+\overline{N}\left(r,0;\left(f^m\right)^{(k)}\right)\right\}+2\breve{T}(r)+\overline{N}(r,1;F)\\
\nonumber &+& \overline{N}(r,1;G)-\overline{N}(r,1;F|=1)+\overline{N}_{L}(r,1;F)+\overline{N}_{L}(r,1;G)+\breve{S}(r).
\eea
Thus in view of Lemma \ref{jjb4}, (\ref{arpe1.1.11}) can be written as
\bea\label{arpe1.1.1} \left(\frac{n}{2}-1\right)\breve{T}(r) &\leq& 3\left\{\overline{N}(r,\infty;f)+\overline{N}\left(r,0;\left(f^m\right)^{(k)}\right)\right\}\\
\nonumber &+& \left(\frac{3}{2}-l\right)\left\{\overline{N}_{L}(r,1;F)+\overline{N}_{L}(r,1;G)\right\}+\breve{S}(r).
\eea
\textbf{Subcase-1.1.} If $l\geq2$, then using Lemmas \ref{arbc121} and \ref{arbc123} in (\ref{arpe1.1.1}), we get
\bea\label{arpe1.1.1ma} \left(\frac{n}{2}-1\right)\breve{T}(r) &\leq& \frac{3(n-1)l}{(n-2)l-1}\overline{N}(r,\infty;f)+\breve{S}(r)\\
\nonumber &\leq& \frac{3(n-1)l^{2}}{(2nl-l-1)(n l-2l-1)-(l+1)^{2}}\breve{T}(r)+\breve{S}(r)\\
\label{arpe1.1.1mab} &\leq& \frac{3(n-1)l}{2n^{2}l-5nl-3n+l+1}\breve{T}(r)+\breve{S}(r).
\eea
From the inequality (\ref{arpe1.1.1ma}) (resp. \ref{arpe1.1.1mab}), we get a contradiction if $f$ is entire (resp. meromorphic) function and $n\geq 3$ $\left(\text{resp.}~~(n-2)(2n^{2}l-5nl-3n+l+1)>6(n-1)l\right)$.
\vspace{.21 cm}\\
\textbf{Subcase-1.2.} If $l=1$, then using Lemmas \ref{arl.n.1m}, \ref{arbc121} and \ref{arbc123} in (\ref{arpe1.1.1}), we get
\bea\nonumber  \left(\frac{n}{2}-1\right)\breve{T}(r) &\leq& 3\left\{\overline{N}(r,\infty;f)+\overline{N}\left(r,0;\left(f^m\right)^{(k)}\right)\right\}\\
\nonumber &+& \frac{1}{2}\left\{\overline{N}(r,0;f)+\overline{N}(r,\infty;f)\right\}+\breve{S}(r)\\
\label{arpe1.1.1maa} &\leq& \frac{7(n-1)}{2(n-3)}\overline{N}(r,\infty;f)+\breve{S}(r)\\
\label{arpe1.1.1maab} &\leq& \frac{7(n-1)}{2\{(2n-2)(n-3)-4\}}\breve{T}(r)+\breve{S}(r).
\eea
From the inequality (\ref{arpe1.1.1maa}) (resp. (\ref{arpe1.1.1maab})), we get a contradiction if $f$ is entire (resp. meromorphic ) function and $n\geq 3$
(resp. $n\geq 5$).
\vspace{.31 cm}\\
\textbf{Subcase-1.3.} If $l=0$, then using Lemmas \ref{arl.n.1}, \ref{arbc121} and \ref{arbc123} in (\ref{arpe1.1.1}), we get
\bea \nonumber \left(\frac{n}{2}-1\right)\breve{T}(r)&\leq& 3\left\{\overline{N}(r,\infty;f)+\overline{N}\left(r,0;\left(f^m\right)^{(k)}\right)\right\}\\
\nonumber &+& 3\left\{\overline{N}\left(r,0;\left(f^{m}\right)^{(k)}\right)+\overline{N}(r,\infty;f)\right\}+\breve{S}(r)\\
\label{arpe11.1.1maa} &\leq& \frac{6(n-1)}{(n-4)}\overline{N}(r,\infty;f)+\breve{S}(r)\\
\label{arpe11.1.1maab} &\leq& \frac{6(n-1)}{2n^{2}-11n+3}\breve{T}(r)+\breve{S}(r).
\eea
From the inequality (\ref{arpe11.1.1maa}) (resp. (\ref{arpe11.1.1maab})), we get a contradiction if $f$ is entire (resp. meromorphic) function and $n\geq 3$ (resp. $n\geq 7$).
\vspace{ .31 cm}\par
\textbf{Case-2.} Let $H\equiv 0$. Then by Lemmas \ref{arsr1} and \ref{arsr2}, we have $f^m=(f^{m})^{(k)}$. Then in view of Lemma \ref{arabc121}, we see that $f$ takes the form  $$f(z)=ce^{\frac{\zeta}{m}z},$$
where $c$ is a non-zero constant and $\zeta^{k}=1$. Hence the proof.
\end{proof}
$~~$
\vspace{10.5 cm}
\\
------------------------------------------------
\\
\textbf{The matter of this chapter has been accepted for publication in Journal of the Indian Math. Soc., (2017).}
\newpage
\chapter{Uniqueness of the power of a meromorphic functions with its differential polynomial sharing a set}
\fancyhead[l]{Chapter 9}
\fancyhead[r]{Uniqueness of power of a meromorphic functions with its diff. polynomial}
\fancyhead[c]{}
\section{Introduction}
\par
The famous Nevanlinna's five value theorem,  implies that if two non-constant entire functions $f$ and $g$ on the complex plane share \emph{four distinct finite values} (in ignoring multiplicity), then $f\equiv g$. In a special case of this theorem, Rubel-Yang (\cite{br10a}) first observed that \emph{the number four can be replaced by two} when one consider $g = f'$.\par
But the result of Rubel-Yang (\cite{br10a}) is not, in general, true when we consider the sharing of a set of two elements instead of values.
\begin{exm} Let $S=\{a,b\}$, where $a$ and $b$ are any two distinct complex numbers. Let $f(z)=e^{-z}+a+b$, then $E_{f}(S)=E_{f'}(S)$ but $f\not\equiv f'$.\end{exm}
Thus for the uniqueness of meromorphic function with its derivative counterpart, the cardinality of the sharing set should at least be three. In this direction, in 2003, using normal families, Fang and Zalcman made the first breakthrough.
\begin{theo 9.A} (\cite{mm3}) Let $S=\{0,a,b\}$, where $a,b$ are two non-zero distinct complex numbers satisfying $a^{2} \not= b^{2}$, $a\not = 2b$, $a^{2}-ab+b^{2}\not=0$. If for a non-constant entire function $f$,  $E_{f}(S)=E_{f'}(S)$, then $f\equiv f'$.
\end{theo 9.A}
In 2007, Chang-Fang-Zalcman (\cite{mm2.0}) further extended the above result by considering an arbitrary set having three elements in the following manner:
\begin{theo 9.B} (\cite{mm2.0}) Let $f$ be a non-constant entire function and let $S=\{a,b,c\}$ where $a,b~\text{and}~c$ are three distinct complex numbers. If $E_{f}(S)=E_{f'}(S)$, then either
\begin{enumerate}
\item [i)] $f(z)=Ce^{z}$, or
\item [ii)] $f(z)=Ce^{-z}+\frac{2}{3}(a+b+c)$ and $(2a-b-c)(2b-c-a)(2c-a-b)=0$, or
\item [iii)] $f(z)=Ce^{\frac{-1 \pm i\sqrt{3}}{2}z}+\frac{3 \pm i\sqrt{3}}{6}(a+b+c)$ and $a^2+b^2+c^2-ab-bc-ca=0$,\\
where $C$ is a non-zero constant.
\end{enumerate}
\end{theo 9.B}
In the next year, Chang-Zalcman (\cite{mm2}) replaced the entire function by meromorphic function with at most finitely many simple poles in Theorem 9.A and 9.B.
\begin{theo 9.C} (\cite{mm2}) Let $S=\{0,a,b\}$, where $a,b$ are two non-zero distinct complex numbers. If $f$ is a meromorphic function with at most finitely many poles and $E_{f}(S)=E_{f'}(S)$, then $f\equiv f'$.
\end{theo 9.C}
\begin{theo 9.D} (\cite{mm2}) Let $f$ be a non-constant meromorphic function with at most finitely many simple poles and let $S=\{0,a,b\}$, where $a,b$ are two distinct non-zero complex numbers. If $E_{f}(S)=E_{f'}(S)$, then either
\begin{enumerate}
\item [i)] $f(z)=Ce^{z}$, or
\item [ii)] $f(z)=Ce^{-z}+\frac{2}{3}(a+b)$ and either $(a+b)=0$ or $(2a^{2}-5ab+2b^{2})=0$, or
\item [iii)] $f(z)=Ce^{\frac{-1 \pm i\sqrt{3}}{2}z}+\frac{3 \pm i\sqrt{3}}{6}(a+b)$ and $a^2-ab+b^2=0$,\\
where $C$ is a non-zero constant.
\end{enumerate}
\end{theo 9.D}
In 2011, L\"{u} (\cite{mm6.0}) consider an arbitrary set with three elements in Theorem 9.D and obtained the same result with some additional suppositions.
\begin{theo 9.E} (\cite{mm6.0}) Let $f$ be a non-constant transcendental meromorphic function with at most finitely many simple poles and let $S=\{a,b,c\}$, where $a,b,~\text{and}~c$ are three distinct complex numbers. If $E_{f}(S)=E_{f'}(S)$, then the conclusion of Theorem 9.B holds.
\end{theo 9.E}
So we observed from the above results that the researchers were mainly involved to find the uniqueness of an entire or meromorphic function with its first derivative sharing a set at the expanse of allowing several constraints.
But all were practically tacit about the uniqueness of an entire or meromorphic function with its higher order derivatives.
In 2007, Chang-Fang-Zalcman (\cite{mm2.0}) consider the following example to show that in Theorem 9.B, one can not relax the CM sharing to IM sharing of the set $S$. In other words, when multiplicity is disregarded, the uniqueness result ceases to hold.
\begin{exm} If $S=\{-1,0,1\}$ and $f(z)=\sin z$, then $f$ and $f'$ share the set $S$ in ignoring multiplicity but $f\not\equiv f'$.
\end{exm}
So the following question is natural:
\begin{ques} Does there exist any set which when shared by a meromorphic function together with its higher order derivative or even a power of a meromorphic function together with its differential polynomial, lead to wards the uniqueness?
\end{ques}
To seek the possible answer of the above question is the motivation of this chapter. Before going to state the main result of this chapter, We recall the following definition.
\begin{defi}
Let $k(\geq1),l(\geq1)$ be positive integers and $a_{i}$ ($i=0,1,2,\ldots,k-1$) be complex constants. For a non-constant meromorphic function $f$, we define the differential polynomial in $f$ as  $$L=L(f)=a_{0}\left(f^{(k)}\right)^{l}+a_{1}\left(f^{(k-1)}\right)^{l}+\ldots+a_{k-1}\left(f^{'}\right)^{l}.$$
\end{defi}
\section{Main Result}
\par
Suppose for  an integer $n\geq 3$, we shall denote by $P_{Y}(z)$ (\cite{y98}) the following polynomial
\be\label{mme5.1} P_{Y}(z)=az^{n}-n(n-1)z^{2}+2n(n-2)bz-(n-1)(n-2)b^{2},\ee
where $a$ and $b$ are two non-zero complex numbers satisfying $ab^{n-2}\not=2$.\\ We have from (\ref{mme5.1}) that \bea\label{mme5.2}P_{Y}^{'}(z)&=& naz^{n-1}-2n(n-1)z+2n(n-2)b\\&=&\frac{n}{z}[az^{n}-2(n-1)z^{2}+2(n-2)b z].\nonumber\eea
We note that $P_{Y}^{'}(0)\not= 0$ and so from (\ref{mme5.1}) and (\ref{mme5.2}), we get
$$az^{n}-2(n-1)z^{2}+2(n-2)bz=0.$$
Now at each root of $P_{Y}^{'}(z)=0$, we get
\beas && P_{Y}(z)\\&=&az^{n}-n(n-1)z^{2}+2n(n-2)b z-(n-1)(n-2)b^{2}\\&=& 2(n-1)z^{2}-2(n-2)b z-n(n-1)z^{2}+2n(n-2)b z-(n-1)(n-2)b^{2}\\& =& -(n-1)(n-2)(z-b)^{2}.\eeas
So at a root of $P_{Y}^{'}(z)=0$, $P_{Y}(z)$ will be zero if $P_{Y}^{'}(b)=0$. But $P_{Y}^{'}(b)=nb(ab^{n-2}-2)\not =0$. That means a zero of $P_{Y}^{'}(z)$ is not a zero of $P_{Y}(z)$. Thus zeros of $P_{Y}(z)$ are simple.
\begin{theo}\label{mmthB1} Let $m(\geq1),~n(\geq1)$ and $p(\geq0)$ be three positive integers and $f$ be a non-constant meromorphic function.
Suppose that $S_{Y}=\{z : P_{Y}(z)=0 \}$ and $E_{f^{m}}(S_{Y},p)=E_{L(f)}(S_{Y},p)$. If one of the following conditions holds:
\begin{enumerate}
\item [i)] $2 \leq p < \infty$ and $n > 6+6\frac{\mu+1}{\lambda-2\mu},$
\item [ii)] $p=1$ and $n > \frac{13}{2}+7\frac{\mu+1}{\lambda-2\mu},$
\item [iii)] $p=0$ and $n> 6+3\mu+6\frac{(\mu+1)^{2}}{\lambda-2\mu},$
\end{enumerate}
then  $f^{m} \equiv L(f)$, where $\lambda=\min\{m(n-2)-1,(1+k)l(n-2)-1\}$ and $\mu=\min\{\frac{1}{p},1\}$.
\end{theo}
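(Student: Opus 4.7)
\textbf{Proof plan for Theorem \ref{mmthB1}.}

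The plan is to convert the set-sharing hypothesis into a weighted value-sharing problem and then apply the familiar framework developed in the previous chapters. Set
\begin{equation*}
F=\frac{a(f^{m})^{n}-n(n-1)(f^{m})^{2}+2n(n-2)b(f^{m})}{(n-1)(n-2)b^{2}},\qquad G=\frac{a(L(f))^{n}-n(n-1)(L(f))^{2}+2n(n-2)b(L(f))}{(n-1)(n-2)b^{2}},
\end{equation*}
so that $F-1=P_{Y}(f^{m})/[(n-1)(n-2)b^{2}]$ and $G-1=P_{Y}(L(f))/[(n-1)(n-2)b^{2}]$. The hypothesis $E_{f^{m}}(S_{Y},p)=E_{L(f)}(S_{Y},p)$ becomes $E_{F}(1,p)=E_{G}(1,p)$. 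By Lemma \ref{ML} and standard estimates for differential polynomials (combining Lemmas \ref{zll7} and \ref{zll8}), we have $T(r,F)=nT(r,f^{m})+S(r,f)$, $T(r,G)=nT(r,L(f))+S(r,f)$, with $T(r,L(f))=O(T(r,f))$ and $N_{2}(r,0;L(f))\le l\,N_{k+2}(r,0;f)+kl\,\ol N(r,\infty;f)+S(r,f)$. Form the Nevanlinna auxiliary function $H$ as in (\ref{CHB}) and split.

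If $H\not\equiv 0$, a Laurent expansion shows that every simple $1$-point of $F$ is a zero of $H$, so $N_{E}^{1)}(r,1;F)\le N(r,\infty;H)+S(r,f)$. The counting of poles of $H$ (done as in Lemma \ref{CHBl2.1a} but adapted to the weight $p$) gives
\begin{equation*}
N(r,\infty;H)\le \ol N(r,0;F\!\mid\!\ge 2)+\ol N(r,0;G\!\mid\!\ge 2)+\ol N(r,\infty;f)+\ol N_{\ast}(r,1;F,G)+\ol N_{0}(r,0;F')+\ol N_{0}(r,0;G'),
\end{equation*}
together with tail terms controlled by $\ol N_{F\ge p+1}(r,1;F\!\mid G\neq 1)$ and its mirror. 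Applying the Second Fundamental Theorem to $F$ and to $G$ and adding, and using Lemma \ref{jjb4} to absorb $\overline N_{\ast}(r,1;F,G)$ when $p\ge 1$, the key point is that a zero of $f$ of multiplicity $t$ produces a zero of $F$ of multiplicity $\ge m(n-2)$ (through the factor $f^{m}$ appearing as a simple root of the quadratic-in-$(f^{m})$ polynomial part of $F$, times powers reflecting that the zeros of $P_{Y}$ are simple), while a zero of $L(f)$ of multiplicity $s$ gives a zero of $G$ of multiplicity $\ge (1+k)l(n-2)$ after accounting for the differentiation order and exponent $l$. This is precisely where the parameter $\lambda=\min\{m(n-2)-1,(1+k)l(n-2)-1\}$ enters, bounding $\ol N(r,0;F)+\ol N(r,0;G)$ in terms of $\frac{1}{\lambda-2\mu}\,\ol N(r,\infty;f)$ (with $\mu$ measuring the loss coming from $\ol N_{L}(r,1;F)+\ol N_{L}(r,1;G)$, which is $\le \mu\,[\ol N(r,\infty;f)+\ol N(r,0;f)]+S(r,f)$ in the three regimes $p\ge 2$, $p=1$, $p=0$ respectively). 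Combining these yields an inequality of the form $(n-\alpha(\mu,\lambda))T(r,f)\le S(r,f)$, which contradicts each of (i), (ii), (iii).

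If $H\equiv 0$, integrating twice gives $\frac{1}{F-1}=\frac{A}{G-1}+B$ with $A\neq 0$, so $F$ and $G$ share $1$ and $\infty$ with full multiplicity. I would dispose of the subcases $B\neq 0$ and $B=0,\ A\neq 1$ exactly as in Lemma \ref{newlem2} and Lemma \ref{jjbiku}: the Second Fundamental Theorem applied to $G$ with the three values $0,\ 1,\ 1-A/B$ (respectively to $F$ with $0,\ A,\ 1$) together with the lower bounds $m\ge 1,\ n\ge 3$ leads to a contradiction via the growth rate of $T(r,G)$ versus $T(r,f)$; the hypothesis $ab^{n-2}\neq 2$ is used here to exclude degenerate factorisations of $P_{Y}(z)-\text{const}$. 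The remaining subcase $A=1,\ B=0$ yields $F\equiv G$, i.e.\ $P_{Y}(f^{m})\equiv P_{Y}(L(f))$. Since $P_{Y}$ is a critically injective polynomial with only simple zeros (verified in the paragraph preceding the theorem: $P_{Y}$ equals $-(n-1)(n-2)(z-b)^{2}$ at any critical point, and $P_{Y}'(b)=nb(ab^{n-2}-2)\neq 0$), Fujimoto's Theorem 5.B applies and guarantees that $P_{Y}$ is a uniqueness polynomial for meromorphic functions once $n\ge 5$, forcing $f^{m}\equiv L(f)$.

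The principal obstacle is the careful multiplicity bookkeeping in the $H\not\equiv 0$ case: one must track separately how zeros of $f$ (according to whether their multiplicity exceeds $k$ or not) propagate through $L(f)=\sum a_{i}(f^{(k-i)})^{l}$ and then through $G=P_{Y}(L(f))/[(n-1)(n-2)b^{2}]$, so that the parameters $\lambda$ and $\mu$ defined in the statement emerge with the sharp constants $6$, $13/2$, $6+3\mu$ appearing in the three bounds on $n$. The $H\equiv 0$ analysis is routine by comparison, and the concluding step reduces immediately to Theorem 5.B through the critical-injection property verified above.
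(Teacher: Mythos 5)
Your overall architecture (reduce set sharing to the value $1$ via suitable $F,G$, form $H$ as in (\ref{CHB}), split on $H\equiv 0$, and run the Second Fundamental Theorem with weighted-sharing lemmas in the three regimes of $p$) is the same as the paper's, but two of your concrete steps would fail as written. First, the normalization: you take $F$ and $G$ to be polynomials of degree $n$ in $f^{m}$ and $L(f)$, whereas the paper takes $F=R(f^{m})$, $G=R(L(f))$ with the rational function $R(z)=\frac{az^{n}}{n(n-1)(z-\alpha_{1})(z-\alpha_{2})}$, whose only zero is $z=0$ (of order $n$) and which has a pole of order $n-2$ at infinity. This choice is not cosmetic. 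The constant $\lambda=\min\{m(n-2)-1,(1+k)l(n-2)-1\}$ arises because a pole of $f$ of order $t$ is a pole of $R(f^{m})$ of order $tm(n-2)$ and of $R(L(f))$ of order $(t+k)l(n-2)$, hence a zero of the auxiliary function $V=\frac{F'}{F(F-1)}-\frac{G'}{G(G-1)}$ of order at least $\lambda$; this yields $\ol N(r,\infty;f)\le\frac{\mu+1}{\lambda-2\mu}\left(\ol N(r,0;f)+\ol N(r,0;L(f))\right)+S_{Y}(r)$ --- note the direction: the \emph{pole} counting function is controlled by the \emph{zero} counting functions, the reverse of what you wrote. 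With your polynomial $F$, a zero of $f$ produces only a zero of $F$ of order $mt$ (the numerator $az^{n}-n(n-1)z^{2}+2n(n-2)bz$ vanishes simply at $z=0$ because $2n(n-2)b\neq 0$), not of order $m(n-2)$ as you assert; and, worse, the zeros of your $F$ also include the $f^{m}$-preimages of the $n-1$ nonzero roots of $az^{n-1}-n(n-1)z+2n(n-2)b$, so $N(r,\infty;V)$ acquires a term of size $(n-1)T(r,f^{m})$ and the estimate on $\ol N(r,\infty;f)$ collapses. The stated $\lambda$ simply cannot emerge from your $F$ and $G$.

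Second, the endgame of the $H\equiv 0$ case. From $F\equiv G$, i.e.\ $P_{Y}(f^{m})\equiv P_{Y}(L(f))$, you invoke Theorem 5.B to conclude $f^{m}\equiv L(f)$. But Theorem 5.B presupposes the critical injection property, which is neither verified for $P_{Y}$ nor automatic: the computation preceding the theorem shows that at every critical point $\zeta$ one has $P_{Y}(\zeta)=-(n-1)(n-2)(\zeta-b)^{2}$, so any two critical points with $\zeta_{1}+\zeta_{2}=2b$ would take the same value and violate injectivity. The paper avoids this entirely: it substitutes $h=L(f)/f^{m}$ into $F\equiv G$ to obtain $n(n-1)h^{2}f^{2m}(h^{n-2}-1)-2n(n-2)bhf^{m}(h^{n-1}-1)+(n-1)(n-2)b^{2}(h^{n}-1)=0$, completes the square, and uses the factorization of $(n-1)^{2}(z^{n}-1)(z^{n-2}-1)-n(n-2)(z^{n-1}-1)^{2}$ (Lemma \ref{mmlemma}) together with the Second Fundamental Theorem applied to $h$ to force $h\equiv 1$. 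Without either verifying the Fujimoto hypotheses for $P_{Y}$ or supplying this $h$-argument, your concluding step is open; and note also that the paper must separately rule out the M\"obius degenerations $F=\frac{AG+B}{CG+D}$ with $C\neq 0$ (in particular $FG\equiv 1$), which your sketch compresses into a single appeal to earlier lemmas that were proved for different $F,G$.
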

\begin{cor}\label{mmthB12}
There exists a set $S_{Y}$ with eight (resp. seven) elements such that if a non-constant meromorphic (resp. entire) function $f$ and its $k$-th derivative $f^{(k)}$ satisfying $E_{f}(S_{Y},3)=E_{f^{(k)}}(S_{Y},3)$, then $f\equiv f^{(k)}$.
\end{cor}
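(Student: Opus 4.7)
My plan is to obtain the corollary as a direct specialization of Theorem~\ref{mmthB1}, with a parallel recomputation for the entire-function case that exploits the absence of poles to save one unit in the cardinality. To begin, I would set $m=1$, $l=1$, $a_{0}=1$, and $a_{1}=a_{2}=\cdots=a_{k-1}=0$, so that $L(f)\equiv f^{(k)}$ and $f^{m}\equiv f$; the hypothesis $E_{f}(S_{Y},3)=E_{f^{(k)}}(S_{Y},3)$ then reads precisely as $E_{f^{m}}(S_{Y},p)=E_{L(f)}(S_{Y},p)$ with $p=3$, and the conclusion $f^{m}\equiv L(f)$ is the desired $f\equiv f^{(k)}$. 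Note also that $|S_{Y}|=n$, since the zeros of $P_{Y}$ were shown to be simple in the discussion preceding the theorem.

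For the meromorphic case, with the above specialization we have $\lambda=\min\{n-3,(1+k)(n-2)-1\}=n-3$ (the minimum is attained by the first term, as $k\geq 1$), and $\mu=\min\{1/3,1\}=1/3$. Substituting into condition (i) of Theorem~\ref{mmthB1}, the requirement $n>6+6(\mu+1)/(\lambda-2\mu)$ becomes $n>6+8/(n-\tfrac{11}{3})$, equivalently $3n^{2}-29n+42>0$. The positive root of this quadratic is $(29+\sqrt{337})/6<7.9$, so every integer $n\geq 8$ qualifies; taking $n=8$ yields an eight-element set $S_{Y}$ with the required property.

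For the entire-function case, my plan is to re-derive the Nevanlinna estimate used in the proof of Theorem~\ref{mmthB1} under the additional assumption that $f$ is entire. Since $f$ entire forces $f^{(k)}$ to be entire, every pole-counting contribution $\overline N(r,\infty;f)$, $\overline N(r,\infty;f^{(k)})$, $\overline N_{L}(r,\infty;F)$, $\overline N_{L}(r,\infty;G)$ collapses into $S(r,f)$. This drops the prefactor of $\overline N(r,\infty;f)$ on the right-hand side of the main inequality, effectively tightening condition (i) to one of the shape $n>5+6(\mu+1)/(\lambda-2\mu)$. With $\mu=1/3$ and $\lambda=n-3$ this reduces to $3n^{2}-26n+33>0$, whose larger real root lies strictly below $7$. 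Hence $n=7$ is admissible in the entire setting and produces the required seven-element $S_{Y}$.

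The principal obstacle I anticipate lies in the entire-case reduction: one must audit the proof of Theorem~\ref{mmthB1} term-by-term to verify that every occurrence of a pole-counting quantity can indeed be absorbed into $S(r,f)$ without inflating some residual term elsewhere. The most delicate point is the estimate for $\overline N_{L}(r,1;F)+\overline N_{L}(r,1;G)$: in the meromorphic proof this is controlled via a logarithmic-derivative argument by a multiple of $\overline N(r,0;f)+\overline N(r,\infty;f)+S(r,f)$, and one must verify that for entire $f$ the pole term genuinely disappears rather than being silently replaced by another non-negligible contribution. Once this transfer is justified, the remainder of the argument from Theorem~\ref{mmthB1} applies verbatim, and the quadratic test above yields the claim.
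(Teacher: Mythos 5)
Your meromorphic case is correct and is exactly the intended specialization of Theorem \ref{mmthB1}: with $m=1$, $l=1$, $L(f)=f^{(k)}$ and $p=3$ one gets $\lambda=n-3$ (since $k\geq 1$ makes the second entry of the minimum larger), $\mu=\tfrac{1}{3}$, and condition (i) becomes $3n^{2}-29n+42>0$, whose larger root is $\tfrac{29+\sqrt{337}}{6}\approx 7.89$; hence $n\geq 8$, i.e.\ an eight-element $S_{Y}$.

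The entire case as written does not go through, for two reasons. First, your numerics are wrong: with $\mu=\tfrac{1}{3}$ and $\lambda=n-3$ your proposed condition $n>5+6\tfrac{\mu+1}{\lambda-2\mu}$ is $3n^{2}-26n+33>0$, whose larger root is $\tfrac{26+\sqrt{280}}{6}\approx 7.12$, \emph{not} below $7$; at $n=7$ one gets $147-182+33=-2<0$, so your own criterion rejects $n=7$ and would force you back to an eight-element set, contradicting the statement you are proving. Second, the shape of the "tightened" condition is not what dropping the pole terms produces. In Subcase 1.1 of the proof of Theorem \ref{mmthB1}, the poles survive into the final estimate only through the single inequality $(n-6)T_{Y}(r)\leq 6\overline{N}(r,\infty;f)+S_{Y}(r)$, and the fraction $\tfrac{\mu+1}{\lambda-2\mu}$ enters solely because Lemma \ref{mml1.1.1} is then invoked to bound $\overline{N}(r,\infty;f)$. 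For entire $f$ one has $\overline{N}(r,\infty;f)\equiv 0$, so that entire right-hand term vanishes and Lemma \ref{mml1.1.1} is not needed at all: Case 1 then requires only $n>6$, and every subcase of Case 2 already requires only $n>6$. That is the correct route to $n=7$. The constant $6$ in "$n>6+\cdots$" arises from absorbing the seven zero- and $b$-point terms of (\ref{mmpe1.1.5}) into the left-hand side; it has nothing to do with poles, so deleting pole-counting functions cannot lower it to $5$. Your conclusion that seven elements suffice for entire functions is true, but the argument you give does not establish it.
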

The following example shows that for a non-constant entire function, the set $S$ in Theorem \ref{mmthB1} can not be replaced by an arbitrary set containing seven distinct elements.
\begin{exm} For example, we take $f=e^{{\omega}^{^{\frac{1}{2k}}}z}$ and $S=\{0,a\omega, a\sqrt{\omega},a, \frac{a}{\sqrt{\omega}}, \frac{a}{\omega}, \frac{a}{\omega \sqrt{\omega}}\}$, where $\omega$ is the non-real cubic root of unity and $a$ is a non-zero complex number. Then it is easy to verify that $f$ and $f^{(k)}$ share $(S,\infty)$, but $f\not \equiv f^{(k)}$.
\end{exm}
\section{Lemmas}
Define
$$R(z):=\frac{az^{n}}{n(n-1)(z-\alpha_{1})(z-\alpha_{2})},$$ where $\alpha_{1}$ and $\alpha_{2}$ are the distinct roots of the equation
\bea n(n-1)z^{2}-2n(n-2)bz+(n-1)(n-2)b^{2}=0.\eea
Throughout this chapter, we take $F=R(f^{m})$, $G=R(L(f))$ and $H$ is defined by the equation (\ref{CHB}).\par Also we define by $T_{Y}(r):=T(r,f^{m})+T\left(r,L(f)\right)$ and $S_{Y}(r):=S(r,f)+S\left(r,L(f)\right)$.
\begin{lem} (\cite{am11})\label{mmlemma} Let $$Q(z)=(n-1)^{2}(z^{n}-1)(z^{n-2}-1)-n(n-2)(z^{n-1}-1)^{2},$$
then $$Q(z)=(z-1)^{4}\prod\limits_{i=1}^{2n-6}(z-\beta_{i})$$
where $\beta_{i} \in \mathbb{C}\setminus\{0,1\} (i=1,2,\ldots,2n-6)$ are distinct.
\end{lem}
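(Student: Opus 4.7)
\medbreak
\noindent\textbf{Proof proposal.} The plan is to reduce the statement to Lemma \ref{amis1.1} proved in Chapter 6, and then do two short bookkeeping checks. First I would observe that the polynomial $Q(z)$ is, up to a nonzero constant multiple, exactly the polynomial $\Gamma(t)$ of Lemma \ref{amis1.1}. Indeed, the value $\lambda=4\bigl(1-\tfrac{1}{(n-1)^{2}}\bigr)$ appearing there equals $\tfrac{4n(n-2)}{(n-1)^{2}}$, so a one-line calculation gives
\[
-\frac{(n-1)^{2}}{4}\,\Gamma(z)=-n(n-2)(z^{n-1}-1)^{2}+(n-1)^{2}(z^{n}-1)(z^{n-2}-1)=Q(z).
\]
Consequently every statement about the zeros of $\Gamma$ transfers to $Q$: Lemma \ref{amis1.1} furnishes $z=1$ as a zero of $Q$ of multiplicity exactly four and forces every other zero of $Q$ to be simple.

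Next I would check the leading coefficient and the non-vanishing at $0$. Expanding, $Q(z)=z^{2n-2}-(n-1)^{2}z^{n}+2n(n-2)z^{n-1}-(n-1)^{2}z^{n-2}+1$, since $(n-1)^{2}-n(n-2)=1$. So the leading coefficient of $Q$ is $1$, which matches the asserted factorisation (no scalar out front). Moreover $Q(0)=1\neq 0$, so $z=0$ is not among the remaining roots. Denoting the $2n-6$ simple zeros by $\beta_{1},\ldots,\beta_{2n-6}$, we then get
\[
Q(z)=(z-1)^{4}\prod_{i=1}^{2n-6}(z-\beta_{i}),\qquad \beta_{i}\in\mathbb{C}\setminus\{0,1\},
\]
with all the $\beta_{i}$ distinct, which is precisely the claim.

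The only real ``work'' is hidden inside Lemma \ref{amis1.1}, which we are quoting. If a self-contained proof were required instead of quoting that lemma, the main obstacle would be verifying that every zero of $Q$ other than $z=1$ is simple. For that I would proceed as follows. Using the Cassini-type identity $(z^{n}-1)(z^{n-2}-1)-(z^{n-1}-1)^{2}=-z^{n-2}(z-1)^{2}$, obtained by direct expansion, one rewrites
\[
Q(z)=(z^{n-1}-1)^{2}-(n-1)^{2}z^{n-2}(z-1)^{2}=(z-1)^{2}\bigl[h(z)^{2}-(n-1)^{2}z^{n-2}\bigr],
\]
where $h(z)=1+z+\cdots+z^{n-2}$. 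A Taylor expansion of $\phi(z):=h(z)^{2}-(n-1)^{2}z^{n-2}$ at $z=1$ gives $\phi(1)=\phi'(1)=0$ and $\phi''(1)=\tfrac{n(n-1)^{2}(n-2)}{6}\neq 0$, yielding the order-$4$ zero at $z=1$. To rule out any other multiple zero one substitutes $z=e^{w}$ in $\phi(z)=0$ and $\phi'(z)=0$ simultaneously and, after eliminating $h(z)$ by using the first relation $h(z)^{2}=(n-1)^{2}z^{n-2}$, reduces the system to $(\cosh w-1)^{2}=0$, which forces $z=1$, exactly as in the argument for Lemma \ref{amis1.1}. This is the step where essentially all of the computation lives, and it is the step I would expect to be the true obstacle in a self-contained proof.
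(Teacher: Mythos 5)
Your proposal is correct, and it actually supplies more than the paper does: the paper states this lemma with a citation (it is the classical Frank--Reinders/Alzahary factorisation) and gives no proof at all. Your reduction is the natural one and all the computations check out: with $\lambda=4\bigl(1-\tfrac{1}{(n-1)^{2}}\bigr)=\tfrac{4n(n-2)}{(n-1)^{2}}$ one indeed has $Q(z)=-\tfrac{(n-1)^{2}}{4}\,\Gamma(z)$, so Lemma \ref{amis1.1} transfers verbatim to $Q$; the leading coefficient $(n-1)^{2}-n(n-2)=1$ justifies the monic factorisation, $Q(0)=1$ excludes $0$ from the roots, the exact multiplicity four at $z=1$ excludes $1$ from the $\beta_{i}$, and simplicity of the remaining $2n-6$ zeros gives their distinctness. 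Your self-contained sketch is also sound: the identity $(z^{n}-1)(z^{n-2}-1)-(z^{n-1}-1)^{2}=-z^{n-2}(z-1)^{2}$ and the consequent rewriting $Q(z)=(z-1)^{2}\bigl[h(z)^{2}-(n-1)^{2}z^{n-2}\bigr]$ with $h(z)=1+z+\cdots+z^{n-2}$ are correct, as is $\phi''(1)=\tfrac{n(n-1)^{2}(n-2)}{6}$. The only caveat worth recording is that your argument is not fully self-contained unless the simplicity of the zeros of $\phi$ away from $z=1$ is actually carried out (the $z=e^{w}$ elimination you outline, which is the content of Lemma \ref{amis1} in Chapter 6 with $A=1$ handled as in Lemma \ref{amis1.1}); as written you correctly flag this as the one place where real work remains, and quoting Lemma \ref{amis1.1} for it is exactly what the thesis itself implicitly does.
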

\begin{lem}\label{mml.n.1}
If $F$ and $G$ share $(1,p)$, then
\begin{enumerate}
\item [i)] $\overline{N}_{L}(r,1;F)\leq \mu\left(\overline{N}(r,0;f)+\overline{N}(r,\infty;f)\right)+S(r,f)$,
\item [ii)] $\overline{N}_{L}(r,1;G)\leq \mu\left(\overline{N}\left(r,0; L(f)\right)+\overline{N}(r,\infty;f)\right)+S\left(r,L(f)\right)$,
\end{enumerate}
where $\mu=\min\{\frac{1}{p},1\}.$
\end{lem}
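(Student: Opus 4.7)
The proof proceeds by analyzing the multiplicity structure of $1$-points of $F$ and $G$. First I would record the key geometric fact: since
\[
R(z)-1=\frac{P_Y(z)}{n(n-1)(z-\alpha_1)(z-\alpha_2)}
\]
and the zeros $\eta_1,\ldots,\eta_n$ of $P_Y(z)$ are all simple (as established in the derivation preceding the statement of the theorem), at any $1$-point $z_0$ of $F=R(f^m)$ we have $f^m(z_0)=\eta_i$ for some $i$, and the multiplicity of $F-1$ at $z_0$ coincides with the multiplicity of $f^m-\eta_i$ at $z_0$. Consequently, if this common multiplicity equals $t\ge 2$, then $(f^m)'$ vanishes at $z_0$ to order exactly $t-1$. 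The analogous statement holds at $1$-points of $G=R(L(f))$ with $(L(f))'$ in place of $(f^m)'$.

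Next I would exploit the weighted sharing hypothesis. When $F$ and $G$ share $(1,p)$ with $p\ge 1$, any point contributing to $\ol{N}_L(r,1;F)$ must be a $1$-point of $F$ of multiplicity at least $p+1$; hence, by pigeonhole, $p\,\ol{N}_L(r,1;F)\le N(r,1;F)-\ol N(r,1;F)$. When $p=0$, the corresponding multiplicity is at least $2$ and so $\ol{N}_L(r,1;F)\le N(r,1;F)-\ol N(r,1;F)$. The two cases combine uniformly as
\[
\ol{N}_L(r,1;F)\le \mu\bigl(N(r,1;F)-\ol N(r,1;F)\bigr),\qquad \mu=\min\{1/p,\,1\}.
\]

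Combining with the first paragraph, each multiple $1$-point of $F$ of multiplicity $t$ is a zero of $(f^m)'$ of order $t-1$ (and is not a zero of $f^m$, since $\eta_i\ne0$), so
\[
N(r,1;F)-\ol N(r,1;F)\le N\bigl(r,0;(f^m)'\mid f^m\ne 0\bigr).
\]
Invoking Lemma \ref{CHBl2.2} with $k=1$ yields
\[
N\bigl(r,0;(f^m)'\mid f^m\ne 0\bigr)\le \ol N(r,\infty;f^m)+\ol N(r,0;f^m)+S(r,f^m),
\]
and since $\ol N(r,0;f^m)=\ol N(r,0;f)$ and $\ol N(r,\infty;f^m)=\ol N(r,\infty;f)$, this proves part (i). For part (ii) I would repeat the same argument with $G$, $L(f)$ and $(L(f))'$ in place of $F$, $f^m$ and $(f^m)'$, and conclude with the elementary bound $\ol N(r,\infty;L(f))\le \ol N(r,\infty;f)$, valid because poles of $L(f)$ can occur only at poles of $f$.

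The only mildly delicate point is justifying that a $1$-point of $F$ of multiplicity $t$ really produces a zero of $(f^m)'$ of order precisely $t-1$: this relies on the simplicity of the zeros of $P_Y$ (equivalently, $P_Y'(\eta_i)\ne0$, which in turn uses $P_Y'(b)=nb(ab^{n-2}-2)\ne0$). Without this, a zero of $R'$ at $\eta_i$ could inflate the multiplicity of $F-1$ without forcing a corresponding zero of $(f^m)'$ and the pigeonhole step would fail. Once this is secured, the rest is routine bookkeeping with the logarithmic derivative estimate.
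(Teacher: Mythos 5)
Your proof is correct and follows essentially the same route as the paper: pigeonhole on the weight $p$ to get $\ol{N}_{L}(r,1;F)\le\mu\bigl(N(r,1;F)-\ol N(r,1;F)\bigr)$, then bound the excess multiplicity of $1$-points by zeros of $(f^{m})'$ off the zeros of $f^{m}$ and apply the standard logarithmic-derivative estimate (the paper phrases this last step as $N\bigl(r,\infty;(f^{m})'/f^{m}\bigr)\le\ol N(r,0;f^{m})+\ol N(r,\infty;f^{m})$, which is the $k=1$ case of the lemma you cite). Your explicit justification via the simplicity of the zeros of $P_{Y}$ is a point the paper leaves implicit, and it is correctly handled.
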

\begin{proof} If $p=0$, then
\beas\overline{N}_{L}(r,1;F) &\leq&  N(r,1;F)-\ol{N}(r,1,F)\\
&\leq&  N\left(r,\infty;\frac{(f^{m})'}{f^{m}}\right)+S(r,f)\\
&\leq& \overline{N}\left(r,0;f^{m}\right)+\overline{N}\left(r,\infty;f^{m}\right)+S(r,f)\\
&\leq& \overline{N}(r,0;f)+\overline{N}(r,\infty;f)+S(r,f).
\eeas
If $p\geq1$, then
\beas\overline{N}_{L}(r,1;F) &\leq& \overline{N}(r,1;F|\geq p+1)\\
&\leq& \frac{1}{p} \left(N(r,1;F)-\ol{N}(r,1,F)\right)\\
&\leq& \frac{1}{p} N\left(r,\infty;\frac{(f^{m})'}{(f^{m})}\right)+S(r,f)\\
&\leq& \frac{1}{p}\left(\overline{N}\left(r,0;f^{m}\right)+\overline{N}\left(r,\infty;f^{m}\right)\right)+S(r,f)\\
&\leq& \frac{1}{p}\left(\overline{N}(r,0;f)+\overline{N}(r,\infty;f)\right)+S(r,f).\eeas
Combining the two cases, we get the proof.
\end{proof}
\begin{lem}\label{mml1.1.1} If $F$ and $G$ share $(1,p)$ and $F \not\equiv G$, then
\bea\label{mmel.1}
\overline{N}(r,\infty;f) &\leq& \frac{\mu+1}{\lambda-2\mu}\left(\overline{N}(r,0;f)+\overline{N}\left(r,0;L(f)\right)\right)+S_{Y}(r),\eea
where $\lambda=\min\left\{m(n-2)-1,(1+k)l(n-2)-1\right\}$ and $\mu=\min\left\{\frac{1}{p},1\right\}$.
\end{lem}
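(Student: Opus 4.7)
The plan is to introduce the auxiliary function
\[
V:=\frac{F'}{F(F-1)}-\frac{G'}{G(G-1)}
=\left(\frac{F'}{F-1}-\frac{F'}{F}\right)-\left(\frac{G'}{G-1}-\frac{G'}{G}\right),
\]
compute both its zeros coming from poles of $f$ and its poles coming from zeros of $F,G$ together with the branch-index mismatch set, and then play these estimates off each other via the lemma on the logarithmic derivative. This is parallel to the use of $V$ in Lemma \ref{arbc123} of Chapter 8, adapted to the polynomial $R(z)$ associated with $P_Y(z)$.

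First I would dispose of the degenerate case $V\equiv 0$. Integrating the relation $\frac{F'}{F(F-1)}=\frac{G'}{G(G-1)}$, one obtains a Möbius-type identity $\frac{F-1}{F}\equiv A\,\frac{G-1}{G}$ for some nonzero constant $A$. If $A=1$ this forces $F\equiv G$, contradicting the hypothesis. If $A\neq 1$, then evaluating the identity at a would-be pole of $f$ gives $1=A$, a contradiction, so $f$ has at most $S(r,f)$ many poles and the asserted inequality holds trivially.

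Next I would handle the main case $V\not\equiv 0$. The key computation is the behavior of $V$ near a pole $z_0$ of $f$ of multiplicity $s\geq 1$. Since $R(w)\sim\frac{a}{n(n-1)}w^{n-2}$ as $w\to\infty$, the function $F=R(f^m)$ has a pole at $z_0$ of order $m(n-2)s$, whence $\frac{F'}{F(F-1)}=\frac{F'}{F-1}-\frac{F'}{F}$ has a zero at $z_0$ of order $m(n-2)s-1$ (the two simple pole contributions cancel and the next Laurent coefficient survives). Writing $L(f)$ with leading pole contribution from $(f^{(k)})^{l}$ of order $(s+k)l$, the same reasoning gives that $\frac{G'}{G(G-1)}$ has a zero at $z_0$ of order at least $(1+k)l(n-2)-1$. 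Taking the minimum of the two orders for $s=1$ (which is the worst case, larger $s$ only increases both orders) yields
\[
\lambda\,\overline{N}(r,\infty;f)\leq N(r,0;V).
\]
By the First Fundamental Theorem and the lemma on the logarithmic derivative,
\[
N(r,0;V)\leq N(r,\infty;V)+m(r,V)+O(1)=N(r,\infty;V)+S_Y(r).
\]

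Then I would bound the poles of $V$. The simple poles of $V$ arise from (i) zeros of $F$, which come from zeros of $f^m$, contributing at most $\overline{N}(r,0;f)$; (ii) zeros of $G$, which come from zeros of $L(f)$, contributing at most $\overline{N}(r,0;L(f))$; and (iii) the shared $1$-points of $F$ and $G$ whose multiplicities differ, contributing $\overline{N}_{*}(r,1;F,G)$. All other candidate poles cancel because $F$ and $G$ share $(\infty,\infty)$ (forced by $R$ being the same rational function applied to both) and because the partial-fraction decomposition already separates the $\frac{F'}{F}$ and $\frac{F'}{F-1}$ contributions cleanly. Hence
\[
N(r,\infty;V)\leq \overline{N}(r,0;f)+\overline{N}(r,0;L(f))+\overline{N}_{*}(r,1;F,G)+S_Y(r).
\]
Now I invoke Lemma \ref{mml.n.1}: since $\overline{N}_{*}(r,1;F,G)=\overline{N}_L(r,1;F)+\overline{N}_L(r,1;G)$, parts (i)--(ii) of that lemma give
\[
\overline{N}_{*}(r,1;F,G)\leq \mu\bigl(\overline{N}(r,0;f)+\overline{N}(r,0;L(f))+2\,\overline{N}(r,\infty;f)\bigr)+S_Y(r).
\]
Substituting and collecting the $\overline{N}(r,\infty;f)$ terms on the left gives
\[
(\lambda-2\mu)\,\overline{N}(r,\infty;f)\leq (\mu+1)\bigl(\overline{N}(r,0;f)+\overline{N}(r,0;L(f))\bigr)+S_Y(r),
\]
from which the conclusion follows (the hypothesis on $n$ in Theorem \ref{mmthB1} will ensure $\lambda-2\mu>0$ whenever this lemma is applied).

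The main obstacle I anticipate is the bookkeeping in step two: verifying that the order of vanishing of $V$ at a pole of $f$ is exactly the claimed $\lambda$ even when $f$ has multiple poles, and ensuring that no unexpected cancellations arise in $V$ from common zeros of $F$ and $G$ that would invalidate the simple-pole upper bound on $N(r,\infty;V)$. This is a careful local computation but does not require new ideas beyond the Laurent expansion of $R(w)$ near $w=\infty$ and near $w=0$, combined with the leading-term analysis of $L(f)$ at a pole of $f$.
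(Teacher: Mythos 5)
Your proposal is correct and follows essentially the same route as the paper: the same auxiliary function $V=\frac{F'}{F(F-1)}-\frac{G'}{G(G-1)}$, the same dichotomy on $V\equiv 0$ versus $V\not\equiv 0$, the same local computation giving a zero of $V$ of order at least $\lambda$ at each pole of $f$, and the same bound $N(r,\infty;V)\leq \overline{N}(r,0;f)+\overline{N}(r,0;L(f))+\overline{N}_{*}(r,1;F,G)$ combined with Lemma \ref{mml.n.1} to absorb the $\overline{N}(r,\infty;f)$ terms. The only cosmetic difference is that you spell out the Laurent-expansion bookkeeping (cancellation of the simple-pole residues at poles and at equal-multiplicity $1$-points) that the paper leaves implicit.
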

\begin{proof} Define $$V:=\left(\frac{F'}{F(F-1)}-\frac{G'}{G(G-1)}\right).$$
\textbf{Case-1.} If $V\equiv 0$, then on integration, we get $$\left(1-\frac{1}{F}\right)=A\left(1-\frac{1}{G}\right).$$
As $f^{m}$ and $L(f)$ share $(\infty,0)$, so if $\overline{N}(r,\infty;f)\neq S(r,f)$, then $A=1$; i.e., $F=G$, which is impossible. Thus $\overline{N}(r,\infty;f)=S(r,f).$ Hence the lemma.\\
\textbf{Case-2.} Next we assume $V\not\equiv 0$.\par
Let $z_{0}$ be a pole of $f$ of order $t$. Then it is a pole of $L(f)$ of order $(t+k)l$, hence pole of $F$ and $G$ of order $tm(n-2)$ and $(t+k)l(n-2)$ respectively.\par
Thus $z_{0}$ is a zero of $\left(\frac{F'}{F-1}-\frac{F'}{F}\right)$ of order atleast $tm(n-2)-1$ and zero of $V$ of order atleast $\lambda$, where $\lambda=\min\left\{m(n-2)-1,(1+k)l(n-2)-1\right\}$. Thus
\beas &&\lambda\overline{N}(r,\infty;f)\\ &\leq& N(r,0;V) \leq N(r,\infty;V)+S_{Y}(r)\\
&\leq& \overline{N}_{L}(r,1;F)+\overline{N}_{L}(r,1;G)+\overline{N}(r,0;f)+\overline{N}\left(r,0;L(f)\right)+S_{Y}(r)\\
&\leq& \mu\left\{\overline{N}(r,0;f)+\overline{N}(r,\infty;f)+\overline{N}\left(r,0; L(f)\right)+\overline{N}(r,\infty;f)\right\}\\
&+& \overline{N}(r,0;f)+\overline{N}\left(r,0;L(f)\right)+S_{Y}(r).
\eeas
Hence
\beas\overline{N}(r,\infty;f) &\leq& \frac{\mu+1}{\lambda-2\mu}\left(\overline{N}(r,0;f)+\overline{N}\left(r,0;L(f)\right)\right)+S_{Y}(r).\eeas
Hence the proof.
\end{proof}
\begin{lem}\label{mmel.2.1.2} Let $H \not\equiv0$. If $F$ and $G$ share $(1,p)$, then
\bea\label{mmel.2} &&N(r,\infty;H)\\ &\leq& \overline{N}(r,\infty;f)+\overline{N}(r,0;f)+\overline{N}\left(r,0;L\left(f\right)\right)+\overline{N}\left(r,b;f^{m}\right)+\overline{N}\left(r,b;L(f)\right)\nonumber\\
\nonumber&+&\overline{N}_{L}(r,1;F)+\overline{N}_{L}(r,1;G)+\overline{N}_{0}\left(r,0;(f^{m})'\right)+\overline{N}_{0}\left(r,0;\left(L(f)\right)'\right),\eea
where $\overline{N}_{0}\left(r,0;(f^{m})'\right)$ denotes the counting function of the zeros of $(f^{m})'$ which are not the zeros of $f(f^{m}-b)$ and $F-1$.  Similarly, $\overline{N}_{0}\left(r,0;\left(L(f)\right)'\right)$ is defined.
\end{lem}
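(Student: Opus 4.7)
The plan is to follow the standard \emph{Laurent-expansion pole enumeration} that lies behind all lemmas of this type (compare Lemma 2.3.1 and Lemma 5.3.1 in the earlier chapters), adapted to the specific form of $R(z)$. Since $H$ is built out of the logarithmic derivatives $F''/F'$, $F'/(F-1)$, $G''/G'$ and $G'/(G-1)$, the Lemma of the Logarithmic Derivative (Corollary 1.3.1) gives immediately $m(r,H)=S(r,F)+S(r,G)=S_Y(r)$, so $T(r,H)=N(r,\infty;H)+S_Y(r)$; the whole task is therefore to bound $N(r,\infty;H)$ by a pointwise analysis. The key algebraic input is the identity
\[
R'(z)=\frac{a(n-2)\,z^{n-1}(z-b)^2}{n(n-1)(z-\alpha_{1})^{2}(z-\alpha_{2})^{2}},
\]
obtained by differentiating $\log R$ and using the Vieta relations $\alpha_{1}+\alpha_{2}=\tfrac{2(n-2)b}{n-1}$ and $\alpha_{1}\alpha_{2}=\tfrac{(n-2)b^{2}}{n}$; this tells us exactly which points of $\mathbb{C}$ are critical values of $R$, and hence which points of $f^{m}$ (resp.\ $L(f)$) produce zeros of $F'$ (resp.\ $G'$) that do not come from $F$ or $F-1$.

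Next I would run through the seven possible local configurations at a putative pole $z_{0}$ of $H$, expanding each piece $F''/F'$, $F'/(F-1)$ in a Laurent series and showing that (a)~the order of the pole of $H$ at $z_{0}$ is at most one, and (b)~the contribution matches exactly one of the counting functions on the right-hand side. The configurations are:
(i) $f(z_{0})=0$, so $F$ has a zero of order $\geq mn$; direct computation gives a simple pole of $H$, contributing $\overline{N}(r,0;f)$.
(ii) $L(f)(z_{0})=0$, giving similarly $\overline{N}(r,0;L(f))$.
(iii) $f(z_{0})=\infty$, at which \emph{both} $F$ and $G$ have poles and the residue calculation $\tfrac{F''}{F'}-\tfrac{2F'}{F-1}=\tfrac{k-1}{z-z_{0}}+O(1)$ (and symmetrically for $G$) leaves a simple pole, contributing $\overline{N}(r,\infty;f)$.
(iv) $f^{m}(z_{0})=b$, where the factor $(z-b)^{2}$ in $R'$ forces a zero of $F'$ of order $3t-1$ while $F-1$ stays finite and nonzero (because $P_Y(b)=ab^{n}\neq 0$ and $b\notin\{\alpha_{1},\alpha_{2}\}$), giving a simple pole of $H$ counted by $\overline{N}(r,b;f^{m})$; symmetrically for $L(f)$ one obtains $\overline{N}(r,b;L(f))$.
(v) Remaining zeros of $(f^{m})'$ and $(L(f))'$ that are not zeros of $f^{m}(f^{m}-b)$ or of $F-1$: these are exactly the ones counted by $\overline{N}_{0}(r,0;(f^{m})')$ and $\overline{N}_{0}(r,0;(L(f))')$.
(vi)--(vii) Common $1$-points of $F$ and $G$: since $F$ and $G$ share $(1,p)$, a $1$-point common to both with equal multiplicities gives residues $-(k+1)-(-(k+1))=0$, so no pole of $H$; only the unbalanced ones survive, contributing $\overline{N}_{L}(r,1;F)+\overline{N}_{L}(r,1;G)$.

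Adding the nine contributions and invoking the simplicity of every pole of $H$ yields the stated inequality. The main obstacle, and the point where care is needed, is case~(iii) together with the $\alpha_{j}$-points of $f^{m}$ and $L(f)$: a priori these produce separate pole contributions of $H_{1}$ and $H_{2}$, and the hope is to absorb them into $\overline{N}(r,\infty;f)$ using the fact that $F$ and $G$ share poles coming from $f=\infty$ and that the $\alpha_{j}$-factors occur \emph{squared} in the denominator of $R'$ so that the corresponding contribution of $F''/F'$ is cancelled against $2F'/(F-1)$ by the same residue identity $k-1$ already used in case~(iii). Once this cancellation is verified, the enumeration is complete and no further terms are needed, matching the right-hand side of the lemma exactly.
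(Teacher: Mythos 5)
Your overall strategy is the same as the paper's: since $m(r,H)=S_Y(r)$ by the lemma on the logarithmic derivative, one enumerates the possible poles of $H$ pointwise, using the factorisation $R'(z)=\frac{a(n-2)z^{n-1}(z-b)^{2}}{n(n-1)(z-\alpha_{1})^{2}(z-\alpha_{2})^{2}}$ (which you compute correctly) and the local residues $\frac{F''}{F'}-\frac{2F'}{F-1}=\frac{q-1}{z-z_{0}}+O(1)$ at a zero or pole of $F$ of order $q$ and $\frac{-\mu-1}{z-z_{0}}+O(1)$ at a $1$-point of multiplicity $\mu$. Your cases (i), (ii), (iv), (v), (vi)--(vii) are handled correctly (modulo the small slip that $P_{Y}(b)=ab^{n}-2b^{2}=b^{2}(ab^{n-2}-2)$, not $ab^{n}$; the conclusion $P_{Y}(b)\neq0$, hence $F\neq1$ at $b$-points of $f^{m}$, is exactly the standing hypothesis $ab^{n-2}\neq2$ and is what you actually need).

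The genuine gap is in your final paragraph, i.e. in the treatment of the $\alpha_{j}$-points of $f^{m}$ and of $L(f)$. These are poles of $F$ (resp. $G$) that do not come from poles of $f$, and you propose to ``absorb them into $\overline{N}(r,\infty;f)$'' via a cancellation ``by the same residue identity $k-1$.'' That cancellation only occurs when the $\alpha_{j}$-point is simple: if $f^{m}-\alpha_{j}$ has a zero of order $t$ at $z_{0}$, then $F$ has a pole of order $t$ there and the residue of $\frac{F''}{F'}-\frac{2F'}{F-1}$ is $t-1$, which vanishes for $t=1$ but not for $t\geq2$. So multiple $\alpha_{j}$-points genuinely are simple poles of $H$, and they are not poles of $f$, so they cannot be charged to $\overline{N}(r,\infty;f)$. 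The correct resolution --- and the one observation the paper's own proof makes explicit --- is that a multiple $\alpha_{j}$-point of $f^{m}$ is a zero of $(f^{m})'$ which is not a zero of $f(f^{m}-b)$ (since $\alpha_{j}\neq0$ and $\alpha_{j}\neq b$, the latter because $n(n-1)b^{2}-2n(n-2)b^{2}+(n-1)(n-2)b^{2}=2b^{2}\neq0$) and not a zero of $F-1$ (it is a pole of $F$); hence it is already counted in $\overline{N}_{0}\left(r,0;(f^{m})'\right)$, and symmetrically for $G$. With that substitution your enumeration closes and yields exactly the stated right-hand side; as written, the step you defer to ``once this cancellation is verified'' cannot be verified.
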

\begin{proof} We see that
$$\overline{N}(r,\infty;F)\leq\overline{N}(r,\infty;f)+\overline{N}(r,\alpha_{1};f^{m})+\overline{N}(r,\alpha_{2};f^{m}).$$
But simple zeros of $f^{m}-\alpha_{i}$ are not poles of $H$ and multiple zeros of  $f^{m}-\alpha_{i}$ are zeros of $(f^{m})'$. Similar explanation for $G$ is also hold. Thus the rest part of the proof is obvious. So we omit the details.
\end{proof}
\section {Proof of the theorem}
\par
\begin{proof} [\textbf{Proof of Theorem \ref{mmthB1} }] We consider two cases:\\
\textbf{Case-1.} Let $H \not\equiv 0$. Then $F \not\equiv G$. By simple calculations, we get
\beas \overline{N}(r,1;F|=1)=\overline{N}(r,1;G|=1)\leq N(r,\infty;H).\eeas
Now applying the Second Fundamental Theorem and Lemma \ref{mmel.2.1.2}, we obtain
\bea\label{mmpe1.1.1} &&(n+1)T(r,f^{m})\\
\nonumber &\leq& \overline{N}(r,\infty;f)+\overline{N}(r,0;f)+\overline{N}(r,b;f^{m})+ \overline{N}(r,1;F)-N_{0}\left(r,0,(f^{m})'\right)+S_{Y}(r)\\
\nonumber &\leq& 2\left\{\overline{N}(r,\infty;f)+\overline{N}(r,0;f)+\overline{N}(r,b;f^{m})\right\}+\overline{N}\left(r,0;L(f)\right)+\overline{N}\left(r,b;L(f)\right)\\
\nonumber &+& \overline{N}(r,1;F|\geq2)+\overline{N}_{L}(r,1;F)+\overline{N}_{L}(r,1;G)+\overline{N}_{0}\left(r,0;\left(L(f)\right)'\right)+S_{Y}(r).
\eea
\textbf{Subcase-1.1.} If $p\geq2$, then
\bea\label{mmpe1.1.2} &&\overline{N}(r,1;F|\geq2)+\overline{N}_{L}(r,1;F)+\overline{N}_{L}(r,1;G)+\overline{N}_{0}\left(r,0;\left(L(f)\right)'\right)\\
\nonumber &\leq& \overline{N}(r,1;G|\geq2)+\overline{N}(r,1;G|\geq3)+\overline{N}_{0}\left(r,0;\left(L(f)\right)'\right)\\
\nonumber &\leq& N\left(r,0;\left(L(f)\right)'~|~L(f)\neq0\right)+S_{Y}(r)\\
\nonumber &\leq& N\left(r,\infty;\frac{\left(L(f)\right)'}{L(f)}\right)+S_{Y}(r)\\
\nonumber &\leq& \overline{N}\left(r,0;L(f)\right)+\overline{N}(r,\infty;f)+S_{Y}(r).\eea
Thus in view of (\ref{mmpe1.1.2}), we can write (\ref{mmpe1.1.1}) as
\bea\label{mmpe1.1.3} &&(n+1)T(r,f^{m})\\
\nonumber &\leq& 3\overline{N}(r,\infty;f)+2\left\{\overline{N}(r,0;f)+\overline{N}\left(r,0;L(f)\right)+\overline{N}(r,b;f^{m})\right\}\\
\nonumber &+& \overline{N}\left(r,b;L(f)\right)+S_{Y}(r).
\eea
Proceeding similarly, we have the expression for $L(f)$ as
\bea\label{mmpe1.1.4} &&(n+1)T\left(r,L(f)\right)\\
\nonumber&\leq& 3\overline{N}(r,\infty;f)+2\left\{\overline{N}(r,0;f)+\overline{N}\left(r,0;L(f)\right)+\overline{N}\left(r,b;L(f)\right)\right\}\\
\nonumber &+& \overline{N}\left(r,b;f^{m}\right)+S_{Y}(r).
\eea
Adding inequalities (\ref{mmpe1.1.3}) and (\ref{mmpe1.1.4}), we have
\bea\label{mmpe1.1.5} &&(n+1)T_{Y}(r)\\
\nonumber &\leq& 6\overline{N}(r,\infty;f)+4\left\{\overline{N}(r,0;f)+\overline{N}\left(r,0;L(f)\right)\right\}\\
\nonumber &+& 3\left\{\overline{N}\left(r,b;f^{m}\right)+\overline{N}\left(r,b;L(f)\right)\right\}+S_{Y}(r).
\eea
Next applying Lemma \ref{mml1.1.1} in the inequality (\ref{mmpe1.1.5}), we have
\bea\label{mmpe1.1.6} (n-6)T_{Y}(r) &\leq& 6\overline{N}(r,\infty;f)+S_{Y}(r)\\
\nonumber &\leq& 6\frac{\mu+1}{\lambda-2\mu}\left(\overline{N}(r,0;f)+\overline{N}\left(r,0;L(f)\right)\right)+S_{Y}(r)\\
\nonumber  &\leq& 6\frac{\mu+1}{\lambda-2\mu} T_{Y}(r)+S_{Y}(r),
\eea
which is a contradiction as $n> 6+6\frac{\mu+1}{\lambda-2\mu}$.
\newpage                                         
\textbf{Subcase-1.2.} If $p=1$, then
\bea\label{mmpe1.1.7} &&\overline{N}(r,1;F|\geq2)+\overline{N}_{L}(r,1;F)+\overline{N}_{L}(r,1;G)+\overline{N}_{0}\left(r,0;(L(f))'\right)\\
\nonumber &\leq& \overline{N}(r,1;G|\geq2)+\overline{N}(r,1;F|\geq2)+\overline{N}_{0}\left(r,0;(L(f))'\right)\\
\nonumber &\leq& N\left(r,0;\left(L(f)\right)'~|~L(f)\neq0\right)+\frac{1}{2}N\left(r,0;(f^{m})'~|~ f^{m}\not=0\right)+S_{Y}(r)\\
\nonumber &\leq& \overline{N}\left(r,0;L(f)\right)+\overline{N}\left(r,\infty;L(f)\right)+\frac{1}{2}\left\{\overline{N}(r,0;f)+\overline{N}(r,\infty;f)\right\}+S_{Y}(r).
\eea
Applying (\ref{mmpe1.1.7}) in (\ref{mmpe1.1.1}), we get
\bea\label{mmpe1.1.8}&& (n+1)T(r,f^{m})\\
\nonumber&\leq& \frac{7}{2}\overline{N}(r,\infty;f)+\frac{5}{2}\overline{N}(r,0;f)+ 2\overline{N}\left(r,0;L(f)\right)+2\overline{N}(r,b;f^{m})\\
\nonumber &+& \overline{N}(r,b;L(f))+S_{Y}(r).
\eea
Proceeding similarly, we have the expression for $L(f)$ as
\bea\label{mmpe1.1.9} &&(n+1)T(r,L(f))\\
\nonumber&\leq& \frac{7}{2}\overline{N}(r,\infty;f)+\frac{5}{2}\overline{N}(r,0;L(f))+2\overline{N}(r,0;f)+2\overline{N}(r,b;L(f))\\
\nonumber &+& \overline{N}(r,b;f^{m})+S_{Y}(r).
\eea
Adding inequalities (\ref{mmpe1.1.8}) and (\ref{mmpe1.1.9}), we get
\bea\label{mmpe1.1.10}&& (n+1)T_{Y}(r)\\
\nonumber &\leq& 7\overline{N}(r,\infty;f)+\frac{9}{2}\left\{\overline{N}(r,0;f)+\overline{N}(r,0;L(f))\right\}\\
\nonumber &+& 3\left\{\overline{N}(r,b;f^{m})+\overline{N}(r,b;L(f))\right\}+S_{Y}(r).
\eea
Thus applying Lemma \ref{mml1.1.1} in (\ref{mmpe1.1.10}), we get
\bea\label{mmpe1.1.11} \left(n-\frac{13}{2}\right)T_{Y}(r) &\leq& 7\overline{N}(r,\infty;f)+S_{Y}(r)\\
\label{mmpe1.1.12} &\leq& 7\frac{\mu+1}{\lambda-2\mu}\bigg(\overline{N}(r,0;f)+\overline{N}(r,0;L(f))\bigg)+S_{Y}(r)\\
\nonumber  &\leq& 7\frac{\mu+1}{\lambda-2\mu} T_{Y}(r)+S_{Y}(r),\eea
which is a contradiction as $n > \frac{13}{2}+7\frac{\mu+1}{\lambda-2\mu}$.\par
\textbf{Subcase-1.3.} If $p=0$, then applying the Second Fundamental Theorem and Lemma \ref{mmel.2.1.2}, we have
\bea\label{mmpe1.1.12} &&(n+1)T_{Y}(r)\\
\nonumber &\leq& \overline{N}(r,\infty;f^{m})+\overline{N}\left(r,\infty;L(f)\right)+\overline{N}(r,0;f^{m})+\overline{N}\left(r,0;L(f)\right)\\
\nonumber&+&\overline{N}\left(r,b;L(f)\right)+\overline{N}(r,b;f^{m})+\overline{N}(r,1;F)+\overline{N}(r,1;G)\\
\nonumber &-& N_{0}\left(r,0;(f^{m})'\right)-N_{0}\left(r,0;\left(L(f)\right)'\right)+S_{Y}(r)\\
\nonumber&\leq& 3\overline{N}(r,\infty;f)+2\overline{N}(r,0;f)+2\overline{N}\left(r,0;L(f)\right)\\
\nonumber &+& 2\overline{N}(r,b;f^{m})+2\overline{N}\left(r,b;L(f)\right)+\overline{N}(r,1;F)+\overline{N}(r,1;G)\\
\nonumber&-&\overline{N}(r,1;F|=1)+\overline{N}_{L}(r,1;F)+\overline{N}_{L}(r,1;G)+S_{Y}(r).\eea
Again,
\beas&&\overline{N}(r,1;F)+\overline{N}(r,1;G)-\overline{N}(r,1;F|=1)\\
 &\leq& \frac{1}{2}\left\{N(r,1;F)+N(r,1;G)+\overline{N}_{L}(r,1;F)+\overline{N}_{L}(r,1;G)\right\}.
\eeas
Thus using Lemmas \ref{mml.n.1} and \ref{mml1.1.1}, we can write (\ref{mmpe1.1.12}) as
\beas &&(n-6)T_{Y}(r) \\
\nonumber &\leq& 6\overline{N}(r,\infty;f)+3\left\{\overline{N}_{L}(r,1;F)+\overline{N}_{L}(r,1;G)\right\}+S_{Y}(r)\\
\nonumber&\leq& 6\overline{N}(r,\infty;f)+3 \mu \left\{\overline{N}(r,0; L(f))+\overline{N}(r,0;f)+2\overline{N}(r,\infty;f)\right\}+S_{Y}(r)\\
\nonumber&\leq& 6(\mu+1)\overline{N}(r,\infty;f)+3 \mu \left\{\overline{N}(r,0; L(f))+\overline{N}(r,0;f)\right\}+S_{Y}(r)\\
\nonumber&\leq& \left(6\frac{(\mu+1)^{2}}{\lambda-2\mu}+3\mu\right)\left\{\overline{N}(r,0;f)+\overline{N}(r,0;L(f))\right\}+S_{Y}(r)\\
\nonumber&\leq& \left(6\frac{(\mu+1)^{2}}{\lambda-2\mu}+3\mu\right)T_{Y}(r)+S_{Y}(r),
\eeas
which is a contradiction as $n> 6+3\mu+6\frac{(\mu+1)^{2}}{\lambda-2\mu}$.
\vspace{.3 cm}\\
\textbf{Case-2.} Next we consider $H\equiv 0$. Then, on integration, we have
\bea\label{mmpe1.1} F=\frac{AG+B}{CG+D},\eea
where $A,B,C,D$ are constant satisfying $AD-BC\neq 0 $. Thus by Lemma \ref{ML}
\bea\label{mmpe1.2} T(r,f^{m})=T\left(r,L(f)\right)+S_{Y}(r).\eea
From(\ref{mmpe1.1}), it is clear that if $C\neq0$, then $\overline{N}(r,\infty;f)=S(r,f)$ and if $C=0$, then $f^{m}$ and $L(f)$ share $(\infty,\infty)$.\\
\textbf{Subcase-2.1.} Let $AC\neq0$. Then (\ref{mmpe1.1}) can be written as
\bea\label{mocha} F-\frac{A}{C}=\frac{BC-AD}{C(CG+D)}.\eea
Thus using the Second Fundamental Theorem and (\ref{mmpe1.2}), (\ref{mocha}), we have
\beas T(r,F) &\leq& \overline{N}(r,\infty;F)+\overline{N}(r,0;F)+\overline{N}\left(r,\frac{A}{C};F\right)+S(r,F)\\
&\leq& \overline{N}(r,\infty;f)+\overline{N}(r,\alpha_{1};f^{m})+\overline{N}(r,\alpha_{2};f^{m})+\overline{N}(r,0;f)\\
&+& \overline{N}\left(r,\infty;L(f)\right)+\overline{N}\left(r,\alpha_{1};L(f)\right)+\overline{N}\left(r,\alpha_{2};L(f)\right)+S_{Y}(r)\\
&\leq& \frac{5}{n}T(r,F)+S(r,F),\eeas
which is a contradiction as $n> 6$.\\
\textbf{Subcase-2.2.} Let $AC=0$. Then  $A=C=0$ never occur.\\
\textbf{Subsubcase-2.2.1.} Let $A=0$ and $C\neq0$. Then $B\neq0$ and (\ref{mmpe1.1}) can be written as
\bea\label{mocha2} F=\frac{1}{\gamma G+\delta},\eea
where $\gamma=\frac{C}{B}$ and $\delta=\frac{D}{B}$. Then $\gamma\not=0$.
\newpage
If $F$ has no $1$-point, then the Second Fundamental Theorem and (\ref{mmpe1.2}) yields
\beas T(r,F) &\leq& \overline{N}(r,\infty;F)+\overline{N}(r,0;F)+\overline{N}(r,1;F)+S(r,F)\nonumber\\
&\leq& \overline{N}(r,\infty;f)+\overline{N}(r,\alpha_{1};f^{m})+\overline{N}(r,\alpha_{2};f^{m})+\overline{N}(r,0;f)+S_{Y}(r)\\
&\leq& \frac{3}{n}T(r,F)+S(r,F),\eeas
which is impossible as $n> 6$.\par
Thus $\gamma+\delta=1$. Also, it is clear from (\ref{mocha2}) that
$$\overline{N}\left(r,0;G+\frac{1-\gamma}{\gamma}\right)=\overline{N}(r,\infty;F).$$
If $\gamma\neq1$, then using the Second Fundamental Theorem and (\ref{mmpe1.2}), we get
\beas T(r,G) &\leq& \overline{N}(r,\infty;G)+\overline{N}(r,0;G)+\overline{N}\left(r,0;G+\frac{1-\gamma}{\gamma}\right)+S(r,G)\nonumber\\
&\leq& \overline{N}\left(r,\infty;L(f)\right)+\overline{N}\left(r,\alpha_{1};L(f)\right)+\overline{N}\left(r,\alpha_{2};L(f)\right)+\overline{N}\left(r,0;L(f)\right)\\
&+& \overline{N}(r,\infty;f)+\overline{N}(r,\alpha_{1};f^{m})+\overline{N}(r,\alpha_{2};f^{m})+S_{Y}(r)\\
&\leq& \frac{5}{n}T(r,F)+S(r,F),\eeas
which is a contradiction as $n> 6$.\par
Thus $\gamma=1$, consequently $FG\equiv 1$. That is,
\bea f^{mn}\left(L(f)\right)^{n}\equiv\frac{n^{2}(n-1)^{2}}{a^{2}}(f^{m}-\alpha_{1})\left(f^{m}-\alpha_{2}\right)\left(L(f)-\alpha_{1}\right)\left(L(f)-\alpha_{2}\right).\eea
Thus, as $n>2$, $f$ has no pole, hence $\overline{N}(r,\infty;f)=S(r,f)$.\par
Let $z_{0}$ be an $\alpha_{1i}$ point of $f$ of order $s$, where $(\alpha_{1i})^{m}=\alpha_{1}$ ($i=1,2,\ldots,m$). Then $z_{0}$ is a zero of $L(f)$ of order $q$ (say) satisfying $nq =s$. Thus
$$\overline{N}(f,\alpha_{1i};f) \leq \frac{1}{n}{N}(f,\alpha_{1i};f).$$
Similarly, if $(\alpha_{2j})^{m}=\alpha_{2}$ ($j=1,2,\ldots,m$), then  we have
$$\overline{N}(f,\alpha_{2j};f) \leq \frac{1}{n}{N}(f,\alpha_{2j};f).$$
Thus by the Second Fundamental Theorem, we get
\beas (2m-1)T(r,f)&\leq& \overline{N}(r,\infty;f)+\sum\limits_{i=1}^{m}\overline{N}(r,\alpha_{1i};f)+\sum\limits_{j=1}^{m}\overline{N}(r,\alpha_{2j};f)+S_{Y}(r)\nonumber\\
&\leq& \frac{2m}{n}T(r,f)+S_{Y}(r),\eeas
which is impossible as $n > 6$.\\
\textbf{Subsubcase-2.2.2.} Let $A\neq0$ and $C=0$. Then $D\neq0$ and (\ref{mmpe1.1}) can be written as
\bea\label{laligu} F=\lambda G+\mu,\eea
where $\lambda=\frac{A}{D}$ and $\mu=\frac{B}{D}$. Then $\lambda\not=0$.
\newpage
If $F$ has no $1$ point, then, similarly as previous, we get a contradiction.\par
Thus $\lambda+\mu=1$ and from (\ref{laligu}), we get
$$\overline{N}\left(r,0;G+\frac{1-\lambda}{\lambda}\right)=\overline{N}(r,0;F).$$
If $\lambda \neq1$, then using the Second Fundamental Theorem and (\ref{mmpe1.2}), we get
\beas T(r,G) &\leq& \overline{N}(r,\infty;G)+\overline{N}(r,0;G)+\overline{N}\left(r,0;G+\frac{1-\lambda}{\lambda}\right)+S(r,G)\nonumber\\
&\leq& \overline{N}\left(r,\infty;L(f)\right)+\overline{N}\left(r,\alpha_{1};L(f)\right)+\overline{N}\left(r,\alpha_{2};L(f)\right)+\overline{N}\left(r,0;L(f)\right)\\
&+& \overline{N}(r,0;f)+S_{Y}(r)\\
&\leq& \frac{5}{n}T(r,G)+S(r,G),\eeas
which is a contradiction as $n > 6$.\par
Thus $\lambda=1$ and hence $F\equiv G$. By substituting $h=\frac{L(f)}{f^{m}}$ in $F\equiv G$, we get
\bea\label{mmpe1.5} && n(n-1)h^{2}f^{2m}\left(h^{n-2}-1\right)-2n(n-2)bhf^{m}\left(h^{n-1}-1\right)\\
&&~~+(n-1)(n-2)b^{2}\left(h^{n}-1\right)=0.\nonumber \eea
If $h$ is a non-constant meromorphic function, then using lemma \ref{mmlemma}, we get
\beas \left\{n(n-1)hf^{m}\left(h^{n-2}-1\right)-n(n-2)b\left(h^{n-1}-1\right)\right\}^{2}=-n(n-2)b^{2}(h-1)^{4}\prod\limits_{i=1}^{2n-6}(h-\beta_{i}).\eeas
Then applying the Second Fundamental Theorem, we get
\beas && (2n-6)T(r,h)\\ &\leq& \overline{N}(r,\infty;h)+\overline{N}(r,0;h)+\sum\limits_{i=1}^{2n-6}\overline{N}(r,0;h-\beta_{i})+S(r,h)\nonumber \\
&\leq& \overline{N}(r,\infty;h)+\overline{N}(r,0;h)+\frac{1}{2}\sum\limits_{i=1}^{2n-6}N(r,0;h-\beta_{i})+S(r,h)\\
&\leq& (n-1) T(r,h)+S(r,h), \eeas
which is a contradiction as $n>6$.\par
Thus $h$ must be a constant. Hence, as $f$ is non-constant and $b\neq0$, we get from equation (\ref{mmpe1.5}) that $$(h^{n-2}-1)=0,~(h^{n-1}-1)=0~\text{and}~(h^{n}-1)=0.$$
Therefore $h=1$; i.e, $f^{m}=L(f).$ Hence the proof.
\end{proof}
$~~$
\vspace{1.2 cm}
\\
------------------------------------------------
\\
\textbf{The matter of this chapter has been published in Math. Morav., Vol. 20, No. 2, (2016), pp. 1-14.}
\newpage
\chapter*{\textbf{List of Publications}}
\addcontentsline{toc}{chapter}
{List of Publications}
\fancyhead[r]{Bibliography}
\fancyhead[l]{}
\fancyhead[c]{}
\begin{enumerate}
\item A. Banerjee and B. Chakraborty, A new type of unique range set with deficient values, \emph{\textbf{Afrika Matematika}}, 26(7-8) (2015), 1561-1572 (\textbf{SCOPUS} indexed).
\item A. Banerjee and B. Chakraborty, Further investigations on a question of Zhang and L\"{u}, \emph{\textbf{Annales Universitatis Paedagogicae Cracoviensis Studia Mathematica}}, 14 (2015), 102-119.
\item A. Banerjee and B. Chakraborty, On the generalizations of Br\"{u}ck conjecture, \emph{\textbf{Communications of Korean Mathematical Society}}, 32(2) (2016), 311-327 (\textbf{SCOPUS} indexed).
\item A. Banerjee and B. Chakraborty, Further results on the uniqueness of meromorphic functions and their derivative counterpart sharing one or two sets, \emph{\textbf{Jordan Journal of Mathematics and Statistics}}, 9(2) (2016), 117-139.
\item A. Banerjee and B. Chakraborty, Uniqueness of the power of a meromorphic functions with its differential polynomial sharing a set, \emph{\textbf{Mathematica Moravica}}, 20(2) (2016), 1-14.
\item A. Banerjee and B. Chakraborty, Some further study on Br\"{u}ck conjecture, \emph{\textbf{An. S\c{t}iin\c{t}. Univ. Al. I. Cuza Ia\c{s}i Mat. (N.S.)}},  62(2, f.2) (2016), 501-511 (\textbf{SCOPUS} indexed).
\item A. Banerjee and B. Chakraborty, On some sufficient conditions of the strong uniqueness polynomials, \emph{\textbf{Advances in Pure and Applied Mathematics}}, 8(1) (2017), 1-13 (\textbf{SCOPUS} indexed).
\item A. Banerjee and B. Chakraborty, On the uniqueness of power of a meromorphic function sharing a set with its $k$-th derivative, \emph{\textbf{ Journal of the Indian Mathematical Society}}, Accepted (2017), (\textbf{SCOPUS} indexed).
\end{enumerate}
\newpage

\end{document}